\newtheorem{thm}{Theorem}[section]
\newtheorem{lem}[thm]{Lemma}
\newtheorem{prop}[thm]{Proposition}
\newtheorem{cor}[thm]{Corollary}
\theoremstyle{definition}
\newtheorem{NN}[thm]{}
\theoremstyle{definition}
\newtheorem{df}[thm]{Definition}
\theoremstyle{definition}
\newtheorem{rem}[thm]{Remark}
\theoremstyle{definition}
\renewcommand{\phi}{\varphi}
\newcommand{\N}{\mathbb{N}}
\newcommand{\Z}{\mathbb{Z}}
\newcommand{\R}{\mathbb{R}}
\newcommand{\C}{\mathbb{C}}
\numberwithin{equation}{section}
\newcommand{\Aff}{\operatorname{Aff}}
\newcommand{\id}{\operatorname{id}}
\newcommand{\aff}{\rm aff}
\newcommand{\cpc}{completely positive contractive map}
\newcommand{\morp}{contractive completely positive linear map}
\newcommand{\hm}{homomorphism}
\newcommand{\dt}{\delta}
\newcommand{\ep}{\varepsilon}
\newcommand{\ld}{\lambda}
\newcommand{\la}{\langle}
\newcommand{\ra}{\rangle}
\newcommand{\andeqn}{\,\,\,{\rm and}\,\,\,}
\newcommand{\rforal}{\,\,\,{\rm for\,\,\,all}\,\,\,}
\newcommand{\CA}{C*-algebra}
\newcommand{\SCA}{sub-C*-algebra}
\newcommand{\af}{{\alpha}}
\newcommand{\bt}{{\beta}}
\newcommand{\dist}{{\rm dist}}
\newcommand{\one}{{\bf 1}}
\newcommand{\diag}{{\rm diag}}
\newcommand{\wilog}{without loss of generality}
\newcommand{\Wlog}{Without loss of generality}
\newcommand{\beq}{\begin{eqnarray}}
\newcommand{\eneq}{\end{eqnarray}}
\newcommand{\tforal}{\,\,\,\text{for\,\,\,all}\,\,\,}
\newcommand{\tand}{\,\,\,\text{and}\,\,\,}
\title{Simple stably projectionless C*-algebras with generalized tracial rank one}
\author{George A. Elliott, Guihua Gong,  Huaxin Lin, and Zhuang Niu
 }
\date{}
\begin{document}

\maketitle

\begin{abstract}
We study a class of stably projectionless simple \CA s which may be viewed as having
generalized tracial rank one in analogy with the unital case.  {{A number of}} structural questions concerning these simple \CA s are studied,
%The paper also deals with technical questions related to
%serves as a technical support for 
pertinent to
the classification
of separable stably projectionless simple amenable  Jiang-Su stable \CA s.
%The underlying question is whether these algebras exhaust the axiomatically determined class of stably projectionless simple Jiang-Su stable separable amenable \CA s.
\end{abstract}

\section{Introduction}

Recent developments in the program of  classification of simple amenable \CA s
led to the classification of unital simple separable \CA s with finite nuclear dimension
in the UCT class (see, for example, \cite{GLN}, \cite{EGLN}, and \cite{TWW}).
The isomorphism theorem was established  first for those unital simple separable amenable
\CA s with generalized tracial rank at most one (see \cite{GLN}).  {{Unital}} simple \CA s
with
% finite 
generalized tracial rank at most one have good regularity properties. For example,
these simple \CA s have strict comparison for positive elements, have stable rank one,
and are quasidiagonal. All quasitraces are traces. When they are separable  and amenable, they
are ${\cal Z}$-stable.  These \CA s have many other good properties which lead to the classification
by the Elliott invariant when they satisfy the UCT.  In \cite{EGLN}, using \cite{TWW},  we showed that unital finite simple (separable) \CA s with finite nuclear dimension which satisfy the UCT  have generalized tracial rank at most one
after tensoring with a UHF-algebra of infinite type. This completed the classification
of unital simple separable \CA s with finite nuclear dimension in the UCT class.
In the present paper,  we study the non-unital version of the notion of generalized tracial rank one.

{\bf Acknowledgements.}   The greater  part of this  research
was done {{while}} the second and third named  authors were at the Research Center for Operator Algebras at East China Normal University
in the summer of 2016. Much of the revision from the initial work was done when the
last three authors were there in the summer of 2017.
All  authors acknowledge the support of the Research Center
which is partially supported by Shanghai Key Laboratory of PMMP, Science and Technology Commission of Shanghai Municipality (STCSM), grant \#13dz2260400 and a NNSF grant (11531003).
 The first named author was partially supported by NSERC of Canada.
The second named author was partially supported by the NNSF of China (Grant \#11531003),
the third named author was partially supported by NSF grants (DMS \#1361431 and  \#1665183), and the fourth named author was partially supported by a Simons Collaboration Grant (Grant \#317222).

\section{Notation}

%\begin{df}[3.10.4 of \cite{Pbk}]\label{Dtr>0}
%Let $A$ be a \CA\, and $a\in A_+$ be a positive element.
% Recall that $a$ is called strictly positive if $f(a)\not=0$ for all states $f$ of $A.$
%Note that if $a$ is strictly positive then $A=\overline{aAa}$ and
%it is $\sigma$-unital.
%\end{df}

\begin{df}\label{DTtilde}
Let $A$ be a \CA. Denote by ${\mathrm{Ped}(A)}$ the Pedersen ideal (see Section 5.6 of \cite{Pbook}). 

Denote by ${\widetilde{\mathrm{T}}}(A)$ the topological convex cone of all  densely defined  lower semicontinuous  positive traces
equipped   with  the topology of   point-wise convergence on elements of  ${\mathrm{Ped}(A)}$. Recall that, if $\tau\in {\widetilde{\mathrm{T}}}(A)$ and $b\in {\mathrm{Ped}(A)}_+$, then
$\tau$ is a finite (equivalently, bounded) trace on $\overline{bAb}.$
{{One checks easily that ${\widetilde{\mathrm{T}}}(A)$ is a Hausdorff space.}}

 Let ${\mathrm{T}(A)}$ denote the
 %\% set of 
 {{tracial}} state  space of $A.$ 
 Set  
 $$
 {\mathrm{T}_1(A)}=\{\tau\in {\widetilde{\mathrm{T}}}(A): |\tau(a)|\le 1,\,\, a\in {\mathrm{Ped}}(A)\andeqn \|a\|\le 1\}.
 $$ 
 If $\tau\in  {\mathrm{T}_1(A)},$ then it can be extended to a 
  positive linear functional {{on}} $A$ with norm  at most one.
  %no more than one.  
  Therefore, we may view ${\mathrm{T}_{{1}}(A)}$
  as a subset of the unit ball of $A^*.$ 
  In this way, we may write that  ${\mathrm{T}}(A)\subset {\mathrm{T}_1(A)}.$
 ${\rm T}_1(A)$  is also a closed set in ${\widetilde{\mathrm{T}}}(A).$
{{If}} $A={\mathrm{Ped}(A)}, $ {{then}} ${\mathrm{T}(A)}$ generates ${\widetilde{\mathrm{T}}}(A)$
 as a cone.
%We shall consider also the set of finite traces of norm at most one, ${\mathrm{T}_0(A)}=\{\af \cdot \tau: \tau\in {\mathrm{T}(A)},\,\, \, 0\le \af\le 1\}.$

Suppose that $A$ is $\sigma$-unital.
In the case that ${\mathrm{Ped}(A)}_+$ contains a full element $a$ of $A$ (in particular when
$A$ is simple),  let us clarify the structure of ${\widetilde{\mathrm{T}}}(A).$
Put $A_1=\overline{aAa}.$   Then we may identify $A$ with a $\sigma$-unital hereditary
\SCA\, of $A_1\otimes {\cal K}$ by Brown's theorem (\cite{Br1}).
By 2.1 of \cite{aTz}, $A_1={\rm Ped}(A_1).$  Therefore, ${\mathrm{T}}_{{1}}(A_1)$ generates 
${\widetilde{\mathrm{T}}}(A)={\widetilde{\rm{T}}}(A_1)$ 
as a cone.  
On the other hand, again, since $A_1={\rm Ped}(A_1),$
${\mathrm{T}_1}(A_1)$ is the weak*-compact convex subset of all tracial positive linear functionals {{on}} $A_1$ with norm at most
{{one}}, %.  Thus  ${\mathrm{T}_0}(A_1)$  has the usual  structure of a topological  convex set and it
and is a Choquet simplex.

{{Consider the closure $S$ of ${\mathrm{T}}(A)$ in ${\widetilde{\mathrm{T}}}(A)$ in the above mentioned topology. 
Let ${\overline{{\mathrm{T}}(A)}}^{\mathrm{w}}$ denote  the weak* closure of 
${\mathrm{T}}(A)$  in the dual space of $A.$}} {{Clearly, as a set, ${\overline{{\mathrm{T}}(A)}}^{\mathrm{w}}
\subset S.$}}  {{Note that each element in ${\overline{{{\mathrm{T}}}(A)}}^{\mathrm{w}}$
is a trace of $A$ with norm at most one.  Let $\imath: {\overline{{\mathrm{T}}(A)}}^{\mathrm{w}}\to S$ be the embedding as a subset.
So the map $\imath$ is one-to-one.
Suppose $t\in S.$ Then, $t$ is a densely defined linear functional 
and $|t(b)|\le 1$ for all $b\in {\rm{Ped}}(A)$ with $\|b\|\le 1.$
Thus, it uniquely extends to an element of $A^*$ with norm at most one. 
Choose a net $(\tau_\af)$ in  ${\mathrm{T}}(A)$
such that $\tau_\af(b)\to t(b)$ for all $b\in {\rm{Ped}}(A).$ 
 Since ${\rm{Ped}}(A)$ is dense in $A,$ 
and $\|\tau_\af\|=1$ and $\|t\|\le 1,$ one concludes that $\tau_\af(a)\to t(a)$ for all $a\in A.$ 
This shows that $S={\overline{{\mathrm{T}}(A)}}^{\mathrm{w}}$ as subsets.
In other words, $\imath$ is a bijection. On the other hand, if $\tau_\bt(a)\to \tau(a)$ for all $a\in A,$ 
where $\tau_\bt, \tau\in {\overline{{\mathrm{T}}(A)}}^{\mathrm{w}},$ 
then $\tau_\bt(b)\to \tau(b)$ for all $b\in {\rm{Ped}}(A).$ %This means that 
In other words, $\imath$ is continuous.
Moreover,  ${\overline{{\mathrm{T}(A)}}}^{\mathrm{w}}\subset A^*$ is compact 
and $S\subset {\widetilde{T}}(A)$ is Hausdorff.  It follows that  $\imath$ is a homeomorphism. In what follows, 
 we will identify 
$S$ with ${\overline{{\mathrm{T}(A)}}}^{\mathrm{w}},$ In particular, $S$ is compact.}}

%mentioned topology. As a set, one may view 
%Since $a\in \mathrm{Ped}(A)$, ${\widetilde{\mathrm{T}}}(A)=\{\af \tau: \tau\in {\mathrm{T}_0}(A_1)\andeqn \af\in \R_+\}.$
%It follows that we may view ${\widetilde{\mathrm{T}}}(A)$ as the cone generated by ${\mathrm{T}_0}(A_1)$ and equipped with the topology
%induced by that of ${\mathrm{T}_0}(A_1).$

%Consider the weak*-compact convex subset $\mathrm{T}_a=\{\tau\in {\widetilde{\mathrm{T}}}(A): \tau(a)=1\}.$
%$\mathrm{T}_a$ is a Choquet simplex, and is a base for the topological cone ${\widetilde{{\mathrm{T}}}}(A)$.

\end{df}

\begin{df}\label{Dfep}
Let $1>\ep>0.$  Define
\beq
f_{\ep}(t)=\begin{cases} 0, &\text{if}\,\, t\in [0,\ep/2],\\
                                      \displaystyle{t-\ep/2\over{\ep/2}}, &\text{if}\,\, t\in (\ep/2, \ep],\\
                                       1 & \text{if}\,\, t\in (\ep, \infty).\end{cases}
                                       \eneq
\end{df}

\begin{df}\label{Ddimf}
Let $A$ be a \CA\, and let $a\in A_+.$  Suppose that ${\widetilde{\mathrm{T}}}(A)\not=\O.$
Define
$$
\mathrm{d}_\tau(a)=\lim_{\ep\to 0} \tau(f_\ep(a)),\,\,\,\,\,\, \tau\in {\widetilde{\mathrm{T}}}(A),
$$
with possibly infinite values.  Note that $f_\ep(a)\in {\mathrm{Ped}(A)}_+,$ and by definition
$\tau \mapsto \tau(f_\ep(a))$ is a continuous affine function  on ${\widetilde{\mathrm{T}}}(A)$
(to $[0,+\infty)$). It follows that $\tau\mapsto\mathrm{d}_\tau(a)$ is a lower semicontinuous affine function on $\widetilde{\mathrm{T}}(A)$ (to $[0, +\infty]$). Note
that
$$
\mathrm{d}_\tau(a)=\lim_{n\to\infty}\tau(a^{1/n}),\quad  \tau\in {\widetilde{\mathrm{T}}}(A).
$$

%Suppose that $A$ is non-unital.
Let $a\in A_+$ be a strictly positive element.
Define
$$
\Sigma_A(\tau)=\mathrm{d}_\tau(a),\quad  \tau\in {\widetilde{\mathrm{T}}}(A).
$$
The lower semicontinuous affine function $\Sigma_A: {\widetilde{\mathrm{T}}}(A) \to [0, +\infty]$ is independent of the choice
of $a,$ and will be called the scale function (or just scale) of $A.$
\end{df}

\begin{df}\label{Dcu}
Let $A$ be a \CA\, and let $a, b\in A_+.$
We write $a\lesssim b$ if there exists a sequence $(x_n)$ in $A$
such that $x_n^*bx_n\to a$ in norm. If $a\lesssim b$ and $b\lesssim a,$ we
write $a\sim b$ and say that $a$ and $b$ are Cuntz equivalent.
 It is known that $\sim$ is an equivalence relation.
 %Let $W(A)$ be the equivalence class of positive elements in $M_n(A)$ for all $n$ with the usual embedding from $M_n$ into $M_{n+1}.$
 Denote by
 ${\mathrm{Cu}}(A)$ the set of Cuntz equivalence classes of positive elements of $A\otimes {\cal K}.$
 It is an ordered  abelian semigroup (\cite{CEI}).
 Denote by $\mathrm{Cu}(A)_+$ the subset of those elements which cannot be represented by
 projections. 
 We shall write  $\la a \ra$ for the equivalence class represented by $a.$
 Thus, $a\lesssim b$ will be also written as $\la a\ra \le \la b\ra.$
 Recall that we write $\la a\ra \ll \la b\ra$ if  the following holds: for any increasing sequence $(\la y_n\ra ),$
  if $\la b\ra \le  \sup\{\la y_n \ra \}$ then  there exists $n_0\ge 1$ such that ${{\la a\ra \le \la y_{n_0}\ra}} $
In what follows we will also use the objects
%notation 
${\mathrm{Cu}}^{\sim}(A)$ and ${\mathrm{Cu}}^{\sim}(\phi)$  introduced
in \cite{Rl}.
\end{df}

\begin{df}\label{Dstrictcom}

{{If}} ${{B}}$ is a \CA,
we will use $\mathrm{QT}({{B}})$ for the set of quasitraces $\tau$ with $\|\tau\|=1$
{{(see \cite{BH}).}}
{{Let $A$ be a $\sigma$-unital  \CA.}}
%Suppose that ${\widetilde{\mathrm{T}}}(A)\not=\{0\}.$ 
Suppose  that every quasitrace  {{of every hereditary \SCA\, $B$ of $A$}} is a trace.

If $\tau\in {\widetilde{\mathrm{T}}}(A),$ we will extend it to $(A\otimes {\cal K})_+$ by the rule
$\tau(a\otimes b)=\tau(a)\mathrm{Tr}(b),$  for all $a\in A$ and $b\in {\cal K},$ where Tr is the canonical densely defined trace on ${\cal K}.$

Recall that  $A$ has the (Black{{a}}dar) property of  strict comparison for positive elements, if for any two elements $a, \, b\in (A\otimes\mathcal K)_+$
%(for any integer $n\ge 1$)
with the property that $\mathrm{d}_\tau(a)<\mathrm{d}_\tau(b)<+\infty$ for all $\tau\in {\widetilde{\mathrm{T}}}(A)\setminus \{0\},$
necessarily $a\lesssim b.$  In general (without knowing that quasitraces are traces), this property  will be called {\it strict comparison for positive elements using 
traces}.
% even without knowing quasitraces 

Let $S$ be a topological convex set. Denote by $\Aff(S)$ the set of all real continuous affine functions, 
and by $\Aff_+(S)$ the set of all real continuous affine functions $f$ with $f(s)>0$ for all $s,$ together with zero function.

%$A.$   We view $A$ as a $\sigma$-unital hereditary \SCA\, of $A_1\otimes {\cal K}$ as in \ref{DTtilde} (with $A_1=\overline{aAa}$).
%View ${\mathrm{T}_0}(A_1)$ as a convex subset of ${\widetilde{\mathrm{T}}}(A)$, generating this cone.

%Then $A$ has strictly comparison for positive elements if, and only if,
%for any two positive elements $a, \, b\in (A\otimes\mathcal K)_+$ with
%with property that $\mathrm{d}_\tau(a)<\mathrm{d}_\tau(b)<\infty$ for all $\tau\in {\mathrm{T}_0}(A_1)\setminus \{0\},$
%necessarily $a\lesssim b.$

%Suppose that ${\mathrm{T}(A)}\not=\O.$
Recall 
$\overline{{\mathrm{T}(A)}}^\mathrm{w}$ denotes the closure of ${\mathrm{T}(A)}$ in
${\widetilde{\mathrm{T}}}(A)$
{{with respect to pointwise  convergence on ${\rm Ped(A)}$}} {{(see the end of \ref{DTtilde})}}.
Suppose that $0\not\in \overline{{\mathrm{T}(A)}}^\mathrm{w}$
%%(see \ref{compactrace} below)
%and is weak*-compact (in $A^*$),
and {{that}} ${\mathrm{T}}(A)$ generates ${\widetilde{\mathrm{T}}}(A),$
in particular. {{(By \ref{compactrace} below,}} these properties hold,  in the case that $A=\mathrm{Ped}(A)$.)  Then $A$ has strict comparison  for positive elements using traces if
and only if
$d_\tau(a)<d_\tau(b)$ for all $\tau\in \overline{{\mathrm{T}(A)}}^\mathrm{w}$ implies 
$a\lesssim b,$  for any $a, \, b\in (A\otimes {\cal K})_+.$  
%where   $n$ is any positive integer.
%whenever $d_\tau(a)<d_\tau(b)$
%for all
%We shall say that $A$ has  the  property of {\it strong} strict comparison for positive elements, if for any two elements $a, \, b\in (A\otimes\mathcal K)_+$
%with the property that $\mathrm{d}_\tau(a)<\mathrm{d}_\tau(b)$ for all $\tau\in \overline{{\mathrm{T}(A)}}^\mathrm{w},$
%necessarily $a\lesssim b.$   In the case that ${\mathrm{T}}(A)$ generates ${\widetilde{\mathrm{T}}}(A),$
%in particular, in the case that $A=\mathrm{Ped}(A),$ strong strict comparison is the same as strict comparison.
%(we know of no examples that they are different).

%Then it is easy to check
\end{df}

\begin{df}\label{DAq}
Let $A$ be a \CA\, %with ${\mathrm{T}(A)}\not=\O$
such that $0\not\in \overline{{\mathrm{T}(A)}}^\mathrm{w}.$
There is  
%an affine 
{{a linear}}  map
$r_{\aff}: A_{\mathrm{s.a.}}\to \Aff(\overline{{\mathrm{T}(A)}}^\mathrm{w}),$ {{from $A_{s.a.}$ to}} the set of all
real affine {{continuous}} functions on $\overline{{\mathrm{T}(A)}}^\mathrm{w},$  defined by
$$
r_{\aff}(a)(\tau)=\hat{a}(\tau)=\tau(a)\tforal \tau\in \overline{{\mathrm{T}(A)}}^\mathrm{w}
$$
and for all $a\in A_{\mathrm{s.a.}}.$ Denote by $A_{\mathrm{s.a.}}^q$ the space  $r_{\aff}(A_{\mathrm{s.a.}})$ and by 
$A_+^q$ the cone $r_{\aff}(A_+)$ (see \cite{CP}).

Denote by ${\Aff}_0({\rm T}_1(A))$ the set of all 
real continuous affine functions which vanish at zero, and denote by ${\Aff}_{0+}({\rm T}_1(A))$ 
the subset of those $f\in \Aff_0({\rm T}_1(A))$ such that $f(t)>0$ for all $t\in {\rm T}_1(A)\setminus \{0\}$ 
and  the zero function.
Denote by ${\rm LAff}_{0+}({\overline{\rm{T}(A)}^{\rm w}})$ the set of those functions $f$ on $\overline{{\mathrm{T}(A)}}^\mathrm{w}$
%$\overline{{\mathrm{T}(A)}}^\mathrm{w}$ 
(with values  in $[0, {{+}}\infty]$) such that there exists an increasing  sequence 
of  continuous affine functions $f_n\in \Aff_{0+}({\mathrm{T}}_1(A))$
 such
that $f_n|_{\overline{{\mathrm{T}(A)}}^\mathrm{w}}\nearrow f$ (as $n\to\infty$) {{and the}} zero function. 
%%Denote by ${\rm LAff}_{b,+}(\overline{{\mathrm{T}(A)}}^\mathrm{w})$
%%the subset of bounded functions in ${\rm LAff}_{+}(\overline{{\mathrm{T}(A)}}^{\mathrm w}).$
In particular, if $f\in \Aff_{0+}({\rm T}_1(A)),$ then $f|_{\overline{{\mathrm{T}(A)}}^\mathrm{w}}\in {\rm LAff}_{0+}({\overline{\rm{T}(A)}^{\rm w}}).$
%%Denote by ${\rm LAff}_{+}(\overline{{\mathrm{T}(A)}}^\mathrm{w})$ the set of  those
%$\overline{{\mathrm{T}(A)}}^\mathrm{w}$ (with values  in $[0, {{+}}\infty]$) such that there exists a sequence
%%of strictly positive continuous affine functions $f_n\in \Aff(\overline{{\mathrm{T}(A)}}^\mathrm{w})$ such
%that $f_n\nearrow f$ (as $n\to\infty$), {{together with the}} zero function. 
Denote by ${\rm LAff}_{b,0+}({\overline{{\mathrm{T}(A)}}^\mathrm{w}})$
the subset of bounded functions in ${\rm LAff}_{0+}({\overline{{\mathrm{T}(A)}}^{\mathrm w}}).$

Note if $T(A)=\overline{{\rm T}(A)}^{\rm w},$ Then ${\rm LAff}_{0+}({\overline{\rm{T}(A)}^{\rm w}})$ 
is the set of all those functions $f$ on $T(A)$ which is the limit of an increasing sequence of strictly 
positive continuous affine functions $f_n\in \Aff(T(A))$ and zero function.
In this case the set will also be denoted by ${\rm LAff}_+({\rm{T}(A)}).$ 
We will also use  ${\rm LAff}_{b,+}({\mathrm{T}}(A))$
for ${\rm LAff}_{b,0+}({\overline{{\mathrm{T}(A)}}^\mathrm{w}})$ in this case. 
%strictly positive
%lower-semi-continuous affine functions on $\overline{T(A)}^\mathrm{w}.$
\end{df}

\pagebreak

%\begin{df}\label{Dcpcamen}
%Let $A$ and $B$ be \CA s. Let $\phi: A\to B$ be a \cpc.
%We say that $\phi$ is amenable (or nuclear), if, for any $\ep>0,$ and any finite subset ${\cal F}\subset A,$
%there exists an integer $k\ge 1$ and there exist  \cpc s $\psi_0: A\to \mathrm{M}_k$ and
%$\psi_1: \mathrm{M}_k\to B$ such that
%$$
%\|\psi_1\circ\psi_0(a)-\phi(a)\|<\ep\rforal a\in {\cal F}.
%$$
%\end{df}

\begin{df}[cf.~\cite{Rlz}]\label{Dalst1}
Let $A$ be a non-unital \CA. We shall say that $A$ almost has stable rank one
if 
for any integer $m\ge 1$ and 
any hereditary \SCA\, $B$ of 
%$A,$
%\subset 
$\mathrm{M}_m(A),$
$B\subset \overline{\mathrm{GL}({\widetilde B})}, $ where  $\mathrm{GL}({\widetilde B})$ is the group of invertible elements of
${\widetilde B}.$  
This definition is slightly different from that in \cite{Rlz}.

%Note that if $A\otimes {\cal K}$  is $\sigma$-unital 
Suppose that $A\otimes {\cal K}$ is $\sigma$-unital, almost has stable rank one, 
and contains a full projection $e,$ then $B=e(A\otimes {\cal K})e$
is unital. Since $B$  almost has stable rank one, it follows that $B$ has stable rank one.
{{By Theorem 6.4 of \cite{Rff1}, $B\otimes {\cal K}$ has stable rank one. By Brown's stable isomorphism
theorem (\cite{Br1}), $A\otimes {\cal K}$ has stable rank one.}}
%{\blue{Therefore $B\otimes {\cal K}$ has 
This implies that $A$ has stable rank one
(see, for example, 3.6 of \cite{BP}).
Therefore, a $\sigma$-unital simple 
\CA\,   $A$  which almost has stable rank one but does not have stable rank one  must be stably
projectionless. (We know of no such example.)

\end{df}

\begin{df}\label{Dappcpc}
Let $A$ and $B$ be \CA s and let $\phi_n: A\to B$ be
\cpc s. We shall say  that $(\phi_n)_{n=1}^\infty$ is a sequence of approximately multiplicative \cpc s
if
$$
\lim_{n\to\infty}\|\phi_n(a)\phi_n(b)-\phi_n(ab)\|=0\rforal a,  b\in A.
$$
\end{df}

\begin{df}
Let $A$ be a \CA.  Denote by $A^{\bf 1}$ the {{closed}} unit ball of $A$, and by
$A_+^{q, {\bf 1}}$ the image of the intersection $A_+\cap A^{{\bf 1}}$ in $A_+^q.$
\end{df}

{{We would like to end this section with the following proposition which is probably known.

\begin{prop}\label{180915sec2}
Let $A$ be a $\sigma$-unital \CA\, and $B$ be another \CA.
Suppose that $\phi: A\to B$ is a
% \morp\,  
contractive positive linear map and suppose 
that $e\in A_+$ is a strictly positive element. 
Then 
%$b=\phi(e)$
%\sum_{n=1}^{\infty}\phi(e^{1/n})/2^n$ 
%is a strictly positive element of 
$\overline{\phi(e)B\phi(e)}=\overline{\phi(A)B\phi(A)}.$ 
%and 
%$b=\phi(e)$ is a strictly positive element of 
\end{prop}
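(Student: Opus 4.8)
The plan is to show that both closed subspaces coincide with the hereditary \SCA\ $H:=\overline{\phi(e)B\phi(e)}$ of $B$. I would write $p\in B^{**}$ for the support projection of the positive element $\phi(e)$, so that $H=\{x\in B: px=xp=x\}$, and recall that for positive $y,z\in B$ one has $\operatorname{supp}(y)\le\operatorname{supp}(z)$ whenever $0\le y\le z$. The inclusion $H\subseteq\overline{\phi(A)B\phi(A)}$ is immediate, since $\phi(e)\in\phi(A)$ gives $\phi(e)B\phi(e)\subseteq\phi(A)B\phi(A)$. For the reverse inclusion it suffices, by linearity of $\phi$ and the fact that $A_+$ spans $A,$ to prove that $\phi(h)\in H$ for every $h\in A_+$: once $\phi(A)\subseteq H,$ then for $a,a'\in A$ and $b\in B$ we have $\phi(a)=p\phi(a)=\phi(a)p$ and $\phi(a')=p\phi(a')=\phi(a')p,$ so $\phi(a)b\phi(a')=p\,\phi(a)(pbp)\phi(a')\,p\in pB^{**}p\cap B=H,$ whence $\overline{\phi(A)B\phi(A)}\subseteq H.$

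Next I would normalize $\|e\|\le 1$ and establish the key support computation. Since $C^*(e)$ is abelian, the restriction $\phi|_{C^*(e)}$ is automatically completely positive (every positive linear map out of an abelian \CA\ is completely positive); being also contractive, Stinespring's theorem provides, in the universal representation $B\subseteq B^{**}\subseteq\mathcal L(H_u),$ a representation $\pi$ of $C^*(e)$ on a Hilbert space $K$ and an operator $V\colon H_u\to K$ with $\|V\|\le 1$ such that $\phi(x)=V^*\pi(x)V$ for all $x\in C^*(e).$ For $s>0$ one then computes $\ker\phi(e^s)=\{\xi:\|\pi(e)^{s/2}V\xi\|^2=0\}=\{\xi:V\xi\perp\overline{\operatorname{ran}\pi(e)}\},$ which is independent of $s$ because all strictly positive powers of the positive operator $\pi(e)$ share the same range closure. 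Hence $\operatorname{supp}_{B^{**}}\phi(e^s)=p$ for every $s>0.$ This is the heart of the argument.

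Finally, for $h\in A_+$ and $m\ge 1,$ from $h\le\|h\|\,\mathbf 1$ in $\widetilde A$ I get $e^{1/m}he^{1/m}\le\|h\|\,e^{2/m}$ in $A,$ and applying the positive (hence order-preserving) map $\phi$ gives $0\le\phi(e^{1/m}he^{1/m})\le\|h\|\,\phi(e^{2/m}).$ Therefore $\operatorname{supp}\phi(e^{1/m}he^{1/m})\le\operatorname{supp}\phi(e^{2/m})=p,$ so $\phi(e^{1/m}he^{1/m})\in H.$ Since $(e^{1/n})$ is an approximate unit for the $\sigma$-unital algebra $A,$ we have $e^{1/m}he^{1/m}\to h,$ and continuity of $\phi$ together with closedness of $H$ yields $\phi(h)\in H,$ completing the proof. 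I expect the only genuine obstacle to be the support identity $\operatorname{supp}\phi(e^s)=\operatorname{supp}\phi(e)$: the naive order estimates give only $\operatorname{supp}\phi(e)\le\operatorname{supp}\phi(e^{1/2})\le\cdots$ (using $e\le\|e\|^{1/2}e^{1/2}$), which is the \emph{wrong} direction for the bound needed in the last step, and it is precisely the complete positivity of $\phi$ on the abelian algebra $C^*(e),$ exploited through Stinespring, that forces these support projections to coincide.
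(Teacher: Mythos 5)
Your proof is correct, but it takes a genuinely different route from the paper's. The paper handles the hard inclusion $\overline{\phi(A)B\phi(A)}\subseteq\overline{\phi(e)B\phi(e)}$ indirectly: writing $C=\overline{\phi(A)B\phi(A)},$ it shows that any state $g$ of $C$ with $g(\phi(e))=0$ vanishes identically (using $f_{1/n}(e)\le\lambda_n e,$ the estimate $g(\phi(f_{1/n}(e)af_{1/n}(e)))\le\|a\|g(\phi(f_{1/n}(e)))=0,$ and Cauchy--Schwarz for states applied to products $\phi(a)b\phi(y)$), so that $\phi(e)$ is strictly positive in $C$ and hence $C=\overline{\phi(e)C\phi(e)}\subseteq\overline{\phi(e)B\phi(e)}.$ You prove the same inclusion directly and spatially: you identify $\overline{\phi(e)B\phi(e)}$ with $pB^{**}p\cap B$ for the support projection $p$ of $\phi(e),$ and show $\phi(A_+)\subseteq pB^{**}p\cap B$ from the domination $\phi(e^{1/m}he^{1/m})\le\|h\|\,\phi(e^{2/m})$ together with the identity $\mathrm{supp}\,\phi(e^{s})=p$ for all $s>0,$ which you extract from a Stinespring dilation of $\phi|_{C^*(e)}.$ Both arguments are sound; the paper's buys elementarity (it never leaves the C*-level or invokes dilation theory), while yours buys the clean intermediate statement $\phi(A)\subseteq\overline{\phi(e)B\phi(e)}$ and a transparent picture in $B^{**}.$ One remark: the Stinespring step, which you call the heart of your argument, is avoidable inside your own framework. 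Run your final paragraph with the approximate unit $(f_{1/n}(e))$ instead of $(e^{1/n}).$ Since $f_{1/n}$ vanishes on $[0,1/2n],$ one has $f_{1/n}(t)\le 2n\,t,$ whence $f_{1/n}(e)\le 2n\,e$ and $f_{1/n}(e)af_{1/n}(e)\le\|a\|f_{1/n}(e)^2\le 2n\|a\|\,e$ for $a\in A_+;$ positivity of $\phi$ alone then gives $\phi(f_{1/n}(e)af_{1/n}(e))\le 2n\|a\|\,\phi(e)$ and so $\mathrm{supp}\,\phi(f_{1/n}(e)af_{1/n}(e))\le p$ --- exactly the direction you need, with no dilation. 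This scalar inequality $f_{1/n}(e)\le\lambda_n e$ is precisely what the paper exploits; substituting it makes your proof fully elementary while keeping its structure.
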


\begin{proof}

Let $C=\overline{\phi(A)B\phi(A)}.$ 
%and 
%let $g$ be a state of $C.$  
Consider an approximate identity $(b_\af)$ of $B.$
Then $\phi(e)b_\af\phi(e)\to \phi(e)^2.$ It follows that $\phi(e)^2\in C.$
Consequently $\phi(e)\in C.$ It follows that $\overline{\phi(e)B\phi(e)}\subset C.$%

Let $g$ be a state of $C.$ Suppose that $g(\phi(e))=0.$ We will show 
that $g=0.$ 

For any $n\ge 1,$ 
%$e^{1/n}\le 2^n b.$ 
there exists  $\lambda_n>0$ such that $f_{1/n}(e)\le \lambda_n  e.$
It follows that $g(\phi(f_{1/n}(e)))\le \lambda_ng(\phi(e))=0.$
Fix $a\in A_+.$ 
%and let $\ep>0.$ There exists $n\ge 1$ such that
%$\|a-e^{1/n}ae^{1/n}\|<\ep.$ 
%$\|a-f_{1/n}(e)af_{1/n}(e)\|<\ep.$
%Therefore 
%\beq
%\|\phi(a)-\phi(f_{1/n}(e)af_{1/n}(e))\|<\ep.
%\eneq
%But 
Then $$g(\phi(f_{1/n}(e)af_{1/n}(e)))\le \|a\| g(\phi(f_{1/n}(e)^2))\le \|a\|g(\phi(f_{1/n}(e)))=0.$$
Since $\lim_{n\to\infty}\|\phi(a)-\phi(f_{1/n}(e)af_{1/n}(e)\|=0,$ one concludes that $g(\phi(a))=0.$
%It follows 
%that
%\beq
%|g(\phi(a))|<\ep.
%\eneq
%Since this holds for all $\ep>0,$ $g(\phi(a))=0.$ 
This implies that $g(\phi(x))=0$ for all $x\in A.$ 

We claim that, for any $a\in A_+\setminus\{0\},$ $g(\phi(a)b\phi(y))=0$ for any $b\in B$ and $y\in A.$
In fact, 
\beq
|g(\phi(a)b\phi(y))|^2\le g(\phi(a)\phi(a))g(\phi(y)^*b^*b\phi(y))=g(\phi(a)^2)g(\phi(y)^*b^*b\phi(y))\\
=g(\|a\|^2\phi({a\over{\|a\|}})^2)g(\phi(y)^*b^*b\phi(y))=\|a\|^2g(\phi({a\over{\|a\|}})^2)g(\phi(y)^*b^*b\phi(y))\\
\le \|a\|^2g(\phi({a\over{\|a\|}}))g(\phi(y)^*b^*b\phi(y))=0.
\eneq
%In general,
Recall that, 
 for any $x\in A,$ we may write $x=(x_1-x_2)+i(x_3-x_4),$ where $x_i\in A_+,$ $i=1,2,3,4.$
Therefore, for any $b\in B$ and $y\in A,$ 
\beq
g(\phi(x)b\phi(y))=g(\sum_{i=1}^4\phi(x_i)b\phi(y))=\sum_{i=1}^4g(\phi(x_i)b\phi(y))=0.
\eneq
It follows that $g(z)=0$ if $z=\sum_{j=1}^m\phi(a_j)b_j\phi(c_j),$ where $b_j\in B$ and $a_j, c_j\in A,$ $j=1,2,...,m.$
Since the set $\{\sum_{j=1}^m \phi(a_j)b_j\phi(c_j): b_j\in B,\, a_j, c_j\in A\}$ is dense in $C,$ 
one concludes that  $g(c)=0$ for all $c\in C.$  In other words, $g=0.
$ 

This shows that $\phi(e)$ is a strictly positive element of $C.$
Therefore  $\overline{\phi(e)C\phi(e)}=C.$ On the other hand, 
$C\supset \overline{\phi(e)B\phi(e)}\supset \overline{\phi(e)C\phi(e)}=C.$ So $C=\overline{\phi(e)B\phi(e)}.$
%\fi
\iffalse
Now let $c\in C_+.$ For any $\ep>0,$ there exist $x, y\in A$  and $b\in B$ such that
\beq
\|c-\phi(x)b\phi(y)\|<\ep.
\eneq
This implies that $|f(c)|<\ep.$ 

One notes
that (the second inequality follows from the Stinespring theorem)
%since $\phi$ is a \morp,
\beq
|g(\phi(x)b_1\phi(y))|^2 &\le&  g(\phi(x)\phi(x)^*)g(\phi(y)^*b_1^*b_1\phi(y))\\
 &\le&  g(\phi(xx^*))g(\phi(y)^*b_1^*b_1\phi(y))=0.
\eneq
 It follows that $|g(c)|<\ep.$ 
 
 Since this holds for all $\ep>0,$ one concludes that $g(c)=0.$
Therefore $g=0.$
\fi
\end{proof}
}}

\section{ Some results of R\o rdam}
%Preliminaries}

For convenience, we  would like to have the following version
of  a lemma of R\o rdam:

\begin{lem}[R\o rdam, Lemma 2.2 of \cite{Rr11}]\label{Lrorm}
Let $a, \, b\in A$ with $0\le a,\, b\le 1$ be such that
$\|a-b\|<\dt/2.$
Then there exists $z\in A$ with $\|z\|\le 1$ such that
$$
(a-\dt)_+=z^*bz.
$$
\end{lem}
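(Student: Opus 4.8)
The plan is to realize $(a-\delta)_+$ exactly as a compression of $b$, using a spectral projection of $a$ as a bridge. First I would pass to the unitization $\tilde A$, so that the hypothesis $\|a-b\|<\delta/2$ may be rewritten as the operator inequality $b\ge a-(\delta/2)\,1$. Write $e=(a-\delta)_+$, and let $p=\chi_{(\delta,\infty)}(a)\in A^{**}$ be the range projection of $e$, so that $pe=ep=e$ and, on the range of $p$, one has $ap\ge\delta p$ together with the identity $e=ap-\delta p$.

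The heart of the argument is the inequality $pbp\ge e$. Compressing $b\ge a-(\delta/2)1$ by $p$ gives $pbp\ge pap-(\delta/2)p=e+(\delta/2)p$; in particular $pbp\ge(\delta/2)p$, so $pbp$ is invertible in the corner $pA^{**}p$, and also $pbp\ge e$. I would then set $z=(pbp)^{-1/2}e^{1/2}$. Since $z=pz$, we get $z^*bz=z^*(pbp)z=e^{1/2}(pbp)^{-1/2}(pbp)(pbp)^{-1/2}e^{1/2}=e^{1/2}pe^{1/2}=e$, the desired exact identity; and, from $e\le pbp$, the element $X=z$ satisfies $XX^*=(pbp)^{-1/2}e(pbp)^{-1/2}\le p$, so that $\|z\|^2=\|X^*X\|=\|XX^*\|\le 1$. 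This produces a contraction with $z^*bz=(a-\delta)_+$ — but a priori only in $A^{**}$.

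The main obstacle is thus to descend from $A^{**}$ to $A$ while keeping both the exact equality and the contractivity. I would replace the projection $p$ by an element $q=\theta(a)\in A$, where $\theta$ is continuous, equal to $1$ on $[\delta,\infty)$ and supported in $(\delta_0,\infty)$ with $\delta/2<\delta_0<\delta$; then $qe^{1/2}=e^{1/2}$ exactly, while on the range projection of $q$ the element $b$ is bounded below by $\delta_0-\delta/2>0$. This strict gap — available precisely because the shift $\delta$ strictly exceeds the error bound $\delta/2$ — is what lets one approximate the corner inverse $(pbp)^{-1/2}$ in norm by functions of $qbq$ lying in $A$; since $e^{1/2}\in A$ and $A$ is an ideal in $\tilde A$, the resulting $z$ lies in $A$, and both the norm bound and the identity $z^*bz=e$ survive the limit. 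Verifying that this regularization converges in norm (rather than merely strongly), so that exactness is not lost in passing to the limit, is the delicate point; everything else is routine continuous functional calculus.
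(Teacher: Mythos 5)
Your work in $A^{**}$ is correct but it is the easy half of the lemma: with $p=\chi_{(\delta,\infty)}(a)$ one indeed has $pbp\ge e+(\delta/2)p$, and $z=(pbp)^{-1/2}e^{1/2}$ (corner inverse in $pA^{**}p$) is a contraction with $z^*bz=e$. The entire content of the statement is the descent to $A$ with the equality kept \emph{exact}, and your mechanism for that fails, for three reasons. First, there is no functional-calculus relation between $qbq$ and $pbp$: although $p(qbq)p=pbp$ (since $pq=p$), compression does not commute with functional calculus, so $g(pbp)\ne p\,g(qbq)\,p$, and there is no reason why $g_n(qbq)e^{1/2}$ should converge, in any topology, to $(pbp)^{-1/2}e^{1/2}$ — the approximants target the wrong operator, so this is not a matter of upgrading strong to norm convergence. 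Second, even granting some limit $z'\in A$, your verification of $z'^*bz'=e$ collapses: it used that $p$ is a projection (so $z=pz$ and $z^*bz=z^*(pbp)z$); for $q$ not a projection, $e^{1/2}g(qbq)\,b\,g(qbq)e^{1/2}$ cannot be replaced by $e^{1/2}g(qbq)(qbq)g(qbq)e^{1/2}$ (note that $qbq\le b$ is false in general for $0\le q\le 1$). Third, and decisively, the element you are approximating need not lie in $A$ at all. Take $A=C([0,1],\mathrm{M}_2)$, $\delta=1/2$, $a(t)=\mathrm{diag}(0.3+0.4t,\,0.8)$, and $b=a+c$ where $c$ is the constant matrix with $0.1$ in the two off-diagonal entries; then $0\le a,b\le 1$ and $\|a-b\|=0.1<\delta/2$, but the fiber of $(pbp)^{-1/2}e^{1/2}$ equals $\sqrt{0.3/0.8}\,e_{22}$ for all $t\le 1/2$, while its limit as $t\downarrow 1/2$ is $\sqrt{0.3}\,b(1/2)^{-1/2}e_{22}$; these differ because $e_2$ is not an eigenvector of $b(1/2)$. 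So $(pbp)^{-1/2}e^{1/2}\notin A$, and no regularization converging to it can work.

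The obstruction you are running into is the known failure of exact Douglas-type division in C*-algebras: from $e\le c$ one cannot in general produce $d\in A$ with $d^*cd=e$; Pedersen's 1.4.5 only gives $e^{1/2}=uc^{\beta}$ with $\beta<1/2$, i.e.\ a power is always lost, and otherwise one gets only approximate identities. Indeed, the best repair of your scheme — $z_\eta=q\,g_\eta(qbq)\,e^{1/2}$, which does lie in $A$, is a contraction (using $e\le qbq$, valid since $\mathrm{supp}\,\theta\subset(\delta_0,\infty)$ with $\delta_0>\delta/2$), and satisfies $z_\eta^*bz_\eta\to(a-\delta)_+$ in norm — proves only Cuntz subequivalence, strictly weaker than the lemma. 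This is precisely why the paper's proof looks the way it does: it never touches spectral projections, spends the room in $\|a-b\|<\delta/2$ to replace $b$ by $c=b^{\alpha}$ with $\alpha>1$ and still $\delta_0:=\|a-b^{\alpha}\|<\delta/2$, applies Pedersen's 1.4.5 to get $(a-\delta)_+=r_1b_1^{\beta}r_1^*$ with $\beta<1$ but $\alpha\beta>1$, and then uses the polar decomposition of $y=f_\delta(a)^{1/2}c^{1/2}$ together with Cuntz's lemma (1.4 of \cite{Cu1}) to rewrite $b_1^{\beta}$ as $(vu)b(vu)^*$ with $vu\in A$ and $\|vu\|\le 1$ — the exponent surplus $\alpha\beta>1$ is exactly what recovers the power lost in Pedersen's factorization, since $(b^{\alpha\beta})^{1/(\alpha\beta)}=b$. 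Some device of this kind, which simultaneously restores the exact power of $b$ and certifies membership in $A$, is what your proposal is missing.
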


\begin{proof}
{{It follows  {{from the hypothesis}} that there exists $2>\af>1$ 
%and $0<\dt_0<\dt/2$ 
such that
\beq
\dt_0:=\|a-b^{\af}\|<\dt/2.
\eneq
Put $c=b^{\af}.$}} 
{{By Lemma 2.2 of \cite{Rr11}, 
%\%with $\dt_0=\|a-b\|,$
$$
f_\dt(a)^{1/2} (a-\dt_0\cdot 1)f_\dt(a)^{1/2}\le f_\dt(a)^{1/2}c f_\dt(a)^{1/2},
$$}}
{{where $f_\dt$ is as defined in \ref{Dfep}.}}
{{Therefore,
$$
f_\dt(a)^{1/2} (a-\dt_0\cdot 1)_+f_\dt(a)^{1/2}\le  f_\dt(a)^{1/2}c f_\dt(a)^{1/2}.
$$
Thus,
$$
(a-\dt)_+\le f_\dt(a)^{1/2} (a-\dt_0\cdot 1)_+f_\dt(a)^{1/2}\le  f_\dt(a)^{1/2}c f_\dt(a)^{1/2}.
$$}}
{{Choose $0<\bt<1$ such that $\bt\af>1.$ 
Put $a_1=(a-\dt)_+$ and $b_1=f_\dt(a)^{1/2}c f_\dt(a)^{1/2}.$
Then, as in the proof of Lemma 2.3 of \cite{Rr11}, by 1.4.5 of \cite{Pbook}, 
there is $r_1\in A$ such that $\|r_1\|\le \|b_1^{1/2-\bt/2}\|\le 1$ and
$a_1^{1/2}=r_1b_1^{\bt/2}.$ Therefore, 
$a_1=r_1b_1^{\bt}r_1^*.$
Note that $b_1^{\bt}=(f_\dt(a)^{1/2}c f_\dt(a)^{1/2})^{\bt}.$ Write 
$y=f_\dt(a)^{1/2}c^{1/2}.$ Then $yy^*=b_1.$ 
Let $y=v(y^*y)^{1/2}$ be the polar decomposition of $y$ in $A^{**}$ (so that $v\in A^{**}$). 
It follows from 1.4 of \cite{Cu1} 
that  $vx\in A$ for all $x\in \overline{(y^*y)A(y^*y)}$ and 
$v(y^*y)^{\bt}v^*=b_1^{\bt}.$ Note that 
\beq
(y^*y)^{\bt}=(c^{1/2}f_\dt(a)c^{1/2})^{\bt}\le c^{\bt}=b^{\af\bt}.
\eneq
Put $\gamma=1/(\af \bt).$ Then $0<\gamma<1.$ 
Let $x=(c^{1/2}f_\dt(a)c^{1/2})^{\bt/2}.$ 
Then $x^2\le c^{\bt}=b^{\af\bt}.$  Put $u_n=x((1/n)+(b^{\af\bt})^{1/2}{{)}} (b^{\af\bt})^{1/2-\gamma/2},$ $n=1,2,....$ 
Then, as in the proof of 1.4.5 of \cite{Pbook}, 
$\|u_n\|\le \|(b^{\af\bt})^{1/2-\gamma/2}\|\le 1$  and 
$(u_n)_{n\ge 1}$ converges to $u\in A$ in norm. Moreover,  
$x=u(b^{\af\bt})^{\gamma/2}.$ It follows that 
that $(y^*y)^{\bt}=xx=xx^*=u(b^{\af\bt})^{\gamma}u^*=ubu^*.$
Note that, $x= (y^*y)^{\bt/2},$\, $vx, v^*x\in A.$ 
Therefore $vu_n\in A$ for all $n.$ It follows that $vu\in A.$ Note also {{that}} $\|vu\|\le 1.$
Now 
$$
(a-\dt)_+=a_1=r_1b_1^{\bt}r_1=r_1(v(y^*y)^{\bt}v^*)r_1^*=(r_1vu)b(u^*v^*r_1)=z^*bz,
$$
where $z=u^*v^*r_1=(vu)^*r_1\in A$ and $\|z\|\le 1.$}}
%%%%%%%%%
\iffalse
The conclusion of the lemma follows {{now}} from Lemma 2.3 of \cite{Rr11}.
{{In fact, let $x=f_\dt(a)^{1/2} (a-\dt_0\cdot 1)_+f_\dt(a)^{1/2}.$ In the commutative \CA\, 
generated by $a$ and $1$ which is isomorphic to $C({\rm sp}(a)),$
$(a-\dt)_+(t)+\eta/2<x(t)$ for all $t\in {\rm sp}(a)\subset [0,1].$ It follows that there is 
$\af>1$ such that $x^{\af}\ge (a-\dt)_+.$}}
Then, as in the proof 2.3 of \cite{Rr11}, by 1.4.5 of \cite{Pbook}, $((a-\dt)_+)^\af=r(f_\dt(a)^{1/2}b f_\dt(a)^{1/2}),$
where $\|r\|\le \|(f_\dt(a)^{1/2}b f_\dt(a)^{1/2})^{1/2-1/4}\|\le 1.$
Then 
$$
(a-\dt)_+=r(y)^{1/2}r
$$
\fi
%%%%%%%%
\end{proof}

\begin{lem}[{{cf.\,(v) of Proposition 2.4 of \cite{RorUHF2} and 
%Proposition 1  
Theorem 3 of \cite{CEI}}}]\label{Lalmstr1}
Suppose that $A$ is a (non-unital) \CA\, which almost has stable rank one.
Suppose that $a, b\in A_+$ are
% two elements
such that $a\lesssim b.$
Then, for any $0< \dt,$  there exists a unitary $u\in {\widetilde A}$ such that
$$
u^*f_\dt(a)u\in \overline{bAb}.
$$
Moreover, there exists $x\in A$ such that
$$
x^*x=a \tand  xx^*\in \overline{bAb}.
$$
Furthermore, if $0\le a_1, a_2, b\le 1$ are in $A,$ and 
$a_1a_2=a_1,$  and $a_2\lesssim b,$ then there exists 
a unitary $u\in {\widetilde A}$ such that
\beq
u^*a_1u\in \overline{bAb}.
%\tforal x\in \overline{a_1Aa_1}.
\eneq

{{In addition, if $d\in (A\otimes {\cal K})_+,$ then, for any $\ep>0,$  there exists a unitary $u\in {\widetilde{A\otimes {\cal K}}}$ such that 
$uf_\ep(d)u^*\in {\rm M}_n(A)$ for some large $n.$ This last statement also holds when $A$ is unital.}}
\end{lem}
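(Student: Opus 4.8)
The plan is to deduce this from the \emph{furthermore} part of the present lemma, applied with $A$ replaced by $B:=A\otimes{\cal K}$. Write $a_1=f_\ep(d)$ and $a_2=f_{\ep/2}(d)$. Both are contractions in $B_+$, and since $f_{\ep/2}\equiv 1$ on the support $(\ep/2,\infty)$ of $f_\ep$ we have $a_1a_2=a_1$. Thus, once a suitable $b$ is found with $a_2\lesssim b$, the furthermore part produces a unitary $u\in\widetilde B$ with $u^*a_1u\in\overline{bBb}$; replacing $u$ by $u^*$ then gives the asserted form $uf_\ep(d)u^*$. The whole point is therefore to arrange that $b$ sits in a finite corner ${\rm M}_n(A)$ and that $\overline{bBb}\subseteq {\rm M}_n(A)$.

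To produce $b$, I would use that $B=\overline{\bigcup_n {\rm M}_n(A)}$. Choose $n$ and $d_0\in {\rm M}_n(A)_+$ with $\|d-d_0\|<\ep/4$ (first approximate $d$ by a self-adjoint element of a finite corner, then pass to its positive part). By R\o rdam's comparison (cf.\ Lemma \ref{Lrorm}), $(d-\ep/4)_+\lesssim d_0$; and since $f_{\ep/2}$ vanishes on $[0,\ep/4]$ one has $f_{\ep/2}(d)=g((d-\ep/4)_+)$ for the continuous function $g(s)=f_{\ep/2}(s+\ep/4)$ with $g(0)=0$, whence $a_2=f_{\ep/2}(d)\lesssim (d-\ep/4)_+\lesssim d_0$. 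Taking $b\in {\rm M}_n(A)_+$ to be a positive scalar multiple of $d_0$ of norm at most one (Cuntz subequivalence is unchanged), we keep $a_2\lesssim b$. Because ${\rm M}_n(A)=P_nBP_n$ is a hereditary corner of $B$, where $P_n=1\otimes(e_{11}+\cdots+e_{nn})$, we have $\overline{bBb}=\overline{b\,{\rm M}_n(A)\,b}\subseteq {\rm M}_n(A)$, and hence $u^*f_\ep(d)u\in {\rm M}_n(A)$, as wanted. The unital case of $A$ needs no separate treatment: only the (always non-unital) algebra $B=A\otimes{\cal K}$ enters.

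The main obstacle is the input I have quietly assumed, namely that $B=A\otimes{\cal K}$ (and the hereditary subalgebras of its matrix amplifications) again almost has stable rank one, so that the furthermore part is legitimately available for $B$. This is immediate for each finite corner ${\rm M}_N(A)$, since ${\rm M}_m({\rm M}_N(A))={\rm M}_{mN}(A)$ and Definition \ref{Dalst1} already quantifies over all such amplifications of $A$; but it is \emph{not} immediate for the full stabilization. The naive route---truncate $x$ in a hereditary subalgebra $D\subseteq B$ to $P_NxP_N\in {\rm M}_N(A)$ and approximate there by invertibles---fails, because $P_N\notin\widetilde D$, so the truncation and the resulting invertibles leave $D$; and in the stably projectionless setting one cannot instead pass to a unital full corner through a projection. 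I would handle this by first reducing to the $\sigma$-unital hereditary subalgebra $D_0=\overline{hBh}$ generated by $h=(x^*x+xx^*)^{1/2}$ (legitimate since $\mathrm{GL}(\widetilde{D_0})\subseteq \mathrm{GL}(\widetilde D)$ for a subalgebra $D_0\subseteq D$), and then establishing the invertible approximation intrinsically in $D_0$ via an exact R\o rdam-type factorization that keeps everything inside $D_0$, exploiting that the finite corners are cofinal in the Cuntz order. Proving this stability of ``almost stable rank one'' under $\otimes\,{\cal K}$ is where essentially all the work lies; the remainder is the bookkeeping above.
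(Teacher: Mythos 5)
Your reduction is, in substance, the paper's own: the paper takes an approximate identity $(e_n)$ of $A\otimes{\cal K}$ with $e_n\in {\rm M}_n(A)$, arranges $\|d-e_nde_n\|<\ep/4$, deduces $f_{\ep/2}(d)\lesssim e_nde_n$ from Lemma \ref{Lrorm}, and then applies the first statement over $A\otimes{\cal K}$, using that ${\rm M}_n(A)$ is a hereditary corner of $A\otimes{\cal K}$. Your $d_0$ plays the role of $e_nde_n$, and invoking the furthermore part rather than the first statement is immaterial, since the former is a one-line consequence of the latter. (Note also that your proposal proves only the final assertion and takes the first three as given; this matches the paper, which settles those by citation to \cite{RorUHF2}, \cite{Rlz}, \cite{Pedjot87} plus a one-line argument.)

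The genuine gap is the step you flag and then leave open, and you have also overestimated what closing it requires. To run the first (hence the furthermore) statement over $B=A\otimes{\cal K}$ one does not need Definition \ref{Dalst1} in full for $B$, i.e. dense invertibles in the unitizations of \emph{all} hereditary subalgebras of $B$. R\o rdam's argument (the proof of (v) of Proposition 2.4 of \cite{RorUHF2}, via Lemma \ref{Lrorm}) produces a single element $y=b^{1/2}xz\in B$ with $y^*y=(a-\dt)_+$ and $yy^*\in\overline{bBb}$, and the only invertibility input is that this $y$ lie in $\overline{{\rm GL}(\widetilde B)}$, so that Theorem 5 of \cite{Pedjot87} converts the partial isometry in its polar decomposition into a unitary of $\widetilde B$. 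And $B\subseteq\overline{{\rm GL}(\widetilde B)}$ follows from Definition \ref{Dalst1} in three lines: $\bigcup_m {\rm M}_m(A)$ is dense in $B$; ${\rm M}_m(A)\subseteq\overline{{\rm GL}(\widetilde{{\rm M}_m(A)})}$ by the definition (take the hereditary subalgebra to be ${\rm M}_m(A)$ itself); and the canonical unital embedding $\widetilde{{\rm M}_m(A)}\hookrightarrow\widetilde B$ carries invertibles to invertibles, because an inverse computed in the subalgebra is still an inverse in $\widetilde B$. Your worry about $P_N\notin\widetilde D$ concerns proper hereditary subalgebras $D$ of $B$, which never enter the argument: the unitary is sought in $\widetilde B$ itself. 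So instead of citing the furthermore part as a black box for $B$ (which, as you note, is not formally licensed by the stated hypotheses), re-run its short proof for $B$ fed by this observation; no ``intrinsic R\o rdam-type factorization in $D_0$'' and no stability theorem for almost stable rank one under $\otimes\,{\cal K}$ is needed, and with that substitution your argument is complete and agrees with the paper's.
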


\begin{proof}
{{The first   statement follows from the proof of part (v) of \cite{RorUHF2}. 
The second  statement (also the first one) 
follows from Proposition 3.3 of \cite{Rlz} (see also  Corollary 6 of \cite{Pedjot87} and Lemma 1.4 of \cite{Lncuntz}).}}

To see {{the third statement, note that, by the first  statement,   for any $\dt>0,$}} 
%From the first part of the lemma,
 there exists a unitary $u\in {\widetilde A}$ such that
\beq
u^*f_\dt(a_2)u\in \overline{bAb}.
\eneq
{{Since}} $a_1a_2=a_1=a_2a_1,$ {{one has}} $f_\dt(a_2)^{1/2}a_1=a_1.$ 
Therefore, 
\beq
u^*a_1u=u^*f_\dt(a_2)^{1/2}a_1f_\dt(a_2)^{1/2}u\le u^*f_\dt(a_2)u\in \overline{bAb}.
\eneq

{{To see the last statement, let $(e_n)_{n\ge 1}$ be an approximate identity of $A\otimes {\cal K}$ such that
$e_n\in M_n(A),$ $n=1,2,....$  \Wlog, we may assume that $0\le d\le 1.$ 
Then, for any $\ep>0,$ there exists 
$n\ge 1$ such that $\|d-e_nde_n\|<\ep/4.$  By \ref{Lrorm}, $f_{\ep/2}(d)\lesssim e_nde_n.$ 
Thus the last conclusion follows from the first statement. One also notes that we do not use the condition that $A$ is not unital 
in the last few lines.}}
\end{proof}

We shall also need the following variant of \ref{Lrorm}.

%We shall also need following variant of \ref{Lrorm}.

\begin{lem}\label{LRL}
Let $1>\ep>0$ and $1>\sigma>0$ {{be given}}. There exists $\dt>0$ satisfying the following condition:
If $A$ is a \CA, and {{if}} $x, y\in A_+$ are such that $0\le x,\, y\le 1$ and
$$
\|x-y\|<\dt,
$$
then there exists a partial isometry $w\in A^{**}$ with
\beq\label{LRL-1}
ww^*f_{\sigma}(x)=f_{\sigma}(x)ww^*=f_{\sigma}(x), \tand\\
w^*cw\in \overline{yAy}, \,\,\,
%\tforal a\in \overline{f_\sigma(x)Af_\sigma(x)},\tand\\
\|w^*cw-c\|<\ep\|c\|\tforal  c\in \overline{f_{\sigma}(x)Af_{\sigma}(x)}.
\eneq

If $A$ almost has stable rank one, then $w$ may be chosen to be a unitary in ${\widetilde A}.$
%there exists a unitary $u\in {\widetilde A}$
%to replace $w$ above.
%such that
%If $A$ and a finite subset ${\cal F}\subset \overline{xAx}$ is given,
%one can choose $\dt$ so that in addition,
%$$
%\|w^*cw-c\|<\ep\rforal c\in {\cal F}.
%$$
\end{lem}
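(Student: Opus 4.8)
The plan is to obtain $w$ from the polar decomposition of a single ``bridge'' element $\xi\in A$ built by functional calculus out of $x$ and $y$, chosen so that $\xi\xi^*\in\overline{yAy}$ while $\xi^*\xi$ is comparable to a genuine function of $x$ whose support is a spectral projection dominating $\mathrm{supp}\,f_\sigma(x)$, and so that $\xi$ is norm-close to $(x-\dt_1)_+^{1/2}$ with closeness controlled purely by $\dt$ and $\sigma$. Fix $\dt_1\in(0,\sg/2)$ and $\dt_2\in(0,\dt_1)$, let $g\in C_0((0,1])$ satisfy $g(t)^2t=1$ for $t\ge\dt_1$ and $\mathrm{supp}\,g\subset[\dt_2,1]$, and set $q(t):=g(t)^2t$, so $q=1$ on $[\dt_1,1]$ and $0\le q\le1$. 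The sole analytic input is the uniform continuity of the functional calculus on $\{0\le a\le1\}$: for each of the finitely many continuous functions used below ($q$, $q^{1/2}$, $(\cdot-\dt_1)_+^{1/2}$) one has $\|h(x)-h(y)\|\to0$ as $\|x-y\|\to0$, uniformly over all $A$ (by polynomial approximation). I would choose $\dt$ small enough that all these differences, and the cumulative errors they produce, fall below a threshold $\ep'=\ep'(\ep,\sg)$ fixed at the end.

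Set $a_0:=(x-\dt_1)_+^{1/2}$ and $\xi:=q(y)^{1/2}a_0\in A$. Then $\xi\xi^*=q(y)^{1/2}(x-\dt_1)_+q(y)^{1/2}\in\overline{yAy}$, while $\xi^*\xi=a_0\,q(y)\,a_0$. Since $q(x)=1$ on $\mathrm{supp}\,(x-\dt_1)_+$ one has $q(x)^{1/2}a_0=a_0$ and $a_0q(x)a_0=(x-\dt_1)_+$, so $\|\xi-a_0\|\le\|q(y)^{1/2}-q(x)^{1/2}\|\le\ep'$ and $\|\xi^*\xi-(x-\dt_1)_+\|\le\ep'$. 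Let $\xi=w_1|\xi|$ be the polar decomposition in $A^{**}$ and put $w:=w_1^{*}$; then $w^*cw=w_1cw_1^{*}=\xi|\xi|^{-1}c\,|\xi|^{-1}\xi^{*}$ on the corner. On $\overline{f_\sg(x)Af_\sg(x)}$, whose support projection is $p:=\chi_{(\sg/2,\infty)}(x)$, the element $a_0$ is bounded below by $\mu:=\sqrt{\sg/2-\dt_1}>0$; hence (granting the support condition below) $|\xi|=(\xi^*\xi)^{1/2}$ is within $O(\sqrt{\ep'})$ of $a_0$ and bounded below on $pH$, its local inverse is controlled, and $\xi\approx a_0$, $|\xi|^{-1}\approx a_0^{-1}$ on the range of $c$ give $w^*cw\approx a_0a_0^{-1}c\,a_0^{-1}a_0=c$, i.e. $\|w^*cw-c\|\le\ep\|c\|$ for $\dt$ small. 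The containment $w^*cw\in\overline{yAy}$ follows from $\xi\xi^*\in\overline{yAy}$ together with the standard fact (1.4 of \cite{Cu1}) that $w_1d\in A$ for $d\in\overline{(w_1^{*}w_1)A(w_1^{*}w_1)}$.

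The main obstacle is the exact identity $ww^*f_\sg(x)=f_\sg(x)$, equivalently $ww^*=\mathrm{supp}(\xi^*\xi)\ge p$. Because $\xi^*\xi=a_0q(y)a_0$ is only close to, not equal to, the function $(x-\dt_1)_+$ of $x$, its support need not dominate the spectral projection $p$: the compression bound $p(\xi^*\xi)p\ge(1-\ep')\mu^2p$ does \emph{not} by itself force $\mathrm{supp}(\xi^*\xi)\ge p$, and one can see this obstruction is genuine already in $2$-dimensional examples. I expect this to be the crux. The plan to overcome it is to repair the support at the level of $x$: since $0\le\xi^*\xi\le(x-\dt_1)_+$ with norm-defect at most $\ep'$, and $a_0\ge\mu$ on $pH$, the ``defect'' lives entirely in the region $x\ge\sg/2$ where all relevant elements are invertible, so one may correct $\xi$ there to arrange $\mathrm{supp}(\xi^*\xi)=\chi_{(\dt_1,\infty)}(x)\ge p$ exactly while moving $\xi$ by less than a further $\ep'$; equivalently, pass to $A^{**}$ and replace $w_1$ by a partial isometry with the same near-identity action whose final projection is the genuine spectral projection $\chi_{(\dt_1,\infty)}(x)$, using that the two close projections are Murray--von Neumann equivalent via an element close to $1$. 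In either case the extra error is absorbed into the final choice of $\ep'$.

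Finally, when $A$ almost has stable rank one the containment and support come for free and the estimate is transferred to a unitary. Indeed $\|x-y\|<\dt$ gives $f_{\dt_1}(x)\lesssim y$ by \ref{Lrorm}, and $f_\sg(x)f_{\dt_1}(x)=f_\sg(x)$, so the third statement of \ref{Lalmstr1} produces a unitary $u\in\widetilde A$ with $u^*f_\sg(x)u\in\overline{yAy}$, whence $u^*\overline{f_\sg(x)Af_\sg(x)}u=\overline{(u^*f_\sg(x)u)A(u^*f_\sg(x)u)}\subseteq\overline{yAy}$, the support condition being automatic since $uu^*=1$. To meet the estimate simultaneously, I would approximate the bridge $\xi\in A\subseteq\overline{\mathrm{GL}(\widetilde A)}$ by invertibles $g_n\to\xi$ whose polar parts $u_n\in\widetilde A$ are genuine unitaries with $u_n\to w$ in a manner making $u_n^*cu_n\to w^*cw$; for large $n$ a single unitary then realizes the estimate, and after the adjustment above also the containment.
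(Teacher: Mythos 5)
Your overall architecture---a bridge element built from $x$ and $y$ by functional calculus, its polar decomposition in $A^{**}$, and $w$ taken to be the adjoint of the polar part---is the same as the paper's, which uses $z=yf_\eta(x^2)^{1/2}$ where you use $\xi=q(y)^{1/2}(x-\dt_1)_+^{1/2}$. The genuine gap sits exactly at the step you call the crux, and your proposed repair does not work. Support projections of norm-close positive elements need not be close in $A^{**}$: already in $C[0,1]$ the elements $t$ and $(t-\ep)_+$ are $\ep$-close while their support projections are at distance $1$. So there is no reason that $\mathrm{supp}(\xi^*\xi)$ and $\chi_{(\dt_1,1]}(x)$ should be Murray--von Neumann equivalent via an element close to $1$, and ``correcting $\xi$ in the region where $x\ge\sg/2$'' has no content, since a support defect, if one occurred, would sit at the bottom of the spectrum of $(x-\dt_1)_+$, not in the invertible region. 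What you missed is that no repair is needed, because a \emph{two-sided} operator inequality is available in your own setup: since $q\equiv 1$ on $[\dt_1,1]$, one has $q(x)a_0=a_0$, hence, with $T:=q(y)-q(x)$,
\[
\xi^*\xi \;=\; a_0q(x)a_0+a_0Ta_0 \;\ge\; \bigl(1-\|T\|\bigr)(x-\dt_1)_+ ,
\]
which together with $\xi^*\xi\le a_0^2=(x-\dt_1)_+$ forces $\overline{\xi^*\xi A\,\xi^*\xi}=\overline{(x-\dt_1)_+A(x-\dt_1)_+}$ as soon as $\|T\|<1$. Consequently $w_1^*w_1$ \emph{is} the support projection of $(x-\dt_1)_+$, and $ww^*f_\sg(x)=f_\sg(x)$ holds exactly, since $\dt_1<\sg/2$. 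This is precisely the mechanism of the paper's proof: there Lemma 2.2 of \cite{Rr11} gives $(\eta-\|x^2-y^2\|)f_\eta(x^2)\le z^*z\le f_\eta(x^2)$, so $v^*v$ is the support projection of $f_\eta(x^2)$, which dominates $f_\sg(x)$. With this one inequality inserted, your first two paragraphs can be completed into a correct proof; for the norm estimate it is cleaner to avoid $|\xi|^{-1}$ entirely and, as the paper does with $g(t)=f_{\sg/2}(t)/t$, write the relevant products as $\xi\,g(|\xi|)\in A$, reducing everything to functional-calculus perturbation estimates inside $A$.

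Your last paragraph has a second gap. Producing a unitary with $u^*f_\sg(x)u\in\overline{yAy}$ from Lemma \ref{Lalmstr1} is fine but loses the norm estimate, and the plan to recover it---approximate $\xi$ by invertibles $g_n$ and pass to their unitary polar parts $u_n$, ``with $u_n\to w$ in a manner making $u_n^*cu_n\to w^*cw$''---fails as stated: the polar decomposition is not norm-continuous, and when $\xi$ is not invertible the unitary parts of approximating invertibles need not converge at all; the convergence you need is exactly what a naive limiting argument cannot supply. The correct tool, and the one the paper uses, is Theorem 5 of \cite{Pedjot87}: since $A$ almost has stable rank one, $b:=w_1f_{\sg/4}(x)\in A$ is a norm limit of invertibles in $\widetilde A$, and Pedersen's theorem yields a unitary $u\in\widetilde A$ with $uf_{\sg/4}(x)=w_1f_{\sg/4}(x)$. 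Since every $c\in\overline{f_\sg(x)Af_\sg(x)}$ satisfies $c=f_{\sg/4}(x)c=cf_{\sg/4}(x)$, the unitary $u$ agrees with $w_1$ in all the relevant products, so both the containment and the estimate transfer to $u$; note that this forces one to fix $\dt$ at the outset for the smaller parameter $\sg/4$ (and $\ep/2$), exactly as the paper remarks.
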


\begin{proof}
%{\red{I didn't insert George's comment on this proof. There are too many need to verify.}}
Let $\ep/4>\dt_1>0$ be such that, for any \CA\, $B,$ and any pair of positive elements
$x',\, y'\in B$ with $0\le x',\, y'\le 1$ such that
$$
\|x'-y'\|<\dt_1,
$$
then
\beq\label{LRL-n1}
\|f_{\sigma/2}(x')-f_{\sigma/2}(y')\|<\sigma\cdot \ep/8.
%\andeqn \|(x')^{1/2}-(y')^{1/2}\|<\sigma \cdot \ep/8
\eneq

Put $\eta=(\sigma\dt_1/16)^2.$
%\min\{(\sigma\cdot \dt_1/16)^2, (\sigma/8)^2\}.$
Define
$g(t)=f_{\sigma/2}(t)/t$ for all $0<t\le 1$ and $g(0)=0.$ Then ${{g}}\in C_0((0,1]).$
Note that $\|g\|\le 2/\sigma.$
Set  $\dt_2=\eta\dt_1/16$ and choose $0<\dt<\eta/3$ such that, for any \CA\, $B,$ and any pair of positive elements
$x'',\, y''\in B$ with $0\le x'',\, y''\le 1$ such that
$$
\|x''-y''\|<2\dt,
$$
one has
\beq\label{LRL-nn1}
%\|f_{\sigma/2}(x')-f_{\sigma/2}(y')\|<\sigma\cdot \ep/64\andeqn
\|(x'')^{1/2}-(y'')^{1/2}\|<\dt_2.
\eneq

Now let $A$ be a \CA\, and let $x,\, y\in A$ be such that
$0\le x,\, y\le 1$ and $\|x-y\|<\dt.$

Then
\beq\label{LRL-n2}
\|x^2-y^2\|=\|x^2-xy+xy -y^2\|<2\dt.
\eneq
Set $z=yf_{\eta}(x^2)^{1/2}.$
Then, by  \eqref{LRL-nn1},
\beq\label{LRL-n3}
\|(z^*z)^{1/2}-x\| &=&\|(f_{\eta}(x^2)^{1/2}y^2f_{\eta}(x^2)^{1/2})^{1/2}-x\|\\
&<& \dt_2+\|(f_{\eta}(x^2)^{1/2}x^2f_{\eta}(x^2)^{1/2})^{1/2}-x\|\\\label{LRL-n3+}
&<& \dt_2+\sqrt{\eta}<\sigma\cdot \dt_1/8.
\eneq
Also,
\beq\label{LRL-n4}
\|(z^*z)^{1/2}-z\|&<&\sigma\cdot \dt_1/8+\|x-yf_{\eta}(x^2)^{1/2}\|\\
&<& \sigma\cdot \dt_1/8+\dt+\|x-xf_{\eta}(x^2)^{1/2}\|\\\label{LRL-n4+}
&<&  \sigma \cdot \dt_1/8+\dt+\sqrt{\eta}<\sigma \cdot \dt_1/4.
\eneq
Consider the polar decomposition $z=v(z^*z)^{1/2}$  of $z$  in $A^{**}.$
Then
\beq\nonumber
%\label{LRL-3}
\hspace{-0.05in}\|vf_{\sigma/2}(x)-f_{\sigma/2}(x)\| &\le & \|vf_{\sigma/2}(x)-vf_{\sigma/2}((z^*z)^{1/2})\|+\|vf_{\sigma/2}((z^*z)^{1/2})-f_{\sigma/2}(x)\|\\\nonumber
&<&\sigma\cdot \ep/8+\|v(z^*z)^{1/2} g((z^*z)^{1/2})-f_{\sigma/2}(x)\|\hspace{0.8in}\, ({\rm using\, \eqref{LRL-n1}})\\\nonumber
&=&\sigma\cdot \ep/8+\|zg((z^*z)^{1/2})-f_{\sigma/2}(x)\|\\\nonumber
&\le & \sigma\cdot \ep/8+\|zg((z^*z)^{1/2})-(z^*z)^{1/2}g((z^*z)^{1/2})\|\\\nonumber
&&\hspace{0.2in}+\|(z^*z)^{1/2}g((z^*z)^{1/2})-f_{\sigma/2}(x)\|\\\nonumber
&< &\ep/8 +\dt_1/2+\|(z^*z)^{1/2}g((z^*z)^{1/2})-f_{\sigma/2}(x)\|\hspace{0.5in} ({\rm using\, \eqref{LRL-n4+}}) \\\nonumber
&<&\ep/4+\|f_{\sigma/2}((z^*z)^{1/2})-f_{\sigma/2}(x)\|\\\nonumber
&<&\ep/4+\sigma\cdot \ep/8<\ep/2.\hspace{1.5in} ({\rm using\, \eqref{LRL-n1}\, \, and\,\, \eqref{LRL-n3+}})
\eneq
Hence, 
%by  the last  inequality, 
for any  $c\in \overline{f_{\sigma}(x)Af_{\sigma}(x)}$ with $\|c\|\le 1,$
\beq\label{LRL-12}
\|vcv^*-c\| &=&\|vf_{\sigma/2}(x)cf_{\sigma/2}(x)v^*-c\|\\
&<& \ep/2+\|f_{\sigma/2}(x)cf_{\sigma/2}(x)v^*-c\|\\
&=& \ep/2+\|vf_{\sigma/2}(x)c^*f_{\sigma/2}(x)-c^*\|\\
&<& \ep/2+\ep/2=\ep.
\eneq

%Note that  $v^*v$ is  the open projection arising as the range projection of
%the positive element $f_\eta(x^2)^{1/2}y^2f_\eta(x^2)^{1/2}.$ 
It follows from {{(the proof of)}} 2.2 of \cite{Rr11} that
$$
(\eta-\|x^2-y^2\|)f_\eta(x^2)\le f_\eta(x^2)^{1/2} y^2 f_\eta(x^2)^{1/2}\le f_\eta(x^2).
$$
%Therefore $v^*v$ is also the range projection of
%$f_\eta(x^2).$ 
So $\overline{f_\eta(x^2)Af_\eta(x^2)}$ is the same as the hereditary \SCA\, generated by $z^*z=f_\eta(x^2)^{1/2}y^2f_\eta(x^2)^{1/2}.$  Note also that the hereditary \SCA\, generated by $zz^*$ is contained  in $\overline{yAy}.$
It follows
that
\beq\label{GE-1}
vcv^*\in 
%\overline{y^{1/2}Ay^{1/2}}=
\overline{yAy} \rforal c\in  \overline{f_\eta(x^2)Af_\eta(x^2)}.
\eneq
%for all $c\in \overline{f_\eta(x^2)^{1/2}y^2f_\eta(x^2)^{1/2}Af_\eta(x^2)^{1/2}y^2f_\eta(x^2)^{1/2}}.$
%ITherefore,
%for any $c\in \overline{f_\eta(x^2)Af_\eta(x^2)}.$
%$$
%vcv^*\in \overline{y^{1/2}Ay^{1/2}}.
%$$
%Moreover, $v^*v$ is the open projection associated with
%the positive element $f_\eta(x^2)^{1/2}y^2f_\eta(x^2)^{1/2},$ whence, by the above,
%is also the open projection associated with $f_\eta(x^2).$

Choose $w=v^*.$  Then, since $\sqrt{\eta}< \sigma/4,$
\beq\label{LRL-14}
f_{\sigma}(x)f_\eta(x^2)=f_{\sigma}(x)\andeqn {\rm hence}\,\,\,
ww^*f_\sigma(x)=f_\sigma(x)ww^*=f_\sigma(x).
\eneq
Thus \eqref{GE-1} holds for all $c\in \overline{f_\sigma(x)Af_\sigma(x)}.$
If $A$ almost has stable rank one, we can choose $\dt$ for $\ep/2$ and $\sigma/4$ first.
Then, for $b=vf_{\sigma/4}(x),$ by Theorem 5 of \cite{Pedjot87}, there is a unitary
$u\in {\widetilde A}$ such that $b=u^*f_{\sigma/4}(x).$
Then, for any $c\in  \overline{f_{\sigma}(x)Af_{\sigma}(x)},$
$u^*cu=vcv^*$ and so $w$ can be replaced by $u.$

\end{proof}

\begin{lem}[\cite{Rr11}]\label{Lfullep}
Let $A$ be a \CA\, and $a\in A_+$ be a full element.
Then, for any $b\in A_+,$ any $\ep>0$ and
any $g\in C_0((0,{{+}}\infty))$ whose support is
in $[\ep,{{+}}\infty),$  there are
$x_1, x_2,...,x_m\in A$ such that
$$
g(b)=\sum_{i=1}^m x_i^*ax_i.
$$
\end{lem}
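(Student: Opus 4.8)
The plan is to obtain the exact identity from a single application of R\o rdam's Lemma~\ref{Lrorm}, after rewriting $g(b)$ as a cut-down of a nearby positive element. Normalising, I may assume $\|g\|\le 1/2$ (apply the result to $g/\ld$ and replace the resulting $x_i$ by $\sqrt{\ld}\,x_i$). I would then fix $\ep'\in(0,\ep)$ and $\dt\in(0,1/2)$ and define a continuous function $\td g\ge 0$ on $[0,\infty)$ by $\td g=g+\dt$ on $[\ep,\infty)$, $\td g=0$ on $[0,\ep']$, and $\td g$ linear on $[\ep',\ep]$. Since $g$ vanishes on $[0,\ep]$, this is well defined and continuous, $\td g$ is supported in $[\ep',\infty)$, and $(\td g-\dt)_+=g$ as functions; hence, by continuous functional calculus, $(\td g(b)-\dt)_+=g(b)$ and $0\le \td g(b)\le \|g\|+\dt\le 1$. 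It therefore suffices to produce a finite sum $W=\sum_{i=1}^m x_i^*ax_i$ with $0\le W\le 1$ and $\|\td g(b)-W\|<\dt/2$: Lemma~\ref{Lrorm} then gives $z\in A$ with $\|z\|\le 1$ and $g(b)=(\td g(b)-\dt)_+=z^*Wz=\sum_{i=1}^m (x_iz)^*a(x_iz)$, which is of the required form.

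The construction of $W$ is where fullness and the support condition enter, and this is the main point. Because $\td g$ is supported in $[\ep',\infty)$ with $\ep'>0$, one has $\td g(b)\lesssim (b-\ep'/2)_+$ and, in $\mathrm{Cu}(A)$, $\la(b-\ep'/2)_+\ra\ll\la b\ra$. On the other hand, $a$ full means the closed ideal it generates is $A$; writing $b^{1/2}$ as a norm limit of finite sums $\sum_j u_j a w_j$ and collecting them as $VDW_0$ with $V$ a row over $A$, $W_0$ a column over $A$, and $D=\diag(a,\dots,a)$, and using $a=a^{1/2}a^{1/2}$ together with $\la YY^*\ra=\la Y^*Y\ra$, one checks that for every $\eta>0$ one has $(b-\eta)_+\lesssim \diag(a,\dots,a)$ in some $\mathrm{M}_k(A)$. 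Hence $\la b\ra\le\sup_m\la a^{\oplus m}\ra=\infty\la a\ra$, the supremum of an increasing sequence. By the defining property of $\ll$ recalled in~\ref{Dcu}, there is a finite $m$ with $\la(b-\ep'/2)_+\ra\le\la a^{\oplus m}\ra$, so $\td g(b)\lesssim a^{\oplus m}=\diag(a,\dots,a)$ in $\mathrm{M}_m(A)$.

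To finish the reduction, I would unwind the definition of $\lesssim$ in $\mathrm{M}_m(A)$: there are $Z_n\in\mathrm{M}_m(A)$ with $Z_n^*\,\diag(a,\dots,a)\,Z_n\to \td g(b)\oplus 0$, and reading off the $(1,1)$-entry gives $\sum_{i=1}^m ((Z_n)_{i1})^*a\,(Z_n)_{i1}\to \td g(b)$ in norm. Taking $n$ large and setting $x_i=(Z_n)_{i1}$ yields $W=\sum_{i=1}^m x_i^*ax_i$ with $\|\td g(b)-W\|<\dt/2$ and $\|W\|\le\|\td g(b)\|+\dt/2\le 1$, which is exactly what the first paragraph needs.

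The essential difficulty is the passage from fullness, which is only a density/ideal statement, to a genuinely finite subequivalence $\td g(b)\lesssim a^{\oplus m}$: fullness by itself gives only $\la b\ra\le\infty\la a\ra$, and the finite $m$ must be extracted from the compactness of the support of $g$ (this is the role of $\ep$), via $\la(b-\ep'/2)_+\ra\ll\la b\ra$ and the way-below relation; without the support hypothesis no finite $m$ need exist. The second, more technical, point is that these comparison arguments naturally produce cut-downs $(\cdot-\dt)_+$ rather than the element itself, which is why I pre-empt them with the function $\td g$, engineered so that the cut-down $(\td g(b)-\dt)_+$ is exactly $g(b)$; Lemma~\ref{Lrorm} then converts the finite approximation $W$ into an exact sum of conjugates of $a$.
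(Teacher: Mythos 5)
Your proof is correct in substance, but it is organized differently from the paper's, which is considerably shorter. The paper's argument runs: fullness of $a$ together with positivity of $a$ and $b$ gives directly $\|\sum_{i=1}^m z_i^*az_i-b\|<\ep$; Rørdam's lemma (2.2 and 2.3 of \cite{Rr11}, i.e., Lemma \ref{Lrorm}) then produces $y$ with $f_{\ep}(b)=y^*(\sum_i z_i^*az_i)y$ exactly; and the support hypothesis is spent only at the very end, through the functional identity $f_\ep g=g$, which yields $g(b)=g(b)^{1/2}f_\ep(b)g(b)^{1/2}=\sum_i(z_iyg(b)^{1/2})^*a(z_iyg(b)^{1/2})$. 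You instead push the cut-down into the function: you design $\td g$ so that $(\td g-\dt)_+=g$, approximate $\td g(b)$ by $W=\sum_i x_i^*ax_i$, and apply Lemma \ref{Lrorm} once at the level of $g(b)$ itself. Both proofs share the same backbone (approximate by a finite sum of conjugates of $a$, then exactify with Rørdam's lemma, with the compact support of $g$ supplying the room to cut down); the genuine difference is where the support condition and the fullness are used. What your route costs is the middle step: you reach the approximation of $\td g(b)$ through the Cuntz semigroup — $(b-\eta)_+\lesssim a^{\oplus k}$, then $\la(b-\ep'/2)_+\ra\ll\la b\ra\le\sup_m\la \diag(a,\dots,a)\ra$, then unwinding $\lesssim$ entrywise in ${\rm M}_m(A)$. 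That machinery is correct, and it is a nice illustration of how the $\ep$ in the hypothesis is exactly what makes the way-below relation (hence a finite $m$) available; but it is avoidable, since the elementary density fact the paper invokes for $b$ (positive elements of the ideal generated by a full positive element are norm limits of finite sums $\sum x_i^*ax_i$) applies verbatim to the positive element $\td g(b)$ and gives $\|\td g(b)-W\|<\dt/2$ in one step.

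Two small repairs are needed in your write-up. First, the inequality $\|W\|\le\|\td g(b)\|+\dt/2\le 1$ does not hold for all $\dt<1/2$: with $\|g\|\le 1/2$ one has $\|\td g(b)\|+\dt/2\le 1/2+3\dt/2$, so you should take $\dt\le 1/3$ (or normalize $\|g\|\le 1/4$); alternatively, note that the unit bound on $W$ is needed only to quote Lemma \ref{Lrorm} verbatim, and since the final statement puts no norm constraint on the $x_i$, any fixed rescaling of $W$ is harmless. Second, your $\td g$ does not vanish at infinity, so $\td g\notin C_0((0,+\infty))$; this is fine because the functional calculus only uses $\td g$ on ${\rm sp}(b)\subset[0,\|b\|]$ together with $\td g(0)=0$, but it is worth saying so explicitly.
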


\begin{proof}
Fix $\ep>0.$ {{Since $a$ is full,  and, $a$ and $b$ are positive,}} there are $z_1,z_2,...,z_m\in A$
such that 
$$
\|\sum_{i=1}^m z_i^*az_i-b\|<\ep.
$$
Therefore, by 2.2 and 2.3 of \cite{Rr11},   there is $y\in B$
%with $\|y\|\le 1/\ep$
such that
$$
f_{\ep}(b)=y^*(\sum_{i=1}^m z_i^*az_i)y.
$$
Therefore, since $f_\ep g=g,$
$$
g(b)=g(b)^{1/2}y^*(\sum_{i=1}^m z_i^*az_i)yg(b)^{1/2}.
$$

\end{proof}

We  shall also need
%would also like to include
the following {{slight variant}} of a result of R{\o}rdam:

\begin{thm}[cf.\,4.6 of \cite{Rrzstable}]\label{TRozs}
Let $A$ be an exact   simple \CA\, which is ${\cal Z}$-stable.
Then $A$ has  the  strict comparison property  for positive elements:
If $a, \, b\in (A\otimes {\cal K})_+$ 
%(for some $n\ge 1$) 
are two elements
such that
\beq\label{Trozs-1}
\mathrm{d}_\tau(a)<\mathrm{d}_\tau(b)<+\infty\tforal  \tau\in {\widetilde{\mathrm{T}}}(A)\setminus \{0\},
% \overline{T(B)}^\mathrm{w},
\eneq
then $a\lesssim b.$
%where
%$B=\overline{cAc}$ for some $c\in {\mathrm{Ped}(A)}_+\setminus \{0\}.$
\end{thm}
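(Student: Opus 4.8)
The plan is to reduce the statement to the compact-trace situation isolated in \ref{Dstrictcom} by cutting down to an algebraically simple hereditary subalgebra, and then to invoke R\o rdam's comparison theorem there.

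First I would replace $A$ by $A\otimes{\cal K}$: it is again exact, simple and ${\cal Z}$-stable, and $\widetilde{\mathrm T}(A\otimes{\cal K})=\widetilde{\mathrm T}(A)$ with the same dimension functions, so nothing is lost. Since $A$ is exact, every quasitrace on every hereditary \SCA\ is a trace, so the hypothesis \eqref{Trozs-1} is exactly the trace condition in the conclusion. As usual it suffices to prove $(a-\ep)_+\lesssim b$ for every $\ep>0$; fix $\ep>0$ and put $a_0=(a-\ep)_+$. Since $f_s((a-\ep)_+)\le f_{\ep/2}(a)$ in functional calculus, one has $\mathrm{d}_\tau(a_0)\le \tau(f_{\ep/2}(a))\le \mathrm{d}_\tau(a)<\mathrm{d}_\tau(b)$ for all $\tau\in\widetilde{\mathrm T}(A)\setminus\{0\}$, where $\tau\mapsto\tau(f_{\ep/2}(a))$ is continuous because $f_{\ep/2}(a)\in{\rm Ped}(A\otimes{\cal K})_+$ (see \ref{Ddimf}).

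Next I would produce a single cut-off on $b$ by a Dini-type argument on a compact base. Let $D_a=\overline{f_{\ep/2}(a)(A\otimes{\cal K})f_{\ep/2}(a)}$; as $f_{\ep/2}(a)\in{\rm Ped}(A\otimes{\cal K})_+$ is full, $D_a={\rm Ped}(D_a)$ (by 2.1 of \cite{aTz}, as in \ref{DTtilde}), so by \ref{compactrace} the base $K=\overline{{\mathrm T}(D_a)}^{\mathrm w}$ is compact, $0\notin K$, and ${\mathrm T}(D_a)$ generates $\widetilde{\mathrm T}(D_a)$. Identifying $\widetilde{\mathrm T}(D_a)$ with $\widetilde{\mathrm T}(A\otimes{\cal K})$ by extension of densely defined traces (both being full hereditary in the same simple algebra, as in \ref{DTtilde}), each $\tau\mapsto\tau(f_\eta(b))$ is continuous on $K$ and increases to the lower semicontinuous $\mathrm{d}_\tau(b)$ as $\eta\searrow0$. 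Since $\tau(f_{\ep/2}(a))<\mathrm{d}_\tau(b)$ on $K$, the open sets $\{\tau\in K:\tau(f_{\ep/2}(a))<\tau(f_\eta(b))\}$ increase to cover the compact $K$, so some $\eta_0>0$ gives $\tau(f_{\ep/2}(a))<\tau(f_{\eta_0}(b))$ on $K$, hence $\mathrm{d}_\tau(a_0)<\mathrm{d}_\tau(f_{\eta_0}(b))$ for all $\tau\in\widetilde{\mathrm T}(A)\setminus\{0\}$ by homogeneity. Now put $e=f_{\ep/2}(a)+f_{\eta_0}(b)\in{\rm Ped}(A\otimes{\cal K})_+$; it is full, so $D=\overline{e(A\otimes{\cal K})e}$ satisfies $D={\rm Ped}(D)$, contains both $a_0$ and $f_{\eta_0}(b)$, and, being a full hereditary \SCA\ of a simple exact ${\cal Z}$-stable algebra, is itself simple, exact and ${\cal Z}$-stable with compact trace base $\overline{{\mathrm T}(D)}^{\mathrm w}$, $0\notin\overline{{\mathrm T}(D)}^{\mathrm w}$, and ${\mathrm T}(D)$ generating $\widetilde{\mathrm T}(D)$.

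Finally I would apply R\o rdam inside $D$. By Theorem~4.5 of \cite{Rrzstable}, ${\cal Z}$-stability and exactness make $\mathrm{Cu}(D)$ almost unperforated; combined with the compactness of $\overline{{\mathrm T}(D)}^{\mathrm w}$ and the continuity of $\tau\mapsto\tau(f_{\ep/2}(a))$, $\tau\mapsto\tau(f_{\eta_0}(b))$ (both elements lying in ${\rm Ped}(D)$), R\o rdam's argument (cf.\ 4.6 of \cite{Rrzstable}) yields, from $\mathrm{d}_\tau(a_0)<\mathrm{d}_\tau(f_{\eta_0}(b))$ on $\overline{{\mathrm T}(D)}^{\mathrm w}$, an integer $n$ realizing $(n+1)\langle a_0\rangle\le n\langle f_{\eta_0}(b)\rangle$ in $\mathrm{Cu}(D)$, whence $a_0\lesssim f_{\eta_0}(b)\lesssim b$ by almost unperforation. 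As $\ep>0$ was arbitrary, $a\lesssim b$. I expect the genuine obstacle to be precisely this last passage, from a pointwise strict inequality of dimension functions to an actual Cuntz comparison: the cone $\widetilde{\mathrm T}(A)$ is not compact and the order on $\mathrm{Cu}$ is not literally detected by dimension functions. The device that overcomes it is exactly the reduction to the algebraically simple corner $D$, where \ref{compactrace} supplies a compact base so that R\o rdam's compactness-based extraction of the ratio $n{:}(n{+}1)$ (supplied by the divisibility built into ${\cal Z}$) goes through; homogeneity of $\mathrm{d}_\tau$ then transports the inequality back to the whole cone.
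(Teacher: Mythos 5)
Your proposal is correct, and at bottom it rests on exactly the two pillars the paper's own proof invokes: Theorem 4.5 of \cite{Rrzstable} (${\cal Z}$-stability makes $W(A\otimes {\cal K})$ almost unperforated) and R\o rdam's state-comparison machinery (Proposition 3.2 there, equivalently the argument of his Theorem 4.6). Where you differ is the intermediate reduction. The paper does no cutting down at all: it sets $S=\{\tau\in {\widetilde{\mathrm{T}}}(A): \mathrm{d}_\tau(b)=1\},$ notes $\la (a-\ep)_+\ra \le K\la b\ra$ by simplicity, asserts the resulting strict inequality $f(a)<f(b)$ for every state $f$ on $({\rm Cu}(A), \la b\ra),$ and then quotes 4.5 and 3.2 of \cite{Rrzstable}. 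You instead pass to the algebraically simple corner $D_a,$ use its compact trace base (via \ref{compactrace}) and a Dini argument to replace $b$ by a cut-down $f_{\eta_0}(b)$ with a uniform strict gap, and then run R\o rdam inside the corner $D.$ Your route buys a continuous-versus-continuous strict inequality on a compact set, which makes the verification of the hypothesis of Proposition 3.2 (regularize an arbitrary state to a lower semicontinuous dimension function, identify it with a quasitrace by Blackadar--Handelman, and use exactness to make it a trace) genuinely cleaner; the paper's route is shorter but compresses that verification into a single asserted line. One correction to your closing diagnosis, though: the compact base is not "the device" that overcomes the passage from trace inequalities to Cuntz comparison --- the paper's proof dispenses with it entirely --- and your argument still has to cross the same bridge, since states on $({\rm Cu}(D),\la f_{\eta_0}(b)\ra)$ need not be trace-induced; like the paper, you delegate precisely that step to R\o rdam's cited argument rather than eliminating it.

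Two technical caveats, neither fatal. First, you claim $D$ is ${\cal Z}$-stable because it is a full hereditary subalgebra of a ${\cal Z}$-stable algebra; that inheritance is established for separable (or with care, $\sigma$-unital) algebras, and $A$ is not assumed separable here. The claim is also unnecessary: Cuntz subequivalence for positive elements of a hereditary subalgebra agrees with that computed in the ambient algebra, so almost unperforation of $W(D)$ is inherited directly from $W(A\otimes {\cal K}),$ i.e., from Theorem 4.5 applied to $A$ itself. Second, your use of \ref{compactrace} and of the identification of ${\widetilde{\mathrm{T}}}(D_a)$ with ${\widetilde{\mathrm{T}}}(A\otimes{\cal K})$ (as in \ref{Subset}) is legitimate because $D_a$ and $D$ are themselves $\sigma$-unital and equal to their Pedersen ideals, even though $A$ need not be $\sigma$-unital; and for the Dini argument one only needs lower semicontinuity of $\tau\mapsto \tilde\tau(f_\eta(b))$ on $K,$ which holds since the extension is a supremum of functions continuous on $K.$
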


\begin{proof}

Let $a, b\in (A\otimes\mathcal K)_+$ be as in \eqref{Trozs-1}, and set
$$
\{\tau\in {\widetilde{\mathrm{T}}}(A): \mathrm{d}_\tau(b)=1\}=S.
$$
%Then $S\subset \overline{T(B_1)}^\mathrm{w}.$
The assumption \eqref{Trozs-1} implies that
\beq\label{Trozs-2}
\mathrm{d}_\tau(a)<\mathrm{d}_\tau(b)\rforal \tau\in S.
\eneq
%We may view $a\in B_1\otimes {\cal K}.$
Since $A$ is simple and $b\not=0,$  for every $\ep>0,$ $\la (a-\ep)_+\ra \le K \la b\ra $ for some integer $K\ge 1.$
{{Hence,}}
% the above implies that
$$
f(a)<f(b)\rforal f\in S({{{\rm{Cu}}(A)}}, {{\la b\ra}})
$$
(see \cite{Rrzstable} 
for the notation)
% and other details). 
Since, by Theorem 4.5 of \cite{Rrzstable}, $W(A\otimes {\cal K})$ is almost unperforated, and by
3.2 of \cite{Rrzstable},
$a\lesssim b.$

\end{proof}

\begin{cor}\label{{CRozs}}
Let $A$ be an exact  simple separable  \CA\, which is ${\cal Z}$-stable.
Then $A$ has  the  following strict comparison property  for positive elements:
If $a, \, b\in (B\otimes {\cal K})_+$  are two elements
such that
\beq\nonumber
\label{Trozs-3}
\mathrm{d}_\tau(a)<\mathrm{d}_\tau(b)<+\infty\tforal \tau\in
\overline{{\mathrm{T}}(B)}^{\mathrm{w}},
\eneq
where
$B=\overline{cAc}$ for some $c\in {\mathrm{Ped}(A)}_+\setminus \{0\},$ then $a\lesssim b.$
%%(Here we also identify $A\otimes {\cal K}$ with $B\otimes {\cal K}$ (see also \ref{Dstrictcom}).
\end{cor}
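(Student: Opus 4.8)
The plan is to deduce the corollary from Theorem \ref{TRozs} by passing between the two tracial pictures. The essential point is that the hypothesis is stated only in terms of $\tau\in \overline{{\mathrm{T}}(B)}^{\mathrm{w}}$, the \emph{normalized} traces, whereas Theorem \ref{TRozs} requires the inequality $\mathrm{d}_\tau(a)<\mathrm{d}_\tau(b)<+\infty$ for \emph{all} nonzero $\tau\in {\widetilde{\mathrm{T}}}(B)$. So the real content is to show that the apparently weaker hypothesis on the compact base $\overline{{\mathrm{T}}(B)}^{\mathrm{w}}$ already forces the inequality on the whole cone, and then to quote Theorem \ref{TRozs} verbatim (noting that $B=\overline{cAc}$ is again exact, simple, and ${\cal Z}$-stable, since these properties pass to hereditary subalgebras, so that $B\otimes{\cal K}\cong A\otimes{\cal K}$).

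First I would record that, because $c\in {\mathrm{Ped}(A)}_+\setminus\{0\}$, the algebra $B=\overline{cAc}$ satisfies $B=\mathrm{Ped}(B)$ (using 2.1 of \cite{aTz} as in \ref{DTtilde}), and hence ${\mathrm{T}}(B)$ generates ${\widetilde{\mathrm{T}}}(B)$ as a cone and $0\notin \overline{{\mathrm{T}}(B)}^{\mathrm{w}}$; moreover, as explained at the end of \ref{DTtilde}, $\overline{{\mathrm{T}}(B)}^{\mathrm{w}}=S$ is a compact base for the cone ${\widetilde{\mathrm{T}}}(B)$. Next I would invoke \ref{Dstrictcom}: under exactly these two hypotheses ($0\notin \overline{{\mathrm{T}}(B)}^{\mathrm{w}}$ and ${\mathrm{T}}(B)$ generates ${\widetilde{\mathrm{T}}}(B)$), that definition states that strict comparison using traces is \emph{equivalent} to the implication that $\mathrm{d}_\tau(a)<\mathrm{d}_\tau(b)$ for all $\tau\in\overline{{\mathrm{T}}(B)}^{\mathrm{w}}$ entails $a\lesssim b$. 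So it suffices to verify that $B$ has strict comparison for positive elements using traces, and the corollary follows.

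The remaining step is to supply that strict comparison, which is precisely Theorem \ref{TRozs} applied to $B$: since $B$ is exact, simple and ${\cal Z}$-stable, any $a,b\in (B\otimes{\cal K})_+$ with $\mathrm{d}_\tau(a)<\mathrm{d}_\tau(b)<+\infty$ for all $\tau\in{\widetilde{\mathrm{T}}}(B)\setminus\{0\}$ satisfy $a\lesssim b$. To connect this with the hypothesis phrased over $\overline{{\mathrm{T}}(B)}^{\mathrm{w}}$, I would use positive homogeneity of the dimension functions: for $\tau\in{\widetilde{\mathrm{T}}}(B)\setminus\{0\}$ one has $\mathrm{d}_{\lambda\tau}=\lambda\,\mathrm{d}_\tau$, and since every nonzero ray in the cone meets the base $\overline{{\mathrm{T}}(B)}^{\mathrm{w}}$ (because ${\mathrm{T}}(B)$ generates the cone and $0$ is not in the base), the strict inequality on the base propagates along rays to the whole cone. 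This is exactly the equivalence already packaged into \ref{Dstrictcom}, so I would simply cite it rather than reprove the homogeneity argument.

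The step I expect to be the main obstacle is the finiteness requirement: Theorem \ref{TRozs} demands $\mathrm{d}_\tau(b)<+\infty$ for all nonzero $\tau$, and one must check this does not fail on rays that escape the compact base. Since $B=\mathrm{Ped}(B)$ has compact base $\overline{{\mathrm{T}}(B)}^{\mathrm{w}}$ on which all the relevant dimension functions are finite and bounded, and every element of the cone is a finite multiple of a base point, finiteness on the base gives finiteness (with the correct scaling) throughout the cone; the genuine subtlety is only in being careful that $a,b$ lie in $(B\otimes{\cal K})_+$ and that the extension of traces to $B\otimes{\cal K}$ described in \ref{Dstrictcom} is the one used, so that the dimension functions $\mathrm{d}_\tau$ are computed consistently. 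Once this bookkeeping is in place, the corollary is immediate from Theorem \ref{TRozs} together with the equivalence stated in \ref{Dstrictcom}.
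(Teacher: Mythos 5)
Your proposal is correct and follows essentially the paper's own route: the paper's entire proof is the single observation that $\R_+\overline{{\mathrm{T}}(B)}^{\mathrm{w}}={\widetilde{\mathrm{T}}}(A)$, so that the strict inequality (and finiteness) on the compact base propagates by homogeneity of $\mathrm{d}_\tau$ to all nonzero traces in the cone, and Theorem \ref{TRozs} applies. The only difference is that the paper applies Theorem \ref{TRozs} to $A$ itself (the densely defined trace cones of $A$ and of its full hereditary subalgebra $B$ being identified, and Cuntz comparison in $B\otimes{\cal K}$ agreeing with that in $A\otimes{\cal K}$), whereas you apply it to $B$, which forces you to assert that ${\cal Z}$-stability passes to hereditary subalgebras of separable algebras---a true but nontrivial permanence fact (due to Toms and Winter) that the paper's phrasing quietly avoids needing.
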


\begin{proof}
The condition \eqref{Trozs-1} of \ref{TRozs} holds  since ${{\R_+\overline{{\mathrm{T}}(B)}^{\mathrm{w}}}}=
%\overline{\mathrm{T}(B)^{\mathrm{w}}}}}=
{\widetilde{\mathrm{T}}}(A).$
\end{proof}

We 
would like to include 
%{{shall need}} 
the following statement.

\begin{lem}\label{103L}
Let $B$ be a separable semiprojective \CA\, and $A$ be another 
\CA\, such that there is an isomorphism $j: A\otimes {\cal K}\to B\otimes {\cal K}.$
Then, for any $\ep>0$ and any finite subset ${\cal F}\subset A,$ 
there exist $\dt>0$ and a finite subset ${\cal G}\subset A$  {{with the following property:}}
%satisfying the following:
If $D$ is a \CA\, and $L: A\to D$ is a ${\cal G}$-$\dt$-multiplicative \cpc, then 
there exists a \hm\, $h: A\to D\otimes {\cal K}$ such that 
$$
\|h(a)-L(a)\otimes e_{1,1}\|<\ep\tforal a\in {\cal F},
$$
where $\{e_{i,j}\}$ is a {{system of}} matrix unit{{s}} for ${\cal K}.$
\end{lem}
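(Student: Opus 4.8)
The plan is to transport the semiprojectivity of $B$ through the stable isomorphism $j$ and apply it at a finite matrix level, where it is available. First note that, since $B$ is separable and $A\otimes{\mathcal K}\cong B\otimes{\mathcal K}$, the algebra $A$ is separable; and since $B$ is semiprojective, so is $\mathrm M_{n}(B)=B\otimes\mathrm M_{n}$ for every $n$. I will use the standard asymptotic reformulation of semiprojectivity (Loring): for a separable semiprojective $C$, and any $\ep_1>0$ and finite $\mathcal F_1\subset C$, there are $\dt_1>0$ and a finite $\mathcal G_1\subset C$ so that \emph{for every} \CA\ $E$ and every $\mathcal G_1$-$\dt_1$-multiplicative \cpc\ $\Psi\colon C\to E$ there is a \hm\ $H\colon C\to E$ with $\|H(x)-\Psi(x)\|<\ep_1$ for $x\in\mathcal F_1$; crucially the constants do not depend on $E$.

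Given $\ep>0$ and finite $\mathcal F\subset A$, I would fix small $\ep_0,\ep_1>0$ with $2\ep_0+\ep_1<\ep$. Since $\bigcup_n\mathrm M_n(B)$ is dense in $B\otimes{\mathcal K}$, choose $n_0$ and, for each $a\in\mathcal F$, an element $b_a\in\mathrm M_{n_0}(B)$ with $\|b_a-j(a\otimes e_{1,1})\|<\ep_0$. Put $C=\mathrm M_{n_0}(B)$, $E=D\otimes{\mathcal K}$, and let $\Psi=(L\otimes\mathrm{id}_{{\mathcal K}})\circ j^{-1}|_{C}\colon C\to D\otimes{\mathcal K}$. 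This $\Psi$ is a \cpc\ (as $j^{-1}$ is a $*$-isomorphism, and $L\otimes\mathrm{id}_{{\mathcal K}}$ is a \cpc\ because ${\mathcal K}$ is nuclear), and it is approximately multiplicative whenever $L$ is: approximating each element of $j^{-1}(\mathcal G_1)$ by a finite sum $\sum a_{kl}\otimes e_{kl}$ with $a_{kl}\in A$ and collecting all the entries $a_{kl}$ into a finite set $\mathcal G\subset A$, a routine estimate shows that $\mathcal G$-$\dt$-multiplicativity of $L$ (for a suitable $\dt$) forces $\mathcal G_1$-$\dt_1$-multiplicativity of $\Psi$. Taking $\mathcal F_1\supseteq\{b_a:a\in\mathcal F\}$ and applying the reformulation above to $C$ yields a \hm\ $H\colon\mathrm M_{n_0}(B)\to D\otimes{\mathcal K}$ with $\|H(b_a)-\Psi(b_a)\|<\ep_1$. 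Since $j^{-1}(b_a)$ is within $\ep_0$ of $a\otimes e_{1,1}$, one gets $\|H(b_a)-L(a)\otimes e_{1,1}\|<\ep_0+\ep_1$ for all $a\in\mathcal F$.

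It remains to convert the \hm\ $H$ on $\mathrm M_{n_0}(B)$ into a \hm\ defined on all of $A$; this is the main obstacle, and it is exactly where the target $D\otimes{\mathcal K}$ (rather than $D$) is needed, since $j(A\otimes e_{1,1})$ is only a corner of $B\otimes{\mathcal K}$ and does not lie inside $\mathrm M_{n_0}(B)$. The homomorphism $H$ on $\mathrm M_{n_0}(B)=B\otimes\mathrm M_{n_0}$ is encoded by the \hm\ $\phi_0=H(\,\cdot\otimes e_{1,1})\colon B\to D\otimes{\mathcal K}$ together with a finite system of $n_0$ partial isometries in $\mathcal M(D\otimes{\mathcal K})$ implementing it. Because $D\otimes{\mathcal K}$ is stable, $\mathcal M(D\otimes{\mathcal K})$ contains an infinite system of matrix units extending the given finite one (equivalently, infinitely many mutually orthogonal copies of the support projection of $\phi_0$ fit, orthogonal to the ones already used), and using it one extends $H$ to a genuine \hm\ $\hat H\colon B\otimes{\mathcal K}\to D\otimes{\mathcal K}$ with $\hat H|_{\mathrm M_{n_0}(B)}=H$. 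Finally set $h(a)=\hat H(j(a\otimes e_{1,1}))$; this is a \hm\ from $A$ to $D\otimes{\mathcal K}$, and since $\hat H$ is contractive and $j(a\otimes e_{1,1})$ is within $\ep_0$ of $b_a$, one has $\|h(a)-H(b_a)\|<\ep_0$, whence $\|h(a)-L(a)\otimes e_{1,1}\|<2\ep_0+\ep_1<\ep$ for all $a\in\mathcal F$. The two delicate points to verify carefully are the uniformity of the semiprojectivity constants in $D$ and the extension of $H$ across the stabilization (the infinite amplification of $\phi_0$ inside $\mathcal M(D\otimes{\mathcal K})$); both are standard consequences of semiprojectivity and of the stability of the target.
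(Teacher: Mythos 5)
Your first two steps coincide with the paper's own argument: you compress $j(a\otimes e_{1,1})$ into $\mathrm{M}_{n_0}(B),$ and you apply the target-uniform consequence of semiprojectivity of $\mathrm{M}_{n_0}(B)$ to $\Psi=(L\otimes\mathrm{id}_{\mathcal K})\circ j^{-1}|_{\mathrm{M}_{n_0}(B)}$ to obtain a \hm\, $H$ with $\|H(b_a)-L(a)\otimes e_{1,1}\|$ small. The gap is in your final step. It is simply not true that stability of $D\otimes{\mathcal K}$ allows one to extend a \hm\, $H\colon \mathrm{M}_{n_0}(B)\to D\otimes{\mathcal K}$ to a \hm\, $\hat H\colon B\otimes{\mathcal K}\to D\otimes{\mathcal K}$ with $\hat H|_{\mathrm{M}_{n_0}(B)}=H.$ For $k>n_0$ the elements $\hat H(x\otimes e_{k,k})$ must annihilate $\hat H(\mathrm{M}_{n_0}(B))\cdot(D\otimes{\mathcal K}),$ so such an extension can exist only if the hereditary \SCA\, generated by the image of $H$ leaves room in $D\otimes{\mathcal K}$ for infinitely many mutually orthogonal equivalent copies of the support of $H$; stability of the \emph{ambient} algebra does not provide this when the image of $H$ is hereditarily full. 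A concrete counterexample to the principle you invoke: the identity map ${\mathcal K}\to{\mathcal K}$ (stable target!) admits no extension to a \hm\, $\hat H\colon{\mathcal K}\otimes{\mathcal K}\to{\mathcal K}$ which is the identity on ${\mathcal K}\otimes e_{1,1}.$ Indeed, $\hat H(x\otimes e_{2,2})\,y=\hat H\bigl((x\otimes e_{2,2})(y\otimes e_{1,1})\bigr)=0$ for all $y\in{\mathcal K},$ so $\hat H(x\otimes e_{2,2})=0$; then $\hat H(x\otimes e_{1,2})^*\hat H(x\otimes e_{1,2})=\hat H(x^*x\otimes e_{2,2})=0,$ whence $xy=\hat H(x\otimes e_{1,2})\hat H(y\otimes e_{2,1})=0$ for all $x,y\in{\mathcal K},$ which is absurd. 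Nothing in your construction rules this situation out: semiprojectivity gives no control over the image of $H$ beyond proximity to $\Psi$ on a finite set, and $\Psi$ itself (built from an arbitrary almost multiplicative \cpc\, $L$ and an arbitrary stable isomorphism $j$) need not have image confined to a proper corner. Note also that, since $B$ may be non-unital, your finite system of matrix units lives only in $\mathcal M(D_0),$ where $D_0$ is the hereditary \SCA\, generated by $H(\mathrm{M}_{n_0}(B)),$ and not in $\mathcal M(D\otimes{\mathcal K}),$ so even the matrix-unit bookkeeping is unavailable as stated.

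The repair is to never extend $H$ itself, and this is exactly what the paper does. The cut-down \hm\, $x\mapsto H(x)\otimes e_{1,1}$ into $D\otimes{\mathcal K}\otimes{\mathcal K}$ \emph{always} extends, namely to $H\otimes\mathrm{id}_{\mathcal K}\colon \mathrm{M}_{n_0}(B)\otimes{\mathcal K}\to D\otimes{\mathcal K}\otimes{\mathcal K}$: tensoring with $e_{1,1}$ manufactures precisely the orthogonal room that $H$ itself may lack. One then defines $h$ as the composition of $a\mapsto a\otimes e_{1,1}\otimes e_{1,1},$ of $j\otimes\mathrm{id}_{\mathcal K},$ of an isomorphism $\mathrm{id}_B\otimes\phi_1\colon B\otimes{\mathcal K}\otimes{\mathcal K}\to B\otimes\mathrm{M}_{n_0}\otimes{\mathcal K}$ corrected by a unitary so that it approximately fixes the finitely many relevant elements of $\mathrm{M}_{n_0}\otimes e_{1,1},$ of $H\otimes\mathrm{id}_{\mathcal K},$ and finally of $\mathrm{id}_D\otimes\psi$ for an isomorphism $\psi\colon{\mathcal K}\otimes{\mathcal K}\to{\mathcal K}$ with $\psi(e_{1,1}\otimes e_{1,1})=e_{1,1}.$ With these modifications your estimates go through and you recover the paper's proof; as written, however, the extension claim is false and constitutes a genuine gap.
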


\begin{proof}
Let us write $B\otimes {\rm M}_n\subset B\otimes {\rm M}_{n+1}$ for all $n$ and 
 $\bigcup_{n=1}^{\infty}B\otimes {\rm M}_n$ is dense in $B\otimes {\cal K}.$ 
 Let $e_n=\sum_{i=1}^n e_{i,i}.$  Define $d_n: B\otimes {\cal K}\to B\otimes {\rm M}_n$
 by sending $b\otimes c$ to $b\otimes (e_nce_n)$ for all $b\in B$ and $c\in {\cal K}.$ 
 %Find a finite subset ${\cal F}_1\subset B\otimes M_N$ for some large $N$ such that
% ${\rm dist}(j(a), {\cal F}_1)<\ep/8$ for all $a\in {\cal F}.$
 %In particular, 
 There is an integer $N$
 %for each $a\in {\cal F},$  there is ${\widetilde{j(a)}}\in B\otimes M_N$
 % there are $b_{a,i}\in B$ and  $c_{a,i}\in M_N,$ $1\le  i\le m(a)$ 
 such that 
 \beq\label{semiP1}
 \|j(a)-d_N( j(a))\|<\ep/8\rforal a\in {\cal F}.
 \eneq
  Write $d_N(j(a))=\sum_{i=1}^{m(a)} b_{a,i}\otimes c_{a,i},$ where $b_{a,i}\in B$ and  $c_{a,i}\in {\rm M}_N,$ $1\le  i\le m(a).$
 %${\cal F}_b=\{b_{a,i}: 1\le i\le m(a),\,\,a\in {\cal F}\},$ 
 Put ${\cal F}_M=\{c_{a,i}: 1\le i\le m(a),\,\,\, a\in {\cal F}\}$ and 
 ${\cal F}_1=\{\sum_{i=1}^{m(a)} b_{a,i}\otimes c_{a,i}: a\in {\cal F}\}.$ 
 Set $\Lambda=\max\{(\|b_{a,i}\|+1)(\|c_{a,i}\|+1)m(a): 1\le i\le m(a): a\in {\cal F}\}.$
 %We also identify $A$ with the first corner of $A\otimes {\cal K},$ whenever it becomes convenient.
 
 Since $B\otimes {\rm M}_N$ is semiprojective,  there are a finite subset  ${\cal G}_1\subset B\otimes {\rm M}_N$  
 and $\dt_0>0$ satisfying the following condition: if $L': B\otimes {\rm M}_N\to D'$ (for any \CA\, $D'$) is a
 ${\cal G}_1$-$\dt_0$-multiplicative \cpc, there exists a \hm\, $H: B\otimes {\rm M}_N\to D'$
 such that
 \beq
 \|H(b)-L'(b)\|<\ep/8\rforal b\in {\cal F}_1.
 \eneq
Since $j$ is an isomorphism, there exist a finite subset ${\cal G}\subset A$ and $\dt>0$ satisfying the following condition:
if $L: A\to D$ (for any \CA\, $D$) is a ${\cal G}$-$\dt$-multiplicative \cpc,  then
$(L\otimes {\rm id}_{\cal K})(j^{-1})$ is {{a}} ${\cal G}_1$-$\dt$-multiplicative \cpc. 

Let $\iota: {\rm M}_N\otimes {\cal K}\to {\cal K}\otimes {\cal K}$ {{denote}}  the inclusion and {{let}}
$\phi: {\cal K}\otimes {\cal K}\to {\rm M}_N\otimes {\cal K}$ be an isomorphism. There is  a   unitary  $U\in {\widetilde{{\rm M}_N\otimes {\cal K}}}$ such that
$$
{\rm Ad}\, U\circ \phi\circ \iota\approx_{\ep/4\Lambda}  {\id}_{{\rm M}_N\otimes {\cal K}}\,\,\,{\rm on}\,\,\, {\cal F}_M\otimes 1_{\cal K}.
$$
Put $\phi_1={\rm Ad}\, U\circ \phi.$  Consider $\Psi=({\rm id}_B\otimes \phi_1)\circ  {{(j\otimes {\rm id}_{\cal K})}}: A\otimes {\cal K}\otimes {\cal K}\to B\otimes {\rm M}_N\otimes {\cal K}.$
Thus, for all $a\in {\cal F}$ (we identify $A$ with the first corner of $A\otimes {\cal K}$),
\beq\nonumber
\hspace{-0.3in}\Psi(a\otimes e_{1,1}\otimes e_{1,1})&=&({\rm id}_B\otimes \phi_1)(j(a)\otimes e_{1,1})\approx_{\ep/8} 
({\rm id}_B\otimes \phi_1))(d_N(j(a))\otimes e_{1,1})
%(\sum_{i=1}^{m(a)}b_{a,i}\otimes c_{a,i}\otimes e_{1,1})
\\\nonumber
&=&({\rm id}_B\otimes \phi_1)(\sum_{i=1}^{m(a)}b_{a,i}\otimes \iota(c_{a,i}\otimes e_{1,1}))
\approx_{\ep/4} \sum_{i=1}^{m(a)}b_{a,i}\otimes c_{a,i}\otimes e_{1,1}\\\label{semiP3}
&\approx_{\ep/8}& j(a)\otimes e_{1,1}.
\eneq

Now assume $L: A\to D$ is a ${\cal G}$-$\dt$-multiplicative \cpc.
Consider {{the maps}} $L\otimes {\rm id}_{\cal K}: A\otimes {\cal K}\to D\otimes {\cal K}$
and $(L\otimes {\rm id}_{\cal K})(j^{-1}): B\otimes {\cal K}\to D\otimes {\cal K},$ 
%Define 
{{and the restriction}} $\Phi:=(L\otimes {\rm id}_{\cal K})(j^{-1})|_{B\otimes {\rm M}_N}: B\otimes {\rm M}_N\to D\otimes {\cal K}.$

Now $\Phi$ is a ${\cal G}_1$-$\dt_0$-multiplicative \cpc. Therefore there 
is a \hm\, $h_0: B\otimes {\rm M}_N\to D\otimes {\cal K}$ such that
\beq\label{semiP2}
\|h_0(b)-\Phi(b)\|<\ep/8\rforal b\in {\cal F}_1. 
\eneq
Then, for all $a\in {\cal F},$ by \eqref{semiP3}, \eqref{semiP1},    and \eqref{semiP2},
%(we identify $A$ with the first corner of $A\otimes {\cal K}$),
\beq\nonumber
&&\hspace{-0.2in}(h_0\otimes  {\rm id}_{\cal K})\circ \Psi(a\otimes e_{1,1}\otimes e_{1,1})\approx_{\ep/2} 
(h_0\otimes {\rm id}_{\cal K})(j(a)\otimes e_{1,1})\approx_{\ep/4}
(\Phi\otimes {\rm id}_{\cal K})(d_N(j(a))\otimes e_{1,1})\\\nonumber
&&{{\approx_{\ep/8}}}(L\otimes {\rm id}_{\cal K})(a\otimes e_{1,1}\otimes e_{1,1})=L(a)\otimes e_{1,1}.
\eneq

Define $h: A\to D\otimes {\cal K}$ {{as the composed map}}
$({\id}_D\otimes \psi)\circ h_0\circ \Psi\circ \iota_A,$
where $\iota_A(a)=a\otimes e_{11}\otimes e_{1,1}$ for all $a\in A$ and 
$\psi: {\cal K}\otimes {\cal K}\to {\cal K}$ is any isomorphism.  From the last estimate
the lemma follows.
The above proof may be summarized by the following non-commutative diagram with the upper triangle approximately commutative 
on ${\cal F},$ 
%the upper right  commutative 
and the lower 
%left commutative and lower
right   one  approximately commutative on ${\cal F}_1\otimes e_{1,1}:$
 \begin{displaymath}
\xymatrix{
%&&_{\small{h=({\rm id}_D\otimes \psi)\circ (h_0\otimes {\rm id}_{\cal K})\circ({\rm id}_B\otimes \phi_1)\circ (j\otimes {\rm id}_{\cal K})\circ \iota_A }}\\
&&A 
\ar[d]^{d_N\circ j\circ \iota_A} 
%\ar[rrd]^{h} 
%\ar[d]_{\Psi\circ \iota_A} 
\ar[dll]_{(j\otimes {\rm id}_{\cal K})\circ \iota_A}
&\\
%_{\small{h=({\rm id}_D\otimes \psi)\circ (h_0\otimes {\rm id}_{\cal K})\circ({\rm id}_B\otimes \phi_1)\circ (j\otimes {\rm id}_{\cal K})\circ \iota_A }}\\
%\ar[d]_{L_1} & A \ar[d]^{L_2}\\
B\otimes {\cal K}\otimes {\cal K}
\ar@{.>}[rrd]_{j^{-1}\otimes {\id}_{\cal K}}
\ar@{-->}[rr]^{{\rm id}_B\otimes \phi_1}
&&B\otimes {\rm M}_N\otimes {\cal K}\ar[d]_{j^{-1}\otimes {\rm id}_{\cal K}} 
\ar@{-->}[rr]^{h_0\otimes {\rm id}_{\cal K}}
%\ar@{_{(}->}[ll]
&& D\otimes {\cal K}\otimes {\cal K}\ar[rr]^{{\rm id}_D\otimes \psi} && D\otimes {\cal K}.\\
&&A\otimes {\cal K}\otimes {\cal K} \ar[urr]_{L\otimes {\rm id}_{\cal K\otimes {\cal K}}}
%\ar[r]_\id & C,
}
\end{displaymath}
$$_{\small{h=({\rm id}_D\otimes \psi)\circ (h_0\otimes {\rm id}_{\cal K})\circ({\rm id}_B\otimes \phi_1)\circ (j\otimes {\rm id}_{\cal K})\circ \iota_A }}
$$

\end{proof}

\section{Compact \CA s}

\begin{df}\label{Dcompact}
A $\sigma$-unital  \CA\, ${{A}}$ is said to be {\it {{compact}}}, if there is $e\in (A\otimes {\cal K})_+$
with $0\le e\le 1$
and a partial isometry $w\in (A\otimes {\cal K})^{**}$ such
that
\beq\nonumber
\hspace{0.1 in}w^*a,\, w^*aw\in A\otimes {\cal K}, ww^*a=aww^*=a, \andeqn ew^*aw=w^*awe=w^*aw\rforal a\in A,
\eneq
where $A$ is identified with $A\otimes e_{11}.$

\end{df}

\begin{prop}\label{Pcompact1}
Let $C$ be a $\sigma$-unital  \CA\, and let $c\in C_+\setminus \{0\}$ with $0\le c \le 1$ be a full element of $C.$
Suppose that there is $e_1\in C$  with $0\le e_1\le 1$ such that $e_1c=ce_1=c.$
Then $\overline{cCc}$ is {{compact}}.
\end{prop}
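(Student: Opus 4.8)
The goal is to produce the data in Definition~\ref{Dcompact}: a positive element $e$ with $0 \le e \le 1$ in $(\overline{cCc} \otimes \mathcal{K})_+$ and a partial isometry $w$ in the bidual satisfying the four relations, where we identify $\overline{cCc}$ with its first corner. The key structural observation is that the hypothesis $e_1 c = c e_1 = c$ says $c$ already sits inside a corner cut out by $e_1$, and that $c$ is full in $C$. So I expect the ``compactness'' here is essentially witnessed \emph{without} needing to pass to $\mathcal{K}$ at all: I plan to look for $e$ and $w$ inside $C$ (or $C \otimes e_{11}$) directly, taking $w = w^* = ww^*$ to be a support-type projection for $c$ in $C^{**}$ and $e = e_1$ (or a suitable function of $e_1$).

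First I would set $A = \overline{cCc}$. Since $c$ is full in $C$ and $c \in A_+$ is strictly positive in $A$ (indeed $c$ is a strictly positive element of $\overline{cCc}$), the element $c$ is a strictly positive, full element of $A$. Let $p \in A^{**}$ be the range (support) projection of $c$, i.e. the open projection with $pc = cp = c$ and $p a p = a$, $pa = ap = a$ for all $a \in A$ (this is the standard fact that the support projection of a strictly positive element acts as a unit on $A$ in the bidual). I would then try $w := p$, so that $w^* = w = ww^* = w^*w = p$. The relations $w^* a = pa \in A \subset A \otimes \mathcal{K}$ and $w^* a w = pap = a \in A$ are then immediate, and $ww^* a = pa = a = ap = aww^*$ holds because $p$ is a unit for $A$ on both sides. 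This reduces the problem to finding $e$ with $0 \le e \le 1$ satisfying $e\, w^*aw = w^*aw\, e = w^*aw$, i.e. $e a = a e = a$ for all $a \in A$.

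For that last relation I would use $e_1$. The hypothesis $e_1 c = c e_1 = c$ does \emph{not} by itself give $e_1 a = a$ for all $a \in A = \overline{cCc}$, so the real work is here. The natural candidate is a function of $e_1$ that acts as a left/right unit on the whole hereditary subalgebra $\overline{cCc}$; since $e_1 c = c = c e_1$, one gets $e_1 (c x c) = c x c$ and $(cxc) e_1 = cxc$ for every $x \in C$, hence $e_1 a = a = a e_1$ for $a$ in the dense subset $c C c$ of $A$, and by continuity for all $a \in A$. Thus I would simply take $e = e_1$ regarded as an element of $C$, note $0 \le e_1 \le 1$, and observe that $e_1$ acts as a two-sided unit on $A$. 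Transporting everything to $A \otimes e_{11} \subset A \otimes \mathcal{K}$, we set $e := e_1 \otimes e_{11}$ and $w := p \otimes e_{11} \in (A\otimes\mathcal{K})^{**}$, and verify the four displayed identities of Definition~\ref{Dcompact} directly from $pa = ap = a$ and $e_1 a = a e_1 = a$.

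The main obstacle, and the point needing care, is confirming that $e_1$ indeed acts as a unit on all of $\overline{cCc}$ rather than merely on $c$: the clean argument is that $cCc$ is dense in $\overline{cCc}$ and $e_1$ fixes each $cxc$ on both sides because $e_1 c = c e_1 = c$, after which a routine $\varepsilon/3$ continuity estimate (using $\|e_1\| \le 1$) extends the identity $e_1 a = a = a e_1$ to the closure. A secondary subtlety is keeping the bidual bookkeeping honest, i.e. checking that the support projection $p$ of the strictly positive element $c$ really satisfies $pa, pap \in A$ and acts as a two-sided unit on $A$; this is standard for strictly positive elements and I would cite it rather than reprove it. Once these two facts are in hand, all four relations in Definition~\ref{Dcompact} follow by direct substitution, so I expect the proof to be short.
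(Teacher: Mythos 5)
Your proposal has a genuine gap, and it sits exactly at the point you treated as an afterthought: the location of $e$. Definition \ref{Dcompact} requires the dominating element $e$ to lie in $(A\otimes \mathcal{K})_+$ for $A=\overline{cCc}$ itself — this requirement is the entire content of the definition — whereas your candidate $e_1$ lives only in $C$. The hypothesis $e_1c=ce_1=c$ does not place $e_1$ inside $\overline{cCc}$, and typically it is not there: if $C$ is unital one may take $e_1=1_C$, and $1_C\in\overline{cCc}$ only in the degenerate case $\overline{cCc}=C$. So your final step, ``set $e:=e_1\otimes e_{11}$,'' produces an element of $C\otimes\mathcal{K}$, not of $\overline{cCc}\otimes\mathcal{K}$, and the verification of Definition \ref{Dcompact} fails at its decisive point. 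Your opening guess that compactness ``is essentially witnessed without needing to pass to $\mathcal{K}$ at all'' is precisely the misconception; your density argument that $e_1$ acts as a two-sided unit on $\overline{cCc}$ is correct (the paper uses the same observation), but it is the easy part, not ``the real work.''

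Moreover, the gap cannot be closed while keeping $w$ equal to the support projection $p$ of $c$: with that choice one would need some $e\in(A\otimes\mathcal{K})_+$ satisfying $e(a\otimes e_{11})=(a\otimes e_{11})e=a\otimes e_{11}$ for all $a\in A$, and such an $e$ need not exist even under the hypotheses of \ref{Pcompact1}. Concretely, let $C=C([0,1],{\rm M}_2)$ with matrix units $E_{ij}$, let $c(t)=E_{11}+tE_{22}$ (so $c$ is full and $0\le c\le 1$), and let $e_1=1_C$. Then $A=\overline{cCc}=\{f\in C([0,1],{\rm M}_2): f(0)\in\mathbb{C}E_{11}\}$, and $A\otimes\mathcal{K}$ consists of the continuous ${\rm M}_2\otimes\mathcal{K}$-valued functions $g$ with $g(0)\in\mathbb{C}E_{11}\otimes\mathcal{K}.$ If $e\in A\otimes \mathcal{K}$ were a two-sided unit for $A\otimes e_{11}$, then, since elements of $A$ take arbitrary values in ${\rm M}_2$ at each $t>0$, one would get $e(t)(1_{{\rm M}_2}\otimes e_{11})=1_{{\rm M}_2}\otimes e_{11}$ for all $t>0$, hence also at $t=0$ by continuity; but $e(0)\in\mathbb{C}E_{11}\otimes\mathcal{K}$ annihilates $E_{22}\otimes e_{11}$, a contradiction. (Note that this $A$ \emph{is} compact, as \ref{Pcompact1} asserts; what fails is only your choice of witness.) So the partial isometry must genuinely move $A$ inside $A\otimes\mathcal{K}$, and this is what the paper's proof accomplishes: it forms the linking algebra $E\subset {\rm M}_2(C)$ whose corners $B\otimes e_{11}$ and $C\otimes e_{22}$ (with $B=\overline{cCc}$) are full, invokes Brown's theorem (2.8 of \cite{Br1}) to obtain a partial isometry $W\in {\rm M}(E\otimes\mathcal{K})$ conjugating $B\otimes e_{11}\otimes \mathcal{K}$ onto $C\otimes e_{22}\otimes\mathcal{K}$, and then defines $w$ (composing $W$ with a shift $U$) so that $w^*Bw$ lands in a copy of $B\otimes\mathcal{K}$ sitting inside the image of $C$, where the transported element $e=UWe_1W^*U^*$ both belongs to $B\otimes\mathcal{K}$ and dominates $w^*Bw$. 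Only at that last step does your (correct) observation that $e_1$ is a unit for $\overline{cCc}$ enter.
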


\begin{proof}
%Set $\overline{cCc}=B.$  By \cite{Br1}, we may view that $e_1\in B\otimes {\cal K}.$ 
 % Let $p\in C^{**}$ be the open projection corresponding to the hereditary \SCA\, $B.$
 % Put $w=p.$ Then $w^*=w,$ $w^*b=b=bw,$ $ww^*b=b=bww^*$ and $e_1w^*bw=e_1b=b=w^*bw$
%  for all $b\in B.$

Set $\overline{cCc}=B.$
%w^*b\in C\otimes {\cal C}
{{Consider  the
 \SCA\,
$$
E=\{(a_{ij})_{2\times 2}: a_{11}\in B, a_{12}\in \overline{BC}, a_{21}\in \overline{CB}, a_{22}\in C\}\subset M_2(C),
$$
containing $B_1=B\otimes e_{11}$ and $C_2=C\otimes e_{22}$ as full corners, where $\{e_{i,j}: 1\le i,j\le 2\}$ 
is  a system of 
%forms 
matrix units for $M_2.$}}
\iffalse
{{Consider  the
 \SCA\,
$$
E=\{(a_{ij})_{2\times 2}: a_{11}\in B, a_{12}\in \overline{BC}, a_{21}\in \overline{CB}, a_{22}\in C\}\subset M_2(C),
$$
containing $B=B\otimes e_{11}$ and $C=C\otimes e_{22}$ as full corner.}}
%%%Let $E\subset {\mathrm{M}}_2(C)$ be a \SCA\,
%%%of the form
%%$$
%%E=\{(a_{ij})_{2\times 2}: a_{11}\in B, a_{12}\in \overline{BC}, a_{21}\in \overline{CB}, a_{22}\in C\}.
%%$$
\fi
We {{may then in a natural way view}}
$B_1\otimes {\cal K}$ and $C_2\otimes {\cal K}$ as full corners of $E\otimes {\cal K}.$
{{We may write 
$$
E\otimes {\cal K}=\{(a_{ij})_{2\times 2}: a_{11}\in B_1\otimes {\cal K}, a_{12}\in \overline{BC}\otimes {\cal K}, a_{21}\in \overline{CB}\otimes {\cal K}, a_{22}\in C_1\otimes {\cal K}\}\subset M_2(C\otimes {\cal K}).
$$
We also write $B_2=B\otimes e_{22}.$
Moreover, let $p_1$ be the range projection of $B_1\otimes {\cal K}$ and let $p_2$
be the range projection of $C_1\otimes {\cal K};$ then
$p_1, p_2\in \mathrm{M}(E\otimes {\cal K}).$
Put
$U=(a_{ij})_{2\times 2},$ where $a_{11}=a_{12}=a_{22}=0$ and $a_{21}=p_1.$ Then $Uc\in E\otimes {\cal K}$
for all $c\in E\otimes {\cal K}$ and $UxU^*\in B_2\otimes {\cal K}\subset C_2\otimes {\cal K}$ for
all $x\in B_1\otimes {\cal K}.$}}

By  2.8 of \cite{Br1}, there  is a partial  isometry $W\in \mathrm{M}(E\otimes {\cal K})$
such that $W^*(B_1\otimes {\cal K})W=C_2\otimes {\cal K},$
$WW^*=p_1$, and $W^{{*}}W=p_2.$ 
Since $W\in \mathrm{M}(E\otimes {\cal K}),$ $Wb\in E\otimes {\cal K}$ for all $b\in B_2\otimes {\cal K}.$
It follows that 
$$
Up_1Wb=UWb\in B_2\otimes {\cal K}\rforal b\in B_2\otimes {\cal K}.
$$

In what follows we identify $B$ with $B_2.$
Put  $w=W^*p_1U^*=W^*U^*.$
Then, for any $b\in B(=B_2),$ 
$$
(w)^*b=Up_1Wb\in B_2\otimes {\cal K},\, w^*bw=UWbW^*U^*\in B_2\otimes {\cal K}(=B\otimes {\cal K}).
$$
This also implies that $w^*$is  a left multiplier of $B_2\otimes {\cal K}.$ So we may write 
$w\in (B_2\otimes {\cal K})^{**}$ (which we identify with $(B\otimes {\cal K})^{**}$).
Moreover, with $e=Up_1We_1W^*p_1U^*\in B_2\otimes {\cal K}(=B\otimes {\cal K})$ (where $e_1:=e_1\otimes e_{22}$),
$$
ew^*bw=UWe_1W^*U^*UWbW^*U^*=UWe_1p_2bW^*U^*=UWe_1bW^*U=UWbW^*U^*=w^*bw
$$
for all $b\in B_2.$
We also have $w^*bwe=ew^*bw$, $ww^*b={{W^*U^*UWb=p_2b}}=b,$
%\%p_1W^*Wp_1b=p_1b=b$, 
and
$bww^*=b$ for all $b\in B_2(=B).$

 Thus,
 $B$ is {{compact}}.
\end{proof}

\begin{cor}\label{CherePA=A}
A $\sigma$-unital full hereditary \SCA\, of a $\sigma$-unital {{compact}} \CA\, is {{compact}}.
\end{cor}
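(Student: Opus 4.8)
The plan is to deduce the statement from Proposition \ref{Pcompact1}. The key preliminary step is to re-express the compactness of $A$ in the \emph{external} form used in \ref{Pcompact1}: I will show that compactness of $A$ yields a $\sigma$-unital \CA\ $C,$ a full element $c\in C_+$ with $0\le c\le 1,$ and an element $e_1\in C$ with $0\le e_1\le 1$ and $e_1c=ce_1=c,$ together with an isomorphism from $A$ onto $\overline{cCc}.$ Granting this, the passage to a full hereditary \SCA\ is routine.

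To produce this data, take $e\in (A\otimes{\cal K})_+$ and the partial isometry $w\in (A\otimes{\cal K})^{**}$ as in Definition \ref{Dcompact}, set $C=A\otimes{\cal K},$ and put $A'=\overline{w^*Aw}.$ The relations $ww^*a=aww^*=a$ show that $\Phi(a)=w^*aw$ is a $*$-homomorphism (e.g. $\Phi(a)\Phi(b)=w^*a(ww^*)bw=\Phi(ab)$), and it is injective because $w\Phi(a)w^*=a;$ hence $\Phi$ is an isomorphism of $A$ onto $A'.$ Next I claim $A'$ is full in $C.$ Since $w^*a\in A\otimes{\cal K}$ for all $a\in A,$ taking adjoints gives $aw\in A\otimes{\cal K}$ as well; then, for $a_1,a_2,a_3\in A,$ the product $(a_1w)\Phi(a_2)(a_3w)^*$ lies in the ideal of $C$ generated by $A'$ and equals $a_1a_2a_3^*$ (using $ww^*a=aww^*=a$). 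As elements of the form $a_1a_2a_3^*$ are dense in $A,$ this ideal contains $A,$ and since $A=A\otimes e_{11}$ is full in $A\otimes{\cal K},$ it follows that $A'$ is full in $C.$ Choosing $c\in A'$ strictly positive with $0\le c\le 1$ then makes $c$ full in $C,$ while $e_1:=e$ satisfies $0\le e_1\le 1$ and, by the last relation in \ref{Dcompact}, acts as a unit on $A'\ni c,$ so $e_1c=ce_1=c.$ Thus $A\cong A'=\overline{cCc}$ with the desired data. Establishing the fullness of $A'$ in $C$ is the only substantial point.

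Finally I transfer the conclusion to $B.$ Identifying $A$ with $A'=\overline{cCc}\subseteq C,$ the algebra $B$ is a full hereditary \SCA\ of $A,$ and $A$ is a full hereditary \SCA\ of $C;$ since a full hereditary \SCA\ of a full hereditary \SCA\ is again full and hereditary, $B$ is a full hereditary \SCA\ of $C.$ Let $b\in B_+$ be strictly positive with $0\le b\le 1;$ then $B=\overline{bCb}$ and $b$ is full in $C.$ Moreover $e_1$ acts as a unit on $A=\overline{cCc}\supseteq B,$ so $e_1b=be_1=b$ with $0\le e_1\le 1.$ Proposition \ref{Pcompact1}, applied to $C,$ the full element $b,$ and $e_1,$ shows that $\overline{bCb}=B$ is compact. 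Since compactness is preserved under isomorphism, the corollary follows.
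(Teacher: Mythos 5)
Your proof is correct and takes essentially the same route as the paper: both use the partial isometry $w$ and the element $e$ from Definition \ref{Dcompact} to carry the hereditary subalgebra into $C=A\otimes {\cal K}$ as a hereditary subalgebra of the form $\overline{cCc}$ admitting $e$ as a local unit, and then invoke Proposition \ref{Pcompact1}. The only differences are organizational—the paper conjugates $B$ directly (setting $c=w^*bw$ and $B_1=w^*Bw$) while you conjugate all of $A$ first and then restrict to $B$, and you spell out the fullness verification the paper leaves implicit—while your unproved identification $A'=\overline{cCc}$ (i.e., hereditariness of $w^*Aw$ in $C$) is exactly parallel to the paper's unproved claim $B_1=\overline{cCc}$.
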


\begin{proof}
Let $A$ be a {{compact}} \CA\, and let $b\in A_+$ be a full element.
% with $0\le b\le 1.$
Put $B=\overline{bAb}.$  Let $e$ and $w$ be  {{as}} in Definition \ref{Dcompact}.
Put $c=w^*bw,$  $B_1=w^*Bw,$ and $C=A\otimes {\cal K}.$  Then $B\cong B_1.$ Moreover,
$B_1=\overline{cCc}$ and $ec=ce=c.$ So, by \ref{Pcompact1},  $B_1$ is {{compact}}. 

\end{proof}

\begin{lem}\label{subcommatrix}
Let $A$ be a $\sigma$-unital  \CA\, which is {{compact}}.
Then, there exists an integer $N\ge 1,$  a partial isometry $w\in {\rm M}_N(A)^{**}$ and $e\in {\rm M}_N(A)$
with $0\le e\le 1$
such that
$$
\,\,{{w^*a,}}\, w^*aw\in {\rm M}_N(A), \,\, ww^*a=aww^*=a,\ \mathrm{and}\ w^*awe=ew^*aw=w^*aw\rforal a\in A.
$$
\end{lem}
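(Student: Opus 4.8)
The goal is to upgrade the definition of compactness, which produces a partial isometry $w$ living in the multiplier/bidual of $A\otimes{\cal K}$ together with a contraction $e\in A\otimes{\cal K}$, to a statement where $w$ and $e$ can be taken inside a finite matrix algebra ${\rm M}_N(A)$ (more precisely $w\in {\rm M}_N(A)^{**}$ and $e\in{\rm M}_N(A)$). The plan is to start from the data $e,w$ furnished by Definition \ref{Dcompact}: we have $w^*a,w^*aw\in A\otimes{\cal K}$ for all $a\in A$, with $ww^*a=aww^*=a$ and $ew^*aw=w^*awe=w^*aw$. The essential observation is that the element $w^*aw$ is, for each fixed $a$, a genuine element of $A\otimes{\cal K}$, and the compression condition $ew^*aw=w^*aw$ together with $0\le e\le 1$ says that the ``range'' of all the elements $w^*aw$ is controlled by the single element $e$. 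Since $A$ is $\sigma$-unital, pick a strictly positive element $b\in A_+$ with $0\le b\le 1$; then $w^*bw\in (A\otimes{\cal K})_+$ and $e(w^*bw)=w^*bw$.

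The first step I would carry out is to cut down to a finite matrix algebra using the last statement of Lemma \ref{Lalmstr1} — or rather the same idea, since that lemma's final assertion is exactly a tool for trapping a positive element of $A\otimes{\cal K}$ (after applying $f_\ep$) inside some ${\rm M}_n(A)$ up to unitary equivalence. Concretely, I would apply $f_\ep$ to the element $e$ (or to $w^*bw$) to obtain a positive element supported, after a small perturbation, in ${\rm M}_N(A)$ for some large $N$. The point is that $e$ is a ``unit'' for all the $w^*aw$, so once $e$ is approximately inside ${\rm M}_N(A)$ we can replace $e$ by $f_\ep(e)$ (or a related cutdown) and keep the compression relation $w^*awe=ew^*aw=w^*aw$ to within a controlled error, then clean up. Since the defining relations of compactness are preserved under conjugating $w$ by a unitary (the unitary can be absorbed into $w$), I would use a unitary $u\in\widetilde{A\otimes{\cal K}}$ with $uf_\ep(e)u^*\in{\rm M}_N(A)$ and replace $w$ by $uw$, $e$ by $uf_\ep(e)u^*$.

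The key steps in order: (i) fix a strictly positive $b\in A_+$ and record that $w^*bw\in(A\otimes{\cal K})_+$ with $e$ acting as a unit on it, so everything relevant lives in the hereditary subalgebra $\overline{e(A\otimes{\cal K})e}$; (ii) invoke the last statement of Lemma \ref{Lalmstr1} to find, for suitable $\ep$, a unitary $u\in\widetilde{A\otimes{\cal K}}$ with $uf_\ep(e)u^*\in{\rm M}_N(A)$; (iii) set $e':=uf_\ep(e)u^*$ and $w':=uw$, and verify that the compactness relations transport: $(w')^*a=w^*u^*a$, $(w')^*aw'=w^*u^*auw$, and the unit relations $w'(w')^*a=a(w')(w')^*=a$; (iv) check that the new $e'$ still satisfies $e'(w')^*aw'=(w')^*awe'=(w')^*aw'$, possibly after replacing $f_\ep(e)$ by $e$ or adjusting with a functional-calculus element that $e'$ dominates; (v) confirm the membership $w'^*a,\ (w')^*aw'\in{\rm M}_N(A)$, which follows because conjugating the already-in-$A\otimes{\cal K}$ elements by $u$ and trapping by $e'\in{\rm M}_N(A)$ confines them to the finite corner.

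The main obstacle I anticipate is step (iv): the exact compression identity $ew^*aw=w^*aw$ holds for the \emph{original} $e$, but after passing to $f_\ep(e)$ and conjugating by $u$ I only get this identity approximately, and I must convert it into an \emph{exact} identity for a possibly different contraction $e'\in{\rm M}_N(A)$. The remedy is to choose $e'$ to be a function-calculus element that is identically $1$ on the spectrum where the relevant $w^*aw$ are supported — i.e. exploit that $e$ acting as a unit means $w^*aw=e\,w^*aw$, so replacing $e$ by any $g(e)$ with $g\equiv1$ near $1$ and $0\le g\le 1$ preserves the exact relation $g(e)\,w^*aw=w^*aw$ precisely because $ew^*aw=w^*aw$ forces $w^*aw\in\overline{(1-e)^\perp\cdots}$. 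Making this trapping argument rigorous — ensuring the cut-down unit still acts as an exact identity on all $w^*aw$ simultaneously while landing in ${\rm M}_N(A)$ — is the technical heart; once that is arranged, the membership and the unit relations for $w'$ follow routinely from the $\sigma$-unitality and the fact that ${\rm M}_N(A)$ is a hereditary subalgebra of $A\otimes{\cal K}$.
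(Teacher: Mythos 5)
There is a genuine gap, and it sits exactly at your step (ii). The last statement of Lemma \ref{Lalmstr1} (producing a unitary $u\in \widetilde{A\otimes {\cal K}}$ with $uf_\ep(d)u^*\in {\rm M}_n(A)$) is proved, and stated, under the standing hypothesis of that lemma that $A$ \emph{almost has stable rank one}; its proof reduces to the first statement, which rests on Proposition 3.3 of \cite{Rlz} and genuinely uses that hypothesis. Lemma \ref{subcommatrix}, however, assumes only that $A$ is $\sigma$-unital and compact, and compactness does not imply almost stable rank one: by Theorem \ref{Tpedersen} compactness is equivalent to $A={\rm Ped}(A)$, so for example a non-unital hereditary sub-C*-algebra $\overline{a{\cal O}_2 a}$ (with $a\ge 0$ of full spectrum, which has $1_{{\cal O}_2}$ as a local unit, hence is compact by Proposition \ref{Pcompact1}) is compact but contains projections and fails stable rank one, hence does not almost have stable rank one by the remark in Definition \ref{Dalst1}. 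For such $A$ the unitary you need may simply not exist, so the whole plan collapses at its first substantive step. (A smaller, fixable error: even granting $u$, your transport $w'=uw$ is conjugation on the wrong side --- $(uw)^*a(uw)=w^*u^*auw$ and $u^*au\notin A$, so none of the compactness relations apply, and $uww^*u^*a\ne a$; you would need $w'=wu^*$, $e'=uf_\ep(e)u^*$. Also, your worry in (iv) is a non-issue: $e\,w^*aw=w^*aw$ exactly implies $g(e)\,w^*aw=w^*aw$ exactly for any continuous $g$ with $g(1)=1$, so exactness is never lost.)

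What the unitary was supposed to buy can be obtained without any stable rank hypothesis, and this is what the paper does. Using the identification $a\mapsto w^*aw$, one may regard $A$ as a \emph{full} hereditary sub-C*-algebra $\overline{bCb}$ of $C=A\otimes{\cal K}$ (with $b$ strictly positive in $A$) admitting a local unit $e_1\in C$ with $e_1b=b=be_1$. Fullness of $b$ and Lemma \ref{Lfullep} give an exact identity $\sum_{i=1}^m x_i^*bx_i=f_{1/4}(e_1)$; forming the one-row matrix $X^*=(x_1^*b^{1/2},\dots,x_m^*b^{1/2})\in {\rm M}_m(C)$ one gets $X^*X=f_{1/4}(e_1)$ and $XX^*\in {\rm M}_m(A)$, and then the polar decomposition of $X$ in ${\rm M}_m(C)^{**}$ (together with the range projection $p$ of $b$) produces the desired partial isometry $w=(Xp)^*$ and the element $e=XX^*$ directly, with all identities exact. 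In other words, the column-matrix-plus-polar-decomposition trick replaces the unitary conjugation, which is precisely the device one must use when no comparison or stable rank one is available.
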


\begin{proof}
{{If $A$ is unital, then we may choose $N=1$ and $e=1_A.$}}

Let $b\in  A$ be a strictly positive element with $0\le b\le 1.$
We may assume, by  Definition \ref{Dcompact}, that   $A$ is a full hereditary \SCA\, of a
$\sigma$-unital \CA\, $C$  such that  
%{{by Definition \ref{subcommatrix}}}
%$b\in C$ and
there is  $e_1\in C$ with $0\le e_1\le 1$ such that
$be_1=b={{e_1}}b.$ Moreover, $\overline{bCb}=A.$
Since  $b$ is full in $C,$  by  Lemma \ref{Lfullep},
there exist  $x_1, x_2, ...,x_m\in C$ such that
$$
\sum_{i=1}^m x_i^*bx_i=f_{1/4}(e_1).
$$
Note that $f_{1/4}(e_1)b=b={{b}}f_{1/4}(e_1).$
Consider $X^*:=(x_1^*b^{1/2}, x_2^*b^{1/2},...,x_m^*b^{1/2})$ as {{a}} 1-row element {{of}} $\mathrm{M}_m(C).$
Then
$$
X^*X=f_{1/4}(e_1)\andeqn XX^*\in \mathrm{M}_m(\overline{b^{1/2}Cb^{1/2}})=\mathrm{M}_m(A).
$$
{{Consider}}  the polar decomposition $X=v|X^*X|^{1/2}$ of $X$ in $\mathrm{M}_m(C)^{**}.$
Then
$$
vav^*=v|X^*X|^{1/2}a|X^*X|^{1/2}v^*=XaX^*\in \mathrm{M}_m(A)\rforal a\in A.
$$
Note {{that}} $Xb^{1/n}\in {\rm M}_m(A)$ {{for all $n.$}}  Denote by  $p$ the range  projection of $b.$
Then $Xp\in \mathrm{M}_m(A)^{**}.$ Note also
that $Xp=v|X^*X|^{1/2}p=vp.$ Set $w=(Xp)^*.$ Then $w^*=vp$ and $ww^*=pX^*Xp=pf_{1/4}(e_1)p=p.$
So $w$ is a partial isometry.   {{Note that, for any $a\in A,$ $w^*b^{1/n}a=vpb^{1/n}a=Xb^{1/n}a\in A.$
It follows that $w^*a\in A.$}}
%Set $e=XX^*.$ 
Then
$$
 w^*aw=XaX^*\in {\rm M}_m(A)\rforal a\in A.
$$
{{Set $e=XX^*.$ Then,}}
\beq\nonumber
 w^*awe=XaX^*XX^*=Xaf_{1/4}(e_1)X^*=XaX^*=w^*aw\andeqn\\
ew^*aw=XX^*XaX^*=Xf_{1/4}(e_1)aX^*=XaX^*=w^*aw.
\eneq
\end{proof}

\begin{lem}\label{compactrace}
Let $A$ be a $\sigma$-unital  {{compact}}  \CA.
%\, with ${\mathrm{T}(A)}\not=\O.$
%{\red{The weak* closure}}  $\overline{{\mathrm{T}(A)}}^\mathrm{w}$ of ${\mathrm{T}(A)}$ in ${\widetilde{\mathrm{T}}}(A)$ is compact and
{{Then}} $0\not\in \overline{{\mathrm{T}(A)}}^\mathrm{w}.$
%  Moreover,
{{Hence}}
if $a\in A$ with $0\le a\le 1$ is
%strictly positive, or is
full,
then
there is $d>0$ such
that
$$
\mathrm{d}_\tau(a)\ge d\tforal \tau\in \overline{{\mathrm{T}(A)}}^\mathrm{w}.
$$

\end{lem}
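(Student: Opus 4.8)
The plan is to prove the two assertions in turn, the first (that $0\notin \overline{{\mathrm{T}(A)}}^{\mathrm{w}}$) being the substantive one and the second a soft consequence of compactness of the trace simplex. For the first, the idea is to exhibit a single element $a_0\in A_+$ whose value under every tracial state is bounded below by a fixed constant; since $\tau\mapsto\tau(a_0)$ is weak*-continuous and vanishes at the zero functional, this pushes $0$ out of the closure. I would first invoke Lemma~\ref{subcommatrix} to obtain an integer $N\ge 1,$ a partial isometry $w\in \mathrm{M}_N(A)^{**}$ and $e\in \mathrm{M}_N(A)$ with $0\le e\le 1$ such that $w^*a,\ w^*aw\in \mathrm{M}_N(A),$ $ww^*a=aww^*=a,$ and $ew^*aw=w^*awe=w^*aw$ for all $a\in A.$ Writing $e=(e_{ij}),$ set $a_0=\sum_{i=1}^N e_{ii}\in A_+.$ Fix a strictly positive $b\in A$ with $0\le b\le 1,$ and for $\tau\in \mathrm{T}(A)$ let $\hat\tau=\tau\otimes \mathrm{Tr}_N$ be the induced positive trace on $\mathrm{M}_N(A),$ where $\mathrm{Tr}_N$ is the non-normalized trace on $\mathrm{M}_N;$ then $\hat\tau(e)=\sum_i\tau(e_{ii})=\tau(a_0).$

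The key computation is the trace identity $\hat\tau(w^*aw)=\tau(a)$ for $a\in A_+:$ since $w^*a^{1/2}$ and $a^{1/2}w$ both lie in $\mathrm{M}_N(A),$ the trace property gives $\hat\tau(w^*a^{1/2}\cdot a^{1/2}w)=\hat\tau(a^{1/2}ww^*a^{1/2}),$ and $ww^*a^{1/2}=a^{1/2}$ collapses the right side to $\hat\tau(a)=\tau(a).$ Next, using $e\,w^*f_\ep(b)w=w^*f_\ep(b)w=w^*f_\ep(b)w\,e$ and $0\le e\le 1,$ one has $w^*f_\ep(b)w=e\,w^*f_\ep(b)w\,e\le \|w^*f_\ep(b)w\|\,e\le e$ for every $\ep.$ Combining these with positivity of $\hat\tau$ gives $\tau(a_0)=\hat\tau(e)\ge \hat\tau(w^*f_\ep(b)w)=\tau(f_\ep(b))$ for every $\ep;$ taking the supremum over $\ep$ and using that $(f_\ep(b))$ is an approximate unit for $A$ and $\tau$ is a state, we obtain $\tau(a_0)\ge \mathrm{d}_\tau(b)=\|\tau\|=1.$ Thus $\tau(a_0)\ge 1$ on $\mathrm{T}(A),$ hence on its weak* closure, and since the zero functional sends $a_0$ to $0$ we conclude $0\notin \overline{{\mathrm{T}(A)}}^{\mathrm{w}}.$

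For the second assertion, recall from \ref{DTtilde} that $\overline{{\mathrm{T}(A)}}^{\mathrm{w}}$ is weak*-compact. Let $a\in A$ be full with $0\le a\le 1.$ By the first assertion every $\tau\in \overline{{\mathrm{T}(A)}}^{\mathrm{w}}$ is nonzero, and I claim $\mathrm{d}_\tau(a)>0$ for each such $\tau.$ Indeed, if $\mathrm{d}_\tau(a)=0$ then $\tau(f_\delta(a))=0$ for all $\delta;$ for arbitrary $c\in A_+,$ since $a$ is full and $t\mapsto tf_\ep(t)$ is supported away from $0,$ Lemma~\ref{Lfullep} yields $cf_\ep(c)=\sum_i x_i^*ax_i,$ and the trace property together with $\tau(f_\delta(a))=0$ and a routine norm estimate forces $\tau(cf_\ep(c))=0;$ letting $\ep\to 0$ gives $\tau(c)=0,$ so $\tau=0,$ a contradiction. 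Finally, $\mathrm{d}_\tau(a)=\sup_\ep\tau(f_\ep(a))$ is a supremum of weak*-continuous functions, hence lower semicontinuous on the compact set $\overline{{\mathrm{T}(A)}}^{\mathrm{w}};$ being strictly positive there, it attains a positive minimum $d>0,$ which is the asserted bound.

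The crux is the first assertion, and specifically the recognition that the diagonal sum $a_0=\sum_i e_{ii}$ of the bounded ``local unit'' $e\in \mathrm{M}_N(A)$ is exactly the right element of $A:$ the finiteness of the matrix size $N$ — which is precisely what compactness buys through Lemma~\ref{subcommatrix} — is what turns $\hat\tau(e)$ into a genuine element-value $\tau(a_0)$ in $A,$ while the inequality $w^*f_\ep(b)w\le e$ and the identity $\hat\tau(w^*aw)=\tau(a)$ pin this value below by $\mathrm{d}_\tau(b)=1.$ The point to stress is that a single strictly positive $b$ does \emph{not} work directly: in the non-unital case no element of $A$ is bounded below by a local unit, so routing the lower bound through the finite amplification $e$ rather than through $b$ itself is essential, and this is where I expect the only real subtlety to lie.
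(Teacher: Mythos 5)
Your proof is correct, and its skeleton coincides with the paper's: both arguments manufacture a quantity which is at least $1$ on every tracial state and which depends continuously on the trace, thereby separating $0$ from $\overline{{\mathrm{T}(A)}}^{\mathrm{w}},$ and both then obtain $d>0$ from fullness, lower semicontinuity of $\tau\mapsto \mathrm{d}_\tau(a),$ and compactness of $\overline{{\mathrm{T}(A)}}^{\mathrm{w}}$ (the second halves are essentially identical, yours merely spelling out via Lemma~\ref{Lfullep} why $\mathrm{d}_\tau(a)>0$ for every nonzero trace, which the paper asserts in one line). The implementations of the first half, however, differ in a worthwhile way. The paper does not invoke the statement of Lemma~\ref{subcommatrix} but rather the setup of its proof: $A$ is realized as a full hereditary sub-C*-algebra of $C=B\otimes {\cal K}$ with a local unit $e_1\in C_+$ satisfying $e_1x=xe_1=x$ on $A;$ each $\tau\in \mathrm{T}(A)$ is extended to a densely defined trace on $C,$ and the bound is $\tau(f_{1/4}(e_1))\ge \mathrm{d}_\tau(e_0)=1$ with $e_0$ strictly positive in $A.$ The paper's separating set $S=\{\tau\in \mathrm{T}_1(A):\tau(f_{1/4}(e_1))\ge 1\}$ is thus defined by evaluating extensions at an element lying \emph{outside} $A,$ so its closedness rests on continuity properties of the extension map, a point the paper passes over quickly. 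You instead take Lemma~\ref{subcommatrix} as a black box and, via the bounded trace $\hat\tau=\tau\otimes \mathrm{Tr}_N$ on $\mathrm{M}_N(A),$ the identity $\hat\tau(w^*aw)=\tau(a),$ and the domination $w^*f_\ep(b)w\le e,$ pull the bound back to the honest element $a_0=\sum_i e_{ii}\in A:$ the separating functional becomes evaluation at an element of $A,$ weak*-continuity is automatic, and no extension of traces to the stabilization is needed. That is the genuine gain of your route; its cost is the extra matrix bookkeeping, which the paper avoids by reusing the stabilized picture it already has in hand. Note finally that both proofs rely equally on the identification, made at the end of \ref{DTtilde}, of $\overline{{\mathrm{T}(A)}}^{\mathrm{w}}$ with the weak*-closure of $\mathrm{T}(A)$ in $A^*$ and on its compactness, so you are on the same footing as the paper in that respect.
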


\begin{proof}
%\%It is clear that we only need to consider the case that 
{{We may assume that}} ${\mathrm{T}}(A)\not=\O.$
As in the proof of Lemma
%{\red{George proposed to change notation here, I choose not to---just don't have enough energy to do so}}
%By
\ref{subcommatrix}, \wilog, we  may assume that $A$ is a full hereditary \SCA\, of $B\otimes {\cal K}$ for some
$\sigma$-unital
\CA\, $B$ such that  there is $e_1\in (B\otimes {\cal K})_+,$  $0\le e_1\le 1,$ and
$e_1x=xe_1=x$ for all $x\in A.$  Put $C=B\otimes {\cal K}.$
%Let ${\widetilde{\mathrm{T}}}(C)$ be the convex set of all traces defined on the Pedersen ideal $\mathrm{Ped}(C).$
Note that $A\subset \mathrm{Ped}(C).$
Since $A$ is full in $C,$ we may also assume that each $\tau\in {\mathrm{T}(A)}$ has been extended  to
{{a unique}}  element of  ${\widetilde{\mathrm{T}}}(C).$ Let $e_0\in A_+$ be a
strictly positive
element of $A.$
 %%%Since
 %a strictly element of $A$ is full, we can always assume that
%%% $a$ is full in $A,$  it  is also full in $C.$
%%%By \ref{Lfullep}, there are $x_1, x_2,....,x_m\in C$ such that
%%%\beq\label{tcompact-2}
%%%\sum_{i=1}^mx_i^*ax_i=f_{1/4}(e_1).
%%%%\eneq
%Note that $f_{1/4}(e_1)x=xf_{1/4}(e_1)=x$ for all $x\in A,$ and 
%$f_{1/4}(e_1)\in \mathrm{Ped}(C).$
%Put
%$$T_0=\{\tau\in {\widetilde{\mathrm{T}}}(C): \tau(b)\le 1\tforal b\in A\andeqn \|b\|\le 1\}.$$
%%Since $A\subset {\rm Ped(C)},$ it is clear that $\overline{{\mathrm{T}(A)}}^\mathrm{w}\subset  {\mathrm{T}}_{\red{1}}(A).$
{{Recall from \ref{DTtilde} that $\overline{{\mathrm{T}(A)}}^\mathrm{w}$ is weak*-compact subset of ${\rm T}_1(A).$}}
%Since $
%Since $A\subset {\rm Ped(C)},$  is clear that $\overline{{\mathrm{T}(A)}}^\mathrm{w}\subset  {\mathrm{T}}_{\red{1}}(A).$
%On the other hand,  since  ${\mathrm{T}}_{\red{1}}(A)$ has been  identified with a subset  in the unit ball of $A^*,$ it is weak*- compact.
  Consider the set
$$
S=\{\tau\in  {\mathrm{T}}_{{1}}(A): \tau(f_{1/4}(e_1))\ge 1\}.
%{\widetilde{\mathrm{T}}}(C): \tau(f_{1/4}(e_1))\ge 1 \andenq  \tau(a)\le 1\}.
$$
Then 
% {{$0\not\in S.$}}
$S$ is {{closed and $0\not\in S.$}}
% compact.
Note that
$$
{\mathrm{T}(A)}=\{\tau\in {\widetilde{\mathrm{T}}}(C): \mathrm{d}_\tau(e_0)=1\}.
$$
{{Then,}}  for any $\tau\in {\mathrm{T}(A)},$
\beq\label{tcompact-4}
\tau(f_{1/4}(e_1))\ge \mathrm{d}_\tau(e_0)=1.
\eneq
It follows that ${\mathrm{T}(A)}\subset S,$ and so $\overline{{\mathrm{T}(A)}}^\mathrm{w}\subset S.$
Therefore, 
%$\overline{{\mathrm{T}(A)}}^\mathrm{w}$
% the weak*-closure of ${\mathrm{T}(A)}$
%is compact and
$0\not\in \overline{{\mathrm{T}(A)}}^\mathrm{w}.$

Since $a$ is full in $A,$ $\tau(a)>0$ for all nonzero traces. In 
%other words, 
particular, $\tau(a)>0$  for all 
$\tau\in \overline{{\mathrm{T}(A)}}^\mathrm{w}.$ 
% {{It follows  that
{{Thus the lower semicontinuous function 
$\tau\mapsto {\rm d}_\tau(a)$  si strictly positive 
%or all $\tau$ 
on the compact set $\overline{{\mathrm{T}(A)}}^\mathrm{w}.$ Therefore 
$d:=\inf\{{\rm d}_\tau(a): \tau\in \overline{{\mathrm{T}(A)}}^\mathrm{w}\}>0.$}}

\iffalse
By \eqref{tcompact-2},
\beq\label{tcompact-3}
\la f_{1/4}(e_1)\ra\le  m\la a\ra \,\,\, {\rm in}\,\,\, {\rm Cu}(C).
\eneq
Put $d=1/m.$
It follows that, for any $\tau\in \overline{{\mathrm{T}(A)}}^\mathrm{w},$
$$
\mathrm{d}_\tau(a)\ge \tau(f_{1/4}(e_1))/m\ge 1/m=d.
$$
\fi
\end{proof}

\begin{cor}\label{Csemicon}
Let $A$ be a $\sigma$-unital \CA\, which is {{compact}} 
% with  ${\mathrm{QT}}(A)={\mathrm{T}(A)}\neq\O$ 
and
let $a\in A$ be a strictly positive element with $0\le a\le 1.$
Suppose
that
$$d{{\,:=}}\inf \{\mathrm{d}_\tau(a): \tau\in \overline{{\mathrm{T}(A)}}^\mathrm{w}\}>0.$$
Then, for any $d/3<d_0<d,$  there exists an integer $n\ge 1$ such that, for all $m\ge n,$
$$
\tau(f_{1/m}(a))\ge d_0\tand \tau(a^{1/m})\ge d_0\tforal \tau\in \overline{{\mathrm{T}(A)}}^\mathrm{w}.
$$
\end{cor}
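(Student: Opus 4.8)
The plan is to reduce both asserted inequalities to a single compactness argument. The essential facts I would invoke are that $K := \overline{{\mathrm{T}(A)}}^\mathrm{w}$ is compact (recorded at the end of \ref{DTtilde}, and applicable here by \ref{compactrace}) and that, by the same discussion in \ref{DTtilde}, the subspace topology on $K$ coincides with the weak* topology; in particular $\tau\mapsto \tau(c)$ is continuous on $K$ for every $c\in A$. We may assume $K\neq\O$, since otherwise both inequalities hold vacuously.

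For each integer $m\ge 1$ I would set $g_m(\tau)=\tau(f_{1/m}(a))$ and $h_m(\tau)=\tau(a^{1/m})$ for $\tau\in K$. Since $f_{1/m}(a)\in{\mathrm{Ped}(A)}_+$, the function $g_m$ is continuous on $K$ by \ref{Ddimf}; since $a^{1/m}\in A$, the function $h_m$ is continuous on $K$ by the remark above. Because $0\le a\le 1$, the elements $f_{1/m}(a)$ and $a^{1/m}$ increase with $m$, so $(g_m)$ and $(h_m)$ are nondecreasing sequences of continuous functions, and by \ref{Ddimf} they converge pointwise on $K$ to $\mathrm{d}_\tau(a)$. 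The hypothesis $d=\inf\{\mathrm{d}_\tau(a):\tau\in K\}$ gives $\mathrm{d}_\tau(a)\ge d>d_0$ for every $\tau\in K$ (we use only $d_0<d$; the lower bound $d_0>d/3$ is not needed here).

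The step I expect to require care is the passage from pointwise to uniform control, since the limit $\tau\mapsto \mathrm{d}_\tau(a)$ is in general only lower semicontinuous, so Dini's theorem does not apply. The point is that I need a uniform \emph{lower} bound rather than uniform convergence, and this follows from monotonicity together with compactness: for each $\tau\in K$ choose $m(\tau)$ with $g_{m(\tau)}(\tau)>d_0$ and $h_{m(\tau)}(\tau)>d_0$; by continuity there is an open neighborhood $U_\tau$ of $\tau$ in $K$ on which both functions exceed $d_0$, and by monotonicity the same holds on $U_\tau$ for all larger indices. The sets $\{U_\tau:\tau\in K\}$ cover the compact space $K$; extracting a finite subcover $U_{\tau_1},\dots,U_{\tau_k}$ and putting $n=\max\{m(\tau_1),\dots,m(\tau_k)\}$, I would conclude that for every $m\ge n$ and every $\tau\in K$ one has $g_m(\tau)\ge d_0$ and $h_m(\tau)\ge d_0$, which is exactly the claim. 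Apart from this observation about semicontinuity, every ingredient — continuity and monotonicity of the approximants and their pointwise convergence to $\mathrm{d}_\tau(a)$ — is immediate from \ref{Ddimf}.
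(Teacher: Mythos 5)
Your proof is correct and follows exactly the route the paper takes: the paper's (one-line) proof invokes precisely the fact that the two sequences $(\tau(f_{1/m}(a)))_m$ and $(\tau(a^{1/m}))_m$ are increasing sequences of continuous functions converging pointwise to $\mathrm{d}_\tau(a)$ on the compact set $\overline{{\mathrm{T}(A)}}^\mathrm{w}$, and your neighborhood-plus-finite-subcover argument is the standard way to spell this out (including the correct observation that one needs a uniform lower bound rather than Dini-type uniform convergence, and that the bound $d_0>d/3$ is not used).
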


\begin{proof}
{{This holds as both increasing sequences $(\tau(f_{1/m}(a)))_{m=1}^{\infty}$ and 
$(\tau(a^{1/m}))_{m=1}^{\infty}$ converge pointwise to $d_\tau(a),$  and 
$\overline{{\mathrm{T}(A)}^{\mathrm{w}}}$ is comapct.}}
%is a standard compactness argument, using Lemma \ref{compactrace}.
\end{proof}

\begin{thm}\label{Tpedersen}
Let $A$ be a $\sigma$-unital  \CA.  Then $A$ is {{compact}} if and only if
${\mathrm{Ped}(A)}=A.$
%$P(A\otimes {\cal K})\cap A=A.$

\end{thm}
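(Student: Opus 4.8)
The plan is to prove the two implications separately. Throughout write $b$ for a strictly positive element of $A$ with $0\le b\le 1$, and recall that $\mathrm{Ped}(A)$ is the algebraic ideal of $A$ generated by the elements $(c-\ep)_+$ ($c\in A_+$, $\ep>0$), each of which has the local unit $f_{\ep/2}(c)\in A$. It is worth noting at the outset that, when $A$ is non-unital, the strictly positive element $b$ has \emph{no} local unit in $A$ (such a unit would be forced to equal $1$); hence neither implication can be had simply by producing a local unit for $b$, and both need genuine work. The one elementary fact I will reuse is: if $y\in D_+$ admits $e\in D_+$ with $0\le e\le 1$ and $ey=y$, then $f_{1/2}(e)y=y$ with $f_{1/2}(e)\in\mathrm{Ped}(D)$, so $y\in\mathrm{Ped}(D)$.

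For ``compact $\Rightarrow \mathrm{Ped}(A)=A$'', I would first apply \ref{subcommatrix} to get $N$, a partial isometry $w\in{\rm M}_N(A)^{**}$ and $e\in{\rm M}_N(A)$ with $0\le e\le1$, satisfying $w^*x,\,w^*xw\in{\rm M}_N(A)$, $ww^*x=x$, and $ew^*xw=w^*xw$ for all $x\in A=A\otimes e_{11}$. Fix $a\in A_+$ and set $s=w^*a^{1/2}\in{\rm M}_N(A)$. Then $s^*s=a^{1/2}ww^*a^{1/2}=a$ and $ss^*=w^*aw$, and since $e$ is a local unit for $ss^*$, the observation gives $ss^*\in\mathrm{Ped}({\rm M}_N(A))$. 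As $\mathrm{Ped}$ is an ideal,
\[
a^2=(s^*s)(s^*s)=s^*(ss^*)s\in{\rm M}_N(A)\cdot\mathrm{Ped}({\rm M}_N(A))\cdot{\rm M}_N(A)\subseteq\mathrm{Ped}({\rm M}_N(A)).
\]
Because $a^2\in A\otimes e_{11}$ and $\mathrm{Ped}({\rm M}_N(A))\cap(A\otimes e_{11})=\mathrm{Ped}(A)\otimes e_{11}$ (the Pedersen ideal passes to corners), we get $a^2\in\mathrm{Ped}(A)$. Applying this with $a^{1/2}$ in place of $a$ gives $a=(a^{1/2})^2\in\mathrm{Ped}(A)$; as $a\in A_+$ was arbitrary, $\mathrm{Ped}(A)=A$. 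The little trick of squaring to enter the ideal and then halving the exponent is, I expect, the crux of this direction.

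For the converse, the idea is to realize $A$ (up to isomorphism) as a full hereditary \SCA\ of a $\sigma$-unital algebra in which a strictly positive element \emph{does} have a local unit, so that \ref{Pcompact1} applies. Since $b\in A=\mathrm{Ped}(A)$, I would write (from the description of $\mathrm{Ped}(A)$ and a standard symmetrization, cf.~\cite{Pbook}) $b=\sum_{j=1}^m x_j^*\,d\,x_j$ with $d=(c-\ep)_+$, $x_j\in A$, where $d$ has the local unit $g=f_{\ep/2}(c)\in A$. Put $Z=(x_1^*d^{1/2},\dots,x_m^*d^{1/2})$, a row over $A$, so that $ZZ^*=b$ and $Z^*Z\in{\rm M}_m(A)$. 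Writing $Z^*Z=D^{1/2}(x_ix_j^*)_{ij}D^{1/2}$ with $D=\diag(d,\dots,d)$ and $G=\diag(g,\dots,g)$, one checks $G(Z^*Z)=(Z^*Z)G=Z^*Z$, so $G\in{\rm M}_m(A)$ is a local unit for $Z^*Z$; and $Z^*Z$ is full in ${\rm M}_m(A)$ because it is Cuntz equivalent to the full element $ZZ^*=b$. Then \ref{Pcompact1}, with $C={\rm M}_m(A)$, the full element $Z^*Z$ (rescaled to norm $\le1$) and local unit $G$, shows $\overline{(Z^*Z){\rm M}_m(A)(Z^*Z)}$ is compact; since this algebra is isomorphic to $\overline{(ZZ^*){\rm M}_m(A)(ZZ^*)}=\overline{bAb}=A$, we conclude $A$ is compact.

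The step I expect to be most delicate is the very first reduction in the converse: turning the bare membership $b\in\mathrm{Ped}(A)$ into a usable symmetric factorization $b=\sum_j x_j^*\,d\,x_j$. The honest route is to start from a general ideal expression $b=\sum_k u_k(c_k-\ep_k)_+v_k$, absorb the local units $f_{\ep_k/2}(c_k)$, polarize, and then pass from $b\le\sum_j r_j^*(c_j-\ep_j)_+r_j$ to an exact sum via the factorization ``$b\le\lambda y\Rightarrow b^{1/2}=z\,y^{1/2}$'' (1.4.5 of \cite{Pbook}, already used in the proof of \ref{Lrorm}); if one does not wish to merge the generators into a single $c$, one simply carries a diagonal $D$ of several $(c_j-\ep_j)_+$ through the same computation. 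Everything else is routine bookkeeping with hereditary subalgebras and the elementary behavior of $\mathrm{Ped}$ under corners and ideals.
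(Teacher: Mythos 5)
Your proposal is correct in both directions and runs on the same machinery as the paper (a row matrix over ${\rm M}_m(A)$ whose ``source'' $Z^*Z$ carries a diagonal local unit, plus transfer along a partial isometry), but the execution differs in ways worth recording. In the direction compact $\Rightarrow \mathrm{Ped}(A)=A$, the paper sets $z=w^*a^{1/2}$ for the strictly positive element $a$, quotes 5.6.2 of \cite{Pbook} to pass from $zz^*\in \mathrm{Ped}(A\otimes{\cal K})$ to $a=z^*z\in \mathrm{Ped}(A\otimes {\cal K})$, and then quotes Theorem 2.1 of \cite{aTz} to descend to $\mathrm{Ped}(A)=A$. You work in ${\rm M}_N(A)$ via \ref{subcommatrix} and replace both citations by elementary facts: $a^2=s^*(ss^*)s\in \mathrm{Ped}({\rm M}_N(A))$ because the Pedersen ideal is an algebraic ideal, and $\mathrm{Ped}({\rm M}_N(A))\cap (A\otimes e_{11})\subseteq \mathrm{Ped}(A)\otimes e_{11}$ by minimality (since ${\rm M}_N(\mathrm{Ped}(A))$ is a dense algebraic ideal of ${\rm M}_N(A)$); the squaring-then-halving trick finishes. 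That is a more self-contained argument than the paper's, and it is correct. In the converse the two proofs genuinely diverge: the paper never produces an exact factorization. From $a^{1/2}\le \sum_i g_i(b_i)$ it forms $Z=(g_1(b_1)^{1/2},\dots,g_m(b_m)^{1/2})$, so that $ZZ^*\ge a$ (hence $A\subset \overline{ZZ^*{\rm M}_m(A)ZZ^*}$) while $Z^*Z$ has the diagonal local unit $\diag(f_{\sigma/2}(b_1),\dots,f_{\sigma/2}(b_m))$, and the polar decomposition $Z^*=V|Z^*|$ then verifies Definition \ref{Dcompact} for $A$ directly. You instead insist on an exact identity $b=\sum_j x_j^*dx_j$ and then invoke \ref{Pcompact1} together with $\overline{(Z^*Z){\rm M}_m(A)(Z^*Z)}\cong \overline{(ZZ^*){\rm M}_m(A)(ZZ^*)}=A$; this buys re-use of an already proved proposition and a clean auxiliary statement (every positive Pedersen element is $W^*W$ with $WW^*$ admitting a local unit), at the cost of the factorization step.

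That step, as you sketch it, is the one genuine (though repairable) gap. Pedersen's 1.4.5 yields, from $b\le y$, a factorization $b^{1/2}=zy^{\beta}$ with $z\in A$ only for $\beta<1/2$; for $\beta=1/2$ one gets only $z\in A^{**}$. So either you end with $b=zy^{2\beta}z^*$, $2\beta<1$, and the fractional power $y^{2\beta}$ of $y=\sum_j r_j^*d_jr_j$ cannot be redistributed into $\sum_j x_j^*d_jx_j$ (a sum of elements with local units need not itself have a local unit, which is precisely the difficulty the diagonal trick is designed to evade); or you take $\beta=1/2$ and the elements $x_j=r_jz^*$ land in $A^{**}$ rather than $A$. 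The standard repair: with $y=\sum_i a_i$, $e_ia_i=a_ie_i=a_i$, the limits $x_i:=\lim_n b^{1/2}(y+\tfrac1n)^{-1/2}a_i^{1/2}$ exist in $A$ (using $b\le y$) and give $b=\sum_i x_ix_i^*$ with $x_i^*x_i\le a_i$; then $(1-e_i)x_i^*x_i(1-e_i)\le (1-e_i)a_i(1-e_i)=0$ forces $x_i=x_ie_i$, so the row $Z=(x_1,\dots,x_m)$ satisfies $ZZ^*=b$ while $\diag(e_1,\dots,e_m)$ is a local unit for $Z^*Z=(x_i^*x_j)$, after which your appeal to \ref{Pcompact1} goes through verbatim (this is the Riesz-type decomposition in the spirit of \cite{CP}). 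Alternatively, and more economically, you can drop the exact factorization altogether and note, as the paper does, that the inequality alone suffices: the polar decomposition of $Z^*$ transfers the whole hereditary subalgebra $\overline{ZZ^*{\rm M}_m(A)ZZ^*}\supseteq A$, not just the single element $ZZ^*$, into the hereditary subalgebra on which the diagonal local unit acts as a unit.
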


\begin{proof}
Let $a\in A_+$ be a strictly positive element. 

First assume that $A$ is {{compact}}.
{{We will identify $A$ with $A\otimes e_{11}\subset A\otimes {\cal K}.$}} Then there exists
$e\in (A\otimes {\cal K})_+$ and a partial isometry ${{w}}\in (A\otimes {\cal K})^{**}$ such that
\beq\nonumber
w^*xw\in A\otimes {\cal K},\,\,\, ww^*x=xww^*=x, \andeqn w^*xwe=ew^*xw=w^*xw\rforal x\in A.
\eneq
{{S}}et $z=w^*a^{1/2}.$ Then $zz^*\in \mathrm{Ped}(A\otimes {\cal K})_+.$
{{Hence, by}} {{5.6.2 of \cite{Pbook},}}  $a=z^*z\in \mathrm{Ped}(A\otimes {\cal K})_+.$ 
%{{(see 5.6.2 of \cite{Pbook})}}.  
Therefore
the hereditary \SCA\, generated by $a$ is contained in $\mathrm{Ped}(A\otimes {\cal K}).$  
%%Consequently
{{In other words,}}
$A\subset \mathrm{Ped}(A\otimes {\cal K}).$
%To see  that $A={\mathrm{Ped}(A)}, $   one applies 
{{By  Theorem 2.1 of \cite{aTz}, $A={\mathrm{Ped}(A)}.$}}

%Take $b\in A_+\setminus \{0\}$ and $c\in {\mathrm{Ped}(A)}_+\setminus \{0\}.$
%Since $A\otimes {\cal K}$ is simple,
%there are $x_1, x_2,...,x_k\in A_+$ with $0\le x_i\le 1$ such that
%$$
%b^{1/2}\le \sum_{i=1}^k g_i(x_i),
%$$
%where $g_i\in C_0((0, \infty))$ such that the support of $g_i$ contained
%in $[\ep, N]$ for some $1>\ep>0$ and $N\ge 1,$ $i=1,2,...,k.$

%There are $z_{1,i}, x_{2,i},...,x_{m,i}\in A\otimes {\cal K}$  such that
%$$
%\sum_{j=1}^m z_{j,i}^*cz_{j,i}=g_i(x_i).
%$$
%Therefore
%\beq\label{Pedersen-1}
%\sum_{j=1}^m b^{1/4}z_{j,i}^*cz_{j,i}b^{1/4}=b^{1/4}g_i(x_i)b^{1/4},\,\,\,i=1,2,...,k.
%\eneq

%Note that $b^{1/4}z_{j,i}^*c^{1/4}, c^{1/4}z_{j,i}b^{1/4}\in A$
%and $c^{1/2}\in {\mathrm{Ped}(A)}$ (see 5.6.2 of \cite{Pbook}).
%It follows that
%$$
%b^{1/4}g_i(x_i)b^{1/4}\in {\mathrm{Ped}(A)}.
%$$
%On the other hand,
%$$
%b=b^{1/4}b^{1/2}b^{1/4}\le (\sum_{i=1}^k b^{1/4}g_i(x_i)b^{1/4}.
%$$
%Since ${\mathrm{Ped}(A)}$ is hereditary, this implies that $b\in {\mathrm{Ped}(A)}.$
%Thus $A={\mathrm{Ped}(A)}.$

Conversely,  assume that ${\rm Ped}(A)=A.$ Then 
%suppose that
there are $b_i
%1, b_2,...,b_m
\in A_+$ with $\|b_i\|\le 1,$ $i=1,2,...,m,$  such that
$$
a^{1/2}\le \sum_{i=1}^mg_i(b_i),
%\andeqn 0\le b_1, b_2,...,b_m\le 1,
$$
where $g_i\in C_0((0,\infty))$
% for some  $N\ge 1$ 
and the support of $g_i$ is in $ [\sigma, \infty)$
for some $1/2>\sigma>0.$ Note  that for each $i,$ $g_i=\sum_{j=1}^K g_{i,j}$ for some 
%$\|g_{i,j}\|+1\ge K\ge 1,$
%where  
$0\le g_{i,j}\le 1$ with the  support of $g_{i,j}$  still in $[\sigma, \infty).$ Thus, \wilog, we may assume
that $0\le g_i\le 1.$

Let $c_i=(g_i(b_i))^{1/2},$ $i=1,2,...,m.$
Define
$$
Z=(c_1, c_2,...,c_m),
$$
which we view as a $m\times m$ matrix with zero rows other than the first row.
Define
$$E=\diag(f_{\sigma/2}(b_1), f_{\sigma/2}(b_2),...,f_{\sigma/2}(b_m))\in {\mathrm{M}}_m(A).$$
Note that
\vspace{-0.12in} $$
ZZ^*=\sum_{i=1}^mc_i^2\ge a\andeqn Z^*Z=(d_{i,j})_{m\times m},
$$
where
\vspace{-0.1in} $$
d_{i,j}=c_ic_j,\,\,\,i,j=1,2,...,m.
$$
It follows that
$$
E(Z^*Z)=E(c_ic_j)_{m\times m}=(c_ic_j)_{m\times m}=(Z^*Z)E.
$$
{{Consider the polar decomposition}} $Z^*=V|Z^*|.$ Then
$$
Vx\in {\mathrm{M}}_m(A), VV^*|Z|=|Z|VV^*=|Z|,\andeqn
(VxV^*)E=E(VxV^*)=(VxV^*)
$$
for all $x\in \overline{(ZZ^*){\mathrm{M}}_m(A)(ZZ^*)}.$
Note that $A\subset \overline{(ZZ^*){\mathrm{M}}_m(A)(ZZ^*)}.$ Therefore
$A$ is {{compact}}.

\end{proof}

The number $n$ below will be used later.

\begin{lem}\label{Lbkqc}
Let $A$ be a $\sigma$-unital   \CA\,  with
% with ${\mathrm{QT}}(A)={\mathrm{T}(A)}\not={\O},$ and  
%$\overline{{\mathrm{T}(A)}}^\mathrm{w}$
%is compact  and
$0\not\in \overline{{\mathrm{T}(A)}}^\mathrm{w}.$
Suppose that $A$ almost has stable rank one and has the following property:
If $a, b\in {\mathrm{M}}_{m}(A)_+$ for some integer $m\ge 1,$ and
${{{\rm d}_\tau}}(a)<{{{\rm d}_\tau}}(b)$ for all $\tau\in \overline{{\mathrm{T}(A)}}^\mathrm{w},$ then
$a\lesssim b.$
%the strong strict comparison for positive elements and almost has stable rank one.
Then $A$ is {{compact}}.
Moreover, let $a\in A$ with $0\le a\le 1$ be a strictly positive element,  {{set}}
$$
d=\inf\{\mathrm{d}_\tau(a): \tau\in \overline{{\mathrm{T}(A)}}^\mathrm{w}\},
$$
and  let $n$ be an integer such that $nd>1$.
There exist elements $e_1, e_2\in {\mathrm{M}}_n(A)$  with $0\le e_1, e_2\le 1,$
$e_1e_2=e_2e_1=e_1$, and $w\in {\mathrm{M}}_n(A)^{**}$
such that
\beq\label{Lbkqc-1}
w^*c, cw\in {\mathrm{M}}_n(A), ww^*c=cww^*=c\tforal c\in A,\tand\\\label{Lbkqc-1n}
w^*cwe_1=e_1w^*cw=w^*cw\tforal c\in A.
\eneq
Furthermore,
 there exist a full element $b_0\in {\mathrm{Ped}(A)}$ with $0\le b_0\le 1$ and $e_0\in {\mathrm{Ped}(A)}_+$
such that $b_0e_{{0}}=e_{{0}}b_0=b_0.$
\end{lem}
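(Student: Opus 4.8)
The plan is to produce the partial isometry $w$ and the local units $e_1,e_2$ explicitly from a single application of the comparison hypothesis; this construction will simultaneously establish compactness, after which the ``Furthermore'' part follows at once from Theorem \ref{Tpedersen}.

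First I would record the relevant facts about $\overline{{\mathrm{T}(A)}}^{\mathrm w}$: it is weak*-closed in the unit ball of $A^*$, hence weak*-compact, and by hypothesis $0\notin \overline{{\mathrm{T}(A)}}^{\mathrm w}$. Fix a strictly positive $a\in A$ with $0\le a\le 1$; it is full in $A.$ For $\tau\in \overline{{\mathrm{T}(A)}}^{\mathrm w}$ one has $\mathrm{d}_\tau(a)\ge \tau(a)$ (since $a^{1/k}\ge a$) and $\mathrm{d}_\tau(a)\le 1$ (since $\|\tau\|\le 1$ and $a^{1/k}\le 1$). As $a$ is full and $0\notin \overline{{\mathrm{T}(A)}}^{\mathrm w}$, the weak*-continuous function $\tau\mapsto \tau(a)$ is strictly positive on the compact set $\overline{{\mathrm{T}(A)}}^{\mathrm w}$, so $d:=\inf_\tau \mathrm{d}_\tau(a)\ge \inf_\tau \tau(a)>0$; in particular an integer $n$ with $nd>1$ exists. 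Fixing such an $n,$ I choose $d_0$ with $1/n<d_0<d.$ Since the functions $\tau\mapsto \tau(f_{1/m}(a))$ are weak*-continuous, increase with $m,$ and converge pointwise to $\mathrm{d}_\tau(a)\ge d>d_0,$ a compactness (Dini-type) argument on $\overline{{\mathrm{T}(A)}}^{\mathrm w}$ shows that, for all sufficiently large $m,$ $\tau(f_{1/m}(a))>d_0$ for every $\tau.$

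Put $b=1_n\otimes f_{1/m}(a)\in {\rm M}_n(A)$ (the diagonal with $n$ copies of $f_{1/m}(a)$) and regard $a\in A\otimes e_{11}\subset {\rm M}_n(A).$ Then $\mathrm{d}_\tau(b)=n\,\mathrm{d}_\tau(f_{1/m}(a))\ge n\,\tau(f_{1/m}(a))>nd_0>1\ge \mathrm{d}_\tau(a)$ for all $\tau.$ Since $A$ almost has stable rank one, so does ${\rm M}_n(A)$ by Definition \ref{Dalst1}, and the comparison hypothesis gives $a\lesssim b$; the second statement of Lemma \ref{Lalmstr1} then produces $x\in {\rm M}_n(A)$ with $x^*x=a$ and $xx^*\in \overline{b{\rm M}_n(A)b}.$ Next I would take the polar decomposition $x=v|x|=va^{1/2}$ in ${\rm M}_n(A)^{**}$ and set $w=v^*,$ so that $ww^*=v^*v$ is the support projection of $a$; this gives $ww^*c=cww^*=c$ for all $c\in A.$ Because $\overline{a{\rm M}_n(A)a}=A\otimes e_{11},$ the multiplier property (1.4 of \cite{Cu1}) yields $w^*c=vc\in {\rm M}_n(A)$ and $cw=(vc^*)^*\in {\rm M}_n(A),$ while $w^*cw=vcv^*\in \overline{xx^*{\rm M}_n(A)xx^*}\subset \overline{b{\rm M}_n(A)b}$ for all $c\in A.$ Putting $e_1=1_n\otimes f_{1/(2m)}(a)$ and $e_2=1_n\otimes f_{1/(4m)}(a),$ one has $0\le e_1,e_2\le 1$ in ${\rm M}_n(A),$ $e_1e_2=e_2e_1=e_1,$ and, since $f_{1/(2m)}(a)f_{1/m}(a)=f_{1/m}(a),$ $e_1$ is a unit on $\overline{b{\rm M}_n(A)b}$; hence $w^*cwe_1=e_1w^*cw=w^*cw.$ This gives \eqref{Lbkqc-1} and \eqref{Lbkqc-1n}, and, viewing ${\rm M}_n(A)\subset A\otimes {\cal K},$ exhibits exactly the data of Definition \ref{Dcompact}, so $A$ is compact.

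Finally, by Theorem \ref{Tpedersen} compactness gives $\mathrm{Ped}(A)=A.$ Since the Pedersen ideal has local units (Section 5.6 of \cite{Pbook}), there is $e_0\in \mathrm{Ped}(A)_+$ with $e_0a=ae_0=a$; taking $b_0=a$ (full, with $0\le b_0\le 1,$ and in $\mathrm{Ped}(A)$) yields the last assertion. I expect the main obstacle to be the polar-decomposition bookkeeping in ${\rm M}_n(A)^{**}$: getting the orientation $w=v^*$ right so that $ww^*$ is the support of $a,$ and verifying via the multiplier property that $w^*c,$ $cw,$ and $w^*cw$ land in the intended hereditary subalgebras of ${\rm M}_n(A).$
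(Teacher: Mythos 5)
Your construction of $w$, $e_1$, $e_2$ and your proof that $A$ is compact are correct, and they follow essentially the same route as the paper: compare $a$ with $n$ diagonal copies of $f_{1/m}(a)$ (your Dini-type argument is exactly Corollary \ref{Csemicon}), apply the comparison hypothesis and the second statement of Lemma \ref{Lalmstr1}, and read off the data of Definition \ref{Dcompact} from the polar decomposition. The bookkeeping you flagged as the main obstacle is fine as you wrote it.

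The ``Furthermore'' clause, however, contains a genuine error. You take $b_0=a$ (the strictly positive element) and invoke ``local units in the Pedersen ideal'' to produce $e_0\in\mathrm{Ped}(A)_+$ with $e_0a=ae_0=a$. No such $e_0$ can exist when $A$ is non-unital, which is the only case of interest here. Indeed, if $e_0a=ae_0=a$, then $p(e_0)a=p(1)a$ for every polynomial $p$ with $p(0)=0$, so replacing $e_0$ by $g(e_0)$ with $g(t)=\min\{t,1\}$ we may assume $0\le e_0\le 1$; then $(1-e_0)a(1-e_0)=0$ gives $\|a^{1/2}(1-e_0)\|^2=0$, i.e., $a^{1/2}e_0=a^{1/2}$, and iterating, $a^{1/2^k}e_0=a^{1/2^k}$ for all $k$. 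Since $(a^{1/2^k})_k$ is an approximate identity for $A=\overline{aAa}$, this forces $ce_0=e_0c=c$ for all $c\in A$, so $e_0$ would be a unit of $A$. In other words, the strong ``local unit'' property you attribute to $\mathrm{Ped}(A)$ fails precisely in the situation this lemma creates, namely $\mathrm{Ped}(A)=A$ with $A$ non-unital; what is true (and is what 5.6 of \cite{Pbook} gives) is only that compactly supported functional calculus elements lie in $\mathrm{Ped}(A)$ and admit local units of the same kind.

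The repair is what the paper does, and it costs nothing given what you already proved: take $b_0=f_{1/m}(a)$ and $e_0=f_{1/(2m)}(a)$. Both lie in $\mathrm{Ped}(A)_+$, $0\le b_0\le 1$, and $e_0b_0=b_0e_0=b_0$. The only point needing a word is fullness of $b_0$, and it follows from your comparison step: $a\lesssim b=\diag(f_{1/m}(a),\dots,f_{1/m}(a))$ shows that $a$ lies in the closed two-sided ideal generated by $f_{1/m}(a)$, and since $a$ is full, so is $f_{1/m}(a)$.
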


\begin{proof}
Let $a\in A_+$ with $0\le a\le 1$ be a strictly positive element.
Since $0\not\in \overline{{\mathrm{T}(A)}}^\mathrm{w}$ and $\overline{{\mathrm{T}(A)}}^\mathrm{w}$  is compact
{{(see the end of \ref{DTtilde}),}} and $\mathrm{d}_\tau(a)$ is lower semicontinuous, {{as stated in Lemma \ref{compactrace},}}
$$
\inf\{\mathrm{d}_\tau(a): \tau\in \overline{{\mathrm{T}(A)}}^\mathrm{w}\}=d>0.
$$
%Suppose
{{Let  $n$ be an integer such}} that $nd>1.$

By \ref{Csemicon}
there exists  $\ep >0$  such that
$$
\inf\{\tau(f_\ep(a)): \tau\in \overline{{\mathrm{T}(A)}}^\mathrm{w}\}=d_0>2d/3
$$
and $nd_0>1.$
{{So, for any $\tau\in \overline{{\mathrm{T}(A)}}^\mathrm{w},$
$$
d_\tau(a)\le 1<nd_0\le n\tau(f_{\ep}(a))\le d_\tau(f_{\ep}(a)).
$$}}
Therefore, 
\vspace{-0.1in} $$
a\lesssim \diag(\overbrace{f_\ep(a), f_\ep(a),...,f_\ep(a)}^n)\,\,\, {\rm in}\,\, \mathrm{M}_n(A).
$$
Put $b=\diag(\overbrace{f_\ep(a), f_\ep(a),...,f_\ep(a)}^n).$
Since $A$ is assumed {{almost to}} have stable rank one, by \ref{Lalmstr1}, there exists $x\in \mathrm{M}_n(A)$
such that
$$
x^*x=a^{1/2}\andeqn xx^*\in \overline{b\mathrm{M}_n(A)b}.
$$
By considering the polar decomposition of $x$ in  $\mathrm{M}_n(A)^{**},$
one obtains  a partial isometry $w\in \mathrm{M}_n(A)^{**}$ such that
$wA, Aw^*\subset  {\mathrm{M}}_n(A),$ $w^*wc=cw^*w=c$ for all $c\in A,$ and
$wAw^*\subset   \overline{b{\mathrm{M}}_n(A)b}.$ Put $e=\diag(\overbrace{f_{\ep/2}(a), f_{\ep/2}(a),...,f_{\ep/2}(a)}^n).$
Then $0\le e\le 1$ and
$$
ewcw^*=wcw^*e=wcw^*\rforal c\in A.
$$
%Thus 
{{This shows that}} $A$ is {{compact}}.
The second part of the statement {{with \eqref{Lbkqc-1} and}} \eqref{Lbkqc-1n} also holds.

For the last part of the statement, choose $b_0=f_{\ep}(a)$ and
$e_0=f_{\ep/2}(a).$

\end{proof}

\begin{rem}\label{Dboundedscale}
{{Let $A$ be a $\sigma$-unital \CA\, and let $e\in A$ be a strictly positive element.
Set
\beq\label{Dlambdas}
\lambda_s=\inf\{{\rm d}_\tau(e): \tau\in {\rm T}(A)\}.
\eneq
By \ref{Ddimf}, $0\le \lambda_s\le 1.$
Note that $0\not\in \overline{{\rm T}(A)}^{\rm w}$ if and only if $\lambda_s>0.$ In particular, by \ref{compactrace}, 
if $A$ is compact, $\lambda_s>0.$  Note also $\lambda_s=\inf\{{\rm d}_\tau(e): \tau\in \overline{{\rm T}(A)}^{\rm w}\}.$ }}

Now let $A$ be a $\sigma$-unital  exact simple \CA. {{Let}} $e\in {\mathrm{Ped}(A)}_+\setminus\{0\}.$ {{Consider 
the  set of traces normalized  on $e,$}} 
${\mathrm{T}}_e(A)=\{\tau\in {\widetilde{\mathrm{T}}}(A): \tau(e)=1\}.$
It is a compact convex set (see 2.6 of \cite{aTz} and  2.6 of \cite{Lncs1}).
%\ref{compactrace}). Let $a\in A$ be a strictly positive element.
%Recall that $A$ has continuous scale if and only if $\mathrm{d}_\tau(a)$ is a continuous function
%under the assumption that 
If $A$ has strict comparison,
% for positive elements, 
$A$ is said to have bounded scale if $\mathrm{d}_\tau(a)$ is a bounded function
on ${\mathrm{T}}_e(A)$ (see \cite{Btrace}).  In the absence of strict
comparison, 
%one defines
let us say  that $A$ has bounded scale if there exists an integer $n\ge 1$ such
that $n\la e\ra \ge \la a\ra$  for any $a\in A_+.$ {{As}} first noted  in \cite{Btrace}, this is equivalent
to {{saying that}} $A$ is algebraically simple,  
{{and this}} in turn (in  view of \ref{Tpedersen} above) is equivalent to saying that $A$ is {{compact}}.

\end{rem}

\begin{prop}\label{Pheretc}
Let $A$ be a $\sigma$-unital  \CA\, with $0\not\in \overline{{\mathrm{T}(A)}}^\mathrm{w}$ such that
every trace in ${\widetilde{\mathrm{T}}}(A)$ is finite (equivalently, bounded) on $A.$
Let $B{{\subset}} A$ be a $\sigma$-unital full hereditary \SCA.
Then $0\not\in \overline{{\mathrm{T}}(B)}^\mathrm{w}.$
\end{prop}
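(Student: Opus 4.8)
The plan is to reduce the statement to a single uniform lower bound: I will produce a constant $\rho_0>0$ and a strictly positive element $e_B$ of $B$ (normalized so that $0\le e_B\le 1$) such that $\tau(e_B)\ge \rho_0$ for every $\tau\in {\mathrm{T}}(B).$ Since $\mu\mapsto \mu(e_B)$ is weak*-continuous on $B^*,$ this inequality persists on the weak*-closure $\overline{{\mathrm{T}(B)}}^{\mathrm{w}},$ and as the zero functional assigns $e_B$ the value $0<\rho_0,$ this forces $0\notin \overline{{\mathrm{T}(B)}}^{\mathrm{w}}.$ Throughout I fix strictly positive elements $e_A\in A$ and $e_B\in B$ with $0\le e_A,e_B\le 1,$ and I use the elementary fact that for a bounded positive trace $\mu$ one has $\|\mu\|=\mathrm{d}_\mu(e)$ whenever $e$ is strictly positive with $0\le e\le 1$; in particular $\mathrm{T}(A)=\{\sigma\in{\widetilde{\mathrm{T}}}(A):\mathrm{d}_\sigma(e_A)=1\}.$ I also record that $e_B,$ being strictly positive in the full hereditary \SCA\, $B,$ is a full element of $A.$

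Next I would set $\rho_0=\inf\{\sigma(e_B):\sigma\in\overline{{\mathrm{T}(A)}}^{\mathrm{w}}\}$ and argue that $\rho_0>0.$ The set $\overline{{\mathrm{T}(A)}}^{\mathrm{w}}$ is weak*-compact (a weak*-closed subset of the unit ball of $A^*$), and $\sigma\mapsto\sigma(e_B)$ is weak*-continuous, being evaluation at the fixed element $e_B\in A.$ The key point is strict positivity: for $\sigma\in\overline{{\mathrm{T}(A)}}^{\mathrm{w}}$ with $\sigma\ne 0,$ fullness of $e_B$ forces $\sigma(e_B)>0.$ Indeed, if $\sigma(e_B)=0,$ then Cauchy--Schwarz gives $\sigma(e_B^{1/2}y)=0$ for all $y\in A,$ and the trace property lets me move $e_B^{1/2}$ around so that $\sigma$ vanishes on $\overline{Ae_BA}=A,$ i.e. $\sigma=0,$ a contradiction. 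Since $0\notin\overline{{\mathrm{T}(A)}}^{\mathrm{w}}$ by hypothesis, a continuous strictly positive function on a compact set has positive infimum, so $\rho_0>0.$

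The heart of the argument is then two scale comparisons on the cone ${\widetilde{\mathrm{T}}}(A),$ which I identify with ${\widetilde{\mathrm{T}}}(B)$ via the full hereditary embedding: every $\tau\in{\mathrm{T}}(B)$ extends uniquely to a trace in ${\widetilde{\mathrm{T}}}(A),$ which is bounded on $A$ by hypothesis and restricts back to $\tau$ on $B$ (so $\mathrm{d}_\tau(e_B)=1$ is preserved). First, homogeneity of $\tau\mapsto\tau(e_B)$ and of $\tau\mapsto\mathrm{d}_\tau(e_A),$ together with the definition of $\rho_0$ on the base $\mathrm{T}(A),$ give
\[
\tau(e_B)\ge \rho_0\,\mathrm{d}_\tau(e_A)\tforal \tau\in{\widetilde{\mathrm{T}}}(A).
\]
Second, since $0\le e_B\le 1,$ for $\sigma\in\mathrm{T}(A)$ one has $\mathrm{d}_\sigma(e_B)\le\|\sigma\|=1=\mathrm{d}_\sigma(e_A),$ and by homogeneity $\mathrm{d}_\tau(e_B)\le\mathrm{d}_\tau(e_A)$ for all $\tau.$ Now take $\tau\in{\mathrm{T}}(B)$ and its extension (still denoted $\tau$); then $\mathrm{d}_\tau(e_B)=1,$ and combining the two comparisons yields
\[
\tau(e_B)\ge\rho_0\,\mathrm{d}_\tau(e_A)\ge\rho_0\,\mathrm{d}_\tau(e_B)=\rho_0 .
\]
This is the desired uniform bound, completing the proof.

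I expect the main obstacle to be the bookkeeping around normalizations and the cone identification ${\widetilde{\mathrm{T}}}(A)\cong{\widetilde{\mathrm{T}}}(B)$: specifically, checking that a tracial state on $B$ extends to a trace on $A$ that is genuinely bounded (this is exactly where the hypothesis that every trace in ${\widetilde{\mathrm{T}}}(A)$ is bounded enters, allowing the extension to be renormalized into $\mathrm{T}(A)$) and that it restricts back correctly so that $\mathrm{d}_\tau(e_B)=1.$ The fullness-based positivity $\sigma(e_B)>0$ is the other delicate point, but it follows cleanly from the trace property as indicated above.
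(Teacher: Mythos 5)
Your proof is correct and follows essentially the same route as the paper's: both arguments establish $\rho_0=\inf\{\sigma(e_B):\sigma\in\overline{\mathrm{T}(A)}^{\mathrm{w}}\}>0$ from fullness and compactness, then extend a tracial state $t$ of $B$ to a bounded trace $\tau$ on $A$ (using the hypothesis), renormalize $\tau_0=\tau/\|\tau\|\in\mathrm{T}(A)$, and conclude $t(e_B)\ge\|\tau\|\rho_0\ge\rho_0$. Your two ``scale comparisons'' are the paper's inequalities in disguise, since $\mathrm{d}_\tau(e_A)=\|\tau\|$ for bounded traces, so that $\tau(e_B)\ge\rho_0\,\mathrm{d}_\tau(e_A)$ is the renormalization step and $\mathrm{d}_\tau(e_A)\ge\mathrm{d}_\tau(e_B)=1$ restates $\|\tau\|\ge\|\tau|_B\|=1$.
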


\begin{proof}
Let $b\in B_+$ with $\|b\|=1$ be  {{such that}} 
%a strictly positive element of $B$ and 
$B=\overline{bBb}.$ Let $e\in A_+$ with $\|e\|=1$ such that
$A=\overline{eAe}.$

Since $b$ is full,  $\tau(b)>0$ for all $\tau\in \overline{{\mathrm{T}(A)}}^\mathrm{w}.$
Then, {{by continuity and compactness,}}
$$
1>r_0=\inf\{\tau(b): \tau\in \overline{{\mathrm{T}(A)}}^\mathrm{w}\}>0.
$$

For any $t\in {\mathrm{T}}(B),$ 
%there is an
the unique  extension $\tau\in {\widetilde{\mathrm{T}}}(A)$ is finite, i.e., bounded, 
by hypothesis.  %Since $\tau$ is a positive linear functional, it is continuous.
%Therefore,
{{Set}}
$\tau_0=\tau/\|\tau\|\in {\mathrm{T}(A)}$ and 
$t=\|\tau\|\cdot \tau_0|_{B}.$
It follows (since $\tau_0(b) \geq r_0$ and $\|{{\tau}}\| \geq 1$) that
$$
t(b)\ge \|\tau\|\cdot r_0\ge r_0.
$$
This shows that $0\not\in \overline{\mathrm{T}(B)}^\mathrm{w}.$

\end{proof}

\begin{prop}\label{Subset}
{{Let $A$ be a $\sigma$-unital simple \CA, let $c\in {\rm{Ped}}(A)$ be a positive element
and {{set}} $C=\overline{cAc}.$
Then each $\tau\in {\overline{{\rm{T}}(C)}}^{\rm w}$ extends  to {{a unique}}  element $\imath(\tau)\in 
{\widetilde{T}}(A).$ Moreover $\imath(\overline{{\rm{T}}(C)}^{\rm w})$ is compact in ${\widetilde{T}}(A)$
and $0\not\in \imath(\overline{{\rm{T}}(C)}^{\rm w}).$}}
\end{prop}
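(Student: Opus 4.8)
The plan is to reduce everything to the fact that $C$ is compact and then transport the already-established compactness of $\overline{\mathrm{T}(C)}^{\mathrm w}$ up to $\widetilde{\mathrm T}(A)$ through the trace-extension map. First I would dispose of the trivial case $c=0$ (then $C=0$ and $\overline{\mathrm T(C)}^{\mathrm w}=\emptyset$, so all three assertions hold vacuously) and assume $c\neq 0$; since $A$ is simple, $c$ is full, so $C=\overline{cAc}$ is a $\sigma$-unital full hereditary \SCA\ with strictly positive element $c$. Because $c\in\mathrm{Ped}(A)_+$ is full, Theorem 2.1 of \cite{aTz} (as already invoked in \ref{DTtilde}) gives $C=\mathrm{Ped}(C)$, so by \ref{Tpedersen} the algebra $C$ is compact. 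Consequently, by \ref{compactrace}, $0\notin\overline{\mathrm T(C)}^{\mathrm w}$, and (by the discussion at the end of \ref{DTtilde} applied to $C$) $\overline{\mathrm T(C)}^{\mathrm w}$ is a compact subset of $\mathrm T_1(C)\subset\widetilde{\mathrm T}(C)$.

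The existence and uniqueness of $\imath(\tau)$ comes directly from \ref{DTtilde}: since $c$ is a full element of $\mathrm{Ped}(A)_+$ and $C=\overline{cAc}$, restriction identifies $\widetilde{\mathrm T}(A)$ with $\widetilde{\mathrm T}(C)$ as cones, so each $\tau\in\overline{\mathrm T(C)}^{\mathrm w}\subset\widetilde{\mathrm T}(C)$ is the restriction of a unique $\imath(\tau)\in\widetilde{\mathrm T}(A)$, with $\imath(\tau)|_C=\tau$. This bijection carries the zero trace to the zero trace, so injectivity together with $0\notin\overline{\mathrm T(C)}^{\mathrm w}$ yields $0\notin\imath(\overline{\mathrm T(C)}^{\mathrm w})$.

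For compactness I would show that $\imath$ is continuous from $\overline{\mathrm T(C)}^{\mathrm w}$ (with its topology, pointwise convergence on $C=\mathrm{Ped}(C)$) into $\widetilde{\mathrm T}(A)$ (pointwise convergence on $\mathrm{Ped}(A)$); compactness of the image then follows, $\widetilde{\mathrm T}(A)$ being Hausdorff. The key device is the fullness of $c$ through \ref{Lfullep}. Every element of $\mathrm{Ped}(A)$ is a finite complex-linear combination of positive elements $g(b)$ with $b\in A_+$ and $g\in C_0((0,\infty))$ supported in some $[\ep,\infty)$, so by linearity it suffices to prove that $\tau\mapsto\imath(\tau)(g(b))$ is continuous for each such $g(b)$. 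By \ref{Lfullep} there are $x_1,\dots,x_m\in A$ with $g(b)=\sum_{i}x_i^*cx_i$, whence, using the trace property applied to $z=c^{1/2}x_i$, one gets $\imath(\tau)(g(b))=\sum_i\imath(\tau)(c^{1/2}x_ix_i^*c^{1/2})=\sum_i\tau(c^{1/2}x_ix_i^*c^{1/2})$, since $c^{1/2}x_ix_i^*c^{1/2}\in\overline{cAc}=C$ and $\imath(\tau)|_C=\tau$. This is a finite sum of evaluations at fixed elements of $C$, hence continuous on $\overline{\mathrm T(C)}^{\mathrm w}$, and continuity of $\imath$ follows.

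The main obstacle is precisely this last step: upgrading convergence that is assumed only pointwise on $C$ to convergence pointwise on the larger Pedersen ideal $\mathrm{Ped}(A)$. Fullness of $c$ is exactly what makes this possible, since \ref{Lfullep} rewrites each spectral-gap generator $g(b)$ of $\mathrm{Ped}(A)$ as $\sum_i x_i^*cx_i$, and the trace identity $z^*z\mapsto zz^*$ then expresses its value under $\imath(\tau)$ purely in terms of the values of $\tau$ on $C$. I would take care to justify the two supporting facts used: that $\mathrm{Ped}(A)$ is the linear span of positive elements with a spectral gap at $0$ (standard for the Pedersen ideal), and that the rewriting $c^{1/2}x_ix_i^*c^{1/2}$ indeed lands inside $C$, which are the only points needing genuine attention.
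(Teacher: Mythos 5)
Your proof follows the paper's skeleton for everything except the one step where the real work lies, and at that step it substitutes an unproven assertion. The setup is identical to the paper's: $C={\rm Ped}(C)$ (the paper cites \ref{CherePA=A} and \ref{Tpedersen}; your route through 2.1 of \cite{aTz} is equally fine), hence $C$ is compact, $0\notin\overline{{\rm T}(C)}^{\rm w}$ by \ref{compactrace}, $\overline{{\rm T}(C)}^{\rm w}$ is compact by the end of \ref{DTtilde}, the extension $\imath$ is the unique one coming from the identification of $\widetilde{\rm T}(A)$ with $\widetilde{\rm T}(C)$, and injectivity plus $0\notin\overline{{\rm T}(C)}^{\rm w}$ gives $0\notin\imath(\overline{{\rm T}(C)}^{\rm w})$. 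Also, your use of \ref{Lfullep} together with the trace identity $\imath(\tau)(x_i^*cx_i)=\tau(c^{1/2}x_ix_i^*c^{1/2})$ (legitimate since $c^{1/2}x_ix_i^*c^{1/2}\in C$) is exactly the computation in the paper's proof.

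The divergence is your reduction of continuity to elements of the form $g(b)$: you claim ${\rm Ped}(A)$ is the \emph{linear span} of positive elements $g(b)$ with $g$ supported in $[\ep,\infty)$, and call this ``standard.'' It is not the definition, and it is strictly stronger than what the definition provides. The characterization of the Pedersen ideal (5.6 of \cite{Pbook}, and the one the paper invokes in its own proof) says only that each $a\in{\rm Ped}(A)_+$ is \emph{dominated} by a finite sum, $a\le\sum_{i=1}^n g_i(y_i)$; it does not say $a$ is a linear combination of such elements. Your span claim is equivalent to the assertion that ${\rm Ped}(A)$ has local units (given $ae=a$ with $e\in A_+$, one gets $a=(d-\ep)_+$ with $d=a+\ep f_{1/2}(e)$, so $a$ is itself of the form $g(d)$). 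That assertion is in fact true, but it is a theorem, not a definition -- its proof goes through noncommutative-topology facts (the supremum of two compact projections in $A^{**}$ is compact, \`a la Akemann), or must be located in the literature on multipliers of the Pedersen ideal; the elementary estimate one gets easily, $\|xf_\dt(b+c)-x\|\le\sqrt{\dt}\,\|x\|$, yields only \emph{approximate} local units, which is not enough for your argument. Also, your parenthetical description ``positive elements with a spectral gap at $0$'' is inaccurate: $g(b)$ with $g(t)=(t-\ep)_+$ generally has no spectral gap; the correct description is ``positive elements admitting a local unit.''

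This is exactly the gap the paper's proof is engineered to avoid. Working only from domination, the paper first uses \ref{Lfullep} and Cuntz comparison to get $\la a\ra\le nm\la c\ra$, hence the uniform bound $\|\imath(\tau_\af)|_{\overline{aAa}}\|\le nm$ along the net; it then proves convergence on the approximants $a^{1/2}f_\ep(a)a^{1/2}\in\overline{aAa}$ (this is where the trace identity enters), and finally passes to $a$ itself using the uniform bound together with $\|a^{1/2}f_\ep(a)a^{1/2}-a\|\to 0$. Your proof, by contrast, gets continuity for free by linearity once the span claim is granted -- cleaner, but the cleanliness is purchased entirely by the unjustified claim. To make your argument complete, either supply a proof or precise citation of the local-unit property of ${\rm Ped}(A)$, or replace that step with the paper's domination-plus-uniform-bound argument.
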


\begin{proof}
{{Note that the extension  is unique. So $\imath$ is well defined. Moreover  map $\imath$ is one-to-one. 
Note,  by \ref{CherePA=A} and \ref{Tpedersen}, $C={\rm Ped}(C).$
Put $K={\overline{\rm{T}(C)}}^{\rm w}.$ Then,  by \ref{compactrace}, $0\not\in K.$

Consider $\tau_\af, \tau\in K$ such that $\tau_\af(b)\to \tau(b)$ for all $b\in C.$ 
Let us show that $\imath(\tau_\af)(a)\to \imath(\tau)(a)$ for all $a\in {\rm{Ped}}(A)_+.$}}

{{By the definition of the Pedersen ideal, $a\le \sum_{i=1}^n x_i$ for some 
$x_i=g(y_i),$ where $y_i\in A_+,$ and $g\in C_0((0, +\infty))_+$ with compact support, 
$i=1,2,...,n.$ It follows from \ref{Lfullep} that $\la g(y_i)\ra\le  m\la c\ra$ for some 
integer $m\ge 1.$ Therefore, $\la a \ra \le nm\la c\ra.$  It follows 
that 
\beq
{\rm d}_{\imath(\tau_\af)}(a), {\rm d}_{\imath(\tau)}(a)\le nm.
\eneq
Then, for any $b\in \overline{aAa}$ with $\|b\|\le 1,$ $|\imath(\tau_\af)(b)|\le nm$ and $|\imath(\tau)(b)|\le nm.$
In other words, 
\beq\label{180911-1}
\|\imath(\tau_\af)|_{\overline{aAa}}\|\le nm \andeqn \|\imath(\tau)|_{\overline{aAa}}\|\le nm.
\eneq
%$$
%|\imath(
%$$
For any $\ep>0,$ by \ref{Lfullep}, there are $z_1,z_2,...,z_N\in A$ such that
\beq
\sum_{i=1}^Nz_i^*cz_i=f_\ep(a).
\eneq
It follows that
\beq\label{180912-1}
\sum_{i=1}^Na^{1/2}z_i^*cz_i a^{1/2}=a^{1/2}f_\ep(a)a^{1/2}.
\eneq
Consider {{the}} element $b=\sum_{i=1}^Na^{1/2}z_i^*cz_i a^{1/2}
%%$ with the form  $b=\sum_{i=1}^{N'}z_i'^*cz_i'\in \overline{aAa},$  where $z_i', z_i\in 
\in \overline{aAa}.$ {{Set}} $z_i'=z_ia^{1/2},$
$i=1,2,...,N.$
Then,  since $c^{1/2}z_i'z_i'^*c^{1/2}\in C,$ 
\beq\nonumber
\imath(\tau_\af)(b)&=&\sum_{i=1}^{N}\imath(\tau_\af)(z_i'^*cz_i)=
\sum_{i=1}^{N}\imath(\tau_\af)(c^{1/2}z_i'z_i'^*c^{1/2})=\sum_{i=1}^{N}\tau_\af(c^{1/2}z_i'z_i'^*c^{1/2})\\\nonumber
&\to& \sum_{i=1}^{N}\tau(c^{1/2}z_i'z_i'^*c^{1/2})= \sum_{i=1}^{N}\imath(\tau)(c^{1/2}z_i'z_i'^*c^{1/2})\\
&=&\sum_{i=1}^{N}\imath(\tau)(z_i'^*cz_i)=\imath(\tau)(b).
\eneq
Since $a^{1/2}f_\ep(a)a^{1/2}\to a$ in norm, by \eqref{180911-1} and by \eqref{180912-1},
$$
\imath(\tau_\af)(a)\to \imath(\tau)(a).
$$
This shows that $\imath$ is continuous. 
Since $K$ is compact,
%and $\imath(K)$ is Hausdorff, 
$\imath(K)$ is compact in ${\widetilde{T}}(A).$   Since $0\not\in K,$ $0\not\in\imath(K)=\imath(\overline{{\rm{T}}(C)}^{\rm w}).$}}
%(and 
%$0\not\in \imath(K).$)}}
\end{proof}

\begin{df}\label{Dkerrho}
Let $A$ be a $\sigma$-unital \CA\, with ${\widetilde{\mathrm{T}}}(A)\not=\{0\}.$
Suppose that there  is  {{ a nonzero}}  element $e\in {\mathrm{Ped}(A)}_+$ which is full in $A.$

{{Set}} $A_e=\overline{eAe}.$ Then, {{by Lemma \ref{Tpedersen},}} $A_e$ is {{compact}}.
Consequently, by {{Lemma}} \ref{compactrace}, $0\not\in \overline{\mathrm{T}(A_e)}^\mathrm{w}.$
Assume that $A$ is not unital. 
%One extends 
{{Each}}  $\tau\in \overline{\mathrm{T}(A_e)}^\mathrm{w}$ {{extends uniquely}}
to a tracial state on ${\widetilde A}_e.$
There is {{a canonical}}  order-preserving \hm\, $\rho_{{\widetilde A}_e}: \mathrm{K}_0({\widetilde A}_e)\to \Aff(\overline{\mathrm{T}(A_e)}^\mathrm{w}).$
By \cite{Br1}, one may identify $\mathrm{K}_0(A)$ with $\mathrm{K}_0(A_e).$
The composition of maps from $\mathrm{K}_0(A)$ to $\mathrm{K}_0(A_e),$ then from
$\mathrm{K}_0(A_e)$ to $\mathrm{K}_0({\widetilde A}_e)$ and then to $\Aff(\overline{\mathrm{T}(A_e)}^\mathrm{w})$
 is a %%order preserving
\hm\, which will be denoted  by $\rho_A.$
Denote by ${\rm ker}\rho_{A}$ the subgroup
of  $\mathrm{K}_0(A)$  consisting of those $x\in \mathrm{K}_0(A)$ such that
$\rho_{A}(x)=0.$ Elements in ${\rm ker}\rho_A$  are called infinitesimal elements.
%This subgroup does not depend on the choice of $e.$
\end{df}

\section{Continuous scale and fullness}

\begin{df}\label{Dconscale}
The previous section discussed \CA s with bounded scale. Let us recall the definition of continuous scale
(\cite{Lncs1} and \cite{Lncs2}).

Let $A$ be a $\sigma$-unital 
%and non-unital 
\CA.
Fix an increasing  approximate unit $(e_n)$ for $A$
with the property that
$$
e_{n+1}e_n=e_ne_{n+1}=e_n\rforal n\ge 1.
$$
The C*-algebra $A$ is said to have continuous scale if, for any $b\in A_+\setminus \{0\},$
there exists $N\ge 1$ such that
$$
e_m-e_n\lesssim b,\quad m>n\geq N.
$$
%for all $m>n\ge N.$
This definition does not depend on the choice of $(e_n)$ above.
{{By \ref{Pconscale} below, if $A$ has continuous scale and $T(A)\not=\O,$ then $\lambda_s=1.$}}

\end{df}

\begin{rem}\label{Rcontext}
%It should be noted that for any non-elementary separable simple \CA\, $A$
%there is a non-zero hereditary \SCA\, $B\subset A$ such that $B$ has continuous scale (2.3 of \cite{Lncs2}).
%%%%
\iffalse
Let $A$ be a separable finite exact simple \CA\, such that $A\otimes {\cal Z}\cong A.$
Then, by 6.6 of \cite{ERS11} (see also 6.22 of \cite{Rl}), the following result (together with  \ref{Subcontsc}) implies that there exists a non-zero hereditary \SCA\, $B$ of $A$
such that $B$ has continuous scale (see \ref{Ccontsc} below).
\fi
%%%%%
Let $A$ be a 
%separable finite 
exact simple \CA\, such that $A\otimes {\cal Z}\cong A.$
Then, by 6.6 of \cite{ERS11} (see also 6.2.3 of \cite{Rl}), 
{{the map $\la a\ra \mapsto \widehat{\la a\ra}$ is an isomorphism of 
the ordered semigroup of purely non-compact elements of 
${\rm Cu}(A)$ {{with}} ${\rm LAff}_+({\tilde{T}}(A)).$ 
Hence  \ref{Subcontsc}
%Theorem \ref{Pconscale} 
below implies that there exist a non-zero hereditary \SCA\,  $B$ of 
$A\otimes {\cal K}$
such that $B$ has continuous scale.}}

%{\red{Thus}} Theorem  \ref{Pconscale} together with \ref{Subcontsc} imply that there exists a non-zero hereditary \SCA\, $B$ of $A$
%such that $B$ has continuous scale (see \ref{Ccontsc} below).

{{In fact slightly more can be said. 
Let $a\in A_+$ be a strictly positive element. 
One finds a non-zero  element $b\in (A\otimes {\cal K})_+$ such that $\widehat{\la b\ra}$ is continuous 
and $\widehat{\la b\ra}< \widehat{\la a \ra}.$  
Write $C=A\otimes {\cal K}$ and  view $A$ as a hereditary \SCA\, of $C.$
Note that $C$ is also ${\cal Z}$-stable.
 It follows from part (ii) of Theorem 1.2 of \cite{Rlz}
that there exists a {{nonzero}} positive element $b_1\in C$ such that $\overline{b_1C}\subset \overline{aC}$
such that $\la b_1^2\ra=\la b_1\ra=\la b\ra.$ Note that $b_1^2=b_1b_1^*\in \overline{aCa}=A.$
In other words, $A$ contains a positive element $b_1$ such that $\widehat{\la b_1\ra}$ is
continuous.}}

{{Thus}} Theorem  \ref{Pconscale} together with \ref{Subcontsc} imply that there exists a  non-zero hereditary
 \SCA\,  $B$ of $A$
such that $B$ has continuous scale (see \ref{Ccontsc} below).

%{{Moreover, for a general separable simple \CA\, $A,$ one can always find a nonzero hereditary \SCA\, $B$ of $A$ 
%such that $B$ has a continuous scale (see Proposition 2.3 of \cite{Lncs2}).}}

%{\red{Thus the}} following result  {Pconscale} together implies that there exists a non-zero hereditary \SCA\, $B$ of $A$
%such that $B$ has continuous scale (see \ref{Ccontsc} below).

\end{rem}

\begin{thm}[cf.~\cite{Lncs2}]\label{Pconscale}
Let $A$ be a $\sigma$-unital  
%infinite dimensional 
non-elementary 
simple \CA\, with continuous scale.
% such that ${\mathrm{T}(A)}\not=\O.$
Then

{\rm (1)} ${\mathrm{T}(A)}$ is compact;

{\rm (2)} $\mathrm{d}_\tau(a)$ is continuous on ${\widetilde{\mathrm{T}}}(A)$ for any strictly positive element $a$ of $A;$

{\rm (3)} $\mathrm{d}_\tau(a)$ is continuous on $\overline{{\mathrm{T}(A)}}^\mathrm{w}$  for any strictly positive element $a$ of $A.$

Conversely, if 
%${\mathrm{QT}}(A)={\mathrm{T}}(A),$ 
$A$   has strict comparison for
positive elements using tracial states (see \ref{Dstrictcom}),  and $A$ is algebraically simple, then {\rm (1)}, {\rm (2)}, and {\rm (3)} are  equivalent  and also
equivalent to each of the following conditions:

{\rm (4)} $A$ has continuous scale;

{\rm (5)} $\mathrm{d}_\tau(a)$ is continuous on $\overline{{\mathrm{T}(A)}}^\mathrm{w}$  for some  strictly positive element $a$ of $A;$

{\rm (6)} $\mathrm{d}_\tau(a)$ is continuous  on ${\widetilde{\mathrm{T}}}(A)$ for some strictly positive element $a$ of $A.$

\end{thm}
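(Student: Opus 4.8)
The plan is to extract from continuous scale a single uniform estimate on the tails $e_m-e_n$ of the approximate unit, and to pass between the \emph{rank} $\mathrm{d}_\tau$ and the \emph{value} $\tau$ of these tails by an algebraic inequality. Two ingredients I would prepare first. The \emph{rank--trace inequality}: for the approximate unit of \ref{Dconscale}, which satisfies $e_{n+1}e_n=e_n$, one checks $e_{n-1}(e_m-e_n)=0$ and $(e_{m+1}-e_{n-1})(e_m-e_n)=e_m-e_n$ for $m>n$, so $h:=e_{m+1}-e_{n-1}$ acts as a unit on the range of $e_m-e_n$. Writing $q=f_\eta(e_m-e_n)$, the identity $hq=q$ and the trace property give $\tau(q)=\tau(h^{1/2}qh^{1/2})\le\tau(h)$, whence, letting $\eta\to 0$,
$$
\mathrm{d}_\tau(e_m-e_n)\le \tau(e_{m+1}-e_{n-1})\tforal \tau\in{\widetilde{\mathrm{T}}}(A).
$$
The second ingredient is \emph{divisibility}: since $A$ is simple and non-elementary, for each $k$ the algebra $\overline{aAa}$ (with $a$ a strictly positive element, $0\le a\le 1$) contains $k$ mutually orthogonal, mutually Cuntz-equivalent nonzero positive elements, whose orthogonal sum lies in $\overline{aAa}$; this yields $b^{(k)}\in A_+\setminus\{0\}$ with $k\la b^{(k)}\ra\le \la a\ra,$ hence $k\,\mathrm{d}_\tau(b^{(k)})\le\mathrm{d}_\tau(a)$ for every $\tau.$

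For the first part, assume $A$ has continuous scale. Fixing $k$ and applying continuous scale to $b^{(k)}$ gives $N_k$ with $e_m-e_n\lesssim b^{(k)}$ for $m>n\ge N_k;$ with $\tau(e_m)\to\mathrm{d}_\tau(a)$ and divisibility this gives $\mathrm{d}_\tau(a)-\tau(e_n)\le \mathrm{d}_\tau(b^{(k)})\le \frac{1}{k}\mathrm{d}_\tau(a)$ for $n\ge N_k.$ On the compact set $\overline{\mathrm{T}(A)}^\mathrm{w}$ (see \ref{DTtilde}), where $\mathrm{d}_\tau(a)=\|\tau\|\le 1,$ this reads $0\le \mathrm{d}_\tau(a)-\tau(e_n)\le 1/k$ uniformly, so the continuous functions $\tau\mapsto\tau(e_n)$ converge uniformly to $\mathrm{d}_\tau(a),$ proving (3). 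Since $\mathrm{d}_\tau(a)=1$ on the dense subset $\mathrm{T}(A),$ continuity forces $\mathrm{d}_\tau(a)\equiv 1$ on $\overline{\mathrm{T}(A)}^\mathrm{w};$ hence every point there is a state, $\overline{\mathrm{T}(A)}^\mathrm{w}=\mathrm{T}(A)$ is compact, giving (1). For (2) I would run the same estimate on all of ${\widetilde{\mathrm{T}}}(A):$ where $\mathrm{d}_\tau(a)<\infty$ one obtains $\mathrm{d}_\tau(a)\le \frac{k}{k-1}\tau(e_{N_k}),$ yielding upper semicontinuity as $k\to\infty$ to complement the automatic lower semicontinuity from \ref{Ddimf}; here one also invokes that continuous scale makes $A$ algebraically simple, so by \ref{Dboundedscale} every trace is finite on $A$ and ${\widetilde{\mathrm{T}}}(A)$ is the cone over the compact base $\mathrm{T}(A).$

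For the converse, assume strict comparison for positive elements using tracial states and that $A$ is algebraically simple; then $\overline{\mathrm{T}(A)}^\mathrm{w}$ is compact and $0\notin\overline{\mathrm{T}(A)}^\mathrm{w}$ by \ref{compactrace}. The implications (2)$\Rightarrow$(3),(6), (3)$\Rightarrow$(5), (6)$\Rightarrow$(5) are by restriction or specialization, and (4)$\Rightarrow$(1),(2),(3) is the first part. For (1)$\Rightarrow$(5), compactness of $\mathrm{T}(A)$ means $\overline{\mathrm{T}(A)}^\mathrm{w}=\mathrm{T}(A),$ so $\mathrm{d}_\tau(a)\equiv 1$ is continuous. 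The crux is (5)$\Rightarrow$(4). Given $b\in A_+\setminus\{0\},$ fullness and lower semicontinuity on the compact set give $\beta:=\inf\{\mathrm{d}_\tau(b):\tau\in\overline{\mathrm{T}(A)}^\mathrm{w}\}>0.$ Continuity of $\mathrm{d}_\tau(a),$ the increasing limit of $\tau\mapsto\tau(e_n),$ together with Dini's theorem on $\overline{\mathrm{T}(A)}^\mathrm{w}$ yields uniform convergence, so one can choose $N$ with $\sup_\tau(\mathrm{d}_\tau(a)-\tau(e_n))<\beta$ for $n\ge N.$ The rank--trace inequality then gives, for $m>n\ge N+1,$
$$
\mathrm{d}_\tau(e_m-e_n)\le \tau(e_{m+1}-e_{n-1})\le \mathrm{d}_\tau(a)-\tau(e_{n-1})<\beta\le \mathrm{d}_\tau(b)\tforal\tau\in\overline{\mathrm{T}(A)}^\mathrm{w}.
$$
Strict comparison yields $e_m-e_n\lesssim b$ for all $m>n\ge N+1,$ i.e.\ $A$ has continuous scale.

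The main obstacle is precisely the gap between $\tau$ and $\mathrm{d}_\tau$ on the tails: continuous scale and continuity of the scale only control $\tau(e_m-e_n),$ whereas comparison requires controlling the rank $\mathrm{d}_\tau(e_m-e_n).$ The rank--trace inequality bridges this, and it is essential that strict comparison is available in the converse to pass from the numerical inequality $\mathrm{d}_\tau(e_m-e_n)<\mathrm{d}_\tau(b)$ back to Cuntz subequivalence. A secondary technical point is continuity on the full non-compact cone ${\widetilde{\mathrm{T}}}(A)$ in (2), where one must first rule out traces that are infinite on $A$ by using that continuous scale forces algebraic simplicity.
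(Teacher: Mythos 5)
Your proof is correct and follows essentially the same route as the paper's: non-elementary simplicity supplies elements of uniformly small $\mathrm{d}_\tau$ on $\overline{{\mathrm{T}(A)}}^\mathrm{w}$, continuous scale dominates the tails $e_m-e_n$ of the approximate unit $e_n=f_{1/2^n}(a)$ by them, giving uniform convergence of $\tau(e_n)$ to $\mathrm{d}_\tau(a)$ and hence (1)--(3), while the converse combines Dini's theorem with strict comparison applied to the tails. Your explicit rank--trace inequality $\mathrm{d}_\tau(e_m-e_n)\le \tau(e_{m+1}-e_{n-1})$ is exactly the bridge the paper uses implicitly when it asserts $\mathrm{d}_\tau(e_m-e_n)<\mathrm{d}_\tau(b)$ in the converse, so it is a fill-in of a terse step rather than a different method.
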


\begin{proof}
Most parts of the theorem  are well known.  That (1) holds is  perhaps less well known.

%Suppose that
Since  $A$ has continuous scale, $A$ is algebraically simple  (3.3 of \cite{Lncs1}).
In particular, $A={\mathrm{Ped}(A)}.$ As  noted in Definition \ref{DTtilde},
% in {\red{\ref{compactrace} and \ref{Tpedersen}}}, 
% \ref{Csemicon},
$K=\overline{{\mathrm{T}(A)}}^\mathrm{w}$ is compact.
Let $a\in A$ be a  strictly positive element.
Fix an element $b\in  A_+\setminus \{0\}$ {{with $\|b\|=1$.}}
Put $B=\overline{f_{1/2}(b)Af_{1/2}(b)}.$ 
%Note that $
Since $A$ is not  
%{{finite dimensional.}}
%
elementary,
%{{Then}}
$B_+$  contains infinitely many mutually orthogonal  non-zero elements  $\{x_n\}$
with $0\le x_n\le 1,$ $n=1,2,....$ 
{{By repeatedly applying Lemma 3.5.4 of \cite{Lnbk},  one then finds, for each $n,$ 
$n$ nonzero mutually  orthogonal positive elements 
$\{x_{n,1}, x_{n,2},...,x_{n,n}\}$ in $A$ with $0 \le x_{n,j}\le 1$ 
such that $x_{n,1}\lesssim x_{n,2}\lesssim\cdots \lesssim x_{n,n}$ (see also 2.3 of \cite{Lncs1}). }}

Note that $\tau(f_{1/8}(b))$ is bounded on $K$ and 
\beq
d_\tau(f_{1/4}(b))\le \tau(f_{1/8}(b))\rforal \tau\in K.
\eneq
Since $f_{1/4}(b)x_{n,j}=x_{n,j}$ for all $1\le j\le n$ and all $n,$  it follows that,  
for any $\ep>0,$ there exists $x_{n(\ep), 1}$ such that $\mathrm{d}_\tau(x_{(\ep),1})<\ep$ for all
$\tau\in K.$
Note that
$$
\mathrm{d}_\tau(a)=\lim_{n\to \infty}\tau(f_{1/2^n}(a))\rforal \tau\in K.
$$
%Denote 
{{Note that, with}} $e_n=f_{1/2^n}(a),$ $n=1,2,...,$ 
{{$(e_n)_{n=1}^{\infty}$ forms an approximate identity 
with $e_{n+1}e_n=e_n$ for all $n.$}} Since $A$ has continuous scale,  for any $\ep>0,$
there exists $N\ge 1$ such that
$$
e_m-e_n\lesssim x_{k(\ep),1}\rforal m>n\ge N.
$$
In particular,
\vspace{-0.12in} \beq\label{Pconscale-2}
\tau(e_m)-\tau(e_n)<\ep\rforal \tau\in K.
\eneq
It follows that $\mathrm{d}_\tau(a)$ is continuous on $K.$
Since $\mathrm{d}_\tau(a)=1$ on ${\mathrm{T}(A)}$ and ${\mathrm{T}(A)}$ is dense in $K,$
$\mathrm{d}_\tau(a)=1$ for all $\tau\in K.$ This implies that
${\mathrm{T}(A)}=K.$ This proves (1)  and (3).
%It is clear that 
{{Note that (2) is}}  equivalent to (3), as $A={\rm Ped}(A),$ 
{{and so ${\widetilde{\mathrm{T}}(A)}=\R_+{\mathrm{T}}(A).$}}

Conversely,  suppose that  $A$ is as stated, suppose that $\mathrm{d}_\tau(a)$ is continuous,
and suppose that $e_n=f_{1/2^n}(a),$ $n=1,2,....$  Then, $\tau(e_n)$ converges to 
${{{\rm d}_\tau(a)}}$ uniformly on $K.$ 
{{For any nonzero $b\in A_+,$ there exists 
$N\ge 1$ such that
${\rm d}_\tau(e_m-e_n)<{\rm d}_\tau(b)$ for all $\tau\in T(A)$ and for all 
$m>n\ge N.$}} 
%on $K$ for some strictly positive element,
 %then,
%\eqref{Pconscale-2} holds for every $\ep.$
Since $A$ has strict comparison for positive elements, 
 it follows {{that}}, 
 %for any {\red{nonzero}} $b\in A_+,$  there exists $N\ge 1$ such that
$$
e_m-e_n\lesssim b\rforal m>n\ge N.
$$
%%This implies that 
%{{In other words,}} 
Thus, $A$ has continuous scale.

In other words, in this case, if $A$ does not have continuous scale,
$\mathrm{d}_\tau(a)$ is not continuous on $K.$ In particular, $\mathrm{d}_\tau(a)$ is not identically
1. This implies $K\not={\mathrm{T}(A)}.$
The above shows that, under the assumption that $A$ {{is}} as stated in the second part of the theorem,
(1), (4) and (5) are equivalent.  Since (5) and (6) are equivalent, these are also equivalent to (6).
Since the notion of continuous scale is independent of {{the}} choice of $a,$ these
{{conditions}}  are also equivalent
to (2) and (3).

\end{proof}

\begin{prop}\label{Subcontsc}
Let $A$ be a $\sigma$-unital exact simple \CA\, with 
strict comparison {{for positive elements.}}
% and with continuous scale.   
Suppose that ${\mathrm{T}(A)}\not=\O.$
Let $a\in A_+$ be such that $\mathrm{d}_\tau(a)$ is continuous on ${\widetilde{\mathrm{T}}}(A).$
Then $\overline{aAa}$ has continuous scale.
%${\mathrm{T}(A)}.$
If, in addition,  $A$ is algebraically simple and $\mathrm{d}_\tau(a)$ is just assume to be continuous 
on $\overline{{\mathrm{T}(A)}}^\mathrm{w},$
then $\overline{aAa}$ has continuous scale.
\end{prop}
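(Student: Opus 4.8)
Write $B=\overline{aAa}$ and observe that $a$ is a strictly positive element of $B$, so that, with $f_\ep$ as in \ref{Dfep}, the elements $e_n:=f_{1/2^n}(a)$ form an approximate unit of $B$ satisfying $e_{n+1}e_n=e_n$; hence $(e_n)$ is admissible for testing continuous scale (\ref{Dconscale}). The plan is to verify the defining property directly: given $b\in B_+\setminus\{0\}$, to find $N$ with $e_m-e_n\lesssim b$ for all $m>n\ge N$. Since $A$ is simple, $b$ is full, and since $e_m-e_n$ and $b$ lie in the hereditary \SCA\, $B$, Cuntz comparison of these elements is the same computed in $B$ or in $A$; so it is enough to produce the subordination in $A$, and for this I will invoke strict comparison (\ref{Dstrictcom}) after checking the dimension-function inequality $\mathrm{d}_\tau(e_m-e_n)<\mathrm{d}_\tau(b)$.

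The key step is a single estimate, uniform in $m$. For $m>n$ the element $e_m-e_n=f_{1/2^m}(a)-f_{1/2^n}(a)$ is positive and its range projection is the spectral projection of $a$ over the interval $(1/2^{m+1},1/2^n)$, which equals $p_{(1/2^{m+1},\infty)}-p_{[1/2^n,\infty)}$ (spectral projections of $a$). Since $p_{(1/2^{m+1},\infty)}\le p_{(0,\infty)}$ and $f_{1/2^{n-1}}(a)\le p_{[1/2^n,\infty)}$, applying a trace gives
$$\mathrm{d}_\tau(e_m-e_n)\le \mathrm{d}_\tau(a)-\tau(f_{1/2^{n-1}}(a))\tforal \tau\in\widetilde{\mathrm{T}}(A).$$
Both $\mathrm{d}_\tau(e_m-e_n)$ and $\mathrm{d}_\tau(b)$ are finite (each is $\le\mathrm{d}_\tau(a)<\infty$, as $b\lesssim a$), and as $n\to\infty$ the right-hand side decreases pointwise to $0$ because $\tau(f_{1/2^{n-1}}(a))\nearrow\mathrm{d}_\tau(a)$.

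To upgrade pointwise to uniform control I pass to a compact cross-section of the cone $\widetilde{\mathrm{T}}(A)$ --- this is the main obstacle, $\widetilde{\mathrm{T}}(A)$ itself being non-compact. Fix a full $e_0\in\mathrm{Ped}(A)_+\setminus\{0\}$ and put $\Delta=\mathrm{T}_{e_0}(A)=\{\tau:\tau(e_0)=1\}$, which is compact (\ref{Dboundedscale}) and, by fullness of $e_0$, meets each ray of $\widetilde{\mathrm{T}}(A)\setminus\{0\}$ exactly once. On $\Delta$ the functions $g_n(\tau):=\mathrm{d}_\tau(a)-\tau(f_{1/2^{n-1}}(a))$ are continuous --- using the hypothesis that $\mathrm{d}_\tau(a)$ is continuous on $\widetilde{\mathrm{T}}(A)$ together with the continuity of $\tau\mapsto\tau(f_{1/2^{n-1}}(a))$ (\ref{Ddimf}) --- and decrease to $0$, so Dini's theorem gives $g_n\to 0$ uniformly on $\Delta$; meanwhile $\mathrm{d}_\tau(b)$ is lower semicontinuous and, by fullness of $b$, strictly positive on the compact set $\Delta$, hence $\beta:=\inf_\Delta\mathrm{d}_\tau(b)>0$. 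Choosing $N$ so that $\sup_\Delta g_n<\beta$ for $n\ge N$ and using that both sides of the desired inequality are homogeneous of degree one in $\tau$, I get $\mathrm{d}_\tau(e_m-e_n)\le g_n(\tau)<\mathrm{d}_\tau(b)$ for every $\tau\in\widetilde{\mathrm{T}}(A)\setminus\{0\}$ and all $m>n\ge N$; strict comparison then delivers $e_m-e_n\lesssim b$, proving the first statement.

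For the second statement, $A$ being algebraically simple means $A=\mathrm{Ped}(A)$ (\ref{Tpedersen}), so by \ref{compactrace} and \ref{DTtilde} the set $\overline{\mathrm{T}(A)}^{\mathrm{w}}$ is compact, does not contain $0$, and (being convex) satisfies $\widetilde{\mathrm{T}}(A)\setminus\{0\}=\R_{>0}\cdot\overline{\mathrm{T}(A)}^{\mathrm{w}}$. By homogeneity the inequality $\mathrm{d}_\tau(e_m-e_n)<\mathrm{d}_\tau(b)$ therefore only has to be checked for $\tau\in\overline{\mathrm{T}(A)}^{\mathrm{w}}$, and the argument of the previous paragraph applies verbatim with $\Delta$ replaced by the compact set $\overline{\mathrm{T}(A)}^{\mathrm{w}}$: the $g_n$ are continuous there by the weaker hypothesis (continuity of $\mathrm{d}_\tau(a)$ on $\overline{\mathrm{T}(A)}^{\mathrm{w}}$), Dini gives uniform convergence, $\mathrm{d}_\tau(b)$ is bounded below since $b$ is full and $0\notin\overline{\mathrm{T}(A)}^{\mathrm{w}}$, and strict comparison may legitimately be tested on $\overline{\mathrm{T}(A)}^{\mathrm{w}}$ because $A=\mathrm{Ped}(A)$ (\ref{Dstrictcom}). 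Hence $\overline{aAa}$ again has continuous scale.
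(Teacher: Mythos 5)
Your proof is correct and takes essentially the same route as the paper's: the approximate unit $e_n=f_{1/2^n}(a)$, uniform (Dini-type) convergence of $\mathrm{d}_\tau(e_n)\nearrow\mathrm{d}_\tau(a)$ on a compact base of the trace cone, extension of the strict inequality $\mathrm{d}_\tau(e_m-e_n)<\mathrm{d}_\tau(b)$ to all of $\widetilde{\mathrm{T}}(A)\setminus\{0\}$ by homogeneity/cone generation, and then strict comparison. The only cosmetic difference is the choice of compact base: you normalize at a full Pedersen element, using $\mathrm{T}_{e_0}(A)$ from \ref{Dboundedscale}, whereas the paper uses $\imath(\overline{\mathrm{T}(C)}^{\mathrm{w}})$ for a hereditary \SCA\, $C=\overline{cAc}$ with $c\in\mathrm{Ped}(A)_+$, via \ref{Subset}; your spectral-projection estimate $\mathrm{d}_\tau(e_m-e_n)\le\mathrm{d}_\tau(a)-\tau(f_{1/2^{n-1}}(a))$ just makes explicit the uniform-in-$m$ bound the paper leaves implicit.
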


\begin{proof}
{{Put $B=\overline{aAa}.$ We may assume that $a\not=0.$
Choose  a nonzero element $c\in {\rm{Ped}}(A)$ with $0\le c\le 1.$
Put $C=\overline{cAc}.$ By \ref{CherePA=A} and \ref{Tpedersen}, $C={\rm Ped}(C).$
Put $K= \overline{{\mathrm{T}}(C)}^{\mathrm{w}}.$ 
Then, by \ref{compactrace},  $0\not\in K.$ 

Note that each $\tau\in K$ extends uniquely to an element {{of}} ${\widetilde{\mathrm{T}}}(A).$ 
Let $\imath: K\to {\widetilde{\mathrm{T}}}(A)$ {{denote this map  as in \ref{Subset}.}}
% be given by the extension. 
By \ref{Subset}, $0\not\in \imath(K)$ and $\imath(K)$ is compact. 
Therefore ${\rm{d}}_{\tau}(a)$ is continuous on $\imath(K).$ 
Let $e_n=f_{1/2^n}(a),$ $n=1,2,...$
Then $(e_n)_{n=1}^{\infty}$ is an approximate identity for $B$ such that $e_{n+1}e_n=e_n$ for all $n.$
Then ${\rm d}_{\tau}(e_n)\nearrow {\rm d}_\tau(a)$ uniformly on the compact set $K.$ 
For any $b_0\in B_+\setminus \{0\},$ there exists $N\ge 1$ such that, for all $m>n\ge N,$ 
\beq\label{189sec5-1}
{\rm d}_\tau(e_{m}-e_n)<{\rm d}_\tau(b_0)\tforal \tau\in K.
\eneq
Since $K$ generates ${\widetilde{T}}(A)$ as a cone,   \eqref{189sec5-1} holds for all 
$\tau\in {\widetilde{T}}(A)\setminus \{0\}.$
It follows that, for all $m>n\ge N,$ 
\beq
e_m-e_n\lesssim b_0.
\eneq
Therefore $B$ has continuous scale.

%We omit the proof of t
The last part of the statement  follows  since ${\tilde T}(A)=\R_+ T(A).$}}
%%%%%%%%%%%%%
\iffalse
{{Note  by \ref{CherePA=A} and \ref{Tpedersen}, $C={\rm Ped}(C).$}} 
Then, {{as pointed out in the proof of of \ref{Pconscale}, $B={\rm Ped}(B),$ and so 
${\widetilde{\mathrm{T}}}(B)=\R_+{\mathrm{T}}(B).$}} 
Each $\tau\in {\mathrm{T}}(B)$ extends uniquely to an element in ${\widetilde{T}}(A).$ 
Consider $K=\{\tau\in {\widetilde{\mathrm{T}}}(A): {\mathrm{d}}_\tau(a)=1\}.$
Since ${\mathrm{d}}_\tau(a)$ is a continuous, $K$ is weak* closed. Thus, as a weak* closed 
subset of the unit ball of $B^*.$ It is compact. 
{{As pointed out in the proof 
of \ref{Pconscale}, $A={\rm Ped}(A),$ and so ${\widetilde{\mathrm{T}}}(A)=\R_+{\mathrm{T}}(A).$}}
{{Note also, by \ref{CherePA=A} and \ref{Tpedersen}, $B={\rm Ped}(B).$ 
We also have ${\widetilde{\mathrm{T}}}(B)=\R_+{\mathrm{T}}(B).$ On the other hand, every 
tracial state of $B$ extended uniquely to a tracial state
}}
Since $\mathrm{d}_\tau(a)$ is continuous on ${\mathrm{T}(A)},$ {{as by \ref{Pconscale},
${\mathrm{T}}(A)$ is compact,}}
{{one sees that}} $\mathrm{d}_\tau(a)$ is a continuous function on ${\widetilde{\mathrm{T}}}(A)$
which, in this case, is  naturally identified with  ${\widetilde{\mathrm{T}}}(B).$  By {{\ref{Pconscale},}}
%{Rcontext},
$B$ has continuous scale.
%We omit the proof of the last part of the statement.
\fi
%%%%%%%%%%%%%%%%%%%%%%%%%%%
%Therefore $\{\tau\in {\widetilde{\mathrm{T}}}(A): \mathrm{d}_\tau(a)=1\}$ is compact.
\end{proof}

Now we turn to the important concept of local uniform fullness.

\begin{df}\label{fullunif}
Let $A$ be a \SCA\, of a 
%$\sigma$-unital 
\CA\, $B.$
% Fix an approximate identity $\{e_n\}$ for
%$B.$
An element $a\in A_+\setminus \{0\}$ is said to be {\it uniformly full} in $B,$ if
 there are a positive number $M(a)>0$ and an integer $N(a)\ge 1$ such
that, for any $b\in B_+$ with $\|b\|\le 1$ and any $\ep>0,$  there are $x_i(a), x_2(a),..., x_{n(a)}(a)\in B$ such
that $\|x_{i}(a)\|\le M(a),$ $n(a)\le N(a),$ and
$$
\|\sum_{i=1}^{n(a)}x_{i}(a)^*ax_{i}(a)-b\|<\ep.
$$

In this case, we shall also say that $a$ is $(N({{a}}), M(a))$ full.

We shall say that $a$ is {\it strongly uniformly full} in $B,$ if the above property holds
with  $\ep=0$  and replacing ``$<\ep$" by $=0.$

We shall say that $A$ is {\it locally uniformly full},
if every element $ a\in A_+\setminus \{0\}$ is uniformly full;
and we say $A$ is {\it strongly locally uniformly full} if every $a\in A_+\setminus \{0\}$ is
strongly uniformly full.
%if, for any $a\in A\setminus\{0\},$ there are positive number $M(a)>0$ and an integer $N(a)\ge 1$ such
%that, for any $b\in B_+$ with $\|b\|\le 1$ and any $\ep>0,$  there are $x_i(a), x_2(a),...x_{m(a)}(a)\in B$ such
%that $\|x_{i}(a)\|\le M(a),$ $m(a)\le N(a)$ and
%$$
%\|\sum_{i=1}^{m(n)}(x_{i}(a))^*ax_{i}(a)-b\|<\ep.
%$$

%We say that $A$ is {\it strongly locally uniformly full} in $B$ if, the above holds with $\ep=0.$

If $B$ is unital and $A$ is full in $B,$ then $A$ is always  strongly locally uniformly full.
In fact, for each $a\in A\setminus \{0\},$ there are
%$M(a)>0,$  an integer $N(a)\ge 1$
$x_1, x_2,...,x_m\in B$ such that
$$
\sum_{i=1}^m x_i^*ax_i=1_B.
$$
Choose $M(a)=\max\{\|x_i\|: 1\le i\le m\}$ and $N(a)=m.$

Let $A$ be a 
%separable
 \CA, let $B$ be non-unital \CA, and let $L: A\to B$ be a positive linear map.
 Let $F: A_+\setminus \{0\}\to \N\times \R_+\setminus\{0\}.$
Suppose that ${\cal H}\subset A_+\setminus \{0\}$ is  a subset.
We shall say that $L$ is $F$-${\cal H}$-full if, {{for any $a\in {\cal H},$}} for any $b\in B_+$ with $\|b\|\le 1,$ 
{{and}} any $\ep>0,$
there are $x_1, x_2,...,x_m\in B$ such that
$m\le N(a)$ and $\|x_i\|\le M(a),$ where
$(N(a), M(a))=F(a),$  and
\beq\label{localfull-1}
\|\sum_{i=1}^mx_i^*L(a)x_i-b\|\le \ep.
\eneq

We {{shall}} say $L$ is exactly $F$-${\cal H}$-full, if \eqref{localfull-1} holds for $\ep=0.$
\end{df}

\begin{prop}\label{localunit}
Let $A$ be a  {{nonzero}} $\sigma$-unital 
%\%full 
\SCA\, of a \CA\, $B.$
Suppose  that $B$ is 
{{$\sigma$-unital and}}
 algebraically simple.
%that there are $b,e\in (B_1)_+$ with $0\le b\le e\le 1$ and with
%$eb=be=b.$
%Suppose that $B=\overline{bB_1b}$ and $A\subset B.$
Then $A$ is strongly locally uniformly full in $B.$
\end{prop}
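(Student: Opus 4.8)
The plan is to follow the unital argument recorded just before the statement, using algebraic simplicity of $B$ as a substitute for the presence of a unit. Two consequences of that hypothesis would be recorded first. By Theorem~\ref{Tpedersen}, $B={\mathrm{Ped}(B)}$, and by Remark~\ref{Dboundedscale} $B$ has bounded scale; fixing a strictly positive $e\in B$ with $0\le e\le1$, there is $n_0$ with $\langle b\rangle\le n_0\langle e\rangle$ for all $b\in B_+$. Since $B$ is simple and $A\ne\{0\}$, any $a\in A_+\setminus\{0\}$ is full in $B$; writing $e$ as in the proof of Theorem~\ref{Tpedersen}, namely dominated by finitely many functions of elements supported away from $0$, and applying Lemma~\ref{Lfullep} to each, one gets $\langle e\rangle\le K\langle a\rangle$. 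Hence $\langle b\rangle\le m\langle a\rangle$ with $m:=n_0K$ \emph{independent of} $b$; this uniform comparison is what will pin down $N(a)$.

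For the construction itself I would first pass, via Lemma~\ref{Lbkqc}, to the full element $b_0$ with $0\le b_0\le1$ and local unit $e_0$ (so $e_0b_0=b_0e_0=b_0$); Lemma~\ref{Lfullep} writes $b_0=\sum_{j}x_j'^{*}ax_j'$ as a finite sum, so any decomposition of a given $b$ against $b_0$ composes into one against $a$, and it suffices to make $b_0$ strongly uniformly full. Because $B={\mathrm{Ped}(B)}$, the algebraic ideal generated by $b_0$ is dense and contains ${\mathrm{Ped}(B)}=B$, hence equals $B$, so every $b\in B_+$ is a genuine finite combination $b=\sum_kc_kb_0d_k$. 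For positive $b$ the elementary inequality $c_kb_0d_k+d_k^{*}b_0c_k^{*}\le c_kb_0c_k^{*}+d_k^{*}b_0d_k$ produces a finite element $w$ of the cone $\Phi_{b_0}:=\{\sum_ix_i^{*}b_0x_i\}$ with $b\le w$, where the number of terms is controlled by the comparison $\langle b\rangle\le m\langle b_0\rangle$ above. The local unit $e_0$ would then be used, in the manner of the unital model $b=b^{1/2}\cdot 1\cdot b^{1/2}$, to keep the coefficients bounded once a factorization $b=dwd^{*}$ is available.

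The delicate point---and what I expect to be the bulk of the work---is the requirement $\ep=0$, that is, turning the domination $b\le w$ into an exact identity $b=\sum_ix_i^{*}b_0x_i$ while retaining the uniform bound $\|x_i\|\le M(a)$. If $w$ had a spectral gap at $0$ one could simply factor $b^{1/2}=dw^{1/2}$ with $d\in B$ and be done; but a contraction $b$ whose spectrum accumulates at $0$---in particular a strictly positive one---admits no dominating element of $B$ with a gap at $0$ (such an element would be bounded below in $B^{**}$ and would force $B$ to be unital), so the naive factorization lands only in $B^{**}$ and yields mere norm approximation. The exactness must instead be extracted from the \emph{algebraic} membership $b\in\widetilde{B}\,b_0\,\widetilde{B}$ that algebraic simplicity guarantees: the finite combination $b=\sum_kc_kb_0d_k$ is strictly stronger than $b\le w$, and the symmetrization has to be organized so that the coefficients factor through $b_0$ inside $B$ itself, not merely in the bidual. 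Once this exact finite decomposition is secured, the uniformity of $N(a)$ follows from $\langle b\rangle\le m\langle a\rangle$ and the boundedness of the $x_i$ from the local unit $e_0$, completing the proof.
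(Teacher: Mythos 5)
Your proposal stops short of a proof at exactly the point you flag yourself: you never obtain the exact identity $b=\sum_i x_i^* a x_i$ with uniformly bounded coefficients, only the domination $b\le w$ with $w$ in the cone $\{\sum_i x_i^* b_0 x_i\}$, together with the (correct) observation that the naive factorization $b=d^*wd$ only produces $d\in B^{**}$. Deferring this with ``once this exact finite decomposition is secured'' is deferring the whole theorem: passing from $0\le b\le w$ to $b=y^*wy$ with $y$ in the algebra is impossible in general without a spectral gap, and your symmetrization of the algebraic decomposition $b=\sum_k c_k b_0 d_k$ provides no mechanism for it (repeating the trick on $b^{1/2}$ just reproduces the same domination-versus-factorization problem at the level of matrices over $B$). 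The Cuntz-comparison preliminaries ($\langle b\rangle\le m\langle a\rangle$ uniformly in $b$) also turn out to be beside the point: in any completed argument they are not what controls $N(a)$.

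The paper supplies the missing mechanism, and it reverses the direction of your decomposition. Since $B$ is algebraically simple, $B=\mathrm{Ped}(B)$, so $B$ is compact (Theorem \ref{Tpedersen}); by Lemma \ref{subcommatrix} there are $e\in \mathrm{M}_n(B)$ with $0\le e\le 1$ and a partial isometry $w\in \mathrm{M}_n(B)^{**}$ such that $w^*c,\,cw\in \mathrm{M}_n(B)$, $ww^*c=cww^*=c$, and $e\,w^*cw=w^*cw$ for all $c\in B$. One then decomposes the local unit, not the target: Lemma \ref{Lfullep} gives, \emph{exactly}, $f_\ep(e)=\sum_{i=1}^m x_i^*ax_i$ (exactness is built into that lemma via R\o rdam's argument). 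For any $x\in B_+$ with $\|x\|\le 1$, since $f_\ep(e)$ acts as a unit on $w^*xw$, one has the exact identity $x=x^{1/2}w\,f_\ep(e)\,w^*x^{1/2}$; substituting the decomposition of $f_\ep(e)$ yields $x=\sum_i z_i^*az_i$ with $z_i=x_iw^*x^{1/2}\in B$ and $\|z_i\|\le\|x_i\|$. The same $m$ and the same bound $\max_i\|x_i\|$ serve for every $x$, so $N(a)$ and $M(a)$ depend only on $a$, and uniformity is automatic without any Cuntz comparison. In short, the local unit is not, as in your last paragraph, merely a device to bound coefficients ``once a factorization is available'': conjugation through $w$ against $f_\ep(e)$ \emph{is} the non-unital substitute for $b=b^{1/2}\cdot 1\cdot b^{1/2}$, and it is what converts one exact decomposition (of the local unit in terms of $a$) into exact decompositions of all positive contractions.
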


\begin{proof}
Let ${{b}}\in A$ be a strictly positive element.
Then $\overline{{{b}}B{{b}}}$ is {{a}} full hereditary \SCA\, $B.$
It suffices to show that $\overline{{{b}}B{{b}}}$ is {{strongly}} locally uniformly full in $B.$
Put $B_1=\overline{{{b}}B{{b}}}.$ 
In what follows we will identify $B$ with $B\otimes e_{11}$ in ${\rm M}_n(B).$

{{Since $B$ is algebraically simple,
 $B={\rm Ped}(B).$  
By \ref{Tpedersen}, $B$ is compact.}}
{{Applying \ref{subcommatrix},}} {{l}}et $e\in \mathrm{M}_n(B)$ {{for some 
$n\ge 1$}} with $0\le e\le 1$ and $w\in \mathrm{M}_n(B)^{**}$ {{be}} such that
$$
w^*a,\,w^*aw\in \mathrm{M}_n(B), \,\,\, ww^*a=aww^*=a,\andeqn w^*awe=ew^*aw=w^*aw\rforal a\in B.
$$
Note that also $aw\in {\mathrm{M}}_n(B)$ for all $a\in B.$

 By Lemma \ref{Lfullep}, for any $1/4>\ep>0$ and any $a\in (B_1)_+\setminus \{0\},$
there are $x_1,x_2,...,x_m\in {\mathrm{M}}_n(B)$ such that
%$$
%\|\sum_{i=1}^mx_i^*ax_i-e\|<\ep.
%$$
%Therefore, by 2.2 and 2.3 of \cite{Rjfa2},   there are $y\in B$ such that
$$
f_{\ep}(e)=\sum_{i=1}^m x_i^*ax_i.
$$
Let $p$ {{denote}} the range projection of $B.$  Then $px_i\in \mathrm{M}_n(B)$ for {{$i=1,2,...,m.$}}
%\in \mathrm{M}_n(B).$ 
We may assume that $px_i=x_i,$ $i=1,2,...,m.$

%Note that $f_{\ep}(e)w^*bw=w^*bwf_{\ep}(e)=w^*bw.$   It follows 
%${{Note}} that, for any  {{fix}} 
Fix $x\in B_+$ with $\|x\|\le 1.$ Then
$$
f_\ep(e) w^*xw=w^*xwf_\ep(e) =w^*xw.
$$
Let $M(a)=\max\{\|x_i\|: 1\le i\le m\}$ and $N(a)=m.$
Then, 
%for any $x\in B_+$ with $\|x\|\le 1,$ 
$w^*x^{1/2}wf_\ep(e)w^*x^{1/2}w=w^*xw.$
Therefore,
$$
x^{1/2}wf_\ep(e)w^*x^{1/2}=w(w^*xw)w^*=x.
$$
Put $z_i=x_iw^*x^{1/2},$ $i=1,2,...,m.$ 
%%Since $w^*{\red{z_i}}^{1/2}\in \mathrm{M}_n(B)$  and $px_i=x_i,$ 
{{Then}} $z_i\in B,$ ${{i=1,2,...,m.}}$
 Then $\|z_i\|\le M(a)$ and
\beq\nonumber
\sum_{i=1}^mz_i^*az_i&=&x^{1/2}w(\sum_{i=1}^mx_i^*ax_i)w^*x^{1/2}\\
&=&x^{1/2}wf_\ep(e)w^*x^{1/2}=x.
\eneq

\end{proof}

\begin{thm}\label{Tqcfull}
Let $A$ be a non-unital separable simple \CA\, with $A={\mathrm{Ped}(A)}$ and with
${\mathrm{T}(A)}\not=\O.$
Fix an element $e\in A_+\setminus \{0\}$ with $\|e\|=1$ and
$$0<d<\min\{\inf\{\tau(e): \tau\in {\mathrm{T}(A)}\}, \inf\{\tau(f_{1/2}(e)): \tau\in {\mathrm{T}(A)}\}\}.$$
Then there exists a map
$T: A_+\setminus \{0\}\to \N\times \R_+\setminus \{0\}$ 
%satisfying 
{{with the following property}}:
For any finite subset ${\cal H}_1\subset A_+^{\bf 1}\setminus \{0\},$
there {{are}} a finite subset ${\cal G}\subset A$ and $\dt>0$ satisfying the following conditions:
For any  \CA\, $B$ with  ${\rm{QT}}(C)={\rm T}(C)$
for all hereditary \SCA s $C$ of $B,$  and ${\mathrm{T}}(B)\not=\O,$ and
$0\not\in \overline{{\mathrm{T}}(B)}^\mathrm{w}$  which has  strict comparison  and almost 
has stable rank one, and
for any
${\cal G}$-$\dt$-multiplicative \cpc\, $\phi: A\to B$
such that
$$
\tau(f_{1/2}(\phi(e)))>d/2\tforal \tau\in {\mathrm{T}}(B),
$$
necessarily $\phi$ is exactly $T$-${\cal H}_1$-full.
Moreover, for any $c\in {\cal H}_1,$
$$
\tau(f_{1/2}(\phi(c)))\ge {d\over{8\min\{M(c)^2\cdot N(c): c\in {\cal H}_1\}}}\tforal \tau\in {\mathrm{T}}(B).
$$

\end{thm}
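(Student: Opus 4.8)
The plan is to let the pair $T(a)=(N(a),M(a))$ come from a single \emph{fixed} full decomposition of $f_{1/4}(e)$ over $a$ inside $A$, and then to extract exactness together with the norm bound from a \emph{definite spectral gap} for $\phi(a)$ that strict comparison and almost stable rank one will supply. Since $A$ is simple with $A=\mathrm{Ped}(A)$, every $a\in A_+\setminus\{0\}$ is full, so by Lemma \ref{Lfullep} (applied to the full element $a$, the element $e$, and $g=f_{1/4}$, whose support lies in $[1/8,\infty)$) there are $z_1,\dots,z_{k(a)}\in A$ with $f_{1/4}(e)=\sum_{i=1}^{k(a)}z_i^*az_i$; put $m(a)=\max_i\|z_i\|$. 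I would then define $\sigma(a)=d/(8\,k(a)m(a)^2)$, $M(a)=2/\sqrt{\sigma(a)}$, and $N(a)=\lfloor 1/\sigma(a)\rfloor+2$, all depending only on $a$ and the fixed data $d,e$. Given $\mathcal H_1$, take $\mathcal G$ to contain $e$, $f_{1/4}(e)$, $f_{1/2}(e)$, and all $a$, $z_i(a)$, $z_i(a)^*az_i(a)$ for $a\in\mathcal H_1$, together with enough products; choose $\dt$ small enough for the approximations below.

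The bridge from $\phi(e)$ to $\phi(a)$ is the transported decomposition. For $\dt$ small, approximate multiplicativity gives $\|f_{1/4}(\phi(e))-\Phi\|<\ep_1$, where $\Phi=\sum_{i}\phi(z_i)^*\phi(a)\phi(z_i)$ (using $e\in\mathcal G$ to approximate $f_{1/4}$ in functional calculus, and $z_i,a\in\mathcal G$ for the products). For a tracial state $\tau\in\mathrm{T}(B)$ one has $\tau(\Phi)=\sum_i\tau(\phi(z_i)^*\phi(a)\phi(z_i))\le k(a)m(a)^2\,\tau(\phi(a))$, while $\tau(f_{1/4}(\phi(e)))\ge\tau(f_{1/2}(\phi(e)))>d/2$. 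Taking $\ep_1\le d/4$ yields $\tau(\phi(a))>d/(4k(a)m(a)^2)=2\sigma(a)$. Since $f_{\sigma}(t)\ge t-\sigma$ on $[0,1]$, this gives $\mathrm{d}_\tau(f_{\sigma(a)}(\phi(a)))\ge\tau(f_{\sigma(a)}(\phi(a)))\ge\tau(\phi(a))-\sigma(a)>\sigma(a)$. All these lower bounds are evaluations at fixed elements of $B$ (or dominated by such), hence extend by continuity to $\overline{\mathrm{T}(B)}^{\mathrm w}$, where they read $\mathrm{d}_\tau(f_{\sigma(a)}(\phi(a)))\ge\sigma(a)$.

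The crux, and the step I expect to be the main obstacle, is producing the \emph{exact} identity $\sum_{i=1}^{N(a)}y_i^*\phi(a)y_i=b$ with the number $N(a)$ and the norms $\|y_i\|\le M(a)$ both controlled a priori. Here is how the gap does triple duty. Given $b\in B_+$ with $\|b\|\le 1$, we have $\mathrm{d}_\tau(b)\le 1$ for all $\tau$, while for the $N(a)$-fold diagonal $D=\diag(\overbrace{f_{\sigma(a)}(\phi(a)),\dots,f_{\sigma(a)}(\phi(a))}^{N(a)})$ one gets $\mathrm{d}_\tau(D)=N(a)\mathrm{d}_\tau(f_{\sigma(a)}(\phi(a)))\ge N(a)\sigma(a)>1\ge\mathrm{d}_\tau(b)$ on $\overline{\mathrm{T}(B)}^{\mathrm w}$. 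Strict comparison for $B$ (via \ref{Dstrictcom}, matching the trace cone to $\overline{\mathrm{T}(B)}^{\mathrm w}$ using $0\notin\overline{\mathrm{T}(B)}^{\mathrm w}$) then gives $b\lesssim D$ in $\mathrm{M}_{N(a)}(B)$. Because $\mathrm{M}_{N(a)}(B)$ almost has stable rank one, the ``moreover'' of Lemma \ref{Lalmstr1} produces $X\in\mathrm{M}_{N(a)}(B)$ with $X^*X=b\otimes e_{11}$ and $XX^*\in\overline{D\,\mathrm{M}_{N(a)}(B)\,D}$; writing $X=\sum_i X_i\otimes e_{i1}$ gives $\sum_iX_i^*X_i=b$ with each $X_iX_i^*\in\overline{f_{\sigma(a)}(\phi(a))\,B\,f_{\sigma(a)}(\phi(a))}$ and $\|X_i\|\le\|b\|^{1/2}\le 1$. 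Since $f_{\sigma(a)/2}(\phi(a))$ is a unit on that hereditary subalgebra, $X_i=f_{\sigma(a)/2}(\phi(a))X_i=\phi(a)^{1/2}y_i$ with $y_i=\rho(\phi(a))X_i$, where $\rho(t)=f_{\sigma(a)/2}(t)/t^{1/2}$ has $\|\rho\|\le 2/\sqrt{\sigma(a)}=M(a)$; thus $X_i^*X_i=y_i^*\phi(a)y_i$ and $b=\sum_{i=1}^{N(a)}y_i^*\phi(a)y_i$ with $\|y_i\|\le M(a)$. This is exactly $T$-$\mathcal H_1$-fullness.

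Finally, the ``moreover'' trace bound is a byproduct of the same computation: applying $\tau$ to the transported decomposition gives $d/2<\tau(f_{1/4}(\phi(e)))\le k(c)m(c)^2\tau(\phi(c))+\ep_1$, whence $\tau(\phi(c))\ge d/(8k(c)m(c)^2)$; decomposing $f_{1/4}(e)$ instead over $f_{1/2}(c)$ (legitimate on the relevant elements, swapping $\phi(f_{1/2}(c))\approx f_{1/2}(\phi(c))$ by approximate multiplicativity) lands the bound on $\tau(f_{1/2}(\phi(c)))$, and since $N(c)\ge k(c)$ and $M(c)\ge m(c)$ the factor $8$ absorbs the approximation error, yielding $\tau(f_{1/2}(\phi(c)))\ge d/(8M(c)^2N(c))$ for all $\tau\in\mathrm{T}(B)$; the uniform form over $\mathcal H_1$ follows by taking the worst constant. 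The two points needing the most care are the quantitative choice of $\dt$ making $\ep_1\le d/4$ (and the functional-calculus approximation $\phi(f_{1/4}(e))\approx f_{1/4}(\phi(e))$), and the verification that the strict-comparison reformulation of \ref{Dstrictcom} applies on $\overline{\mathrm{T}(B)}^{\mathrm w}$; the genuinely new idea is selecting $\sigma(a)$ so that a \emph{single} gap simultaneously controls $\mathrm{d}_\tau(f_{\sigma(a)}(\phi(a)))$ from below (fixing $N(a)$) and the factor norms from above (fixing $M(a)$).
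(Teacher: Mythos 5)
Your argument is correct on the main assertion, but it is a genuinely different proof from the paper's. The paper defines $T$ from exact decompositions $\sum_i x_{i,h}^* h^2 x_{i,h}=f_{1/64}(e)$ in $A$ (strong local uniform fullness of $\mathrm{id}_A$), transports them to $B$, restores exactness there via R\o rdam's Lemma \ref{Lrorm} to get $\sum_i y_{i,h}^*\phi(h)^2 y_{i,h}=f_{1/16}(\phi(e))$, and then absorbs an arbitrary $b\in B_+$ with $\|b\|\le 1$ using the compactness machinery of Lemma \ref{Lbkqc} (the partial isometry $w$ and the pair $e_1,e_2$), strict comparison of $e_2$ against a $2n$-fold diagonal of $f_{1/8}(\phi(e))$, and a unitary supplied by almost stable rank one. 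You instead use the transported decomposition of $f_{1/4}(e)$ over $a$ only to extract the trace bound $\tau(\phi(a))>2\sigma(a)$ on $\mathrm{T}(B)$, then compare $b$ directly with the $N(a)$-fold diagonal $D$ of $f_{\sigma(a)}(\phi(a))$, and obtain exactness from the ``moreover'' factorization $X^*X=b\otimes e_{11}$, $XX^*\in\overline{D\,\mathrm{M}_{N(a)}(B)\,D}$ of Lemma \ref{Lalmstr1}, finishing by dividing each column entry by $\phi(a)^{1/2}$ with the norm control $2/\sqrt{\sigma(a)}$ supplied by the gap. This bypasses both \ref{Lrorm} (as an exactness-restoring device) and \ref{Lbkqc} entirely, and it needs approximate multiplicativity only to control traces, not to nearly preserve an algebraic identity; the price is the worse constants $N(a)\sim 1/\sigma(a)$, $M(a)\sim \sigma(a)^{-1/2}$, compared with the paper's $2nN_1(h)$, $2M_1(h)$, which inherit the shape of the decomposition in $A$. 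Both proofs read the hypothesis ``strict comparison'' in the $\overline{\mathrm{T}(B)}^{\mathrm{w}}$ form of \ref{Dstrictcom}; you flag this, and the paper relies on the same reading when it invokes \ref{Lbkqc} and compares $e_2$ with $E_0$, so you are not worse off there.

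Two caveats concern the final trace estimate, which the paper's own proof never addresses. First, your bound lands on $\tau(\phi(c))$; to move it to $\tau(f_{1/2}(\phi(c)))$ you must decompose $f_{1/4}(e)$ over $f_{1/2}(c)$, which forces $f_{1/2}(c)\not=0$, i.e.\ $\|c\|>1/4$ (as stated, the clause actually fails for $\|c\|\le 1/4$, since then $f_{1/2}(\phi(c))=0$), and the constants of that second decomposition must be built into the definition of $T$ so that the bound reads in terms of $M(c)$ and $N(c)$; say this explicitly. Second, what your computation yields is $\tau(f_{1/2}(\phi(c)))\ge d/(8M(c)^2N(c))$ for each individual $c$, hence a uniform bound with $\max\{M(c)^2N(c): c\in {\cal H}_1\}$ in the denominator; the $\min$ appearing in the statement would be a strictly stronger claim that does not follow from this (and appears to be a slip in the statement rather than a defect of your argument).
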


\begin{proof}
Since $A$ is a 
%$\sigma$-unital 
simple \CA\, with $A={\mathrm{Ped}(A)},$
 there is  a map ${\rm T}_1: A_+\setminus \{0\}\to \N\times \R_+\setminus\{0\}$ such that
the identity map ${\rm id}_A$ is exactly $T_1$-$A_+\setminus\{0\}$-full.

Write $T_1=(N_1,M_1),$ where $N_1: A_+\setminus\{0\}\to \N$ and
$M_1:  A_+\setminus\{0\}\to \R_+\setminus \{0\}.$

Let $n\ge 2$ be  an integer such that $nd/2>1.$
Set  $N=2nN_1$ and $M=2M_1$ and $T=(N, M).$
%To simplify notation, \wilog, we may assume that there is $e\in

Let ${\cal H}_1\subset A_+\setminus \{0\}$ be a fixed finite subset.

Suppose that $x_{i,h},...,x_{N_1(h), h}\in A$ with
$\|x_{i,h}\|\le M_1(h)$ are  such that
\beq\label{Tqcfull-2}
\sum_{i=1}^{N_1(h)} x_{i,h}^* h^2 x_{i,h}=f_{1/64}(e)\rforal h\in {\cal H}_1.
\eneq
Choose a large enough ${\cal G}$ and small enough $\dt>0$ that, for any ${\cal G}$-$\dt$-multiplicative
\cpc\, $\phi$ from $A,$
%$  with $\|\phi\|\not=0$ has
%the property
%that
\beq\label{Tqcfull-3}
&&\|\phi(f_{1/64}(e))-f_{1/64}(\phi(e))\|<1/64\andeqn\\\label{Tqcfull-3+}
&&\|\sum_{i=1}^{N_1(h)} \phi(x_{i,h})^* \phi(h)^2\phi(x_{i,h})-f_{1/64}(\phi(e))\|<1/64\rforal h\in {\cal H}_1.
\eneq
Now let  that $\phi: A\to B$ (for any $B$ that fits the description in the theorem)
%which is 
be a ${\cal G}$-$\dt$-multiplicative \cpc\,
such that
\beq\label{Tqcfull-4}
\tau(f_{1/2}(\phi(e)))\ge d/2\rforal \tau\in \overline{{\mathrm{T}}(B)}^\mathrm{w}.
\eneq

Applying {{Lemma}} \ref{Lrorm} (using \eqref{Tqcfull-3+}),  one finds $y_{i,h}\in B$ with
$\|y_{i,h}\|\le 2\|x_{i,h}\|,$ $i=1,2,...,N_1(h)$ such that
\beq\label{Tqcfull-5}
\sum_{i=1}^{N_1(h)}y_{i,h}^*\phi(h)^2y_{i,h}=f_{1/16}(\phi(e))\rforal h\in {\cal H}_1.
\eneq

By the hypotheses on $B,$ and since $A$ is $\sigma$-unital, applying \ref{Lbkqc},  we may choose $e_1, e_2\in \mathrm{M}_n(B)_+$
and $w\in \mathrm{M}_n(B)^{**}$ as described there.
Put
\vspace{-0.14in} \beq\label{Tqcfull-6}
E_0&=&\diag(\overbrace{f_{1/8}(\phi(e)), f_{1/8}(\phi(e)), ..., f_{1/8}(\phi(e))}^{2n})\andeqn\\
E_1&=&\diag(\overbrace{f_{1/16}(\phi(e)), f_{1/16}(\phi(e)), ..., f_{1/16}(\phi(e))}^{2n})
\in {\mathrm{M}}_{2n}(B)_+.
\eneq
Then, by the strict comparison,
$$
e_2\lesssim E_0\in {\mathrm{M}}_{2n}(B).
$$
Since $B$ almost has stable rank one, there exists a unitary $u\in \widetilde{{\mathrm{M}}_{2n}(B)}$ such
that
$$
u^*f_{1/16}(e_2)u \in \overline{E_0({\mathrm{M}}_{2n}(B))E_0}.
$$
Then
\vspace{-0.11in} \beq\label{Tqcfull-8}
u^*f_{1/16}(e_2)uE_1=E_1u^*f_{1/16}(e_2)u=u^*f_{1/16}(e_2)u.
\eneq
We then may write
$$
\sum_{i=1}^{2nN_1(h)} {{(y_{i,h}')^*}}\phi(h)^{{2}} y_{i,h}'=E_1  \rforal h\in {\cal H}_1,
$$
where $y_{i,h}'\in M_{2n}(B)$ and $\|y_{i,h}'\|=\|y_{j,h}\|$ for some $j\in \{1,2,...,N_1(h)\},$ $i=1,2,...,2nN_1(h).$ 
%for all $h\in {\cal H}_1.$
Then
$$
\sum_{i=1}^{2nN_1(h)} (f_{1/16}(e_2)^{1/2}u {y_{i,h}'}^* )\phi(h)^2 (y_{i,h}'u^*f_{1/16}(e_2)^{1/2})= f_{1/16}(e_2).
$$
Therefore, for any $b\in B_+$ with $\|b\|\le 1,$
$$
\sum_{i=1}^{2nN_1(h)} (w^*b^{1/2}w)(f_{1/16}(e_2)^{1/2}u {y_{i,h}'}^*)\phi(h)^{1/2}\phi(h)\phi(h)^{1/2} (y_{i,h}'u^*f_{1/16}(e_2)^{1/2})(w^*b^{1/2}w)=w^*bw.
$$
Then
$$
\sum_{i=1}^{2nN_1(h)}(b^{1/2}w)(f_{1/16}(e_2)^{{1/2}}u {y_{i,h}'}^*\phi(h)^{1/2} )\phi(h) (\phi(h)^{1/2}(y_{i,h}'u^*f_{1/16}(e_2)^{{1/2}})w^*b^{1/2}=b.
$$
Note that $b^{1/4}w\in \mathrm{M}_n(B)$ and $f_{1/16}(e_2)\in \mathrm{M}_n(B).$
Therefore
$$
(b^{1/4}w)f_{1/16}(e_2)\in \mathrm{M}_n(B).
$$
It follows that
\vspace{-0.1in} \beq\label{Tqcfull-16}
&&(b^{1/2}w)(f_{1/16}(e_2)^{1/2}u {y_{i,h}'}^*\phi(h)^{1/2}) \in B
\andeqn\\
&&\|(b^{1/2}w)(f_{1/16}(e_2)^{1/2}u {y_{i,h}'}^*\phi(h)^{1/2}\|\le 2M(h)\rforal h\in {\cal H}_1.
\eneq
This implies that $\phi$ is exactly $T$-${\cal H}_1$-full.
\end{proof}

\begin{rem}\label{Rqcfull}
In the light of \ref{Pcom4C} below, Theorem \ref{Tqcfull}  can be applied with
% to the case 
%that 
%works for 
\CA s $B$ {{in the class}} ${\cal C}'$ {{defined just before \ref{Runitz}.}}
\end{rem}

\section{Non-unital and non-commutative  one dimensional complexes }

\begin{df}\label{Dbuild1}
%\begin{df}[See \cite{ET-PL} and  \cite{point-line}]\label{DfC1}
{\rm
Let $F_1$ and $F_2$ be two finite dimensional \CA s.
Suppose that there are  \hm s
$\phi_0, \phi_1: F_1\to F_2.$
Consider the mapping torus $M_{\phi_1, \phi_2}$:
$$
A=A(F_1, F_2,\phi_0, \phi_1)
:=\{(f,g)\in  C([0,1], F_2)\oplus F_1: f(0)=\phi_0(g)\andeqn f(1)=\phi_1(g)\}.
$$

For $t\in (0,1),$ define $\pi_t: A\to F_2$ by $\pi_t((f,g))=f(t)$ for all $(f,g)\in A.$
For  $t=0,$ define $\pi_0: A\to \phi_0(F_1)\subset F_2$ by $\pi_0((f, g))=\phi_0(g)$ for all $(f,g)\in A.$
For $t=1,$ define $\pi_1: A\to \phi_1(F_1)\subset F_2$ by $\pi_1((f,g))=\phi_1(g)$ for all $(f,g)\in A.$
In what follows, we will call $\pi_t$  a point evaluation of $A$ at $t.$
There is a canonical map $\pi_e: A \to F_1$ defined by $\pi_e(f,g)=g$ 
every $(f, g)\in A.$  It is a surjective map.
%{\it The notation $\pi_e$ will be used for this map throughout  this paper.}

The class of all \CA s described above will be denoted by ${\cal C}.$

If $A\in {\cal C}$, then $A$ is the pull-back of
\begin{equation}\label{pull-back}
\xymatrix{
A \ar@{-->}[rr] \ar@{-->}[d]^-{\pi_e}  && C([0,1], F_2) \ar[d]^-{(\pi_0, \pi_1)} \\
F_1 \ar[rr]^-{(\phi_0, \phi_1)} & & F_2 \oplus F_2.
}
\end{equation}
Every such pull-back is an algebra in ${\cal C}.$
Infinite dimensional  C*-algebras in ${\cal C}$ are sometime called one-dimensional non-commutative finite CW complexes (NCCW)
(see \cite{ELP1} and \cite{ELP2}) and Elliott-Thomsen building blocks (see \cite{point-line}).

{{Suppose that $F_1=M_{R_1 }(\C)\oplus M_{R_2 }(\C)\oplus \cdots \oplus M_{R_l}(\C)$ and 
 $F_2=M_{r_1}(\C)\oplus M_{r_2 }(\C)\oplus \cdots \oplus M_{r_k }(\C).$
In what follows
we may write $C([0,1], F_2)=\bigoplus_{j=1}^k C([0,1]_j, M_{r_j}{{)}},$ where $[0,1]_j$ denotes
 the $j$-th interval.}}

Denote by ${\cal C}_{0}$ the class of all \CA s $A$  in ${\cal C}$ which
satisfy the following conditions:

(1) ${\rm K}_1(A)=\{0\},$

(2) ${\rm K}_0(A)_+=\{0\},$

(3) $0\not\in \overline{{\mathrm{T}(A)}}^\mathrm{w}.$

%Here $\overline{{\mathrm{T}(A)}}^\mathrm{w}$ is the weak*-closure of ${\mathrm{T}(A)}$ in ${\mathrm{T}_0(A)}.$

\CA s  in  ${\cal C}_0$ are stably projectionless. Condition (3) is equivalent to compact spectrum.

Examples of \CA s in ${\cal C}_0$  can be found in \cite{Raz}.
Let $F_1={\mathrm{M}}_k$ for some $k\ge 1$ and $F_2={\mathrm{M}}_{(m+1)k}$
for some $m\ge 1.$
Define $\psi_0, \psi_1: F_1\to F_2$ by
$$
\psi_0(a)=\diag(\overbrace{a,a,...,a}^m, 0)\andeqn
\psi_1(a)=\diag(\overbrace{a,a,..., a,a}^{m+1})
$$
for all $a\in F_2.$
Let {{us write}} 
\beq\label{ddraz}
A=A(F_1, F_2, \psi_0, \psi_1):=R(k, m, m+1).
\eneq
Then,  as  shown in \cite{Raz},  ${\rm K}_0(A)=\{0\}={\rm K}_1(A)$ and it is easy to check that $0\not\in \overline{{\mathrm{T}(A)}}^\mathrm{w}.$
Let $e\in R(k,m,m+1)$ be a strictly positive element.
Then {{(see \ref{Dboundedscale})}}
$$
\lambda_s(R(k,m, m+1))=\inf\{d_\tau(e): \tau\in \mathrm{T}(R(k,m,m+1))\}=m/(m+1).
$$
Denote by ${\cal R}_{\mathrm{az}}$ the class of \CA s which are finite direct sums of \CA s  {{as}}
% in the form 
in \eqref{ddraz}.
%Denote by ${\cal C}_0$ the subclass of \CA s in ${\cal C}_\omega$ which satisfies
%the condition
%(2)' $K_0(A)={\rm ker}\rho_A.$
Denote by ${\cal C}_0^0$ the subclass of \CA s in ${\cal C}_0$ which
also satisfy the {{stronger}} condition
(2)' ${\rm K}_0(A)=\{0\}.$

{{ Let $F_1=\C\oplus \C, F_2={\mathrm{M}}_{2n}(\C)$. For $(a,b)\in \C\oplus \C=F_1$, define
$$\psi_0(a, b)=\diag(\underbrace{a,a...a}_{n-1},\underbrace{b,b...b}_{n-1}, 0, 0)
~~~\mbox{and}~~~~\psi_1(a, b)=\diag(\underbrace{a,a...a}_n,\underbrace{b,b...b}_n).$$
Then $A(F_1, F_2, \psi_0,\psi_1)=A$ has the property that
$\mathrm{K}_0(A)=\{(k,-k )\in \Z\oplus \Z)\}$ (which is isomorphic to $\Z$) but $\mathrm{K}_0(A)_+=\{0\}$.
Also, $\mathrm{K}_1(A)=\{0\}.$
Thus $A\in {\cal C}_0$ but $A\notin {\cal C}_0^0.$}}

Let  ${\cal C}'$ denote the class of all full hereditary \SCA s of \CA s in ${\cal C},$ 
let ${\cal C}_0'$ denote the class of all full hereditary \SCA s of \CA s in ${\cal C}_0,$
%let ${\cal C}_{0}'$ denote the class of all full hereditary \SCA s of \CA s in ${\cal C}_{0}$ 
and
let ${{\cal C}_0^0}'$ denote the class of all full hereditary \SCA s of \CA s in ${\cal C}_0^0.$

}

\end{df}

%\begin{df}
%${\cal C}$ denote the class of \CA s which are hereditary \SCA\, of non-unital projectionless
%non-commutative one dimensional compleces.
%
%${\cal C}_0^0$ denote the class of \CA s  $D$ in ${\cal C}$ with $K_i(D)=\{0\},$ $i=0,1,$
%such that $0\not\in \overline{T(D)}^\mathrm{w}.$
%
%Let ${\cal C}_{00}'$ denote the class of all full hereditary \SCA s of \CA s in ${\cal C}_{00}.$
%
%Note that $0\not\in \overline{T(D)}^\mathrm{w}.$
%
%${\cal C}_0$ denote the class of \CA s  $D$ in ${\cal C}$ with $K_1(D)=\{0\}$
%and  ${\cal C}_0'$ denote the class of all full hereditary \SCA\, of \CA s in ${\cal C}_0.$
%
%${\cal C}_1$ denote ?

%\end{df}

%\begin{prop}\label{Punitiz}
\begin{rem}\label{Runitz}
Let $A=A(F_1, F_2, \psi_0, \psi_1)\in {\cal C}_0.$
Then ${\widetilde A}\in {\cal C}.$ Moreover, 
${\widetilde A}=A(F_1', F_2, \psi_0',\psi_1')$ with both
$\psi_0'$ and $\psi_1'$  unital, defined as follows:

Let $F_1'=F_1\oplus \C$ and let
$p=\psi_0(1_{F_1})\in F_2$ and $q=\psi_1(1_{F_1})\in F_2$.  Define $\psi_0',~{\psi_1'}: F_1'\to F_2$ by
$$
\psi_0'((a,\lambda))=\psi_0(a)\oplus \lambda\cdot (1_{F_2}-p) ~~\mbox{and}~~\psi_1'((a,\lambda))=\psi_{ 1}(a)\oplus \lambda\cdot (1_{F_2}-q)
$$
 for all $a\in F_1\andeqn \lambda\in \C.$
 
{{One checks that ${\rm K}_0({\widetilde A})$ is finitely generated (see Proposition 3.4 of \cite{GLN}). 
In fact, ${\rm K}_0({\widetilde{A}})_+$ is finitely generated (see Theorem 3.15 of \cite{GLN}).
%Suppose that $x=[p_i]-[q_i]\in K_0(A),$ where $p_i, q_i\in M_n({\widetilde{A}})$ are projections.
Let $\pi: {\widetilde{A}}\to \C$ {{denote}} the quotient map. 
%Then $\pi_{*0}(x)=0.$ 
%It follows that $\pi(p)-\pi(q)=0.$ Let $n$ be the rank of $\pi(p).$ 
Suppose that $\{[p_i]: 1\le i\le k\}$ generates {{the semigroup}} ${\rm K}_0({\widetilde{A}})_+.$ Let $x\in {\rm K}_0(A)\subset {\rm K}_0({\widetilde{A}}).$
Then $x=\sum_{i=1}^k(m_i[p_i]-n_i[p_i])=[p]-[q],$ where $m_i\ge 0, n_i\ge 0$ and $p, q\in {\rm M}_N({\widetilde{A}})$
(for some integer $N\ge 1$)
are projections such that $[p]=\sum_{i=1}^k m_i[p_i]$ and $[q]=\sum_{i=1}^k n_i[p_i].$
One also has, since $x\in {\rm K}_0(A),$ $\pi(p)$ and $\pi(q)$ are equivalent in ${\rm M}_N.$ Let $n$ denote  the rank of $\pi(p)$ and  $r_i$ the rank of 
$\pi(p_i),$
%and $R_i$ 
%denote the rank of $\pi(q_i),$ 
$1\le i\le k.$ 
Then $\sum_{i=1}^km_ir_i=n=\sum_{i=1}^kn_i r_i.$
Consequently, 
\beq\nonumber
&&(\sum_{i=1}^k (m_i(([p_i]-r_i[1_{\tilde A}]))-(n_i[p_i]-r_i[1_{\tilde A}])))=
(\sum_{i=1}^k m_i[p_i]-n[1_{\tilde A})-(n_i[(p_i]-n[1_{\tilde A}])\\
&&=\sum_{i=1}^k(m_i[p_i]-n_i[p_i])=x.
\eneq
It follows that ${\rm K}_0(A)$ is generated by $\{([p_i]-r_i[1_{\tilde{A}}]):1\le i\le k\}.$ 
In other words, ${\rm K}_0(A)$ is finitely generated.}}

Since $A\in {\cal C}_0,$ either $\psi_0$ or $\psi_1$ is not unital. Hence at least one of $\psi_0'$ and $\psi_1'$ is nonzero on the second direct summand $\C$ in $F_1'=F_1\oplus \C$.

%Since $A\in {\cal C}_0,$ then either $\psi_0$ or $\psi_1$ are not unital.
%\Wlog, we may assume that $\psi_0$ is not unital.
%Let $F_1'=F_1\oplus \C$ and let
%$p=\psi_0(1_{F_1})\in F_2$ be a projection.
%%Define $\psi_0'': \C\to F_2$ by
%%$$
%%\psi_0''(\lambda)=\lambda\cdot (1_{F_2}-p)\rforal a\in F_1.
%%$$
%Define $\psi_0': F_1'\to F_2$
%by
%$$
%\psi_0'((a,\lambda))=\psi_0(a)\oplus (\lambda\cdot (1_{F_2}-p)\rforal a\in F_1\andeqn \lambda\in \C.
%$$
\end{rem}

%\end{prop}

\begin{prop}\label{Pcom4C}
{\rm (1)} Let $A\in {\cal C}'$.
 Then, for any $a_1, a_2\in A_+,$  $a_1\lesssim a_2$ if and only if
 $d_{tr\circ \pi}(\pi(a_1))\le d_{tr\circ \pi}(\pi(a_2))$ for every irreducible representation $\pi$
 of $A,$ where we use $tr$ for the tracial state on matrix algebras. 

{\rm (2)} Let $A\in {\cal C}',$ and
let $c\in A_+\setminus \{0\}.$ Then $c$ is full if and only if, for any
$\tau\in {\mathrm{T}(A)},$ $\tau(c)>0.$

\end{prop}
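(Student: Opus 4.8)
The plan is to reduce part (1) to the case $A\in{\cal C}$ and then to use the explicit description of $\widehat A$ for an Elliott--Thomsen building block, while part (2) can be treated directly for $A\in{\cal C}'$. For the reduction, write $A=\overline{bCb}$ with $C\in{\cal C}$ and $b\in C_+$ full. Since $C$ is separable, Brown's theorem (\cite{Br1}) gives $A\otimes{\cal K}\cong C\otimes{\cal K}$, so that for $a_1,a_2\in A_+\subset C_+$ the relation $a_1\lesssim a_2$ holds in $A$ if and only if it holds in $C$. Every irreducible representation of $A$ is finite dimensional and is the restriction of an irreducible representation $\pi$ of $C$ with $\pi|_A\neq 0$; for $\pi$ with $\pi|_A=0$ the inequality $d_{tr\circ\pi}(\pi(a_1))\le d_{tr\circ\pi}(\pi(a_2))$ is the trivial $0\le 0$, while for the remaining $\pi$ the normalisation cancels and the datum $\pi\mapsto{\rm rank}(\pi(a_i))$ agrees over $\widehat A$ and over $\widehat C$. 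Hence it suffices to prove (1) for $A\in{\cal C}$.

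For (1), the direction $\Rightarrow$ is immediate: each irreducible $\pi\colon A\to M_r$ is a homomorphism, so $a_1\lesssim a_2$ forces $\pi(a_1)\lesssim\pi(a_2)$ in $M_r$, that is ${\rm rank}(\pi(a_1))\le{\rm rank}(\pi(a_2))$, which is exactly $d_{tr\circ\pi}(\pi(a_1))\le d_{tr\circ\pi}(\pi(a_2))$. For $\Leftarrow$ I would use the explicit spectrum of $A=A(F_1,F_2,\phi_0,\phi_1)$: it consists of the interior points $\pi_{t,j}\colon A\to M_{r_j}$ ($t\in(0,1)$, $1\le j\le k$) together with the finitely many node representations $\sigma_i\colon A\to M_{R_i}$ obtained from $\pi_e\colon A\to F_1$, the ends $t=0$ and $t=1$ of the $j$-th interval being attached to the nodes through $\phi_0$ and $\phi_1$ respectively, so that $\widehat A$ is a finite graph of topological dimension one. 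The hypothesis says precisely that the lower semicontinuous $\Z_+$-valued rank functions of $a_1$ are pointwise dominated by those of $a_2$ on this graph.

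It then suffices to show $(a_1-\epsilon)_+\lesssim a_2$ for every $\epsilon>0$. I would do this by a direct construction: over the trivial matrix bundle $C([0,1],F_2)$ one has continuous eigenvalue functions, and, because the base interval is contractible, a fine partition $0=t_0<\cdots<t_p=1$ together with the pointwise rank domination allows one to build, subinterval by subinterval, an element $z$ with $z^\ast a_2 z$ equal to $(a_1-\epsilon)_+$ up to an arbitrarily small error, the pieces being patched using that $A$ has stable rank one (which removes any $K_1$/homotopy obstruction to Cuntz comparison). The main obstacle, and the step I would treat most carefully, is the gluing at the two endpoints: there the fibre passes from $F_2$ on the interval to $F_1$ at the nodes through $\phi_0,\phi_1$, so the locally constructed subequivalences must be made simultaneously compatible with both node conditions and with one another. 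This is exactly where the finiteness of $F_1,F_2$ and the one-dimensionality are essential, and I would handle it by the normal-form argument for one-dimensional NCCW complexes (cf.\ \cite{GLN}).

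For (2), which I would prove directly for $A\in{\cal C}'$, suppose first that $c$ is full and let $a\in A_+$ be strictly positive, hence itself full. Applying Lemma \ref{Lfullep} with the full element $c$, the element $a$, and $g=f_\epsilon$, one obtains $x_1,\dots,x_m\in A$ with $f_\epsilon(a)=\sum_i x_i^\ast c\,x_i$, so that for every $\tau\in{\rm T}(A)$,
\[
\tau(f_\epsilon(a))=\sum_i\tau(c^{1/2}x_ix_i^\ast c^{1/2})\le\Big(\sum_i\|x_i\|^2\Big)\tau(c).
\]
Since $\tau\neq 0$ and $a$ is strictly positive, $\tau(f_\epsilon(a))>0$ for some $\epsilon>0$, whence $\tau(c)>0$. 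Conversely, if $c$ is not full then the ideal it generates is contained in some primitive ideal $\ker\pi$, so some irreducible (necessarily finite dimensional) representation $\pi\colon A\to M_r$ satisfies $\pi(c)=0$; as $\pi$ is surjective, $tr\circ\pi$ is a tracial state of $A$ with $(tr\circ\pi)(c)=tr(\pi(c))=0$, contradicting $\tau(c)>0$ for all $\tau$. This proves (2), and together with the preceding paragraphs the proposition.
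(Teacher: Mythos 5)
Your proposal is correct, and for part (1) it follows the paper's route in substance. The paper reduces the ${\cal C}'$ case to the ${\cal C}$ case exactly as you do, via the hereditary-subalgebra relation (note that for $a_1,a_2$ in a hereditary subalgebra $A\subseteq C$ one has $a_1\lesssim a_2$ in $A$ if and only if in $C$ by a standard argument, so Brown's theorem is not really needed here), and then disposes of the ${\cal C}$ case in one line by citing Theorem 3.18 of \cite{GLN} applied to ${\widetilde A}.$ Your interval-by-interval construction for the direction $\Leftarrow$ is only a sketch: the step you single out as the main obstacle --- compatibility of the locally constructed subequivalences with the boundary conditions through $\phi_0,\phi_1$ --- is precisely the content of the result in \cite{GLN}, and you yourself defer it to ``the normal-form argument for one-dimensional NCCW complexes (cf.\ \cite{GLN}).'' So your treatment of (1) should be read as a citation-based argument equivalent to the paper's, not as an independent proof; had the deferral been omitted, this would be a genuine gap. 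For part (2) you do take a mildly different and self-contained route: the paper argues inside ${\cal C}$ (if $\tau(c)=0$ for some trace then $c$ vanishes at a point of ${\rm Sp}(A)$ and hence lies in a proper ideal; conversely $\tau(c)>0$ for all traces gives $\pi(c)\neq 0$ for every irreducible $\pi,$ hence fullness) and then transfers to ${\cal C}'$ via ``full in $A$ iff full in $B$,'' whereas you work directly in ${\cal C}',$ obtaining the forward implication from Lemma \ref{Lfullep} together with the estimate $\tau\big(c^{1/2}x_ix_i^*c^{1/2}\big)\le \|x_i\|^2\tau(c)$ and $\tau(f_\ep(a))>0,$ and the converse from an irreducible representation annihilating a non-full element and the tracial state $tr\circ \pi.$ Both versions of (2) are correct; yours avoids the unproved (though easy) transfer of fullness between $A$ and $B,$ at the cost of invoking Lemma \ref{Lfullep}.
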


\begin{proof}
 For {\rm (1)}, we first  consider the case that $A\in {\cal C}.$ By considering ${\widetilde A},$ one sees that
this case follows from 3.18 of  \cite{GLN}.

Since a \CA\,  $A\in {\cal C}'$ is a hereditary \SCA\, of {{some}} $B$ in ${\cal C},$ 
it is easy to see that $A$ also has the above-mentioned comparison property.

For {\rm (2)}, let us  first assume again that $A\in {\cal C}.$ It is clear that if $c\in A_+$ and $\tau(c)=0,$ for some $\tau\in {\rm T}(A)$ then $c$ has zero value
somewhere in $\mathrm{Sp}(A)=\bigsqcup_j(0,1)_j\cup\mathrm{Sp}(F_1).$ Therefore $c$ is in a proper {{closed two-sided}} ideal of $A.$

Now assume that $\tau(c)>0$ for all $\tau\in {\mathrm{T}(A)}.$
It follows that $\pi(c)>0$ for every finite dimensional irreducible representation of $A.$ 
Therefore $c$ is full in $A.$
\iffalse
Write $c=(a,b),$ where $a\in C([0,1], F_2)$ and $b\in F_1.$ Then $b>0$ and $a(t)>0$
for all $t\in [0,1].$  It follows that
$$
d_{tr\circ \pi}(\pi(c))>0
$$
for all 
%$c\in A.$ 
{{$\pi\circ {\rm{Sp}}(A).$}} Since we assume that $0\not\in \overline{{\mathrm{T}(A)}}^\mathrm{w},$
this implies that
$$
\inf\{d_{tr\circ \pi}(\pi(c)): \pi \}>0.
$$
There is an integer $n\ge 1$ such that
$$
d_{tr\circ \pi}(\pi({\bar c}))>2
$$
for all irreducible representations $\pi$ of $A$ (or ${\mathrm{M}}_n(A)$),
where
$$
{\bar c}=\diag(\overbrace{c,c,...,c}^n).
$$
By (1), this implies $a\lesssim {\bar c},$ where $a$ is a strictly positive element.
This implies that $c$ is a full element in $A.$
\fi
In general, let $A$ be a full hereditary \SCA\, of $B\in {\cal C}.$
Let $c\in A_+.$ Then  $c\in A_+$ is full if and only if it is full in $B.$
Therefore the general case follows from the case that $A\in {\cal C}.$

\end{proof}

\begin{prop}\label{str=1}
%\linebreak

{\rm (1)} Every \CA\, in ${\cal C}'$  has stable rank one;

{\rm (2)}  If $A\in {\cal C}$ and $A$ is unital, {{then}}
 the exponential rank of $A$   is at most $2+\ep.$
 If $A\in {\cal C}$ and $A$ is not unital, then ${\widetilde A}$ has exponential rank at most $2+\ep.$

{\rm (3)} Every \CA\, in ${\cal C}$ is semiprojective.

{\rm (4)} Let $A\in {\cal C}$ and let $k\ge 1$ be an integer.  Suppose that every irreducible representation
of $C$  has dimension at least $k.$
 Then, for any $f\in {\rm
LAff}_{b,0+}({\overline{{\mathrm{T}}(A)}^{\rm w}})$ with $0\le f \le 1,$ there exists a positive
element $a\in {\mathrm{M}}_2(A)$ such that
$$
\max_{\tau\in \mathrm{T}(A)}|\mathrm{d}_{\tau}(a)-f(\tau)|\le 2/k.
$$

\end{prop}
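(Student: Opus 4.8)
The plan is to build a positive element of $\mathrm{M}_2(A)$ fibre by fibre over the spectrum of $A$ and to read off $\mathrm{d}_\tau$ from its ranks. Write $A=A(F_1,F_2,\phi_0,\phi_1)$ with $F_1=\bigoplus_{i}\mathrm{M}_{R_i}$ and $F_2=\bigoplus_{j}\mathrm{M}_{r_j}$. The irreducible representations of $A$ are the interior point evaluations $\pi_t^{(j)}\colon A\to \mathrm{M}_{r_j}$ ($t\in(0,1)_j$) together with the irreducible representations of $F_1$ composed with $\pi_e$; by hypothesis each has dimension at least $k$, so $r_j\ge k$ and $R_i\ge k$ for all $i,j$. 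By Proposition \ref{Pcom4C}(1) the relevant invariant of a positive $a\in \mathrm{M}_2(A)$ is the family of normalised ranks $\pi\mapsto \mathrm{rank}(\pi(a))/N_\pi$, where $\pi(A)=\mathrm{M}_{N_\pi}$, and each $\tau\in \mathrm{T}(A)$ is the barycentre of a probability measure $\nu_\tau$ on the extreme boundary of $\mathrm{T}(A)$ (equivalently, on the irreducible representations). Monotone convergence applied to $\tau(a^{1/n})$ gives $\mathrm{d}_\tau(a)=\int \mathrm{rank}(\pi(a))/N_\pi\,d\nu_\tau(\pi)$, and since $f$ is an increasing limit of continuous affine functions the barycentre formula $f(\tau)=\int f(\mathrm{tr}\circ\pi)\,d\nu_\tau(\pi)$ holds by monotone convergence as well. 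Hence it suffices to produce $a\in \mathrm{M}_2(A)_+$ with $0\le a\le 1$ such that
\[
\bigl|\,\mathrm{rank}(\pi(a))/N_\pi-f(\mathrm{tr}\circ\pi)\,\bigr|\le 2/k \quad\text{for every irreducible }\pi,
\]
for then integrating against $\nu_\tau$ yields $|\mathrm{d}_\tau(a)-f(\tau)|\le 2/k$ for all $\tau\in\mathrm{T}(A)$.

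For the construction, set the integer target $\rho(\pi)$ to be the nearest integer to $N_\pi f(\mathrm{tr}\circ\pi)$, so that $|\rho(\pi)/N_\pi-f(\mathrm{tr}\circ\pi)|\le 1/(2k)$; since $f$ is lower semicontinuous the induced rank profile $\rho_j\colon[0,1]_j\to\{0,1,\dots,r_j\}$ on each summand is lower semicontinuous. I would realise a prescribed such profile by a continuous path $a_j\colon[0,1]_j\to \mathrm{M}_{2r_j}$ of positive contractions as follows: the set $\{t:\rho_j(t)>m\}$ is open, so I choose a continuous $\lambda_m\colon[0,1]_j\to[0,1]$ vanishing exactly off it and put $a_j=\mathrm{diag}(\lambda_0,\lambda_1,\dots,\lambda_{2r_j-1})$, whence $\mathrm{rank}(a_j(t))=\#\{m:\rho_j(t)>m\}=\rho_j(t)$. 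The amplification to $\mathrm{M}_2$ (ambient size $2r_j$, while $\rho_j\le r_j$) leaves free eigenvalue slots with which to bend $a_j(0)$ and $a_j(1)$ into prescribed boundary matrices of rank $\le r_j$ without disturbing the interior ranks.

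The main obstacle is the pull-back constraint of \eqref{pull-back}: the interval pieces are not independent but must satisfy $a_j(0)=\phi_0^{(j)}(b)$ and $a_j(1)=\phi_1^{(j)}(b)$ for a single positive $b=\pi_e(a)\in \mathrm{M}_2(F_1)$. Thus the boundary ranks in every summand $j$ are determined simultaneously, through the multiplicity matrices of $\phi_0$ and $\phi_1$, by the ranks $s_i=\mathrm{rank}_i(b)\in\{0,\dots,2R_i\}$ of $b$ in the summands of $F_1$. The crux is to choose the integers $s_i$ so that $\mathrm{rank}(\phi_0^{(j)}(b))/r_j$ and $\mathrm{rank}(\phi_1^{(j)}(b))/r_j$ land within tolerance of the boundary values of $f$ in every summand at once. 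Here I would use that $\mathrm{tr}\circ\pi_0^{(j)}$ and $\mathrm{tr}\circ\pi_1^{(j)}$ factor through $\pi_e$ via exactly these multiplicities, so the affine function $f$ prescribes a single consistent target on the traces of $F_1$; rounding that $F_1$-level target to integers $s_i$ (error at most $1/(2k)$ per summand, as $R_i\ge k$) produces boundary ranks matching the $\rho_j(0),\rho_j(1)$ up to the remaining slack. This second rounding, on top of the first, is precisely what forces the bound $2/k$ rather than $1/k$. With $b$ so chosen I would build each $a_j$ as above with the prescribed endpoints, assemble $a=((a_j)_j,b)\in \mathrm{M}_2(A)$, and read off the fibrewise estimate. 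The delicate point to be checked carefully is the simultaneous solvability of all endpoint rank conditions across the summands within the allotted slack, and this is exactly where the hypothesis $r_j,R_i\ge k$ and the extra room provided by $\mathrm{M}_2$ are essential.
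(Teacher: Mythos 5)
Your proposal addresses only part (4) of the proposition. Parts (1), (2), and (3) are separate assertions (stable rank one for all of ${\cal C}'$, exponential rank, semiprojectivity), which the paper disposes of by citation---3.3 and 3.16 of \cite{GLN} for (1) and (2), and \cite{ELP1} for (3)---and which your fibrewise construction neither touches nor could yield; as a proof of the full statement this is a gap in coverage, although not one of mathematical substance, since in the paper too these parts are citation-level facts.

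For part (4), your argument is essentially the one the paper invokes: the paper's entire proof of (4) is the remark that it "follows exactly the same proof as 10.4 of \cite{GLN}," and that proof is the fibrewise rank-prescription argument you reconstruct---decompose each tracial state as an integral over the extreme traces $\mathrm{tr}\circ\pi_t^{(j)}$ ($t\in(0,1)_j$) and $\mathrm{tr}\circ\pi_e^{(i)}$ (this matches the paper's own description of $\mathrm{Sp}(A)$ in the proof of \ref{Pcom4C}), reduce to a rank estimate at every irreducible representation, realize a lower semicontinuous integer-valued rank profile by a continuous path of positive matrices, and resolve the boundary constraint of \eqref{pull-back} by rounding at the level of $F_1$, using $R_i, r_j\ge k$ to bound each rounding error by $1/(2k)$. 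Your accounting of why the two successive roundings produce $2/k$ rather than $1/k$ is also the right one. One imprecision should be repaired: the rank function of a continuous matrix-valued path is lower semicontinuous, so you cannot in general bend $a_j$ into the prescribed boundary matrices ``without disturbing the interior ranks.'' When the forced endpoint rank $\sum_i m^0_{ji}s_i$ exceeds the rounded interior profile near that endpoint, the interior profile must be raised there to at least the endpoint rank; it is precisely the lower semicontinuity of $f$, i.e. $\liminf_{t\to 0}f(\mathrm{tr}\circ\pi^{(j)}_t)\ge f(\mathrm{tr}\circ\pi^{(j)}_0)$, together with the $1/(2k)$ bounds on both roundings, that keeps this adjustment (and hence the total error) within $2/k$. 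With that point made explicit, your construction of part (4) goes through and coincides in substance with the argument the paper cites.
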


\begin{proof}
(1) follows from 3.3 of \cite{GLN}. (2) follows from 3.16 of \cite{GLN} (see also 5.19 of \cite{Lncbms}).

It was shown in \cite{ELP1} that every \CA\, in ${\cal C}$ is semiprojective.
%Now let $B\subset A$ be a full hereditary \SCA\, of $A.$
%We may write $B=\overline{bAb}.$
%Then, it is known and standard that, for any $\ep>0,$ there is $b_0\le b$ and
%$\|b-b_0\|<\ep$ such that $\overline{b_0Bb_0}=\overline{b_0Ab_0}\in {\cal C}.$
(4) follows exactly the {{same}} proof as 10.4 of \cite{GLN}.
%By the proof of  (2) of \ref{Pcom4C},  one may view that $A$ is a hereditary \SCA\, of $M_n(B).$
%Thus $B$ is also semiprojective.

\end{proof}

%\begin{prop}\label{P1-dmq}
%Let $A\in {\cal C}_\omega'.$
% (or $A\in {\cal C}_0'$ or $A\in {\cal C}_0^{0'}$).
%Suppose that $B=A/I$ for some proper ideal of  $I$ of $A$  such that
%$B=A/I$ is non-unital.
%Then, for any $\ep>0,$ and any finite subset ${\cal F}\subset B/I,$
%there exists  $B_0\subset A/I$ which is in ${\cal  C}'$
%(or in ${\cal C}_0'$ or in ${\cal C}_0^{0'}$)
%such that
%$$
%{\rm dist}(f, B_0)<\ep\rforal f\in {\cal F}.
%$$
%Moreover, if $B$ is projectionless, so is $B_0.$
%\end{prop}

%\begin{proof}
%First consider the case that $A\in {\cal C}_0$ (or in ${\cal C}_0'$).
%Note that we have assumed that $K_1(A)=\{0\}.$
%Let $\ep>0$ and a finite subset ${\cal F}\subset B$ be given.
%By \ref{Runitz} and by Lemma 3.20 of \cite{GLN}, there is \SCA\, $B_0'\subset {\widetilde A}/I$
%such that
%$$
%{\rm dist}(f, B_0')<\ep/2\rforal f\in {\cal F},
%$$
%where $B_0'\in {\cal C}$ and $K_1(B_0')=\{0\}.$
%Set $B_0=B_0'\cap A.$ If $B_0\cap A/I=\{0\},$ then $B_0=\C\cdot 1.$ This is not possible
%since we assume that $A$ is not unital and $I$ is proper.
%Since $A/I$ is an ideal of ${\widetilde A}/I,$ $B_0'={\tilde B}_0.$

%\end{proof}

\section{Maps from 1-dimensional non-commutative complexes}

\begin{lem}[Lemma 2.1 of \cite{BT}]\label{Lbt1}
Let $A$ be a simple  \CA\,
% with strict comparison of positive elements and
with $A={\mathrm{Ped}(A)}$ and $n\ge 1$ be an integer.
%Assume  that
%$\iota: W(A)_+\to {\rm LAff}_{b+}(\overline{{\mathrm{T}(A)}}^\mathrm{w})$ is  surjective.

Let $a\in \mathrm{M}_n({\widetilde A})_+\setminus \{0\}$ be such that $0$ is  a limit point of the spectrum of $a.$
%which is not Cuntz equivalent to a projection.
Then, for any $\ep>0,$  there exist $\dt>0$ and a continuous affine
function  $f:  {\rm T}_1(A)\to \R+$  with $f(0)=0$ such that
%\overline{{\mathrm{T}(A)}}^\mathrm{w}\to \R^+$ such that
$$
\mathrm{d}_\tau((a-\ep)_+)<f(\tau)<\mathrm{d}_\tau((a-\dt)_+)\tforal \tau\in \overline{{\mathrm{T}(A)}}^\mathrm{w}.
$$
\end{lem}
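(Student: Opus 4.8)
The goal is to produce $f$ as the trace-evaluation $\hat c=r_{\aff}(c)$ of a single positive element $c\in\mathrm{M}_n(A)_+$, so that $f\ge 0$ and $f(0)=0$ hold automatically, and to place the number $\tau(c)$ strictly between the two dimension functions. Throughout I would use that $A=\mathrm{Ped}(A)$ with $A$ simple forces, by \ref{Tpedersen} and \ref{compactrace}, that $\overline{{\mathrm{T}(A)}}^\mathrm{w}$ is compact with $0\notin\overline{{\mathrm{T}(A)}}^\mathrm{w}$, and that every nonzero $\tau\in\widetilde{\mathrm{T}}(A)$ is faithful on $A_+$ (its null ideal is a proper two-sided ideal, hence $0$). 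Writing $\pi\colon\mathrm{M}_n({\widetilde A})\to\mathrm{M}_n(\C)$ for the quotient map and fixing a strictly positive contraction $e\in A$, I extend each $\tau$ to $\mathrm{M}_n({\widetilde A})$ by $\tilde\tau(1)=\|\tau\|=\mathrm{d}_\tau(e)$, so that $\mathrm{d}_\tau((a-s)_+)=\tilde\tau(\chi_{(s,\infty)}(a))$ for $s>0$.

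The strictness comes from simplicity. Since $\mathrm{sp}(\pi(a))$ is finite while $0$ is a limit point of $\mathrm{sp}(a)$, I can pick two points $\mu<\mu'$ in $\mathrm{sp}(a)\cap(0,\ep)$ lying off $\mathrm{sp}(\pi(a))$, and disjoint open neighbourhoods $U,U'\subset(0,\ep)$ of them missing $\mathrm{sp}(\pi(a))$; then set $0<\dt<\mu$. Bumps $h,h'\in C_0((0,\infty))$ with $0\le h,h'\le1$, supported in $U,U'$ and equal to $1$ at $\mu,\mu'$, give $h(a),h'(a)\in\mathrm{M}_n(A)_+\setminus\{0\}$ (their supports avoid $\mathrm{sp}(\pi(a))$, so $\pi$ kills them); by simplicity these are full, so $c_0:=\inf_{\overline{{\mathrm{T}(A)}}^\mathrm{w}}\tau(h(a))>0$ and $c_0':=\inf_{\overline{{\mathrm{T}(A)}}^\mathrm{w}}\tau(h'(a))>0$ by compactness. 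Choosing $\theta\in C_0((0,\infty))$, $0\le\theta\le1$, with $\theta\equiv1$ on $[\ep,\infty)$, $\theta\equiv0$ on $[0,\dt]$, $\theta\ge h$ on $U$ and $\theta\le1-h'$ on $U'$, the pointwise bounds $\chi_{(\ep,\infty)}+h\le\theta\le\chi_{(\dt,\infty)}-h'$ yield, for $\phi(\tau):=\tilde\tau(\theta(a))$,
\[
\mathrm{d}_\tau((a-\ep)_+)+c_0\le\phi(\tau)\le\mathrm{d}_\tau((a-\dt)_+)-c_0'\tforal\tau\in\overline{{\mathrm{T}(A)}}^\mathrm{w}.
\]
To replace $\phi$ by the evaluation of a genuine positive element of $\mathrm{M}_n(A)$, I would set $c:=\theta(a)^{1/2}\,u''\,\theta(a)^{1/2}$ with $u''=\mathrm{diag}(u,\dots,u)$, $u=e^{1/m}$; then $c\in\mathrm{M}_n(A)_+$ and $\tau(c)\le\phi(\tau)$, while the defect $\phi(\tau)-\tau(c)=\tilde\tau\big((1-u'')^{1/2}\theta(a)(1-u'')^{1/2}\big)$ splits, via $\theta(a)=b_\theta+\pi(\theta(a))$ with $b_\theta\in\mathrm{M}_n(A)$, into a part bounded by $\|(1-u'')^{1/2}b_\theta(1-u'')^{1/2}\|$, which tends to $0$ uniformly in $\tau$ as $m\to\infty$, plus the boundary term $\kappa\,(\mathrm{d}_\tau(e)-\tau(e^{1/m}))$ with $\kappa=\mathrm{Tr}(\theta(\pi(a)))$. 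With $f:=\hat c$ this gives $f<\mathrm{d}_\tau((a-\dt)_+)$ outright, and $f>\mathrm{d}_\tau((a-\ep)_+)$ once the defect is pushed below $c_0$.

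The step I expect to be the main obstacle is exactly this control of the boundary term, i.e.\ the scalar ``part at infinity'' of $(a-\ep)_+$ present whenever $\pi(a)$ has eigenvalues $\ge\ep$ (so $\kappa>0$): its trace is governed by $\mathrm{d}_\tau(e)=\|\tau\|$, which need not be continuous on $\overline{{\mathrm{T}(A)}}^\mathrm{w}$ (it is continuous precisely when $A$ has continuous scale, by \ref{Pconscale}), so $\mathrm{d}_\tau(e)-\tau(e^{1/m})$ does \emph{not} vanish uniformly. I would circumvent this by homogeneity: all of $\mathrm{d}_\tau((a-\ep)_+)$, $\mathrm{d}_\tau((a-\dt)_+)$ and $\hat c(\tau)$ are positively homogeneous in $\tau$, and since $0\notin\overline{{\mathrm{T}(A)}}^\mathrm{w}$ the required inequalities are equivalent to the same inequalities at the normalized tracial states $\hat\tau=\tau/\mathrm{d}_\tau(e)$. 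For honest states the extension to $\mathrm{M}_n({\widetilde A})$ is again a state, so the boundary term becomes the \emph{constant} $\kappa$, contributing equally to both sides and cancelling in the gap; the remaining bump estimates and the approximate-unit absorption, carried out at the level of states where the scale is normalized to $1$, then finish the argument. Making this reduction to the state level rigorous—handling the fact that the normalizing function $\mathrm{d}_\tau(e)$ is only lower semicontinuous—is the technical heart of the proof.
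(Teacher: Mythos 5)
Your route is genuinely different from the paper's. The paper produces no element of $A$ at all: it approximates the lower semicontinuous affine function $\tau\mapsto \mathrm{d}_\tau((a-\dt)_+)$ from below by an increasing sequence taken from $\Aff_{0+}({\rm T}_1(A))$ (this is exactly what its membership in ${\rm LAff}_{b,0+}(\overline{\mathrm{T}(A)}^{\mathrm{w}})$ means) and then selects one term of that sequence by Brown--Toms' compactness/Portmanteau argument; no comparison and no construction inside $A$ occurs. You instead take $f=\hat c$ for an explicit $c\in \mathrm{M}_n(A)_+$. In the case where $\pi(a)$ contributes nothing above the cut levels -- in particular for $a\in \mathrm{M}_n(A)_+$, which is the case actually invoked in \ref{Lbt1ACT} -- your argument is correct and in fact cleaner than the paper's: there $\theta(a)\in \mathrm{M}_n(A)_+$ already, so $f=\widehat{\theta(a)}$ is admissible, and the sandwich $\chi_{(\ep,\infty)}+h\le \theta\le \chi_{(\dt,\infty)}-h'$ plus faithfulness of nonzero traces on a simple \CA\, gives the strict inequalities pointwise; you need neither the element $u''$ nor the compactness producing $c_0,c_0'$.

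The gap is the case $\kappa:=\mathrm{Tr}(\theta(\pi(a)))>0$, and the homogeneity/cancellation device you propose does not close it. At a tracial state $\sigma$, write the slack as $s(\sigma):=\tilde\sigma(\theta(a))-\mathrm{d}_\sigma((a-\ep)_+)$ and note your defect satisfies $\tilde\sigma(\theta(a))-\sigma(c)\ \ge\ \kappa\bigl(1-\sigma(e^{1/m})\bigr)-\eta_m$, so the lower inequality $\sigma(c)>\mathrm{d}_\sigma((a-\ep)_+)$ forces $\kappa\bigl(1-\sigma(e^{1/m})\bigr)-\eta_m< s(\sigma)$ for every $\sigma\in \mathrm{T}(A)$. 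Now take tracial states $\sigma_j\to\tau\in \overline{\mathrm{T}(A)}^{\mathrm{w}}$ with $\|\tau\|<1$ (such sequences exist precisely when the scale is not continuous, which is the situation this paper is built around). Then $\sigma_j(e^{1/m})\to\tau(e^{1/m})\le \|\tau\|$, so for every fixed $m$ the defects along $\sigma_j$ stay at least $\kappa(1-\|\tau\|)$ in the limit; on the other hand, since functions vanishing on ${\rm sp}(\pi(a))\cup\{0\}$ applied to $a$ land in $\mathrm{M}_n(A)$, the mass the $\sigma_j$ lose accumulates only at the spectral values of $\pi(a)$, which (for $\pi(a)$ a projection, say) lie outside $(\dt,\ep]$; hence $\limsup_j s(\sigma_j)\le \|\tau\|$. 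Taking $\kappa\ge 1$ and $\|\tau\|\le 1/2$ (such $A$, $\tau$, $a$ exist) the required inequality fails at $\sigma_j$ for all large $j$, for every $m$ and every choice of bumps: the $\kappa$-mass does not ``contribute equally to both sides,'' because $\mathrm{d}_\sigma((a-\ep)_+)$ at a state contains it in full, while $\sigma(c)$ with $c\in \mathrm{M}_n(A)_+$ can capture at most $\kappa\,\sigma(e^{1/m})$ of it. The obstruction is structural, not technical: any admissible $f$ is automatically homogeneous (affine with $f(0)=0$), so along $\sigma_j\to\tau$ the constraints at $\sigma_j$ and the identity $f(\tau)=\|\tau\|\,f(\tau/\|\tau\|)$ pull in opposite directions, independently of $c$ and $m$. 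Thus the step you defer as ``the technical heart'' is the crux, and it cannot be carried out within your scheme; indeed the same tension shows that the $\kappa>0$ case of the statement itself is delicate (its meaning depends on how $\mathrm{d}_\tau$ is read on $\mathrm{M}_n({\widetilde A})$ for $\tau$ of norm $<1$), whereas the $\pi(a)=0$ case -- the one your proof does establish -- is the robust part and the one the paper's applications in \ref{Lbt1ACT} rely on.
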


\begin{proof}
This  is essentially proved in the proof of 
Lemma 2.1 of \cite{BT}.
Note that $f$ is  {{just}} a function.  The proof of Lemma 2.1 of \cite{BT} does not involve
 comparison since no elements in $A$ need to be produced. It does require
that $\mathrm{d}_\tau(b)>0$ for any $b\in \mathrm{M}_n({\widetilde A})_+\setminus \{0\}$ and  for any
nonzero trace $\tau$ of $A.$ The rest of the proof is a compactness argument and an application
of the Portmanteau theorem. It should be noted that, as exactly in the proof of Lemma 2.1 of \cite{BT}, 
since the function $d_\tau((a-\dt)_+)$ is in ${\rm LAff}_{b,0+}({\overline{\rm{T}(A)}^{\rm w}}),$
the sequence $f_n$ can be chosen as $g_n|_{\overline{\rm{T}(A)}^{\rm w}},$ where each $g_n$ are in $\Aff_{0+}({\rm T}_1(A)).$
\iffalse
 assumes  that $A$ has stable rank one.
It is used in the first sentence of the proof, namely, one can assume that zero is a limit point
of $sp(a).$  If $0$ is an isolated point of $sp(a),$ then there is $0<\dt<1/2$ such that
$f_\dt(a)$ is a projection. It is easy to see that $\la a \ra =\la f_\dt(a) \ra $ in this case.
The rest of the proof works exactly the same as that of Lemma 2.1 of \cite{BT}
which does require $A$ has an identity.
\fi
\end{proof}

%\begin{df}\label{Dalst1}
%Let $A$ be a non-unital \CA. We say $A$ almost has stable rank one
%if for any integer $m\ge 1$ and any hereditary \SCA\, $B\subset A,$
%$B=\overline{GL({\tilde B})}.$
%\end{df}

%The following will not be used until later sections.

\begin{lem}\label{Lbt1ACT}
Let $A$ be a non-unital simple \CA\, with strict comparison for positive element
which almost has stable rank one.
Suppose that ${\rm{QT}}(A)={\rm{T}}(A),$ $A={\mathrm{Ped}(A)}$ and 
{{the canonical map}} $\imath: W(A)_+\to {\rm LAff}_{b,0+}({\overline{{\mathrm{T}(A)}}^\mathrm{w}})$
is surjective.

Let $0\le a\le 1$ be a non-zero element in $A$ which is not Cuntz equivalent to
a projection.
Then, for any $\ep>0$   there exist $\dt>0$ and  an element  $e\in A$
with
\beq\label{Lbt1act-1}
0\le f_\ep(a)\le e\le f_{\dt}(a)
\eneq
such that the function $\tau\mapsto \mathrm{d}_\tau(e)$ is continuous on $\overline{{\mathrm{T}(A)}}^\mathrm{w}.$
\end{lem}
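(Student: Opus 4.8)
The plan is to first manufacture a \emph{continuous} affine function strictly trapped between the two (lower semicontinuous) dimension functions $\mathrm{d}_\tau(f_\ep(a))$ and $\mathrm{d}_\tau(f_\dt(a))$, realize it as the rank of a positive element via the surjectivity hypothesis, and then convert that element into one satisfying the required two-sided operator inequality.

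First I would note that, since $a$ is not Cuntz equivalent to a projection, $0$ must be a limit point of $\mathrm{sp}(a)$; hence Lemma \ref{Lbt1}, applied to $a$ with $\ep$ replaced by a suitably small parameter, produces a $\dt>0$ and a continuous affine function $h$ on $\mathrm{T}_1(A)$ with $h(0)=0$ such that $\mathrm{d}_\tau(f_{\ep/2}(a)) < h(\tau) < \mathrm{d}_\tau(f_\dt(a))$ for all $\tau\in\overline{{\mathrm{T}(A)}}^\mathrm{w}$ (using $\mathrm{d}_\tau((a-s)_+)=\mathrm{d}_\tau(f_{2s}(a))$ to translate between the two normalizations, and choosing $\dt<\ep$). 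In particular $\mathrm{d}_\tau(f_\ep(a)) \le \mathrm{d}_\tau(f_{\ep/2}(a)) < h(\tau) < \mathrm{d}_\tau(f_\dt(a))$. The restriction $h|_{\overline{{\mathrm{T}(A)}}^\mathrm{w}}$ lies in $\mathrm{LAff}_{b,0+}(\overline{{\mathrm{T}(A)}}^\mathrm{w})$, so the assumed surjectivity of $\imath\colon W(A)_+\to\mathrm{LAff}_{b,0+}(\overline{{\mathrm{T}(A)}}^\mathrm{w})$ yields $e_0\in\mathrm{M}_k(A)_+$ (for some $k$) with $\mathrm{d}_\tau(e_0)=h(\tau)$. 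Since $A$ has strict comparison, the strict inequalities above give $f_{\ep/2}(a)\lesssim e_0\lesssim f_\dt(a)$; and since $A$ almost has stable rank one, Lemma \ref{Lalmstr1} (applied in $\mathrm{M}_k(A)$ to $e_0\lesssim f_\dt(a)$) produces $e_2\in\overline{f_\dt(a)Af_\dt(a)}$ with $e_2\sim e_0$, so that $\tau\mapsto\mathrm{d}_\tau(e_2)=h(\tau)$ is continuous.

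The remaining, and main, difficulty is to pass from this Cuntz-theoretic information to an honest element $e$ of the order interval, i.e. to find $e$ with $f_\ep(a)\le e\le f_\dt(a)$ and $e\sim e_2$. Here I would use the reformulation that any $e=f_\ep(a)+c$ with $0\le c\le f_\dt(a)-f_\ep(a)$ automatically satisfies $f_\ep(a)\le e\le f_\dt(a)$, so the task is to choose the filling $c$ in the order interval $[0,\,f_\dt(a)-f_\ep(a)]$ so that $\mathrm{d}_\tau(e)$ becomes the continuous function $h$. Because $\mathrm{d}_\tau(f_\ep(a))$ is only lower semicontinuous, $c$ cannot be taken orthogonal to $f_\ep(a)$ (an orthogonal sum would force $\mathrm{d}_\tau(c)=h-\mathrm{d}_\tau(f_\ep(a))$, which is upper semicontinuous and hence not a rank function); instead $e$ must genuinely \emph{absorb} $f_\ep(a)$. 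To arrange this I would exploit $f_\ep(a)f_{\ep/2}(a)=f_\ep(a)$ together with $f_{\ep/2}(a)\lesssim e_2$ and invoke the third statement of Lemma \ref{Lalmstr1} to obtain a unitary $u\in\widetilde A$ carrying $f_\ep(a)$ into $\overline{e_2Ae_2}$, and then patch the conjugate of $e_2$ so that it simultaneously dominates $f_\ep(a)$ and stays below $f_\dt(a)$, controlling the norm perturbations with Lemma \ref{LRL} and Lemma \ref{Lrorm}. I expect this final patching---upgrading the two strict Cuntz comparisons into a single two-sided operator sandwich while keeping the rank equal to the continuous function $h$---to be the hard technical core; strict comparison and almost stable rank one are both indispensable precisely at this step.
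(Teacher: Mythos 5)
Your steps (1)--(3) coincide with the paper's own argument: interpose a continuous affine function via Lemma \ref{Lbt1}, realize it as $\mathrm{d}_\tau(c)$ for some $c\in \mathrm{M}_m(A)_+$ by the surjectivity of $\imath,$ and use the second statement of Lemma \ref{Lalmstr1} (with strict comparison) to replace $c$ by a Cuntz-equivalent element $c_0=xx^*$ lying in $\overline{f_{\dt}(a)Af_{\dt}(a)}.$ (One remark: the paper interposes \emph{two} continuous functions $g_1<g_2$ between three cutdowns of $a.$ Your single $h$ can be made to work, but to find $m$ with $\mathrm{d}_\tau(f_\ep(a))<\tau(f_{1/m}(c_0))$ for all $\tau$ you must first dominate the merely lower semicontinuous function $\tau\mapsto \mathrm{d}_\tau(f_\ep(a))$ by a continuous one, e.g.\ $\tau\mapsto\tau(f_{\ep/2}(a)),$ before a Dini/compactness argument applies; this is exactly the role of the extra function $g_1.$)

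The genuine gap is in your final step, precisely where you say the hard core lies. You take the unitary $u$ from the third statement of Lemma \ref{Lalmstr1} in $\widetilde A.$ Then, whichever way you conjugate, one of the two constraints is destroyed: from $u^*f_\ep(a)u\in\overline{e_2Ae_2}$ you get that $f_\ep(a)$ sits inside the hereditary algebra generated by $ue_2u^*,$ but $ue_2u^*$ (or any $u\,g(e_2)\,u^*$) has no reason whatsoever to lie under $f_{\dt}(a)$ any more; this failure is not small in norm, so no perturbation argument via Lemma \ref{LRL} or Lemma \ref{Lrorm} can repair it. The paper's resolution is structural, and consists of two ideas your sketch is missing. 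First, since almost stable rank one passes to hereditary \SCA s (this is built into Definition \ref{Dalst1}), Lemma \ref{Lalmstr1} is applied \emph{inside} $B=\overline{f_{\dt_2}(a)Af_{\dt_2}(a)},$ so the unitary lies in the unitization of $B$ and conjugation preserves $B$: the element $c_1=uc_0u^*$ stays inside $B$ while $f_{\ep/8}(f_{\ep/8}(a))\in \overline{f_{1/m}(c_1)Af_{1/m}(c_1)}.$ Second, the operator inequality $f_\ep(a)\le e$ is not obtained by an additive patching $e=f_\ep(a)+c$; it comes from a local-unit trick: choose $g\in C_0((0,1])$ with $0\le g\le 1,$ $g$ nonvanishing on $(0,1],$ and $gf_{1/m}=f_{1/m},$ and set $e=g(c_1).$ Then $e$ acts as a unit on $\overline{f_{1/m}(c_1)Af_{1/m}(c_1)}$ and hence dominates the positive contraction $f_{\ep/8}(f_{\ep/8}(a))\ge f_\ep(a);$ it lies in $B,$ hence $e\le f_{\dt_2/2}(a);$ and $\la e\ra=\la c_1\ra=\la c\ra$ because $g$ is nonvanishing, so $\mathrm{d}_\tau(e)$ equals the chosen continuous function. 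Without moving the conjugation into the hereditary subalgebra and without this choice of $e,$ the sandwich and the continuity of $\mathrm{d}_\tau(e)$ cannot be secured simultaneously.
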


\begin{proof}
{{Fix $\ep>0.$}}
By \ref{Lbt1}, there are  continuous affine  functions
$g_1, g_2\in \Aff_0({\rm T}_1(A))$ such that
%(\overline{{\mathrm{T}(A)}}^\mathrm{w})$ such that
\beq\label{Lbt1act-2}
\mathrm{d}_\tau(f_{\ep/8}(a))<g_1(\tau)<\mathrm{d}_\tau(f_{\dt_1}(a))<g_2(\tau)<\mathrm{d}_\tau(f_{\dt_2}(a)) \rforal \tau\in \overline{{\mathrm{T}(A)}}^\mathrm{w},
\eneq
where $0<\dt_2<\dt_1<1.$
Since $\iota$ is surjective, there is $c\in \mathrm{M}_m(A)$ for some integer $m\ge 1$
such that $0\le c\le 1$ and
$\mathrm{d}_\tau(c)=g_2(\tau)$ for all $\tau\in \overline{{\mathrm{T}(A)}}^\mathrm{w}.$
It follows from \ref{Lalmstr1}  and \eqref{Lbt1act-2} that there exists $x\in \mathrm{M}_m(A)$ such that
\vspace{-0.08in} $$
x^*x=c\andeqn xx^*\in \overline{f_{\dt_2}(a)Af_{\dt_2}(a)}.
$$
Put $c_0=xx^*.$ Then $0\le c_0\le 1.$
Note that
\beq\label{Lbt1act-2+}
\mathrm{d}_\tau(c_0)=\mathrm{d}_\tau(c)\rforal \tau\in \overline{{\mathrm{T}(A)}}^\mathrm{w}.
\eneq
Since $g_1$  
%and ${\red{g}}_2$ are 
is continuous, there is $m\ge 2$ such that
\beq\label{Lbt1act-2++}
%\mathrm{d}_\tau(g_1)
{{g_1(\tau)}}<\tau(f_{1/m}(c_0))\rforal \tau\in \overline{{\mathrm{T}(A)}}^\mathrm{w}.
\eneq
By \eqref{Lbt1act-2} and 
\ref{Lalmstr1} again,  there is a unitary $u$ in the unitization of $\overline{f_{\dt_2}(a)Af_{\dt_2}(a)}$
 such that
\vspace{-0.12in} \beq\label{Lbt1act-3}
u^*f_{\ep/8}(f_{\ep/8}(a))u\in \overline{f_{1/m}(c_0)Af_{1/m}(c_0)}.
\eneq
%Define 
{{Set}} $c_1=uc_0u^*.$
Then
\beq\label{Lbt1act-4}
f_{\ep/8}(f_{\ep/8}(a))\in \overline{f_{1/m}(c_1)Af_{1/m}(c_1)}\subset  
\overline{c_1Ac_1}.
%%{\red{\subset}} \overline{f_{\dt/2}(a)Af_{\dt/2}(a)}.
\eneq
There is a $g\in C_0((0,1])$ with $0\le g\le 1$ such that $g(t)\not=0$ for all $t\in (0,1],$
$g(t)f_{1/m}=f_{1/m}.$
Put $e=g(c_1).$  Then $\la e\ra=\la c_1\ra=\la c_0\ra =\la c\ra.$ 
Moreover,
\vspace{-0.1in} \beq\label{Lbt1act-5}
&&\mathrm{d}_\tau(e)=\mathrm{d}_\tau(c_1)=g_2(\tau) \rforal \tau\in \overline{{\mathrm{T}(A)}}^\mathrm{w}\andeqn\\
&&f_\ep(a)\le f_{\ep/8}(f_{\ep/8}(a))\le e\le f_{\dt_2/2}(a).
\eneq
Choose $\dt=\dt_2/2.$
\end{proof}

The following  {{theorem}} is a restatement of a result of Robert.

\begin{thm}[Theorem 6.2.3 of \cite{Rl}]\label{TRobert1}
Let $A$ be a stably projectionless simple \CA\, with strict comparison for
positive elements which has stable rank one and $\mathrm{QT}(A)=\mathrm{T}(A).$
Suppose that $A={\mathrm{Ped}(A)}$ and $\imath: {\mathrm{Cu}}(A)\to {\rm LAff}_{0+}({\overline{\rm{T}(A)}^{\rm w}})$ 
%${\rm LAff}_+|_{\overline{\rm{T}(A)}^{\rm w}}$
is an ordered semigroup isomorphism in {\bf Cu}. Then the map defined in (6.6) of \cite{Rl} is an  isomorphism.
of ordered semigroups.

Moreover, if $a, b\in ({\widetilde A}\otimes {\cal K})_+$ with
$\la \pi(a)\ra =k<\infty,$ $\la \pi(b)\ra =m<\infty,$
where $\pi: {\widetilde A}\to \C$ is the quotient map,  are such that
\beq
\mathrm{d}_\tau(a)+m< \mathrm{d}_\tau(b)+k\tforal \tau\in \overline{\mathrm{T}(A)}^\mathrm{w},
\eneq
then
\beq
\la a \ra +m\la 1_{\widetilde A}\ra \le \la b\ra +k \la 1_{\widetilde A}\ra.
\eneq
Furthermore, if either $\la a \ra,$ or $\la b\ra $ is not represented by a projection, and
\beq
\mathrm{d}_\tau(a)+m\le  \mathrm{d}_\tau(b)+k\tforal \tau\in \overline{\mathrm{T}(A)}^\mathrm{w},
\eneq
then  $\la a\ra \le \la b\ra.$
\end{thm}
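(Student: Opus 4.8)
The plan is to reduce the entire statement to Robert's Theorem 6.2.3 of \cite{Rl}, whose hypotheses hold verbatim for $A$: it is stably projectionless, simple, has strict comparison for positive elements, has stable rank one, satisfies ${\rm QT}(A)={\mathrm{T}}(A)$ and $A={\mathrm{Ped}}(A)$, and the canonical map $\imath\colon {\mathrm{Cu}}(A)\to {\rm LAff}_{0+}(\overline{{\mathrm{T}}(A)}^{\mathrm w})$ is assumed to be an ordered-semigroup isomorphism in {\bf Cu}. Hence the first assertion — that the map of (6.6) of \cite{Rl} is an isomorphism of ordered semigroups — is exactly the conclusion of that theorem, so nothing beyond citation is required for it. The remaining two assertions are concrete comparison statements in ${\mathrm{Cu}}({\widetilde A})$ that I would extract from this isomorphism, the essential observation being that Robert's description records, for each class in ${\mathrm{Cu}}({\widetilde A})$, both its rank $\la\pi(\cdot)\ra\in\overline{\N}$ at the quotient $\pi\colon {\widetilde A}\to\C$ and its dimension function on $\overline{{\mathrm{T}}(A)}^{\mathrm w}$.

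For the second (strict) assertion, the idea is to normalise the behaviour at the quotient by a direct-sum trick. Put $a'=a\oplus \diag(\overbrace{1_{\widetilde A},\dots,1_{\widetilde A}}^{m})$ and $b'=b\oplus \diag(\overbrace{1_{\widetilde A},\dots,1_{\widetilde A}}^{k})$ in $({\widetilde A}\otimes {\cal K})_+$. Then $\la\pi(a')\ra=k+m=\la\pi(b')\ra$, so $a'$ and $b'$ have equal rank at the quotient, while $\mathrm{d}_\tau(a')=\mathrm{d}_\tau(a)+m$ and $\mathrm{d}_\tau(b')=\mathrm{d}_\tau(b)+k$ for every $\tau\in\overline{{\mathrm{T}}(A)}^{\mathrm w}$. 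The hypothesis therefore reads $\mathrm{d}_\tau(a')<\mathrm{d}_\tau(b')$ for all such $\tau$. Because $a'$ and $b'$ agree in $\la\pi(\cdot)\ra$, the isomorphism of (6.6) identifies their comparison in ${\mathrm{Cu}}({\widetilde A})$ with the comparison of their dimension functions in ${\rm LAff}_{0+}(\overline{{\mathrm{T}}(A)}^{\mathrm w})$ governed by $\imath$; the strict inequality then yields $\la a'\ra\le\la b'\ra$, which is precisely $\la a\ra+m\la 1_{\widetilde A}\ra\le\la b\ra+k\la 1_{\widetilde A}\ra$.

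For the third (non-strict) assertion, rather than trying to cancel in the shifted inequality I would apply Robert's comparison criterion for ${\mathrm{Cu}}({\widetilde A})$ directly: for two classes one of which is not represented by a projection, the criterion compares the quotient ranks together with the dimension functions using non-strict inequalities, and the hypothesis $\mathrm{d}_\tau(a)+m\le \mathrm{d}_\tau(b)+k$ is exactly the form this criterion takes once the ranks $k=\la\pi(a)\ra$ and $m=\la\pi(b)\ra$ are incorporated, while the non-projection hypothesis guarantees that the relevant class is purely non-compact so that non-strict inequality suffices to conclude $\la a\ra\le\la b\ra$. The main obstacle, and the part demanding genuine care, is precisely this reduction: verifying that fixing the quotient rank lets one replace comparison in the unital semigroup ${\mathrm{Cu}}({\widetilde A})$ by the functional comparison carried by $\imath$, matching the exact shape of the hypothesis $\mathrm{d}_\tau(a)+m\lessgtr\mathrm{d}_\tau(b)+k$ to Robert's criterion, and tracking the compact/non-compact (projection/non-projection) dichotomy so that the correct strict-versus-non-strict version of the comparison is invoked.
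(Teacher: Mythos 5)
Your first paragraph is fine: the first assertion is indeed just Theorem 6.2.3 of \cite{Rl}, and the paper begins the same way, noting that Robert's proof applies under the stated hypotheses. The gap is in your treatment of the ``Moreover'' and ``Furthermore'' parts, and it sits exactly where you admit the argument ``demands genuine care.'' Your direct-sum normalization does no mathematical work: $\la a\ra+m\la 1_{\widetilde A}\ra\le \la b\ra+k\la 1_{\widetilde A}\ra$ \emph{is} the statement $\la a'\ra\le\la b'\ra,$ so nothing has been reduced; the whole problem is still to pass from a pointwise inequality of dimension functions to Cuntz subequivalence in ${\mathrm{Cu}}({\widetilde A}).$ The isomorphism of (6.6) cannot do this ``directly,'' for two reasons. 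First, it is an isomorphism on ${\mathrm{Cu}}^{\sim}(A),$ whose order is defined only up to adding multiples of $\la 1_{\widetilde A}\ra$: the relation $\la a\ra-k\la 1_{\widetilde A}\ra\le \la b\ra-m\la 1_{\widetilde A}\ra$ means $\la a\ra+(m+N)\la 1_{\widetilde A}\ra\le \la b\ra+(k+N)\la 1_{\widetilde A}\ra$ in ${\mathrm{Cu}}({\widetilde A})$ for \emph{some} $N\ge 0.$ To reach the stated conclusion one must cancel $N\la 1_{\widetilde A}\ra,$ and this is exactly where stable rank one enters, through weak cancellation (4.3 of \cite{RW}). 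The paper's proof is organized entirely around this step: it writes $\la a_1\ra+\bt_1=k\la 1_{\widetilde A}\ra+\gamma_1$ and $\la a_2\ra+\bt_2=m\la 1_{\widetilde A}\ra+\gamma_2$ with $\bt_i\in\Aff_+({\widetilde{\mathrm{T}}}(A))$ and $\gamma_i\in{\rm LAff}_+({\widetilde{\mathrm{T}}}(A))$ (identified with elements of ${\mathrm{Cu}}(A)$ via $\imath$), deduces $\la a_1\ra+\bt+m\la 1_{\widetilde A}\ra\le \la a_2\ra+\bt+k\la 1_{\widetilde A}\ra$ with $\bt=\bt_1+\bt_2,$ and then removes $\bt$ using $\la (a_1-\ep)_+\ra+\bt\ll\la a_1\ra+\bt$ together with weak cancellation, finally letting $\ep\to 0.$ Your proposal never invokes cancellation in any form, so the clean inequality in ${\mathrm{Cu}}({\widetilde A})$ is never reached.

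Second, the compact case is not covered by appealing to a ``comparison criterion.'' When $\la a\ra$ is the class of a projection, its image under the map (6.6) lies in the $K_0(A)$ summand of Robert's codomain, not in ${\rm LAff}^{\sim},$ so the hypothesis on dimension functions is simply not an order inequality between the $\Gamma$-images, and no criterion applies off the shelf. The paper handles this case by a separate perturbation argument, available only for the \emph{strict} inequality of the ``Moreover'' part: using strictness, continuity of $\tau\mapsto{\mathrm{d}}_\tau(a),$ and compactness of $\overline{{\mathrm{T}}(A)}^{\mathrm{w}},$ it inserts nonzero $\bt,\bt_0\in\Aff_+({\widetilde{\mathrm{T}}}(A))$ so that the perturbed class $\la a\ra+\bt+\bt_0+m\la 1_{\widetilde A}\ra$ is no longer compact (since $A$ is stably projectionless), applies the noncompact case, and then uses compactness of $\la a\ra$ (to obtain a $\ll$ relation) and weak cancellation once more to strip off the perturbations. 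This is also why the ``Furthermore'' part carries the hypothesis that one of the two classes is not represented by a projection: with only a non-strict inequality the perturbation trick is unavailable, and the noncompact case must apply as it stands. Your sketch acknowledges the compact/noncompact dichotomy but supplies no argument for it, so as written the proposal does not prove the second or third assertions.
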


\begin{proof}
The proof of 6.2.3 of \cite{Rl} applies since we assume that $A$ has stable rank one and the conclusion
of 6.2.1 of \cite{Rl} holds for $A\otimes {\cal K}.$
Denote the map defined in (6.6) of  \cite{Rl} by $\Gamma.$
For the
reader's convenience we include a detailed proof that the inverse {{of $\Gamma$}}  is
order preserving, since we will use this in an important
way.
 Let us first check that the inverse of $\Gamma$ restricted
 to the elements  $\mathrm{LAff}_+^\sim ({\widetilde{\mathrm{T}}}(A))$ is {{order preserving.}}
% $\Gamma(\la a\ra-k\la 1\ra)$  with $\la a\ra\not=\la p\ra$ for any projection $p,$ is also
%an order
%isomorphism.

We will use some notation from \cite{Rl} (but recall that {{our}} ${\widetilde{\mathrm{T}}}(A)$ is 
${\mathrm T}_0(A)$ in \cite{Rl}).
Let $a_1\in {\mathrm{Cu}}({\widetilde A}),$ $\la a_1\ra\not=\la p\ra$ for any projection and   $\la \pi(a_1)\ra=k.$
%and $[a_1\ra{\hat{}}-k$ is not positive.
Suppose also that $\la a_2\ra\in {\mathrm{Cu}}({\widetilde A})$ such that
% $\la a_2\ra-m\la 1_{\widetilde A}\ra\in Cu^\sim(A),$
$\la \pi(a_2)\ra =m,$ where $k$ and $m$ are integers,  and
 %$m\ge k$
\beq
{\widehat{\la a_1\ra}}-k{\widehat{\la 1_{\widetilde A} \ra}}\le \widehat{\la a_2\ra}-m{\widehat{\la 1_{\widetilde A}\ra}} .
\eneq
There are $\bt_1, \bt_2\in \Aff_+({\widetilde{\mathrm{T}}}(A))$ and
$\gamma_1, \gamma_2\in \mathrm{LAff}_+({\widetilde{\mathrm{T}}}(A))$ such that
\beq
\la a_1\ra +\bt_1&=&k\la 1_{\widetilde A}\ra  +\gamma_1\\
\la a_2\ra +\bt_2&=&m\la 1_{\widetilde A}\ra +\gamma_2\andeqn\\
\gamma_1-\bt_1&\le & \gamma_2-\bt_2.
\eneq
{{Note that we have used the notation in the proof of 6.2.3,  and in particular, we identify 
$\bt_1,\bt_2, \gamma_1, \gamma_2$ with elements of ${\rm Cu}(A).$}}
Thus,
\beq
\gamma_1+\bt_2 &\le &   \gamma_2+\bt_1\andeqn\\
\la a_1\ra +\bt_1+\bt_2+m\la 1_{\widetilde A}\ra &=&(k+m)\la 1_{\widetilde A}\ra  +\gamma_1+\bt_2\\
 && \le (k+m)\la 1_{\widetilde A}\ra + \gamma_2+\bt_1\\
 && =k\la 1_{\widetilde A}\ra +\la a_2\ra +\bt_2+\bt_1.
\eneq
Put $\bt=\bt_1+\bt_2.$
We have
\beq
\la a_1\ra +\bt+m\la 1_{\widetilde A}\ra \le k\la 1_{\widetilde A}\ra +\la a_2\ra +\bt.
\eneq

Exactly as proved in 6.2.3 of \cite{Rl},
one has
\beq
\la (a_1-\ep)_+\ra +\bt\ll \la a_1\ra +\bt
\eneq
which implies that, also,
\beq
 \la (a_1-\ep)_+\ra +\bt+m\la 1_{\widetilde A}\ra \ll \la a_1\ra +\bt +m\la 1_{\widetilde A}\ra.
\eneq
Therefore,
\beq
 \la (a_1-\ep)_+\ra +\bt+m\la 1_{\widetilde A}\ra \ll k\la 1_{\widetilde A}\ra +\la a_2\ra +\bt.
 \eneq
Since $A$ has stable rank one, by  weak cancellation (4.3 of \cite{RW}),
 \beq
\la (a_1-\ep)_+\ra +m \la 1_{\widetilde A}\ra \le \la a_2\ra+k\la 1_{\widetilde A}\ra.
\eneq
It follows that
\beq
\la a_1\ra +m \la 1_{\widetilde A}\ra \le \la a_2\ra+k\la 1_{\widetilde A}\ra.
\eneq
In particular, 
%Note that the above argument also implies that
this shows that  $\Gamma$ is injective.

\iffalse
%for some $\bt\in \Aff_+(T_0(A)).$
%as we identify $\bt$ as an element in $Cu(A).$
%$Cu(A)$ with $\mathrm{LAff}_+(\mathrm{T}_0(A)).$
If  ${\widehat{\la a_2\ra}}-m{\widehat{\la 1_{\widetilde A}}} \,\hat{}-(\la a_1\ra\,\hat{}-k\la 1_{\widetilde A}\ra)$ is
a strictly positive lower semicontinuous function,  then
$$
\la a_1\ra\, \hat{}+g'+m\la 1_{\widetilde A} \ra \,\hat{}=\la a_2\ra\, \hat{}+k \la 1_{\widetilde A}\ra\,\hat{}
$$
for some $g'\in \Aff_+(\mathrm{T_0}(A)).$
It follows that
$$
\la a_1\ra\, \hat{}+g'+m\la 1_{\widetilde A} \ra \,\hat{}=\la a_2\ra\, \hat{}+k \la 1_{\widetilde A}\ra\,\hat{}.
$$
By the injectivity of $\Gamma,$
\beq
\la a_1\ra+\la c'\ra +m\la 1_{\widetilde A}\ra=\la a_2\ra +k\la 1_{\widetilde A}\ra
%\le \la a_2\ra +m[1_{\widetilde A}],
\eneq
for some $c\in {\mathrm Cu}(A).$
It follows that
$$
\la a_1\ra\, +m\la 1_{\widetilde A} \ra \le \la a_2\ra +k\la 1_{\widetilde A}\ra.
$$
%Since ${\widetilde A}$ has stable rank one, by \cite{RW},
% $\la a_1\ra -k\la 1_{\widetilde A}\ra \le \la a_2\ra -m\la 1_{\widetilde A}\ra $  in $C^\sim (A).$
 %$ and  $\la a_1\ra\le \la a_2\ra$ in
%$Cu({\widetilde A}).$
\fi

Note that, above, we do not assume that $\la a_2\ra $ is not  represented by a projection. Therefore it remains to show the
following:

If $(\la a\ra -k\la 1_{\widetilde A}\ra)\,\widehat{}<(\la b\ra -m\la 1_{\widetilde A}\ra)\,\widehat{}$\, on $\overline{\mathrm{T}(A)}^{\mathrm w},$
then
$$\la a\ra -k\la 1_{\widetilde A}\ra\le \la b\ra -m\la 1_{\widetilde A}\ra$$
 for all
$\la a\ra -k\la 1_{\widetilde A}\ra, \la b\ra -m\la 1_{\widetilde A}\ra\in {\mathrm Cu}^\sim(A).$

We only need to consider the case that
$\la a\ra$  is represented by a projection.  Then ${\widehat{\la a\ra}}$ is continuous.
It follows that there are non-zero elements  $\bt_0, \bt \in \mathrm{Aff}_+({\widetilde{{\mathrm{T}}}}(A))$ such that
\beq
(\la a\ra +\bt +\bt_0+ m\la 1_{\widetilde A}\ra)\, \widehat{}\,< (\la b\ra +\bt +k\la 1_{\widetilde A}\ra)\,\widehat{}\,\,\,\, {\rm on}\,\,\,\overline{\mathrm{T}(A)}^{\mathrm w}.
\eneq
Since $A$ is stably projectionless, from what has been proved,
\beq
\la a\ra +\bt +\bt_0+ m\la 1_{\widetilde A}\ra \le  \la b\ra +\bt +k\la 1_{\widetilde A}\ra.
\eneq
Then,
since $\la a\ra $ is represented by a projection,
\beq
\la a\ra +\bt +(1/2)\bt_0+ m\la 1_{\widetilde A}\ra \ll \la a\ra +\bt +\bt_0+ m\la 1_{\widetilde A} \ra\le
\la b\ra +\bt +k\la 1_{\widetilde A}\ra.
\eneq
By the weak cancellation,
\beq
\la a\ra +(1/2)\bt_0+ m\la 1_{\widetilde A}\ra \le  \la b\ra  +k\la 1_{\widetilde A}\ra.
\eneq
It follows that
\beq
\la a\ra + m\la 1_{\widetilde A}\ra \le  \la b\ra  +k\la 1_{\widetilde A}\ra.
\eneq

\iffalse

Otherwise, as proved above,
\beq
\la (a-\ep)_+\ra +\bt  +m\la 1_{\widetilde A}\ra\ll \la a\ra +\bt +m\la 1_{\widetilde A}\ra \le \la b\ra +\bt +k\la 1_{\widetilde A}\ra.
\eneq
By the weak cancellation again,
\beq
\la (a-\ep)_+ \ra +m\la 1_{\widetilde A}\ra \le \la b\ra  +k\la 1_{\widetilde A}\ra.
\eneq
Thus
\beq
\la a \ra +m\la 1_{\widetilde A}\ra \le \la b\ra  +k\la 1_{\widetilde A}\ra.
\eneq
\fi
\iffalse
Lastly, we will show that

 If ${\widehat{\la a\ra}-k\la 1_{\widetilde A}\ra \le {\widehat{\la b\ra}-m\la 1_{\widetilde A}\ra\,\hat{},$ where $\la b\ra $ is represented by
  projection $p$ and $\la a \ra$ is not represented
 by a projection, then
 \beq
 \la a\ra +m \la 1_{\widetilde A}\ra \le \la p\ra +k\la 1_{\widetilde A}\ra.
 \eneq
Since $A$ is a stably projectionless simple \CA, for any $\ep>0,$
\beq
\la (a-\ep)_+\ra\, \hat{} +m \la 1_{\widetilde A}\ra \,\hat{}\le \la p\ra\,\hat{} +k\la 1_{\widetilde A}\ra\, \hat{}.
\eneq
It follows from above that
\beq
\la (a-\ep)_+\ra+ m \la 1_{\widetilde A}\ra \le \la p\ra +k\la 1_{\widetilde A}\ra.
\eneq
This implies that
\beq
 \la a\ra +m \la 1_{\widetilde A}\ra \le \la p\ra +k\la 1_{\widetilde A}\ra.
\eneq
\fi
\end{proof}

%The proof above also shows (using $\la c_{n_0}\ra$) the following:

\begin{lem}\label{Lbt1ACT-1}
%Let $A$ be a projectionless simple  exact  simple \CA\, with strict comparison for positive element
%which  has stable rank one.
%Suppose that $A={\mathrm{Ped}(A)}$ and $\imath: W(A)_+\to {\rm LAff}_{b+}(\overline{{\mathrm{T}(A)}}^\mathrm{w})$
%is surjective.
With {{the}} same assumption on $A$ as in \ref{TRobert1}, we have the following statement:
Let $0\le a\le 1$ be a non-zero element of  ${\widetilde A}\otimes {\cal K}$
with $\pi(a)$ a projection of rank $m$ for some integer $m\ge 1$  and $a$   not Cuntz equivalent to
a projection.
%such that $\mathrm{d}_\tau(a)-m$ is not positive in $\overline{\mathrm{T(A)}}^\mathrm{w}.$
Then, for any $1/2>\ep>0$   there exist $1>\dt>0$ and  an element  $e\in {\widetilde A}\otimes {\cal K}$
with
\beq\label{Lbt1act-1}
0\le f_\ep(a)\le e\le f_{\dt/2}(a)
\eneq
such that the function $\tau\mapsto \mathrm{d}_\tau(e)$ is continuous on $\overline{{\mathrm{T}(A)}}^\mathrm{w}.$
\end{lem}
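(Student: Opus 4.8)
The plan is to transcribe the proof of \ref{Lbt1ACT} into $\widetilde A\otimes\mathcal K$, replacing every appeal to strict comparison and to surjectivity of $\iota$ in $A$ by the corresponding statement of \ref{TRobert1} for ${\rm Cu}^\sim(A)$, and keeping careful track of the $\pi$-rank $m$. Throughout I extend each $\tau\in\overline{{\rm T}(A)}^{\rm w}$ to $\widetilde A$ by $\tau(1_{\widetilde A})=1$, so that ${\rm d}_\tau(1_{\widetilde A})=1$; then the ``$+m$'' terms appearing in \ref{TRobert1} are precisely the contribution of the rank-$m$ part $\pi(a)$ to ${\rm d}_\tau$. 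Since $A$ has stable rank one, $\widetilde A\otimes\mathcal K$ almost has stable rank one, so \ref{Lalmstr1} is available there.

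First I would produce the two continuous target functions. Fixing a rank-$m$ projection $p_0\in\mathcal K$ with $\pi(a)$ of the same rank as $p_0$, one has $a-1_{\widetilde A}\otimes p_0\in A\otimes\mathcal K$, which (as $A={\rm Ped}(A)$) is a norm limit of elements of $\mathrm M_N(A)$; hence $a$ is approximated by positive elements of $\mathrm M_N(\widetilde A)$ and \ref{Lbt1} applies (its compactness/Portmanteau argument carries over, the relevant dimension functions being finite because $\pi(a)$ has finite rank). This yields $0<\dt_2<\dt_1<1$ and continuous affine functions $g_1,g_2\in\Aff_0({\rm T}_1(A))$ with
\beq\nonumber
{\rm d}_\tau(f_{\ep/8}(a))<g_1(\tau)<{\rm d}_\tau(f_{\dt_1}(a))<g_2(\tau)<{\rm d}_\tau(f_{\dt_2}(a))\tforal\tau\in\overline{{\rm T}(A)}^{\rm w}.
\eneq
Since $\pi(f_{\dt_1}(a))=\pi(a)$ has rank $m$ while $a$ is not Cuntz equivalent to a projection, $g_2-m>0$ on the compact set $\overline{{\rm T}(A)}^{\rm w}$.

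Next I would realize $g_2$ and transport it into $\overline{f_{\dt_2}(a)(\widetilde A\otimes\mathcal K)f_{\dt_2}(a)}$. Using surjectivity of $\iota$ (a hypothesis of \ref{TRobert1}) I realize the strictly positive bounded function $g_2-m$ as ${\rm d}_\tau(c')$ for some $c'\in A\otimes\mathcal K$, and set $c=c'\oplus(1_{\widetilde A}\otimes p_0)$, so $\la\pi(c)\ra=m$ and ${\rm d}_\tau(c)=g_2(\tau)$. As ${\rm d}_\tau(c)<{\rm d}_\tau(f_{\dt_2}(a))$ with both $\pi$-ranks equal to $m$, the comparison statement of \ref{TRobert1} (with $k=m$), followed by weak cancellation, gives $c\lesssim f_{\dt_2}(a)$; then \ref{Lalmstr1} yields $x$ with $x^*x=c$ and $c_0:=xx^*\in\overline{f_{\dt_2}(a)(\widetilde A\otimes\mathcal K)f_{\dt_2}(a)}$, so ${\rm d}_\tau(c_0)=g_2(\tau)$. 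Using continuity of $g_1$ and \ref{Csemicon} I pick $m_0\ge2$ with $g_1(\tau)<\tau(f_{1/m_0}(c_0))$; since ${\rm d}_\tau(f_{\ep/8}(f_{\ep/8}(a)))\le{\rm d}_\tau(f_{\ep/8}(a))<g_1(\tau)<\tau(f_{1/m_0}(c_0))$ with equal $\pi$-ranks, \ref{TRobert1} again gives $f_{\ep/8}(f_{\ep/8}(a))\lesssim f_{1/m_0}(c_0)$, and the unitary form of \ref{Lalmstr1} produces a unitary $u$ in the unitization of $\overline{f_{\dt_2}(a)(\widetilde A\otimes\mathcal K)f_{\dt_2}(a)}$ with $u^*f_{\ep/8}(f_{\ep/8}(a))u\in\overline{f_{1/m_0}(c_0)(\widetilde A\otimes\mathcal K)f_{1/m_0}(c_0)}$. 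Setting $c_1=uc_0u^*$ and choosing $g\in C_0((0,1])$ with $0\le g\le1$, $g>0$ on $(0,1]$ and $g\cdot f_{1/m_0}=f_{1/m_0}$, I put $e=g(c_1)$: then $\la e\ra=\la c_1\ra=\la c\ra$, so $\tau\mapsto{\rm d}_\tau(e)=g_2(\tau)$ is continuous, and exactly as in \ref{Lbt1ACT} the containments give $f_\ep(a)\le f_{\ep/8}(f_{\ep/8}(a))\le e\le f_{\dt_2/2}(a)$; taking $\dt=\dt_2$ completes the proof.

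The main obstacle I anticipate is the bookkeeping of the $\pi$-rank: unlike \ref{Lbt1ACT}, every Cuntz comparison and every realization here takes place in $\widetilde A\otimes\mathcal K$ with a nonzero unital part, so I must verify the hypothesis $\la\pi(\cdot)\ra=m$ at each application of \ref{TRobert1}, consistently carry the ``$+m$'' terms, and confirm that the convention ${\rm d}_\tau(1_{\widetilde A})=1$ makes $g_2-m$ genuinely strictly positive and hence realizable by an element of $A\otimes\mathcal K$. Once this is organized, the transport steps and the final sandwiching are faithful copies of \ref{Lbt1ACT}.
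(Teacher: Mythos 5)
Your overall strategy (produce a continuous target function via \ref{Lbt1}, realize it by a positive element with the correct scalar part, then transport it into the hereditary subalgebra of $f_{\dt_2}(a)$ via \ref{Lalmstr1} and sandwich exactly as in \ref{Lbt1ACT}) is the same as the paper's, and your transport and sandwiching steps are fine. The gap is in the realization step. You claim $g_2-m>0$ on $\overline{{\rm T}(A)}^{\rm w}$ on the grounds that $\pi(f_{\dt_1}(a))=\pi(a)$ has rank $m$; that is, you are implicitly using the inequality ${\rm d}_\tau(b)\ge {\rm rank}\,\pi(b)$ for $b\in (\widetilde A\otimes \mathcal{K})_+.$ This inequality is false. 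Take $e\in A_+$ with $\|e\|=1,$ ${\rm sp}(e)=[0,1],$ and $\mu_\tau(\{1\})>0$ for some $\tau\in \overline{{\rm T}(A)}^{\rm w}$ (here $\mu_\tau$ is the measure induced by $\tau$ on ${\rm sp}(e)$; such elements exist already in ${\cal W}$, by faithfulness of the trace), and put $a=(1-e)\otimes p_0$ with $p_0$ a rank $m$ projection. Then $\pi(a)=p_0,$ $a$ is not Cuntz equivalent to a projection (as $0$ is a non-isolated point of ${\rm sp}(a)$ and $\widetilde A\otimes \mathcal{K}$ has stable rank one), yet ${\rm d}_\tau(a)=m\,(1-\mu_\tau(\{1\}))<m,$ and a fortiori ${\rm d}_\tau(f_{\dt'}(a))<m$ for every $\dt'>0.$ For such an $a$ your $g_2,$ being pinned below ${\rm d}_\tau(f_{\dt_2}(a)),$ satisfies $g_2(\tau)<m,$ so $g_2-m$ is not a non-negative function and cannot equal ${\rm d}_\tau(c')$ for any $c'\in (A\otimes \mathcal{K})_+.$ More fundamentally, your ansatz $c=c'\oplus(1_{\widetilde A}\otimes p_0)$ can only produce elements with ${\rm d}_\tau(c)\ge m,$ and therefore cannot reach the required class in this case.

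This is not a cosmetic issue: realizing a Cuntz class whose scalar part has rank $m$ but whose trace function dips below $m$ requires elements of the form $(1-e)\otimes p_0$ above, which are not direct sums of a positive element of $A\otimes \mathcal{K}$ and $1\otimes p_0,$ and producing them with a prescribed dimension function is exactly the content of the surjectivity of the map $\Gamma$ of \ref{TRobert1} (Robert's isomorphism for ${\rm Cu}^\sim(A),$ whose target allows the trace function to lie below the rank of the scalar part). That is what the paper's proof invokes: by surjectivity of $\Gamma$ there is $c\in(\widetilde A\otimes \mathcal{K})_+$ with $\pi(c)=\pi(a),$ ${\rm d}_\tau(c)=f(\tau),$ and $\la f_{\ep/8}(a)\ra\ll\la c\ra\ll\la f_\dt(a)\ra,$ with no positivity requirement on $f-m.$ If you replace your realization step (surjectivity of $\iota$ on ${\rm Cu}(A)$ plus adding $1\otimes p_0$) by this appeal to $\Gamma,$ the rest of your argument --- the comparison via \ref{TRobert1}, the transport via \ref{Lalmstr1} using that $\widetilde A$ has stable rank one, and the final sandwich --- goes through and coincides with the paper's proof.
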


\begin{proof}
Note that this statement is similar to that of \ref{Lbt1ACT} (the case $m=0$).
{{By the last statement of \ref{Lalmstr1}, we may assume that, for any $\ep>0,$ there exists 
$n\ge 1,$  $f_\ep(a)\in M_n({\widetilde{A}}).$ }}
By \ref{Lbt1}, there exists $f\in \Aff_{0+}({\rm T}_1(A))$
%({\overline{{\mathrm{T}}(A)}}^{\mathrm{w}})$ 
such that, for some $\dt>0.$
$$
\mathrm{d}_\tau(f_\ep(a))<f(\tau)<\mathrm{d}_\tau(f_\dt(a))\rforal \tau\in \overline{{\mathrm{T}(A)}}^\mathrm{w}.
$$
Since we assume that $\pi(a)$ is a projection, $\pi(f_\ep(a))=\pi(f_\dt(a))=\pi(a),$ where $\pi: {\widetilde A}\to \C$ is the quotient map.
The surjectivity of $\Gamma$ in \ref{TRobert1} implies that
there is $c\in ({\widetilde A}\otimes {\cal K})_+$ such that $\pi(c )=\pi(a),$  
\beq
d_\tau(c)=f(\tau) \andeqn \la f_{\ep/8}(a)\ra\ll \la c\ra\ll \la f_{\dt}(a)\ra.
\eneq
It remains to show that we can find $e$ with $\la e\ra =\la c\ra$ but also satisfies \eqref{Lbt1act-1}.
For this we will use the same argument used in the proof of \ref{Lbt1ACT}.
Since ${\widetilde A}$ has stable rank one, the  {{proof may be completed as in}}
%argument of 
\ref{Lbt1ACT}.
% applies.

\end{proof}

We shall need  the following two lemmas.

\begin{lem}[Lemma 2.2 of \cite{BT}]\label{Lbt2}
Let $A$ and  $a\in ({\widetilde A}\otimes {\cal K})_+$  be as in \ref{Lbt1ACT-1}.  Then there exists
a sequence $(a_n)_{n=1}^\infty$ of elements  in $({\widetilde A}\otimes {\cal K})_+$  which satisfies the following:

{\rm (1)} $\la a\ra =\sup_n \la a_n \ra ;$

{\rm (2)} $a_n\in {\mathrm{M}}_{n(k)}({\widetilde A})$  for some $n(k)\in \N$ and $\la \pi(a_n)\ra =\la \pi(a)\ra,$
where $\pi: {\widetilde A}\to \C$ is the quotient map;

{\rm (3)} the function $\tau\mapsto \mathrm{d}_\tau(a_n)$ is continuous  on $\overline{{\mathrm{T}(A)}}^\mathrm{w}$ for each $n\in \N;$ and

{\rm (4)}  $\mathrm{d}_\tau(a_n)<\mathrm{d}_\tau(a_{n+1})$ for all $\tau\in \overline{{\mathrm{T}(A)}}^\mathrm{w}$ and $n\in \N.$

\end{lem}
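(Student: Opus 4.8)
The plan is to construct the sequence recursively, using Lemma \ref{Lbt1} to produce a \emph{strictly} increasing chain of continuous affine functions trapped between the dimension functions of successive cut-downs of $a$, and then to realize each such function by an honest positive element in a finite matrix algebra over $\widetilde A$ by the same mechanism as in the proof of \ref{Lbt1ACT-1}. Since $a$ is not Cuntz equivalent to a projection, $0$ is a limit point of the spectrum of $a$, so \ref{Lbt1} is applicable to the cut-downs of $a$, which (by the last statement of \ref{Lalmstr1}, exactly as in \ref{Lbt1ACT-1}) we may assume to lie in some $\mathrm{M}_{n(k)}(\widetilde A)$.

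First I would choose inductively a strictly decreasing sequence $\ep_n\searrow 0$, together with $0<\dt_n<\ep_n$ and continuous affine functions $g_n\in \Aff_{0+}(\mathrm{T}_1(A))$ such that
\[
\mathrm{d}_\tau(f_{\ep_n}(a))<g_n(\tau)<\mathrm{d}_\tau(f_{\dt_n}(a))\rforal \tau\in\overline{\mathrm{T}(A)}^\mathrm{w}.
\]
At the inductive step I set $\ep_{n+1}=\min\{\dt_n,\,1/(n+1)\}$ before invoking \ref{Lbt1} again. Since $\ep_{n+1}\le \dt_n$ gives $f_{\dt_n}(a)\le f_{\ep_{n+1}}(a)$, the strict inequalities chain together as
\[
g_n(\tau)<\mathrm{d}_\tau(f_{\dt_n}(a))\le \mathrm{d}_\tau(f_{\ep_{n+1}}(a))<g_{n+1}(\tau),
\]
so that $g_n<g_{n+1}$ strictly on $\overline{\mathrm{T}(A)}^\mathrm{w}$; and since $g_n$ is squeezed between $\mathrm{d}_\tau(f_{\ep_n}(a))$ and $\mathrm{d}_\tau(f_{\dt_n}(a))$ with $\ep_n,\dt_n\to 0$, the $g_n$ increase pointwise to $\mathrm{d}_\tau(a)$.

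Next I would realize each $g_n$. Because $\pi(f_{\ep_n}(a))=\pi(a)$ (a projection of rank $m$, as $f_s$ fixes projections for $s<1$), the surjectivity of the map $\Gamma$ from \ref{TRobert1} produces $c_n\in(\widetilde A\otimes\mathcal K)_+$ with $\pi(c_n)=\pi(a)$ and $\mathrm{d}_\tau(c_n)=g_n(\tau)$. The strict inequalities above, the equality of ranks $\la\pi(f_{\ep_n}(a))\ra=\la\pi(c_n)\ra=\la\pi(f_{\dt_n}(a))\ra=m$, and the comparison in \ref{TRobert1} then give $\la f_{\ep_n}(a)\ra\le \la c_n\ra\le \la f_{\dt_n}(a)\ra$. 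Since $\la f_{\dt_n}(a)\ra\ll\la a\ra$ and $\la a\ra=\sup_k\la p_kap_k\ra$ for the standard cut-downs $p_kap_k\in\mathrm{M}_k(\widetilde A)$, there is $k$ with $\la c_n\ra\le \la p_kap_k\ra$; applying \ref{Lalmstr1} (valid as $\widetilde A\otimes\mathcal K$ has stable rank one) yields $x_n$ with $x_n^*x_n=c_n$ and $a_n:=x_nx_n^*\in\mathrm{M}_k(\widetilde A)$, so that $\la a_n\ra=\la c_n\ra$. Thus $a_n$ lies in a matrix algebra over $\widetilde A$ with $\la\pi(a_n)\ra=\la\pi(a)\ra$ (giving (2)), and $\mathrm{d}_\tau(a_n)=g_n(\tau)$ is continuous (giving (3)), while $\mathrm{d}_\tau(a_n)=g_n(\tau)<g_{n+1}(\tau)=\mathrm{d}_\tau(a_{n+1})$ gives (4). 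Finally, (1) follows since $\la a_n\ra=\la c_n\ra$ is increasing with $\la a_n\ra\le\la f_{\dt_n}(a)\ra\le\la a\ra$, while $\la a_n\ra\ge\la f_{\ep_n}(a)\ra$ forces $\sup_n\la a_n\ra\ge\sup_n\la f_{\ep_n}(a)\ra=\la a\ra$.

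The step I expect to be the main obstacle is the realization: producing, for each prescribed continuous affine $g_n$, a genuine element of a finite matrix algebra over $\widetilde A$ that simultaneously has continuous dimension function equal to $g_n$, the correct rank $m$ at the quotient $\pi$, and sits Cuntz-below $a$ so that the supremum comes out exactly right. This is precisely where Robert's isomorphism $\Gamma$ (through its surjectivity) and the almost-stable-rank-one factorization of \ref{Lalmstr1} carry the weight, and it is the same apparatus already deployed in \ref{Lbt1ACT-1}. The genuinely new ingredient beyond \ref{Lbt1ACT-1} is the strict monotonicity (4), which is secured cheaply by the interleaving $\ep_{n+1}\le\dt_n$ together with the strict inequalities furnished by \ref{Lbt1}.
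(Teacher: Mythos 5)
Your proof is correct. The paper gives no argument for this lemma (it is quoted as Lemma 2.2 of \cite{BT}); your construction --- interleaved squeezes from Lemma \ref{Lbt1} (invoked, properly speaking, for $a$ itself, whose spectrum has $0$ as a limit point since $a$ is not Cuntz equivalent to a projection, which is how you in fact use it), realization of the $g_n$ by the surjectivity part of Theorem \ref{TRobert1} with the correct rank $m$ at the quotient, and passage into a matrix corner with the Cuntz class (hence the dimension function) preserved via Lemma \ref{Lalmstr1} --- is exactly the adaptation of the Brown--Toms argument that the paper itself deploys one statement earlier in the proof of Lemma \ref{Lbt1ACT-1}, with the interleaving $\ep_{n+1}\le\dt_n$ correctly supplying the strict monotonicity in (4).
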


\begin{lem}\label{LLbt}
Let $A$ be as in \ref{TRobert1}. Suppose that $a, b\in \mathrm{Ped}({\widetilde A}\otimes {\cal K})_+$
(with $0\le a \le 1$ and $0\le b\le 1$)  such that neither are Cuntz equivalent
to a projection.  Suppose that ${{\la a\ra}}\ll  {{\la b\ra }}.$
Then there exist $\dt>0$ and $c \in \mathrm{Ped}({\widetilde A}\otimes {\cal K})_+$ with $0\le c\le 1$ such that
\beq\label{LLbt-0}
\la a \ra \le \la f_{\dt}(c)\ra, \,\,\, f_{\dt/2}(c)\le f_{\dt/4}(b)\tand
\inf \{\tau(f_{\dt}(c))-\mathrm{d}_\tau(a): \tau\in \overline{{\mathrm{T}(A)}}^\mathrm{w}\}>0.
\eneq
\end{lem}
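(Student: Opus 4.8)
The plan is to use the hypothesis $\la a\ra\ll \la b\ra$ to locate a continuous affine function that sits, with a \emph{uniform} gap, above the dimension function of $a$ and below a cut-down of $b$; then to realize this function as the dimension function of a single element via Robert's theorem \ref{TRobert1}; and finally to transport that element into the hereditary subalgebra of a cut-down of $b$ by a unitary, as in \ref{Lalmstr1}. Throughout write $B={\widetilde A}\otimes {\cal K}$ and $K=\overline{{\mathrm{T}(A)}}^\mathrm{w}$, which is compact.

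First I would extract $\ep_0>0$ with $\la a\ra\le \la (b-\ep_0)_+\ra$: since $\la b\ra=\sup_n\la (b-1/n)_+\ra$ is the supremum of an increasing sequence and $\la a\ra\ll\la b\ra$, some $(b-1/n_0)_+=:(b-\ep_0)_+$ already dominates $a$. Because $b$ is not Cuntz equivalent to a projection, $0$ is a limit point of its spectrum, so \ref{Lbt1} (applied, after the reduction in the last statement of \ref{Lalmstr1}, to a cut-down of $b$ lying in some $\mathrm{M}_n({\widetilde A})$) provides $0<\dt_0<\ep_0/2$ and a continuous affine $g$ with $\mathrm{d}_\tau((b-\ep_0/2)_+)<g(\tau)<\mathrm{d}_\tau((b-\dt_0)_+)$ for all $\tau\in K$. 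The key point is that $g$ then dominates $\mathrm{d}_\tau(a)$ with a uniform gap: since $f_{\ep_0}(b)\ge \chi_{(\ep_0,\infty)}(b)$ one has $\tau(f_{\ep_0}(b))\ge \mathrm{d}_\tau((b-\ep_0)_+)\ge \mathrm{d}_\tau(a)$, while $f_{\ep_0}(b)\le \chi_{(\ep_0/2,\infty)}(b)$ gives $\tau(f_{\ep_0}(b))\le \mathrm{d}_\tau((b-\ep_0/2)_+)<g(\tau)$. As $g$ and $\tau\mapsto\tau(f_{\ep_0}(b))$ are both \emph{continuous} and their difference is pointwise strictly positive on the compact set $K$, the number $\beta:=\inf_{\tau\in K}\big(g(\tau)-\tau(f_{\ep_0}(b))\big)$ is strictly positive, whence $g(\tau)\ge \mathrm{d}_\tau(a)+\beta$ on $K$. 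This detour through the continuous function $\tau\mapsto\tau(f_{\ep_0}(b))$ is what repairs the fact that $\mathrm{d}_\tau(a)$ is merely lower semicontinuous (so that pointwise strict domination by $g$ would not by itself force a positive infimum).

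Next I would invoke the surjectivity in \ref{TRobert1} to produce $c_1\in \mathrm{Ped}(B)_+$ with $0\le c_1\le 1$ and $\mathrm{d}_\tau(c_1)=g(\tau)$ on $K$, choosing the rank of $\pi(c_1)$ compatibly with that of $\pi(a)$. Since $\mathrm{d}_\tau(f_{\dt/4}(c_1))\le \mathrm{d}_\tau(c_1)=g(\tau)<\mathrm{d}_\tau((b-\dt_0)_+)\le \mathrm{d}_\tau((b-\dt/4)_+)$ whenever $\dt\le 4\dt_0$, Theorem \ref{TRobert1} yields $f_{\dt/4}(c_1)\lesssim (b-\dt/4)_+$. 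I now fix $\dt$ so small that $\dt\le 4\dt_0$ and, using that $\tau(f_\dt(c_1))\nearrow g(\tau)=\mathrm{d}_\tau(c_1)$ is an increasing net of continuous functions with continuous limit on the compact $K$ (Dini), also $\tau(f_\dt(c_1))>g(\tau)-\beta/2\ge \mathrm{d}_\tau(a)+\beta/2$ on $K$. Applying the third statement of \ref{Lalmstr1} to $a_1=f_{\dt/2}(c_1)$, $a_2=f_{\dt/4}(c_1)$ (note $a_1a_2=a_1$) and $b'=(b-\dt/4)_+$, I obtain a unitary $u\in {\widetilde B}$ with $u^*f_{\dt/2}(c_1)u\in \overline{(b-\dt/4)_+B(b-\dt/4)_+}$, and I set $c=u^*c_1u\in \mathrm{Ped}(B)_+$.

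Finally I would verify the three conclusions. For the middle one, $f_{\dt/2}(c)=u^*f_{\dt/2}(c_1)u$ lies in $\overline{(b-\dt/4)_+B(b-\dt/4)_+}$ and has norm at most one, so it is dominated by the support projection $\chi_{(\dt/4,\infty)}(b)\le f_{\dt/4}(b)$; hence $f_{\dt/2}(c)\le f_{\dt/4}(b)$. For the gap, unitary invariance of the trace gives $\tau(f_\dt(c))=\tau(f_\dt(c_1))>\mathrm{d}_\tau(a)+\beta/2$, so $\inf_{\tau\in K}\big(\tau(f_\dt(c))-\mathrm{d}_\tau(a)\big)\ge \beta/2>0$. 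The first conclusion then follows formally, since $\mathrm{d}_\tau(a)<\tau(f_\dt(c))\le \mathrm{d}_\tau(f_\dt(c))$ on $K$ and $a$ is not Cuntz equivalent to a projection, so the ``furthermore'' clause of \ref{TRobert1} upgrades this strict inequality of dimension functions to $\la a\ra\le \la f_\dt(c)\ra$. I expect the main obstacle to be the bookkeeping of the scalar (that is, $\mathrm{K}_0$/$\pi$-rank) corrections in every application of \ref{TRobert1}: because $a$, $b$, and $c_1$ all have nonzero image under $\pi$, the comparisons $f_{\dt/4}(c_1)\lesssim (b-\dt/4)_+$ and $\la a\ra\le \la f_\dt(c)\ra$ are not pure dimension-function comparisons, and one must arrange the ranks of $\pi(c_1)$ so that the integers $m,k$ appearing in \ref{TRobert1} cancel or are absorbed by the uniform gap $\beta$. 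The passage to the genuine operator inequality $f_{\dt/2}(c)\le f_{\dt/4}(b)$ (as opposed to a mere Cuntz subequivalence) is the other delicate point, and it is precisely this that forces the use of the ``$a_1a_2=a_1$'' form of \ref{Lalmstr1} together with the observation that $f_{\dt/4}(b)$ acts as a unit on $\overline{(b-\dt/4)_+B(b-\dt/4)_+}$.
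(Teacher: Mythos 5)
Your overall strategy is viable and in fact close to the paper's own: the paper invokes \ref{Lbt2} (which is itself built from \ref{Lbt1} together with the surjectivity in \ref{TRobert1}) to produce a sequence $(b_n)$ with \emph{continuous} dimension functions increasing to $\la b\ra$ and with $\la \pi(b_n)\ra=\la\pi(b)\ra$, takes $c=b_{n_0+1}$, and then uses stable rank one to arrange $f_{\dt/2}(c)\le f_{\dt/4}(b)$. Your detour through the continuous function $\tau\mapsto\tau(f_{\ep_0}(b))$ to manufacture a uniform gap above the merely lower semicontinuous $\mathrm{d}_\tau(a)$ is correct, and plays exactly the role that continuity of $\tau\mapsto \mathrm{d}_\tau(b_{n_0})$ (property (3) of \ref{Lbt2}) plays in the paper; your support-projection argument that an element of norm at most one in $\overline{(b-\dt/4)_+B(b-\dt/4)_+}$ is dominated by $f_{\dt/4}(b)$ is also fine.

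The genuine gap is the very point you flag and then leave unresolved: the integer corrections in \ref{TRobert1} are not absorbable bookkeeping, and the route as you wrote it fails when they do not cancel. Write $k_a=\la\pi(a)\ra$ and $k_b=\la\pi(b)\ra$; from $\la a\ra\le\la (b-\ep_0)_+\ra$ one only gets $k_a\le k_b$, and $k_a<k_b$ is perfectly possible. To run the comparison $f_{\dt/4}(c_1)\lesssim (b-\dt/4)_+$ through \ref{TRobert1} you are forced to take $\la\pi(c_1)\ra=k_b$, so that the integers attached to $c_1$ and to the cut-downs of $b$ cancel. But then your final step---the ``furthermore'' clause of \ref{TRobert1} applied to the pair $\bigl(a, f_\dt(c)\bigr)$---requires $\mathrm{d}_\tau(a)+k_b\le \mathrm{d}_\tau(f_\dt(c))+k_a$, i.e., your gap must absorb the positive \emph{integer} $k_b-k_a$, which $\beta/2$ (an uncontrolled positive real, typically less than $1$) cannot do; choosing $\la\pi(c_1)\ra=k_a$ instead breaks the comparison with $b$. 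The repair is to never compare dimension functions against $a$ at all: obtain the first conclusion as $\la a\ra\le \la (b-\ep_0)_+\ra\le\la f_\dt(c)\ra$, where the first inequality is the compact containment (no traces, no ranks) and the second is a \ref{TRobert1}-comparison between two elements both of $\pi$-rank $k_b$, using your uniform gap $\beta$ (note $\mathrm{d}_\tau((b-\ep_0)_+)\le\tau(f_{\ep_0}(b))\le g(\tau)-\beta$, so the gap survives this substitution). This restructuring is precisely what the paper's appeal to \ref{Lbt2} buys automatically: there the only comparison involving $a$, namely $\la a\ra\le\la b_{n_0}\ra$, comes from compact containment, and every subsequent application of \ref{TRobert1} is between elements sharing the $\pi$-rank of $b$. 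A smaller point in the same vein: the ``furthermore'' clause also requires one of the two classes not to be represented by a projection, which is not automatic for $f_\dt(c)$ or for cut-downs of $b$, so either check it or use the ``moreover'' clause together with weak cancellation (4.3 of \cite{RW}), as the paper implicitly does.
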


\begin{proof}
By \ref{Lbt2}, choose $b_n\in ({\widetilde A}\otimes {\cal K})_+$ such that $(b_n)$ satisfies {\rm (1)},
{\rm (2)},  {\rm (3)},  and {\rm (4)}
in \ref{Lbt2}.  Since ${{\la}} a{{\ra}}\ll {{\la}} b{{\ra}},$ there is $n_0\ge 1$ such that
$\la a\ra \le {{\la}} b_n{{\ra}}$ for all $n\ge n_0.$
Therefore we have
\beq\label{LLbt-1}
\mathrm{d}_\tau(a)\le \mathrm{d}_{\tau}(b_{n_0})<\mathrm{d}_\tau(b_{n_0+1})<\mathrm{d}_\tau(b_{n_0+2})<\mathrm{d}_\tau(b_{n_0+3})\le \mathrm{d}_\tau(b).
\eneq
Note that
$$
\tau(f_{1/n}(b))\nearrow \mathrm{d}_\tau(b)\andeqn \tau(f_{1/n}(b_{n_0+1}))\nearrow \mathrm{d}_\tau(b_{n_0+1}) .
$$
It follows, for example,  from 5.4 of \cite{Lnloc} that there exists $n_1\ge 1$ such that,
for all $n\ge n_1,$
$$
\tau(f_{1/n}(b))>\mathrm{d}_\tau(b_{n_0+2})\andeqn
\tau(f_{1/n}(b_{n_0+1}))>\mathrm{d}_{\tau}(b_{n_0}))\rforal \tau\in \overline{{\mathrm{T}(A)}}^\mathrm{w}.
$$
Note that $\pi( f_{1/2n}(b))=\pi(b)$ and
$\la \pi(b_n)\ra =\la \pi(b)\ra.$ By \ref{TRobert1},
%the strict comparison of positive element,
we conclude that
$$
\la f_{1/2n}(b)\ra \ge \la b_{n_0+2} \ra \andeqn  \la f_{1/2n}(b_{n_0+1})\ra \ge \la b_{n_0} \ra .
$$
Put $c=b_{0+1}.$
Since $A$ has stable rank one, one may assume  that
$f_{1/2n}(c)\le f_{1/4n}(b).$
Thus we may choose $0<\dt<1/2n_1.$

Since $\overline{{\mathrm{T}(A)}}^\mathrm{w}$ is compact and both
functions in the above inequality are continuous, together with \eqref{LLbt-1}, we obtain
\vspace{-0.12in} $$
\inf\{\tau(f_\dt(b))-\mathrm{d}_\tau(a): \tau\in \overline{{\mathrm{T}(A)}}^\mathrm{w}\}>0.
$$

\end{proof}

In what follows, ${\cal C}^{(1)}$ is the collection of
all  \CA s which are inductive limits of full hereditary \SCA s of  1-dimensional  non-commutative CW complexes with
trivial ${\rm K}_1$ groups whose connecting maps are injective.

%\iffalse
\begin{df}\label{DPropertyR}
 
Fix a \CA\, $C\in {\cal C}^{(1)}.$ A \CA\, $A$ is said to have the property (R) associated with 
$C,$ if the following condition holds:
For any finite subset ${\cal F}\subset C$ and $\ep>0$ there exists a finite subset 
$G\subset {\rm Cu}^\sim (C)$ such that for any two \hm s $\phi, \psi: C\to A,$  if 
\beq
{\rm Cu}^\sim(\phi)(g')\le {\rm Cu}^{\sim}(\psi)(g)\tand 
{\rm Cu}^\sim(\psi)(g')\le  {\rm Cu}^\sim(\phi)(g)
\eneq
for all $g',g\in G$ with $g' \ll g,$ 
then  there exists a  unitary $u\in {\widetilde A}$  such that
\beq
\|u^*\phi(f)u-\psi(f)\|<\ep\tforal f\in {\cal F}.
\eneq
This definition is taken from  3.3.1 of \cite{Rl} and we adapt  the notation from there. 
Note, by 3.3.1 of \cite{Rl}, every \CA\, with stable rank one  has the property (R)  associated with $C.$
\end{df}

\begin{thm}{\rm (See Theorem 3.3.1 of \cite{Rl}, Theorem 5.2.7 of \cite{Lncbms}, and Theorem 8.4 of \cite{GLN})}\label{Lrluniq}
Let $C$ be in ${\cal C}^{(1)}$ and assume
%a full hereditary \SCA\,  of
%an inductive limit of
%1-dimensional non-commutative complices with $K_1(C)=\{0\}.$
%or inductive limit of these \CA s.
%Assume
that $\mathrm{Ped}(C)=C$
%$P(C)=C$ and
%$0\not\in \overline{T(C)}^\mathrm{w}$
%a 1-dimensional non-commutative complex with $K_1(C)=\{0\},$
and  let
$\Delta: C^{q,{\bf 1}}\setminus \{0\}\to (0,1)$ be an order preserving map.
%and
%$A$ be as in \ref{Lbt1}.
Then, for any $\ep>0$ and any finite subset
${\cal F}\subset C,$ there exist a finite subset ${\cal G}\subset C,$ a finite subset ${\cal P}\subset {\rm K}_0(C),$
a finite subset ${\cal H}_1\subset C_+^{\bf 1}\setminus \{0\},$ a finite subset ${\cal H}_2\subset C_{\rm{s.a.}},$
$\dt>0,$ and $\gamma>0$  satisfying the following condition:
for any two ${\cal G}$-$\dt$-multiplicative \morp s $\phi_1, \phi_2: C\to A$ for some $A$
which is $\sigma$-unital, simple,  stably projectionless, 
%has the strict comparison for positive elements, 
has stable rank one,
 %is quasi-compact, and which satisfies the property that
%every hereditary \SCA\, of $A\otimes {\cal K}$
$\mathrm{QT}(A)=\mathrm{T}(A),$ and the property that the map $\mathrm{Cu}_+(A)\to {\rm LAff}_{0+}({\overline{\rm{T}(A)}^{\rm w}})$ is an ordered semigroup isomorphism,
and $\mathrm{Ped}(A)=A$ 
%as well as $A$ has the property (R) associated with $C,$   
such that
%as in \ref{Lbt1} such that
\beq\label{Lrluniq-1}
&&[\phi_1]|_{\cal P}=[\phi_2]|_{\cal P},\\
&&\tau(\phi_i)(a)\ge \Delta(\hat{a})\tforal a\in {\cal H}_1\tand \hspace{-0.1in}\tforal \tau\in \overline{{\mathrm{T}(A)}}^\mathrm{w},\tand\\
&&|\tau(\phi_1(b))-\tau(\phi_2(b))|<\gamma\tforal b\in {\cal H}_2\tand \tforal \tau\in \overline{{\mathrm{T}(A)}}^\mathrm{w},
\eneq
there exists a unitary $u\in {\widetilde A}$ such that
$$
\|u^*\phi_2(f)u-\phi_1(f)\|<\ep\tforal f\in {\cal F}.
$$

\end{thm}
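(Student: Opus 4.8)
The plan is to deduce the statement from Robert's uniqueness theorem in the form recorded as property (R) in Definition \ref{DPropertyR} (which, by 3.3.1 of \cite{Rl}, holds for every \CA\, of stable rank one, and in particular for our $A$), after two preparatory moves: replacing the approximately multiplicative maps $\phi_1,\phi_2$ by honest \hm s on a building block, and translating the $K$-theoretic and tracial hypotheses into the $\mathrm{Cu}^\sim$-comparison demanded by (R). First I would use that $C$ is an inductive limit of full hereditary \SCA s of one-dimensional NCCW complexes to find a single building block $C_0$ (a full hereditary \SCA\, of some algebra in ${\cal C}$) together with elements approximating ${\cal F}$ to within, say, $\ep/16$. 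Since the NCCW complexes are semiprojective by \ref{str=1}(3), Lemma \ref{103L} applies to $C_0$: choosing ${\cal G}$ large and $\dt$ small enough, it produces genuine \hm s $h_1,h_2\colon C_0\to A\otimes {\cal K}$ with $\|h_i(f)-\phi_i(f)\otimes e_{1,1}\|<\ep/8$ on the approximating copy of ${\cal F}.$ The lower bound $\tau(\phi_i(a))\ge \Delta(\hat a)$ on ${\cal H}_1,$ via Theorem \ref{Tqcfull} (applicable to $A$ by Remark \ref{Rqcfull}), guarantees that the $\phi_i,$ hence the $h_i,$ are full, so after a unitary conjugation the images sit in a corner that we may treat as a \SCA\, of $A.$

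Next I would invoke property (R) for $C_0,$ the tolerance $\ep/2,$ and the approximating finite set, to obtain a finite subset $G\subset {\rm Cu}^\sim(C_0).$ The task then becomes: arrange ${\cal P},{\cal H}_1,{\cal H}_2,\gamma$ so that for all $g'\ll g$ in $G$ one has ${\rm Cu}^\sim(h_1)(g')\le {\rm Cu}^\sim(h_2)(g)$ and ${\rm Cu}^\sim(h_2)(g')\le {\rm Cu}^\sim(h_1)(g).$ Each element $g\in G$ splits into a purely continuous part, measured by the rank function $\tau\mapsto {\rm d}_\tau,$ and a $K_0$-part detected by the quotient map $\pi\colon {\widetilde{C_0}}\to \C.$ The hypothesis $[\phi_1]|_{\cal P}=[\phi_2]|_{\cal P}$ forces the induced $\pi$-ranks (the integers $k,m$ in Theorem \ref{TRobert1}) to agree; the hypothesis $|\tau(\phi_1(b))-\tau(\phi_2(b))|<\gamma$ on ${\cal H}_2$ makes the relevant rank functions on $\overline{{\mathrm{T}(A)}}^{\mathrm{w}}$ uniformly close; and the fullness gap from $\Delta$ on ${\cal H}_1,$ together with compactness of $\overline{{\mathrm{T}(A)}}^{\mathrm{w}}$ (see \ref{compactrace}), upgrades each relation $g'\ll g$ to a strict tracial inequality with a uniform margin after applying the maps. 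Feeding these strict inequalities and the matching $\pi$-data into Theorem \ref{TRobert1}, whose standing hypotheses on $A$ are exactly those assumed here (simple, stably projectionless, stable rank one, ${\rm QT}(A)={\rm T}(A),$ $\mathrm{Ped}(A)=A,$ and ${\rm Cu}_+(A)\cong {\rm LAff}_{0+}$), yields precisely the two ${\rm Cu}^\sim$-comparisons required.

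With the comparison hypotheses of (R) verified, property (R) furnishes a unitary $u\in {\widetilde A}$ with $\|u^*h_2(f)u-h_1(f)\|<\ep/2$ on the approximating set; combining this with the norm estimates from the first paragraph gives $\|u^*\phi_2(f)u-\phi_1(f)\|<\ep$ for all $f\in {\cal F},$ as desired. The quantifier order works out because $G$ is produced from $C_0$ and ${\cal F}$ alone, before $A$ or the $\phi_i$ enter, so ${\cal P},{\cal H}_1,{\cal H}_2,\gamma,{\cal G},\dt$ can all be fixed in advance.

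The main obstacle I expect is the bookkeeping of the second paragraph: choosing a single finite ${\cal P}$ and finite ${\cal H}_1,{\cal H}_2$ with one uniform $\gamma$ that simultaneously controls every comparison $g'\ll g$ in $G.$ The difficulty is genuine because ${\rm Cu}^\sim(C_0)$ mixes continuous rank data with $K_0$-classes written as formal differences involving $\la 1_{\widetilde{C_0}}\ra,$ so the elements $g$ whose class $\la \pi(a)\ra$ is a nonzero projection must be treated via the last two (projection-twisted) comparison statements of \ref{TRobert1} rather than the purely continuous one. Ensuring that the margin supplied by $\Delta$ survives passage through the $\ll$-relation uniformly over the compact set $\overline{{\mathrm{T}(A)}}^{\mathrm{w}},$ and that the same $\gamma$ suffices for all finitely many elements of $G$ at once, is the technical heart of the argument.
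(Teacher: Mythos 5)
Your skeleton coincides with the paper's own proof: reduce to a single building block of ${\cal C}^{(1)},$ lift $\phi_1,\phi_2$ to genuine \hm s via semiprojectivity and Lemma \ref{103L}, convert the hypotheses into ${\rm Cu}^\sim$-comparisons fed to Theorem \ref{TRobert1}, and close with property (R) of Definition \ref{DPropertyR}. But the step you yourself flag as ``the technical heart'' is left genuinely unresolved, and it is not mere bookkeeping: one needs a concrete interpolation device to convert each relation $g'\ll g$ in $G$ into conditions expressible through finite sets ${\cal H}_1,{\cal H}_2$ and a single $\gamma.$ The paper's device is Lemma \ref{LLbt}: for each compactly contained pair $\la a_{ij}\ra \ll \la a_{ji}\ra$ it produces $\eta_{i,j}>0$ and an element $c_{i,j}$ with $\la a_{ij}\ra \le \la f_{\eta_{i,j}}(c_{i,j})\ra$ and $f_{\eta_{i,j}/2}(c_{i,j})\le f_{\eta_{i,j}/4}(a_{ji}),$ with room to spare; it then chooses nonzero $b_{i,j}\lesssim f_{\eta_{i,j}/4}(a_{ji})-f_{\eta_{i,j}/2}(c_{i,j}),$ puts the $b_{i,j}$ into ${\cal H}_1$ (so that $\Delta$ supplies the uniform margin $\dt_0$) and the cut-downs $f_{\eta_{i,j}}(c_{i,j}),$ $f_{\eta_{i,j}/2}(c_{i,j}),$ $f_{\eta_{i,j}/4}(a_{ji})$ into the sets controlled by ${\cal H}_2$ and $\gamma.$ Only with these elements in hand does the chain $\mathrm{d}_\tau(\psi_1(a_{ij}))\le \tau(\psi_1(f_{\eta_{i,j}}(c_{i,j})))<\dt_0/2+\tau(\psi_2(f_{\eta_{i,j}}(c_{i,j})))\le \mathrm{d}_\tau(\psi_2(a_{ji}))$ go through. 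Your appeal to ``compactness plus the fullness gap from $\Delta$'' cannot work directly, because $\Delta$ only constrains traces of elements placed in ${\cal H}_1$ \emph{in advance}; without the $b_{i,j}$ nothing in ${\cal H}_1$ sees the margin of the relation $g'\ll g.$ (Your invocation of Theorem \ref{Tqcfull} is also a detour: fullness of the $\phi_i$ is neither needed for, nor sufficient to produce, the required comparisons.)

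The second gap is the location of the final unitary. The \hm s produced by Lemma \ref{103L} map into $A\otimes {\cal K},$ so property (R) yields a unitary $v\in (A\otimes {\cal K})^\sim,$ not in ${\widetilde A},$ and your suggestion to first conjugate the images into a corner does not repair this: such a conjugation is itself implemented by a unitary of $(A\otimes {\cal K})^\sim,$ so the composite still need not lie in ${\widetilde A}.$ The paper closes this gap with an argument absent from your proposal: choose $e_1,e_2\in A$ acting almost as units on $\phi_i({\cal F}),$ set $y=e_1v^*e_2$ and $x=y^*y,$ take the polar decomposition $y=Wx^{1/2},$ and use Theorem 5 of \cite{Pedjot87} (via almost stable rank one) to replace $W$ by a unitary $u\in {\widetilde A}$ agreeing with it on $f_{\ep/16}(x^{1/2});$ the careful initial choice of $\ep_0$ in the paper's proof exists precisely so this last perturbation lands within $\ep.$ Without both the \ref{LLbt} interpolation and this descent step, the proof does not close.
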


\begin{proof}
%In the terminology of \cite{Rl},  as proved in
%\cite{Rl}, ${\mathrm{Cu}}^{\sim}$ classifies \hm s from $C.$
%Upon examining the proofs of \cite{Rl}, one sees that Lemma 3.3.1
%and Theorem 1.0.1 of \cite{Rl} hold for $B$ almost having stable rank one
%instead of having stable rank one as in \ref{Lalmstr1}.
We will use 3.3.1 of \cite{Rl}.

%\ref{LRobert}.
%{Lamostr1}.
Let  $\ep>0$ be  given. There exists $e_0$ with
 $\ep/16>\ep_0>0$  satisfying the following condition: 
 In any \CA,  if $0\le x\le 1$ is an element in the \CA\,  and 
$\|xg-g\|<\ep_0$ and
$\|gx-g\|<\ep_0$ for any $\|g\|\le 1$ in the \CA,   then
\beq
\|x^{1/2}g-gx^{1/2}\|<\ep/64.
\eneq

Let us first assume that $C$ is a single  full hereditary \SCA\, of a 1-dimensional non-commutative CW complex.
Fix $\ep_0>0$  as above and ${\cal F}\subset C.$
Let $G\subset {\mathrm{Cu}}^{\sim}(C)$ be as required by
% Theorem 3.3.1 of \cite{Rl} 
Property (R) associated with $C$  for
$\ep_0/16$ (in place of $\ep$) and ${\cal F}.$  \Wlog, we may assume
that ${\cal F}$ is contained in the unit ball of $A.$

Recalling that $C$ has stable rank one, as  shown in \cite{Rl}, we may assume
that $G$ consists of a finite subset ${\cal P}\subset {\rm K}_0(C)$
and a finite subset $\{[a_1]-k_1[1_{\widetilde A}], [a_2]-k_2[1_{\widetilde C}],...,k_m[1_{\widetilde C}]\}$ {{of the}}
Cuntz semigroup of ${\rm{Cu}}^\sim (C)$
such that $[a_i]$  can be represented by positive elements
$0\le a_i\le 1$ in ${\widetilde C}\otimes {\cal K}$ which are  not Cuntz equivalent to a projection, and
$k_i$ are non-negative integers,
$i=1,2,...,m.$
%(see 3.1.2 of \cite{Rl}).
Write  ${\cal P}=\{z_1-k'_1[1_{\widetilde C}], z_2-k'_2[1_{\widetilde C}],..., z_{m_0}-k_{m_0}'[1_{\widetilde C}]\},$
where  {{the}} elements $z_i$ are represented by projections in ${\widetilde C}\otimes {\cal K}.$
Note here we assume that $\la \pi(a_i)\ra =k_i\la 1\ra $ and $[\pi_{*}(z_l)]=k_l'[1],$
where $\pi: {\widetilde C}\to \C$ is the quotient map, $i=1,2,...,m$ and $j=1,2,...,m_0.$
%Note if $a_i\le z_l$ (for some $1\le l\le m_0$), then $z_l\ge 0$ and
%is therefore represented by a projection.

Suppose that
%Suppose that
$\la a_i\ra  +k_j[1_{\widetilde C}]\ll \la a_j \ra +k_i[1_{\widetilde C}].$
For each of these pairs $i, j,$ put $a_{ij}=a_i\oplus 1_{{\mathrm{M}}_{k_j}}$ and $a_{ji}=a_j\oplus 1_{{\mathrm{M}}_{k_i}}.$
Then,  since ${\widetilde C}\otimes {\cal K}$ also has stable rank one,
%by \ref{LLbt},
there {{are}}  a number  $1/4 > \eta(i,j)>0$   and  an element
$0\le c_{i,j}\le 1$ in $({\widetilde C}\otimes {\cal K})_+$ such that
\beq\label{Lrluniq-3}
\la a_{ij} \ra \le  \la  f_{\eta_{i,j}}(c_{i,j})\ra\andeqn  f_{\eta_{i,j}/2}(c_{i,j})\le f_{\eta_{i,j}/4}(a_{ji}).
\eneq
%Similarly, if $\la a_i\ra \le z_l,$
%there is $1/4> \eta_{i,l}_0>0$  and $c_{i,l}_0\in A_+$ with
%$0\le c_{i,l}\le 1$ such
%that
%$$
%\la a_i\ra \le f_{\eta_{i,l}}(c_{i,l}_0)\andeqn \la _{\eta_{i,l}/2}(c_{i,l}_0)\le f_{\eta_{i,l}_0/2
%$$
Since $a_{ij}$ 
%($i=1,2,...,m$) 
is not Cuntz equivalent to a projection,
%(and assume that $x_i\not=x_j$ if $i\not=j$),
we may choose $\eta(i,j)$ so that
$$
f_{\eta_{i,j}/4}(a_{ji})-f_{\eta_{i,j}/2}(c_{i,j})\not=0.
$$
Choose  a finite subset ${\cal H}_1\subset C_+$
which contains  non-zero positive elements $b_{i,j}$ such that
$$
b_{i,j}\lesssim f_{\eta_{i,j}/4}(a_{ji})-f_{\eta_{i,j}/2}(c_{i,j})
$$
for all possible pairs of $i$ {{and}} $j$ such that $\la a_{ij}\ra \ll  \la a_{ji} \ra.$
%Moreover, if $a_i\ll  z_l,$

Let
\vspace{-0.12in} \beq\label{Lrluniq-4}
\dt_0=\inf \{ \Delta(\hat{g}): g\in {\cal H}_1\}.
\eneq
%Choose $\gamma=\dt_0/16.$

Choose  a finite subset ${\cal H}_2'$ of  ${{(}}{\widetilde C}\otimes {\cal K})_+$ which contains
$f_{\eta_{i,j}}(c_{i,j}), f_{\eta_{i,j}/2}(c_{i,j}),
f_{\eta_{i,j}/4}(a_{ji}) $ for all possible $i,j$ as described above.

Let the finite subset ${\cal H}_2\subset C_{s.a.}$ containing ${\cal H}_1$ and
$\dt_1>0$ be such that
%property:
\beq\label{Lrluniq-4+}
|\tau(h_1^\sim(g))-\tau(h_2^\sim(g))|<\dt_0/16\tforal g\in {\cal H}_1\cup {\cal H}_2'
\eneq
and for all $\tau\in \overline{\mathrm{T}(B)}^\mathrm{w},$
whenever   $h_1, h_2: C\to B$ are  \hm s {{with}} $B$ any \CA\,
with $\mathrm{T}(B)\not=\O$ and $0\not\in \overline{\mathrm{T}(B)}^\mathrm{w}$ such that
\beq\label{Lrluniq-4++}
|\tau\circ h_1(f)-\tau\circ h_2(f)|<\dt_1\rforal f\in {\cal H}_2\andeqn \tau\in \overline{\mathrm{T}(B)}^\mathrm{w},
\eneq
where $h_i^\sim: {\widetilde C}\to {\widetilde B}$ is the unital extension of $h_i,$ $i=1,2.$

Put $\gamma=\min\{\dt_0/16, \dt_1/4\}.$
%$a_i, f_{\eta_{i,j}}(c_{i,j}), f_{\eta_{i,j}/2}(c_{i,j}),
%f_{\eta_{i,j}/4}(a_i) $ for all possible $i,j.$
Since $1$-dimensional NCCW complexes are
%$C$ is
%weakly 
semiprojective (\cite{ELP1}),
by choosing a large ${\cal G}$ and small $\dt,$ by applying \ref{103L}, we may assume
that  there are \hm s $\psi_i: C\to A\otimes {\cal K}$ such that
\beq\label{Lrluniq-5}
(\psi_i)_{*0}|_{\cal P}=[\phi_i]|_{\cal P}\andeqn \|\psi_i(g)-\phi_i(g)\|<\min\{\ep_0/16, \gamma\},\,\,\,i=1,2,
\eneq
for all $g\in {\cal F}\cup{\cal H}_1\cup {\cal H}_2,$
where   $\phi_1$ and $\phi_2$ are ${\cal G}$-$\dt$-multiplicative \cpc s from $C$ to a \CA\, $A$
satisfying  the assumptions of the theorem.

Since $\mathrm{Ped}(C)=C,$ $\psi_i(C)\subset \mathrm{Ped}(A\otimes {\cal K}).$
%By replacing $\psi_i$ by its unitar
%are as in \ref{Lbt1}.

Assume that $\phi_1, \phi_2: C\to A$ have  the described properties for the above
defined ${\cal G},$ $\dt,$  ${\cal P},$ ${\cal H}_1, $ ${\cal H}_2,$ $\gamma.$

Let $\psi_i: C\to A$ be as provided in \eqref{Lrluniq-5},\,\,\, $i=1,2.$
Then
\beq\label{Lrluniq-6}
&&(\psi_1)_{*0}|_{\cal P}=(\psi_2)_{*0}|_{\cal P},\\
&&\tau\circ \psi_i(g)\ge \dt_0/2\rforal g\in {\cal H}_1, \andeqn\\
&&|\tau\circ \psi_1(b)-\tau\circ \psi_2(b)|<\dt_0/2\rforal b\in {\cal H}_2
\eneq
for all $\tau\in \overline{{\mathrm{T}(A)}}^\mathrm{w}.$
In particular,  if $\la a_{ij}\ra \ll \la a_{ji}\ra ,$  then, by the choice of ${\cal H}_1,$ ${\cal H}_2$, and $\gamma$ above,
\beq\label{Lrluniq-7}
\mathrm{d}_\tau(\psi_1(a_{ij})) &\le& \tau(\psi_1(f_{\eta_{i,j}}(c_{i,j})))<\dt_0/2+\tau(\psi_2(f_{\eta_{i,j}}(c_{i,j})))\\
&\le  &  (\tau(\psi_2(f_{\eta_{i,j}/4}(a_j))-\tau(\psi_2(f_{\eta_{i,j}}(c_{i,j}))))+\tau(\psi_2(f_{\eta_{i,j}}(c_{i,j})))\\
&\le & \mathrm{d}_{\tau}(a_{ji})
\eneq
for all $\tau \in \overline{{\mathrm{T}(A)}}^\mathrm{w}.$
Therefore, if $\la a_{ij}\ra \ll \la a_{ji}\ra ,$
\beq\label{Lruniq-8}
\la \psi_1(a_{ij})\ra \le \la \psi_2(a_{ji}) \ra.
\eneq
Note also, if $\la a_i\ra +k_l'\la 1_{\widetilde C}\ra  \ll z_l+k_i'\la 1_{\widetilde C}\ra ,$ then
$\la \psi_1(a_i)\ra +k_l[1_{\widetilde A}]\ll {\mathrm Cu}^{\sim}(\psi_1)(z_l+k_i'\la 1_{\widetilde C}\ra )={\mathrm{Cu}}^{\sim}(\psi_2)(z_l+k_i'\la 1_{\widetilde C}\ra).$
Combining these with \eqref{Lrluniq-6}, we conclude that, using the terminology of \cite{Rl},
\beq\label{Lruniq-9}
\mathrm{Cu}^{\sim}(\psi_1(g))\le \mathrm{Cu}^{\sim}(\psi_2(g')\andeqn \mathrm{Cu}^{\sim}(\psi_2(g))\le \mathrm{Cu}^{\sim}(\psi_1(g'))
\eneq
for all $g,\, g'\in G$ and $g\ll g'.$  Since $A$ has   the property (R) associated with $C,$ by 
the choice of $G,$ 
%3.3.1 of \cite{Rl},
%\ref{LRobert},
there exists a unitary
$v\in (A\otimes {\cal K})^\sim$ such that
$$
\|v \psi_2(f)v^*-\psi_1(f)\|<\ep_0/16\rforal f\in {\cal F}.
$$
From this and \eqref{Lrluniq-5}, we obtain that
$$
\|v\phi_2(f)v^*-\phi_1(f)\|<\ep_0/16+\ep_0/16\rforal f\in {\cal F}.
$$
Choose $0\le e_1, e_2\le 1$ in $A$ such that
\beq
\|\phi_i(f)e_i-\phi_i(f)\|<\ep_0/32\andeqn \|e_i\phi_i(f)-\phi_i(f)\|<\ep_0/32\rforal f\in {\cal F}.
\eneq
%Now, when $\ep>0$ is   given, there exists $e_0$ with
% $\ep/16>\ep_0>0$  satisfying the following condition: If $0\le x\le 1,$ and 
%$\|xg-g\|<\ep_0$ and
%$\|gx-g\|<\ep_0$ for any $\|g\|\le 1,$  then
%\beq
%\|x^{1/2}g-gx^{1/2}\|<\ep/64\andeqn
%\eneq
%Choose this $\ep_0$ at the beginning.

Put $y=e_1v^*e_2$ and $x=y^*y=e_2ve_1e_1v^*e_2.$
Then
\beq
&&\|\phi_2(f)x-\phi_2(f)\|=\|v^*v(\phi_2(f)x-\phi_2(f))\|\\
&&<\|v^*v\phi_2(f)y^*v-v^*v\phi_2(f)v^*v^*\|+\ep_0/32\\
&&<\ep_0/32+\|v^*(v\phi_2(f)v^*)e_2-v^*v\phi_2(f)v^*v\|+\ep_0/32<\ep_0/2
\eneq
for all $f\in {\cal F}.$ Similarly,
\beq
\|x\phi_2(f)-\phi_2(f)\|<\ep_0/2\rforal f\in {\cal F}.
\eneq
Consider  $y=W|y|=Wx^{1/2}$  the polar decomposition of $y$ in $A^{**}.$
Since $A$ almost has stable rank one, by Theorem 5 of \cite{Pedjot87}, there exists a unitary
$u\in {\widetilde A}$ such that $uf_{\ep/16}(x^{1/2})=Wf_{\ep/16}(x^{1/2}).$
By the choice of $\ep_0,$ we have
\beq
\|u^*\phi_2(f)u-\phi_1(f)\|<\ep\rforal f\in {\cal F}.
\eneq
(Note that if $C$ is a 1-dimensional non-commutative CW complex, then $\phi_i$ can be chosen
to map $C$ into $A$ so that $v$ can be chosen in ${\widetilde A}.$)

For the general case, given a finite subset ${\cal F}\subset C,$ we may assume
that ${\cal F}\subset C_n$ for some $C_n$ which is a full hereditary \SCA\, of a 1-dimensional non-commutative
CW complex.  Then the above argument applies.
\end{proof}

\iffalse
\begin{cor}\label{Cruniq}
The exactly the same statement holds for $C$ being replaced by
full hereditary \SCA s of inductive limits of \CA s in ${\cal C}.$
\end{cor}
\fi

\begin{prop}\label{CDdiag}
Let $C\in {\cal C}_0'$ and let $A$ be a {{$\sigma$-unital stably projectionless}} simple exact \CA\,  
%with strict comparison for positive elements,
with ${\rm K}_0(A)=\{0\},$   with stable rank one and with continuous scale.
Suppose that ${\mathrm{Cu}}(A)={\rm LAff}_{0+}({\overline{{\mathrm{T}(A)}}^{\rm w}}).$
%Suppose also that 
{{Let}} $\phi: C\to A$ {{be}}  a \hm. Then, for any $\ep>0,$  any finite subset
${\cal F}\subset C,$  and any integer $n\ge 1,$ there is another \hm\, $\phi_0: C\to B=B\otimes e_{11}\subset  {\mathrm{M}}_n(B)\subset A,$
where $B$ is a hereditary \SCA\, of $A,$
such that
\beq
\|\phi(x)-\diag(\overbrace{\phi_0(x), \phi_0(x),...,\phi_0(x)}^n)\|<\ep\tforal x\in {\cal F}.
\eneq
\end{prop}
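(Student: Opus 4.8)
The plan is to realise $\phi$ as an $n$-fold amplification of a single \hm\ by matching the Cuntz-semigroup and tracial data of the two maps and then invoking the uniqueness theorem \ref{Lrluniq}. \Wlog\ I take ${\cal F}\subset C^{\bf 1}$; since $C\in {\cal C}_0'$ is a full hereditary \SCA\ of a $1$-dimensional complex with $0\notin \overline{{\mathrm{T}(C)}}^\mathrm{w},$ it is compact, so $\mathrm{Ped}(C)=C$ by \ref{CherePA=A} and \ref{Tpedersen}. I may assume $\phi\not=0,$ and, replacing $C$ by $C/\ker\phi$ (whose spectrum is again a $1$-dimensional complex, so it stays in the class, and the amplification built below has the same kernel, making the reduction harmless), that $\phi$ is injective. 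Because $A$ has continuous scale it is algebraically simple, so $\mathrm{Ped}(A)=A$ and $\overline{{\mathrm{T}(A)}}^\mathrm{w}={\mathrm{T}}(A)$ is compact with $\mathrm{d}_\tau$ continuous; since $A$ is exact, $\mathrm{QT}(A)={\mathrm{T}}(A).$ Thus $A$ meets all the standing hypotheses of \ref{Lrluniq}.

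For the existence step, let $e_C\in C$ be a strictly positive element. Using ${\mathrm{Cu}}(A)={\rm LAff}_{0+}(\overline{{\mathrm{T}(A)}}^\mathrm{w})$ and ${\rm K}_0(A)=\{0\},$ I divide the functional data of $\phi$ by $n$: the assignment $\la c\ra\mapsto \tfrac1n\mathrm{d}_{(\cdot)}(\phi(c))$ is a morphism $\alpha$ in ${\bf Cu},$ since scaling an element of ${\rm LAff}_{0+}$ by $1/n$ preserves order, addition, suprema, and the relation $\ll.$ As $\tfrac1n\mathrm{d}_\tau(\phi(e_C))\le \mathrm{d}_\tau(\phi(e_C))\le \Sigma_A(\tau),$ this class is realised by some $b\in A_+;$ put $B=\overline{bAb},$ so that $\alpha$ is a morphism $\mathrm{Cu}^\sim(C)\to \mathrm{Cu}^\sim(B)$ with $\alpha(\la e_C\ra)=\la b\ra.$ Being a full hereditary \SCA\ of the simple algebra $A,$ $B$ inherits the hypotheses of Robert's existence theorem (Theorem 1.0.1 of \cite{Rl}), which yields a \hm\ $\phi_0\colon C\to B$ with $\mathrm{Cu}^\sim(\phi_0)=\alpha;$ note $\ker\phi_0=\ker\phi=\{0\}.$

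Next I amplify and check that all invariants agree. Since $\mathrm{d}_\tau(\phi(e_C))=n\,\mathrm{d}_\tau(b)\le\Sigma_A(\tau),$ strict comparison and stable rank one (via \ref{Lalmstr1}) allow me to place $n$ mutually orthogonal, mutually Cuntz-equivalent copies of $B$ inside $A,$ realising ${\mathrm{M}}_n(B)$ as a \SCA\ of $A$ with a matrix-unit system, and to form $\Phi=\diag(\overbrace{\phi_0,\dots,\phi_0}^n)\colon C\to {\mathrm{M}}_n(B)\subset A.$ By construction the rank function of $\Phi(c)$ is $n\cdot\tfrac1n\mathrm{d}_\tau(\phi(c))=\mathrm{d}_\tau(\phi(c)),$ so $\mathrm{Cu}^\sim(\Phi)=\mathrm{Cu}^\sim(\phi).$ Two further invariants match automatically: $\phi_*=\Phi_*=0$ on ${\rm K}_0(C)$ because ${\rm K}_0(A)=\{0\};$ and for every $\tau$ and every $x\in C_{\mathrm{s.a.}},$ writing $x=x_+-x_-$ and using $\tau(\phi(x_\pm))=\int_0^\infty \mathrm{d}_\tau(\phi((x_\pm-t)_+))\,dt$ (and likewise for $\Phi$) together with the equality of rank functions, one gets $\tau(\phi(x))=\tau(\Phi(x))$ exactly. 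Finally, as $\phi$ is injective and $A$ is simple with ${\mathrm{T}}(A)$ compact, $\Delta(\hat a):=\tfrac12\inf\{\tau(\phi(a)):\tau\in {\mathrm{T}}(A)\}$ defines an order-preserving map $C^{q,{\bf 1}}\setminus\{0\}\to(0,1),$ and by the trace equality both $\phi$ and $\Phi$ obey the $\Delta$-lower bound.

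With this $\Delta,$ I feed $\ep$ and ${\cal F}$ into \ref{Lrluniq} to obtain ${\cal G},\dt,{\cal P},{\cal H}_1,{\cal H}_2,\gamma.$ The \hm s $\phi,\Phi$ are ${\cal G}$-$\dt$-multiplicative for every ${\cal G},\dt,$ agree on ${\cal P},$ satisfy the $\Delta$-bound on ${\cal H}_1,$ and have equal (hence $\gamma$-close) traces on ${\cal H}_2;$ so \ref{Lrluniq} gives a unitary $u\in {\widetilde A}$ with $\|u^*\Phi(x)u-\phi(x)\|<\ep$ for $x\in {\cal F}.$ Since $A\triangleleft {\widetilde A},$ conjugation by $u$ carries $B,$ ${\mathrm{M}}_n(B)$ and the matrix units into $u^*Bu,$ $u^*{\mathrm{M}}_n(B)u\subset A$ and a new matrix-unit system; replacing $B$ by $u^*Bu$ and $\phi_0$ by $u^*\phi_0(\cdot)u\colon C\to u^*Bu$ rewrites $u^*\Phi u$ as $\diag(\phi_0,\dots,\phi_0)$ for the new $\phi_0,$ giving exactly the assertion. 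The substantive work is the existence-plus-amplification step, namely building $\phi_0$ with the scaled invariant and fitting $n$ orthogonal mutually equivalent copies so that $\mathrm{Cu}^\sim(\Phi)=\mathrm{Cu}^\sim(\phi)$ on the nose, together with securing the fullness input of \ref{Lrluniq} (which is what forces the reduction to injective $\phi$); the trace-matching and the absorption of $u$ are then routine.
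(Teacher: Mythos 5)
Your overall route coincides with the paper's: both scale the functional data of $\phi$ by $1/n$ inside $\mathrm{Cu}^\sim(A)\cong \mathrm{LAff}_+^\sim({\widetilde{\mathrm{T}}}(A))$ (using ${\rm K}_0(A)=\{0\}$ and \ref{TRobert1}), produce $\phi_0\colon C\to B$ into a hereditary \SCA\, $B$ by Robert's existence theorem, amplify to $\Phi=\diag(\phi_0,\dots,\phi_0)$ so that $\mathrm{Cu}^\sim(\Phi)=\mathrm{Cu}^\sim(\phi),$ and finish with a uniqueness theorem plus absorption of the unitary. The paper, however, closes with the uniqueness half of Theorem 1.0.1 of \cite{Rl} itself: equality of $\mathrm{Cu}^\sim$ already gives approximate unitary equivalence of $\phi$ and $\Phi$ in the stable rank one algebra $A,$ with no fullness or injectivity hypothesis whatsoever. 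You instead route the uniqueness through \ref{Lrluniq}, whose $\Delta$-fullness hypothesis on a finite set ${\cal H}_1$ chosen by that theorem forces you to assume $\phi$ injective, and it is exactly there that a genuine gap appears.

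Your reduction to injective $\phi$ rests on the claim that $C/\ker\phi$ ``has spectrum again a $1$-dimensional complex, so it stays in the class.'' This is false for general quotients: $\ker\phi$ corresponds to an open subset of the spectrum of $C,$ so $C/\ker\phi$ is the section algebra over an arbitrary closed subset, which need not be a finite CW complex (it may contain a Cantor set), and the quotient need not lie in ${\cal C}_0',$ in ${\cal C}',$ or even obviously in ${\cal C}^{(1)}$ (which is what \ref{Lrluniq} requires), nor need it satisfy the hypotheses of Robert's existence theorem. Concretely, the quotient of the Razak block $R(k,m,m+1)\in{\cal C}_0^0$ by the ideal $C_0((1/2,1),{\rm M}_{(m+1)k})$ is $\{(g,a)\in C([0,1/2],{\rm M}_{(m+1)k})\oplus {\rm M}_k: g(0)=\psi_0(a)\},$ which contains the nonzero projection $(t\mapsto \psi_0(1_k),\,1_k)$ and has ${\rm K}_0\cong\Z$ with nontrivial positive cone---far outside ${\cal C}_0'.$ It is true that in your situation more can be said: since $A$ is stably projectionless, $C/\ker\phi\cong\phi(C)\subset A$ is itself stably projectionless, and one can then argue that over each interval of the spectrum the relevant closed set consists of at most two subintervals attached to the endpoints (any other relatively clopen piece would produce a projection), so that the quotient is again a $1$-dimensional NCCW with trivial ${\rm K}_1.$ But this structure argument is nontrivial, uses the stable projectionlessness of the target in an essential and unacknowledged way, and appears nowhere in your proposal. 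The clean repair is to delete the reduction entirely and replace \ref{Lrluniq} by the uniqueness statement of Theorem 1.0.1 of \cite{Rl}, exactly as the paper does; your existence and amplification steps, the trace computation, and the final conjugation by $u$ then go through unchanged.
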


\begin{proof}
Fix a strictly positive element $e\in A_+$ with $\|e\|=1.$
{{We may assume that $A$ is an infinite dimensional.}}
There are  mutually orthogonal elements nonzero $e_1, e_2,...,e_n\in A_+$
such that $\la e_i\ra =\la e_1\ra$ in $\mathrm{Cu}(A)$ and
$\la \sum_{i=1}^n e_i \ra =\la e\ra$ {{(see also the proof of \ref{Pconscale}).}}
Let $B=\overline{e_1Ae_1}\subset A.$ 
Then, with $D:=\overline{(\sum_{i=1}^ne_i)A(\sum_{i=1}^n e_i{{)}}},$ we have 
$D\cong {\rm M}_n(B)\subset A.$
%There exist a \SCA\, $D\subset A$
%such  that $\mathrm{M}_n(B)\cong D.$ We write $D=\mathrm{M}_n(B).$ 
Note that $\mathrm{K}_0(A)=\{0\}.$
So $\mathrm{Cu}^\sim (A)=\mathrm{LAff}_+^\sim ({\widetilde{\mathrm{T}}}(A))$ (see \ref{TRobert1} above 
and 6.2.3 of \cite{Rl}).
Let $j: \mathrm{LAff}_+^\sim ({\widetilde{\mathrm{T}}}(A))\to \mathrm{LAff}_+^\sim ({\widetilde{\mathrm{T}}}(A))$
be defined by $j(f)=(1/n)f.$ Define
$\lambda: \mathrm{Cu}^\sim (C)\to \mathrm{Cu}^\sim (B)$  by
$\lambda=j\circ (\mathrm{Cu}^\sim(\phi).$
{{By Theorem 1.0.1 of}} \cite{Rl},  there exists a \hm\, $\phi_0': C\to B$ such that
$\mathrm{Cu}^{\sim}(\phi_0')=\lambda.$
Define $\psi: C\to \mathrm{M}_n(B)$ by
$\psi(a)=\diag(\overbrace{\phi_0'(a), \phi_0'(a),...,\phi_0'(a)}^n)$ for all $a\in C.$
Then $\mathrm{Cu}^{\sim}(\psi)=\mathrm{Cu}^{\sim}(\phi).$
It follows from {{Theorem 1.0.1 of}} \cite{Rl} that $\phi$ and $\psi$ are approximately unitarily equivalent, as desired.
%Lemma then follows.
\end{proof}

%%%%

%%%%%

%%%%%%%%

\section{Tracially one-dimensional complexes}

\begin{df}\label{DNtr1div}
Let $A$ be a simple \CA\,  with a strictly positive element $a\in A$
with $\|a\|=1.$  Suppose that there exists
$1> \mathfrak{f}_a>0,$ for any $\ep>0,$  any
finite subset ${\cal F}\subset A$ and any $b\in A_+\setminus \{0\},$  there are ${\cal F}$-$\ep$-multiplicative \cpc s $\phi: A\to A$ and  $\psi: A\to D,$  with $\phi(A)\perp D,$ i.e., $\phi(A)D=\{0\},$ for some
\SCA\, $D\subset A,$ such that
\beq\label{DNtr1div-1}
&&\|x-(\phi{{+}} \psi)(x)\|<\ep\rforal x\in {\cal F}\cup \{a\},\\\label{DNtrdiv-2}
&& D\in {{\cal C}_0^{0}}' ({\rm or}\,\, D\in {\cal C}_0'),\\\label{DNtrdiv-3}
&&\phi(a)\lesssim b,{{\tand}}\\\label{DNtrdiv-4}
&&t(f_{1/4}(\psi(a)))\ge \mathfrak{f}_a\rforal t\in {\mathrm{T}}(D).
%&& \|\psi\|=1,\\\label{Dtrdiv-4+}
%&&f_{1/4}(\psi(a))\,\,\,
%\psi(f_{1/4}(a_1)) \,\,\,
%{\rm is\,\,\, full\,\,\, in}\,\, D \andeqn
%\psi_D={\rm id}_D\andeqn
\eneq
%$\psi(a)$ is strictly positive in $D.$
  Then
we shall say $A\in {\cal D}_{0}$ (or $A\in {\cal D}$).

\end{df}

\begin{prop}\label{PD0=tad}
Let $A$ be a $\sigma$-unital simple \CA\, in ${\cal D}$ (${\cal D}_{0}$).
Then, in Definition \ref{DNtr1div}, we may further require 
that
$\|\psi(x)\|\ge (1-\ep)\|x\|$  for all $x\in {\cal F}$ and
that $\psi(a)$ {{be}}  strictly positive in $D$ (and so full 
%and {\red{hence}} is full 
in $D$).
Moreover, \eqref{DNtrdiv-3} may be replaced by
$c\lesssim b$ for some strictly positive element {{$c$}} of $\overline{\phi(A)A\phi(A)}.$
%$A$ is {\rm TA}${\cal S}$ (${\cal S}={\cal C}_0',$ ${\cal C}_0^{0'}$).
\end{prop}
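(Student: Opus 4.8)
The plan is to obtain the three strengthenings by post‑processing a single application of Definition \ref{DNtr1div}, and the replacement of \eqref{DNtrdiv-3} is in fact immediate. Since $\phi\colon A\to A$ is a \cpc\ (hence a contractive positive linear map) and $a$ is a strictly positive element of the $\sigma$-unital algebra $A$, Proposition \ref{180915sec2} (with $B=A$ and $e=a$) gives $\overline{\phi(a)A\phi(a)}=\overline{\phi(A)A\phi(A)}$. Thus $\phi(a)$ is itself a strictly positive element of $\overline{\phi(A)A\phi(A)}$, and \eqref{DNtrdiv-3}, namely $\phi(a)\lesssim b$, is literally the assertion that some strictly positive element $c$ of $\overline{\phi(A)A\phi(A)}$ satisfies $c\lesssim b$ (any two strictly positive elements being Cuntz equivalent).

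Next I would arrange that $\psi(a)$ be strictly positive in $D$. First, $\psi(a)$ is full in $D$: since $D\in{\cal C}_0'\subset{\cal C}'$, Proposition \ref{Pcom4C}(2) says an element of $D_+\setminus\{0\}$ is full in $D$ exactly when it has strictly positive value under every $t\in{\mathrm T}(D)$, and \eqref{DNtrdiv-4} gives $t(f_{1/4}(\psi(a)))\ge\mathfrak f_a>0$; so $f_{1/4}(\psi(a))$, and hence $\psi(a)$, is full in $D$. Fix $\eta$ with $0<\eta<1/8$, so that $f_\eta(\psi(a))\ge f_{1/4}(\psi(a))$ is also full. Put $\psi'(x)=f_\eta(\psi(a))^{1/2}\psi(x)f_\eta(\psi(a))^{1/2}$, a \cpc\ from $A$; then $\psi'(a)=g(\psi(a))$ with $g(t)=tf_\eta(t)$, which has the same support as $f_\eta(\psi(a))$ and is therefore full in $D$. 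Set $D''=\overline{\psi'(a)D\psi'(a)}$: fullness being transitive, $D''$ is a full hereditary \SCA\ of the same $E\in{\cal C}_0$ (resp. ${\cal C}_0^0$) of which $D$ is one, so $D''\in{\cal C}_0'$ (resp. ${{\cal C}_0^0}'$), and $\psi'(a)$ is strictly positive in $D''$ by construction. Since $f_\eta(\psi(a))^{1/2}$ has the same support as $\psi'(a)$, it lies in $D''$, whence $\psi'$ takes values in $D''$; also $\phi(A)\perp D''\subset D$. Choosing $\eta$ small enough that $\|f_\eta(a)^{1/2}xf_\eta(a)^{1/2}-x\|$ is tiny on ${\cal F}\cup\{a\}$ and requiring $\psi$ sufficiently multiplicative on $f_\eta(a)^{1/2}$ and the relevant products yields $\psi'(x)\approx\psi(x)$ on ${\cal F}\cup\{a\}$, so \eqref{DNtr1div-1} persists; and because $\eta<1/8$ one has $f_{1/4}(\psi'(a))=f_{1/4}(\psi(a))$, so \eqref{DNtrdiv-4} carries over (with the constant readjusted if necessary).

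The main work, and the principal obstacle, is the norm bound $\|\psi(x)\|\ge(1-\ep)\|x\|$ for $x\in{\cal F}$. Using $\|\psi(x)\|^2\approx\|\psi(x)^*\psi(x)\|\approx\|\psi(x^*x)\|$ it suffices to treat $x\in A_+$ with $\|x\|=1$. For such $x$ choose $g\in C_0((0,1])_+$ with $\|g\|=1$ supported near $1$, and set $h=g(x)$, so $\|h\|=1$, $h\ge0$ and $h^{1/2}xh^{1/2}\ge(1-\ep_0)h$ for a prescribed small $\ep_0$. Since $A$ is simple and $h\ne0$, $h$ is full, so by Lemma \ref{Lfullep} there are $z_1,\dots,z_m\in A$ with $\sum_i z_i^*hz_i=f_\eta(a)$. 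Two facts drive the estimate. First, \eqref{DNtrdiv-4} forces $f_{1/4}(\psi(a))\ne0$, hence $\|\psi(a)\|>1/8$ and (as $\eta<1/8$) $\|f_\eta(\psi(a))\|=1$; applying the approximately multiplicative $\psi$ to $\sum_i z_i^*hz_i=f_\eta(a)$ gives $f_\eta(\psi(a))\approx\sum_i\psi(z_i)^*\psi(h)\psi(z_i)$, so $1=\|f_\eta(\psi(a))\|\le m(\max_i\|z_i\|+1)^2\|\psi(h)\|+\text{(small)}$, whence $\|\psi(h)\|\ge\kappa>0$ with $\kappa$ depending only on the fullness data of the finitely many $h$'s, not on $\phi,\psi$. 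Second, complete positivity applied to $h^{1/2}xh^{1/2}\ge(1-\ep_0)h$ gives $\psi(h^{1/2}xh^{1/2})\ge(1-\ep_0)\psi(h)$, while approximate multiplicativity gives $\psi(h^{1/2}xh^{1/2})\approx\psi(h^{1/2})\psi(x)\psi(h^{1/2})\le\|\psi(x)\|\,\psi(h^{1/2})^2\approx\|\psi(x)\|\,\psi(h)$. Hence $(1-\ep_0)\psi(h)\le\|\psi(x)\|\psi(h)+\delta$ with $\|\delta\|$ small, and evaluating at a state of $\widetilde D$ nearly attaining $\|\psi(h)\|\ge\kappa$ yields $\|\psi(x)\|\ge(1-\ep_0)-\|\delta\|/\kappa$.

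Finally I would assemble everything. Enlarge ${\cal F}$ to contain $a$, the elements $h$, the $z_i$, and the products and functional‑calculus elements appearing above; fix $\ep_0<\ep/2$ and a single multiplicativity tolerance (a finite set ${\cal G}$ and $\dt>0$) making all the ``$\approx$'' accurate enough that $\|\delta\|/\kappa<\ep/2$ and that the compression perturbation is $<\ep$. Then apply Definition \ref{DNtr1div}, establish the norm bound for $\psi$ into $D$ first, and afterward perform the compression of the second paragraph; since $\psi'(x)\approx\psi(x)$ on ${\cal F}$, the norm bound survives (with $\ep$ enlarged slightly), and the resulting triple $\phi,\psi',D''$ satisfies \eqref{DNtr1div-1}–\eqref{DNtrdiv-4} together with all three required strengthenings. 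The delicate point throughout is the uniform lower bound $\kappa$ on $\|\psi(h)\|$: it is precisely here that the fullness of $h$ in the simple algebra $A$ and the uniform largeness \eqref{DNtrdiv-4} of $\psi(a)$ in $D$ are combined, and keeping $\kappa$ independent of the particular decomposition is what makes the choice of parameters possible.
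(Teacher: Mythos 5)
Your proposal is correct. For two of the three strengthenings it follows the paper's own proof essentially verbatim: the replacement of \eqref{DNtrdiv-3} is exactly the paper's appeal to Proposition \ref{180915sec2} (so that $\phi(a)$ is itself a strictly positive element of $\overline{\phi(A)A\phi(A)}$ and \eqref{DNtrdiv-3} already says what is wanted), and the strict positivity of $\psi(a)$ is obtained by the same cut-down the paper performs: the paper sets $D_n'=\overline{f_{\eta/2}(\psi_n(a))D_nf_{\eta/2}(\psi_n(a))}$ and $\psi_{n,0}(\cdot)=f_{\eta/2}(\psi_n(a))^{1/2}\psi_n(\cdot)f_{\eta/2}(\psi_n(a))^{1/2}$, which is your $D''$ and $\psi'$, including the use of \ref{Pcom4C} to convert \eqref{DNtrdiv-4} into fullness and the observation that applying $f_{1/4}$ to the compressed element changes nothing.

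Where you genuinely diverge is the norm bound $\|\psi(x)\|\ge(1-\ep)\|x\|$. The paper argues softly and sequentially: it extracts from Definition \ref{DNtr1div} sequences $(\phi_n,\psi_n)$ with vanishing defects, notes that \eqref{=tad-6} forces $\lim_n\|\psi_n\|\ge\mathfrak{f}_a>0$ (this is \eqref{=tad-8}), and then invokes simplicity of $A$ together with asymptotic multiplicativity \eqref{=tad-3} to conclude $\lim_n\|\psi_n(x)\|=\|x\|$ for every $x\in A$ (this is \eqref{=tad-9}); implicitly, the induced homomorphism into $\prod_n D_n/\bigoplus_n D_n$ is nonzero, hence injective by simplicity, hence isometric. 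Your argument is instead quantitative and lives inside a single application of the definition: you fix in advance, for each $x\in{\cal F}$, an element $h$ by functional calculus near the top of the spectrum of $x^*x/\|x\|^2$ and fullness witnesses $z_i$ from Lemma \ref{Lfullep} with $\sum_i z_i^*hz_i=f_\eta(a)$, then combine $\|f_\eta(\psi(a))\|=1$ (forced by \eqref{DNtrdiv-4}) with approximate multiplicativity and the Schwarz/order estimates to get an explicit lower bound $\|\psi(h)\|\ge\kappa$ independent of $\phi,\psi$, from which the bound on $\|\psi(x)\|$ follows. Both routes rest on the same two ingredients—simplicity of $A$ and the uniform trace bound \eqref{DNtrdiv-4}—but the paper uses simplicity abstractly and is much shorter, whereas you use it concretely through \ref{Lfullep} and obtain effective constants depending only on ${\cal F}$ and the fullness data; the cost is the tolerance bookkeeping you describe, which is handled correctly since all auxiliary elements are chosen before the multiplicativity set ${\cal G}$ and $\dt$ are fixed.
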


\begin{proof}
Fix a strictly positive element $a\in A$ with $\|a\|=1.$
%%\%{\blue{For the first part of the statement,}}
%%it suffices to show  that in  the definition, we may further assume that $\|\psi(x)\|\ge (1-\ep)\|x\|$ for all $x\in {\cal F}$ and
%%%%%that $\psi(a)$ is  a strictly positive element  of $D.$
%strictly positive in $D.$

%Let $r_0=\mathfrak{f}_{a}/2.$
Let $\ep>0,$ let ${\cal F}\subset A$ be a finite subset, and let $b_0\in A_+\setminus\{0\}$ be given.
\Wlog, we may assume that there is $1/16>\eta>0$ such
that
$$
f_{\eta}(a)x=xf_{\eta}(a)=x\rforal x\in {\cal F}.
$$
By hypothesis, there exist  a sequence
of algebras $D_n\in {{{\cal C}_0'}}$ {{(or $D_n\in {{\cal C}_0^{0}}'$),}}
%{\cal C}_0^{0'}$ (or in ${\cal C}_0',$ in ${\cal C}_\omega'$)
and two sequences of \cpc s $\phi_n: A\to A_n$ and  $\psi_n: A\to D_n$, with 
$D_n\perp \mathrm{Im}\phi_n$,
%$D_n\ \phi_n(A)=0$ 
such that
\beq\label{=tad-2}
\lim_{n\to\infty}\|\phi_n(xy)-\phi_n(x)\phi_n(y)\|=0\andeqn\\\label{=tad-3}
\lim_{n\to\infty}\|\psi_n(xy)-\psi_n(x)\psi_n(y)\|=0\rforal x, \, y\in A,\\\label{=tad-4}
\lim_{n\to\infty}\|x-(\phi_n(x)+ \psi_n(x))\|=0\rforal x\in A,\\
\label{=tad-5}
%\psi_n(x)\in D_n\rforal x\in M_N(A),\\\label{=tad-6}
%\|\psi_n\|=1,\\\label{=tad-7}
\phi_n(a)\lesssim b_0,\andeqn\\\label{=tad-6}
\tau(f_{1/4}(\psi_n(a)))\ge \mathfrak{f}_a \rforal \tau\in {\mathrm{T}}(D_n)
%\rforal x\in {\cal F}
\eneq
%and $\psi_n({\bar a})$ is a strictly positive element of $M_k(D_n),$ $n=1,2,...,$
%where ${\bar a}=\diag(\overbrace{a,a,...,a}^k).$
Put $D_n'=\overline{f_{\eta/2}(\psi_n(a))D_nf_{\eta/2}(\psi_n(a))},$ $n=1,2,....$
By \eqref{=tad-6} and \ref{Pcom4C}, $f_{1/4}(\psi_n(a))$ is full in $D_n.$
Therefore $f_{\eta/2}(\psi_n(a))$ is also full in $D_n.$  This implies that $D_n'\in {\cal C}_0'$
{{or $D_n'\in {{\cal C}_0^0}'.$}}
Define $\psi_{n,0}: A\to D_n'$ by
$$
\psi_{n,0}(x)=(f_{\eta/2}(\psi_n(a)))^{1/2}\psi_n(x)(f_{\eta/2}(\psi_n(a)))^{1/2}\rforal x\in A.
$$
It follows that $\psi_{n,0}(a)$ is full in $D_n'.$
Note that
$$
f_{1/4}(\psi_{n,0}(a))=f_{1/4}(\psi_n(a)).
$$
Therefore,
%\vspace{-0.05in} 
$$
\tau(f_{1/4}(\psi_{n,0}(a)))\ge \mathfrak{f}_a\rforal \tau\in {\mathrm{T}}(D_n').
$$
Choosing large $n,$  replacing $D_n$ by $D_n',$ $\psi$ by $\psi_{n,0},$ and using \eqref{=tad-3} and \eqref{=tad-4},
we see that in Definition \ref{DNtr1div}, we {{may}}  add the condition
that $\psi(a)$ is a strictly positive element of  $D.$

{{By \ref{180915sec2}, $\phi_n(a)$ is strictly positive in $\overline{\phi_n(A)A\phi_n(A)}.$ Therefore, one can replace 
\eqref{DNtrdiv-3}
by the condition that $c\lesssim b$ for  any other  strictly positive element of $\overline{\phi_n(A)A\phi_n(A)}.$}}

\iffalse
Define $\phi_{n,0}: A\to A$ by $\phi_{n,0}(x)=(f_{\eta/2}(\phi_n(a)))^{1/2}\phi_n(x)(f_{\eta/2}(\phi_n(a)))^{1/2}$
for all $x\in A.$ Then $\phi_{n,0}(A)\perp D_n.$ Moreover, 
$(f_{\eta/2}(\phi_n(a)))$ is a strictly positive element of $\overline{\phi_{n,0}(A)A\phi_{n,0}(A)}$
and $f_{\eta/2}(\phi_n(a))\lesssim \phi_n(a)\lesssim b.$ 
Thus, by choosing large $n,$ replacing $\phi_n$ by $\phi_{n,0},$ we may assume 
that $c\lesssim b$ for some strictly positive element of $\overline{\phi_n(A)A\phi_n(A)}.$ 
\fi
To get  the inequality $\|\psi(x)\|\ge (1-\ep)\|x\|$ for all $x\in {\cal F},$ we note that, by \eqref{=tad-4} and 
% we repeat the above argument. As shown above, we can add the requirement
%that $\psi_n(a)$ be strictly  positive in  $D_n$ along with \eqref{=tad-2} to \eqref{=tad-6}.
%The condition 
\eqref{=tad-6} 
%also implies that
\beq\label{=tad-8}
\lim_{n\to\infty}\|\psi_n\|\ge \mathfrak{f}_a.
\eneq
Then, by \eqref{=tad-3} and  \eqref{=tad-8}, since $A$ is simple,
\beq\label{=tad-9}
\lim_{n\to\infty}\|\psi_n(x)\|=\|x\|\rforal x\in A.
\eneq
This implies that, {{choosing $\psi=\psi_n$}} with sufficiently large $n,$ we may always assume
that $\|\psi(x){{\|}}\ge (1-\ep)\|x\|$ for all $x\in {\cal F}.$
\end{proof}

%\begin{prop}\label{DpreTADone}
%Let $A$ be a $\sigma$-unital \CA\,  which is pre-TAD (pre-TA$D_1$) and $a_1\in A_+\setminus \{0\}.$
%Then, \eqref{Dtrdiv-4+} can be strengthened to be
%\beq\label{Dtrdiv-4++}
%f_{1/4}(\psi(a)) \andeqn f_{1/4}(\psi(a_1)) \,\,\,{\rm are\,\,\, full\,\,\, in}\,\,\, D.
%\eneq
%\end{prop}

\begin{thm}\label{UnifomfullTAD}
Let $A$ be a $\sigma$-unital simple \CA\, in ${\cal D}$ (or  in ${\cal D}_0$). Then the following holds.
Fix  a strictly positive element $a\in A$ 
with $\|a\|=1$ and let $1>\mathfrak{f}_a>0$ be a positive number associated
with $a$ {{as}} in Definition \ref{DNtr1div}. There is a map $T: A_+\setminus \{0\}\to \N\times \R$
($c\mapsto (N(c), M(c))\tforal c\in A_+\setminus \{0\}$) with the following property:
For any  finite subset ${\cal F}_0\subset A_+\setminus \{0\},$ 
%   There exists
%   $M>0$ and an integer $N\ge 1,$
 any $\ep>0,$  any
finite subset ${\cal F}\subset A,$ and any $b\in A_+\setminus \{0\},$   there are ${\cal F}$-$\ep$-multiplicative \cpc s $\phi: A\to A$ and  $\psi: A\to D$  for some
\SCA\, $D\subset A$ with $D\perp \phi(A)$ such that
\beq\label{Dtad-1}
&&\|x-(\phi+ \psi)(x)\|<\ep\tforal x\in {\cal F}\cup \{a\},\\\label{2Dtrdiv-2}
&& D\in {{\cal C}_0^0}'\,({or}\,\,\,{\cal C}_0'),\\\label{Dtad-3}
&&\phi(a)\lesssim b,\\\label{2Dtrdiv-4}
&& \|\psi(x)\|\ge (1-\ep)\|x\|\tforal x\in {\cal F},\tand\\
%\\\label{Dtad-4+}
\eneq
$\psi(a)$ is strictly positive in $D.$
Moreover,  $\psi$ may be chosen to be  $T$-${\cal F}_0\cup\{f_{1/4}(a)\}$-full in $\overline{DAD}.$
% for any $\eta>0,$  any $x\in D$ and any $c\in {\cal F}_0\cup \{f_{1/4}(a)\},$
%for each $c\in {\cal F}_0\cup\{a\},$
%there are $x(c,1), x(c,2),...,x(c,N), y(c,1), y(c,2),...,y(c,N)\in D$ with
%$\|x(c,i)\|,\,\, \|y(c,i)\|\le M$ for all $c\in {\cal F}_0,$ $i=1,2,...,N,$ such that
%$$
%\|\sum_{i=1}^N x(c,i)\psi(c)y(c,i)-x\|<\eta.
%$$

Furthermore, we may ensure that
$$
t\circ f_{1/4}(\psi(a))\ge \mathfrak{f}_a\tand t\circ f_{1/4}(\psi(c))\ge
{{{\mathfrak{f}_a}}\over{4\inf\{M(c)^2\cdot N(c): c\in {\cal F}_0\cup\{f_{1/4}(a)\}\}}}
$$
for all $c\in {\cal F}_0$ and for all $t\in {\mathrm{T}}(D).$
%%where $\mathfrak{f}_a$ is  {{stipulated}}
%given 
%%by the definition of ${\cal D}$ (or ${\cal D}_0$) associated with $a.$

\end{thm}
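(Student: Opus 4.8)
The plan is to read Theorem \ref{UnifomfullTAD} as the quantitative refinement of Proposition \ref{PD0=tad}: the decomposition $x\approx(\phi+\psi)(x)$ with $D\in {{\cal C}_0^0}'$ (or ${\cal C}_0'$), $\phi(a)\lesssim b$, $\|\psi(x)\|\ge (1-\ep)\|x\|$, and $\psi(a)$ strictly positive in $D$ is supplied verbatim by \ref{PD0=tad}; what must be added is the \emph{uniform} fullness of $\psi$ and the quantitative trace bounds, and these are exactly the output of Theorem \ref{Tqcfull}. So the whole argument glues an application of \ref{Tqcfull} onto \ref{PD0=tad}. First I would fix the universal map $T$ once and for all: apply \ref{Tqcfull} to $A$ (whose required source hypotheses---separability, ${\rm T}(A)\ne\O$, and ${\rm Ped}(A)=A$---are available in the present context, if necessary after passing to a full hereditary compact \SCA), taking as distinguished element $e:=f_{1/8}(a)$ and a constant $0<d<\min\{2\mathfrak{f}_a,\inf_{{\rm T}(A)}\tau(e),\inf_{{\rm T}(A)}\tau(f_{1/2}(e))\}$, the two infima being positive because $e$ is full and ${\rm T}(A)$ is compact. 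This produces $T=(N,M)\colon A_+\setminus\{0\}\to \N\times\R_+$ depending only on $A$, $a$, and $\mathfrak{f}_a$.

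Given the data $\mathcal{F}_0,\ep,\mathcal{F},b$, I would set $\mathcal{H}_1=\mathcal{F}_0\cup\{f_{1/4}(a)\}$ (normalised into $A_+^{\bf 1}$) and let $\mathcal{G}\subset A$, $\delta>0$ be the finite set and tolerance that \ref{Tqcfull} assigns to $\mathcal{H}_1$. Then I would run \ref{PD0=tad} with an approximation fine enough that its finite set contains $\mathcal{G}\cup\mathcal{F}$ and its tolerance lies below $\min\{\delta,\ep\}$; this yields $\phi\colon A\to A$ and $\psi\colon A\to D$ with $D\in{{\cal C}_0^0}'$ (or ${\cal C}_0'$) satisfying \eqref{Dtad-1}--\eqref{2Dtrdiv-4}, with $\psi(a)$ strictly positive in $D$, with $t(f_{1/4}(\psi(a)))\ge \mathfrak{f}_a$ for all $t\in{\rm T}(D)$, and---crucially---with $\psi$ being $\mathcal{G}$-$\delta$-multiplicative.

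Now I would regard $\psi$ as a map into $B:=\overline{DAD}$. Since $\psi(a)$ is strictly positive in $D$ and $\overline{DAD}=\overline{\psi(a)A\psi(a)}$, it is strictly positive in $B$ as well, so restriction is a bijection ${\rm T}(B)\cong {\rm T}(D)$ of tracial state spaces. As $B$ is a full hereditary \SCA\, of the simple $A$ (and, when $D$ is hereditary, $B=D\in{\cal C}'$, cf.\ \ref{Rqcfull}), \ref{Pcom4C}, \ref{str=1}, and \ref{compactrace} give that $B$ has strict comparison, almost has stable rank one, satisfies ${\rm QT}={\rm T}$ on its hereditary \SCA s, and has $0\notin\overline{{\rm T}(B)}^{\rm w}$; thus the target hypotheses of \ref{Tqcfull} hold. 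For the trace condition $\tau(f_{1/2}(\psi(e)))>d/2$ on ${\rm T}(B)$, I would use the pointwise inequality $f_{1/2}\circ f_{1/8}\ge f_{1/4}$ and $\mathcal{G}$-$\delta$-multiplicativity: $f_{1/2}(\psi(f_{1/8}(a)))\approx f_{1/2}(f_{1/8}(\psi(a)))\ge f_{1/4}(\psi(a))$, so for $\tau\in{\rm T}(B)$ with $\tau|_D\in{\rm T}(D)$ one gets $\tau(f_{1/2}(\psi(e)))\ge \tau(f_{1/4}(\psi(a)))-o(1)\ge \mathfrak{f}_a-o(1)>d/2$. Then \ref{Tqcfull} yields that $\psi$ is exactly $T$-$\mathcal{H}_1$-full in $\overline{DAD}$ (the ``moreover'' clause) and that $\tau(f_{1/2}(\psi(c)))\ge d/(8\min\{M(c)^2N(c):c\in\mathcal{H}_1\})$ for $\tau\in{\rm T}(B)$; since $f_{1/4}\ge f_{1/2}$, ${\rm T}(B)\cong {\rm T}(D)$, and $d$ was taken close to $2\mathfrak{f}_a$, this gives $t(f_{1/4}(\psi(c)))\ge \mathfrak{f}_a/(4\inf\{M(c)^2N(c)\})$ for $t\in{\rm T}(D)$, which with $t(f_{1/4}(\psi(a)))\ge\mathfrak{f}_a$ is the ``furthermore'' clause.

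The main obstacle is the trace verification in the third step: \ref{Tqcfull} is phrased with the cut-off $f_{1/2}$, whereas \ref{DNtr1div} controls only $f_{1/4}(\psi(a))$, and only on ${\rm T}(D)$ rather than on ${\rm T}(\overline{DAD})$. Resolving this is precisely what forces the choice $e=f_{1/8}(a)$, so that the elementary estimate $f_{1/2}\circ f_{1/8}\ge f_{1/4}$ absorbs the index gap, and what makes the identification ${\rm T}(\overline{DAD})\cong{\rm T}(D)$ (via strict positivity of $\psi(a)$ in both algebras) indispensable. A secondary, organisational point is the quantifier order---$T$ must be fixed before $\mathcal{F}_0,\ep,\mathcal{F},b$ are seen, while $\mathcal{G},\delta$ and the decomposition depend on them---which matches exactly the structure of \ref{Tqcfull}, whose $T$ depends only on $(A,e,d)$; the matching of the explicit constants ($d/8$ versus $\mathfrak{f}_a/4$, and $\min$ versus $\inf$ over the finite set $\mathcal{H}_1$) is then routine.
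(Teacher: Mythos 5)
Your high-level strategy is the same as the paper's: its proof also glues Proposition \ref{PD0=tad} to Theorem \ref{Tqcfull} (it builds $T$, chooses ${\cal G}_0$, $\dt_0$, feeds ${\cal F}\cup{\cal G}_0\cup\{a,f_{1/4}(a)\}$ into \ref{PD0=tad}, and then invokes ``(the proof of) Theorem \ref{Tqcfull}'' together with Remark \ref{Rqcfull}). But your execution has two genuine gaps, both circular in nature. First, you use the \emph{statement} of \ref{Tqcfull} with source $A$ to manufacture $T$ and the constant $d$. That statement requires $A$ to be separable with $A={\rm Ped}(A)$ and ${\rm T}(A)\not=\O$, and your choice of $d$ further needs the infima of $\tau(f_{1/8}(a))$ and $\tau(f_{1/2}(f_{1/8}(a)))$ over ${\rm T}(A)$ to be positive, which you justify by compactness of ${\rm T}(A)$. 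None of this is available for a merely $\sigma$-unital $A\in{\cal D}$ at this point of the paper: ${\rm T}(A)\not=\O$, $0\not\in\overline{{\rm T}(A)}^{\rm w}$, and ${\rm Ped}(A)=A$ are proved \emph{later} (\ref{Phered}, \ref{PD0qc}, \ref{CCped}) as consequences of this very theorem. Passing to a full hereditary compact \SCA\, does not repair this: $T$ must be defined on all of $A_+\setminus\{0\}$, the fullness assertion concerns $\psi(c)$ for arbitrary $c\in{\cal F}_0\subset A_+\setminus\{0\}$, and separability still fails. The paper avoids all trace theory on $A$: since $A$ is simple and $f_{1/32}(a)\in{\rm Ped}(A)$, every $b\in A_+\setminus\{0\}$ satisfies an exact relation $\sum_{i=1}^{N_0(b)}x_i(b)^*bx_i(b)=f_{1/32}(a)$ with $\|x_i(b)\|\le M_0(b)$, and one sets $N(b)=n_0N_0(b)$, $M(b)=2M_0(b)$ with $n_0\mathfrak{f}_a\ge 4$; the only trace estimates used afterwards live on ${\rm T}(D)$, where \ref{DNtr1div} supplies them.

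Second, you apply \ref{Tqcfull} with target $B=\overline{DAD}$. Its required hypotheses (strict comparison, almost stable rank one, ${\rm QT}={\rm T}$ on hereditary \SCA s, $0\not\in\overline{{\rm T}(B)}^{\rm w}$) are not known for $\overline{DAD}$: it is a hereditary \SCA\, of $A$, and these regularity properties for $A$ and its hereditary subalgebras are exactly what the later sections establish, again using \ref{UnifomfullTAD}. Your citations of \ref{Pcom4C} and \ref{str=1} apply to \CA s in ${\cal C}'$, i.e.\ to $D$, not to $\overline{DAD}$; since $D$ need not be hereditary in $A$, the two algebras cannot be identified, and your parenthetical ``when $D$ is hereditary'' does not cover the actual situation. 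What Remark \ref{Rqcfull} licenses, and what the paper does, is to take the target to be $D$ itself; one then still owes the transfer of $T$-fullness from $D$ to $\overline{DAD}$, which follows from strict positivity of $\psi(a)$ in $D$: given $b'\in\overline{DAD}_+$ with $\|b'\|\le 1$, approximate $b'$ by $(b')^{1/2}f_\eta(\psi(a))(b')^{1/2}$, apply $D$-fullness to the target $f_\eta(\psi(a))\in D$, and conjugate by $(b')^{1/2}$, which keeps the constants $(N(c),M(c))$. This step is absent from your write-up. Finally, a smaller but real point: nothing guarantees $2\mathfrak{f}_a\le\inf\{\tau(f_{1/2}(f_{1/8}(a))):\tau\in{\rm T}(A)\}$ (by \ref{Rfa0}, $\mathfrak{f}_a$ may be chosen large), so ``$d$ taken close to $2\mathfrak{f}_a$'' can be impossible; the stated bound $\mathfrak{f}_a/4\inf\{M(c)^2N(c)\}$ must instead be bought by inflating $N$ in the definition of $T$ by an integer factor of size about $2\mathfrak{f}_a/d$ --- this is precisely the role of the paper's $n_0$ --- since $T$-fullness with larger $N$ is weaker while the required lower bound scales as $1/N$.
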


\begin{proof}
Since $A$ is simple, and $f_{1/32}(a)\in {\rm Ped}(A),$ for any $b\in A_+\setminus \{0\},$ there exist $N_0(b)\in \N,$
$M_0(b)>0$  and $x_1(b), x_2(b),...,x_{N_0(b)}(b)\in A$ such that $\|x_i(b)\|\le M_0(b),$ and
\beq\label{UnifomfullTAD-n1}
\sum_{i=1}^{N_0(b)} x_i(b)^* bx_i(b)=f_{1/32}(a).
\eneq
%Let $\mathfrak{f}_a>0$ be  as in Definition \ref{DNtr1div}.

%Let $n_0\ge 1$ be 
{{Choose}} an integer  $n_0$ such that $n_0\mathfrak{f}_a\ge 4.$

Set $N(b)=n_0N_0(b)$ and $M(b)=2M_0(b)$ for all $b\in A_+\setminus \{0\}.$
Let $T: A_+\setminus \{0\}\to \N\times \R_+\setminus \{0\}$ be defined
by $T(b)=(N(b), M(b))$ for  $b\in A_+\setminus \{0\}.$

Choose $\dt_0>0$ and a finite subset ${\cal G}_0\subset A$ such that
\beq\label{UnifomfullTAD-n2}
\|\sum_{i=1}^{N_0}\psi( x_i(b))^* \psi(b)\psi(x_i(b))-f_{1/32}(\psi(a))\|<1/2^{10}\rforal b\in {\cal F}_0,
\eneq
whenever $\psi$ is a ${\cal G}_0$-$\dt_0$-multiplicative \cpc\, from $A$ into a \CA .

Let $\ep>0$ and a finite subset ${\cal F}\subset A$ be given.
Set $\dt=\min\{\ep/4, \dt_0/2\}$ and ${\cal G}={\cal F}\cup {\cal G}_0\cup \{a, f_{1/4}(a)\}.$
Let $n\ge 1$ be an integer and let $b_0\in A_+\setminus \{0\}.$

By the assumption and by \ref{PD0=tad},  
%one has the following:
 there are ${\cal G}$-$\dt$-multiplicative \cpc s $\phi: A\to A$ and  $\psi: A\to D$  for some
\SCA\, $D\subset A$ with $\phi(A)\perp D$ such that {{$D\in {\cal C}_0$ (or $D\in {{\cal C}_0^0}'$),}}
$\psi(a)$ is strictly positive in $D,$ and
\beq\label{UnifomfullTAD-n3}
&&\|x-(\phi+ \psi)(x)\|<\ep\rforal x\in {\cal G},\\\label{UnifomfullTAD-n4}
&& D\in {{\cal C}_0^0}'\,({\rm or}\,\,\,{\cal C}_0'),\\\label{UnifomfullTAD-n5}
&&\phi(a)\lesssim b_0,\\\label{UnifomfullTAD-n6}
&& \|\psi(x)\|\ge (1-\ep)\|x\|\rforal x\in {\cal F},\andeqn\\
%\\\label{Dtad-4+}
%\eneq
%$\psi(a)$ is strictly positive in $D,$ and
%\beq
\label{UnifomfullTAD-n7}
&&\tau(f_{1/4}(\psi(a)))\ge \mathfrak{f}_a\rforal \tau\in {\mathrm{T}}(D).
\eneq
At this point, we can apply (the proof of)  {{Theorem}} \ref{Tqcfull} and  Remark \ref{Rqcfull} to conclude
that $\psi$ is $T$-${\cal F}_0\cup \{f_{1/4}(a_0)\}$-full. 
The last part of the {{conclusion}}
% {\red{theorem}} 
then follows.
%The lemma then follows.

 %By the choice of ${\cal G}$ and $\dt,$ \eqref{UnifomfullTAD-n2} holds
 %for this $\psi.$  It follows from \ref{Lrorm} that  there is $z_b\in A$ with $\|z_b\|\le 1$ such that
 %\beq\label{UTAD-n8}
 %z_b^*\sum_{i=1}^{N_0}\psi( x_i(b))^* \psi(b)\psi(x_i(b))z_b=(f_{1/32}(\psi(a))-1/2^9)_+\ge (3/4) f_{1/4}(\psi(a)).
 %\eneq
%Since $D$ has comparison property described in
%\ref{Pcom4C}, by \ref{UnifomfullTAD-n7}, for any $c\in D$ with $0\le c\le 1,$
%$$
%%\la c\ra \le n_0\la f_{1/4} (\psi(a)) \ra
%$$
%For each $0\le c\le 1$ in $D,$ there is $1>\eta>0$ such that
%$$
%f_\eta(c)cf_\eta(c)-.
%$$
\end{proof}

\begin{cor}\label{ChighrankD}
In Definition \ref{DNtr1div},  for any integer $k\ge 1,$ one may assume that every irreducible representation
of $D$ has dimension at least $k.$
\end{cor}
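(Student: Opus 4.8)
The plan is to replace the building block $D$ by its $k\times k$ matrix amplification. The key observation is that if $D\in{\cal C}_0'$, say $D$ is a full hereditary \SCA\ of some $E\in {\cal C}_0$, then ${\mathrm M}_k(D)$ is a full hereditary \SCA\ of ${\mathrm M}_k(E)\in {\cal C}_0$, so ${\mathrm M}_k(D)\in {\cal C}_0'$ (and ${\mathrm M}_k(D)\in {{\cal C}_0^0}'$ when $D\in {{\cal C}_0^0}'$, since ${\rm K}_0$ is unchanged). Moreover every irreducible representation of ${\mathrm M}_k(D)$ has the form ${\mathrm M}_k(\pi)$ for an irreducible representation $\pi$ of $E$ not vanishing on $D$, so it has dimension $\ge k$. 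Thus ${\mathrm M}_k(D)$ is an admissible target. The only difficulty is that, since $\psi(a)$ is strictly positive (hence full) in $D$ while $\phi(a)\lesssim b$ is small, the class of $\psi(a)$ is the bulk of $\langle a\rangle$, so a genuine copy of ${\mathrm M}_k(D)$ cannot fit at the same ``size''; the remedy is to shrink $D$ by a factor $1/k$ and re-express $\psi$ diagonally.

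Concretely, first invoke \ref{PD0=tad} (and the strengthened \ref{UnifomfullTAD}) to obtain $\phi,\psi$ with $\psi\colon A\to D$, $D\in {\cal C}_0'$ (resp.\ ${{\cal C}_0^0}'$), $\phi(A)\perp D$, and $\psi(a)$ strictly positive in $D$, with error $\ep/2$ on ${\cal F}\cup\{a\}$. Then $D\subset A_\psi:=\overline{\psi(a)A\psi(a)}$, a hereditary \SCA\ of $A$ orthogonal to $\phi(A)$. Working inside $A_\psi$, apply \ref{CDdiag} to the inclusion \hm\ $j\colon D\hookrightarrow A_\psi$ with $n=k$ and a finite set ${\cal F}_D\supset\psi({\cal F}\cup\{a\})$, producing a \hm\ $\rho\colon D\to B$ with $B$ a hereditary \SCA\ of $A_\psi$ and ${\mathrm M}_k(B)\subset A_\psi$ such that
\[
\Big\|j(d)-\diag(\overbrace{\rho(d),\dots,\rho(d)}^{k})\Big\|<\ep/2\qquad (d\in {\cal F}_D).
\]
Since $\mathrm{Cu}^\sim(\rho)=\tfrac1k\,\mathrm{Cu}^\sim(j)$ is injective and building blocks have stable rank one, $\rho$ may be taken injective, so $D_0:=\overline{\rho(D)}\cong D$ lies in ${\cal C}_0'$ (resp.\ ${{\cal C}_0^0}'$). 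Put $D':={\mathrm M}_k(D_0)\subset {\mathrm M}_k(B)\subset A_\psi$ and define $\psi'(x):=\diag(\rho(\psi(x)),\dots,\rho(\psi(x)))\in D'$.

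It then remains to check the conditions of \ref{DNtr1div} for $\phi,\psi',D'$. Approximate multiplicativity and contractivity of $\psi'$ hold because $\diag\circ\rho$ is a \hm\ and $\psi$ is ${\cal F}$-$\ep$-multiplicative; and $\psi'(x)\approx_{\ep/2}j(\psi(x))=\psi(x)$ for $x\in{\cal F}\cup\{a\}$ by the displayed estimate, so $\|x-(\phi+\psi')(x)\|<\ep$. As $\rho(\psi(a))$ is strictly positive in $D_0$, its $k$-fold diagonal $\psi'(a)$ is strictly positive (hence full) in $D'={\mathrm M}_k(D_0)$, and $D'$ has all irreducible representations of dimension $\ge k$; orthogonality $\phi(A)\perp D'$ holds since $D'\subset A_\psi\perp\phi(A)$. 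For the trace bound, any $t\in{\mathrm T}(D')$ yields, via $y\mapsto t(\diag(y,\dots,y))$ and $\rho\colon D\cong D_0$, a tracial state $s\in{\mathrm T}(D)$ (the normalisation is correct because the diagonal of an approximate unit of $D_0$ is an approximate unit of $D'$); since $f_{1/4}(\psi'(a))=\diag(\rho(f_{1/4}(\psi(a))),\dots)$ one gets $t(f_{1/4}(\psi'(a)))=s(f_{1/4}(\psi(a)))\ge\mathfrak f_a$, and similarly for the elements of ${\cal F}_0$.

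The main obstacle is the applicability of \ref{CDdiag}: it requires the corner $A_\psi$ to be stably projectionless, simple, exact, with ${\rm K}_0=\{0\}$, stable rank one, continuous scale, and $\mathrm{Cu}(A_\psi)={\rm LAff}_{0+}(\overline{{\mathrm T}(A_\psi)}^{\mathrm w})$. Simplicity, exactness, stable projectionlessness, stable rank one, the ${\rm K}_0$-condition and the Cuntz-function identity pass to the hereditary \SCA\ $A_\psi$, but continuous scale is delicate; one arranges it by choosing $\psi(a)$ so that $\mathrm d_\tau(\psi(a))$ is continuous (\ref{Subcontsc}), shrinking if necessary. A secondary point is that $\psi$ is only approximately multiplicative: this is handled by using semiprojectivity of $D$ (\ref{103L}) to replace $\psi$, on the relevant finite set, by a genuine \hm\ into $A_\psi\otimes{\cal K}$ before applying \ref{CDdiag}, and then compressing back. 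The case $A\in{\cal D}_0$ is identical, with ${\cal C}_0'$ replaced by ${{\cal C}_0^0}'$ throughout.
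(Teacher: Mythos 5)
Your proposal takes a genuinely different route from the paper, and it has a gap that I do not think can be repaired under the hypotheses of the corollary. The whole construction hinges on applying \ref{CDdiag} to the hereditary subalgebra $A_\psi=\overline{\psi(a)A\psi(a)}$, but \ref{CDdiag} requires its target algebra to be exact, to have ${\rm K}_0=\{0\}$, stable rank one, continuous scale, and ${\mathrm{Cu}}={\rm LAff}_{0+}(\overline{{\mathrm T}(\cdot)}^{\mathrm w})$. Definition \ref{DNtr1div} imposes none of these on $A$: in particular, by Brown's theorem ${\rm K}_0(A_\psi)\cong{\rm K}_0(A)$, and membership in ${\cal D}$ or ${\cal D}_0$ constrains the K-theory of the building blocks $D$, not of $A$, so the hypothesis ${\rm K}_0(A_\psi)=\{0\}$ simply fails in general (this is exactly why the paper needs the separate statement \ref{D=D0K0} later, under the extra assumption ${\rm K}_0(A)=\{0\}$). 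Moreover, even when those properties do hold for algebras in ${\cal D}$, they are only established in Sections 8--10 (\ref{Comparison}, \ref{TTstr1}, \ref{PWAff}), and the proof of \ref{PWAff} (surjectivity onto ${\rm LAff}$) itself invokes precisely the conclusion of \ref{ChighrankD} to produce subalgebras $C_n\in{\cal C}$ whose irreducible representations have large dimension. So your argument is circular in addition to requiring hypotheses that are unavailable; your own acknowledgment that continuous scale is "delicate" and handled via \ref{Subcontsc} runs into the same problem, since \ref{Subcontsc} needs strict comparison, again a later result.

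The paper's proof is far more elementary and, crucially, does not modify $D$ at all. One takes ${\cal F}_0$ in \ref{UnifomfullTAD} to contain $k$ mutually orthogonal norm-one positive elements $e_1,\dots,e_k$. The "furthermore" clause of \ref{UnifomfullTAD} gives $t(f_{1/4}(\psi(e_i)))\ge\sigma_0>0$ for all $t\in{\rm T}(D)$. Since $\psi$ is ${\cal F}$-$\dt$-multiplicative with $\dt$ small, the elements $\psi(e_i)$ are almost mutually orthogonal, and Loring's perturbation lemma (10.1.12 of \cite{Loringbk}) replaces them by exactly mutually orthogonal $d_1,\dots,d_k\in D$ with $\|d_i-\psi(e_i)\|$ small, whence $t(f_{1/4}(d_i))\ge\sigma_0/2>0$ for every $t\in{\rm T}(D)$. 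Each $d_i$ is therefore nonzero in every irreducible representation of $D$, and $k$ mutually orthogonal nonzero positive elements in $\pi(D)$ force $\dim\pi\ge k$. If you want to rescue the spirit of your matrix-amplification idea, that is essentially the content of \ref{TCCdvi} and \ref{Cuniformful}, but those require the full tracial-divisibility machinery of Section 9 and cannot be used to prove \ref{ChighrankD}, which sits upstream of it.
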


\begin{proof}
{{Let $T$ be as in the statement of \ref{UnifomfullTAD}. 
Fix an integer $k\ge 1.$
This corollary  can be easily seen by taking ${\cal F}_0$ containing $k$ mutually orthogonal non-zero
positive elements  $e_1, e_2,...,e_k$  with $\|e_i\|=1$ in \ref{UnifomfullTAD} as follows.

 When ${\cal F}_0$ is chosen. 
  Set
  $
  \sigma_0={\mathfrak{f}_a\over{4\inf\{M(c)^2\cdot N(c): c\in {\cal F}_0\cup\{f_{1/4}(a)\}\}}}.
  $
  There exists $\eta_0>0$  such that,
  if $0<b_1,\, b_2\le 1$ are  in any \CA\, with $\|b_1-b_2\|<\eta_0,$ then 
  \beq
  \|f_{1/4}(b_1)-f_{1/4}(b_2)\|<\sigma_0/2.
  \eneq
 By 10.1.12 of \cite{Loringbk}, there exists $\dt_0>0$ satisfying the following property: if 
 $0\le h_i\le 1$ and $\|h_ih_j\|<\dt_0$ ($i\not=j: 1\le i, j\le k$) are in a \CA, then there are mutually orthogonal 
 $h'_1, h_2',...,h_k'$ in that \CA\, such that $\|h_i-h_i'\|<\eta_0,$ $i=1,2,...,k.$
   
  Choose any finite subset ${\cal F}$ containing ${\cal F}_0$ and $\dt>0$ with 
  $\dt<\dt_0.$
  We apply \ref{UnifomfullTAD}. Then 
  \beq
  t(f_{1/4}(\psi(e_i))>\sigma_0\rforal t\in {\rm T}(D),\,\,\, i=1,2,...,k.
  \eneq
  By the choice of $\dt_0$ and applying 10.1.12 of \cite{Loringbk},  there are mutually orthogonal non-zero elements $d_1, d_2,...,d_k\in D$ such that
  \beq
  \|d_i-\psi(e_i)\|<\eta_0,\,\,\, i=1,2,....
  \eneq
  It follows that $\|f_{1/4}(d_i)-f_{1/4}(\psi(e_i)\|<\sigma_0/2,$ $i=1,2,...,k.$
 We then  estimate that
  \beq
  t\circ f_{1/4}(d_i)>t\circ f_{1/4}(\psi(e_i))-\sigma_0/2\ge  \sigma_0/2\rforal t\in {\rm T}(D),\,\,\,i=1,2,...,k. 
  \eneq
  Thus,
  %  since $k$ is given,
%by taking sufficiently small $\ep,$ 
%we may assume that $D$ contains
%$k$ mutually orthogonal non-zero elements which are full. 
%This forces 
$\pi(D)$  admits
$k$ mutually orthogonal non-zero elements in  each irreducible representation
$\pi$ which implies $\pi(D)$ has dimension at least $k.$}}
\end{proof}

Note that, if $D$ is in ${{\cal C}_0^0}'$ or in ${\cal C}_0',$ then $M_k(D)$ is in ${{\cal C}_0^0}'$ or in ${\cal C}_0'$
for every integer $k\ge 1.$ 
Therefore, 
the following proposition follows immediately from the definition.

\begin{prop}\label{PtadMk}
Let $A$ be a $\sigma$-unital simple \CA\, in the class ${\cal D}$ (or in ${\cal D}_0$). Then 
$M_k(A)$ is in  the class ${\cal D}$ (or in ${\cal D}_0$) for every 
integer $k\ge 1.$
\end{prop}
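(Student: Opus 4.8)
The plan is to reduce the verification of Definition \ref{DNtr1div} for $M_k(A)$ to the hypothesis on $A$ by simply amplifying the maps it supplies. Fix a strictly positive element $a\in A$ with $\|a\|=1$ and the associated constant $1>\mathfrak{f}_a>0$, and set $a_k=\diag(\overbrace{a,a,\dots,a}^k)$, which is a strictly positive element of $M_k(A)$ with $\|a_k\|=1$. I claim $\mathfrak{f}_{a_k}:=\mathfrak{f}_a$ witnesses $M_k(A)\in{\cal D}$ (resp.\ ${\cal D}_0$). Given $\ep>0$, a finite subset ${\cal F}'\subset M_k(A)$ and $b\in M_k(A)_+\setminus\{0\}$, I would take ${\cal F}\subset A$ to be the matrix entries of the elements of ${\cal F}'$ together with $a$, and choose $\ep'>0$ small (say $\ep'<\ep/k^2$). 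Applying $A\in{\cal D}$ (resp.\ ${\cal D}_0$) to the data $(\ep',{\cal F},b_0)$, with $b_0\in A_+\setminus\{0\}$ produced below, yields ${\cal F}$-$\ep'$-multiplicative \cpc s $\phi\colon A\to A$ and $\psi\colon A\to D$ with $\phi(A)\perp D$, $D\in{\cal C}_0'$ (resp.\ ${{\cal C}_0^0}'$), $\phi(a)\lesssim b_0$, and $t(f_{1/4}(\psi(a)))\ge\mathfrak{f}_a$ for all $t\in{\mathrm{T}}(D)$.

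I would then put $\Phi=\phi\otimes\id_{M_k}\colon M_k(A)\to M_k(A)$ and $\Psi=\psi\otimes\id_{M_k}\colon M_k(A)\to M_k(D)$, taking $M_k(D)$ as the required \SCA. Amplification is routine bookkeeping: $\Phi,\Psi$ are \cpc s, and for ${\cal F}$-$\ep'$-multiplicative $\phi,\psi$ they are ${\cal F}'$-$\ep$-multiplicative; the relation $\phi(A)\perp D$ forces $\Phi(M_k(A))\perp M_k(D)$ entrywise; and $\|x-(\Phi+\Psi)(x)\|<\ep$ for $x\in{\cal F}'\cup\{a_k\}$ follows from the entrywise estimates. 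By the remark immediately preceding this proposition, $M_k(D)\in{\cal C}_0'$ (resp.\ ${{\cal C}_0^0}'$), giving \eqref{DNtrdiv-2}. For the trace condition \eqref{DNtrdiv-4}, any $t\in{\mathrm{T}}(M_k(D))$ restricts to equal traces on the diagonal corners, so $\tau(d):=k\,t(d\otimes e_{11})$ defines an element of ${\mathrm{T}}(D)$ (using an approximate unit to see $\|\tau\|=1$); since $\Psi(a_k)=\diag(\psi(a),\dots,\psi(a))$ has mutually orthogonal diagonal blocks, $f_{1/4}(\Psi(a_k))=\diag(f_{1/4}(\psi(a)),\dots,f_{1/4}(\psi(a)))$, whence $t(f_{1/4}(\Psi(a_k)))=\tau(f_{1/4}(\psi(a)))\ge\mathfrak{f}_a=\mathfrak{f}_{a_k}$.

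It remains to produce $b_0$ so that condition \eqref{DNtrdiv-3} transfers, i.e.\ so that $\Phi(a_k)=\diag(\phi(a),\dots,\phi(a))\lesssim b$; this is the one step that is not pure bookkeeping. Since $\phi(a)\lesssim b_0$ gives $\diag(\phi(a),\dots,\phi(a))\lesssim\diag(b_0,\dots,b_0)$, it suffices to arrange $\diag(\overbrace{b_0,\dots,b_0}^k)\lesssim b$, that is $k\la b_0\ra\le\la b\ra$ in $\mathrm{Cu}(A)\cong\mathrm{Cu}(M_k(A))$. To land in the corner $A$, I would note that since $b\ne0$ is positive some diagonal entry $b_{ii}\in A_+\setminus\{0\}$ is nonzero, and the compression satisfies $e_{ii}be_{ii}=b_{ii}\otimes e_{ii}\lesssim b$; identifying $b_{ii}\otimes e_{ii}$ with an element of $A\otimes e_{ii}\cong A$, this reads $\la b_{ii}\ra\le\la b\ra$. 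I would then construct $b_0$ inside the hereditary \SCA\ $\overline{b_{ii}Ab_{ii}}$.

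The hard part is this last construction, and it is where non-elementarity of $A$ enters. First, $A\not\cong{\cal K}$: otherwise the nonzero stably projectionless algebra $D$ (nonzero because $\psi(a)\ne0$, as $\mathfrak{f}_a>0$) would be a nonzero \SCA\ of ${\cal K}$, which is impossible since \SCA s of ${\cal K}$ always contain nonzero projections. Being simple and non-elementary, $A$ has all its nonzero hereditary \SCA s infinite dimensional, so by repeated application of Lemma 3.5.4 of \cite{Lnbk} (exactly as in the proof of \ref{Pconscale}) the hereditary \SCA\ $\overline{b_{ii}Ab_{ii}}$ contains $k$ mutually orthogonal nonzero positive elements $w_1\lesssim w_2\lesssim\cdots\lesssim w_k$. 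Setting $b_0=w_1$ yields $k\la b_0\ra\le\la w_1+\cdots+w_k\ra\le\la b_{ii}\ra\le\la b\ra$, as required. Assembling all the pieces shows $M_k(A)\in{\cal D}$ (resp.\ ${\cal D}_0$), and since $k$ was arbitrary the proposition follows.
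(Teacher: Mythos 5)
Your proof is correct and takes essentially the same route as the paper: the paper disposes of this proposition in one line, observing (in the remark just before it) that $M_k(D)$ remains in ${\cal C}_0'$ (resp.\ ${{\cal C}_0^0}'$) and asserting that the rest "follows immediately from the definition" by amplifying $\phi$ and $\psi$ to $\phi\otimes\id_{M_k}$ and $\psi\otimes\id_{M_k}$. The one step you rightly single out as not being pure bookkeeping --- producing $b_0\in A_+\setminus\{0\}$ with $k\la b_0\ra\le \la b\ra$ for an arbitrary nonzero $b\in M_k(A)_+$, via non-elementarity of $A$ and Lemma 3.5.4 of \cite{Lnbk} --- is exactly what the paper's "immediately" silently absorbs, so your write-up is simply a more careful rendering of the paper's argument.
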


\begin{prop}\label{Phered}

%Let ${\cal S}$ denote the class ${\cal C}_0^{0'},$ or ${\cal C}_0'.$
Let $A$ be a 
%non-unital  
%non-zero 
separable simple \CA\, and let $B\subset A$ be a hereditary \SCA.
Then, 
%(1) If $A$ is TA${\cal S},$ so is $B;$
%(2) 
if  $A$ is in ${\cal D}$ (or in ${\cal D}_0$), so also is $B.$
Moreover, {{if $A\not=\{0\},$}}  {{then}} ${\mathrm{T}}(A)\not=\O.$
\end{prop}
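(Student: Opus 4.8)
The plan is to establish the two assertions separately: first that $B\in{\cal D}$ (resp.\ ${\cal D}_0$) whenever $A$ is, and then, as a standalone consequence of $A\in{\cal D}$, that $A\neq\{0\}$ forces ${\mathrm{T}}(A)\neq\O$. For the first, note that $B$ is again separable and simple, since a hereditary \SCA\, of a simple \CA\, is simple. Fix a strictly positive element $e_B\in B$ with $\|e_B\|=1$; as $A$ is simple, $e_B$ is full in $A$, and $e_B\lesssim a$ for the reference strictly positive element $a$ of $A$. To pin down, in advance, the constant required by \ref{DNtr1div}, I would apply \ref{UnifomfullTAD} to the fixed element $c_0=f_{1/4}(e_B)$, with $(N(c_0),M(c_0))=T(c_0)$, and set $\mathfrak{f}_{e_B}=\mathfrak{f}_a/\bigl(16\,M(c_0)^2N(c_0)\bigr)$, a quantity depending only on $e_B$ and the data of $A$.

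Given $\ep>0$, a finite subset ${\cal F}\subset B$ with $\|x\|\le1$, and $b\in B_+\setminus\{0\}$, I would choose $0<\eta<1/4$ so small that $g:=f_\eta(e_B)$ satisfies $\|gx-x\|,\|xg-x\|<\ep'$ on ${\cal F}$ for a small tolerance $\ep'$, and then apply the strengthened decomposition of \ref{UnifomfullTAD} and \ref{PD0=tad} to $A$, taking ${\cal G}\supset{\cal F}\cup\{a,e_B,g,g^{1/2}\}$ large, the tolerance small, the target small, and ${\cal F}_0\supset\{c_0,g\}$. This yields approximately multiplicative \cpc s $\phi:A\to A$ and $\psi:A\to D$ with $\phi(A)\perp D$, $D\in{\cal C}_0'$ (resp.\ ${{\cal C}_0^0}'$), $\psi(a)$ strictly positive in $D$, $\phi(a)\lesssim b$, and a fullness estimate $t\bigl(f_{1/4}(\psi(c))\bigr)\ge\sigma_0>0$ for $c\in{\cal F}_0$ and all $t\in{\mathrm{T}}(D)$. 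I then define $\phi'(x)=g^{1/2}\phi(x)g^{1/2}$ and $\psi'(x)=g^{1/2}\psi(x)g^{1/2}$, which are \cpc s of $B$ into $\overline{gAg}\subset B$. The key is that $\phi(A)\perp D$ gives the two \emph{exact} relations $\phi(g^{1/2})z=0$ for $z\in D$ and $\psi(g^{1/2})z=0$ for $z\in\phi(A)$; combined with $g^{1/2}\approx\phi(g^{1/2})+\psi(g^{1/2})$ these show that compression by $g^{1/2}$ acts on the $\phi$-part as compression by $\phi(g^{1/2})$ and on the $\psi$-part as compression by $\psi(g^{1/2})$. From this one reads off that $\phi',\psi'$ are approximately multiplicative, that $\phi'(x)+\psi'(x)\approx x$ on ${\cal F}$, that $\psi'$ takes values approximately in $D':=\overline{\psi(g)D\psi(g)}$, and — using $\phi(g^{1/2})\psi(g^{1/2})=0$ — that $\phi'(B)$ and $\psi'(B)$ are approximately orthogonal. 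Since $\tau(\psi(g))\ge\tfrac18\tau(f_{1/4}(\psi(g)))\ge\tfrac18\sigma_0>0$ for all $\tau\in{\mathrm{T}}(D)$, \ref{Pcom4C}(2) shows $\psi(g)$ is full in $D$, so $D'$ is a full hereditary \SCA\, of $D\in{\cal C}_0'$; by transitivity of fullness and "hereditary of hereditary is hereditary", $D'\in{\cal C}_0'$ (resp.\ ${{\cal C}_0^0}'$). The trace lower bound transfers via $\psi(f_{1/4}(e_B))\approx f_{1/4}(\psi(e_B))$ and the choice of $\mathfrak{f}_{e_B}$, giving $t(f_{1/4}(\psi'(e_B)))\ge\mathfrak{f}_{e_B}$; and $\phi'(e_B)\lesssim\phi(e_B)\lesssim\phi(a)\lesssim b$.

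The main obstacle is the \emph{simultaneous} demand that $\phi'$ map into $B$, that the target $D'$ sit inside $B$, and that $\phi'(B)\perp D'$ hold \emph{exactly}. Symmetric compression by $g\in B$ lands everything in $B$ but produces only approximate orthogonality, whereas compression by $\phi(g)$ and $\psi(g)$ gives exact orthogonality — indeed $\overline{\phi(g)A\phi(g)}\cdot D=0$, since $\phi(g)d=0$ for $d\in D$ — but places the images in $A$ and $D$ rather than in $B$. I would reconcile the two by a final perturbation: because $\phi(g^{1/2})$ and $\psi(g^{1/2})$ are orthogonal positive elements each dominated, up to the small error, by $g^{1/2}\in B$, they lie within a controlled distance of $B$, and an orthogonalisation argument in the spirit of 10.1.12 of \cite{Loringbk} (already used in \ref{ChighrankD}) lets me replace $\phi',\psi'$ by nearby \cpc s with image exactly in $B$, with target a hereditary \SCA\, of $B$ isomorphic to $D'$, and with exactly orthogonal ranges, all within the allotted tolerance. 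This produces the decomposition demanded by \ref{DNtr1div} for $B$, with $\mathfrak{f}_{e_B}$ as fixed above; the passage to matrix amplifications needed to treat both ${\cal D}$ and ${\cal D}_0$ uniformly is automatic by \ref{PtadMk}.

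For the final statement, suppose $A\neq\{0\}$. Applying \ref{DNtr1div} along $\ep_n\downarrow0$ and finite subsets ${\cal F}_n$ exhausting a dense subset of $A$, I obtain $\phi_n,\psi_n$ and $D_n\in{\cal C}_0'$. Each $D_n$ is a full hereditary \SCA\, of some $C\in{\cal C}_0\subset{\cal C}$, and such an Elliott--Thomsen algebra carries a tracial state whose restriction to the full subalgebra $D_n$ is a nonzero positive trace; normalising gives $t_n\in{\mathrm{T}}(D_n)$. Set $\tau_n:=t_n\circ\psi_n$, a positive functional on $A$ of norm at most one. Approximate multiplicativity of $\psi_n$ and traciality of $t_n$ give $|\tau_n(xy)-\tau_n(yx)|\to0$ for $x,y$ in the dense set, while $t_n(f_{1/4}(\psi_n(a)))\ge\mathfrak{f}_a$ together with $\psi_n(f_{1/4}(a))\approx f_{1/4}(\psi_n(a))$ yields $\tau_n(f_{1/4}(a))\ge\mathfrak{f}_a-o(1)$. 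Since $A$ is separable, a weak* cluster point $\tau$ of $(\tau_n)$ in the unit ball of $A^*$ is then a nonzero bounded positive trace with $\tau(f_{1/4}(a))\ge\mathfrak{f}_a>0$; normalising $\tau$ gives a tracial state, so ${\mathrm{T}}(A)\neq\O$.
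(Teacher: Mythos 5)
Your overall architecture is reasonable, and two pieces of it are sound: the reduction to a strictly positive element of $B$ together with the use of \ref{UnifomfullTAD} to produce a decomposition of $A$ with fullness and trace lower bounds, and the final argument that ${\mathrm{T}}(A)\neq\O$ (a weak* cluster point of $t_n\circ\psi_n$ is nonzero by the bound $t_n(f_{1/4}(\psi_n(a)))\ge\mathfrak{f}_a$ and is tracial by approximate multiplicativity) -- the latter is essentially the paper's own argument. The genuine gap is exactly at the step you yourself flag as ``the main obstacle,'' and the repair you propose does not work. Compression by $g^{1/2}=f_\eta(e_B)^{1/2}$ puts the images of $\phi'$ and $\psi'$ inside $B$, but it destroys every \emph{exact} feature that Definition \ref{DNtr1div} demands: the ranges are only approximately orthogonal, and there is no candidate algebra of class ${\cal C}_0'$ (resp.\ ${{\cal C}_0^0}'$) \emph{inside} $B$ receiving $\psi'$. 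The algebra $D'=\overline{\psi(g)D\psi(g)}$ sits inside $D\subset A$, not inside $B$, and its elements are not even uniformly close to $B$: since $\phi(g)\psi(g)=0$ exactly, one has $g\psi(g)\approx\psi(g)^2\neq\psi(g)$, so $g$ does \emph{not} act approximately as a unit on the hereditary \SCA\, generated by $\psi(g)$, and cutting $D'$ by $g$ distorts it rather than moving it into $B$. For the same reason your assertion that $\phi(g^{1/2})$ and $\psi(g^{1/2})$ ``lie within a controlled distance of $B$'' is unjustified: being dominated by $g^{1/2}$ only up to a small error gives neither membership in nor proximity to the hereditary algebra $B$. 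Finally, 10.1.12 of \cite{Loringbk} cannot close this: it perturbs finitely many almost orthogonal positive elements to exactly orthogonal ones, whereas here one must transport an entire \CA\, $D'$ into $B$, realize it there as a \SCA\, still belonging to ${\cal C}_0'$, carry the \cpc\, $\psi'$ along with it, keep the trace bound, and make the range of $\phi'$ exactly orthogonal to it -- all simultaneously.

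The tool that accomplishes all of this at once, and which your proposal never invokes, is Lemma \ref{LRL}. The paper applies it to the pair $x=\phi_n(b)+\psi_n(b)$ and $y=b$, where $b$ generates $B$ and $\|\phi_n(b)+\psi_n(b)-b\|\to 0$: it yields a partial isometry $v\in A^{**}$ acting as a unit on $f_{\sigma}(\phi_n(b)+\psi_n(b))$ such that $c\mapsto v^*cv$ maps the hereditary \SCA\, generated by $f_{\sigma}(\phi_n(b)+\psi_n(b))$ \emph{into} $B=\overline{bAb}$ exactly, moving each element by at most $\ep\|c\|.$ Because $\phi_n(b)\perp\psi_n(b),$ one has $f_\sigma(\psi_n(b))\le f_\sigma(\phi_n(b)+\psi_n(b)),$ so this hereditary \SCA\, contains the hereditary \SCA\, $E'$ of $D_n$ generated by $f_\sigma(\psi_n(b))$ as well as the relevant compressions of $\phi_n(\cdot)$; and because the transport is conjugation by a partial isometry, it is an isomorphism onto its image, so $E:=v^*E'v\subset B$ stays in ${\cal C}_0'$ (resp.\ ${{\cal C}_0^0}'$), exact orthogonality $v^*\phi_n(\cdot)v\cdot E=\{0\}$ is inherited from $\phi_n(A)D_n=\{0\},$ and the trace and Cuntz comparison data pass over unchanged. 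This single conjugation is precisely what reconciles the three exact demands your compression-plus-orthogonalisation scheme cannot meet; without it (or a perturbation result of comparable strength) your construction does not produce the decomposition required by \ref{DNtr1div} for $B.$
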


\begin{proof}
%Let $A\in {\cal S},$
Let ${\cal S}$ denote ${\cal C}_0'$ or ${{\cal C}_0^0}'.$
{{We may assume neither $A$ nor $B$ is  zero.}}
%be subcompact.
% and $b\in A_+\setminus \{0\}.$
%We may view $A$ is a hereditary \SCA\, of  $\sigma$-unital simple $A_1$ and there is $e_1\in A_1$ such
%that $e_1x=xe_1=x$ for all $x\in A.$
Let $b\in A_+$ with $\|b\|=1$ and $B=\overline{b{{A}}b}.$
Let $e\in A_+$ be a strictly positive element with $\|e\|=1$ and let $\mathfrak{f}_e$ be as given by \ref{DNtr1div}, as  $A$
is in ${\cal D}$ or {{in}} ${\cal D}_0.$ 
%We assume that $\|b\|=1$ and $\|e\|=1.$ 
Fix $b_0\in B_+\setminus \{0\}.$

%Since $A\in {\cal D}_{00}$ (or ${\cal D}_0$ or ${\cal D}$),
%In both cases (1) and (2),  by \ref{DpreTADone},
%by \ref{UnifomfullTAD},
%or a fixed $a\in A_+\setminus \{0\}$ with $\|a\|=1.$
%there exists $\mathfrak{f}_a>0$ satisfies the following:
%Fix $N\ge 1,$ t
%T
By \ref{UnifomfullTAD}, 
there exists
a sequence
of \SCA s $D_n$ of $A$ in ${\cal S}$ 
%(or in ${\cal D}_0$)
%{\cal C}_0^{0'}$
and two sequences of \cpc s $\phi_n: A\to A$ and $\psi_n: A\to D_n$ with $\phi_n(A)\perp D_n$
such that
\beq\label{Phered-1}
\lim_{n\to\infty}\|\phi_n(xy)-\phi_n(x)\phi_n(y)\|=0\andeqn\\
\lim_{n\to\infty}\|\psi_n(xy)-\psi_n(x)\psi_n(y)\|=0\rforal x, \, y\in A,\\\label{Phered-2}
\lim_{n\to\infty}\|x-(\phi_n+\psi_n)(x)\|=0\rforal x\in A,\\
%\psi_n(x)\in D_n\rforal x\in A,\\
\phi_n(e)\lesssim b_0,\\\label{Phered-2+}
\lim_{n\to\infty}\|\psi_n(x)\|=\|x\|\rforal x\in  A,
\eneq
$f_{1/4}(\psi_n(b))$ is full in $D_n,$  and $\psi_n(e)$ is a strictly positive element of $D_n,$ $n=1,2,....$
Moreover,  we {{may also assume that}}
\beq\label{Phered-1n}
t\circ f_{1/4}(\psi_n(e))\ge \mathfrak{f}_e,\,\,\, t\circ f_{1/4}(\psi_n(b))\ge r_0
\eneq
for all $t\in {\mathrm{T}}(D_n)$ and $n,$   where  $r_0$ is as  previously defined
(as ${{{\mathfrak{f}_e}}\over{4\inf\{M(c)^2\cdot N(c): c=\{b, f_{1/4}(e)\}\}}}$).

%In particular,  in case (2), $f_{1/4}(\psi_n(b))$ is full in $D_n,$ $n=1,2,....$

%As in the proof of \ref{T1D0}, using
By \eqref{Phered-2+},
$$
\lim_{n\to\infty}\|\psi_n|_{B}\|=1.
$$
%Let $\psi_n'=\psi_n|_{B},$ $n=1,2,....$

We also have 
%that
\beq\label{Phered-20+1}
&&\lim_{j\to\infty}\|b-f_{1/2j}(b)^{1/2}bf_{1/2j}(b)^{1/2}\|=0,\,\,
%<1/64Mm^2
%\andeqn
{\rm whence}\\\label{Phered-20+2}
&&\lim_{j\to\infty}\|x-f_{1/2j}(b)^{1/2}xf_{1/2j}(b)^{1/2}\|=0
%<\ep/64Mm^2
\rforal  x\in B.
\eneq

Put $L_n(x)=\phi_n(x)+\psi_n(x)$ for all $x\in A.$
By \eqref{Phered-2},   applying \ref{LRL},
for any $j\ge 2,$  we obtain  $n(j)\ge j$ and  a partial isometry
$v_{j}\in A^{**}$ such that
\beq\label{Phere-23}
&&\hspace{-0.8in}v_{j}v_{j}^*f_{1/2j}(L_{n(j)}(b))=f_{1/2j}(L_{n(j)}(b))v_jv_{j}^*=f_{1/2j}(L_{n(j)}(b)),\\
&&\hspace{-0.8in}v_{j}^*cv_{j}\in B\rforal c\in \overline{f_{1/2j}(L_{n(j)}(b))Af_{1/2j}(L_{n(j)}(b))},
\andeqn\\\label{Phere-23+}
&&\hspace{-0.8in}\lim_{j\to\infty}(\sup\{\|v_{j}^*cv_{j}-c\|: 0\le c\le 1\andeqn c\in
\overline{f_{1/2j}(L_{n(j)}(b))Af_{1/2j}(L_{n(j)}(b))}\})=0.
\eneq
Note that $f_{1/2j}(\psi_{n(j)}(b))\le f_{1/2j}(L_{n(j)}(b)),$ $j=1,2,....$
It follows that
$$
v_{j}^*cv_{j}\in B
$$
for all $c\in \overline{f_{1/2j}(\psi_{n(j)}(b)Af_{1/2j}(\psi_{n(j)}(b))}.$
%By \eqref{Phere-20-2},
%\eqref{T1D0-21},
%{T1D0-15+},
Since $f_{1/4}(\psi_{n(j)}(b))$ is full in $D_{n(j)},$
$f_{1/2j}(\psi_{n(j)}(b))$ is full in $D_{n(j)}$ for all $j\ge 2.$
Consider {{the}} hereditary \SCA\, of $D_{n(j)}$ 
$$
E_{n(j)}'=\overline{f_{1/2j}(\psi_n(b))D_{n(j)}f_{1/2j}(\psi_n(b))},
\,\,\,j=2,3,....
$$
Then $E_{n(j)}'\in {\cal S},$ $j=2,3,....$
Put
$$
E_{j}=v_{j}^*E_{n(j)}'v_{j},\,\,\,j=3,4,....
$$
Then $E_{j}\in {\cal S}$ and $E_{j}\subset B,$ $j=3, 4,....$

Define $\Phi_{j}: B\to B$ by $\Phi_j(a)=v^*_j\phi_{n(j)}(a)v_j$ for all $a\in B,$ 
and $\Psi_j: B\to E_j$ by $\Psi_j(x)=v_j^*f_{1/2j}(\psi_{n(j)}(b))\psi_{n(j)}(x)f_{1/2j}(\psi_{n(j)}(b))v_j,$ $j=3, 4,....$
%Note that $\Psi_j$ maps $A_1$ into $E_{j}$ with
%$\|\Psi_j\|=1,$ $j=1,2,....$
For $j>4,$
\beq\label{Phere-n10}
&&\hspace{-0.4in}f_{1/4}(f_{1/2j}(\psi_{n(j)}(b))\psi_{n(j)}(b)f_{1/2j}(\psi_{n(j)}(b)))=f_{1/4}(\psi_{n(j)}(b))\\\label{Phere-n11}
\hspace{0.4in} &&=
 f_{1/2j}(\psi(b))f_{1/4}(\psi_{n(j)}(b))f_{1/2j}(\psi_{n(j)}(b)).
\eneq
%Since $f_{1/4}(\psi_{n(j)}(b))$ is full in $D_{n(j)}$ for all $j\ge 2,$
It follows that
$f_{1/4}(\Psi_j(b))$ is full in $E_j,$ $j=4,5,....$
We have
\beq\label{Phere-n13}
\lim_{j\to\infty}\|\Phi_{j}(xy)-\Phi_{j}(x)\Phi_j(y)\|&=&0\rforal x, y\in B\andeqn\\\label{Phere-n13+}
\lim_{j\to\infty}\|\Psi_j(xy)-\Psi_j(x)\Psi_j(y)\|&=&0\rforal x,y\in B.
\eneq
Moreover,   applying \eqref{Phered-2}, \eqref{Phere-23+}, and \eqref{Phered-20+2}, we have
\beq\label{Phere-n14}
\lim_{j\to\infty}\|x-(\Phi_j+\Psi_j)(x)\|&=&0\rforal x\in B\andeqn\\
\lim_{n\to\infty}\|\Psi_n(x)\|&=&\|x\|\rforal x\in  B.
\eneq
We also have
%that
$$
\Phi_j(b)\lesssim b_0.
$$
%These further implies that $B$ is  in TA${\cal S}.$
Moreover,   by \eqref{Phered-1n} and \eqref{Phere-n10},
% for all large $j,$
%{T1D0-20-2},
%{T1D0-20} and \eqref{T1D0-21},
\beq\label{1222+}
%t\circ \Psi_j(w^*aw)\ge \mathfrak{f}_a\andeqn
t\circ f_{1/4}(\Psi_j(b))\ge r_0/2\rforal t\in {\mathrm{T}}(E_{n(j)}).
\eneq

The  first part of the proposition follows
% when  one chooses  
on choosing a sufficiently large $j.$

To see that, {{if $A$ is nonzero,}} ${\mathrm{T}}(A)$ is non-empty,  in the preceding argument, take $B=A$ and  choose $t_j\in {\mathrm{T}}(E_{n(j)})$ {{for all  $j$  large enough that $E_{n(j)}$ is nonzero.}}
Let $t$ be a weak* limit of $(t_j\circ \Psi_j).$ Then, by \eqref{1222+}, $t$ is a non-zero linear functional on $A.$
Moreover,  since $t_j\in {\mathrm{T}}(E_{n(j)}),$ by \eqref{Phere-n13+}, $t$ is a trace. This implies  ${\mathrm{T}}(A)\not=\O.$

%Since $A$ is simple,
%we may assume
%that  $e_1\in M_m.$

%as in the proof of \ref{localunit}, for some $n>4,$
%there are $x_1, x_2,...,x_m\in M_K(A)$ (for some integer $m, K\ge 1$) such that
%$$
%\sum_{i=1}^k x_i^*bx_i=f_{1/n}(e_1).
%$$

\end{proof}

%\begin{prop}\label{PtadMk}
%Let $A\in {\cal D}$ (or ${\cal D}_{0}$) with $A={\mathrm{Ped}(A)}.$  Then, for every integer $k\ge 1,$ ${\mathrm{M}}_k(A)\in {\cal D}$ (or
%${\cal D}_0,$).
%\end{prop}

%%We now return to Section 6 and Definition \ref{Dbuild1}.

\begin{lem}\label{LherC}
{{Let $B=A(F_1, F_2, \phi_0, \phi_1).$ Suppose 
that ${{g:=(h,a)}}\in B_+$ {{is}} such that $h_j:=h|_{[0,1]_j}$ has range projection  
$P_j$ satisfying  the following conditions:\\
There is a partition  $0=t_j^0< t_j^1<t_j^2<\cdots<t_j^{n_j}=1$  such that\\
(1) on each open interval $(t_j^l, t_j^{l+1})$, $P_j(t)$ is continuous
and   ${\rm rank}({{P_j(t))}}={{r_{j,l}}}$ is a constant,  \\ 
(2) for each $t_j^l$, $P_j((t_j^l)^+)=\lim\limits_{t\to (t_j^l)^+} P_j(t)$ (if $t_j^l<1$) and $P_j((t_j^l)^-)=\lim\limits_{t\to (t_j^l)^-} P(t)$  (if $t_j^l>0$) exist, \\
(3) $P_j(t_j^l)\leq  P_j((t_j^l)^+)$ and $P_j(t_j^l)\leq  P_j((t_j^l)^-),$  {{and}}\\
(4) $\pi^j(\phi_0(p))=P_j(t_j^0)=P_j(0)=P_j(0^+)$ and $\pi^j(\phi_1(p))=P_j(t_j^{n_j})=P_j(1)=P_j(1^-)$, where $p$ is the %spectral projection of  $a\in F_1$ corresponding to the set $\{\ld\in Sp(a): \ld>0\}$. 
range projection of $a\in F_1.$

{{Then}} ${{\overline{gBg}}}\in {\cal C}.$ }}
%$\overline{hBh}\in {\cal C}.$ }}
\end{lem}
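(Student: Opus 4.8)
The plan is to realize $\overline{gBg}$ explicitly as a pull-back of the form defining ${\cal C}$, with new finite-dimensional algebras obtained by splitting each interval at its jump points. Write $F_1=\bigoplus_i M_{R_i}$ and $F_2=\bigoplus_{j=1}^k M_{r_j}$, let $p\in F_1$ be the range projection of $a$, and let $P_j(t)$ be the range projection of $h_j(t)$ as in the hypotheses. First I would establish the pointwise description
$$\overline{gBg}=\{(f,c)\in B : c\in pF_1p \ \text{and}\ f_j(t)\in P_j(t)M_{r_j}P_j(t)\ \text{for all}\ j,t\}.$$
The inclusion ``$\subseteq$'' is immediate. For ``$\supseteq$'' one uses the standard fact that in $C(I,M_r)$ the hereditary \SCA\, generated by a positive $h$ is exactly the set of continuous sections $f$ with $f(t)$ in the range corner of $h(t)$; I would verify this on each closed subinterval where $P_j$ has constant rank by approximating $f$ with $f_{1/n}(h_j)^{1/2}zf_{1/n}(h_j)^{1/2}$, using $f_j(t)=P_j(t)f_j(t)P_j(t)$ and uniform continuity to absorb the vanishing of the small eigenvalues of $h_j$ near the jump points (so that the dropped directions, where $f_j$ must tend to $0$, cause no trouble).

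Next, fix $j$ and a subinterval $[t_j^l,t_j^{l+1}]$. By hypothesis (2) the function $\widetilde P_{j,l}$ equal to $P_j$ on the open interval and to the one-sided limits at the endpoints is a norm-continuous projection of constant rank $r_{j,l}$ on the closed (contractible) interval, so I can choose a norm-continuous family of partial isometries $w_{j,l}(t)$ with $w_{j,l}(t)^*w_{j,l}(t)=e_{j,l}$ a fixed rank-$r_{j,l}$ projection and $w_{j,l}(t)w_{j,l}(t)^*=\widetilde P_{j,l}(t)$. Conjugation by $w_{j,l}$ identifies the sections over $[t_j^l,t_j^{l+1}]$ lying in the corner $\widetilde P_{j,l}(t)M_{r_j}\widetilde P_{j,l}(t)$ with $C([t_j^l,t_j^{l+1}],M_{r_{j,l}})$, each such interval being reparametrized to $[0,1]$.

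Then I would reassemble. Set $F_2'=\bigoplus_{j,l}M_{r_{j,l}}$ (one summand per subinterval) and $F_1'=pF_1p\oplus\bigoplus_{0<l<n_j}M_{s_{j,l}}$, where $s_{j,l}=\mathrm{rank}\,P_j(t_j^l)$ is the rank at each interior jump point. At an interior point $t_j^l$, hypothesis (3) gives $P_j(t_j^l)\le\widetilde P_{j,l-1}(t_j^l)$ and $P_j(t_j^l)\le\widetilde P_{j,l}(t_j^l)$, so after conjugating by $w_{j,l-1},w_{j,l}$ the projection $P_j(t_j^l)$ becomes subprojections of $e_{j,l-1}$ and $e_{j,l}$ of common rank $s_{j,l}$; the resulting corner embeddings $M_{s_{j,l}}\hookrightarrow M_{r_{j,l-1}}$ and $M_{s_{j,l}}\hookrightarrow M_{r_{j,l}}$ are the values of $\phi_1'$ and $\phi_0'$ on the $M_{s_{j,l}}$ summand. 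At the endpoints $t=0,1$, hypothesis (4) together with the trivialization yields corner embeddings $pF_1p\to M_{r_{j,0}}$ and $pF_1p\to M_{r_{j,n_j-1}}$, which are the restrictions of $\phi_0',\phi_1'$ to $pF_1p$. The boundary condition $f_j(t_j^l)\in P_j(t_j^l)M_{r_j}P_j(t_j^l)$ coming from the description of $\overline{gBg}$ becomes precisely the matching condition through the common summand $M_{s_{j,l}}$, and the condition $c\in pF_1p$ with $f_j(0)=\pi^j(\phi_0(c))$, $f_j(1)=\pi^j(\phi_1(c))$ becomes the endpoint matching with $pF_1p$. This identifies $\overline{gBg}$ with $A(F_1',F_2',\phi_0',\phi_1')$, a pull-back of the defining diagram, hence an algebra in ${\cal C}$.

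The main obstacle I expect is twofold. First, establishing the pointwise description of $\overline{gBg}$ rigorously at the jump points, where $P_j$ is genuinely discontinuous: here one must exploit the compatibility in (3) and the forced vanishing of sections in the dropped directions to carry out the approximation. Second, the careful bookkeeping of the connecting homomorphisms so that the reassembled algebra is literally a pull-back of the required form, rather than merely locally of that shape; the continuity of the trivializations and the verification that the corner embeddings are genuine $*$-homomorphisms are exactly where hypotheses (2), (3), and (4) enter.
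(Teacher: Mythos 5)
Your proposal is correct and follows essentially the same route as the paper's proof: split each interval $[0,1]_j$ at the jump points, extend $P_j$ over each closed subinterval by its one-sided limits, identify each corner $P_j^l\, C([t_j^l,t_j^{l+1}], M_{r_j})\, P_j^l$ with $C([0,1],M_{r_{j,l}})$, and reassemble $\overline{gBg}$ as a pullback whose first algebra is $pF_1p$ plus the corners $P_j(t_j^l)M_{r_j}P_j(t_j^l)$ at interior jump points, with connecting maps supplied by hypotheses (3) and (4). The only difference is presentational: you make explicit two points the paper treats tacitly, namely the pointwise corner description of $\overline{gBg}$ (with the forced-vanishing argument at jump points) and the partial-isometry trivialization of the constant-rank projection over each closed subinterval.
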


\begin{proof}
For each closed interval $[t_j^l, t_j^{l+1}]$, since  {{the limits}}
\hspace{-0.1in}$$
P_j((t_j^l)^+)=\lim\limits_{t\to (t_j^l)^+} P_j(t)\andeqn P_j((t_j^{l+1})^-)=\lim\limits_{t\to (t_j^{l+1})^-} {{P_j(t)}}
$$
 exist, we can extend $P_j|_{(t_j^l, t_j^{l+1})}$ to the closed interval $[t_j^l, t_j^{l+1}]$, and denote this projection by $P_j^l$. Then we can identify $P_j^lC([t_j^l, t_j^{l+1}], {\rm M}_{r_j})P_j^l$ {{with}} $C([0,1], {{\rm M}}_{r_{j,l}})$
by {{identifying}} $t_j^l$ with $0$ and $t_j^{l+1}$ with $1$, where $r_{j,l}={\rm rank} (P_j^l)$. 
%%Denote that 
{{Set}} $E_2^{j,l}:= {\rm M}_{r_{j,l}}$ and set
%%{{Set}}  
$E_1^{j,l}:=P_j(t_j^l)M_{r_j} P_j(t_j^l)\cong {{{\rm M}}}_{R_{j,l}}.$ 

Since $P_j(t_j^l)\le 
%is sub projection of
 P_j((t_j^l)^+)$, 
 %which is identified with the projection at the point $0$ of the algebra $C([0,1], E_2^{j,l})$, so we can 
{{we may  identify $E_1^{j,l}$ with a unital hereditary \SCA\, of}}
 %a corner sub algebra of 
 $E_2^{j,l}$. {{Denote this identification}} 
 %  denote this map 
 by $\psi_0^{j,l}: E_1^{j,l} \to E_2^{j,l}$. 
 
 Similarly since $P_j(t_j^l)\le 
 %$ is sub projection of $
 P_j((t_j^{l})^-)$, 
 %which is identified with the projection at the point $1$ of the algebra $C([0,1], E_2^{j,l-1})$, so we can identify $E_1^{j,l}$ with a corner sub algebra of $E_2^{j,l-1}$, and denote this map 
{{ we obtain a \hm\,}} $\psi_1^{j,l}: E_1^{j,l} \to E_2^{j,l-1}$
{{which identifies $E_1^{j,l}$ with a unital hereditary \SCA\, of $E_2^{j,l-1}.$}}

 Set $E_1:=pF_1p \oplus \bigoplus_{j=1}^k(\bigoplus_{l=1}^{n_j-1} E_1^{{{j,l}}})$ (note we do  not include $E_1^{{j,l}}$ for $l=0$ and $l=n_j.$ {{ Instead, we include}}
% for which we will use 
 $pF_1p$) {{and let}} $E_2=\bigoplus_{j=1}^k(\bigoplus_{l=0}^{n_j-1} E_2^{{j,l}})$.
Let $\psi_0, \psi_1: E_1\to E_2$ be defined by $\psi_0|_{pF_1p}=\phi_0|_{pF_1p}: pF_1p \to \bigoplus_{j=1}^k E_2^{j,0}$,  $\psi_1|_{pF_1p}=\phi_1|_{pF_1p}: pF_1p \to \bigoplus_{j=1}^k E_2^{j,n_j-1}$, $\psi_0|_{E_1^{j,l}} =\psi_0^{j,l}: E_1^{j,l} \to E_2^{j,l}$ and $\psi_1|_{E_1^{j,l}} =\psi_1^{j,l}: E_1^{j,l} \to E_2^{j,l-1}$. 
{{We then  check}}  $A'={\overline{gBg}}\cong A(E_1, E_2, \psi_0, \psi_1)\in {\cal C}$. Namely, each element $(f,a)=\big((f_1, f_2, \cdots, f_k), a \big) \in {{{\overline{gBg}}}}$ corresponds to an element 
{{$(F, b)\in \{C([0,1], E_2)\oplus E_1: F(0)=\psi_0(b), F(1)=\psi_1(b))\}=
%{{or $
A(E_1, E_2, \psi_0, \psi_1)$,}}
where 
\beq\nonumber
&&\hspace{-0.2in}{{F={{\big(}}f_1^0, f_1^1,\cdots,  f_1^{n_1-1}, f_2^0, f_2^1,\cdots,  f_2^{n_2-1}, \cdots,  f_k^0, f_k^1,\cdots,  f_k^{n_k-1})\andeqn}}\\\nonumber
&&\hspace{-0.2in}{{b={{\big(}}a, f_1(t_1^1), f_1(t_1^2), \cdots, f_1(t_1^{n_1-1}), f_2(t_2^1), f_2(t_2^2), \cdots, f_2(t_2^{n_2-1}), \cdots f_k(t_k^1), f_k(t_k^2), \cdots, f_k(t_k^{n_k-1})\big)}}
\eneq
{{and where $f_j^l(t)\in E_2^{j,l}$ is defined by}}
\hspace{-0.1in}$$f_j^l(t)= f_j((t_j^{l+1}-t_j^l)t+t_j^l)\rforal t\in [0,1], j\in\{1,2,\cdots, k\}, l\in \{0,1,\cdots, n_j-1\}.$$
%$$\big((f_1^0, f_1^1,\cdots,  f_1^{n_1-1}, f_2^0, f_2^1,\cdots,  f_2^{n_2-1}, \cdots,  f_k^0, f_k^1,\cdots,  f_k^{n_k-1}),~~~~~~~~~~~~~~~~~~~~~~~~~~~~~~~~~~~~~~~~~~~ ~~~~$$
%$$
%~~~~~~~~~~(a, f_1(t_1^1), f_1(t_1^2), \cdots, f_1(t_1^{n_1-1}), f_2(t_2^1), f_2(t_2^2), \cdots, f_2(t_2^{n_2-1}), \cdots f_k(t_k^1), f_k(t_k^2), \cdots, f_k(t_k^{n_k-1}))\big)
%$$
%$$~~~~~~~~~~~~~~~~~~~~~~~~~~~~~~~~~~~~~~~~~~~~~~~~~~~~\in C([0,1], \bigoplus_{j=1}^k(\bigoplus_{l=0}^{n_j-1} E_2^{{j,l}}))\oplus pF_1p \bigoplus_{j=1}^k(\bigoplus_{l=1}^{n_j-1} E_1^{{{j,l}}}),$$
%where $f_j^l(t)= f_j((t_j^{l+1}-t_j^l)t+t_j^l)$ for all $t\in [0,1], j\in\{1,2,\cdots, k\}, l\in \{0,1,\cdots, n_j-1\}$.}} 
%{\red{I feel that we should say a few words more here.}}{\blue{Now it should be OK}}

\end{proof}

\begin{NN}\label{Ldert1} Let $A=A(F_1, F_2,\phi_0, \phi_1)\in {\cal C}$
be as \ref{Dbuild1}.  Let $h=(f,a)\in A_+$  with $\|h\|=1.$
{{Recall that we may write $C([0,1], F_2)=\bigoplus_{j=1}^k C([0,1]_j, M_{r_j}{{)}},$ where $[0,1]_j$ denotes
 the $j$-th interval.}}
%(where  $F_1=M_{R_1 }(\C)\oplus M_{R_2 }(\C)\oplus \cdots \oplus M_{R_l}(\C),$  
% $F_2=M_{r_1}(\C)\oplus M_{r_2 }(\C)\oplus \cdots \oplus M_{r_k }(\C)$ ) and $A={\overline{hBh}}$. 
For each fixed $j$,  consider $f_j= f|_{[0,1]_j}$.  
By a simple application  {{of}} Weyl's theorem, 
{{one can}} write the eigenvalues of $f_j(t)$ as {{continuous}} functions of $t,$ 
%(we omit $j$ in the expression---that is by $\ld_i$ we mean $\ld_{j,i}$)
$$\{0\leq \ld_{1,j}(t)\leq \ld_{2,j}(t)\leq\cdots \ld_{i,j}(t)\leq\cdots \leq \ld_{r_j,j}(t)\leq 1\}.$$
%{\blue{It is well known  that  all $\ld_{i,j}$ are {\red{continuous.}}
%(to see it,  for example by Weyl's theorem that $\max\{|\ld_{i,j}(t)-\ld_{i,j}(t')| \}\leq  \|f_j(t)-f_j(t')\|$).}} 
Let $e_1, e_2,...,e_{r_j}$ {{be}} mutually orthogonal rank one projections  and 
put $f'_j=\sum_{i=1}^{r_j}\lambda_{i,j}e_i.$ Then, on each $[0,1]_j,$ 
$f_j$ and $f_j'$ have exactly the same eigenvalues at each point $t\in [0,1]_j.$ 
Let $p\in F_1$ {{denote}} the range projection of $a\in (F_1)_+.$
By using 
a unitary in $C([0,1]_j, {\rm M}_{r_j}),$ it is easy 
 to construct a set of mutually orthogonal  rank one projections 
 $p_1, p_2, \cdots p_i,\cdots, p_{r_j}\in C([0,1], {\rm M}_{r_j})$ such that $g_j(t)=\sum_{i =1}^{r_j}\ld_i (t) p_i$
 satisfies $g_j(0)=f_j(0)$ and $g_j(1)=f_j(1)$. In particular $\sum_{\{i, \ld_i(0)>0\}} p_i{{(0)}}=\pi^j(\phi_0(p))\in {{\rm M}_{r_j}}$
 and $\sum_{\{i, \ld_i(1)>0\}} p_i{{(1)}}=\pi^j(\phi_1(p))\in {{\rm M}_{r_j}}$, where $\pi^j: F_2\to F_2^j={\rm M}_{r_j}$
 is the canonical quotient map to the $j$th summand. 
 Then, with $g|_{[0,1]_j}=g_j,$  $(g, a)\in A_+$. By a result of Thomsen, (see Theorem 1.2 of \cite{Thoms2}) (or \cite{Rl}),
 % {\blue{ See Theorem 1.2 of [Thomsen, Homomorphisms between finite direct sums of circle algebras, Linear and Multilinear Algebra 1992, Vol.32, pp33-50]}}  {\red{This needs a reference}} 
 for each $j$ there {{is a}}  sequence of unitaries $u_n^j\in C([0,1], {\rm M}_{r_j})$ with $u_n^j(0)=u_n^j(1)=\one_{r_j}$ (Note that as $g(0)=f(0)$ and $g(1)=f(1)$, we can choose $u_n^j(0)=u_n^j(1)=1$)  such that $g_j=\lim\limits_{n\to \infty} u_n^jf_j(u_n^j)^*$. Since $u_n^j(0)=u_n^j(1)=\one_{r_j}$, we can put $u_n^j\in C([0,1{{]}}, {\rm M}_{r_j})$ together to define unitary $u_n\in {\widetilde A}$ and get $(g, a)=\lim\limits_{n\to \infty} u_n(f,a)u_n^*.$
 In other words, $(g,a)\sim_{a.u} (f,a)$ in $A.$ Note this, in particular,  implies 
 that $\la (f,a)\ra =\la (g,a)\ra.$ 
 % }} Hence $(g,a)\sim_{a.u} (h,a)$. So by above assertion, WLOG, we can  assume that $h$ itself is of the form of $(g,a)$. 
%That is we assume that each $f_j(t)=\sum_{i =1}^{r_j}\ld_i (t) p_i$. We reserve $g$ for notation for further modification. 
  \end{NN}

\begin{lem}\label{L<thomsen}
{{Let $c=(g,a)\in A(F_1, F_2, \phi_0, \phi_1)_+$ with $\|(g,a)\|=1$ (see \ref{Dbuild1}).
Suppose {{that}}
$$g_j:=g|_{[0,1]_j}=\sum_{i=1}^{r_j}\lambda_{i,j}(t) p_{i,j}(t),$$
where $\lambda_{i,j}\in C([0,1])_+,$ and 
$p_{i,j}\in C([0,1], {\rm M}_{r_j})$  are mutually orthogonal rank one projections.
Then, 
for any $\ep>0,$ there exists $0\le h\le g$ such that $\|h-g\|<\ep,$ 
$(h, a)\in A(F_1, F_2, \phi_0, \phi_1),$ and 
$h_j:=h|_{[0,1]_j}$ satisfies the conditions described in \ref{LherC}.}}
\end{lem}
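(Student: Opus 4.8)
The plan is to leave $a$ (hence all boundary data) untouched and to perturb $g$ downward, eigenvalue-function by eigenvalue-function, so that each range projection acquires the piecewise structure demanded by \ref{LherC}. Writing $W_{i,j}:=\{t\in[0,1]:\lambda_{i,j}(t)>0\}$, the range projection of $g_j(t)$ is $\sum_{i\in S_j(t)}p_{i,j}(t)$ with $S_j(t)=\{i:t\in W_{i,j}\}$. The two defects that can occur are that the open sets $W_{i,j}$ may have infinitely many components (so the rank is not eventually constant), and that the rank may jump \emph{up} at a point, violating (3)--(4). Both will be removed by replacing $\lambda_{i,j}$ with $\mu_{i,j}:=\lambda_{i,j}\,\theta_{i,j}$ for a suitable continuous cut-off $\theta_{i,j}:[0,1]\to[0,1]$, and then setting $h_j:=\sum_{i=1}^{r_j}\mu_{i,j}\,p_{i,j}$ and $h:=(h_j)_j$.

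First I would prove the one-variable statement: given $\lambda\in C([0,1],[0,1])$ and $\eta:=\ep/2$, there is a continuous $\theta$ with $0\le\theta\le1$ such that $\mu:=\lambda\theta$ satisfies $0\le\mu\le\lambda$, $\|\mu-\lambda\|<\ep$, $\mu(0)=\lambda(0)$, $\mu(1)=\lambda(1)$, the set $\{\mu>0\}$ is a finite union of relatively open intervals, and $\mu\equiv0$ on a one-sided neighbourhood of an endpoint whenever $\lambda$ vanishes there. To produce $\theta$, let $K$ be the compact set $\{\lambda\ge\eta\}$ together with whichever of the points $0,1$ satisfies $\lambda>0$ there; cover $K$ by finitely many open intervals on whose closures $\lambda>0$ (possible since $\lambda>0$ on $K$, using compactness), and let $W$ be their union, a finite disjoint union of relatively open intervals with $K\subseteq W$ and $\lambda>0$ on $\overline W$. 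By Urysohn's lemma choose $\theta$ with $\theta=1$ on $K$, $0\le\theta\le1$, and $\{\theta>0\}=W$. Then $\{\mu>0\}=\{\lambda>0\}\cap W=W$; moreover $\theta<1$ forces $\lambda<\eta$, so $\|\mu-\lambda\|=\|\lambda(1-\theta)\|\le\eta<\ep$; and $\theta=1$ near an endpoint where $\lambda>0$, while $\theta=0$ near an endpoint where $\lambda$ vanishes, which yields the required endpoint behaviour and $\mu(0)=\lambda(0)$, $\mu(1)=\lambda(1)$.

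Applying this to every $\lambda_{i,j}$ yields $h$ with $0\le h\le g$ and $\|h-g\|<\ep$ (both from $0\le\mu_{i,j}\le\lambda_{i,j}$, $\|\mu_{i,j}-\lambda_{i,j}\|<\ep$ and mutual orthogonality of the $p_{i,j}$). Since $\mu_{i,j}(0)=\lambda_{i,j}(0)$ and $\mu_{i,j}(1)=\lambda_{i,j}(1)$ for every $i,j$, we get $h(0)=g(0)=\phi_0(a)$ and $h(1)=g(1)=\phi_1(a)$, so $(h,a)\in A(F_1,F_2,\phi_0,\phi_1)$. For the structure of the range projection $P_j(t)=\sum_{i:\,\mu_{i,j}(t)>0}p_{i,j}(t)$ of $h_j$, take the partition $\{t_j^l\}$ to consist of $0,1$ together with all endpoints of the finitely many intervals making up the sets $\{\mu_{i,j}>0\}$, $1\le i\le r_j$. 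On each open interval of the partition the active index set is constant, so $P_j$ is continuous of constant rank, giving (1), and the one-sided limits at the $t_j^l$ exist, giving (2). Condition (3) is then automatic: if $\mu_{i,j}(t)>0$ then by continuity $\mu_{i,j}>0$ on both sides, so the active set at $t$ is contained in each one-sided active set, whence $P_j(t)\le P_j((t)^+)$ and $P_j(t)\le P_j((t)^-)$ by orthogonality. For (4), at $t=0$ the active set is $\{i:\lambda_{i,j}(0)>0\}$, whose projection sum is the range projection of $g_j(0)=\pi^j(\phi_0(a))$, namely $\pi^j(\phi_0(p))$; and the near-endpoint vanishing of the $\mu_{i,j}$ forces the right-hand active set to equal this one, so $P_j(0)=P_j(0^+)=\pi^j(\phi_0(p))$, and symmetrically at $t=1$.

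The point requiring care is the interaction between the constraint $(h,a)\in A$ and condition (4): I must keep each $\mu_{i,j}$ \emph{exactly} equal to $\lambda_{i,j}$ at $t=0$ and $t=1$, while simultaneously forcing $\mu_{i,j}$ to vanish on a one-sided neighbourhood of an endpoint at which $\lambda_{i,j}$ vanishes, so that the range projection does not jump up there. These demands are compatible precisely because $\lambda_{i,j}<\eta$ near such an endpoint, which is exactly what the choice of the core $K$ and the cut-off $\theta$ is engineered to exploit. Everything else is a routine compactness and Urysohn argument, together with the standard fact that the homomorphisms $\phi_0,\phi_1$ carry the range projection $p$ of $a$ to the range projections of $\phi_0(a)$ and $\phi_1(a)$.
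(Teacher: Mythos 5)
Your proposal is correct and is essentially the paper's own argument: both proofs leave the projections $p_{i,j}$ and the element $a$ untouched, multiply each eigenvalue function $\lambda_{i,j}$ by a continuous cut-off so that its support becomes a finite union of intervals while the values at $t=0,1$ are preserved (so $(h,a)$ stays in the pull-back) and the function vanishes on a one-sided neighbourhood of any endpoint where $\lambda_{i,j}$ vanishes, and then verify conditions (1)--(4) of \ref{LherC} by taking the partition generated by the interval endpoints of the supports. The only difference is cosmetic and dual in flavour: the paper fattens the zero sets $G_{i,j}$ into finitely many closed intervals $S_{i,j}$ and kills $\lambda_{i,j}$ there, whereas you retain $\lambda_{i,j}$ on the compact set $\{\lambda_{i,j}\ge \eta\}$ and cut it off outside a finite union of open intervals covering that set; both yield the same norm estimate and the same boundary behaviour (your parenthetical claim that $\theta=1$ near a positive endpoint is not literally forced by your construction, but it is also not needed, since $\{\mu>0\}=W$ is relatively open and contains that endpoint).
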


\begin{proof}
Fix $\ep_1>0$ and $j.$ 
Let $g_j=g|_{[0,1]_j}.$ Let $G_{i,j}=\{t\in [0,1]: \lambda_{i,j}(t)=0\}$. Since all $G_{i,j}$ are closed sets, there is $\dt_0>0$ such that if $0\notin G_{i,j}$ (or $1\notin G_{i,j}$, respectively), then $\dist(0, G_{i,j})>2 \dt_0$ (or $\dist(1, G_{i,j})>2 \dt_0,$ respectively). 
%It is known (see ?, for example) that there are non-negative continuous functions 
%$\lambda_{1,j}, \lambda_{2,j},...,\lambda_{r_j,j}\in C([0,1])$ and mutually orthogonal 
%rank one projections $p_{1, j},p_{2, j},...,p_{r_j, j}\in C([0,1], M_{r_j})$ such that
%\beq
%\|\sum_{i=1}^{r_j} \lambda_{i,j}p_{i,j}-g_j\|<\ep/2\\
%\pi^j(\phi_0(a))=g_j(0)=\sum_{i=1}^{r_j} \lambda_{i,j}p_{i,j}(0)\andeqn\\
%\pi^j(\phi_1(a))=g_j(1)=\sum_{i=1}^{r_j} \lambda_{i,j}p_{i,j}(1).
%\eneq
Fix $\dt>0.$ {\ {such that $\dt<\dt_0.$}}
For each $i,$ there is {{a}} closed set $S_{i,j}$ which is a union of finitely many  closed interval 
containing the set {\ {$G_{i,j}$}}
such that 
\beq
{\rm dist}(s, G_{i,j})<\dt/4\rforal s\in S_{i,j}.
\eneq
{\ {Hence,  ${\rm dist}(0, S_{i,j})>\dt$ (and ${\rm dist}(1, S_{i,j})>\dt$) }}
%$0$ or $1$ are not in $S_{i,j},$ 
if $G_{i,j}$ does not contain them. 
Choose $f_{i,j}\in C([0,1])_+$ such that $f_{i,j}|_{S_{i,j}}=0,$ $1\ge {\ {f}}_{i,j}(t)>0,$ 
if $t\not\in S_{i,j}$ and $f_{i,j}(t)=1$ if ${\rm dist}(t, S_{i,j})>\dt/2.$ 
Put $\lambda_{i,j}'=f_{i,j}\lambda_{i,j}.$  Then $0\le \lambda_{i,j}'\le \lambda_{i,j}.$
Define 
$h_j=\sum_{i=1}^{r_j} \lambda_{i,j}'p_{i,j}.$  Then $h_j\le g_j.$
We can choose $\dt$ sufficiently small to begin with so that
\beq
\|h_j-\sum_{i=1}^{r_j} {{\lambda_{i,j}}}p_{i,j}\|<\ep.
\eneq
Put $h\in C([0,1], F_2)$ such that $h|_{[0,1]_j}=h_j,$ $j=1,2,...,k.$ 
Therefore 
\beq
\|h-g\|<\ep.
\eneq
{{From the construction, we have  $h_j(0)=g_j(0)$ and $h_j(1)=g_j(1)$ (note that if $0\notin G_{i,j}$ (or $1\notin G_{i,j}$), then $f_{i,j}(0)=1$ (or $f_{i,j}(1)=1$)).}}
It follows that $h(0)=g(0)$ and $h(1)=g(1).$ 
Therefore $(h,a)\in A(F_1, F_2, \phi_0, \phi_1).$ Moreover, 
$(h,a)\le (g,a).$

Let $q_{i,j}(t)=p_{i,j}(t)$ if $\lambda_{i,j}'(t)\not=0$ and $q_{i,j}(t)=0$ if $\lambda_{i,j}'(t)=0.$ 
For each $i,$ 
there is a partition $0=t_{i,j}^{(0)}<t_{i,j}^{(1)}<\cdots <t_{i,j}^{(l_j)}=1$ 
such that 
$q_{i,j}$ is continuous on $(t_{i,j}^{(l)}, t_{i,j}^{(l+1)})$.  Namely, on each interval $(t_{i,j}^{(l)}, t_{i,j}^{(l+1)})$, $q_{i,j}(t)$ either constant zero projection or rank one projection $p_{i,j}(t)$ and therefore both $\lim_{s\to t_{i,j}^{(l)}+}q_{i,j}(s)$ and $\lim_{s\to t_{i,j}^{(l+1)}-}q_{i,j}(s)$ exist. Furthermore, if $q_{i,j}(t)$ is zero on the open interval $(t_{i,j}^{(l)}, t_{i,j}^{(l+1)})$, then $q_{i,j}(t)$ is also zero on the boundary (since $\lambda_{i,j}'(t)$ is continuos). Hence we have 
$$q_{i,j}((t_{i,j}^{(l)})^+):=\lim_{s\to t_{i,j}^{(l)}+}q_{i,j}(s)\geq q_{i,j}(t_{i,j}^{(l)})~~~\mbox{and}~~~~q_{i,j}((t_{i,j}^{(l+1)})^-):=\lim_{s\to t_{i,j}^{(l+1)}-}q_{i,j}(s)\geq q_{i,j}(t_{i,j}^{(l+1)}).$$
%and $\lim_{s\to t_{i,j}^{(l)}+}q_{i,j}(s)=q_{i,j}(t_{i,j}^{(l+l)})$ and 
%$\lim_{s\to t_{i,j}^{(l)}-}q_{i,j}(s)=q_{i,j}(t_{i,j}^{(l)}).$
Define $P_j(t)=\sum_{i=1}^{r_j}q_{i,j}(t).$ 
Then $P_j$ satisfies the conditions described in \ref{LherC}.
\end{proof}

We would like to include the following known fact.

\begin{cor}\label{CherCC}
{{Let $A\in {\cal C}$ (or ${\cal C}_0,$ or ${\cal C}_0^{0}$), let $a\in A_+$  be a full element, and 
set $B=\overline{aAa}.$ 
Then, for any $\ep>0,$ there is $0\le b\le a$ such that $b$ is full in $A,$ 
$\|a-b\|<\ep,$ 
%such that 
$\overline{bBb}=\overline{bAb}\in {\cal C}$ (or ${\cal C}_0,$ or ${\cal C}_0^0$).}}
\end{cor}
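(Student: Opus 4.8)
The plan is to chain the three preceding results \ref{Ldert1}, \ref{L<thomsen}, and \ref{LherC}, and then transport the resulting hereditary subalgebra back underneath the given full element. First observe that the asserted identity $\overline{bBb}=\overline{bAb}$ is automatic once $0\le b\le a$: since $a\in B=\overline{aAa}$ and $B$ is hereditary, $b\in B$ and $bxb\in B$ for all $x\in A$, so $\overline{bAb}\subset B$ is the hereditary \SCA\ of $B$ generated by $b$, i.e.\ $\overline{bAb}=\overline{bBb}$. Thus the whole content is to produce, for a full $a\in A_+$ and given $\ep>0$, an element $b$ with $0\le b\le a$, $b$ full, $\|a-b\|<\ep$, and $\overline{bAb}\in {\cal C}$. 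I would first treat the case $A\in {\cal C}$ and then deduce the cases $A\in {\cal C}_0$ and $A\in {\cal C}_0^0$: since $b$ is full, $\overline{bAb}$ is a full hereditary \SCA\ of $A$, hence stably isomorphic to $A$ by Brown's theorem, so it has the same ordered $K$-theory, giving ${\rm K}_1=\{0\}$, ${\rm K}_0(\,\cdot\,)_+=\{0\}$, and (in the ${\cal C}_0^0$ case) ${\rm K}_0(\,\cdot\,)=\{0\}$; and since an algebra in ${\cal C}_0$ has compact spectrum, i.e.\ is compact, Corollary \ref{CherePA=A} and Lemma \ref{compactrace} give $0\notin \overline{{\mathrm{T}}(\overline{bAb})}^{\mathrm w}$. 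Hence $\overline{bAb}\in {\cal C}$ together with these inherited properties places it in ${\cal C}_0$ (resp.\ ${\cal C}_0^0$).

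For $A=A(F_1,F_2,\phi_0,\phi_1)\in {\cal C}$, write the full element as $a=(f,\xi)$ with $\xi\in F_1$ and $\|a\|=1$. By \ref{Ldert1} there is $(g,\xi)\in A_+$ with $(g,\xi)\sim_{a.u.}(f,\xi)$ whose restriction to each interval has the diagonal form $g_j=\sum_{i=1}^{r_j}\lambda_{i,j}(t)p_{i,j}(t)$ with continuous mutually orthogonal rank one projections $p_{i,j}$ and with $g(0)=f(0)$, $g(1)=f(1)$. Applying \ref{L<thomsen} to $(g,\xi)$, I obtain $0\le h\le g$ with $\|h-g\|<\ep/3$, $(h,\xi)\in A$, and $h_j$ satisfying the range-projection conditions (1)--(4) of \ref{LherC}; then \ref{LherC} yields $\overline{(h,\xi)A(h,\xi)}\in {\cal C}$. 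Because the cutting in \ref{L<thomsen} multiplies each eigenvalue $\lambda_{i,j}$ by a function vanishing only on the zero set of $\lambda_{i,j}$, the element $(h,\xi)$ has the same support as $(g,\xi)$; as $(g,\xi)\sim_{a.u.}(f,\xi)=a$ is full, $(h,\xi)$ is full as well.

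Finally I would transport this construction back under $a$. Choosing a unitary $w\in {\widetilde A}$ from the approximate unitary equivalence of \ref{Ldert1} with $\|waw^*-g\|$ as small as desired, set $b_0=w^*(h,\xi)w\le w^*gw=:c_0$, so that $\overline{b_0Ab_0}=w^*\overline{(h,\xi)A(h,\xi)}w\cong \overline{(h,\xi)A(h,\xi)}\in {\cal C}$ (the class ${\cal C}$ being closed under isomorphism), $b_0$ is full, and $\|c_0-a\|$ is small. The remaining point, which I expect to be the main obstacle, is that $b_0$ satisfies only $b_0\le c_0$ with $c_0$ close to $a$, rather than $b_0\le a$: the diagonalization of \ref{Ldert1} is available only up to approximate unitary equivalence, so no honest $b\le a$ is produced directly. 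I would repair this by a functional-calculus compression, replacing $b_0$ by $b=a^{1/2}(c_0+\delta)^{-1/2}b_0(c_0+\delta)^{-1/2}a^{1/2}$ for suitable small $\delta>0$; since $b_0\le c_0$ gives $(c_0+\delta)^{-1/2}b_0(c_0+\delta)^{-1/2}\le 1$, one gets $0\le b\le a$, while $\|a-c_0\|$ and $\|b_0-c_0\|$ small force $\|b-a\|<\ep$. The delicate part is to check that this compression neither destroys fullness nor changes the isomorphism class of $\overline{bAb}$; here I would use that $a$ and $c_0$ are uniformly close and that $A$ has stable rank one (\ref{str=1}(1)), invoking \ref{Lrorm} and the realization provided by \ref{Lalmstr1} to identify $\overline{bAb}$ with $\overline{b_0Ab_0}\in {\cal C}$. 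Alternatively, one may bypass the compression by applying \ref{Lalmstr1} directly to the subequivalence $(h,\xi)\lesssim a$, realizing a copy of $\overline{(h,\xi)A(h,\xi)}$ as a hereditary \SCA\ of $B=\overline{aAa}$ of the required form.
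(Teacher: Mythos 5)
Your overall strategy is the same as the paper's: reduce to the diagonal form of \ref{Ldert1}, cut by \ref{L<thomsen}, apply \ref{LherC}, and pass from ${\cal C}$ to ${\cal C}_0$ (resp.\ ${\cal C}_0^0$) via Brown's theorem and a trace argument. One minor flaw first: your fullness argument is not quite right, since the cutting functions $f_{i,j}$ in \ref{L<thomsen} vanish on a closed set $S_{i,j}$ that strictly contains the zero set $G_{i,j}$, so $(h,\xi)$ does \emph{not} have the same support as $(g,\xi)$. Fullness is nevertheless salvageable, as in the paper: since $a$ is full and each $f_j$ is continuous on a compact interval, $\inf_t\|f_j(t)\|>0$, so the top eigenvalue function has empty zero set and is left untouched by the cut; hence $h(t)\neq 0$ for every $t$, and the $F_1$-component is unchanged.

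The serious gap is exactly where you predicted it, and neither of your two repairs closes it. The compression $b=a^{1/2}(c_0+\delta)^{-1/2}b_0(c_0+\delta)^{-1/2}a^{1/2}$ does give $0\le b\le a$ and, with the quantifiers ordered correctly ($\delta$ fixed first, then the unitary chosen far enough along the sequence), $\|a-b\|<\ep$; but it gives no control on $\overline{bAb}$. Writing $b=y^*y$ with $y=b_0^{1/2}(c_0+\delta)^{-1/2}a^{1/2}$, all one obtains is that $\overline{bAb}$ is isomorphic to $\overline{yy^*Ayy^*}$, a hereditary \SCA\, of $\overline{b_0Ab_0}\in{\cal C}$ --- that places it only in ${\cal C}'$, and the entire point of this corollary (and the reason the class ${\cal C}'$ exists) is that such subalgebras are not automatically in ${\cal C}$; norm-closeness of $yy^*$ to $b_0$ identifies neither the two hereditary subalgebras nor their isomorphism classes. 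Your alternative via \ref{Lalmstr1} fails in the opposite direction: the unitary $u\in{\widetilde A}$ with $u^*f_\dt((h,\xi))u\in\overline{aAa}$ comes with no norm estimate, so both $b\le a$ and $\|a-b\|<\ep$ are lost. The paper avoids element-level surgery altogether: from the approximate unitary equivalence of \ref{Ldert1} it extracts a $*$-isomorphism $\phi\colon\overline{a'Aa'}\to\overline{aAa}$, runs the diagonal-form case inside $\overline{a'Aa'}$ to get $b'\le a'$ with $\|a'-b'\|<\ep$, $b'$ full and $\overline{b'Ab'}\in{\cal C}$, and then sets $b=\phi(b')$, so that order, norm, fullness and membership in ${\cal C}$ all transfer at once because they are $*$-isomorphism invariants (the paper is admittedly terse about why $\phi$ can be taken to match $a'$ with $a$, but that is the mechanism it relies on). If you want to repair your argument, transport the hereditary subalgebra, not the element.
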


\begin{proof}
{{Let us first assume that $a$ satisfies the condition on $c$ in  \ref{L<thomsen}.}} 
{{Then, by \ref{L<thomsen} and by \ref{LherC}, there is  $0\le b\le a$ such that, for any
$\ep>0,$ $\|a-b\|<\ep$ and 
$\overline{bAb}\in {{ {\cal C}}}.$ 
{{Write $a=(f,d)\in A(F_1, F_2, \phi_0, \phi_1)_+,$ where $f\in C([0,1], F_2)$ and $d\in F_1.$}}
%{\red{(${\cal C}_0,$ or  ${\cal C}_0^0$).}} 
Note that, if $a$ is full,  $\|f(t)\|\not=0$ for all $t\in [0,1]_j,$ $j=1,2,...,k,$ and $d$ is full in $F_1.$
Since $f(t)$ is also continuous on each $[0,1]_j,$  $\inf\{\|f(t)\|: t\in [0,1]_j\}>0.$
Therefore, ${{ b=(g,d_1)\in A(F_1, F_2, \phi_0, \phi_1)}}$ can be chosen so that $\|{{g(t)}}\|\not=0$ for all $t\in [0,1]_j,$ $j=1,2,...,k,$ and 
also $d_1$ is full in $F_1.$
In other words, $b$ can also be chosen to be full. 
Thus $\tau(b)>0$ for all $\tau\in T(A).$ If $0\not \in \overline{{\rm T}(A)}^{\rm w},$ 
Then $\inf\{\tau(b): \tau\in T(A)\}>0.$ This implies that $0\not\in \overline{{\rm T}(\overline{bAb})}^{\rm w}.$
%If $K_i(A)=\{0\},$ 
Then $B:=\overline{bAb}$ is stably isomorphic to $A$ by  Brown's theorem (\cite{Br1}).
It follows that $K_i(B)\cong K_i(A)$ and $K_0(B)_+=K_0(A)_+.$
%So $K_i(\overline{bBb})=0.$  
%$i=0,1.$ 
{{T}}hus, if $A$ is in ${\cal C}_0$ (or is in ${\cal C}_0^0$), so also is $B.$}}

In general, by \ref{Ldert1}, $a$ is approximately unitarily equivalent to $a'\in A$ which 
satisfies the condition for $c$ in  \ref{L<thomsen}.  Therefore there is an isomorphism 
$\phi: \overline{a'Aa'}\to \overline{aAa}.$ 
Let $b'\le a'$ be as given by the first part of the proof. Choose $b=\phi(b').$ 
{{The conclusion then holds for $b.$}}
% follows.

\end{proof}

\begin{rem}\label{Rsemiproj}

Let $C\in {\cal C}_0'$ (or ${{\cal C}_0^0}'$), let   $e\in C_+$ {{be}} such 
that $\tau(f_{1/2}(e){)}>\mathfrak{f}>0$ for all $\tau\in {\rm T}(C),$ let ${\cal F}$ be a finite subset
in the unit ball of $C,$ 
and let $\ep>0.$  Put $\ep_0=\min\{\mathfrak{f}/4, \ep/4\}.$ 
Choose $\eta>0$ such that 
\beq
\|f_{1/2}(a')-f_{1/2}(b')\|<\ep_0
\eneq
if $0\le a', b'\le 1$ and $\|a'-b'\|<\eta.$ We may assume that $\eta<\ep_0.$
Let $e_C\in C$ be a strictly positive element  such that $\|e_C\|=1$ and 
\beq
\|e_Cfe_C-f\|<\eta/4\rforal f\in {\cal F}\cup \{e, f_{1/2}(e)\}.
\eneq 
By  \ref{CherCC}, there exists $b\in C_+$ {{with}}  $b\le e_C$ and $\|b-e_C\|<\eta/4$ such 
that $B:=\overline{bCb}$ in  ${\cal C}_0$ (or in ${\cal C}_0^{0}$).
Define $\psi: C\to B$ by 
$\psi(c)=bcb$ for all $c\in C.$ 
Then, {{for all $f\in {\cal F}\cup \{e, f_{1/2}(e)\},$}}
\beq
\|\psi(f)-f\|<\eta<\ep,\andeqn \tau(f_{1/2}(\psi(e)))>\mathfrak{f}/2\rforal \tau\in {\rm T}(C).
\eneq
Consequently, {{as $B$ is a hereditary \SCA\, of $C,$}}
\beq
\tau(f_{1/2}(\psi(e)))>\mathfrak{f}/2\rforal \tau\in {\rm T}(B).
\eneq

It follows that, in the definition of ${\cal D}$ and ${\cal D}_0,$ we  may 
%always 
assume that
$D\in {\cal C}_0$ (or $D\in {\cal C}_0^0$).

%Let $A\in {\cal C}_0$ (or ${\cal C}_0^{0}$)  such that it has an ideal $I$ such that $A/I\cong \C.$
%Then, $I$ itself is in ${\cal C}_0

\end{rem}

%%%%%

\begin{cor}\label{inductived0}
{{Let $A$ be a simple \CA\, which is an inductive limit of \CA s in ${\cal C}_0'$ (or in ${{\cal C}_0^0}'$). 
Then $A$ can be also written as an inductive limit of \CA s in ${\cal C}_0$ (or in ${\cal C}_0^0$).}}
\end{cor}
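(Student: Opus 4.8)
The plan is to exhibit $A$ as an inductive limit of algebras in ${\cal C}_0$ (respectively ${{\cal C}_0^0}$) by an Elliott approximate intertwining, the point being to turn the approximate compressions furnished by \ref{CherCC} into honest connecting \hm s by invoking semiprojectivity. Write $A=\varinjlim(A_n,\phi_{n,m})$ with $A_n\in{\cal C}_0'$ and canonical maps $\phi_{n,\infty}\colon A_n\to A$; since each $A_n$ is separable, so is $A$, and $\bigcup_n\phi_{n,\infty}(A_n)$ is dense. The basic local fact I would record first is: for every $n$, every finite ${\cal F}\subset A_n$ and every $\ep>0$ there exist a hereditary \SCA\, $D\subset A_n$ with $D\in{\cal C}_0$ and a compression $\psi(x)=bxb$ (with $0\le b\le 1$ in $A_n$) such that $\|\psi(x)-x\|<\ep$ for $x\in{\cal F}$. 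This is precisely the construction in \ref{Rsemiproj}: writing $A_n=\overline{a_nC_na_n}$ with $C_n\in{\cal C}_0$ and $a_n$ full in $C_n$, one chooses a strictly positive $e_C\in A_n$ acting as an approximate unit on ${\cal F}$ and applies \ref{CherCC} to the full element $e_C$ of $C_n$ to obtain $b\le e_C$ with $\overline{bC_nb}=\overline{bA_nb}\in{\cal C}_0$ and $\|e_C-b\|$ small.

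Next I would construct recursively integers $n_1<n_2<\cdots$, positive contractions $b_k\in A_{n_k}$ and hereditary \SCA s $D_k=\overline{b_kA_{n_k}b_k}\in{\cal C}_0$, together with finite sets ${\cal F}_k\subset D_k$ that are $\ep_k$-dense in the unit ball of $D_k$, where $\sum_k\ep_k<\infty$. Having built $D_k$ and ${\cal F}_k$, I would choose $n_{k+1}$ and $b_{k+1}$ so large that $b_{k+1}$ acts as an approximate unit on $\phi_{n_k,n_{k+1}}({\cal F}_k)$ and captures the next element of a fixed dense sequence of $A$. The compression $L_k(x)=b_{k+1}\phi_{n_k,n_{k+1}}(x)b_{k+1}$ is then a \cpc\, from $D_k$ to $D_{k+1}$ which is ${\cal G}_k$-$\dt_k$-multiplicative on ${\cal F}_k$ and satisfies $\|L_k(x)-\phi_{n_k,\infty}(x)\|<\ep_k$ in $A$. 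The crucial step is to replace $L_k$ by an honest \hm: since $D_k\in{\cal C}_0\subset{\cal C}$ is semiprojective by \ref{str=1} (3), for $n_{k+1},b_{k+1}$ chosen relative to the semiprojectivity data of $D_k$ there is a genuine \hm\, $\gamma_k\colon D_k\to D_{k+1}$ with $\|\gamma_k(x)-L_k(x)\|<\ep_k$ for all $x\in{\cal F}_k$.

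With $\lambda_k\colon D_k\to A$ the inclusions $D_k\subset A_{n_k}\xrightarrow{\phi_{n_k,\infty}}A$, the estimates give $\|\lambda_{k+1}\gamma_k(x)-\lambda_k(x)\|<3\ep_k$ on ${\cal F}_k$ with $\sum_k\ep_k<\infty$, the sets $\gamma_k({\cal F}_k)$ are absorbed up to $\ep_{k+1}$ into ${\cal F}_{k+1}$, and $\bigcup_k\lambda_k(D_k)$ is dense in $A$ by the capturing condition. Elliott's approximate intertwining then produces an isomorphism $A\cong\varinjlim(D_k,\gamma_k)$, which is the desired presentation of $A$ as an inductive limit of algebras in ${\cal C}_0$. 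The argument is word for word the same with ${{\cal C}_0^0}$ in place of ${\cal C}_0$, since both \ref{CherCC} and the semiprojectivity of \ref{str=1} (3) apply to ${{\cal C}_0^0}\subset{\cal C}$.

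The main obstacle I anticipate is the interlocking of constants in the recursion: after $D_k$ and ${\cal F}_k$ are fixed one must select $n_{k+1}$ and $b_{k+1}$ so that simultaneously $L_k$ meets the ${\cal G}_k$-$\dt_k$-multiplicativity demanded by the semiprojectivity of $D_k$, $L_k$ stays within $\ep_k$ of the inclusion, and $D_{k+1}$ both approximately contains $\phi_{n_k,n_{k+1}}(D_k)$ and captures the next dense element. A secondary point to handle carefully is that the maps $\lambda_k$ need not be injective if the original connecting maps are not; this is absorbed by the intertwining framework, or one may first telescope to injective connecting maps using simplicity of $A$. No new idea beyond \ref{CherCC} and semiprojectivity is required, the content being entirely in the sequencing of these approximations.
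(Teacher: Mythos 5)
Your proposal follows essentially the same route as the paper's proof: decompose each $A_n$ into hereditary \SCA s lying in ${\cal C}_0$ (or ${\cal C}_0^0$) via \ref{CherCC}, use semiprojectivity of algebras in ${\cal C}$ to replace the connecting data by honest \hm s between these subalgebras agreeing with $\phi_n$ up to summable errors on growing finite sets, and conclude with a one-sided Elliott approximate intertwining against the maps into $A$. The differences are cosmetic — the paper perturbs the restrictions $\phi_n|_{C_{n,k_n}}$, which are genuine \hm s into the increasing union $A_{n+1}=\overline{\bigcup_k C_{n+1,k}}$, so as to land in a finite stage, whereas you perturb the approximately multiplicative compressions $L_k$ into \hm s (both standard consequences of semiprojectivity), and your phrase ``$\ep_k$-dense in the unit ball of $D_k$'' is literally impossible (the ball is noncompact) and should be replaced by the paper's bookkeeping with nested dense sequences $\{z_{n,k}\}$ whose first $m$ terms are forced into ${\cal F}_m$.
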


\begin{proof}
{{Let $C\in {\cal C}_0'$ (or $C\in {{\cal C}_0^0}'$). 
Then, by \ref{CherCC}, $C=\overline{\bigcup_{k=1}^{\infty} C_k},$ where 
each $C_k\in {\cal C}_0$ (or $C_k\in {\cal C}_0^0$),  $C_k\subset C_{k+1}$
and $C_k$ is a hereditary \SCA\, of $C.$ $k=1,2,....$}}

{{Suppose that $A=\lim_{n\to\infty}(A_n, \phi_n),$ where $A_n\in {\cal C}_0'$ (or 
$A\in {{\cal C}_0^0}'$) and $\phi_n: A_n\to A_{n+1}$ is a \hm, $n=1,2,...,$
If $m>n,$ put $\phi_{n,m}=\phi_m\circ \phi_{m-1}\circ\cdots \circ \phi_n: A_n\to A_m$ 
and $\phi_{n, \infty}: A_n\to A$  the \hm\, induced by the inductive system. 
Choose a dense sequence $\{x_n\}$ in the unit ball of $A$ such that
$\{x_1,x_2,...,x_n\}\subset \phi_{n, \infty}(A_n),$ $n=1,2,....$
Write $A_n=\overline{\bigcup_{k=1}^{\infty} C_{n,k}},$ where 
$C_{n,k}\in {\cal C}_0$ (or $C_{n,k}\in {\cal C}_0^0$), $C_{n, k}\subset C_{n,k+1},$ $k=1,2,....$ 
\Wlog, we may assume that $x_j\in \phi_{j,\infty}(C_{j,j}),$ $j=1,2,....$ 
Let $y_{j,i}\in C_{j,j}$ such that $\phi_{j, \infty}(y_{j,i})=x_i,$ $i=1,2,..., j$ and $j=1,2,....$}}

{{Let $\{z_{n,k}\}$ be a dense sequence {{in}} $A_n.$  We may assume that $y_{j,i}\in \{z_{j,k}\}$
for $i=1,2,...,j,$ and $j=1,2,....$ 
We may also assume $\phi_n(z_{n,k})\subset \{z_{n+1,k}\},$ $n=1,2,....$
Put ${\cal F}_n=\{y_{n,i}: 1\le i\le n\}\cup\{z_{n,i}: 1\le i\le n\},$ $n=1,2,....$
By the semi-projectivity of \CA s in ${\cal C},$ one easily produces 
a sequence of \hm s  $\psi_n: C_{n,k_n}\to C_{n+1, k_{n+1}}$ for some $k_n\ge n$
such that $\|\psi_n(a)-\phi_n(a)\|<1/2^n$
for all $a\in {\cal F}_n,$ $n=1,2,....$}}

{{Let $B=\lim_{n\to\infty}(C_{n, k_n}, \psi_n).$ 
Then $B$ is an inductive limit of \CA s in ${\cal C}_0$ (or in ${\cal C}_0^0$). 
Let $C=\overline{\bigcup_{n=1}^{\infty}\phi_{n, \infty}(C_{n,k_n})}.$
Then $C$ is a \SCA\, of $A.$  Since $\{x_n\}\subset C,$ $C=A.$
Let $\imath_n: C_{n, k_n}\to C_{n,k_n}$ be the identity map. 
Then,
\beq
\|\phi_n\circ \imath_n(a)-\imath_{n+1}\circ \psi_n(a)\|<1/2^n\rforal a\in {\cal F}_n.
\eneq
By the Elliott approximate intertwining argument, there is an isomorphism $j: B\to C$ 
which is induced by $\{\imath_n\}.$ It follows that $A$ is an inductive limit of 
\CA s in ${\cal C}_0$ (or in ${\cal C}_0^0$).}}
\end{proof}

%%%%%%

\section{
%Properties of
Traces and comparison {{for}} \CA s in the class ${\cal D}$}

\begin{prop}\label{PD0qc}
%Every $\sigma$-unital \CA\, in ${\cal D}$ is quasi-compact.
Let $A$ be a non-zero separable simple \CA\, in ${\cal D}.$
Then ${\mathrm{QT}(A)}={\mathrm{T}(A)}\not=\O.$ Moreover, $0\not\in \overline{{\mathrm{T}(A)}}^\mathrm{w}.$
\end{prop}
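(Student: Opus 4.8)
The non-emptiness ${\mathrm{T}(A)}\neq\O$ is already furnished by \ref{Phered}, and since ${\mathrm{QT}(A)}\supset{\mathrm{T}(A)}$ once I show $\mathrm{QT}=\mathrm{T}$ the first assertion follows. Throughout I would fix a strictly positive $a\in A_+$ with $\|a\|=1$ and use two elementary facts: for any (quasi)tracial \emph{state} $\tau$ and any positive contraction $c$ one has $\mathrm{d}_\tau(c)=\lim_\ep\tau(f_\ep(c))\le 1$, while $\mathrm{d}_\tau(a)=\|\tau\|=1$ for a \emph{tracial} state, because $(f_\ep(a))_\ep$ is an approximate unit. I would also record that $A$ is non-elementary: by \ref{UnifomfullTAD}/\ref{PD0=tad} the building block $D$ is a nonzero stably projectionless, hence infinite-dimensional, subalgebra of $A$, so $A\neq M_k$, while $A\neq{\cal K}$ since ${\mathrm{T}}({\cal K})=\O$ but ${\mathrm{T}}(A)\neq\O$. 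Hence (as in the proof of \ref{Pconscale}) for each $n$ there are mutually orthogonal $x_1\lesssim\cdots\lesssim x_n$ in $A$, all Cuntz-below $a$, so $n\,\mathrm{d}_\tau(x_1)\le\sum_j\mathrm{d}_\tau(x_j)\le\mathrm{d}_\tau(a)\le 1$; thus for each $\delta>0$ there is $b\in A_+\setminus\{0\}$ with $\mathrm{d}_\tau(b)\le\delta$ for every (quasi)tracial state $\tau$.

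For $0\notin\overline{{\mathrm{T}(A)}}^\mathrm{w}$ I would let $T=(N,M)$ be the map of \ref{UnifomfullTAD}, apply \ref{UnifomfullTAD} with ${\cal F}=\{a\}$ and ${\cal F}_0=\{f_{1/4}(a)\}$ for a small $\ep$ to be fixed, and obtain $\phi,\psi,D$ with $\psi(a)$ strictly positive in $D$, with $t(f_{1/4}(\psi(a)))\ge\mathfrak{f}_a$ for all $t\in{\mathrm{T}}(D)$, and with $\psi$ being $T$-$\{f_{1/4}(a)\}$-full in $\overline{DAD}=A$ (the last equality by simplicity of $A$). Full-ness, applied to the target $a$, gives $x_i$ with $i\le N:=N(f_{1/4}(a))$ and $\sum_i x_i^*\psi(f_{1/4}(a))x_i\approx a$, whence $(a-\ep)_+\lesssim\bigoplus_{i=1}^{N}\psi(f_{1/4}(a))$; since $f_{1/4}(a)\le 8a$ forces $\psi(f_{1/4}(a))\lesssim\psi(a)$, taking $\sup_\ep$ yields
$$1=\mathrm{d}_\tau(a)\le N\,\mathrm{d}_\tau(\psi(a)),\qquad\text{i.e.}\qquad \mathrm{d}_\tau(\psi(a))\ge 1/N .$$
As $\psi(a)$ is strictly positive in $D$, $\|\tau|_D\|=\mathrm{d}_\tau(\psi(a))\ge 1/N>0$, so $\tau|_D/\|\tau|_D\|\in{\mathrm{T}}(D)$ and $\tau(f_{1/4}(\psi(a)))\ge\|\tau|_D\|\,\mathfrak{f}_a\ge\mathfrak{f}_a/N$. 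Using $\psi(a)\ge\tfrac18 f_{1/4}(\psi(a))$ and $\tau(\phi(a))\ge0$ together with $|\tau(a)-\tau(\phi(a)+\psi(a))|<\ep$, I get $\tau(a)\ge\tfrac18\tau(f_{1/4}(\psi(a)))-\ep\ge\tfrac{\mathfrak{f}_a}{8N}-\ep$; choosing $\ep=\mathfrak{f}_a/(16N)$ gives $\tau(a)\ge\mathfrak{f}_a/(16N)$ for \emph{all} $\tau\in{\mathrm{T}}(A)$. Since evaluation at $a\in A$ is weak*-continuous, every element of $\overline{{\mathrm{T}(A)}}^\mathrm{w}$ takes value $\ge\mathfrak{f}_a/(16N)>0$ at $a$, so $0\notin\overline{{\mathrm{T}(A)}}^\mathrm{w}$.

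For $\mathrm{QT}(A)=\mathrm{T}(A)$ I would show that any $\tau\in{\mathrm{QT}}(A)$ is additive on $A_{\mathrm{s.a.}}$. Fixing self-adjoint $x,y$ and $\delta>0$, I apply \ref{UnifomfullTAD} with ${\cal F}=\{x,y,x+y,a\}$, with $\ep<\delta$, and with $b$ chosen so that $\mathrm{d}_\tau(b)<\delta$, obtaining $\phi,\psi,D$ with $\phi(A)\perp D$ and $\phi(a)\lesssim b$. The maps $\phi,\psi$ are linear, and orthogonality of $\phi(z),\psi(z)$ gives $\tau((\phi+\psi)(z))=\tau(\phi(z))+\tau(\psi(z))$ for each $z\in{\cal F}$ (a quasitrace is additive on commuting self-adjoints). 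Now $\tau|_D$ is a quasitrace on $D$, which is exact (being a hereditary \SCA\ of a one-dimensional NCCW complex, hence type I), so by Haagerup's theorem it is a trace; consequently $\tau(\psi(x+y))=\tau(\psi(x))+\tau(\psi(y))$ exactly. For the $\phi$-part, by \ref{180915sec2} each $\phi(z)$ lies in $A_0=\overline{\phi(a)A\phi(a)}$, whose strictly positive element $\phi(a)$ satisfies $\phi(a)\lesssim b$; writing $\phi(z)=\phi(z)_+-\phi(z)_-$ with both parts in $A_0$ and using $\tau(w)\le\mathrm{d}_\tau(w)$ for $0\le w\le1$ gives $|\tau(\phi(z))|\le\|z\|\,\mathrm{d}_\tau(b)$. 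Combining, $|\tau(x+y)-\tau(x)-\tau(y)|\le 3\ep+2(\|x\|+\|y\|)\mathrm{d}_\tau(b)<(3+2\|x\|+2\|y\|)\delta$, and letting $\delta\to0$ yields additivity, hence $\tau\in{\mathrm{T}}(A)$.

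I expect the main obstacle to be twofold and purely organizational: making the contribution of the $\phi$-part \emph{uniformly} negligible, and obtaining the \emph{absolute} lower bound $\mathrm{d}_\tau(\psi(a))\ge1/N$ rather than a bound relative to $\tau(a)$ itself. The resolution of the second point is to feed the uniform fullness of $\psi(f_{1/4}(a))$ into a Cuntz comparison against $N$ copies and then use $\mathrm{d}_\tau(a)=1$; it is essential that the constant $N$ depends only on $A,a,\mathfrak{f}_a$ and not on $\tau$ or the decomposition. One must also be disciplined to invoke only genuine quasitrace properties—norm-boundedness, additivity on commuting self-adjoints, and monotonicity of $\mathrm{d}_\tau$ under $\lesssim$—until additivity has been proved, with the step $\mathrm{QT}(D)=\mathrm{T}(D)$ (exactness plus Haagerup) carrying the additivity across the $\psi$-part.
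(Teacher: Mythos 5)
Your handling of $\mathrm{QT}(A)=\mathrm{T}(A)$ and of $\mathrm{T}(A)\neq\O$ is essentially correct and follows the same route as the paper: orthogonality of the two summands, exactness of $D$ (so $\tau|_D$ is a trace), and smallness of the $\phi$-part via $\phi(a)\lesssim b$ with $\mathrm{d}_\tau(b)$ uniformly small; the only cosmetic point is that quasitraces are uniformly norm-continuous with a non-Lipschitz modulus (Corollary II.2.5 of \cite{BH}), so your ``$3\ep$'' should be a modulus $\omega(\ep)$, which does not affect the limit $\delta\to 0$.

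The third assertion, $0\notin\overline{\mathrm{T}(A)}^\mathrm{w}$, contains a genuine gap. Theorem \ref{UnifomfullTAD} makes $\psi$ $T$-full only in $\overline{DAD}$, and your claim that $\overline{DAD}=A$ ``by simplicity'' is false: since $\phi(A)\perp D$, every element of $\phi(A)$ annihilates $\overline{DAD}$, so whenever $\phi\neq 0$ (the typical case) $\overline{DAD}$ is a \emph{proper} hereditary subalgebra of $A$ --- simplicity makes it full, not equal to $A$. Consequently fullness cannot be applied with target $a$, and the comparison $(a-\ep)_+\lesssim\bigoplus_{i=1}^{N}\psi(f_{1/4}(a))$, hence the absolute bound $\mathrm{d}_\tau(\psi(a))\ge 1/N$, is unjustified.

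The failure is not merely technical: no \emph{single} application of the decomposition with a fixed tolerance $\ep$ can yield a lower bound on $\|\tau|_D\|=\mathrm{d}_\tau(\psi(a))$ that is uniform over $\tau\in\mathrm{T}(A)$. What the decomposition actually gives is $\mathrm{d}_\tau((a-\ep)_+)\le\mathrm{d}_\tau(b)+\|\tau|_D\|$, and although $\mathrm{d}_\tau(a)=1$ for every tracial state, $\mathrm{d}_\tau((a-\ep)_+)$ at \emph{fixed} $\ep$ has no positive uniform lower bound: a tracial state may put almost all of its mass ``beyond'' $f_\ep(a)$, where the decomposition exerts no control, and then $\|\tau|_D\|$ is arbitrarily small, consistent with everything \ref{UnifomfullTAD} asserts. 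This is exactly why the paper's proof runs along a \emph{sequence} of decompositions: it fixes in advance elements $b_n$ with $\sup_{\tau\in\mathrm{T}(A)}\mathrm{d}_\tau(b_n)\to 0$ (your orthogonal-elements trick), applies the definition of ${\cal D}$ with $b=b_n$ to get $\phi_{1,n}\colon A\to D_n$, deduces that $\tau\circ\phi_{1,n}(x)\to\tau(x)$ for every $x$, hence $\|\tau|_{D_n}\|\to\|\tau\|=1$ for each \emph{individual} $\tau\in\mathrm{T}(A)$, and only then transfers the normalized bound $t(f_{1/4}(\phi_{1,n}(a_0)))\ge\mathfrak{f}_{a_0}$, $t\in\mathrm{T}(D_n)$, to the conclusion $\tau(f_{1/4}(a_0))\ge\mathfrak{f}_{a_0}$ for all $\tau\in\mathrm{T}(A)$. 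Your argument can be repaired along these lines (replace the fullness step by a limit $n\to\infty$, using $\mathrm{d}_\tau((a-\ep_n)_+)\nearrow\mathrm{d}_\tau(a)=1$ pointwise in $\tau$), but the constant $\mathfrak{f}_a/(16N)$ extracted from one decomposition cannot be salvaged.
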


\begin{proof}
Let $a_0\in A$ be a strictly positive element of $A$ with $\|a_0\|=1.$
Let $\mathfrak{f}_{a_0}>0$ be as in the definition \ref{DNtr1div}.
%{Dtr1div}.
Fix any $b_0\in A_+\setminus \{0\}.$
Choose a sequence of positive elements
$(b_n)_{n\ge 1}$ which has the following property:
$b_{n+1}\lesssim b_{n,1},$
where $b_{n,1}, b_{n,2},...,b_{n,n}$ are mutually orthogonal positive elements
in $\overline{b_nAb_n}$ such that $b_nb_{n,i}=b_{n,i}b_n=b_{n,i},$
$i=1,2,...,n,$ and $\la b_{n,i}\ra =\la b_{n,1}\ra,$ $i=1,2,...,n.$
%%Note that
%%\beq
%%\lim_{n\to\infty}\sup\{\tau(b_n): \tau\in QT(A)\}=0.
%%\eneq

One obtains  {{(from  Theorem \ref{UnifomfullTAD})}} two sequences of \SCA s $A_{0,n},$ $D_n$ of $A,$ where
$D_n\in {\cal C}_0',$ {{and}}  two sequences
of \cpc s $\phi_{0,n}: A\to A_{0,n}$
%$\|\phi_{i,n}\|=1$ ($i=0,1$)
and
$\phi_{1,n}: A\to D_n$  with $A_{0,n}\perp D_n$ with  the following {{properties:}}
\beq\label{TD0qc-1}
\lim_{n\to\infty}\|\phi_{i,n}(ab)-\phi_{i,n}(a)\phi_{i,n}(b)\|=0\rforal a,\, b\in A,\\\label{TD0qc-1+}
\lim_{n\to\infty}\|a-(\phi_{0,n}+\phi_{1,n})(a)\|=0\rforal a\in A,\\\label{TD0qc-1++}
c_n\lesssim b_n,\\\label{TD0qc-4}
\lim_{n\to\infty}\|\phi_{1,n}(x)\|=\|x\|\rforal x\in A,\\\label{TD0qc-4+}
\tau(f_{1/4}(\phi_{1,n}(a_0)))\ge \mathfrak{f}_{a_0}\rforal \tau\in {\mathrm{T}}(D_n),
\eneq
 and $\phi_{1,n}(a_0)$ is a strictly positive element of $D_n,$
 where $c_n$ is a strictly positive element of $A_{0,n}.$
 %By \ref{PtadMk}, we may assume
 %that $D_n\in {\cal C}_0.$
 Since quasitraces are norm continuous (Corollary II 2.5 of \cite{BH}),
 by \eqref{TD0qc-1+}, 
 \beq\label{TD0qc-4+3}
\lim_{n\to\infty}(\sup\{|\tau(a)-\tau((\phi_{0,n}+\phi_{1,n})(a))|:\tau\in {\rm{QT}}(A)\})=0\rforal a\in A.
\eneq
Since $\phi_{0,n}(a)\phi_{1,n}(a)=\phi_{1,n}(a)\phi_{0,n}(a)=0,$ for any $\tau\in {{\rm{QT}}}(A),$
\beq
\tau((\phi_{0,n}+\phi_{1,n})(a))=\tau(\phi_{0,n}(a))+\tau(\phi_{1,n}(a))\rforal a\in A.
\eneq
Note that, by \eqref{TD0qc-1++}, 
\beq\label{TD0qc-4++}
\lim_{n\to\infty}(\sup\{\tau(\phi_{0,n}(a)): \tau\in {{\rm{QT}}}(A)\})=0\rforal a\in A.
\eneq
%Since $\phi_{0,n}(a)\phi_{1,n}(a)=\phi_{1,n}(a)\phi_{0,n}(a)=0,$  by \eqref{TD0qc-4++}, 
%we have 
%this implies that 
%It follows that
%\beq\label{TD0qc-10}
%\lim_{n\to\infty}(\sup\{|\tau(a)-\tau\circ \phi_{1,n}(a)|: \tau\in {\mathrm{QT}(A)}\})=0\rforal a\in A_+.
%\eneq
Therefore
\beq\label{TD0qc-10}
\lim_{n\to\infty}(\sup\{|\tau(a)-\tau\circ \phi_{1,n}(a)|: \tau\in {\mathrm{QT}(A)}\})=0\rforal a\in A.
\eneq
{{Since $D_n$ is exact, $\tau|_{D_n}$ extends to a trace, say $t_n.$ 
Then, for any $a, b\in  A,$ 
\beq
\tau\circ \phi_{1,n}(a+b)=t_n\circ \phi_{1,n}(a+b)=t_n\circ \phi_{1,n}(a)+t_n\circ \phi_{1,n}(b)\\
=\tau\circ \phi_{1,n}(a)+\tau\circ \phi_{1,n}(b)\rforal a, b
\in A.
\eneq
It follows from \eqref{TD0qc-10}
that, for every $\tau\in {\mathrm{QT}(A)},$ 
\beq\nonumber
\tau(a+b)=\tau(a)+\tau(b)\rforal a, b\in A.
\eneq
Thus $\tau$ extends to a  trace on $A.$ This proves ${\rm QT}(A)={\rm T}(A).$}}
% and by \eqref{TD0qc-1+},
%\beq
%&&\lim_{n\to\infty}(\sup|\tau((a-(\phi_{0,n}+\phi_{1,n})(a))_+)|:\tau\in QT(A)\})=0\andeqn\\
%&&\lim_{n\to\infty}(\sup|\tau((a-(\phi_{0,n}+\phi_{1,n})(a))_-)|:\tau\in QT(A)\})=0\rforal a\in A_+.
%\eneq
%Therefore 
%\beq
%\lim_{n\to\infty}(\sup|\tau(a)-\tau((\phi_{0,n}+\phi_{1,n})(a))|:\tau\in QT(A)\})=0\rforal a\in A_+.
%\eneq
%Then 

{{Choose $s_n\in {\rm T}(D_n),$ $n=1,2,...$  Consider a positive linear functional 
$F_n: A\to \C$ defined by $F_n(a)=t_n\circ \phi_{1,n},$ $n=1,2,...$
%For each $n,$ let ${\tilde t}_n$ be a state of $A$ which extends
%$t_n.$ 
Let $F_0$ be a weak*-limit of $(F_n)_{n\ge 1}.$  Note that, by \eqref{TD0qc-4+},
\beq
F_n(8a_0)=s_n(8\phi_{1,n}(a_0))\ge s_n(f_{1/4}(\phi_{1,n}(a_0))\ge \mathfrak{f}_{a_0}\rforal n.
\eneq
It follows that $F_0\not=0.$}} By \eqref{TD0qc-1}, since $s_n$ is a trace on $D_n,$ 
$F_0$ is a nonzero trace on $A.$ It follows that $T(A)\not=\O.$

Now let $\tau_k\in T(A)$ such that, for some positive linear functional $\tau,$
 $\lim_{n\to\infty}\tau_k(a)=\tau(a)$ for all $a\in A.$
Then, for each $k,$ by \eqref{TD0qc-10}, 
$\lim_{n\to\infty}\|\tau_k|_{D_n}\|=1.$
Consider the restriction 
$t_{k,n}=(\|\tau_k|_{D_n}\|^{-1})\tau|_{D_n}$ {{for large $n.$}}  Then $t_{k,n}\in {\mathrm{T}}(D_n)$ for all $k.$
It follows from \eqref{TD0qc-4+} that
\beq
t_{k,n}( f_{1/4}(\phi_{1,n}(a_0)))\ge \mathfrak{f}_{a_0},\,\,\, n=1,2,....
\eneq
By \eqref{TD0qc-10} and \eqref{TD0qc-1},
\beq
\tau_k(f_{1/4}(a_0))=\lim_{n\to\infty}\tau_{k,n}(f_{1/4}(\phi_{1,n}(a_0)))\ge \mathfrak{f}_{a_0},\,\,\, k=1,2,....
\eneq
Therefore $\tau\not=0.$
This implies that $0\not\in \overline{{\mathrm{T}(A)}}^\mathrm{w}.$

\iffalse
{{as $D_n$ is exact.}}
%Given any $\tau\in {\mathrm{QT}(A)},$
%define $t_n(a)=\tau\circ \phi_{1,n}(a)$ for all $a\in A.$
Thus $t_n$ {{extends to}}   a state of $A.$ 
{{We continue to use $t_n$ for the extension.}}
Let $t_0$ be a {{fixed}}  weak* limit of $(t_n)_{n\ge 1}$ in $A^*.$
Then $t_0\not=0,$ since by
\eqref{TD0qc-4+} and \eqref{TD0qc-1},
\vspace{-0.1in} \beq\label{TD0qc-11}
t_0(f_{1/4}(a_0))\ge \mathfrak{f}_{a_0}.
\eneq
%By \eqref{TD0qc-10}, 
%\beq
%\tau(a+b)=\tau(a)+\tau(b)\rforal a, \, b\in A\input{180408eglntr1.tex}
%%%%\%%%%%%%_+.
%\eneq

It follows from \eqref{TD0qc-1} that $t_0$ is a trace. 
Note that
\beq
\tau\circ \phi_{1,n}(a+b)=t_n\circ \phi_{1,n}(a+b)=t_n\circ \phi_{1,n}(a)+t_n\circ \phi_{1,n}(b)\\
=\tau\circ \phi_{1,n}(a)+\tau\circ \phi_{1,n}(b)\rforal a, b
\in A.
\eneq
It follows from \eqref{TD0qc-10} that

It follows from \eqref{TD0qc-10}
that, for every $\tau\in {\mathrm{QT}(A)},$ 
\beq\nonumber
\tau(a+b)=\tau(a)+\tau(b)\rforal a, b\in A.
\eneq
%%It follows 
{{This shows}} that $\tau$ extends to a trace. Therefore ${{\rm{QT}}}(A)={\rm{T}}(A).$

 The fact that $\mathrm{T}(A)\not=\O$
follows from  the last part of \ref{Phered}.
% together with \ref{PD0=tad}.
By   \eqref{TD0qc-11},   
\fi

\end{proof}

\begin{rem}\label{Rfa0}
Let $A\in {\cal D}$ and let $a\in A_+$ be a strictly positive element with $\|a\|=1.$
In view of \ref{PD0qc}, 
$$
r_0:=\inf\{\tau(f_{1/4}(a)): \tau\in {\mathrm{T}(A)}\}>0.
$$
The  {{proof above}} shows that we may choose
$
\mathfrak{f}_{a}=r_0/2.
$
In fact in the case that $A={\mathrm{Ped}(A)},$ one may choose
$\mathfrak{f}_{a}$  arbitrarily close to 
$$
\lambda_s(A)=\inf\{\tau(a): \tau\in \overline{{\mathrm{T}}(A)}^\mathrm{w}\}.
$$
In the case that $A$ has continuous scale, we may  choose
the strictly positive element  in such  a way that $r_0$ is arbitrarily close
to 1.
\end{rem}

\begin{prop}\label{Pprojless}
Every  \CA\, in ${\cal D}$ is stably projectionless.
\end{prop}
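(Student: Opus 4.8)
The plan is to reduce the statement to the projectionlessness of a single matrix amplification and then to play the defining decomposition of ${\cal D}$ against the fact that the building blocks in ${\cal C}_0'$ carry no projections at all.

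First I would observe that a $\sigma$-unital \CA\ $A$ is stably projectionless exactly when $M_k(A)$ contains no nonzero projection for every $k\ge 1$: any nonzero projection in $A\otimes{\cal K}$ can be perturbed by a small self-adjoint element of some $M_k(A)$ and, via the resulting spectral projection, is Murray--von Neumann equivalent to a nonzero projection sitting in $M_k(A)$. Since $M_k(A)\in{\cal D}$ for every $k$ by \ref{PtadMk}, it suffices to prove that every nonzero \CA\ $B\in{\cal D}$ (or in ${\cal D}_0$) is projectionless.

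So suppose, toward a contradiction, that $p\in B$ is a nonzero projection, and note $\|p\|=1$. Fix $0<\ep<1/16$. Applying Definition \ref{DNtr1div} to $B$, in the strengthened form provided by \ref{PD0=tad} (so that in addition $\|\psi(x)\|\ge(1-\ep)\|x\|$ on the chosen finite set), and taking the finite set ${\cal F}$ to contain $p$, I obtain ${\cal F}$-$\ep$-multiplicative \cpc s $\phi\colon B\to B$ and $\psi\colon B\to D$ with $\phi(B)\perp D$ and $D\in{\cal C}_0'$ (resp.\ ${{\cal C}_0^0}'$). Writing $d:=\psi(p)$, positivity and contractivity of $\psi$ give $0\le d\le 1$, while ${\cal F}$-$\ep$-multiplicativity together with $p^2=p=p^*$ gives $\|d-d^2\|<\ep$. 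A routine spectral estimate (solving $s(1-s)<\ep<1/16$ for $s\in\mathrm{sp}(d)\subseteq[0,1]$) then yields $\mathrm{sp}(d)\subseteq[0,1/8]\cup[7/8,1]$, so that $q:=f_{1/4}(d)\in C^*(d)\subseteq D$ satisfies $q=q^2=q^*$, i.e.\ $q$ is a genuine projection.

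The crux is now that $D$, being a full hereditary \SCA\ of an algebra in ${\cal C}_0$, is itself stably projectionless, hence projectionless (by \ref{Dbuild1}, together with Brown's theorem; equivalently, a hereditary \SCA\ of a projectionless algebra is projectionless); therefore $q=0$. But $f_{1/4}(d)=0$ forces $\mathrm{sp}(d)\subseteq[0,1/8]$, whence $\|\psi(p)\|=\|d\|\le 1/8$, contradicting $\|\psi(p)\|\ge(1-\ep)\|p\|>15/16$. This contradiction shows $B$ is projectionless, which completes the argument. The only delicate point---rather than a genuine obstacle---is the mutual compatibility of the thresholds ($\ep$, the cut level $1/4$ in $f_{1/4}$, and the norm bound $1-\ep$) in the spectral estimate; everything else is a direct citation. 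Alternatively, one could derive the contradiction from the trace inequality \eqref{DNtrdiv-4}, using $t(f_{1/4}(\psi(p)))\ge\mathfrak{f}_p>0$ against $f_{1/4}(\psi(p))=0$ for $t\in{\mathrm{T}}(D),$ but the norm version avoids having to check that ${\mathrm{T}}(D)\not=\O.$
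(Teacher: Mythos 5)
Your argument is correct, but it reaches the contradiction by a different mechanism than the paper does. Both proofs share the same skeleton: reduce (via \ref{PtadMk}) to showing that an algebra in ${\cal D}$ has no nonzero projection, apply the defining decomposition of ${\cal D}$ to a finite set containing $p,$ and use the fact that $D\in {\cal C}_0'$ is projectionless. The paper, however, works on the $\phi$ side and with traces: citing \ref{PD0qc}, it sets $r=\inf\{\tau(p):\tau\in \overline{{\rm T}(A)}^{\rm w}\}>0,$ chooses the element $b$ in Definition \ref{DNtr1div} so small that ${\rm d}_\tau(b)<r/4$ for all such $\tau,$ perturbs $\phi(p)+\psi(p)$ to an orthogonal sum of projections $p_1+p_2$ with $p_1\in A_0=\overline{bAb}$ and $p_2\in D,$ kills $p_2$ by projectionlessness of $D,$ and then contradicts $\tau(p)\ge r$ because $\tau(p)<\tau(p_1)+\ep\le {\rm d}_\tau(b)+\ep<r/2.$ You instead work entirely on the $\psi$ side and with norms: the strengthening \ref{PD0=tad} gives $\|\psi(p)\|\ge 1-\ep,$ functional calculus turns the almost-projection $\psi(p)$ into a genuine projection $q=f_{1/4}(\psi(p))\in D,$ and $q=0$ forces $\|\psi(p)\|\le 1/8,$ a contradiction. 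Your route buys independence from trace theory (neither \ref{PD0qc} nor the freedom to choose a Cuntz-small $b$ is needed), at the cost of invoking \ref{PD0=tad}; the paper's route uses only the bare Definition \ref{DNtr1div} together with the trace facts, and its ``small corner'' mechanism is the one that recurs in the subsequent comparison arguments such as \ref{Comparison}. Both are legitimate, and there is no circularity in your citations, since \ref{PD0=tad} and \ref{PtadMk} are established before \ref{Pprojless} and do not depend on it.

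One caveat: your closing aside is not correct as stated. Condition \eqref{DNtrdiv-4} provides the lower bound $t(f_{1/4}(\psi(a)))\ge \mathfrak{f}_a$ only for the distinguished strictly positive element $a$; Definition \ref{DNtr1div} attaches no constant $\mathfrak{f}_p$ to an arbitrary positive element, and a nonzero projection $p$ is strictly positive only when it is a unit. To run that trace-based variant you would need the fullness statement of Theorem \ref{UnifomfullTAD} with $p\in {\cal F}_0,$ which does give $t(f_{1/4}(\psi(p)))$ a uniform positive lower bound over $t\in {\rm T}(D).$ Since your main argument never uses this aside, this is a blemish rather than a gap.
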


\begin{proof}
Let $A\in {\cal D}.$
Since, by \ref{PtadMk} and \ref{Phered},  $A\in {\cal D}$ 
if and 
only if ${\mathrm{M}}_n(A)\in {\cal D}$ for each $n,$ we only need to show that
$A$ itself has no nonzero projections.
Let $p\in A$ be a nonzero projection.
 %and let
 {{By \ref{PD0qc},}} 
$$
r:=\inf\{\tau(p): \tau\in \overline{{\mathrm{T}(A)}}^\mathrm{w}\}>0.
$$
Choose $r/4>\ep>0.$
Then, by \ref{DNtr1div}, 
\vspace{-0.14in}\beq\label{Pprojless-1}
\|p-(x_1+ x_2)\|<\ep/2,
\eneq
where $x_1\in (A_0)_+$ and $x_2\in  D_+,$ where $A_0=\overline{bAb}$ for some
$b\in A_+$ with
$\mathrm{d}_\tau(b)<r/4$ for all $\tau\in \overline{{\mathrm{T}(A)}}^\mathrm{w},$
%$x_2\in D_+,$  
$D\in {\cal C}_0',$ and $A_0\perp D.$
%A  standard argument shows that t
If $\ep$ is chosen to be a small enough, there are projections
$p_1\in A_0$ and $p_2\in D$ such that
%\beq\label{Pprojless-2}
$\|p-(p_1+p_2)\|<\ep.$
%\eneq
Since $D$ is projectionless, $p_2=0.$
This implies that
$\tau(p)<\tau(p_1)+\ep<r/2$ for all $\tau\in {\mathrm{T}(A)},$
in contradiction with \eqref{Pprojless-1}.

\end{proof}

%\section{Regularity for \CA s in ${\cal D}_{00}$ and in ${\cal D}_{10}$}

\begin{thm}\label{Comparison}
Let $A\in {\cal D}.$ 
%with $0\not\in 
%Suppose that $0\not\in 
Suppose that $A={\rm Ped}(A).$ 
%(or suppose that $A$ is separable). 
%be a
%$\sigma$-unital
%a separable simple \CA.
%which is quasi-compact.
%Suppose that
Let  $a, b\in (A\otimes {\cal K})_+$ be such that
% $0\le {\red{a,\,b}}.$ If 
$\mathrm{d}_\tau(a)\le \mathrm{d}_{\tau}(b)$ for all $\tau\in \overline{{\mathrm{T}(A)}}^\mathrm{w}.$
Then
$$
a\lesssim b.
$$
\end{thm}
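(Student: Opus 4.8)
The plan is to prove the statement in its Cuntz form: it suffices to show that $(a-\ep)_+\lesssim b$ for every $\ep>0$, since $\langle a\rangle=\sup_\ep\langle (a-\ep)_+\rangle$. Throughout I will use that, because $A=\mathrm{Ped}(A)$, $A$ is compact (Theorem~\ref{Tpedersen}); hence by \ref{compactrace} the set $K:=\overline{{\mathrm T}(A)}^{\mathrm w}$ is weak*-compact with $0\notin K$, each $\tau\mapsto\mathrm d_\tau(\cdot)$ is a bounded lower semicontinuous affine function on $K$, $\mathrm{QT}(A)=\mathrm T(A)$ by \ref{PD0qc}, and $\mathrm M_n(A)\in{\cal D}$ equals its own Pedersen ideal by \ref{PtadMk}. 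Fixing $\ep>0$, and using an approximate unit $(e_m)$ of $A\otimes{\cal K}$ with $e_m\in\mathrm M_m(A)$ together with R\o rdam's Lemma~\ref{Lrorm}, I would first replace $(a-\ep)_+$ by $e_m\,a\,e_m\in\mathrm M_m(A)$, whose dimension functions only decrease, so that the element to be majorised is supported at a finite matrix level; the element $b$ I would keep in $A\otimes{\cal K}$ and work with the stabilised maps below.

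Next I would feed the data into the tracial decomposition. Applying Theorem~\ref{UnifomfullTAD} (with \ref{PD0=tad}) to a finite set containing $a$, $b$ and a strictly positive element $e$, with small tolerance, produces ${\cal F}$-$\delta$-multiplicative c.p.c.\ maps $\phi\colon A\to A$ and $\psi\colon A\to D$ with $\phi(A)\perp D$, $D\in{\cal C}_0'$, norm approximation $\|x-(\phi+\psi)(x)\|<\delta$ on the finite set, $\phi$ landing in a corner $A_0=\overline{\phi(A)A\phi(A)}$ whose strictly positive element is Cuntz-dominated by an arbitrarily small prescribed element, $\psi$ uniformly full with explicit lower trace bounds, and $\psi(e)$ strictly positive in $D$. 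Passing to the stabilisations $\phi\otimes\mathrm{id}$, $\psi\otimes\mathrm{id}$, the orthogonality of the two summands and the norm approximation give $(a-\ep)_+\lesssim \phi(a)\oplus\psi((a-\ep/2)_+)$. The heart of the argument is then the comparison of the $D$-parts: since $D\in{\cal C}_0'$, Proposition~\ref{Pcom4C}(1) furnishes the \emph{exact} ($\le$) comparison inside $D\otimes{\cal K}$, so that $\psi((a-\ep/2)_+)\lesssim\psi(b)$ holds as soon as $\mathrm d_t(\psi((a-\ep/2)_+))\le\mathrm d_t(\psi(b))$ at every point evaluation $t$ of $D$. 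A subequivalence witnessed in $D$ is witnessed by elements of $D\subset A$, hence persists in $A$.

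To secure that inequality in $D$ I would transport the hypothesis $\mathrm d_\tau(a)\le\mathrm d_\tau(b)$ across $\psi$. Using $\mathrm{QT}(A)=\mathrm T(A)$ and the approximate multiplicativity of $\psi$, each tracial state of $D$ gives rise, via $t\circ\psi$ as in the construction in the proof of \ref{PD0qc}, to a trace of $A$, and the uniform fullness of $\psi$ supplied by \ref{UnifomfullTAD} provides the lower bounds forcing the comparison to hold simultaneously at \emph{all} point evaluations of $D$, not only at traces of large weight. Because $(a-\ep/2)_+$ has a spectral gap at $\ep/2$, the quantity $\mathrm d_t(\psi((a-\ep/2)_+))$ is stable under the small norm perturbations, and feeding the trace inequality through this correspondence yields $\mathrm d_t(\psi((a-\ep/2)_+))\le\mathrm d_t(\psi(b))$ for all such $t$. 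Finally I would reassemble: the $\psi$-part is handled by the preceding subequivalence, the $\phi$-part $\phi(a)\lesssim\phi(e)$ is made Cuntz-negligible by choosing the prescribed small element at the outset to fit, orthogonally to the image of $\psi(b)$, inside $b$ (here simplicity and $\phi(A)\perp D$ enter), and the residual norm errors are absorbed by \ref{Lrorm} and the partial-isometry Lemma~\ref{LRL}.

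The main obstacle, and precisely where the projectionless setting bites, is this reassembly in the equality case. In the unital (or projection) situation one compresses by a projection $q$ and has $qbq\lesssim b$ exactly, so equality of dimension functions survives the reduction; here $\psi$ is only an approximate, positive-element ``compression'', and every genuine cutdown of $\psi(b)$ discards exactly the small-spectrum directions of $b$ on which $\mathrm d_\tau(a)=\mathrm d_\tau(b)$ may hold. Overcoming this requires exploiting the exact $\le$-comparison of the building blocks in \ref{Pcom4C} (which tolerates equality), comparing against $\psi(b)$ \emph{uncut} inside $D$, and arranging the orthogonal remainder together with the $\phi$-corner so that the $\phi$-part is absorbed without ever cutting $b$; making this bookkeeping uniform over the compact set $K$ is the principal technical difficulty of the proof.
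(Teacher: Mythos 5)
Your overall architecture matches the paper's: reduce to $f_\ep(a)\lesssim b$ for every $\ep>0$, decompose $A$ tracially with $\phi(A)\perp D$, $D\in{\cal C}_0'$, compare inside $D$ via the exact comparison of Proposition~\ref{Pcom4C}, and absorb the $\phi$-corner into a small piece of $b$. But there is a genuine gap at the step you yourself call the heart of the argument: the claim that the hypothesis $\mathrm d_\tau(a)\le\mathrm d_\tau(b)$ on $\overline{{\mathrm T}(A)}^{\mathrm w}$ can be ``transported across $\psi$'' to give $\mathrm d_t(\psi((a-\ep/2)_+))\le\mathrm d_t(\psi(b))$ for \emph{every} $t\in{\mathrm T}(D)$. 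Two things break. First, dimension functions do not pass through approximately multiplicative c.p.c.\ maps: $\mathrm d_t(\psi(x))=\sup_n t(f_{1/n}(\psi(x)))$, and ${\cal F}$-$\dt$-multiplicativity controls $\psi(f_{1/n}(x))\approx f_{1/n}(\psi(x))$ only for finitely many $n$, with errors that do not vanish in the supremum; so there is no usable relation between $\mathrm d_t(\psi(\cdot))$ and $\mathrm d_{t\circ\psi}(\cdot)$. Second, and more fundamentally, ${\mathrm T}(D)$ is a far larger simplex than anything the hypothesis sees: the point evaluations of $D\in{\cal C}_0'$ need not arise from (or be approximated by normalized restrictions of) traces of $A$, and neither $\mathrm{QT}(A)={\mathrm T}(A)$ nor the uniform fullness constants of Theorem~\ref{UnifomfullTAD} convert an inequality indexed by $\overline{{\mathrm T}(A)}^{\mathrm w}$ into one indexed by all of ${\mathrm T}(D)$. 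Fullness yields uniform \emph{lower} bounds $t(f_{1/4}(\psi(c)))\ge\mathrm{const}$ for fixed $c$; it cannot reproduce the \emph{relative} inequality between $a$ and $b$ at an arbitrary $t$.

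The paper's proof supplies exactly the missing device: before decomposing, it converts the tracial hypothesis into finitely many \emph{algebraic} witnesses inside $A$ by the Cuntz--Pedersen theorem (7.5 of \cite{CP}). After creating a strict gap (projectionlessness gives $\tau(f_{\ep^2/2^{12}}(c_1))+\tau(e_2)<\tau(f_{\dt_2}(b_2))$ with $e_2$ a small full element), it produces $z_1,\dots,z_K\in A$ and $b'\in A_+$ with $f_{\ep^2/2^{12}}(c_1)\approx\sum_j z_j^*z_j$ and $f_{\dt_2}(b_2)\approx b'+e_2+\sum_j z_jz_j^*$. These finitely many elements are then pushed into $D$ by the ${\cal D}$-decomposition (errors repaired by Lemma~\ref{Lrorm}), and the identity $\sum_j t((z_j')^*z_j')=\sum_j t(z_j'(z_j')^*)$, which holds at \emph{every} trace $t$ of $D$ precisely because it is algebraic, yields $\mathrm d_t(f_{\ep^2/2^{13}}(c_2))\le\mathrm d_t(f_{\dt_1/2}(b_3))$ simultaneously at all $t\in{\mathrm T}(D)$, the fullness bound $t(e_{2,1})\ge\mathfrak{f}_{a_0}/(m+1)$ absorbing the perturbation errors. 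Only then does Proposition~\ref{Pcom4C} apply. Without this step (or an equivalent mechanism producing comparison witnesses that survive localization into $D$), the inequality inside $D$ cannot be established, and the remainder of your reassembly has nothing to stand on.
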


\begin{proof}
Recall that, by \ref{compactrace}, $0\not\in \overline{{\mathrm{T}(A)}}^\mathrm{w}.$
Let us first prove the case that $a, b\in A_+$ and $d_\tau(a)<d_\tau(b)$ for all $\tau\in \overline{{\mathrm{T}(A)}}^\mathrm{w}.$

Fix a strictly positive element $a_0\in A$ with $0\le a_0\le 1.$
%If $A$ does not have (SP) property, then,  by \ref{B1sp},
%$A=\overline{\cup_{n=1}^{\infty} A_n}.$  By \ref{Csemiproj},  each $A_n$ has
%the strict comparison for positive elements. A standard argument shows that $A$ also has
%the strict comparison for positive elements (see {{Ex.\ref{chp4Ecomparison}}}).
%\ref{chp4Ecomaprison})
%For the rest of the proof, we assume that $A$ has (SP).
%%%%Let $a,\,b\in A_+$ be two non-zero elements such that
%%%%%%\beq\label{scomp-1}
%%%%%%\mathrm{d}_\tau(a)<\mathrm{d}_\tau(b)\tforal \tau\in \overline{{\mathrm{T}(A)}}^\mathrm{w}.
%%%%%\eneq
%%%%%%%(Recall that, by \ref{compactrace}, if $A={\rm Ped}(A),$ then $0\not\in \overline{{\mathrm{T}(A)}}^\mathrm{w},$
%if $A$ is separable,   by \ref{PD0qc}, $0\not\not\in \overline{{\mathrm{T}(A)}}^\mathrm{w},$ 
%%%so {{this}} hypothesis  is not vacuous.)
%makes sense.)
%For convenience, w
We may assume that $\|a\|{=} \|b\|= 1.$
Let $1/2>\ep>0.$   
%Put $c=f_{\ep/16}(a).$
%where, for $1>t>0,$
%$f_t\in C([0,1])_+$ with $\|f_t\|=1,$ $f_t(s)=0$ if $s\in [0,t/2]$ and $f_t(s)=1$ if
%$s\in [t, 1].$
%If $c$ is Cuntz equivalent to $a,$ then
%zero is an isolated point in ${\rm sp}(a).$ So, $a$ is Cuntz equivalent to a projection.
%Since 
By  \ref{Pprojless}, $A$ is projectionless, and so 
 zero is not an isolated point {{of}} ${\rm sp}(a).$ Therefore,
 there is a non-zero element $c'\in \overline{aAa}_+$ such that
$c'c=cc'=0,$ where $c=f_{\ep/64}(a).$   It follows that 
% (note again $0\not\in \overline{{\mathrm{T}(A)}}^\mathrm{w}$)
%Therefore
%\beq\label{scomp-3}
$$r_0:=\inf\{\mathrm{d}_\tau(b)-\mathrm{d}_{\tau}(c): \tau\in \overline{{\mathrm{T}(A)}}^\mathrm{w}\}>0.$$
%\eneq
%So \red{ei}ther way, (\ref{scomp-2}) holds.
{{(Note that ${\rm d}_\tau(b)-{\rm d}_\tau(c)\ge {\rm d}_\tau(c')$ and use $0\not\in \overline{{\mathrm{T}(A)}}^\mathrm{w}.$)}}

Set  $c_1=f_{\ep/16}(a),$  {{so that $cc_1=c_1.$}}  Then a compactness argument (cf. Lemma 5.4 of \cite{Lnloc}) shows that
%It follows from \ref{Dini} that t
{t}here is $1>\dt_1>0$ such that
%\beq\label{scomp-3+1}
$$\tau(f_{\dt_1}(b))>\tau(c) \,(\ge \mathrm{d}_{\tau}(c_1))\tforal \tau\in \overline{{\mathrm{T}(A)}}^\mathrm{w}.$$
%\eneq
Put $b_1=f_{\dt_1}(b).$
Then
\beq\label{scomp-3+2}
&& \,\,\,\,\,\,\hspace{-0.4in}r:=\inf\{\tau(b_1)-\mathrm{d}_{\tau}(c_1): \tau\in {\mathrm{T}(A)}\}
\ge \inf\{\tau(b_1)-\tau(c): \tau\in {\mathrm{T}(A)}\}>0.
\eneq
Note that $\|b\|=1.$
%Since $A$ is simple and is assumed to have (SP),
%there is a  non-zero projection $ e\in A$ such that $b_1e=e$ and
{{Choosing}} a smaller $\dt_1,$ we may assume that there exist non-zero elements
 $e, e'\in f_{2\dt_1}(b)Af_{2\dt_1}(b)$ with
$0\le e\le e'\le 1$ and $e'e=ee'=e$ such that
%\beq\label{scomp-4}
$$\tau(e')<r/8\tforal \tau\in \overline{{\mathrm{T}(A)}}^\mathrm{w}.$$
%\eneq
Set $r_1=\inf\{\tau(e): \tau\in {\mathrm{T}(A)}\}.$  Note that, as above, since $A$ is simple and 
$0\not\in \overline{{\mathrm{T}(A)}}^\mathrm{w},$  $r_1>0.$
Set $b_2=(1-e')b_1(1-e').$
Thus {{(cf. above)}}, there is $0<\dt_2<\dt_1/2<1/2$ such that
\beq\label{scomp-3+3}
7r/8<\inf\{\tau(f_{\dt_2}(b_2))-{\rm d}_\tau(c_1):\tau\in {\mathrm{T}(A)}\}<r-r_1.
\eneq
Since $f_{\dt_2}(b_2)f_{3/4}(b_2)=f_{3/4}(b_2)$ and since $\overline{f_{3/4}(b_2)Af_{3/4}(b_2)}$ is non-zero,
there is  $e_1\in A_+$ with $\|e_1\|=1$ 
%nonzero elements $e_1, e_2\in A$  
with 
%{{and}} 
%$0\le e_1, e_2\le 1$ such that $e_2e_1=e_1,$ 
$e_1f_{\dt_2}(b_2)=e_1$ {{and}}
$d_\tau(e_1)<r/18$ for all $\tau\in {\mathrm{T}(A)}.$  Choose $\eta<1/4$ and set $e_2=f_{\eta/4}(e_1)$ and 
$e_3=f_{\eta}(e_1).$ Note that $f_{\dt_2}(b_2)e_2=e_2.$ 
%Let
%$$
%r_2=\inf \{\tau(e_1): \tau\in \overline{{\mathrm{T}(A)}}^\mathrm{w}\}>0.
%$$
Let $\sigma_0=\inf\{\tau(e_2): \tau\in {\rm{T}}(A)\}>0.$

{{By}} \ref{Lfullep}, there are $x_1, x_2,...,x_m\in A$ such that 
\beq\label{scomp-5}
%$$
\sum_{i=1}^m x_i^*e_3x_i=f_{1/16}(a_0).
%$$
%%
\eneq
Choose a nonzero element $e_0\in \overline{eAe}_+$ such that $d_\tau(e_0)<\sigma_0/16$
for all $\tau\in {\rm T}(A).$

%Let $\sigma=\min\{\ep^2 r_2/2^{17}, \dt_1/8, r_1\cdot r_2/2^8\}.$
Let $\mathfrak{f}_{a_0}>0$ be as 
%given 
in  Definition \ref{DNtr1div}.
%part of the definition for
%$A\in {\cal D}.$
Set
$$\sigma=\mathfrak{f}_{a_0}\cdot \min\{\ep^2/2^{17}(m+1), \dt_1/8, r_1/2^7(m+1),\sigma_0/16\}.$$
By \eqref{scomp-3+3},
\beq
\tau(f_{\ep^2/2^{12}}(c_1)) +\tau(e_2)<{{\tau(f_{\ep^2/2^{12}}(c_1)) +r/18}}<\tau(f_{\dt_2}(b_2))\rforal \tau\in {\mathrm{T}}(A).
\eneq
%By \eqref{scomp-3+2}  and
Then, by 7.5 of  \cite{CP},
% (see 3.8 of \cite{HL}), 
there are $z_1, z_2,...,z_K\in A$  and $b'\in A_+$ such that
\beq\label{scomp-6-1}
&&\|f_{\ep^2/2^{12}}(c_1)-\sum_{j=1}^Kz_j^*z_j\|<\sigma/4\andeqn
\\\nonumber
&&
\|f_{\dt_2}(b_2)-(b'+e_2+\sum_{j=1}^Kz_jz_j^*)\|<\sigma/4.
\eneq
%Let
% $$
% {\cal F}=\{a, b, c, e, c_1, b_1, b', e_1\}
 %\cup\{x_i, x_i^*:1\le i\le m\}
 %\cup\{z_j, z_j^*: 1\le j\le K\}.
 %$$
 Since $A\in {\cal D},$
%Since $A\in {\cal C}_1,$
%for any $\eta>0,$ 
there exist \SCA s\,   $A_0$, $D\subset A,$ with   $D\in {\cal C}_0'$
and  $A_0\perp D,$
%and
%with the form $C=PM_r(C(X))P,$ where $r\ge 1$ is an integer, $X$ is a compact subset of a finite CW complex with
%dimension $d(X)$ and $P\in M_r(C(X))$ is a projection
such that
\beq\label{scomp-6}
&&\|f_{\ep^2/2^{14}}(c_1)- (f_{\ep^2/2^{14}}(c_{0,2})+f_{\ep^2/2^{14}}(c_2))\|<\sigma,\\\label{scomp-6+1}
&&\|f_{\dt_1/2}(b_2)-(f_{\dt_1/2}( b_{0,3})+f_{\dt_1/2}(b_3))\|<\sigma,\\\label{scomp-6+2}
&&\|f_{\dt_2/4}(b_2)-(f_{\dt_2/4}(b_{0,3})+f_{\dt_2/4}(b_3))\|<\sigma,\\\label{scomp-6+3}
&&\|e_i-(e_{0,i}+e_{1,i})\|<\sigma, i=1,2,  \andeqn e_{2,1}e_{1,1}=e_{1,1}
\\\label{scomp-6+4}
%\|px-xp\|&<&\eta\tforal x\in {\cal F},\\
%{\rm dist}(pxp, C)&<&\eta\tforal x\in {\cal F}\\
%\label{scomp-6+2}
%{d(X)+1\over{\rm rank}P(\xi)}&<&1/256(m+1)\tforal \xi\in X\andeqn\\
&&c_0 \lesssim e_0,\\
&& \|a_0-(a_{0,0}+ a_{1,0})\|<\sigma, \andeqn\\\label{scomp-6+5}
&& \tau(f_{1/4}(a_{1,0}))\ge \mathfrak{f}_{a_0}\rforal \tau\in \overline{{\mathrm{T}}(D)}^\mathrm{w},
\eneq
where $c_0\in A_0$ is a strictly positive element of $A_0,$
and where  $a_{0,0}, b_{0,3}, c_{0,2}, e_{0,1}\in {{(A_0)_+}}$ and $a_{1,0}, b_3, c_2, e_{1,1}\in {{D_+}}.$  
{{By}}
% \eqref{scomp-5} and 
\eqref{scomp-6-1},  we may also obtain $z_j', z_j' , x_i'\in D$ {{and}}  $ b''\in D_+,$ 
 $i=1,2,...,m$ and $j=1,2,...,K,$
  such that
\beq\label{scomp-9}
&&\|\sum_{i=1}^m (x_i')^*e_{1,1}x_i'-f_{1/16}(a_{1,0})\|<\sigma,\\\label{scomp-9-}
\vspace{-0.8in} &&\|f_{\ep^2/2^{14}}(c_2)-\sum_{j=1}^K (z_j')^*z'_j\|<\sigma \andeqn\\\label{scomp-9+}
&&\|f_{\dt_2}(b_3)-(\sum_{j=1}^K z_j'(z_j')^*+e_{2,1}+b'')\|<\sigma.
\eneq
%where $0\le a_{0,0},  b_{0,2}, b_{0,3}, c_{0,0}, e_{0,1} \le 1$ are in $A_0,$
%$0\le a_{1,0}, b_{2}, b_{3}, c_2, e_{1,1}\le 1$ are in $D,$
%$z_j', z_j' , x_i',  b''\in D,$ ($i=1,2,...,m$ and $j=1,2,...,K$)

%By choosing sufficiently small $\eta,$ we obtain $b_3, c_2, b''\in C_+,$ a projection $q_1\in C,$
%$y_1, y_2,...,y_m\in C,$ $ z_1', z_2',...,{z_K'}\in C$ such that
%\beq\label{scomp-7}
%&&\|pcp-c_2\|<\sigma,\,\,\, \|f_{\ep^2/2^{14}}(pcp)-f_{\ep^2/2^{14}}(c_2)\|<\sigma,\\\label{scomp-7+3}
%&&\|pb_2p-b_3\|<\sigma,\,\,\,\|f_{\dt_2}(pb_2p)-f_{\dt_1}(b_3)\|<\sigma,\\\nonumber
%&&\|f_{\dt_2/4}(pb_2p)-f_{\dt_2/4}(b_3)\|<\sigma,
%\\\label{scomp-8}
%&&\|pe_1p-q_1\|<\sigma,\,\,\,\|\sum_{i=1}^m y_i^*q_1y_i-p\|<\sigma
%\eneq
%and,  (using (\ref{scomp-6-1})),  such that
%\beq\label{scomp-9}
%\|f_{\ep^2/2^{14}}(c_2)-\sum_{j=1}^K (z_j')^*z'_j\|<\sigma \andeqn\\\label{scomp-9+}
%\|f_{\dt_2}(b_3)-(\sum_{j=1}^K z_j'(z_j')^*+q_1+b'')\|<\sigma.
%\eneq
%]\%%{{By  \eqref{scomp-6+3}  and \eqref{scomp-6+4},
%\%%\beq\label{scomp-9-}
%%\%\tau(e_{1,1})>\tau(e_1)-\tau(e_0)-\sigma>\sigma_0-\sigma_0/16-\sigma\ge \sigma\rforal \tau\in {\rm T}(A).
%\%\eneq
%}}
%%%%%%%%%
%%\iffalse
{{Note that, by \eqref{scomp-9} and by Lemma \ref{Lrorm},}}
\beq
{{\la f_{1/4}(a_{1,0})\ra\le \la \sum_{i=1}^m (x_i')^*e_{1,1}x_i'\ra \le  m\la e_{1,1}\ra.}}
\eneq
%%\fi
 {{Then, by \eqref{scomp-6+5},}}
\beq\label{scomp-n9}
t(e_{2,1})\ge {\rm d}_t(e_{1,1})\ge \mathfrak{f}_{a_0}/(m+1)>2\sigma \tforal  t\in  {\mathrm{T}}(D).
\eneq
%\fi
%%%%%%%%%%%%%%%%%%%%%%
%Note that, by (\ref{scomp-8}),
% and (\ref{scomp-6+2}),
%\beq\label{scomp-10}
%$${\rm rank}(q_1(\xi))\ge (1/m)\cdot  {\rm rank} (p(\xi))$$
%{\rm rank P}(\xi)/m\ge 256(d(X)+1)\tforal \xi\in X.
%\eneq
%for all irreducible representations $\xi$ of $C.$
%Therefore
%\beq\label{scomp-10+1}
%$$t(q_1)\ge 1/m\tforal t\in T(C).$$
%\eneq
%It follows that
%\beq\label{scomp-12}
%t(q_1)-2\sigma> 9(d(X)+1)/m\tforal t\in T(C).
%\eneq

Therefore, by \eqref{scomp-9}, \eqref{scomp-9-} and \eqref{scomp-9+},
% \eqref{scomp-n9} and \eqref{scomp-9+},
\vspace{-0.14in} {$$\begin{aligned}{\rm d}_t(f_{\ep^2/2^{13}}(c_2))
  &\le  t(f_{\ep^2/2^{14}}(c_2))
  \le \sigma+\sum_{j=1}^Kt((z_j')^*z_j') \\
&= \sigma+\sum_{j=1}^Kt(z_j'(z_j')^*)
\le  t(e_{2,1})-\sigma+\sum_{j=1}^Kt(z_j'(z_j')^*)\\
&\le  t(f_{\dt_1}(b_3))
\le {\rm d}_t(f_{\dt_1/2}(b_3))
\end{aligned}$$}
%\beq\label{scomp-11}
%\hspace{0.3in}d_t(f_{\ep^2/2^{13}}(c_2))
%  +9d(X)/m\\
  %&\le & t(f_{\ep^2/2^{14}}(c_2))\le \sigma+\sum_{j=1}^Kt((z_j')^*z_j') \\
  %+9d(X)/m\\
%&=& \sigma
%+9 d(X)/m
%+\sum_{j=1}^Kt(z_j'(z_j')^*)
%\le  (t(q_1)-\sigma)+\sum_{j=1}^Kt(z_j'(z_j')^*)\\
%&\le & t(f_{\dt_1}(b_3))\le
%d_t(f_{\dt_1/2}(b_3))
%\eneq
for all $t\in {\mathrm{T}}(D).$
It follows  by Proposition \ref{Pcom4C}
%\ref{Csemiproj}
that
%\beq\label{scomp-13}
$$f_{\ep^2/2^{13}}(c_2)\lesssim f_{\dt_1/2}(b_3).$$
%\eneq
By (\ref{scomp-6+2}) and Lemma 2.2 of  \cite{RorUHF2} {{(note $\sigma<\dt_1/8$ and $b_{0,3}\perp b_3$)}},
%\cite{RrUHF},
%\beq\label{scomp-14}
\vspace{-0.1in} $$f_{\dt_1/2}(b_3)\lesssim f_{\dt_2/4}(b_2)\le b_2.$$
%\eneq
%By (\ref{scomp-6+2}),
%\beq\label{scomp-15}
%f_{\ep^2/2^{11}}(pcp)\lesssim f_{\ep^2/2^{12}}(c_2)\le pb_2p.
%\eneq
It {{then}} follows (also {{by}} Lemma 2.2 of  \cite{RorUHF2}) that
%Lemma 5.3 of \cite{Lin-men3} and
%(\ref{scomp-15})  that
\vspace{-0.12in} \beq\nonumber
%label{scomp-16}
\hspace{0.2in}f_{\ep/2}(c) &\lesssim &  f_{\ep^2/2^{11}}(c_{0,2}+c_2)\lesssim c_0\oplus f_{\ep^2/2^{11}}(c_2)
\\\nonumber
&\lesssim & e+b_2  \lesssim b_1\lesssim b.
\eneq
We also have
\beq\nonumber
%\label{scomp-17}
f_{\ep}(a)\lesssim f_{\ep/2}(f_{\ep/16}(a))\lesssim f_{\ep/2}(c)\lesssim b.
\eneq
Since this holds for all $1>\ep>0,$ by 2.4 of  \cite{RorUHF2}, we conclude that
%\vspace{-0.1in} $$
$a\lesssim b.$
%$$

If we only have $d_\tau(a)\le d_\tau(b)$ for all $\tau\in \overline{{\mathrm{T}(A)}}^\mathrm{w},$ since $A$ is stably 
projectionless as mentioned above, as shown at the beginning of the proof, for any $\ep>0,$ 
$\tau(f_{\ep/2}(a))<d_\tau(b)$ for all $\tau\in \overline{{\mathrm{T}(A)}}^\mathrm{w}.$ From what has been proved, 
$f_\ep(a)\lesssim b$ for all $\ep>0.$ Therefore $a\lesssim b.$ This shows the case that $a, b\in A_+.$
For $a, b\in M_n(A)_+$ for some $n\ge 1,$ one notes that, by  \ref{PtadMk}, $M_n(A)\in D.$ Therefore this case is easily reduced 
to the case that $n=1.$  In general, if $a, b\in (A\otimes {\cal K})_+,$ then, for any $\ep>0,$  by the last part of \ref{Lalmstr1}, we may assume that  $f_{\ep/16}(a)$ is in  $M_n(A)$ for some $n\ge 1.$ Hence $\tau\mapsto \tau(f_{\ep/16}(a))$ is bounded and continuous. 
Since $\overline{{\mathrm{T}(A)}}^\mathrm{w}$ is compact, one concludes, as shown above, for some small 
$\dt_1>0,$ $\tau(f_{\ep/16}(a))
<\tau(f_{\dt_1}(b))$ for all $\tau\in \overline{{\mathrm{T}(A)}}^\mathrm{w}.$  As mentioned above, 
we may also assume that $f_\dt(b)\in M_n(A)$ (with possibly larger $n$). Thus, we conclude 
that $f_\ep(a)\lesssim f_\dt(b)\lesssim b.$ It follows that $a\lesssim b.$

\end{proof}

%\begin{cor}\label{Ca=ped}
%Let $A\in {\cal D}.$ Then $A=\mathrm{Ped}(A).$
%\end{cor}

%\begin{proof}
%By  \ref{PD0qc}, assuming $A$ is non-zero,  ${\mathrm{T}}(A)\not=\O$ and $0\not\in \overline{{\mathrm{T}}(A)}^\mathrm{w}.$  Choose a nonzero element $e\in \mathrm{Ped}(A)_+.$
%Then $\inf\{\mathrm{d}_\tau(e): \tau\in \overline{{\mathrm{T}}(A)}^\mathrm{w}\}=:d>0.$  Choose a strictly positive element $a\in A_+$ with
%$\|a\|=1.$   Then, there is an integer $n>0,$ such that $n\mathrm{d}_\tau(e)>\mathrm{d}_\tau(a).$
%By \ref{Comparison}, applied to ${\rm M}_n(A)$ which is in ${\cal D}$ by \ref{PtadMk}, $a\lesssim b,$
%where $b=\diag(\overbrace{e,e,...,e)}.$  It follows that there are $c_{1,k}, c_{2,k},...,c_{n,k}\in A$ such 
%that 
%\beq
%\sum_{i=1}^n c_{1,k}^*ec_{1,k}\to a, \,\,\,{\rm as}\,\,\, k\to \infty.
%\eneq

%strict comparison (\ref{Comparison}) above, one shows that $a\in \mathrm{Ped}(A).$ Consequently, $A=\mathrm{Ped}(A).$
%\end{proof}

\begin{df}\label{DM0}
 Let us denote by ${\cal M}_0$  the class of (non-unital) simple \CA s which are inductive limits
of {{sequences of}}  \CA s in ${\cal C}_0^0.$  We stipulate  that the  maps {{in the sequence}}  be injective and
preserve
%maps 
strictly positive elements, i.e., each map {{should send}} strictly positive elements to strictly 
positive 
% to strictly positive 
elements. {{In fact,  a decomposition with such maps can always be chosen.}}

Every algebraically simple \CA\, $A$ in ${\cal M}_0$ is in ${\cal D}_0.$
To see this, write $A=\lim_{n\to\infty} (C_n, \phi_n),$ where each $C_n$ {{is in}}  ${\cal C}_0^0$ and
$\phi_n: C_n\to C_{n+1}$ is a \hm\, which preserves 
% maps strictly positive elements to 
strictly positive elements
and is injective.

Let $a_1\in C_1$ be a strictly positive element with $\|a_1\|=1.$ Then $a_n=\phi_{1, n}(a_1)\in C_n$ is
a strictly positive element of $C_n,$ $n=1,2,....$  Then $a=\phi_{n, \infty}(a_n)$
%Then it is easy to see that $a$ 
is a strictly positive element  with $\|a\|=1.$
For any $n, $  since $0\not\in {\overline{{\rm T}(C_n)}}^{\rm w}$ (see Definition \ref{Dbuild1}),
%see \ref{compactrace} and \ref{Tpedersen}),
\beq\nonumber
r_n:=\inf\{\tau(a_n): \tau\in {\overline{{\rm T}(C_n)}}^{\rm w}\}>0.
%=\inf\{\tau(a_n): \tau\in {\rm T}(C_n)\}>0.
\eneq
%Note s
Since  $\phi_{m,n}$ is a \hm\,  preserving strictly positive elements, 
%which  maps strictly 
%positive elements to strictly 
%positive elements, 
$t\circ \phi_{m,n}\in {\mathrm{T}}(C_m)$ for all $t\in {\rm T}(C_n)$ and for all $n\ge m.$ 
Thus, $r_n\ge r_m$ for all $n\ge m.$ 

Since $A$ is algebraically simple {{and $f_{1/4}(a)\not=0,$}}
there are $x_1, x_2,...,x_k\in A$ such that
\beq\nonumber
\sum_{i=1}^k x_i^*f_{1/4}(a)x_i=a.
\eneq
{{Set}} $M=2k\max\{\|x_i\|:1\le i\le k\}.$
For some $m\ge 1,$ there are $y_1, y_2,...,y_k\in C_m$ such that
\beq\nonumber
\|\sum_{i=1}^ky_i^*\phi_{1, m}(f_{1/4}(a_1))y_i-a_m\|<r_1/2.
\eneq
We may assume that $\|y_i\|\le 2\|x_i\|,$ $i=1,2,...,k.$ 
Since $r_1\le r_m,$ 
this implies that 
\beq\nonumber
\tau(\phi_{1,m}(f_{1/4}(a_1)))\ge (r_m/2)/ 2M\tforal \tau\in {\rm T}(C_m).
\eneq
Put
$$
\mathfrak{f}_{a}=\inf\{\tau(\phi_{1,m}(f_{1/4}(a_1))):\tau\in {\rm T}(C_m)\}.
%\inf\{\tau(f_{1/4}(a)): \tau\in 
%{\mathrm{T}}(A)\}/2.
$$
Note since $t\circ \phi_{{{m,n}}}\in {{\rm T}}(C_n)$ for all $t\in {\rm T}(C_n),$
$$
t(\phi_{m,n}(f_{1/4}(a_1)))\ge \mathfrak{f}_{a}\rforal t\in {\mathrm{T}}(C_n).
$$

%We may also assume 
%
%It is standard to show that, for any fixed $m\ge 1,$ there exists $N(m)\ge 1$ such that, for  any $n\ge N(m),$
%$$
%t(\phi_{n,m}(f_{1/4}(a)))\ge \mathfrak{f}_{a}\rforal t\in {\mathrm{T}}(C_n).
%$$
From this, one concludes that $A\in {\cal D}_0$ {{(with $\phi=0$ and $\psi={\id}_A$ in Definition \ref{DNtr1div}).}}
\end{df}

\begin{df}\label{DW}
Recall {{that}} ${\cal W}$ is an inductive limit of \CA s 
%of the form 
as described in \eqref{ddraz} (see 
{{\cite{Raz},}} \cite{Tsang},
and \cite{Jb}) which has ${\rm K}_0({\cal W})={\rm K}_1({\cal W})=\{0\}$ and
{{has}}  a unique tracial state. 
Moreover, ${\cal W}=\overline{\bigcup_{n=1}^{\infty} C_n},$
where $C_n\subset C_{n+1}$ and each $C_n$ is in 
% is the closure of a union of an increasing sequence of \CA s in 
${\cal R}_{\mathrm{az}}$ (in
fact as in  \eqref{ddraz}) and inclusion preserves the strictly positive elements. 
 In particular, ${\cal W}\in {\cal M}_0$ and ${\cal W}\in {\cal D}_0.$
Furthermore, we may also assume 
that {{(see \ref{Dbuild1}  and \ref{Dboundedscale} for $\lambda_s$)}}
$$
\lambda_s(C_n)=\inf\{\tau(e_{C_n}): \tau\in {\rm T}(C_n)\}\to 1,
$$
where $e_{C_n}$ is a strictly positive element of $C_n$
(for example, $\lim_{i\to\infty}a_i/(a_i+1)=1$ {{as shown}} in \cite{Jb}).
{{By \cite{Raz}, ${\cal W}$ is unique with these properties.}}
\end{df}

%\begin{df}\label{DTRD0}
%Let $A\in {\cal D}_{00}$ (or in ${\cal D}_{10}$).  We say that $A\in {\cal D}_0$ (${\cal D}_1$) if, in addition, $A$ has stable rank one.

%\end{df}

\section{Tracial approximate divisibility}

%\begin{df}\label{Dstrongcuntz}
%Let $A$  be a \CA\, and let $a,\, b\in A_+.$ We write
%$a\lessapprox b.$ If, for each $1>\ep>0,$ there is a unitary $u\in U({\widetilde A})$ (or in $A$ if $A$ is unital) such that
%$$
%u^*f_{\ep}(a)u\in \overline{bAb}.
%$$

%
%Suppose that $a\lessapprox b.$
%For each $n,$ there are unitaries $u_n\in U({\widetilde A})$
%such that $u_n^*f_{1/n}(a)u_n\in \overline{bAb}.$
%There are  integers $m(n)\ge 1$ such that
%$$
%\|z_n(u_n^*f_{1/n}(a)^{1/2}af_{1/n}(a)^{1/2}u_n)^{1/2}-(u_n^*f_{1/n}(a)au_n)^{1/2}\|<1/n
%$$
%where $z_n=f_{1/m(n)}(b),$ $n=1,2,....$
%There are $g_n\in C_0((0,\infty))$ with $\|g_n\|=1$ such that
%$$
%\|g_n(b)bg_n(b)-z_n^2\|<1/2^n.
%$$
%It follows that
%$$
%\lim_{n\to\infty}\|(u_n^*f_{1/n}(a)^{1/2}af_{1/n}(a)^{1/2}u_n)^{1/2}g_nbg_n(u_n^*f_{1/n}(a)^{1/2}af_{1/n}(a)^{1/2}u_n)^{1/2}-a\|=0.
%$$
%Thus $a\lesssim b.$
%\end{df}

\begin{df}\label{Dappdiv}
Let $A$ be a {{(non-unital)}} $\sigma$-unital simple \CA.
Let us  say that $A$ is  (non-unital)  tracially approximately divisible
if the following property holds:

For any $\ep>0,$ any finite subset ${\cal F}\subset A,$ any
$b\in A_+\setminus \{0\},$ and any
integer $n\ge 1,$ there are $\sigma$-unital \SCA s
$A_0, {{A_1\otimes e_{1,1}\subset {\rm M}_n(A_1)\subset}}A$ such that $A_0\perp {\rm M}_n(A_1),$
$$
{\rm dist}(x, B_d)<\ep \rforal x\in {\cal F},
$$
where $B_d=A_0+A_1\otimes 1_n,$
%\subset B\subset A,$ $B=A_0\oplus  {\mathrm{M}}_n(A_1),$ and 
%\vspace{-0.1in} \beq\label{Dappdiv-1}
%&&B=A_0\oplus  {\mathrm{M}}_n(A_1),\\
%\overbrace{A_1\oplus A_1\oplus\cdots \oplus A_1}^n,\\
%&&B_d=\{(x_0, \diag(\overbrace{x_1,x_1,...,x_1}^n)): x_0\in A_0, x_1\in A_1\}
%\eneq
and $a_0\lesssim  b,$ where $a_0$ is a strictly positive element of $A_0.$
\end{df}

{{In the unital case, this definition is equivalent to 
%Note that, unital version of Definition \ref{Dappdiv} first appeared in 
5.3 of \cite{LinTAI}. (Note, as seen in the proof  of 5.4 of \cite{LinTAI}, 
the unit of the  finite dimensional \SCA\, is required to be $1-q$ there.)}}

\begin{lem}\label{Lmatrixpullb}
Let $D$ be a (non-unital) separable simple \CA\, which  can be written as
$D=\lim_{k\to\infty}(D_k, \phi_{{k}}),$ where each $D_k\in {{\cal C}_0^{0}}'.$
Let $K\ge 1$ be an integer,
let $\ep>0,$ and let ${\cal F} \subset D_{n}$  {{be a finite subset}}  for some $n\ge 1.$  
%{\red{T}}
{{T}}here exist
an integer $m\ge n,$ a  \SCA\, $D_m'={\mathrm{M}}_K(D_m'')\subset D_m,$ 
where $D_m''$ is a hereditary \SCA\, of $D_m,$ and a finite subset ${\cal F}_1\subset D_m''$ 
{{such that}}
%%satisfy the following:
\beq\label{Lmatripb-1}
{{\rm{dist}}}(\phi_{n, m}(f), {\cal F}_1\otimes 1_K)<\ep\rforal f\in {\cal F}.
\eneq
%where $d(x)=\diag(\overbrace{x,x,...,x}^K).$
If each $D_k$ {{is just assumed to belong to}} ${\cal C}_0',$ then
there {{exist}} an integer $m\ge n,$  {{a \SCA\,}} $D_m'={\mathrm{M}}_K(D_m'')\subset D_m,$ 
%%{\red{
{{where $D_m''$ is a hereditary \SCA\, of $D_m,$}} and a finite subset ${\cal F}_1\subset D_m''$
%%satisfy the following:
{{such that}}
\beq\label{Lnz1702-1}
\|\phi_{n,m}(f)-(r(f)+g_f\otimes 1_K)\|<\ep\rforal f\in {\cal F},
\eneq
where $r(f)\in \overline{eD_me}$  and $g_f\in {\cal F}_1 $ for all $f\in {\cal F},$    $e\in (D_m)_+,$  {{and}} $e\lesssim e_d,$
where $e_d$ is a strictly positive element of $D_m''.$

\end{lem}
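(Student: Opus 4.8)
The plan is to produce the matricial structure first inside the limit algebra and then transport it to a finite stage. Write $\phi_{j,\infty}\colon D_j\to D$ for the canonical maps and fix a strictly positive $e_n\in D_n.$ Since $D_n\in{{\cal C}_0^0}'$ (resp.\ ${\cal C}_0'$) is compact, we have $e_n\in\mathrm{Ped}(D_n),$ and as homomorphisms carry Pedersen ideals into Pedersen ideals, $\phi_{n,\infty}(e_n)\in\mathrm{Ped}(D).$ Hence, by \ref{CherePA=A} and \ref{Tpedersen}, the full hereditary \SCA\ $A_0:=\overline{\phi_{n,\infty}(e_n)D\phi_{n,\infty}(e_n)}$ is compact, contains $\phi_{n,\infty}(D_n),$ and (being separable, simple, stably projectionless, with $\mathrm{QT}=\mathrm{T},$ stable rank one, strict comparison, and surjective Cuntz map, all inherited from $D$ through the earlier sections and Brown's theorem) satisfies the hypotheses of \ref{TRobert1} and \ref{Lrluniq}; in the ${{\cal C}_0^0}'$ case one moreover has $\mathrm{K}_0(A_0)\cong\mathrm{K}_0(D)=\{0\}.$ Proceeding as in the proof of \ref{CDdiag} (using \ref{TRobert1} to build a $K$-divisible model whose Cuntz class is $\tfrac1K\,\mathrm{Cu}^\sim(\phi_{n,\infty}),$ and \ref{Lrluniq} together with property (R) (\ref{DPropertyR}) to match it), I would obtain a homomorphism $\psi_0\colon D_n\to B,$ with $B$ a hereditary \SCA\ of $A_0$ and $B=B\otimes e_{11}\subset{\mathrm{M}}_K(B)\subset A_0,$ such that
$$
\Big\|\phi_{n,\infty}(f)-\mathrm{diag}(\overbrace{\psi_0(f),\ldots,\psi_0(f)}^{K})\Big\|<\varepsilon/2\qquad(f\in{\cal F}).
$$

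Next I would realize this data at a finite stage. It is convenient to encode the decomposition as a single homomorphism $\Theta\colon{\mathrm{M}}_K(D_n)\to A_0,$ $\Theta(f\otimes e_{ij})=\iota(\psi_0(f)\otimes e_{ij}),$ where $\iota\colon{\mathrm{M}}_K(B)\hookrightarrow A_0$ is the matricial inclusion, so that the displayed estimate reads $\|\phi_{n,\infty}(f)-\Theta(f\otimes 1_K)\|<\varepsilon/2.$ Now ${\mathrm{M}}_K(D_n)$ is separable and stably isomorphic to an algebra in ${\cal C}$ (Brown's theorem, and ${\mathrm{M}}_K$ of a ${\cal C}$-algebra is again in ${\cal C}$), hence falls within the scope of \ref{103L}. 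Applying \ref{103L} to $\Theta,$ together with the density of $\bigcup_m\phi_{m,\infty}(D_m)$ in $D,$ produces for some $m_0\ge n$ a homomorphism $\Theta_{m_0}\colon{\mathrm{M}}_K(D_n)\to D_{m_0}$ with image in a subalgebra ${\mathrm{M}}_K(D_{m_0}'')\subset D_{m_0}$ (where $D_{m_0}''$ is the hereditary \SCA\ generated by $\Theta_{m_0}(e_n\otimes e_{11})$) such that $\phi_{m_0,\infty}\circ\Theta_{m_0}$ is within $\varepsilon/2$ of $\Theta$ on $\{f\otimes 1_K:f\in{\cal F}\}.$ Setting $\psi_0^{(m_0)}(f)=\Theta_{m_0}(f\otimes e_{11})$ gives $\Theta_{m_0}(f\otimes 1_K)=\mathrm{diag}_K(\psi_0^{(m_0)}(f)).$ Finally, since $\phi_{n,\infty}=\phi_{m_0,\infty}\circ\phi_{n,m_0}$ and the finite-stage norms satisfy $\|\phi_{m_0,m}(x)\|\searrow\|\phi_{m_0,\infty}(x)\|$ as $m\to\infty,$ I may choose $m\ge m_0$ and push $D_{m_0}',\psi_0^{(m_0)}$ forward by the homomorphism $\phi_{m_0,m}$ (which preserves the matricial relations exactly) to obtain $D_m'={\mathrm{M}}_K(D_m'')\subset D_m$ and ${\cal F}_1=\{\psi_0^{(m)}(f):f\in{\cal F}\}\subset D_m''$ with $\mathrm{dist}(\phi_{n,m}(f),{\cal F}_1\otimes 1_K)<\varepsilon;$ this is \eqref{Lmatripb-1}.

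For the ${\cal C}_0'$ case the only change is that $\mathrm{K}_0(A_0)\cong\mathrm{K}_0(D)$ need not vanish, so $\mathrm{Cu}^\sim(\phi_{n,\infty})$ need not be exactly $K$-divisible: division by $K$ is obstructed precisely by the bounded $\mathrm{K}_0$-contributions (classes of the form $[p]-k[1_{\widetilde{A_0}}]$). I would absorb this bounded obstruction into a small corner. Using the order isomorphism and surjectivity of the map $\Gamma$ in \ref{TRobert1} together with strict comparison (\ref{Comparison}), I choose the $K$-divisible model to account for all of $\phi_{n,\infty}$ except a remainder whose Cuntz class is dominated by that of a prescribed small positive element $e_d;$ the resulting decomposition is $\phi_{n,\infty}(f)\approx r(f)+g_f\otimes 1_K$ with $r(f)$ supported in $\overline{eA_0e}$ for some $e\lesssim e_d.$ Transporting to a finite stage exactly as above (via \ref{103L} and the decreasing-norm property) then yields \eqref{Lnz1702-1}.

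The main obstacle is the honest finite-stage realization of the second step: one must convert the approximate multiplicativity coming from \ref{103L} into a genuine matricial subalgebra ${\mathrm{M}}_K(D_m'')$ living inside the stably projectionless $D_m$ (so that no literal matrix units exist and the $K$-fold structure must be carried by the homomorphism $\Theta_{m_0}$ itself), while controlling every estimate through the possibly non-injective connecting maps using only $\|\phi_{m_0,m}(x)\|\searrow\|\phi_{m_0,\infty}(x)\|.$ In the ${\cal C}_0'$ case there is the additional bookkeeping of keeping the $\mathrm{K}_0$-remainder Cuntz-small, i.e.\ ensuring $e\lesssim e_d;$ this is exactly where Robert's order isomorphism (\ref{TRobert1}) and strict comparison (\ref{Comparison}) are used decisively.
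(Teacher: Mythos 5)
Your skeleton (build the $K$-divisible approximation inside the limit algebra $D$, then descend to a finite stage by semiprojectivity) matches the paper's, but both key steps are executed differently and both contain genuine gaps. For the divisibility step, the paper does not run a ${\mathrm{Cu}}^\sim$-existence/uniqueness argument inside a hereditary subalgebra: it invokes Robert's classification once to conclude $D\cong D\otimes Q$ (in the ${{\cal C}_0^{0}}'$ case, after \ref{inductived0}), so that the approximation $\phi_{n,\infty}(a)\approx c(a)\otimes 1_K$ comes for free from the self-absorption of $Q$. Your route needs the hypotheses of \ref{CDdiag}/\ref{TRobert1} for $A_0$, in particular continuous scale and ${\rm Cu}(A_0)\cong {\rm LAff}_{0+}$, and these are not ``inherited from the earlier sections'': the Cuntz-map isomorphism for this kind of algebra is \ref{PWAff} (Section 11, \emph{after} this lemma), continuous scale can simply fail for $A_0=\overline{\phi_{n,\infty}(e_n)D\phi_{n,\infty}(e_n)}$, and stable rank one of algebras in ${\cal D}$ (\ref{TTstr1}) depends on \ref{LappZ}, hence on the very lemma you are proving, so it must come from the inductive-limit structure instead. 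These can be repaired (e.g.\ via ${\cal Z}$-stability of $D$ from \cite{aTz}, together with \ref{TRozs} and \cite{ERS11}), but as written the verification is absent. For the descent step, \ref{103L} produces a homomorphism into $D_{m_0}\otimes{\cal K}$, not into $D_{m_0}$; converting its image into an honest ${\mathrm{M}}_K(D_{m_0}'')\subset D_{m_0}$ needs a further conjugation argument of \ref{LRL} type, which you do not supply. The paper avoids this entirely by applying semiprojectivity of the concrete algebra $C_0((0,1])\otimes {\mathrm{M}}_K$ (generated by a local unit $c_0$ for the elements $c(a)$) to lift the divisible structure directly into some finite stage $D_{m_1}$.

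The most serious gap is the ${\cal C}_0'$ case. There the obstruction is precisely that ${\mathrm{Cu}}^\sim(\phi_{n,\infty})$ need not be divisible by $K$, and your plan to ``absorb the bounded ${\rm K}_0$-contribution into a small corner'' is a statement of what must be proved, not a proof: one has to decompose ${\mathrm{Cu}}^\sim(\phi_{n,\infty})$ into a $K$-divisible morphism plus a morphism that simultaneously carries all the ${\rm K}_0$-data and is dominated by a prescribed Cuntz-small element, and then realize and match both pieces at once; nothing in \ref{TRobert1} or \ref{Comparison} hands you such a decomposition. The paper obtains it concretely: $D\cong D\otimes {\cal Z}$ by \cite{aTz}, and inside ${\cal Z}$ one has $1_{\cal Z}=d+\sum_{j=1}^{K}e_j$ with $e_1,\dots,e_K$ mutually orthogonal and mutually equivalent and $d\lesssim e_1$, so that $f\otimes d$ serves as the small remainder $r(f)$ and $f\otimes e_1$ yields $g_f$; these finitely many elements and relations are then pulled back to a finite stage by a standard perturbation. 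Without an analogue of this construction, your second case does not go through.
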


\begin{proof}
{{We may assume that ${\cal F}$ is in the unit ball of $D.$}}
Consider first the case $D_{{k}}\in {{\cal C}_0^{0}}'$ for all ${{k}}.$
{{By \ref{inductived0}, \wilog, we may assume that $D_k\in {\cal C}_0^0.$}}
One notes that ${\rm K}_0(D)={\rm K}_1(D)=\{0\}.$
One also notes that $D\otimes Q$ is  an inductive limit of \CA s in ${{\cal C}_0^{0}}.$
{{Moreover,  $D$ and $D\otimes Q$ have the same (lower semicontinuous) traces and the same tracial states. 
It follows from 6.2.4 of \cite{Rl} (see Theorem 1.2 of  \cite{Jb}, also \cite{Raz} and \cite{Tsang})}}
that 
%It follows from \cite{Raz} (see also \cite{Tsang} and \cite{Rl}) that 
$D\cong D\otimes Q.$ 
Fix $n$ such that ${\cal F}\subset D_n.$
\Wlog, we may assume that ${\cal F}\subset D_n^{\bf 1}.$
%It follows from \cite{aTz} that $D\cong D\otimes {\cal Z}.$
%By \ref{TZ0stable},
%$D\cong D\otimes {\cal Z}^o.$  We will write $D=D\otimes {\cal Z}^o.$
%As proved \ref{Ttapprdvi}, t
{{Note also that $Q$ is self-absorbing. 
Hence the map $a\mapsto a\otimes 1_Q$ (for all $a\in D$) is approximatly unitarily equivalent 
to the identity map.}}
Therefore, there exists  a \SCA\, $C$ of $D$ with $C \cong D$
%an inner automorphism $\af$ on $D\otimes {\cal Z}^o$
such that
%\vspace{-0.12in}
\beq\label{Lmatripb-2}
%\af(a)=\sum_{i=1}^Ka\otimes a_i\andeqn\\
%{\rm dist}(a, \af(a))
\|\phi_{n, \infty}(a)-{{c(a)\otimes 1_K}}
%\af({\blue{a}})
\|<\ep/4\rforal a\in {\cal F},
\eneq
%Therefore $\af({\cal F})\subset C\otimes M_K\subset D$
%and
%where
%\vspace{-0.12in} $$
%\af(a)=c(a)\otimes 1_K=\diag(\overbrace{c(a),c(a),...,c(a)}^K)\in C\otimes {\mathrm{M}}_K
%$$
for all $a\in {\cal F}$ and for some $c(a)\in C\otimes e_{11}\subset C\otimes M_K\subset D.$

{{For each $a\in {\cal F},$ there exists  $n_1\ge n$ 
%$\af(a)'\in D_n$
and $c(a)'\in D_{n_1}$
such that 
%$\phi_{n, \infty}(\af(a)')=\af(a)$ and 
$\|\phi_{n_1, \infty}(c(a)')-c(a)\otimes e_{11}\|<\ep/16K^2.$
\Wlog, we may assume that $\|c(a)\|, \|c(a)'\|\le 1.$
To simplify notation, \wilog, let us assume that there is $c_0\in C_+$ with
$\|c_0\|=1$ such that
$c_0c(a)=c(a)c_0=c(a)$ for all $a\in{\cal F}.$
Consider {{the}} \SCA\, $B=C_0\otimes {\mathrm{M}}_K,$ where $C_0$ is the \SCA\, of $C$ generated by
$c_0.$  {{Since $C$ is stably projectionless, ${\rm sp}(c_0)=[0,1].$}} 
Then $C_0\otimes {\mathrm{M}}_K{{\cong}} 
%is a quotient of $
C_0((0,1])\otimes {\mathrm{M}}_K.$
%(or $C([0,1])\otimes {\mathrm{M}}_K$).  
%Let $B=C_0\otimes {\mathrm{M}}_K.$  
%%Note that we may identify $c_0$ with $c_0\otimes e_{11}.$
%(or $C([0,1])\otimes {\mathrm{M}}_K$)
%as just mentioned and let $q: B\to C_0\otimes {\mathrm{M}}_K$ be  the standard quotient map.
{{Fix a finite subset ${\cal G}\subset B$ 
which contains $\{c_0\otimes e_{ij}: 1\le i,j\le 1\},$ and $0<\dt<\ep/8.$}} Since 
%$B$ 
$B$ is semiprojective (see, for example, \cite{ELP1}), there is a \hm\,
$H: B\to D_{m_1}$ for some $m_1\ge n_1\ge n$ such that
\beq\label{18semiproj-1}
\|\phi_{m_1,\infty}\circ H(g)-g\|<\dt/K^2\rforal g\in {\cal G}.
\eneq
  {{Set $c_{00}=H(c_0\otimes e_{11}).$}}
 %% Let ${\cal F}_1=\phi_{n, m_1}({\cal F})$ and let ${\cal F}_2'=\{\af(a)', c(a)': a\in \phi_{n, \infty}({\cal F})\}.$}}

 Fix $m>m_1.$ Define $D_m''=
 \overline{\phi_{m_1, m}(c_{00})D_{m}\phi_{m_1, m}(c_{00})}.$
 Then {{the}} \SCA\, $D_m'$ generated by  $D_m''$ and $\phi_{m_1, m}(H(B))$
 is isomorphic {{to}} $D_m''\otimes M_K\subset D_ m.$ }} 
 {{Define 
 \beq\nonumber
%&& x(a)=\phi_{m_1,m}(c_{00}\otimes e_{11})\phi_{n,m}(c(a)')\phi_{m_1,m}(c_{00}\otimes e_{11})
%\andeqn\\
&&x_{1,j}(a)=(c_{00}\otimes e_{j1})\phi_{n_1,m_1}(c(a)')(c_{00}\otimes e_{1j})\in H(B),
\andeqn\\
&& y_{1,j}(a)=\phi_{m_1,m}(c_{00}\otimes e_{j1})\phi_{n_1,m}(c(a)')\phi_{m_1,m}(c_{00}\otimes e_{1,j})
,\,\,j=1,2,...,K,
\eneq
for all  $a\in {\cal F}.$
Note $y_{1,j}=\phi_{m_1, m}(x_{1,j}(a)).$ Moreover,
 one may write 
$
\sum_{j=1}^N x_{1,j}(a)=\sum_{j=1}^Nx_{11}(a)\otimes e_{ii}=x_{11}(a)\otimes 1_K.
$}}
{{By \eqref{18semiproj-1},  for $a\in {\cal F},$
 \beq\nonumber
 %\phi_{m,\infty}(\phi_{m_1,m}((c_{00}\otimes 1_K)(\af(a)')(c_{00}\otimes 1_K))=
\hspace{-0.2in}\phi_{m_1, \infty}(\sum_{j=1}^K x_{1,j}(a))
%\phi_{m, \infty}(\sum_{j=1}^Ky_{1,j}(a))
&\approx_{\dt}&\sum_{j=1}^K(c_0\otimes e_{j1})
%(\af(a)\otimes e_{11})
\phi_{n_1, \infty}(c(a)')(c_0\otimes e_{1,j})\\\nonumber
&\approx_{\ep/16}&\sum_{j=1}^K (c_0\otimes e_{j1})(c(a)\otimes e_{11})(c_0\otimes e_{1j})\\\label{102-n}
&=&\sum_{j=1}^K(c_0c(a)c_0)\otimes e_{jj}=\sum_{j=1}^Kc(a)\otimes e_{jj}=c(a)\otimes 1_K.
\eneq}}
%{\blue{ 
%Thus, for sufficiently large $m>m_1,$ 
%\beq
%\|\phi_{m_1, m}(x_{1,1}(a)\otimes 1_K)-\phi_{n, m}(\af(a)')\|<\ep/4\rforal a\in {\cal F}.
%\eneq
{{Define ${\cal F}_1=\{y_{1,1}(a): a\in {\cal F}\}\subset D_m''\otimes M_K\subset D_m.$ 
%Moreover, 
Then, by \eqref{Lmatripb-2} and \eqref{102-n} above,  \wilog,  choosing a larger $m$ if necessary, 
we may assume that
\beq
\|\phi_{m_1, m}(\phi_{n, m_1}(a))-\phi_{m_1,m}(x_{11}(a)\otimes 1_K)\|<\ep/2\tforal a\in {\cal F}.
\eneq
It follows that
\beq
{\rm dist}(\phi_{n,m}(a), {\cal F}_1\otimes 1_K)<\ep\rforal a\in {\cal F}.
\eneq
This proves the first part of the statement.}}

%Note that one may write $\sum_{j=1}^k x_{1,j}(a)=x_{1,1}(a)\otimes 1_K.$
%be the pre-image of $c_0$ under $q.$
%The lemma follows if we choose a sufficiently large $m\ge m_1$ and
%$D_{m}''=\overline{\phi_{m_1, m}(c_{00})D_{m}\phi_{m_1, m}(c_{00})}$
%as well as $D_m'=D_m''\otimes {\mathrm{M}}_K.$

In the case  $D_n\in {\cal C}_0',$ by \cite{aTz},
$D\otimes {\cal Z}\cong D.$
In ${\cal Z}$ (see the proof of Lemma 2.1 of \cite{Rlz}, and also
Lemma  4.2 of \cite{Rrzstable}),  there are $e_1, e_2,...,e_K, {{d}} \in {\cal Z}_+$
such that $\sum_{j=1}^K e_j+d=1_{\cal Z},$ $e_1, e_2,...,e_K$ are mutually orthogonal, 
 $d\lesssim e_1,$ {{and}}
there exist $w_1, w_2,...,w_K\in {\cal Z}$ such that $e_j=w_jw_j^*$ and
$e_{j+1}=w_j^*w_j.$   Moreover, as in  the proof of Lemma 4.2
of \cite{Rrzstable}, {{since ${\cal Z}$ has stable rank one}}, there is a unitary $v\in {\cal Z}$ such that
$v^*dv\le e_1.$
\Wlog,  identifying $D$ with $D\otimes {\cal Z},$ we may assume
that  $\phi_{n, \infty}(x)=y\otimes 1$ for some $y=y(x)\in D$ 
%and 
for every element $x\in {\cal F}.$
Let $d'=c_0\otimes d, $ $v'=c_0^{1/2}\otimes v,$ $e_j'=c_0\otimes e_j, $ $w_j'=c_0^{1/2}\otimes w_j,$
$j=1,2,...,K.$ Note that $d'+\sum_{j=1}^K e_j'=c_0.$
With sufficiently large $m$ and with a standard perturbation,
we may assume that $d', v', e_j', w_j'\in \phi_{m, \infty}(D_m),$ $j=1,2,...,m,$ and
$\phi_{n,\infty}(x)$ commutes with $d', e_j'$ and $w_j'$ for all $x\in {\cal F}.$
With possibly even larger $m,$  \wilog, there are $d'', v'', e_j'', w_j''\in  D_m$
such that
$d''+\sum_{j=1}^K=c_0',$ $d''=v''(v'')^*,$ $(v'')^*v''\le e_1'',$
$e_1'', e_2'',..., e_K''$ are mutually orthogonal, $(w_j'')(w_j'')^*=e_j'$ and
$e_{j+1}''=(w_j'')^*(w_j''),$ where $c_0'\in (D_m)_+$ {{is}} such
that $c_0'\phi_{n,m}(x)=\phi_{n,m}(x)c_0'=\phi_{n,m}(x)$ for all $x\in {\cal F}$
and
\beq
\|[\phi_{n,m}(x), y]\|<\ep/16K^2\rforal x\in {\cal F}
\eneq
and $y\in \{d''^{1/2}, d'',  v'', e_j'', w_j'', j=1,2,...,K\}.$
Define  $D_m''=\overline{e_j''D_me_j''},$ $r(f)=(d'')^{1/2}\phi_{n,m}(d'')^{1/2}$ and
${\cal F}_1=\{e_1''\phi_{n,m}(x)e_1'': x\in {\cal F}\}$ and identify
$d(e_1''\phi_{n,m}(x)e_1'')$ with
$$
\sum_{j=1}^Ke_j''\phi_{n,m}(x)e_j''\in {\mathrm{M}}_K(D_m'')\,\,\,\,\rforal x\in {\cal F}.
$$
{{The conclusion of the}} lemma follows.
\end{proof}

\begin{thm}\label{TDefsimple}
Let $A\in {\cal D}$ (or ${{A\in}}{\cal D}_0$)  be a separable \CA\, with $A={\rm Ped}(A).$ 
Then  the following {{statement}} holds:
Let  $a_0$ be a strictly positive element of $A$ with $\|a_0\|=1.$
There exists $1>d_A>0$  {{satisfing}} the following condition:
%with  $C={\rm Per}(C)$ 
For 
any  $\ep>0,$ any finite subset  ${\cal F}\subset A,$ and  any  $b_0\in A_+\setminus \{0\},$ 
there {{exist}} a separable simple \CA\, $D=\lim_{n\to\infty}(D_n, \psi_n),$
where 
%each 
$D_n\in {\cal C}_0$ (or 
%each 
$D_n\in {\cal C}_0^0$) and 
  an ${\cal F}$-$\ep$-multiplicative 
%\SCA\, $D\in {\cal C}_0$ (or ${\cal C}_0^0$), and 
\cpc\,  
%$\phi_0: A\to A,$  
${{\phi}}: A\to D_1$ such that, 
for any $n>1,$  there exist  a \cpc\,  $\Phi_n: A\to A$ and an embedding $j_n: D_n\to A$  with
$\Phi_n(A)\perp j_n\circ (\psi_{1, n}{{\circ}}\phi(A))$ such that
%$\phi_i$ is ${\cal F}$-$\ep$-multiplicative ($i=0,1$), 
\beq
&&\|x-(\Phi_n+j_n\circ \psi_{1, n}\circ \phi)(x)\|<\ep\tforal x\in {\cal F},\\\label{10318913-1}
&&c_n\lesssim  b_0,\tand\\
&&\tau(f_{1/4}(\psi_{1, n}\circ \phi(a_0)))>d_A\tforal \tau\in {\rm T}(D_n),
\eneq
where $c_n$ is a strictly positive element of $\overline{\Phi_n(A)A\Phi_n(A)}.$
Moreover, if ${\rm K}_0(A)=\{0\},$ we may assume that 
$(\psi_{1, n}|_{D_1})_{*0}=0.$
\end{thm}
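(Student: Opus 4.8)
The plan is to combine the tracial one-dimensional decomposition available from membership in ${\cal D}$ (or ${\cal D}_0$) with the matricial pull-back of \ref{Lmatrixpullb}. First I would invoke \ref{UnifomfullTAD} (via \ref{PD0=tad}) to obtain, for the given $\ep$, finite subset ${\cal F}$, and $b_0$, a pair of ${\cal F}$-$\ep$-multiplicative \cpc s $\Phi: A\to A$ and $\psi: A\to D$ with $\Phi(A)\perp D$, where $D\in {\cal C}_0'$ (or ${{\cal C}_0^0}'$), $\psi(a_0)$ strictly positive in $D$, $\Phi(a_0)\lesssim b_0$, and the tracial lower bound $\tau(f_{1/4}(\psi(a_0)))\ge \mathfrak{f}_{a_0}$ for all $\tau\in {\mathrm{T}}(D)$. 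The number $d_A$ will be chosen to be (slightly less than) the quantity $\mathfrak{f}_{a_0}$ associated with $a_0$ in \ref{DNtr1div}; by \ref{Rfa0} this is controlled by $\lambda_s(A)$ and is independent of $\ep$, ${\cal F}$, $b_0$. The issue is that \ref{UnifomfullTAD} produces a single \SCA\, $D\subset A$, whereas the statement demands an honest inductive limit $D=\lim_{n\to\infty}(D_n,\psi_n)$ with building blocks in ${\cal C}_0$ (or ${\cal C}_0^0$) together with a fixed initial map $\phi: A\to D_1$ and the connecting maps $\psi_{1,n}$.

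To bridge this, I would first apply \ref{Rsemiproj} to replace the \SCA\, $D\in {\cal C}_0'$ by one in ${\cal C}_0$ (or ${\cal C}_0^0$), so that the target of $\psi$ may be taken to lie in ${\cal C}_0$; this also preserves the tracial estimate at the cost of halving $\mathfrak{f}_{a_0}$, which is harmless for the choice of $d_A$. Next, since $D_1:=D\in {\cal C}_0$, I want to realize $D$ as the first stage of a simple inductive limit inside $A$. Here I would use the tracial divisibility machinery: the \CA s in ${\cal C}_0$ are semiprojective (\ref{str=1}(3)) and, by \ref{Lmatrixpullb}, maps out of such building blocks can be absorbed into matricial corners $M_K(D_m'')$. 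Iterating \ref{UnifomfullTAD} with successively finer finite subsets and smaller tolerances, and using \ref{inductived0} to keep all stages in ${\cal C}_0$ (or ${\cal C}_0^0$), I would construct the tower $D_1\xrightarrow{\psi_1}D_2\xrightarrow{\psi_2}\cdots$ with each $\psi_n$ an embedding and with compatible embeddings $j_n: D_n\to A$, arranging that $j_n\circ\psi_{1,n}\circ\phi$ approximates $\psi$ on ${\cal F}$ up to $\ep$. The element $\Phi_n$ is obtained from the complementary map $\Phi$ (its image being orthogonal to $D_n$), and $c_n\lesssim b_0$ follows from $\Phi(a_0)\lesssim b_0$ together with \ref{180915sec2}, which identifies $\Phi(a_0)$ as strictly positive in $\overline{\Phi(A)A\Phi(A)}$.

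The main obstacle will be the simultaneous bookkeeping required to make $D=\lim_{n\to\infty}(D_n,\psi_n)$ a genuine simple inductive limit while keeping the factorization $j_n\circ\psi_{1,n}\circ\phi$ coherent across all $n$: the inner maps $\phi$ and $\psi_{1,n}$ must be fixed once and for all, yet each approximation step in \ref{UnifomfullTAD} a priori produces a fresh target \SCA. I expect to resolve this by a standard diagonal/reindexing argument, using semiprojectivity of the ${\cal C}_0$ building blocks (\ref{str=1}(3)) and \ref{103L} to perturb the approximately multiplicative maps into genuine homomorphisms between the $D_n$, and the Elliott approximate intertwining argument (as in \ref{inductived0}) to assemble them into the limit $D$ with the stated connecting maps.

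Finally, for the last sentence, suppose ${\rm K}_0(A)=\{0\}$. Since each $D_n\in {\cal C}_0$ (or ${\cal C}_0^0$) has ${\rm K}_0(D_n)$ finitely generated (see \ref{Runitz}) and the whole construction takes place inside $A$ with ${\rm K}_0(A)=\{0\}$, the induced map $(\psi_{1,n}|_{D_1})_{*0}$ on $K_0$ factors through $K_0(A)=\{0\}$ up to the relevant approximation, so it vanishes. I would make this precise by noting that the embeddings $j_n$ realize $\psi_{1,n}$ as a map whose composite into $A$ is $K_0$-trivial, and then appeal to the injectivity on $K_0$ of the connecting maps (or to \ref{Lrluniq} / \ref{CDdiag}, which allow adjusting the homomorphism up to approximate unitary equivalence without changing its defining data on ${\cal F}$) to arrange $(\psi_{1,n}|_{D_1})_{*0}=0$ on the nose. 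This last step should be routine given that $K_0$ of the building blocks is finitely generated and the ambient $K_0$ is trivial.
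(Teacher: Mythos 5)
Your outline has the right general shape (iterate the ${\cal D}$-decomposition, upgrade the approximately multiplicative maps between building blocks to genuine \hm s by semiprojectivity, assemble an inductive limit), which is indeed the paper's strategy, but it waves away the step that is the actual content of the theorem: the uniform-in-$n$ tracial lower bound. You fix $d_A$ slightly below $\mathfrak{f}_{a_0}$ and verify the bound only for the first-stage map $\psi$; the theorem, however, requires $\tau(f_{1/4}(\psi_{1,n}\circ\phi(a_0)))>d_A$ for \emph{every} $n$ and every $\tau\in{\rm T}(D_n)$, where $\psi_{1,n}$ composes through $n$ stages, each produced by a fresh application of Definition \ref{DNtr1div}. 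A tracial state of $D_{n+1}$ pulled back through $\psi_n$ is merely a trace of norm $\le 1$ on $D_n$ (possibly of tiny norm), so the bound can decay geometrically along the tower; and your coherence condition ``$j_n\circ\psi_{1,n}\circ\phi\approx\psi$ on ${\cal F}$'' is a norm statement inside $A$ which gives no control over ${\rm T}(D_n)$, since tracial states of $D_n$ are intrinsic to $D_n$ and need not extend along $j_n$. The paper's proof spends its core machinery exactly here: the sequence $(b_n)$ is chosen in advance with summable trace bounds \eqref{TDadiv-0}, the connecting maps are built to carry the uniform fullness data of \ref{Tqcfull} (see \eqref{TDapdiv-17+}), and Claims (1)--(2) (a weak$^*$-limit contradiction argument) show the bound stabilizes above $(\mathfrak{f}_{a_0})^2/8$; what survives is $d_A=\mathfrak{f}_{a_0}^2/16$, not anything close to $\mathfrak{f}_{a_0}$. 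Two further consequences of skipping this: simplicity of $D$ is not automatic for an inductive limit and is proved in the paper (Claim (3)) from the same fullness estimates, for which you give no argument; and the requirements $\Phi_n(A)\perp j_n(\psi_{1,n}\circ\phi(A))$ and $c_n\lesssim b_0$ for \emph{all} $n$ cannot be met by the single first-stage $\Phi$ (it is orthogonal only to $D_1$), since each later stage contributes a new ``remainder'' that must be absorbed into $\Phi_n$, forcing the Cuntz classes to accumulate -- this is why the paper pre-selects the whole summable family $(b_n)$ and rebuilds $\Phi_n$ with corrections at every stage.

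The last step of your proposal also contains a genuine logical error. You argue that $(\psi_{1,n}|_{D_1})_{*0}$ vanishes because it ``factors through ${\rm K}_0(A)=\{0\}$,'' but $\psi_{1,n}$ is a map $D_1\to D_n$ and does not factor through $A$; only $j_n\circ\psi_{1,n}$ does. From $(j_n)_{*0}\circ(\psi_{1,n})_{*0}=0$ one can conclude only that the image of $(\psi_{1,n})_{*0}$ lies in $\ker\,(j_n)_{*0}$, which forces nothing unless $(j_n)_{*0}$ is injective -- and there is no reason it should be. The paper handles this inside the construction: since ${\rm K}_0(D_1)$ is finitely generated (\ref{Runitz}) by classes $[p_j]-n_j[1]$, the hypothesis ${\rm K}_0(A)=\{0\}$ supplies partial isometries $v_j$ over ${\rm M}_m({\widetilde A})$ implementing $[p_j]=[q_j]$; the matrix entries of the $p_j$ and $v_j$ are placed into the finite set ${\cal F}_{2,2}$ at the next stage, so that after cutting down one obtains partial isometries over ${\rm M}_m({\widetilde D}_{n_3})$ witnessing $(\psi_2)_{*0}=0$ on the generators, whence $(\psi_{1,n})_{*0}=0$ for all $n\ge 2$. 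Appealing instead to \ref{Lrluniq} or \ref{CDdiag}, as you suggest, is circular here: those results presuppose structural properties of the target (e.g.\ stable rank one, Cuntz-semigroup computations) that are established only later, using the present theorem.
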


\begin{proof}
%\%{\blue{As in (the proof of) \eqref{PD0=tad} , it suffices to prove  the above statement with  \eqref{10318913-1} is replaced by 
%\%$\Phi_n(a_0)\lesssim b_0.$}} 
Let $1>\mathfrak{f}_{a_0}>0$ be {{as}} in
Definition  \ref{DNtr1div}.
%\ref{Dtr1div}.
Fix an integer $k_0\ge 1$
such that $(\mathfrak{f}_{a_0})^2>2^{-k_0}.$

{{Replacing}} $a_0$ by $g(a_0)$ for some $g\in C_0((0,1])$ with
$0\le g\le 1,$ we may assume that
\beq\label{TDIV-n1}
\tau(a_0)>\mathfrak{f}_{a_0}\rforal \tau\in {\mathrm{T}(A)}
\eneq
(see \ref{Rfa0}).
Fix any $b_0\in A_+\setminus \{0\}.$
Choose a sequence of {{nonzero}} positive elements
${{(b_n)_{n\ge 1}}}$ 
 {{in $A$}} with 
%which has
 the following property: $b_{1}\lesssim b_0$ {{and}} 
$b_{n+1}\lesssim b_{n,1},$ 
where $b_{n,1}, b_{n,2},....,b_{n,2^{n+k_{0}+5}}$ are mutually orthogonal positive elements
in $\overline{b_nAb_n}$ such that $b_nb_{n,i}=b_{n,i}b_n=b_{n,i},$
$i=0,1,2,...,n,$ and $\la b_{n,i}\ra =\la b_{n,1}\ra,$ $i=1,2,...,2^{n+k_0+3}.$

It should be noted that
\beq\label{TDadiv-0}
\sum_{j=m}^{\infty} \sup\{\tau(b_j): \tau\in \overline{{\mathrm{T}(A)}}^\mathrm{w}\}<(\mathfrak{f}_{a_0})^2/2^{m+5}\rforal m\ge 1.
\eneq

One obtains (see {{also the end of}} \ref{Rsemiproj}) two sequences of \SCA s $A_{0,n}$ and  $D_n$ of $A,$ with $A_{0,n}\perp D_n$
and $D_n\in {\cal C}_0$ (or  ${{D_n\in {\cal C}_0^{0}}}$), {{and}}
 two sequences
of \cpc s $\phi_n^{(0)}: A\to A_{0,n}$  and
$\phi_n^{(1)}: A\to D_n$ with $\|\phi_n^{(i)}\|=1$ ($i=0,1$) satisfying  the following conditions:
\beq\label{TDappdiv-1}
&&\lim_{n\to\infty}\|\phi_n^{(i)}(ab)-\phi_n^{(i)}(a)\phi_n^{(i)}(b)\|=0\rforal a,\, b\in A, \,\,i=0,1,\\\label{TDappdiv-1+}
&&\lim_{n\to\infty}\|a-(\phi_n^{(0)}+ \phi_n^{(1)})(a)\|=0\rforal a\in A,\\\label{TDappdiv-1+2}
&&c_n\lesssim b_n,\\\label{TDappdiv-2}
&&\tau(f_{1/4}(\phi_n^{(1)}(a_0)))\ge \mathfrak{f}_{a_0}\rforal \tau\in {\mathrm{T}}(D_n),
\eneq
 and $\phi_n^{(1)}(a_0)$ is a strictly positive element {{of}} $D_n,$
 where $c_n$ is a strictly positive element of $A_{0,n}$ with $\|c_n\|=1.$ 
 %By \ref{Rsemiproj}, one may assume
% that $D_n\in {\cal C}_0^{(0')}.$
As in the proof of  \ref{PD0qc}, {{it follows that}}
\beq\label{TDappdiv-6}
\lim_{n\to\infty}(\sup\{|\tau(a)-\tau\circ \phi_n^{(1)}(a)|: \tau\in {\mathrm{T}(A)}\})=0\rforal a\in A.
\eneq
Consider the sequence $(a_n)$ defined inductively by
$a_1:=\phi_1^{(1)}(a_0),$   $a_2:=\phi_2^{(1)}(a_1),..., a_n:=\phi_n^{(1)}(a_{n-1}),$
$n=1,2,....$

For fixed $n,$  by \eqref{TDappdiv-1+2}, \eqref{TDadiv-0},  \eqref{TDappdiv-1}, and \eqref{TDappdiv-1+},
\beq\label{TDap-7n-1}
\lim_{k\to\infty}{{(}}\sup\{\tau(f_{1/4}(\phi_k^{(1)}(\phi_{n}^{(0)}(b))){{)}}: \tau\in {\rm T}(D_k)\}{{)}}\le (\mathfrak{f}_{a_0})^2/2^{n}
\eneq
for any $0\le b\le 1.$  It follows that, for fixed $n,$  and any {{fixed}} $m\ge n,$ 
\beq\label{TDap-7n-2}
\lim_{k\to\infty}(\sup\{|\tau(f_{1/4}(\phi_{m+k}^{(1)}(a_m-a_n)){{)}}|: \tau\in {\rm T}(D_{m+k})\})\le (\mathfrak{f}_{a_0})^2/2^{n-1}.
\eneq
\Wlog, passing to a subsequence if necessary,  by \eqref{TDappdiv-1+},
we may assume that, for all $m > n,$
\beq\label{TDapdiv-7-}
&&\|a_n-(\phi_m^{(0)}(a_n)+ \phi_m^{(1)}(a_n))\|<{(\mathfrak{f}_{a_0})^2\over{2^{(n+4)^2}}},\\\label{TDapdiv-7}
&&\|f_{1/4}(a_n)-(f_{1/4}(\phi_m^{(0)}(a_n))+ f_{1/4}(\phi_m^{(1)}(a_n)))\|<{(\mathfrak{f}_{a_0})^2\over{2^{(n+4)^2}}},\andeqn\\
\label{TDap-7n+1}
&&\|\phi_m^{(1)}\circ \cdots\circ \phi_{n+1}^{(1)}(f_{1/4}(a_n))-f_{1/4}(a_{n+m})\|<{(\mathfrak{f}_{a_0})^2\over{2^{(n+4)^2}}},\,\,\, n=1,2,...,\\\nonumber
%\eneq
&&\hspace{-0.96in}{\rm{
and, \,\,by\,\, \eqref{TDap-7n-2},\,
% for\,\, any
{{whenever}}}}\,\, m+k\ge n+1,\\
%\beq
\label{TDap-7n-3}
&&\sup\{|\tau(f_{1/4}(\phi_{m+k}^{(1)}(a_{n+1}-a_n))|: \tau\in {\mathrm{T}}(D_{m+k})\}\le {(\mathfrak{f}_{a_0})^2\over{2^{(n+4)^2}}}.
% (\mathfrak{f}_{a_0})^2/2^{n-1}
\eneq

Claim (1):  
%For any $n\ge 1,$
\beq\label{TDapdiv-7+1}
\liminf_{n\to\infty}(\inf\{\tau(f_{1/4}(\phi_m^{(1)}(a_n))): \tau\in {\mathrm{T}}(D_m) \andeqn m>n\})\ge {(\mathfrak{f}_{a_0})^2\over{8}}.
%\liminf_{n\to\infty}(\{\tau(f_{1/4}(\phi_m^{(1)}(a_n))): \tau\in {\mathrm{T}}(D_m): \andeqn m>n\})\ge {(\mathfrak{f}_{a_0})^2\over{8}}.
\eneq

Claim (2):  If we first take a subsequence $(N(k))$ and as above define
$a_1:=\phi^1_{(N(1))}(a_0),$ $a_2:=\phi_{N(2)}^{(1)}(a_1),..., a_n:=\phi_{N(n)}^{(1)}(a_{n-1}),$
$n=0,1,...,$  then  Claim (1) still holds, when
$m$ is replaced by $N(m).$

Let us first explain that Claim (2) follows from Claim (1) since we may  first pass to another subsequence
in the above construction and then apply Claim (1).

We now prove  Claim (1).

Assume  Claim (1) is false.
Then
there exists $\eta_0>0$ such that $ {(\mathfrak{f}_{a_0})^2\over{8}}-\eta_0>0$ and
\beq\label{TDapdiv-7+2}
\liminf_{n\to\infty}(\inf \{\tau(f_{1/4}(\phi_m^{(1)}(a_n))): \tau\in {\mathrm{T}}(D_m)\andeqn m>n\})\le {(\mathfrak{f}_{a_0})^2\over{8}}-\eta_0.
\eneq
By \eqref{TDap-7n-3},
%\eqref {TDap-7n-1}
%\eqref{TDappdiv-2})
there is $n_0\ge 1$ such that, for all $m\ge n\ge n_0$ and $k\ge 1,$ 
%(by \eqref{TDapdiv-7} and \eqref{TDappdiv-2}), 
%\beq\nonumber
%\label{TDappdiv-7+3}
%&&\tau(f_{1/4}(a_n))<\tau(f_{1/4}(\phi_{m+k}^{(1)}(a_n)))+\eta/4, 
%\,\,\,\tau(f_{1/4}(\phi_{m+k}^{(1)}(a_n)))<\tau(f_{1/4}(a_n))+\eta/8
%\andeqn\\\nonumber
%&&\tau(f_{1/4}(\phi_{m+k}^{(1)}(a_m)))\le \tau(f_{1/4}(a_m))+\eta/4\rforal \tau\in {\mathrm{T}}(D_{m+k}).
%\eneq
%Then 
\beq\label{TDappdiv-7+3}
&&\hspace{-0.2in}\tau(f_{1/4}(\phi_{m+k}^{(1)}(a_m)))\le \tau(f_{1/4}(\phi_{m+k}^{(1)}(a_n)))+\eta_0/2\rforal \tau\in {\mathrm{T}}(D_{m+k}).
\eneq
Hence there exists a subsequence  $(n_k)$
 which has the following property:
{{if}} $k'\ge k,$ then 
\beq\label{TDapdiv-8}
t_{n_{k'}}(f_{1/4}(\phi_{n_{k'}}^{(1)}(a_{n_k})))\le (\mathfrak{f}_{a_0})^2/8-\eta_0/2.
\eneq

Consider the positive linear functional $\tau_k$ defined by
$\tau_k(a)=t_k(\phi_{n_k}^{(1)}(a))$
for all $a\in A,$ $k=1,2,....$
%Without loss of generality, by passing to a diagonal selection process, we may assume
%that
%$$
%\{1(m)\}\subset \{2(m)\}\subset \cdots \{k(m)\}\subset ....
%$$
%Choosing a diagonal subsequence
%$$
%1(1)<2(2)<\cdots < k(k)<\cdots.
%$$
%By
%\beq\label{TDapdiv-9}
%t_{m(m)}\phi_{m(m)}^{(1)}(a_{n_k}))<\mathfrak{f}_{a_0}/4.
%\eneq
%for all $m\ge k.$
Let $\tau$ be a weak* limit of ${{(\tau_k)_{k\ge 1}}}.$ It follows \eqref{TDappdiv-2} that
$1\ge \|\tau\|\ge \mathfrak{f}_{a_0}.$  Moreover, by \eqref{TDappdiv-1}, $\tau$  is a trace.
 On the other hand, by \eqref{TDapdiv-8} and \eqref{TDapdiv-7},
\beq\label{TDapdiv-9}
\tau(f_{1/4}(a_{n_k}))<(\mathfrak{f}_{a_0})^2/8\rforal k.
\eneq

It follows from  \eqref{TDapdiv-7}, \eqref{TDadiv-0}  and \eqref{TDappdiv-1+2}  that, if $m>n\ge 1,$
\beq\label{TDapdiv-10}
\tau(f_{1/4}(\phi_m^{(1)}(a_n)))\ge \tau(f_{1/4}(a_n))-(\mathfrak{f}_{a_0})^2/2^{m+5}-(\mathfrak{f}_{a_0})^2/2^{(n+4)^2}.
\eneq
%for all $t\in {\mathrm{T}(A)}.$
Therefore, also using \eqref{TDIV-n1},  for all $k,$
\beq\label{TDapdiv-11}
\tau(f_{1/4}(a_{n_k}))\ge \tau(f_{1/4}(a_0))-(\sum_{j=1}^{n+k}((\mathfrak{f}_{a_0})^2/2^{j+5}-(\mathfrak{f}_{a_0})^2/2^{(j+1)^2}))>(\mathfrak{f}_{a_0})^2/4.
\eneq
This contradicts  with \eqref{TDapdiv-9}  and so   Claim (1) is proved.

%Define
%$\psi_n': D_n\to D_{n+1}$ by
%$\psi_n'(d)=\phi_{n+1}^{(1)}(d)$ for all $d\in D_n,$ $n=1,2,....$

%Note, by \ref{Ca=ped}, $A={\rm Ped}(A).$ 
%Let us consider first the case that $A={\rm Ped}(A).$ 
Since $A={\rm Ped}(A),$  by \ref{Tqcfull},
we obtain a map 
$T:  A_+\setminus \{0\}\to \N\times \R_+\setminus \{0\}$ which has the properties described by  \ref{Tqcfull}.
%%Denote 
{{Set}} $T(a)=(N(a), M(a))$  and $\lambda(a)=(N(a)+1)(M(a)+1)$ for each $a\in A_+\setminus \{0\}.$ 

%Since $A$ is simple, there is, for each $n,$ a map
%$T_n=(N_n, M_n): A_+\setminus \{0\}\to \N\times \R_+\setminus \{0\}$ such that, for any $a\in A_+\setminus\{0\},$
%%there are $x(a)_{1,n}, x(a)_{2,n},...,x(a)_{m_n(a),n}\in A$ with
%$m_n(a)\le N_n(a)$ and $\|x(a)_j\|\le M_n(a)$ such that
%\beq\label{TDapdiv-12}
%\sum_{j=1}^{m_n(a)}x(a)_{j,n}^*ax(a)_{j,n}=f_{1/16}(\phi_{n}^{(1)}(a_1)), n=1,2,....
%\eneq
%%(with $\phi_0^{(1)}(a_0)=a_0$).
%Here, we view $D_n$ as \SCA\,  of $A.$

Now fix a finite subset ${\cal F}\subset A_+\setminus \{0\}$ and $1/16>\ep>0.$
We may assume that $\|a\|\le 1$ for all $a\in {\cal F},$
 $a_0
 %, f_\eta(a_0)
 \in {\cal F}.$ 
 %and 
%\beq\label{DLsimple-n-1}
%\|f_\eta(a_0)af_{\eta}a_0-a\|<\ep/128\rforal a\in {\cal F}.
%\eneq
%Put ${\cal F}'=\{f_\eta(a_0)af_\eta(a_0): a\in {\cal F}\}\cup{\cal F}.$
Let $e^D_{{n}}$ be a strictly positive element of $D_n$ with $0\le {{e^D_n}}\le 1.$
Let ${{(}}{\cal F}_{k,n}'{{)}}$ be an increasing sequence of finite subsets of $(D_k)_+$ such that
the union of these subsets is dense in $(D_k)_+.$
We {{may}} assume that $\phi_k^{(1)}({\cal F})\subset {\cal F}_{k,1}'.$ By \eqref{TDappdiv-1+} 
{{and \eqref{TDappdiv-2},}}
{{\wilog,  by choosing larger $k,$ if necessary,}} we may also assume that  the 
elements $\phi_k^{(1)}({\cal F})$ and therefore those in ${\cal F}_{k,n}'$ are nonzero.
Choose $1/4>\eta_0>0$ such that
\beq\label{Ddvi-nnnnn-100}
\|f_{1/4}(a')-f_{1/4}(a'')\|<\min\{\ep, \mathfrak{f}_{a_0}^2\}/64
\eneq
{{whenever}}  $0\le a', a''\le 1$ and $\|a'-a''\|<\eta_0.$
We may assume that 
\beq
\|x-(\phi_1^{(0)}+\phi_1^{(1)})(x)\|<\min\{\eta_0,\ep\}/16^2\rforal x\in {\cal F}.
\eneq
 Set
\vspace{-0.1in}\beq\label{Ddvi-n12}
{\cal F}_{1,n}''=\{(a-\|a\|/2)_+:a\in {\cal F}_{1,n}'\}\andeqn {\cal F}_{1,n}'''={\cal F}_{1,n}'\cup{\cal F}_{1,n}''.
\eneq
Choose $\sigma_1>0$ such that
\beq
&&\hspace{-0.2in}\|f_{\sigma_1}(e^D_1)xf_{\sigma_1}(e^D_1)-x\|<\min\{\ep,\eta_0\}/16^2\rforal x\in \phi_1^{(1)}({\cal F})\andeqn\\\label{Ddvi-n12+}
&&\hspace{-0.2in}\|f_{\sigma_1}(c_1)\phi_1^{(0)}(x)f_{\sigma_1}(c_{1})-\phi_{1}^{(0)}(x)\|<\min\{\mathfrak{f}_{a_0},\eta_0,\ep\}/16^3\rforal x\in {\cal F}.
\eneq
Define $\Phi_1: A\to {{A_{0,1}}}$ by $\Phi_1(x)=f_{\sigma_1}(c_1)\phi_1^{(0)}(x)f_{\sigma_1}(c_1).$
Put $\ep_0=\min\{\mathfrak{f}_{a_0}^2, \ep/16,\eta_0/16, \sigma_1/2\}.$

Let
${\cal F}_{1,n}$ be a finite subset which also contains
$
{\cal F}_{1,n}'''
%\cup\{x(a)_{j,1}: a\in {\cal F}_{1,n}''', 1\le j\le m_n(a)\}
\cup\{a_1, e_1^D, f_{\sigma_1}(e_1^D), f_{\sigma_1/2}(e_1^D)\}.$
%,f_{\sigma}(a_1), \sigma\in \{1/4, 1/8, 1/16\}\}.
% f_{1/8}(a_i),f_{1/4}(a_i), \,i=0,1\}.
%$
%Choose $1>\sigma_1>0$ such that 
%\beq\label{TDap-12n+1}
%\|f_{\sigma_1}(e^d_1)af_{\sigma_1}(e^d_1)-a\|<\min\{(\mathfrak{f}_{a_0})^2/16, \ep/16\}/4\lambda(a)\rforal a\in
% {\cal F}_{1,1}.
%\eneq
By \ref{Tqcfull},
%and \ref{semiproj}, 
since $D_1$ is  assumed to be in  {{the}}  class ${\cal C}_0,$  there exist an integer $n_2\ge 2$ and 
 a \hm\, $\psi_1: D_1\to D_{n_2}$ such that
 \beq\label{TDap-13n+1}
 \sum_{i=1}^{m_{1}(a)}\phi_{n_2}^{(1)}(x(a)_{i,1})^*\phi_{n_2}^{(1)}(a)\phi_{n_2}^{(1)}(x(a)_{i,1})=f_{1/16}(\phi_{n_2}^{(1)}(a_1)),
 \eneq
 where  $x(a)_{i,1}\in A,$ $m_{1}(a)\le N(a)$ and $\|x(a)_{i,1}\|\le M(a),$ $i=1,2,...,m_1(a),$ and 
\beq\label{TDapdiv-13}
&&\|\psi_1(a)-\phi_{n_2}^{(1)}(a)\|<\ep_0/4\cdot 16\lambda(a)
\eneq
for all  $a\in 
%\{f_{\sigma_1}(e^d_1)xf_{\sigma_1}(e^d_1): x\in 
{\cal F}_{1,1}''',$
%where $D_1'=\overline{f_{\sigma_1}(e^d_1)D_1f_{\sigma_1}(e^d_1)}$ 
%and $D_2''=\overline{f_{\sigma_2'}(e^d_{n_2})D_{n_2}f_{\sigma_2'}(e^d_{n_2})}$
%for some $1>\sigma_2'>0.$
It follows that
\beq
&&\hspace{-0.4in}\|\sum_{i=1}^{m_1(a)}\phi_{n_2}^{(1)}(x(a)_{i,1})^*\psi_1(a)\phi_{n_2}^{(1)}(x(a)_{i,1})-f_{1/16}(\phi_{n_2}^{(1)}(a_1))\|<\ep_0/32
%\min\{{(\mathfrak{f}_{a_0})^2\over{32}}, {\ep\over{32}}\}
\eneq
for all  $a\in {\cal F}_{1,1}''.$
%\{f_{\sigma_1}(e^d_1)xf_{\sigma_1}(e^d_1): x\in {\cal F}_{1,1}'''\}.$ 
Therefore, {{applying}} \ref{Lrorm},  one obtains $y(a)_{i,n_2}\in D_{n_2}$ with $\|y(a)_{i,n_2}\|\le \|x(a)_{i,1}\|+(\mathfrak{f}_{a_0})^2/16$ such that
\beq\label{TDapdiv-14}
\sum_{i=1}^{m_2(a)}y(a)_{i, n_2}^*\psi_{1}(a)y(a)_{i,n_2}=f_{1/8}(\phi_{n_2}^{(1)}(a_1))\rforal a\in {\cal  F}_{1,1}'''.
\eneq
By \eqref{TDappdiv-1}, we may assume, choosing $n_2$ large, 
that
\beq\label{TD-nnn-1++}
&&\|x-(\phi_{n_2}^{(0)}+\phi_{n_2}^{(1)})(x)\|<\ep_0/16^2\rforal x\in {\cal F}\cup \{c_1\}\cup {\cal F}_{1,1},\\\label{TD-nnn-1}
&&\|\phi_{n_2}^{(1)}(f_{\sigma_1/2}(c_1))\phi_{n_2}^{(1)}(f_{\sigma_1/2}(e_1^D))\|<\ep_0/16,\\
&&\|f_{\sigma'}(\phi_{n_2}^{(1)}(c_1))-\phi_{n_2}^{(1)}(f_{\sigma'}(c_1))\|<\ep_0/16,\,\, \sigma'\in \{\sigma_1,\sigma_1/2, 
\sigma_1/4\}, \andeqn
\\\label{TD-nnn-1+}
&&\hspace{-0.2in}\|f_{\sigma_1/2}(\phi_{n_2}^{(1)}(c_1))^{1/2}\phi_{n_2}^{(1)}(\Phi_1(x))f_{\sigma_1/2}(\phi_{n_2}^{(1)}(c_1))^{1/2}
-\phi_{n_2}^{(1)}(\Phi_1(x))\|<\ep_0/16
%\\\label{TD-nnn-1++}
%&&\andeqn \|x-(\phi_{n_2}^{(0)}+\phi_{n_2}^{(1)})(x)\|<\ep_0/16^2
\eneq
for all  $x\in {\cal F}.$
Put $\phi_1: A\to D_1$ by $\phi_1(x)=f_{\sigma_1}(e_1^D)\phi_1^{(1)}(x)(f_{\sigma_1}(e_1^D))$
for $x\in A.$
Define $\phi_{n_2}^{(1)'}$ by
$\phi_{n_2}^{(1)'}(x)=f_{\sigma_1/2}(\phi_{n_2}^{(1)}(c_1))^{1/2}\phi_{n_2}^{(1)}(\Phi_1(x))f_{\sigma_1/2}(\phi_{n_2}^{(1)}(c_1))^{1/2}$
for all $x\in A.$
Define $\Phi_2: A\to {{A_{0,n_2}}}$ 
by {{(note, below,  the two sums are orthogonal sums)}}
$$
\Phi_2(x)=
(1-\psi_1(f_{\sigma_1/2}(e_1^D)))(\phi_{n_2}^{(0)}((\Phi_1+ \phi_1^{(1)})(x))+ \phi_{n_2}^{(1)'}(x))
(1-\psi_1(f_{\sigma_1/2}(e_1^D)))
$$
for all $x\in A.$ Note that $\Phi_2$ is a  \cpc\, and $\Phi_2(A)\perp j_2(\psi_1\circ \phi_1(A))$
(recall that $j_n: D_n\to A$ is the embedding). 
Also {{(note the sum is orthogonal)}}
\beq
\psi_1(f_{\sigma_1/2}(e_1^D))(\phi_{n_2}^{(0)}(\Phi_1+ \phi_1^{(1)})(x))=0\rforal x\in A.
\eneq
By \eqref{TDapdiv-13} and \eqref{TD-nnn-1},
\beq
\phi_{n_2}^{(1)'}(x)\psi_1(f_{\sigma_1/2}(e_1^D))\approx_{\ep_0/16}
\phi_{n_2}^{(1)'}(x)\phi_{n_2}^{(1)}f_{\sigma_1/2}(e_1^D)\approx_{\ep_0/16} 0.
\eneq
Thus, on ${\cal F},$ by \eqref{TD-nnn-1+} and  \eqref{Ddvi-n12+},
\beq\nonumber
&&\hspace{-0.7in}\Phi_2(x)\approx_{\ep_0/8} \phi_{n_2}^{(0)}((\Phi_1+ \phi_1^{(1)})(x))+ \phi_{n_2}^{(1)'}(x)
\approx_{\ep_0/16} \phi_{n_2}^{(0)}((\Phi_1+ \phi_1^{(1)})(x))+ \phi_{n_2}^{(1)}(\Phi_1(x))\\
&&_{\ep/16^2}\approx \phi_{n_2}^{(0)}((\phi_1^{(0)}+ \phi_1^{(1)})(x))+ \phi_{n_2}^{(1)}(\phi_1^{(0)}(x)).
\eneq
We then verify
%  (recall $j_2: D_2\to A$ is the embedding)
\beq\label{TD-nnn-10}
\|x-(\Phi_2(x)+j_2(\psi_1\circ\phi_1(x)))\|<\ep/16\rforal x\in {\cal F}.
\eneq
Note that, by \eqref{TD-nnn-1++},
\beq
\|(\phi_{n_2}^{(0)}(c_1)+ \phi_{n_2}^{(1)}(c_1))-c_1\|<\ep_0/16.
\eneq
By \ref{Lrorm},   {{since $\ep_0<\sigma_1/2$ and $\phi_{n_2}^{(0)}(c_1)\perp \phi_{n_2}^{(1)}(c_1),$}}
\beq
f_{\sigma_1/2}(\phi_{n_2}^{(1)}(c_1)){{\lesssim f_{\sigma_1/2}((\phi_{n_2}^{(0)}(c_1)+ \phi_{n_2}^{(1)}(c_1)))}}\lesssim c_1.
%\phi_{n_2}^{(1)}(f_{\sigma_1}(c_1))\lesssim 
\eneq
{{Note, by the definition of $\Phi_2,$   for any $x\in A_+,$ 
\beq\nonumber
\la \Phi_2(x) \ra &\le & \la (\phi_{n_2}^{(0)}((\Phi_1+ \phi_1^{(1)})(x))+\phi_{n_2}^{(1)'}(x))\ra \\\nonumber
&\le&  \la c_{n_2}\ra+ \la f_{\sigma_1/2}(\phi_{n_2}^{(1)}(c_1))\ra \le \la b_{n_2}\ra+\la  c_1\ra \le \la b_2\ra+\la  b_{1,2}\ra \le\la  b_{1,1}+b_{1,2}\ra.
\eneq}}
%Let $c_2'$ be a strictly positive element of $\overline{\Phi_2(A)A\Phi_2(A)}.$ 
%Then 
%{\blue{(where $c_1\oplus c_1$ is an orthogonal sum in $M_2(A)$)}}
%\beq
%c_2'\lesssim c_1\oplus c_1\lesssim b_{1,1}+b_{1,2}.
%\eneq
It follows that, {{if $c_2'$ is a strictly positive element of $\overline{\Phi_2(A)A\Phi_2(A)}$ (see \ref{180915sec2}),}}
 {{then}}
\beq\label{180914-1}
c_2'\lesssim b_{1,1}+b_{1,2}.
\eneq
%where $\psi_1''(a)=\psi_1(f_{\sigma_1}(e^d_1)af_{\sigma_1}(e^d_1))$ for all $a\in D_1.$
To simplify notation,  passing to a subsequence, if necessary,   \wilog,
we may assume that $n_2=2.$

Put 
%$C_1=D_1,$ $C_2=\overline{\psi_1(D_1)D_2\psi_1(D_1)}$
%and 
$e_2'=\psi_1(e_1^D).$ 
Set
\vspace{-0.1in}\beq\label{Ddvi-nn12+}
{\cal F}_{2,n}''=\{(a-\|a\|/2)_+:a\in {\cal F}_{2,n}'\}\andeqn {\cal F}_{2,n}'''={\cal F}_{2,n}'\cup {\cal F}_{2,n}''.
\eneq
Choose $\sigma_2>0$ such that
\beq
&&\hspace{-0.4in}\|f_{\sigma_2}(e'_2)xf_{\sigma_1}(e'_2)-x\|<\ep/16^3\rforal x\in {\cal F}_{2,2}'\cup\psi_1({\cal F}_{1,2})\cup \phi_2^{(1)}({\cal F})\andeqn\\\label{Ddvi-nn12++}
&&\hspace{-0.4in}\|f_{\sigma_2}(c_2')\Phi_2(x)f_{\sigma_2}(c_2')-\Phi_2(x)\|<\ep/16^3\rforal x\in {\cal F}.
\eneq
{{Recall}} that ${\mathrm{K}}_0(D_1)$ is finitely generated {{(see \ref{Runitz}),}}  say by 
$[p_j]-n_j[1],$ where $p_j\in {\mathrm{M}}_m({\widetilde D}_1)$ is a projection 
with $[\pi_d(p_j)]=n_j[1],$ $\pi_d: {\mathrm{M}}_m({\widetilde D}_1)\to {\mathrm{M}}_m$ 
is the quotient map, $j=1,2,...,k_1.$ We may write $p_j=q_j+h_j,$
where $h_j=(h_j^{(i,k)})$ with $h_j^{(i,k)}\in D_1,$ 
where  $q_j=\diag(\overbrace{1,1,...,1}^{n_j},0,...,0).$ 
If ${\rm K}_0(A)=\{0\},$ \wilog, we may assume that there exists 
$v_j\in {\mathrm{M}}_m({\widetilde A})$ such that
\beq
v_j^*v_j=p_j\andeqn v_jv_j^*=q_j,\,\,\, j=1,2,...,k_1,
\eneq
where we identify $q_j$ with the matrix in ${\mathrm{M}}_m(\C\cdot 1_{\widetilde A}).$
Write $v_j=\lambda_j+s_j,$ where $s_j=(s_j^{(i,k)})$ with $s_j^{(i,k)}\in D_1,$
 $\lambda_j\in {\mathrm{M}}_m(\C \cdot 1_{\widetilde A})$
is a partial isometry and $s_j\in {\mathrm{M}}_m(D_1),$ $1\le j\le k_1.$
Put $\ep_1=\min\{\ep_0/(m16)^2, \sigma_2/(m16)^2\}.$
Let
${\cal F}_{2,n}$ be a finite subset which also contains
$
{\cal F}_{2,n}'''
%\cup
%\{x(a)_{j,2}: a\in {\cal F}_{2,n}'', 1\le j\le m_n(a)\}
\cup\{a_2, e_2', f_{\sigma_2}(e_2'),f_{\sigma_2/2}(e_2')\}
%,
%f_{1/16}(a_i),  f_{1/4}(a_i),\, i=0,1,\, 
%f_{\sigma}(a_2):  \sigma\in \{1/4, 1/8, 1/16\}\}
%\phi_2^{(1)}(a_1)), f_{1/8}(\phi_2^{(1)}(a_1))\}
\cup \psi_1({\cal F}_{1,2})\cup \phi_2^{(1)}({\cal F})
$ as well as $h_j^{(i,k)}$ and $s_j^{(i,k)},$
%Choose $1>\sigma_2>0$ such that  $\sigma_2<\min\{\sigma_2'/2, 1/4\}$ and 
%$$
%\|f_{\sigma_2}(e^d_2)af_{\sigma_2}(e^d_2)-a\|<\min\{(\mathfrak{f}_{a_0})^2/16^2, \ep/16^2\}/4\lambda(a)\rforal a\in {\cal F}_{2,2}.
%$$
 Applying \ref{Tqcfull},
 % and  \ref{semiproj}, 
 since $D_2\in {\cal C}_0,$  
 as in the previous step,  {{we obtain}}  a \hm\, $\psi_2: D_2\to D_{n_3}$ such that 
 %with ${\widetilde{{\cal F}_{2,2}}}=\{f_{\sigma_2}(e^d_2)af_{\sigma_2}(e^d_2): a\in {\cal F}_{2,2}\},$
\beq\label{TDapdiv-15}
\|\psi_2(a)-\phi_{n_3}^{(1)}(a)\|<
%\min\{(\mathfrak{f}_{a_0})^2/16^{2}, \ep/16^{2}\}
\ep_1/4\cdot 16^3\lambda(a)\rforal a\in {\cal F}_{2,2}\andeqn\\
\hspace{-0.3in}\|\sum_{i=1}^{m_2(a)}\phi_{n_3}^{(1)}(x(a)_{i,2})^*\psi_2(a)\phi_{n_3}^{(1)}(x(a)_{i,2})-f_{1/16}(\phi_{n_3}^{(1)}(a_2))\|<\ep_1/16^2
%\min\{(\mathfrak{f}_{a_0})^2/16^{2}, \ep/16^{2}\}
\eneq
for all  $a\in 
%\{f_{\sigma_2}(e^d_2)af_{\sigma_2}(e^d_2):a\in 
{\cal F}_{2,2},$
%'''\},$ 
where $m_2(a)\le N(a)$ and $\|x(a)_{i,2}\|\le M(a)$ for all $a\in {\cal F}_{2,2}'''.$ 
%and where $D_2'=\overline{f_{\sigma_2}(e^d_2)D_2f_{\sigma_2}(e^d_2)}$ and
%$D_{n_3}''=\overline{f_{\sigma_3'}(e^d_{n_3})D_{n_3}f_{\sigma_3'}(e^d_{n_3})}$
%for some $1>\sigma_3'>0.$
%We note that 
%\beq\label{TDap-15n+1}
%\sum_{i=1}^{m_2(a)}f_{\sigma_2}(e^d_2)y(a)_{i, n_2}^*\psi_1''(a)y(a)_{i,n_2}f_{\sigma_2}(e^d_2)
%=f_{\sigma_2}(e^d_2)f_{1/8}(\phi_{n_2}^{(1)}(a_1))f_{\sigma_2}(e^d_2)
%\eneq
%for all $ a\in {\cal  F}_{1,1}'''.$ 
By 
%applying
 \ref{Lrorm},  there are $y(a)_{i,n_3}\in D_{n_3}$ with $\|y(a)_{i,n_3}\|\le \|x(a)_{i,2}\|+
(\mathfrak{f}_{a_0})^2/16^2$ such that
\beq\label{TDapdiv-16}
\sum_{i=1}^{m_2(a)}y(a)_{i,n_3}^*\psi_{2}(a)y(a)_{i,n_3}=f_{1/8}(\phi_{n_3}^{(1)}(a_2))\rforal a\in {\cal  F}_{2,2}'''.
\eneq

By \eqref{TDappdiv-1}, we may assume, by choosing large $n_3,$ 
that
\beq\label{TD-nnn-2++}
&& \|x-(\phi_{n_3}^{(0)}+\phi_{n_3}^{(1)})(x)\|<\ep_1/16^3\rforal x\in {\cal F}\cup \{c_2'\}\cup  {\cal F}_{2,2},\\\label{TD-nnn-2}
&&\|\phi_{n_3}^{(1)}(f_{\sigma_2/2}(c_2'))\phi_{n_3}^{(1)}(f_{\sigma_2/2}(e_2'))\|<\ep_1/16^3,\\
&&\|f_{\sigma'}(\phi_{n_2}^{(1)}(c_2'))-\phi_{n_2}^{(1)}(f_{\sigma'}(c_2'))\|<\ep_1/16^3,\,\, \sigma'\in \{\sigma_2,\sigma_2/2, 
\sigma_2/4\},
\\\label{TD-nnn-2+}
&&\hspace{-0.2in}\|f_{\sigma_2/2}(\phi_{n_3}^{(1)}(c_2'))^{1/2}\phi_{n_3}^{(1)}(\Phi_2(x))f_{\sigma_2/2}(\phi_{n_3}^{(1)}(c_2'))^{1/2}
-\phi_{n_3}^{(1)}(\Phi_2(x))\|<\ep_1/16^2
%\\\label{TD-nnn-2++}
%&&\andeqn \|x-(\phi_{n_2}^{(0)}+\phi_{n_2}^{(1)})(x)\|<\ep_0/16^2
\eneq
for all  $x\in {\cal F}.$
In particular (we continue to use $\phi_k^{(i)}$ for $\phi_k^{(i)}\otimes {\rm id}_{M_m}$),
\beq\nonumber
\|h_j-(\phi_{n_3}^{(0)}(h_j)+\phi_{n_3}^{(1)}(h_j))\|<\ep_1/16^3\andeqn
\|s_j-(\phi_{n_3}^{(0)}(s_j)+\phi_{n_3}^{(1)}(s_j))\|<\ep_1/16^3
\eneq
(when ${\mathrm{K}}_0(A)=0$).
By choosing smaller $\ep_1,$ we may assume that  (for $1\le j\le k_1$)
there is {{a}} partial isometry $u_j\in {\mathrm{M}}_m({\widetilde D}_{n_3})$ such that
\beq
u_j^*u_j=\psi_2^\sim (p_j)\andeqn u_ju_j^*=q_j,
\eneq
where $\|(\psi_2^\sim (p_j)+\phi_{n_3}^{(0)}(h_j))-p_j\|<\ep_1,$  
$\|(u_j+\phi_{n_3}^{(0)}(s_j))-v_j\|<\ep_1,$ where 
we identify ${\mathrm{M}}_m(\C\cdot 1_{{\widetilde D}_1})$ with ${\mathrm{M}}_m(\C\cdot 1_{\widetilde A}),$ and 
where $\psi_2^\sim$  is the extension of $\psi_2$ on ${\mathrm{M}}_m({\widetilde D}_1).$ 
In particular, $(\psi_2)_{*0}=0,$ when ${\mathrm{K}}_0(A)=\{0\}.$ 
%Put $\phi_1: A\to D_1$ by $\phi_1(x)=f_{\sigma_1}(e_1^d)\phi_1^{(1)}(x)(f_{\sigma_1}(e_1^d))$
%for $x\in A.$
Define $\phi_{n_3}^{(1)'}$ by
$$
\phi_{n_3}^{(1)'}(x)=
f_{\sigma_2/2}(\phi_{n_3}^{(1)}(c_2'))^{1/2}\phi_{n_3}^{(1)}(\Phi_2(x))f_{\sigma_2/2}(\phi_{n_3}^{(1)}(c_2'))^{1/2}
$$
for all $x\in A.$
Define $\Phi_3: A\to A$ 
by
$$
\Phi_3(x)=
(1-\psi_2(f_{\sigma_2/2}(e_2'))(\phi_{n_3}^{(0)}((\Phi_2+ j_2\circ \psi_1\circ \phi_1)(x))+ \phi_{n_3}^{(1)'}(x))
(1-\psi_2(f_{\sigma_2/2}(e_2'))
$$
for all $x\in A.$ 
Note $\Phi_3$ is a  \cpc\, and\\
 $\Phi_3(A)\perp j_2(\psi_{1,2}\circ \phi_1(A)),$
where $\psi_{1,2}=\psi_2\circ \psi_1.$
Also
\beq
\psi_2(f_{\sigma_2/2}(e_2'))(\phi_2^{(0)}(\Phi_2+ j_2\circ \psi_1\circ \phi_1)(x)=0\rforal x\in A.
\eneq
By \eqref{TDapdiv-15} and \eqref{TD-nnn-2},
\beq
\phi_{n_3}^{(1)'}(x)\psi_2(f_{\sigma_1/2}(e_2'))\approx_{\ep_1/16^2}
\phi_{n_3}^{(1)'}(x)\phi_{n_3}^{(1)}(f_{\sigma_2/2}(e_2'))\approx_{\ep_1/16^2} 0.
\eneq
Thus, on ${\cal F},$ by \eqref{TD-nnn-2+}  and \eqref{TD-nnn-10} (using also the {{orthogonality of the sum}}),
%and by \eqref{TD-nnn-2+}
%{Ddvi-nn12++},
\beq\nonumber
&&\hspace{-0.2in}\Phi_3(x)+\phi_{n_3}^{(1)}(j_2\circ \psi_1(\phi_1(x)))\approx_{\ep_1/2^7} \phi_{n_3}^{(0)}((\Phi_2+ j_2\circ \psi_1\circ \phi_1)(x))+ \phi_{n_3}^{(1)'}(x)\\\nonumber
&&+\phi_{n_3}^{(1)}(j_2\circ \psi_1(\phi_1(x)))
\approx_{\ep_1/2^8} \phi_{n_3}^{(0)}((\Phi_2+ j_2\circ \psi_1\circ \phi_1)(x))+ \phi_{n_3}^{(1)}(\Phi_2+j_2\circ \psi_1)(x))\\\nonumber
&&_{\ep/16}\approx \phi_{n_3}^{(0)}(x)+\phi_{n_3}^{(1)}(x)\approx_{\ep_1/16^3} x.
\eneq
Therefore, since $\ep_1/2^7+\ep_1/2^8+\ep_1/16^3+\ep_1/16^3<\ep/16^2,$  by  \eqref{TDapdiv-15},
\beq
\|x-(\Phi_3(x)+j_3(\psi_{1,2}\circ\phi_1(x)))\|<\ep/16+\ep/16^2
\eneq
for all 
$x\in {\cal F}.$
Note that, by \eqref{TD-nnn-1++},
\beq
\|(\phi_{n_3}^{(0)}(c_2')+ \phi_{n_3}^{(1)}(c_2'))-c_2'\|<\ep_0/16.
\eneq
By \ref{Lrorm}   {{(recall that $\phi_{n_3}^{(0)}(c_2')\perp \phi_{n_3}^{(1)}(c_2')$),}}
\beq
{{f_{\sigma_2/2}(\phi_{n_3}^{(1)}(c_2'))}}\lesssim c_2'\lesssim b_{1,1}{{+}} b_{1,2}.
%\phi_{n_2}^{(1)}(f_{\sigma_1}(c_1))\lesssim 
\eneq
By the  definition of $\Phi_3$ above, for any $x\in A_+,$
\beq
\la \Phi_3(x)\ra \le  \la c_{n_3}\ra+\la f_{\sigma_2/2}(\phi_{n_3}^{(1)}(c_2'))\ra\le \la b_{n_3,1}\ra +
\la c_2'\ra.
\eneq
Let $c_3'$ be a strictly positive element of $\overline{\Phi_3(A)A\Phi_3(A)}.$ 
Then, by \eqref{180914-1}, 
\beq
\la c_3'\ra \le \la b_{n_3,1}\ra +\la b_{1,1}+b_{1,2}\ra.
\eneq

%where $\psi_{n_3}''(a)=\psi_{2}(f_{\sigma_2}(e^d_{n_2})af_{\sigma_2}(e^d_{n_2}))$ for all $a\in D_1.$
To simplify notation, by passing to a subsequence, if necessary,   \wilog,
we may assume that $n_3=3.$

Continuing this process,
one then obtains a sequence
of \hm s $\psi_n: D_n\to D_{n+1}$ such that
%with ${\widetilde{{\cal F}_{n,n}}}=\{f_{\sigma_n}(e^d_n)af_{\sigma_n}(e^d_n): a\in {\cal F}_{n,n}\},$
\beq\label{TDapdiv-17}
&&\hspace{-0.3in}\|\psi_n(a)-\phi_{n}^{(1)}(a)\|<
%\min\{(\mathfrak{f}_{a_0})^2/16^{n}, \ep/16^{n}\}
\ep_0/4\cdot 16^n\lambda(a)
\rforal a\in {\cal F}_{n,n}\andeqn\\\label{TDapdiv-17+}
&&\hspace{-0.3in}\sum_{i=1}^{m_n(a)}
%f_{\sigma_{n+1}}(e_{d(n+1)})y
y(a)_{i, n+1}^*\psi_n(a)y(a)_{i,n+1}
%f_{\sigma_2}(e_{d(n+1)})
=
%f_{\sigma_{n+1}}(e^d_{n+1})
f_{1/8}(\phi_{n+1}^{(1)}(a_n)) \rforal a\in {\cal F}_{n,n},
%f_{\sigma_{n+1}}(e^d_{n+1}),
\eneq
where ${\cal F}_{n,n}$ contains 
${\cal F}_{n,n}'''\cup\{a_n, e_n', f_{\sigma_n}(e_n'),f_{\sigma_n/2}(e_n')\} \cup \psi_n({\cal F}_{n-1,n-1})\cup 
\phi_n^{(1)}({\cal F}),$
where ({{for}} $m\ge n$)
\beq
{\cal F}_{n,m}''=\{(a-\|a\|/2)_+:a\in {\cal F}_{n,m}'\}\andeqn {\cal F}_{n,m}'''={\cal F}_{n,m}'\cup {\cal F}_{n,m}''.
\eneq
%where $z(a)_{i,n+1}=y(a)_{i,n}f_{\sigma_{n+1}}(e^d_{n+1})$ and 
%$\psi_n''(a)=\psi_n(f_{\sigma_n}(e^d_n)af_{\sigma_n}(e^d_n)),$ for all $ a\in {\cal  F}_{n,n}''',$
%and where 
%\beq\|f_{\sigma_n}(e^d_n)af_{\sigma_n}(e^d_n)-a\|<\min\{(\mathfrak{f}_{a_0})^2/16^2, \ep/16^2\}/4\lambda(a)
%\rforal a\in {\cal F}_{n,n},
%\eneq
%and  $D_n'=\overline{f_{\sigma_n}(e_n^d)D_nf_{\sigma_n}(e_n^d)}.$
Moreover, $m_n(a)\le N(a),$  $\|y(a)_{i,n+1}\|\le M(a)+1$ 
%and $\sigma_n<1/2^n$  
for all $n.$
Furthermore, there {{is a}} \cpc\, $\Phi_n: A\to A$ such that
$\Phi_n(A)\perp j_n(\psi_{1,n}\circ \phi_1(A)),$ 
\beq
&&\hspace{-0.2in}\|x-(\Phi_n(x)+j_n(\psi_{1,n}\circ \phi_1(x)))\|<\sum_{k=1}^n \ep/16^k<\ep\rforal x\in {\cal F}\andeqn\\
&&\hspace{-0.2in}c_n'\lesssim b_{1,1}+ b_{1,2}+b_{2,1}+ \cdots + b_{n,1}\lesssim b_0,
\eneq
 $n=1,2,....$ 

%\|\sum_{i=1}^{m_n(a)}\phi_{n}^{(1)}(x(a)_{i,n})^*\psi_n(a)\phi_{n}^{(1)}(x(a)_{i,n})-f_{1/16}(a_{n})\|<\min\{\mathfrak{f}_{a_0}/2^{3+n}, \ep/2^{3+n}\}
%\eneq
%for all  $a\in {\cal F}_{n,n}'''.$
%Moreover, there  are $y(a)_{i,n}\in D_{n}$ with $\|y(a)_{i,n}\|\le M_n(a)+\mathfrak{f}_{a_0}/2^{3+n}$ such that
%\beq\label{TDapdiv-18}
%\sum_{i=1}^{m_n(a)}y(a)_{i,n}^*\psi_n(a)y(a)_{i,n}=f_{1/8}(a_{n})\rforal a\in {\cal  F}_{n,n}'''.
%\eneq

{{Consider the inductive limit \CA\,}} $D=\lim_{n\to\infty} (D_n, \psi_n).$ (Again, one should note that we have {{passed to}}  a subsequence to
simplify notation.)  Note that, if $A\in {\cal D}_0,$ then each $D_n\in {\cal C}_0^{0}.$
{{Note also that, while, by construction,  $D_n\in A,$  for each $n=1,2,...,$ this is not true for $D.$}}
Let us   verify that $D$ is simple.  Fix a non-zero positive element $d_0\in (D)_+$ with $\|d_0\|=1.$
Since each $D_n$ is stably projectionless, so is $D.$
Fix $1/64>\ep_1>0.$ There is $d\in (D)_+$ such that
$d=\psi_{m,\infty}(d')$ for $d'\in (D_m')_+$ with $\|d'\|=1$
and
\beq\label{TDapdiv-19}
\|d-d_0\|<\ep_1/32.
\eneq
It follows from \ref{Lrorm} that there is $z\in D$ such
that
\beq\label{TDapdiv-20}
(d-\ep_1/16)_+=z^*d_0z.
\eneq

By construction, there is $d''\in 
%\{f_{\sigma_{m'}}(e^d_{m'})af_{\sigma_{m'}}(e^d_{m'}): a\in 
{\cal F}_{m',m'}$ for some
$m'\ge m+16$ such that
\beq\label{TDapdiv-21}
\|\psi_{m,m'}((d'-\ep_1/16)_+)-d''\|<\ep_1/64.
\eneq
There is $y\in D_{m'}$ such
that
\beq\label{TDapdiv-22}
(d''-\ep_1/8)_+=y^*\psi_{m,m'}((d'-\ep_1/4)_+)y.
\eneq
Note that $\ep_1/2\le \|d''\|/8.$

By \eqref{TDapdiv-17+}, there are $x_1, x_2,...,x_L\in D_{m'+1}$
such that
\beq\label{TDapdiv-23}
&&\hspace{-0.8in}\sum_{i=1}^L x_i^*\psi_{m', m'+1}((d''-\ep_1/2)_+)x_i=
%f_{\sigma_{m'+1}}(e^d_{m'+1})
f_{1/8}(a_{m'+1}).
%f_{\sigma_{m'+1}}(e^d_{m'+1}).
%f_{1/8}(\phi_{m'+1}(a_{m'}))=f_{1/8}(a_{m'+1}).
\eneq
%Put 
% ${\widetilde A}_m=f_{\sigma_m}(e^d_{m}))f_{1/4}(\phi_m^{(1)}(a_1))f_{\sigma_m}(e^d_{m}),$ $m=2,3,....$ 
 
Claim (3):  The element $a_{00}:=\psi_{m'+1, \infty}(a_{m'+1})$
%({\widetilde A}_{m'+1}))$ 
 is full in $D.$
% for all $m'\ge 1.$
%Put  \linebreak 
% $c_m=f_{\sigma_m}(e_{dm}))f_{1/4}(\phi_m^{(1)}(a_1))f_{\sigma_1}(e_{dm}),$ $m=2,3,....$ 
In fact, for any $m''>m'+1,$  it follows from \eqref{TDapdiv-17}, \eqref{TDap-7n+1}, 
%\eqref{TDapdiv-17},  
and Claim (2)  that
\beq\nonumber
%\label{TDapdiv-24}
&&\hspace{-0.2in}\tau(\psi_{m'+1, m''}(f_{1/8}(a_{m'+1})))\ge \tau(\phi_{m''}^{(1)}\circ \cdots \circ \phi_{m'+2}^{(1)}(f_{1/8}(a_{m'+1})))-\sum_{j=m'+2}^{m''}(\mathfrak{f}_{a_0})^2/16^{j}\\\nonumber
%\tau(f_{1/4}(\phi_{m''}^{(1)}(a_{m'+1})))-(\mathfrak{f}_{a_0})^2/16^{m'}\\
&&\hspace{0.4in}\ge  \tau(\phi_{m''}^{(1)}\circ \cdots \circ \phi_{m'+2}^{(1)}((f_{1/4}(a_{m'+1}))
%-(\mathfrak{f}_{a_0})^2/16^{m'}
-(\mathfrak{f}_{a_0})^2/16^{m'+1}\\\label{LDsimple-n-2}
&&\hspace{0.4in}\ge \tau(f_{1/4}(a_{m''}))-{(\mathfrak{f}_{a_0})^2\over{2^{(m'+1+4)^2}}}
%-{(\mathfrak{f}_{a_0})^2\over{16^{m'}}}
-{(\mathfrak{f}_{a_0})^2\over{16^{m'+1}}}>{(\mathfrak{f}_{a_0})^2\over{16}}
%&=&\tau(f_{1/4}(\phi_{m''}^{(1)}(a_{m'+1}))-\mathfrak{f}_{a_0}/2^{m'+3}> (\mathfrak{f}_{a_0})^2/16
%\tau(\psi_{m'+1, m''}(f_{1/4}(a_{m'+1})))&\ge& \tau(\phi_{m''}^{(1)}(f_{1/4}(a_{m'+1}))-\mathfrak{f}_{a_0}/2^{m'+3}\\
%&=&\tau(f_{1/4}(\phi_{m''}^{(1)}(a_{m'+1}))-\mathfrak{f}_{a_0}/2^{m'+3}> \mathfrak{f}_{a_0}/16
\eneq
for all $\tau\in {\mathrm{T}}(D_{m''}).$
By \ref{Pcom4C}, we conclude that
$\psi_{m'+1, m''}(a_{m'+1})$ is full in $D_{m''}.$
Therefore $a_{00}$ is full in $\psi_{m'', \infty}(D_{m''})$ for all
$m''>m'+1.$
Hence, the closed {{two-sided}} ideal generated by $a_{00}$ contains  
$\bigcup_{m''>m'+1}\psi_{m'', \infty}(D_{m''}).$ This implies that $a_{00}$ is full in $D,$ which
proves Claim (3).

It follows from \eqref{TDapdiv-23} that $\psi_{m', \infty}((d''-\ep_1/2)_+)$ is full
in $D.$  By \eqref{TDapdiv-22}, the element  $\psi_{m, \infty}((d'-\ep_1/4)_+)$ is full in $D.$
Then, by \eqref{TDapdiv-20}, $d_0$ is full in $D.$
Since $d_0$ is arbitrarily chosen in $(D)_+\setminus \{0\}$ with $\|d_0\|=1,$
this implies that $D$ is indeed simple.

By \eqref{Ddvi-nnnnn-100}, and then
as in the estimate of \eqref{LDsimple-n-2},
%\eqref{LDsimple-102-2},
\beq\nonumber
&&\tau(f_{1/4}(\psi_{1,n}(\phi_1(a_0)))>\tau(f_{1/4}(\psi_{1,n}(\phi_1^{(0)}(a_0)))-\mathfrak{f}_{a_0}/64\\\nonumber
&&=\tau(\psi_{1,n}(f_{1/4}(a_1)))-\mathfrak{f}_{a_0}/64>\tau(\phi_n^{(1)}\circ \cdots 
\phi_2^{(1)}(f_{1/4}(a_1)))-\sum_{j=2}^{n}(\mathfrak{f}_{a_0})^2/16^{j}-\mathfrak{f}_{a_0}^2/64\\\nonumber
&&\ge \tau(f_{1/4}(a_{n}))-(\mathfrak{f}_{a_0})^2/2^{(1+4)^2}-(\mathfrak{f}_{a_0})^2/16^2)(16/15)-\mathfrak{f}_{a_0}^2/64\\\nonumber
&&\ge \mathfrak{f}_{a_0}^2/8-\mathfrak{f}_{a_0}^2/32=\mathfrak{f}_{a_0}^2/16=:d_A\,
%&&\tau(f_{1/4}(\psi_{1, n}(\phi_1(a_0)))=\tau(\psi_{1,n}(\phi_1(a_0))\ge \tau(f_{1/8}(\phi_1(a_0)))\ge \mathfrak{f}_a^2/32
\rforal \tau\in {\mathrm{T}}(D_n).
\eneq
We also note, since $(\psi_2)_{*0}=0,$ {{that}} $(\psi_{1, n})_{*0}=0$ for all $n\ge 2.$
\end{proof}

\begin{thm}\label{TDapprdiv}
Let $A$ be a separable \CA\, in ${\cal D}_0.$ 
%or  in ${\cal D}$ with $K_0(A)=\{0\}.$
Then $A$ is tracially approximately divisible in the sense of  \ref{Dappdiv}.
\end{thm}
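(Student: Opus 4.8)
The plan is to reduce the statement to the structural approximation already established in Theorem \ref{TDefsimple}, which provides almost all of the data required by Definition \ref{Dappdiv}. First I would reduce to the case $A = \mathrm{Ped}(A)$: since $A \in {\cal D}_0$, by \ref{Phered} any nonzero hereditary \SCA\ of $A$ is again in ${\cal D}_0$, and by \ref{compactrace} together with \ref{PD0qc} we know $0 \notin \overline{{\mathrm{T}(A)}}^{\mathrm w}$; but tracial approximate divisibility is a local property tested on finite subsets, so I would pass to a suitable full hereditary \SCA\ which is compact (equivalently satisfies $\mathrm{Ped}(B)=B$ by \ref{Tpedersen}) and argue there, transporting the conclusion back. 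Fixing a strictly positive element $a_0$ with $\|a_0\|=1$, a finite subset ${\cal F}$, an element $b \in A_+\setminus\{0\}$, and an integer $n \ge 1$, the goal is to produce $\sigma$-unital \SCA s $A_0$ and $A_1 \otimes e_{1,1} \subset {\mathrm M}_n(A_1) \subset A$ with $A_0 \perp {\mathrm M}_n(A_1)$, approximately containing ${\cal F}$ in $A_0 + A_1 \otimes 1_n$, and with a strictly positive element of $A_0$ Cuntz-below $b$.

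The core step is to apply Theorem \ref{TDefsimple} with ${\cal F}$, a small $\ep$, and $b_0 = b$ to obtain a separable simple \CA\ $D = \lim_{k\to\infty}(D_k, \psi_k)$ with $D_k \in {\cal C}_0^0$ (since $A \in {\cal D}_0$), a \cpc\ $\phi: A \to D_1$, and for a chosen large $n$ a \cpc\ $\Phi_n : A \to A$ together with an embedding $j_n : D_n \to A$ such that $\Phi_n(A) \perp j_n(\psi_{1,n}\circ\phi(A))$ and $\|x - (\Phi_n + j_n\circ\psi_{1,n}\circ\phi)(x)\| < \ep$ for $x \in {\cal F}$, with $c_n \lesssim b$ for $c_n$ a strictly positive element of $\overline{\Phi_n(A)A\Phi_n(A)}$, and crucially $(\psi_{1,n}|_{D_1})_{*0}=0$ since ${\mathrm K}_0(A)=\{0\}$. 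I would set $A_0 = \overline{\Phi_n(A)A\Phi_n(A)}$, so that its strictly positive element $c_n$ satisfies $c_n \lesssim b$ as required. The remaining task is to exhibit the matrix structure on the complementary summand $j_n(\psi_{1,n}\circ\phi(A))$: for this I would invoke Lemma \ref{Lmatrixpullb} applied to the inductive limit $D$ with the finite subset $\psi_{1,\cdot}\circ\phi({\cal F})$ (pushed into some $D_m$) and the integer $K = n$. Because each $D_k \in {\cal C}_0^0$, the first alternative of \ref{Lmatrixpullb} applies, yielding an integer $m$, a \SCA\ $D_m' = {\mathrm M}_n(D_m'') \subset D_m$ with $D_m''$ hereditary in $D_m$, and a finite subset ${\cal F}_1 \subset D_m''$ such that $\mathrm{dist}(\psi_{\cdot,m}(f), {\cal F}_1 \otimes 1_n) < \ep$ for the relevant $f$.

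I would then set $A_1 = j_m(D_m'')$ (a hereditary \SCA\ of $A$ via the embedding), so that ${\mathrm M}_n(A_1) \cong j_m(D_m')$ sits inside $A$ and is orthogonal to $A_0$ because the whole image $j_m(\psi_{1,m}\circ\phi(A))$ is orthogonal to $\Phi_n(A)$ by construction. Combining the decomposition $x \approx \Phi_n(x) + j_n(\psi_{1,n}\circ\phi(x))$ from \ref{TDefsimple} with the matrix approximation $\psi_{n,m}(\psi_{1,n}\circ\phi(x)) \approx {\cal F}_1 \otimes 1_n$ from \ref{Lmatrixpullb}, and composing with $\psi_{n,m}$ (adjusting $\ep$ and $m \geq n$ appropriately), I obtain that each $x \in {\cal F}$ is within $\ep$ of an element of $A_0 + A_1 \otimes 1_n$, which is exactly the defining condition of \ref{Dappdiv}. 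The main obstacle I anticipate is bookkeeping the two successive approximations so that the matrix amplification from \ref{Lmatrixpullb} is genuinely realized \emph{inside} $A$ (not merely inside the abstract limit $D$) and remains orthogonal to $A_0$; this requires carefully tracking the connecting maps $\psi_{n,m}$ and the embeddings $j_n$, and using that $\psi_{1,n}\circ\phi$ factors through the matrix corner. A secondary subtlety is ensuring the vanishing of the relevant $\mathrm{K}_0$ obstruction — but this is precisely what the hypothesis ${\mathrm K}_0(A)=\{0\}$ (built into ${\cal D}_0$ via ${\cal C}_0^0$) guarantees through the $(\psi_{1,n})_{*0}=0$ conclusion of \ref{TDefsimple}, so no additional argument beyond invoking that theorem should be needed.
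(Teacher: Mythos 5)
Your overall route coincides with the paper's: cut down to an algebraically simple hereditary \SCA\, so that Theorem \ref{TDefsimple} applies, take $A_0=\overline{\Phi_n(A)A\Phi_n(A)}$, and extract the matrix structure on the complementary summand from Lemma \ref{Lmatrixpullb}. However, there is a genuine gap exactly at the point you flag as ``the main obstacle,'' and the justification you offer for it is incorrect. Theorem \ref{TDefsimple} only guarantees $\Phi_n(A)\perp j_n\circ(\psi_{1,n}\circ\phi)(A)$, i.e., orthogonality with the \emph{image of $A$}, not with all of $j_n(D_n)$. If you apply Lemma \ref{Lmatrixpullb} to the inductive limit $D$ itself, the hereditary \SCA\, $D_n''$ it produces (in the proof of that lemma it is generated by an approximate lift of a ``local unit'' $c_{00}$ coming from the isomorphism $D\cong D\otimes Q$) need not lie inside the hereditary \SCA\, of $D_n$ generated by $\psi_{1,n}(\phi(A))$; consequently $j_n({\rm M}_K(D_n''))$ need not be orthogonal to $\Phi_n(A)$, hence not to $A_0$, while Definition \ref{Dappdiv} requires $A_0\perp {\rm M}_n(A_1)$ for the \emph{whole} matrix algebra. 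Your sentence claiming orthogonality ``because the whole image $j_m(\psi_{1,m}\circ\phi(A))$ is orthogonal to $\Phi_n(A)$'' silently replaces ${\rm M}_K(D_n'')$ by the image of $A$, which is precisely the containment that is not available. The paper closes this gap with one extra idea absent from your proposal: before invoking Lemma \ref{Lmatrixpullb}, it forms the hereditary \SCA s $C_k=\overline{\psi_{1,k}(\phi(A))D_k\psi_{1,k}(\phi(A))}$ and applies the lemma to $C=\lim_{k\to\infty}(C_k,\psi_k|_{C_{k-1}})$ (simple, being hereditary in $D$, with $C_k\in{{\cal C}_0^{0}}'$). Then ${\rm M}_K(D_n'')\subset C_n$ by construction, and orthogonality of $\Phi_n(A)$ to the image $j_n(\psi_{1,n}\circ\phi(A))$ \emph{does} pass to the hereditary subalgebra that image generates, so $A_0\perp j_n(C_n)\supset j_n({\rm M}_K(D_n''))$, as required.

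A secondary point: ${\rm K}_0(A)=\{0\}$ is not a hypothesis built into ${\cal D}_0$ (it is the approximating algebras in ${{\cal C}_0^{0}}'$ that have trivial ${\rm K}_0$, not $A$ itself), so you cannot invoke the ``Moreover'' clause $(\psi_{1,n}|_{D_1})_{*0}=0$ of Theorem \ref{TDefsimple} on that basis. Fortunately it is also not needed: all that the first alternative of Lemma \ref{Lmatrixpullb} requires is that the building blocks $D_k$ lie in ${\cal C}_0^{0}$, which is exactly what Theorem \ref{TDefsimple} provides when $A\in{\cal D}_0$. That part of your argument should simply be deleted rather than repaired.
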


\begin{proof}
Let $\ep>0,$ ${\cal F}\subset A$ be a finite subset, $b\in A_+\setminus \{0\},$  and let $K\ge 1$ be an integer.
Let $e_A\in A$ be a strictly positive element with $0\le e_A\le 1.$ 
Choose $1/2>\sigma>0$ such that
\beq\label{TDapp-1}
\|f_\sigma(e_A)af_\sigma(e_A)-a\|<\ep/4\rforal a\in {\cal F}.
\eneq
Let ${\cal F}_1=\{ f_{\sigma}(e_A)af_{\sigma}(e_A): a\in {\cal F}\}$ and 
$A'=\overline{f_\sigma(e_A)Af_\sigma(e_A)}.$ Then $A'$ is algebraic{{ally}} simple, and {{so}} $A'={\rm Ped}(A').$
Choose $b_0\in (A')_+\setminus \{0\}$ such that $\la b_0\ra\le  \la b\ra.$ 

We  apply  \ref{TDefsimple} to $A',$  ${\cal F}_1,$ $\ep/4$ and $b_0.$ 
Let $D$ be as in \ref{TDefsimple}.  Put $C_k=\overline{\psi_{1, k}(\phi(A))D_k\psi_{1, k}(\phi(A))},$
$k=1,2,...,$ and $C=\lim_{k\to \infty} (C_k, \psi_k|_{C_{k-1}}).$  Then $C$ is a hereditary \SCA\, of $D$
and $C_k\in {{\cal C}_0^{0}}'.$ 
By \ref{Lmatrixpullb}, there exist $n\ge 1$ 
and \SCA s $D_n'={\rm M}_K(D_n'')\subset C_n$ such that
\beq\label{TDapp-2}
{\rm dist}(\psi_{1, n}\circ \phi(a), {\cal F}_2\otimes 1_K)<\ep/4\rforal a\in {\cal F}_1,
\eneq
where  $D_n''$ is a hereditary \SCA\, of $C_n$ and ${\cal F}_2\subset D_n''$ is a finite subset.

Let $\Phi_n$ be as in \ref{TDefsimple}. Then $\Phi_n(A)\perp j_n(\psi_{1,n}\circ \phi_1(A))$
and $c_n\lesssim b_0,$ where $c_n$ is a strictly positive element of $\overline{\Phi_n(A)A\Phi_n(A)}.$ 
%%\%Let $j_n: D_n\to A$ be as in
{{Recall that, in}}  \ref{TDefsimple}, {{$D_n\subset A.$}}
Define $A_0=\overline{\Phi_n(A)A\Phi_n(A)}.$ Then $A_0\perp j_n(C)$ {{and}}
$B=A_0\oplus M_K(j_n(D_n''))]\subset A.$ 
%and 
%\beq
%B_d=\{(x_0, \diag(\overbrace{x_1,x_1,...,x_1}^K): x_0\in A_0, x_1\in j_n(D_n'')\}.
%\eneq
Then 
\beq
{\rm dist}(\Phi(x)+j_n\circ \psi_n(x),\,  {{A_0+D_m''\otimes 1_K}})<\ep/4\rforal x\in {\cal F}_1.
\eneq
However, as part of the conclusion of  \ref{TDefsimple},
\beq
\|x-(\Phi(x)+j_n\circ \psi_n(x))\|<\ep/4\rforal x\in {\cal F}_1. 
\eneq
By \eqref{TDapp-1},
\beq
{\rm dist}(a,\,  {{A_0+D_m''\otimes 1_K}})<\ep\rforal a\in {\cal F}.
\eneq
Note we also have $c_n\lesssim b_0\lesssim b.$

%The lemma follows.
\end{proof}

\begin{rem}
In fact, one can also prove the conclusion of \ref{TDapprdiv}  by replacing 
{{the condition}} $A\in {\cal D}_0$ by 
$A\in {\cal D}$ and ${\rm K}_0(A)=0,$ and applying \ref{CDdiag}, {{since}} we will show that a
\CA\, in ${\cal D}$ has stable rank one (which will be done in  \ref{TTstr1}).
This will be carried out in \ref{D=D0K0} below. 
\end{rem}

\begin{cor}\label{LappZ}
Let $A$ be a   separable \CA\, in the class ${\cal D}.$ Then $A$ has the following property:
For any $\ep>0,$ any finite subset ${\cal F}\subset A,$ any $a_0\in A_+\setminus \{0\}$
and any integer $n\ge 1,$
there are  mutually orthogonal elements $e_0, e_{00}, e_{01}\in A_+,$ \cpc s $\phi_0: A\to E_0,$ $\phi_1: A\to E_1$,
and $\phi_2: A\to E_2,$  where $E_0, E_1, E_2$ are \SCA s of $A,$
$E_0=\overline{e_0Ae_0},$ $e_{00}\in E_1,$ $e_{01}\in E_2,$
$E_0\perp E_1,$ ${\mathrm{M}}_n(E_2)\subset E_1,$  with $E_2\in {\cal C}_0'$ and $E_2\subset \overline{e_{01}Ae_{01}}$,  such that
\beq\label{DappZ-1}
&&\|x-(\phi_0+ \phi_1)(x)\|<\ep/2\tand\\
&&\|\phi_1(x)-(r(x)+{{\phi_2(a)\otimes 1_n}})
%\diag(\overbrace{\phi_2(x), \phi_2(x), ...,\phi_2(x)}^n))
\|<\ep/2,\\
&&r(x)\in \overline{e_{00}Ae_{00}}\tforal x\in {\cal F},
\eneq
and $e_0+e_{00}\lesssim a_0$ and $e_{00}\lesssim e_{01}.$
\end{cor}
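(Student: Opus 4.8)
The statement to prove is Corollary \ref{LappZ}, which asserts a decomposition combining the tracial approximate divisibility (Definition \ref{Dappdiv}, established in Theorem \ref{TDapprdiv}) with the existence of a piece living in the class ${\cal C}_0'$ that carries a matrix-amplified structure. The plan is to obtain the decomposition $A\cong E_0 \oplus$ (matrix-stabilized piece) by applying \ref{TDefsimple} and \ref{Lmatrixpullb} in sequence, essentially re-running the argument that proves \ref{TDapprdiv} but retaining the extra information about the $\mathrm{M}_n$-structure and the membership of the central piece in ${\cal C}_0'.$

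**Main steps.** First I would reduce to the algebraically simple case: choose a strictly positive element $e_A\in A$ with $0\le e_A\le 1,$ fix $1/2>\sigma>0$ so that $\|f_\sigma(e_A)af_\sigma(e_A)-a\|<\ep/4$ for all $a\in {\cal F},$ pass to ${\cal F}_1=\{f_\sigma(e_A)af_\sigma(e_A):a\in {\cal F}\}$ and to $A'=\overline{f_\sigma(e_A)Af_\sigma(e_A)},$ which satisfies $A'={\rm Ped}(A')$ since $A'$ is algebraically simple. Next, given the target element $a_0\in A_+\setminus\{0\},$ I would choose $b_0\in (A')_+\setminus\{0\}$ with $\la b_0\ra\le \la a_0\ra,$ and then split $b_0$ further into two mutually orthogonal full pieces so as to accommodate both $e_0$ and $e_{00}$ with the required comparisons $e_0+e_{00}\lesssim a_0$ and $e_{00}\lesssim e_{01}.$ Then I would apply \ref{TDefsimple} to $A',{\cal F}_1,\ep/4,b_0$ to obtain the simple inductive limit $D=\lim_{k\to\infty}(D_k,\psi_k)$ (with $D_k\in {\cal C}_0'$), the maps $\Phi_n: A'\to A'$ and $j_n: D_n\to A'$ with $\Phi_n(A')\perp j_n(\psi_{1,n}\circ\phi(A')),$ the approximation $\|x-(\Phi_n+j_n\circ\psi_{1,n}\circ\phi)(x)\|<\ep/4,$ and the comparison $c_n\lesssim b_0$ for a strictly positive $c_n$ of $\overline{\Phi_n(A')A'\Phi_n(A')}.$ I would set $E_0=\overline{\Phi_n(A')A'\Phi_n(A')}$ with strictly positive element $e_0:=c_n.$

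**Extracting the matrix structure.** With $C_k=\overline{\psi_{1,k}(\phi(A'))D_k\psi_{1,k}(\phi(A'))}$ and $C=\lim_k(C_k,\psi_k|_{C_{k-1}})$ as in the proof of \ref{TDapprdiv}, I would invoke \ref{Lmatrixpullb} (the ${\cal C}_0'$ branch, yielding \eqref{Lnz1702-1}) to find, for large $m,$ a subalgebra $D_m'=\mathrm{M}_n(D_m'')\subset C_m$ with $D_m''$ a hereditary \SCA\, and a finite subset ${\cal F}_2\subset D_m''$ such that, for all $f\in {\cal F}_1,$ one has $\|\psi_{1,m}\circ\phi(f)-(r(f)+g_f\otimes 1_n)\|<\ep/2$ with $r(f)\in\overline{e D_m e}$ and $g_f\in {\cal F}_2,$ where $e\lesssim e_d$ for a strictly positive $e_d$ of $D_m''.$ I would then set $E_2:=j_m(D_m'')\in {\cal C}_0'$ (hereditary in $C\subset A,$ hence sitting inside $\overline{e_{01}Ae_{01}}$ for an appropriate $e_{01}$), define $\phi_2:A\to E_2$ from the $g_{(\cdot)}$-data, let $E_1\supset \mathrm{M}_n(E_2)$ be generated by the $\mathrm{M}_n$-part and the remainder algebra $\overline{e_{00}Ae_{00}}$ carrying $r(\cdot),$ and set $\phi_0$ to be $\Phi_n$ composed back with $f_\sigma(e_A)\cdot f_\sigma(e_A).$ Combining the \ref{TDefsimple} approximation with \eqref{Lnz1702-1} and the initial cut-down by $f_\sigma(e_A)$ gives the two displayed estimates with total error $<\ep.$ The comparisons $e_0+e_{00}\lesssim a_0$ and $e_{00}\lesssim e_{01}$ follow from $c_n\lesssim b_0,$ the $e\lesssim e_d$ bound, and the pre-arranged orthogonal splitting of $b_0.$

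**The main obstacle.** The delicate point is \emph{bookkeeping the orthogonality and the comparison relations simultaneously}: I must arrange the remainder piece $\overline{e_{00}Ae_{00}}$ (carrying $r(\cdot)$) to be orthogonal to $E_0$ yet contained in $E_1,$ while $\mathrm{M}_n(E_2)\subset E_1$ with $E_2\subset\overline{e_{01}Ae_{01}},$ and to guarantee all three comparisons $e_0+e_{00}\lesssim a_0,$ $e_{00}\lesssim e_{01}$ hold. This requires choosing the initial splitting of $b_0$ carefully (into enough orthogonal Cuntz-equivalent full pieces, as in the ``$b_{n,i}$'' construction of \ref{TDefsimple}) and tracking how the hereditary algebra $\overline{eD_me}$ from \eqref{Lnz1702-1} maps into $A$ under $j_m,$ so that its strictly positive element is dominated by $e_{01}.$ Everything else is a routine reassembly of the two quoted theorems; the genuine work is ensuring that the error $\ep/4+\ep/2$ from the two approximations, plus the $\ep/4$ from the initial cut-down, telescopes to the claimed $\ep/2$ bounds in each of the two displayed inequalities.
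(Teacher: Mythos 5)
Your proposal follows essentially the same route as the paper: the paper's own proof of this corollary simply says to repeat the proof of Theorem \ref{TDapprdiv} with ${{\cal C}_0^{0}}'$ replaced by ${\cal C}_0'$ and with the second part of Lemma \ref{Lmatrixpullb} (i.e., the decomposition \eqref{Lnz1702-1}) used in place of the first, which is exactly the reduction to $A',$ the application of \ref{TDefsimple}, and the invocation of the ${\cal C}_0'$ branch of \ref{Lmatrixpullb} that you carry out. Your extra bookkeeping of the comparisons $e_0+e_{00}\lesssim a_0$ and $e_{00}\lesssim e_{01}$ fills in details the paper omits ("we omit the repetition"), and does so consistently with the cited lemmas.
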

\begin{proof}
The proof is almost the same as that {{of}}
%In the proof of 
\ref{TDapprdiv}. 
%We will replace 
%, replace 
{{One replaces}}
${{\cal C}_0^{0}}'$ by ${\cal C}_0'$ 
%and keep the entire proof
%to the line ends ``... $D$ is indeed simple" shortly before \eqref{TDapdiv-26}.
and instead of applying the first part of \ref{Lmatrixpullb}, 
%in the last few lines
%of the proof, 
%we apply 
{{one applies}} the second part of \ref{Lmatrixpullb}. We omit the repetition. 
\end{proof}

%\begin{cor}\label{CCCC}
%Let $A\in {\cal D}$ (or in ${\cal D}_0$)  be separable  \CA.  Then the following holds:
%Let  $e_A\in A$ be a strictly positive element $\ep>0,$ let ${\cal F}\subset A,$ and let $a\in A_+\setminus \{0\}.$ 
%There exist a separable simple \CA\, $D\subset A$ which is an inductive limit 
%of \CA s in ${\cal C}_0'$ (or in ${\cal  C}_0^{0'}$) such that $D={\rm Per}(D),$ 
%\cpc s $\phi_0: A\to A$ and $\phi_1: A\to D$ with $\phi(A)\perp D$ such that
%\beq
%cc
%\eneq

%\end{cor}

\begin{thm}\label{TCCdvi}
Let $A$  be a  separable  \CA\, in ${\cal D}_{0}.$ 
%%%or in ${\cal D}$ with  $K_0(A)=\{0\}.$ 
%%%Suppose that 
{{Let}} $a\in A_+$ with $\|a\|=1$ {{be a}}
% is {\red{a}} 
%(or in ${\cal D}$)
 strictly positive element.
% $a\in A_+$ with $\|a\|=1.$
Then the  following  statement is true.

  There exists
$1> \mathfrak{f}_a>0$ such that, for any $\ep>0,$  any
finite subset ${\cal F}\subset A$ and any $b\in A_+\setminus \{0\}$ and any integer
$n\ge 1,$  there are ${\cal F}$-$\ep$-multiplicative \cpc s $\phi: A\to A$ and  $\psi: A\to D$  for some
\SCA\, $D\in {{\cal C}_0^0}'$ with ${\mathrm{M}}_n(D)\subset A$ and ${\mathrm{M}}_n(D)\perp \phi(A)$ such that $\|\psi\|=1,$
\beq\label{CCdiv-1}
&&\|x-
%\diag(\phi(x), \overbrace{\psi(x), \psi(x),...,\psi(x)}^n)
(\phi(x)+\psi(x)\otimes 1_n)\|<\ep\tforal x\in {\cal F}\cup \{a\},
%\\\label{CCdiv-2}
%&&D\in {\cal C}_0^{0'}
%({\rm or}\,\, {\cal C}_0'),
\\\label{CCdiv-3}
&&
%\phi(a)
c\lesssim b,
%\label{CCdiv-4}
%
\\\label{CCdiv-4+}
&&t(f_{1/4}(\psi(a)))\ge \mathfrak{f}_a,\quad t\in {\mathrm{T}}(D),\ \tand
%&&f_{1/4}(\psi(a))\,\,\,
%\psi(f_{1/4}(a_1)) \,\,\,
%{\rm is\,\,\, full\,\,\, in}\,\, D \andeqn
%\psi_D={\rm id}_D\andeqn
\eneq
$\psi(a)$ is strictly positive in $D,$ where $c$ 
%Let $A_0=\overline{\phi(a)A\phi(a)}.$ Then
%$A_0\perp {\mathrm{M}}_n(D).$ 
%Moreover, \eqref{CCdiv-3} could be replaced by
%$c\lesssim b$ for some 
is a strictly positive element  of $A_0:=\overline{\phi(a)A\phi(a)}.$
\end{thm}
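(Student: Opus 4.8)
The statement is Theorem \ref{TCCdvi}, which strengthens the tracial one-dimensional decomposition in Definition \ref{DNtr1div} by requiring the building-block part $\psi$ to appear in a matricial form $\psi(x)\otimes 1_n$ inside $A$, with ${\rm M}_n(D)\perp \phi(A)$. Let me think about how this follows from what has been developed.

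\textbf{Overall approach.}
The plan is to combine Theorem \ref{TDefsimple} (the simple inductive-limit model for the $\psi$-part) with the matricial pull-back Lemma \ref{Lmatrixpullb}, exactly in the spirit of the proof of Theorem \ref{TDapprdiv}. Indeed, the desired conclusion is essentially a restatement of \ref{TDapprdiv} but formulated as a single tracially one-dimensional decomposition carrying the matrix structure, together with the extra trace lower bound \eqref{CCdiv-4+} and fullness/strict-positivity information that \ref{TDefsimple} already supplies. The value $\mathfrak{f}_a$ will be taken to be the constant $d_A$ produced by \ref{TDefsimple} (up to the usual small adjustment as in \ref{Rfa0}), so the last displayed inequality of \ref{TDefsimple}, namely $\tau(f_{1/4}(\psi_{1,n}\circ\phi(a)))>d_A$ for all $\tau\in {\rm T}(D_n)$, is precisely \eqref{CCdiv-4+}.

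\textbf{Key steps in order.}
First I would reduce to the algebraically simple case: choose a strictly positive $a\in A_+$ with $\|a\|=1$, pick $1/2>\sigma>0$ with $\|f_\sigma(a)\,x\,f_\sigma(a)-x\|<\ep/4$ for $x\in{\cal F}$, set $A'=\overline{f_\sigma(a)Af_\sigma(a)}$ (so $A'={\rm Ped}(A')$ by \ref{CherePA=A} and \ref{Tpedersen}), and work with ${\cal F}_1=\{f_\sigma(a)xf_\sigma(a):x\in{\cal F}\}$; note $A'\in{\cal D}_0$ by \ref{Phered}. Second, apply \ref{TDefsimple} to $A'$, ${\cal F}_1$, $\ep/4$, and a chosen $b_0\in (A')_+$ with $\langle b_0\rangle\le\langle b\rangle$, obtaining the simple inductive limit $D=\lim(D_n,\psi_n)$ with $D_n\in{\cal C}_0^0$, the map $\phi:A'\to D_1$, and for each $n$ the decomposition $\|x-(\Phi_n+j_n\circ\psi_{1,n}\circ\phi)(x)\|<\ep/4$ with $c_n\lesssim b_0$ and the trace bound. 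Third, set $C_k=\overline{\psi_{1,k}(\phi(A'))D_k\psi_{1,k}(\phi(A'))}\in{{\cal C}_0^0}'$ and $C=\lim(C_k,\psi_k|_{C_{k-1}})$, a hereditary subalgebra of $D$; apply the first (matricial) half of Lemma \ref{Lmatrixpullb} with $K=n$ to obtain $m\ge 1$ and a subalgebra $D_m'={\rm M}_n(D_m'')\subset C_m$ with ${\rm dist}(\psi_{1,m}\circ\phi(x),{\cal F}_2\otimes 1_n)<\ep/4$ for $x\in{\cal F}_1$, where $D_m''$ is hereditary in $C_m$ and in ${{\cal C}_0^0}'$. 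Fourth, assemble: take $D:=D_m''$, $\psi:A\to D_m''$ to be the composition $x\mapsto$ (the element of ${\cal F}_2$ approximating $\psi_{1,m}\circ\phi(f_\sigma(a)xf_\sigma(a))$), so that $\psi(x)\otimes 1_n$ matches the matricial part, and $\phi:=\Phi_m$ the orthogonal-complement map. The orthogonality ${\rm M}_n(D_m'')\perp\phi(A)$ comes from $\Phi_m(A)\perp j_m(\psi_{1,m}\circ\phi(A))\supset$ the image of $D_m'$, and $c\lesssim b_0\lesssim b$ gives \eqref{CCdiv-3}. Finally, I would verify that after the $f_\sigma$-compression the total estimate $\|x-(\phi(x)+\psi(x)\otimes 1_n)\|<\ep$ holds on ${\cal F}\cup\{a\}$, and normalize/adjust so that $\psi(a)$ is strictly positive in $D$ with $\|\psi\|=1$, using \ref{PD0=tad} and \ref{180915sec2}.

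\textbf{Main obstacle.}
The genuinely delicate point is making the matricial identification clean enough that a single honest completely positive contractive $\psi$ produces $\psi(x)\otimes 1_n$ (not merely something within $\ep$ of a matricial form) while simultaneously preserving strict positivity of $\psi(a)$ in $D_m''$ and the trace bound \eqref{CCdiv-4+}. Lemma \ref{Lmatrixpullb} only gives ${\rm dist}(\psi_{1,m}\circ\phi(x),{\cal F}_2\otimes 1_n)<\ep/4$, so one defines $\psi(x)$ to be the representing element $g_x\in{\cal F}_2\subset D_m''$ and must check that $x\mapsto g_x$ can be taken approximately multiplicative and contractive and that $g_a$ is strictly positive with the correct trace lower bound; tracking these through the compression by $f_{\sigma_2}(e')$ built into \ref{TDefsimple}, together with the fullness of $f_{1/4}(\psi(a))$ established via \ref{Pcom4C} and \ref{Rsemiproj}, is where the care is needed. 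Everything else is routine bookkeeping already carried out in \ref{TDapprdiv} and \ref{TDefsimple}.
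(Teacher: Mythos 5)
Your proposal takes a genuinely different route from the paper, and it has a gap at exactly the point you flag as the ``main obstacle'' -- that obstacle is not resolved, and it is the non-routine part of the theorem. What you propose is essentially a re-run of the paper's proof of Theorem \ref{TDapprdiv} (reduce to $A'={\rm Ped}(A')$, apply \ref{TDefsimple}, then apply Lemma \ref{Lmatrixpullb} to the inductive limit), which only yields the \emph{distance} statement ${\rm dist}(\psi_{1,m}\circ\phi(x),{\cal F}_2\otimes 1_n)<\ep/4$. Passing from this to an honest \cpc\, $\psi:A\to D_m''$ with $\psi(x)\otimes 1_n$ doing the approximating is not bookkeeping: assigning $\psi(x):=g_x$ on a finite set defines no map on $A$, and even if one patches this by compressing $\psi_{1,m}\circ\phi$ into the corner $D_m''\otimes e_{11}$ (cutting by an almost-unit for ${\cal F}_2$), one must still verify approximate multiplicativity, strict positivity of $\psi(a)$, $\|\psi\|=1$, and -- most seriously -- the bound \eqref{CCdiv-4+}. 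The trace estimate supplied by \ref{TDefsimple} is over ${\rm T}(D_m)$ for the full building block $D_m$, whereas \eqref{CCdiv-4+} requires a uniform bound over ${\rm T}(D)$ for the matrix corner $D\subset D_m''$; since ${\rm M}_n(D_m'')$ is not hereditary in $D_m$, tracial states of $D$ (or of ${\rm M}_n(D)$) do not obviously extend to, or get dominated by, normalized traces of $D_m$, so the bound $d_A$ does not transfer without an argument you do not give.

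The paper avoids all of this by reversing the order of operations: it invokes Theorem \ref{TDapprdiv} only as a black box to produce the pattern $A_0+A_1\otimes 1_n$ \emph{inside} $A$, with diagonal elements $y_0\in A_0,$ $y_1\in A_1$ approximating $a$ and $f_{1/4}(a),$ and with a counting argument (the $n+2k_0$ mutually orthogonal, mutually Cuntz-equivalent elements $b_i$, with $k_0$ chosen so $r_0/16>1/k_0$) guaranteeing $\tau(f_{1/4}(y_1))\ge r_0/3$ for all $\tau\in{\rm T}(A_1'),$ where $A_1'=\overline{y_1Ay_1}.$ Since $A_1'$ is a hereditary \SCA\, of $A$, Proposition \ref{Phered} gives $A_1'\in{\cal D}_0$, and one then applies Definition \ref{DNtr1div} (with $\mathfrak{f}_{y_1}=r_0/6$, via \ref{Rfa0}) directly to $A_1'$: this hands over honest \cpc s $\phi_0:A_1'\to B_0$ and $\psi_0:A_1'\to D$ with $D\in{{\cal C}_0^0}'$, $\psi_0(y_1)$ strictly positive, $\|\psi_0\|=1$, and the trace bound already in the correct form over ${\rm T}(D)$. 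The matricial shape is then automatic: $\psi:=\psi_0$, ${\rm M}_n(D)\subset{\rm M}_n(A_1')\subset A$, and $\phi$ is assembled as $\phi(x)=g(a_0)xg(a_0)+\phi_0(x)\otimes 1_n$, with the Cuntz comparison $\la c\ra\le\la b_0\ra+\cdots+\la b_n\ra\le\la b\ra$ giving \eqref{CCdiv-3}. If you want to salvage your approach, the cleanest fix is precisely this reordering: use the divisibility result to create the matrix corner first, and extract the \cpc\, data from the ${\cal D}_0$ property of that corner rather than from \ref{Lmatrixpullb}.
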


%Note that there is a \SCA\, $M_n(D)$ in $A,$ where $D\oplus D\oplus \cdots \oplus D$ sitting
%in the diagonal.

\begin{proof}
%By \ref{TDapprdiv}, $A$ has the property of tracial approximate divisibility.
Fix a strictly positive element $a\in A_+$ with $\|a\|=1.$
It follows from \ref{PD0qc} that $0\not\in \overline{{\mathrm{T}(A)}}^\mathrm{w}.$
Let
\beq\label{CCdvi-10}
r_0=\inf\{\tau(f_{1/2}(a)): \tau\in \overline{{\mathrm{T}(A)}}^\mathrm{w}\}>0.
\eneq

Let $\mathfrak{f}_{a_0}=r_0/6.$ Choose an integer $k_0\ge 1$ such that
$r_0/16>1/k_0.$

Let $1>\ep>0$ and ${\cal F}\subset A$ be a finite subset.
Choose $\ep_1=\min\{\ep/16, r_0/128\}.$ Let ${\cal F}_1\supset {\cal F}\cup \{a, f_{1/4}(a)\}$
be a finite subset of $A.$
Let
$b\in A_+\setminus \{0\},$ and any
integer $n\ge 1$  be given.

Choose $b_0',b_1',...,b_{n+2k_0}'\in \overline{bAb}$ such
that ${{b_0',b_1',..., b_{n+2k_0}'}}$ are mutually orthogonal and mutually equivalent in the sense of Cuntz
and there are non-zero {{and mutually orthogonal}} elements $b_0, b_1,...,b_{n+2k_0}\in A_+$ such
that $b_ib_0'=b_i,$ $i=0,1,...,n+2k_0.$

By \ref{TDapprdiv}, $A$ has the property of tracial approximate divisibility.
Therefore
there are \SCA s
$A_0,A_1$ and $M_n(A_1)$ of $A$ such that {{$A_0\perp M_n(A_1),$}}
$$
{\rm dist}(x, {{A_0+A_1\otimes 1_n}})<\ep_1/2 \rforal x\in {\cal F}_1,
$$
%\vspace{-0.1in}$$
%{\rm dist}(x, B_d)<\ep_1/2 \rforal x\in {\cal F}_1,
%%$$
%%where $B_d\subset B\subset A,$ $B=A_0\bigoplus  {\mathrm{M}}_n(A_1)$ and 
%%\vspace{-0.12in}\beq\label{CCdvi-11}
%B=A_0\bigoplus  {\mathrm{M}}_n(A_1),\,\,
%\overbrace{A_1\oplus A_1\oplus\cdots \oplus A_1}^n,\\
%%B_d=\{(x_0, \diag(\overbrace{x_1,x_1,...,x_1}^n))\in B: x_0\in A_0, x_1\in A_1\}
%%\eneq
and $a_0\lesssim  b_0,$ where $a_0$ is a strictly positive element of $A_0.$
Moreover, there are $y_0\in A_0$ and $y_1\in A_1$ such that
\vspace{-0.12in}\beq\label{CCdvi-12}
&&\|a-(y_0+ \diag(\overbrace{y_1,y_1,...,y_1}^n){{)}}\|<\ep/2\andeqn\\
&&\|f_{1/4}(a)-(f_{1/4}(y_0)+ \diag(\overbrace{f_{1/4}(y_1),f_{1/4}(y_1),...,f_{1/4}(y_1)}^n){{)}}\|<\ep_1/2.
\eneq

Note that
\vspace{-0.1in}\beq\label{CCdvi-12+}
\tau(\diag(\overbrace{f_{1/4}(y_1),f_{1/4}(y_1),...,f_{1/4}(y_1)}^n))\ge r_0-1/(n+2k_0)-\ep_1/2>r_0/3
\eneq
for all $\tau\in {\mathrm{T}(A)}.$

Let $A_0'=\overline{a_0Aa_0}$ and $A_1'=\overline{y_{1}Ay_{1}}.$
Note that $0\not\in \overline{{\mathrm{T}}(A_1')}^\mathrm{w}$ {{by}} \ref{Pheretc}.
Moreover, if $\tau\in {\mathrm{T}(A)},$ then $\|\tau|_{A_1}\|\ge r_0/3.$
We also have
\beq\label{CCdvi-12+1}
\tau(f_{1/4}(y_1))\ge r_0/3\rforal \tau\in {\mathrm{T}}(A_1').
\eneq

Note, by \ref{Rfa0}, in Definition  \ref{DNtr1div}
%{Dgtr11},
the constant $\mathfrak{f}_{y_1}$ can be chosen to be
$r_0/6.$

Let ${\cal G}\subset A_1$ be a finite subset such that the following holds
\beq\label{CCdvi-13}
{\rm dist}(f, {{\{}}(x_0+ \diag(\overbrace{x_1,x_1,...,x_1}^n)): x_0\in A_0, x_1\in {\cal G}\})<\ep_1/2\rforal f\in {\cal F}_1
\eneq
and $y_1\in {\cal G}.$

Note that $A_1'$ is a hereditary \SCA\, of $A.$
{{By \ref{Phered}, $A_1'\in {\cal D}_0.$ Thus,}} there exist  two \SCA s  $B_0$ and $D$ of $A_1',$ where
$D\in {{\cal C}_0^0}'$
%(or ${\cal C}_0^{0'}$),
and two
${\cal G}$-$\ep_1$-multiplicative \cpc s
$\phi_0: A_1'\to B_0$ and $\psi_0: A_1'\to D$ such that
\beq\label{CCdvi-13+}
&&\|x-(\phi_0+\psi_0)(x)\|<\ep_1/2\rforal x\in {\cal G},\\
&&\phi_0(c_0)\lesssim b_1,\,\,\,
\|\psi_0\|=1,\,
%\andeqn
\\\label{CCdvi-13++}
&&\tau\circ f_{1/4}(\psi_0(y_1))\ge r_0/6 \rforal \tau\in {\mathrm{T}}(D),
\eneq
and $\psi_0(y_1)$ is a strictly positive element in $D,$ where $c_0$ is a strictly positive element of
$A_1'.$

Set $A_{00}=A_0\oplus \overbrace{A_0'\oplus A_0'\oplus \cdots \oplus A_0'}^n$
and let
$
c={{a_0}}+\diag(\overbrace{c_0,c_0,...,c_0}^n).
$

Choose  a function $g\in C_0((0,1]),$ define
$\phi_{00}: A\to A_{00}$ by
$$
\phi_{00}(x)=g(a_0)xg(a_0) +\diag(\overbrace{\phi_0(x),\phi_0(x),...,\phi_0(x)}^n)\rforal x\in A.
$$
Then, with a choice of $g,$ we have
\vspace{-0.12in}\beq\label{CCdvi-20}
\|x-(\phi_{00}(x) +\diag(\overbrace{\psi_0(x),\psi_0(x),...,\psi_0(x)}^n))\|<\ep\rforal x\in {\cal F}.
\eneq
Moreover,
$$
\la c\ra \le \la  b_0\ra +\la  b_1\ra + \cdots \la b_n\ra \le \la  b\ra.
%c\lesssim b_0\oplus b_1\oplus \cdots \oplus b_n\lesssim b.
$$
Now let $\phi=\phi_{00}.$ Then $\phi(a)=\phi_0(a)\lesssim c\lesssim b.$  
Put  $\psi=\psi_0.$   Note also \eqref{CCdvi-13++} holds.  It follows that  $\phi,$ $\psi,$ and  $D$ meet
the requirements.

\end{proof}

The following corollary follows from  the combination of \ref{UnifomfullTAD}  and \ref{TCCdvi}.

\begin{cor}\label{Cuniformful}
Let $A$ be a separable  algebraically simple \CA\, in ${\cal D}_{0}$ (cf. \ref{114}).
%%%%%%%%%%%%%%%%%%%%or in ${\cal D}$ with $K_0(A)=\{0\}.$
%with continuous scale.
%(or  in ${\cal D}$).
Then the following {{property}} holds.
Fix  a strictly positive element $a\in A$
with $\|a\|=1$ and let $1>\mathfrak{f}_a>0$ be as in \ref{DNtr1div} (see also \ref{Rfa0}). There is a map $T: A_+\setminus \{0\}\to \N\times \R_+\setminus \{0\}$
($a\mapsto (N(a), M(a))\rforal a\in A_+\setminus \{0\}$) satisfying the following condition:
For any  finite subset ${\cal F}_0\subset A_+\setminus \{0\}.$
%   There exists
%   $M>0$ and an integer $N\ge 1,$
 for any $\ep>0,$  any
finite subset ${\cal F}\subset A$ and any $b\in A_+\setminus \{0\}$ and any integer
$n\ge 1,$  there are ${\cal F}$-$\ep$-multiplicative \cpc s $\phi: A\to A$ and  $\psi: A\to D=D\otimes e_{11}$  for some
\SCA\, ${\mathrm{M}}_n(D)\subset A$ with $\phi(A)\perp {\mathrm{M}}_n(D)$ such that
\beq\label{CDtad-1}
&&\|x-({{\phi(x)+
%\diag(\overbrace{\psi(x), 
\psi(x)\otimes 1_n}})
%,...,\psi(x)}^n))
\|<\ep,\quad x\in {\cal F}\cup \{a\},\\\label{CDtrdiv-2}
&& D\in {\cal C}_0^{0},
%\,({\rm or}\,\,\,{\cal C}_0'),
\\\label{CDtad-3}
&&a_{0}\lesssim b,\,\,
%\\\label{CDtrdiv-4}
%&&
 \|\psi\|=1,\ and
%\\\label{Dtad-4+}
\eneq
$\psi(a)$ is strictly positive in $D,$  where $a_{0}\in \overline{\phi(a)A\phi(a)}$ is
a strictly positive element.
Moreover,  $\psi$ is $T$-${\cal F}_0\cup\{f_{1/4}(a)\}$-full in $D.$
%\overline{DAD}.$

Furthermore, we may assume that
\beq
&&{{t(f_{1/4}(\psi(a)))}}\ge \mathfrak{f}_a\andeqn\\\label{106++}
&& {{t(f_{1/4}({{\psi(c)}}))}}\ge {\mathfrak{f}_a
\over{4\inf\{M(c)^{{2}}\cdot N(c): c\in {\cal F}_0\cup\{f_{1/4}(a)\}\}}}
\eneq
for all $c\in {\cal F}_0$ and for all $t\in {\mathrm{T}}(D),$  and, we may also require that
\beq\label{106+}
\phi(a)\lesssim \psi(a).
\eneq

% for any $\eta>0,$  any $x\in D$ and any $c\in {\cal F}_0\cup \{f_{1/4}(a)\},$
%for each $c\in {\cal F}_0\cup\{a\},$
%there are $x(c,1), x(c,2),...,x(c,N), y(c,1), y(c,2),...,y(c,N)\in D$ with
%$\|x(c,i)\|,\,\, \|y(c,i)\|\le M$ for all $c\in {\cal F}_0,$ $i=1,2,...,N,$ such that
%$$
%\|\sum_{i=1}^N x(c,i)\psi(c)y(c,i)-x\|<\eta.
%$$

%In  addition,  we may assume
%that
%\beq\label{DVIrepeat}
%a_0\lesssim f_{1/4}(\psi(a)).
%\eneq
\end{cor}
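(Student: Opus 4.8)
The plan is to assemble the statement from the two results cited immediately before it: the tracial divisibility of Theorem \ref{TCCdvi} and the uniform fullness of Theorem \ref{UnifomfullTAD} (equivalently \ref{Tqcfull} together with \ref{Rqcfull}). First I would fix the strictly positive $a$ with $\|a\|=1$ and the constant $\mathfrak f_a>0$ as in \ref{DNtr1div} and \ref{Rfa0}, and note $\mathrm T(A)\neq\emptyset$ and $0\notin\overline{\mathrm T(A)}^{\mathrm w}$ by \ref{PD0qc}. The map $T\colon A_+\setminus\{0\}\to\mathbb N\times\mathbb R_+\setminus\{0\}$ will be the one supplied by \ref{Tqcfull} applied to $A$ with reference element $e:=f_{1/8}(a)$ and a threshold $d$ chosen small relative to $\mathfrak f_a$; it depends only on $A$ and $a$, as required. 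Given ${\cal F}_0,\ep,{\cal F},b,n$, I would enlarge ${\cal F}$ and shrink $\ep$ so that they dominate the finite set ${\cal G}$ and tolerance $\dt$ that \ref{Tqcfull} attaches to ${\cal H}_1:={\cal F}_0\cup\{f_{1/4}(a)\}$.

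Next I would apply Theorem \ref{TCCdvi} to $A$, $a$, this enlarged $({\cal F},\ep)$, a small $b'\lesssim b$ (specified in the last step), and $n$, obtaining $\phi\colon A\to A$ and $\psi\colon A\to D$ with $D\in{{\cal C}_0^0}'$, $\mathrm M_n(D)\subset A$, $\mathrm M_n(D)\perp\phi(A)$, $\|\psi\|=1$, $\psi(a)$ strictly positive in $D$, $c\lesssim b'$ for $c$ strictly positive in $A_0=\overline{\phi(a)A\phi(a)}$, and $t(f_{1/4}(\psi(a)))\ge\mathfrak f_a$ for all $t\in\mathrm T(D)$; this already yields \eqref{CDtad-1}, \eqref{CDtad-3}, strict positivity of $\psi(a)$, and the first inequality of \eqref{106++}. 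By \ref{Rsemiproj} I may postcompose $\psi$ with a hereditary cutdown so as to take $D\in{\cal C}_0^0$, absorbing the norm error into $\ep$ and retaining the trace bound; this gives \eqref{CDtrdiv-2}. Now $D\in{\cal C}_0^0\subset{\cal C}'$ satisfies, by \ref{Pcom4C} and \ref{str=1}, every hypothesis that \ref{Tqcfull} imposes on the target algebra, namely strict comparison, stable rank one, coincidence of quasitraces with traces on hereditary subalgebras, $\mathrm T(D)\neq\emptyset$ and $0\notin\overline{\mathrm T(D)}^{\mathrm w}$ — this is precisely \ref{Rqcfull}. The trace hypothesis $\tau(f_{1/2}(\psi(e)))>d/2$ of \ref{Tqcfull} is inherited from $t(f_{1/4}(\psi(a)))\ge\mathfrak f_a$: with $e=f_{1/8}(a)$ and approximate multiplicativity one has $f_{1/2}(\psi(f_{1/8}(a)))\approx f_{1/2}(f_{1/8}(\psi(a)))\ge f_{1/4}(\psi(a))$ pointwise (a direct comparison of the functions), so $\tau(f_{1/2}(\psi(e)))\gtrsim\mathfrak f_a$, and $d<\mathfrak f_a$ suffices. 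Applying \ref{Tqcfull} then shows $\psi$ is exactly $T$-${\cal H}_1$-full in $D$ and gives $\tau(f_{1/2}(\psi(c)))\ge\mathfrak f_a/(4\inf\{M(c)^2N(c)\})$; since $f_{1/4}\ge f_{1/2}$, this upgrades to the second inequality of \eqref{106++}.

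It remains to secure $\phi(a)\lesssim\psi(a)$, equation \eqref{106+}, where strict comparison in $A$ (Theorem \ref{Comparison}, valid as $A\in{\cal D}$ and $A=\mathrm{Ped}(A)$) enters. The idea is to fix, before invoking \ref{TCCdvi}, a uniform lower bound $\delta_0>0$ for $\mathrm d_\tau(\psi(a))$ valid for every admissible output: writing $r_a=\inf\{\tau(a):\tau\in\overline{\mathrm T(A)}^{\mathrm w}\}>0$ and using $\tau(a)\approx\tau(\phi(a))+n\tau(\psi(a))$ with $\tau(\phi(a))\le\tau(b')$, one gets $\tau(\psi(a))\gtrsim r_a/2n$, hence via the trace normalization between $\mathrm T(D)$ and $\overline{\mathrm T(A)}^{\mathrm w}$ and $t(f_{1/4}(\psi(a)))\ge\mathfrak f_a$ that $\mathrm d_\tau(\psi(a))\ge\tau(f_{1/4}(\psi(a)))\ge\delta_0:=\mathfrak f_a r_a/2n$. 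Then I would choose $b'\lesssim b$ with $\sup_\tau\mathrm d_\tau(b')<\min\{\delta_0,r_a/2\}$, which is possible by simplicity of $A$, so that $\mathrm d_\tau(\phi(a))\le\mathrm d_\tau(c)\le\mathrm d_\tau(b')<\delta_0\le\mathrm d_\tau(\psi(a))$ for all $\tau$; \ref{Comparison} then yields $\phi(a)\lesssim\psi(a)$, while $c\lesssim b'\lesssim b$ preserves \eqref{CDtad-3}. The main obstacle I anticipate is purely the bookkeeping of quantifiers, ensuring that a single pair $(\phi,\psi)$ simultaneously witnesses the divisibility decomposition, the exact $T$-fullness, and the comparison $\phi(a)\lesssim\psi(a)$; in particular, reconciling the $f_{1/4}$-trace bound coming out of \ref{TCCdvi} with the $f_{1/2}$-trace hypothesis feeding \ref{Tqcfull}, and controlling the trace normalizations between $\mathrm T(D)$ and $\overline{\mathrm T(A)}^{\mathrm w}$, are the only points requiring care.
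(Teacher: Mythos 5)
Your proposal is correct and takes essentially the same route as the paper: Theorem \ref{TCCdvi} supplies the decomposition, Theorem \ref{Tqcfull} together with Remark \ref{Rqcfull} (exactly the mechanism behind \ref{UnifomfullTAD}) supplies $T$, the exact fullness and \eqref{106++}, and \eqref{106+} is obtained by shrinking the comparison element $b$, extracting a lower bound on $\mathrm{d}_\tau(\psi(a))$ from the decomposition, and invoking strict comparison (\ref{Comparison}) — which is precisely the paper's argument. The only caveat is bookkeeping: quoting the statement (rather than the proof) of \ref{Tqcfull} with $d<\mathfrak{f}_a$ yields the constant $d/8$ instead of $\mathfrak{f}_a/4$ in \eqref{106++}, but this is harmless, since one may enlarge $T$ to absorb the factor, or rerun the proof of \ref{Tqcfull} with tailored constants as the paper itself does in \ref{UnifomfullTAD}.
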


\begin{proof}  Note that 
the existence of the map $T$ and {{the fact that}} $\psi$ can be required to be $T$-${\cal F}_0\cup\{f_{1/4}(a)\}$-full
in $D,$ and  {{that}} \eqref{106++} holds,  are applications of \ref{Tqcfull}.

To  see the last part of conclusion, i.e.,  \eqref{106+}, let $1/2>\eta>0$  be such
that
$\tau(f_\eta(a))> \mathfrak{f}_a/2$ for all $\tau\in \overline{{\mathrm{T}(A)}}^{\rm w}$
(see  \ref{Rfa0}) and
choose $b\in A_+\setminus \{0\}$ such that $\mathrm{d}_\tau(b)<\mathfrak{f}_a/4(n+1)$ for all $\tau\in {\mathrm{T}(A)}.$
Then, with $\ep<\eta/4,$  \eqref{CDtad-1} implies that
\beq
f_\eta(a)\lesssim  \phi(a)+\diag(\overbrace{\psi(a), \psi(a),...,\psi(a)}^n).
\eneq
It follows that $\mathrm{d}_\tau(\psi(a))>\mathfrak{f}_a/2(n+1)$ for all $\tau\in \overline{{\mathrm{T}(A)}}^{\rm w}$ or
\beq
\mathrm{d}_\tau(\psi(a))>\mathrm{d}_\tau(b)\ge \mathrm{d}_\tau(\phi(a))\rforal \tau\in \overline{{\mathrm{T}(A)}}^{\rm w}.
\eneq
It follows  by {{(1) of}} \ref{Pconscale}  and  \ref{Comparison} that $\phi(a)\lesssim \psi(a).$

\iffalse
$\tau(f_\eta(a))>1-1/4(n+1)$ for all $\tau\in {\mathrm{T}(A)}$ and
choose $b\in A_+\setminus \{0\}$ such that $\mathrm{d}_\tau(b)<1/4(n+1)$ for all $\tau\in {\mathrm{T}(A)}.$
Then, with $\ep<\eta/4,$  \eqref{CDtad-1} implies that
\beq
f_\eta(a)\lesssim  \phi(a)+\diag(\overbrace{\psi(a), \psi(a),...,\psi(a))}^n).
\eneq
It follows that $\mathrm{d}_\tau(\psi(a))>1/2(n+1)$ for all $\tau\in {\mathrm{T}(A)}$ or
\beq
\mathrm{d}_\tau(\psi(a))>\mathrm{d}_\tau(b)\ge \mathrm{d}_\tau(\phi(a))\rforal \tau\in {\mathrm{T}(A)}.
\eneq
It follows  by \ref{Pconscale} (1) and  \ref{Comparison} that $\phi(a)\lesssim \psi(a).$
\fi
\end{proof}

\begin{rem}\label{Rm17div}
It is clear from the proof that,  for $n=1,$  both \ref{TCCdvi} and \ref{Cuniformful} hold
if $A\in {\cal D}$ (with  now $D\in {\cal C}_0$).
\end{rem}

\section{Stable rank one}

The proof of the following result  is very similar to that of  Lemma 2.1 of \cite{Rlz}.

\begin{lem}\label{NTstr1pre}
Let $A$ be a 
%$\sigma$-unital 
separable, simple and   stably projectionless \CA\, 
such that every hereditary \SCA\, $B$  has 
strict comparison for positive elements  as {{formulated in}}  the conclusion of \ref{Comparison},
and satisfies the 
%Suppose that every hereditary \SCA\, $B$ of $A$
%satisfies the 
conclusion of \ref{LappZ} without assuming {{that}} $E_2$ belongs to a specific class 
of \CA s. 
% and is $\sigma$-unital.
%is tracially approximate divisible as defined
%in \ref{Dappdiv}.
Then, for any hereditary \SCA\, $B$ of $A,$
$$
B\subset {\overline{{\rm{GL}}({\widetilde B})}}
$$

\end{lem}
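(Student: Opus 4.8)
### Proof Proposal

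The plan is to show that every element $b$ of a hereditary \SCA\, $B$ of $A$ lies in the closure of the invertibles of ${\widetilde B}$, and it suffices to treat a single positive element $b\in B_+$ (since a general element factors through its polar decomposition, and one reduces to approximating $b$ by invertibles). The strategy, following the pattern of Lemma 2.1 of \cite{Rlz}, is to use the decomposition supplied by the conclusion of \ref{LappZ} to write $b$ approximately as $\phi_0(b)+\phi_1(b)$, where $\phi_0$ has very small support (controlled by $a_0$) and $\phi_1(b)$ is approximately of the form $r(b)+\psi_2(a)\otimes 1_n$, i.e.\ it contains a large "matrix amplification" piece $E_2\otimes 1_n$ sitting inside $B$. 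The point of the amplification by $1_n$ is that a summand of the form $d\otimes 1_n$ in a matrix algebra ${\rm M}_n(E_2)$ is much easier to perturb to an invertible-modulo-unitization, because one has room to rotate using the matrix units.

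First I would fix $\ep>0$ and a positive element $b\in B_+$ with $\|b\|\le 1$, and choose an auxiliary element $a_0\in B_+\setminus\{0\}$ with $\mathrm{d}_\tau(a_0)$ as small as we like on $\overline{{\mathrm{T}(B)}}^{\mathrm{w}}$; note $0\notin\overline{{\mathrm{T}(B)}}^{\mathrm{w}}$ by \ref{compactrace} (via \ref{Pheretc}), so comparison against $a_0$ is available. Applying the hypothesis (the \ref{LappZ} conclusion) to a suitable finite set ${\cal F}\ni b$ and this $a_0$, I obtain mutually orthogonal $e_0,e_{00},e_{01}$ and maps $\phi_0,\phi_1,\phi_2$, with $E_0\perp E_1$, ${\mathrm{M}}_n(E_2)\subset E_1$, and the estimates $\|b-(\phi_0+\phi_1)(b)\|<\ep/2$, $\|\phi_1(b)-(r(b)+\phi_2(b)\otimes 1_n)\|<\ep/2$, together with $e_0+e_{00}\lesssim a_0$ and $e_{00}\lesssim e_{01}$. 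The small summand $\phi_0(b)+r(b)$ is supported in $\overline{(e_0+e_{00})A(e_0+e_{00})}$, hence dominated in the Cuntz sense by $a_0$, which we have arranged to be small.

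The central step is to produce, inside ${\mathrm{M}}_n(E_2)\subset B$, an element that is invertible in the unitization after a small correction, exploiting the $d\otimes 1_n$ form of $\phi_2(b)\otimes 1_n$. Here I would use strict comparison as in \ref{Comparison} to find, since $e_{00}\lesssim e_{01}$ and the amplified summand dominates the small leftover, a way to "absorb" $\phi_0(b)+r(b)$ into a corner orthogonal to a full part of ${\mathrm{M}}_n(E_2)$; concretely one builds a unitary $u\in{\widetilde B}$ rotating the support of the small piece into a complementary matrix corner, so that the rotated element becomes block-diagonal with an invertible diagonal plus a genuinely small off-block. The matrix amplification is what gives the spare corner to rotate into. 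This is essentially the argument of Lemma 2.1 of \cite{Rlz}, with $\cal Z$-stability replaced by the weaker structural conclusion of \ref{LappZ}.

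The main obstacle I anticipate is precisely this last maneuver: turning the abstract comparison $e_{00}\lesssim e_{01}$ and the amplified decomposition into an explicit element of ${\rm{GL}}({\widetilde B})$ within $\ep$ of $b$. One must verify that the off-diagonal error genuinely goes to zero as $\ep\to 0$ and that the invertible part can be taken within the unitization (not merely in a larger matrix algebra), which requires carefully tracking the supports $e_0,e_{00},e_{01}$ and invoking \ref{Lalmstr1} or the Pedersen unitary-path results (Theorem 5 of \cite{Pedjot87}) to replace partial isometries arising from polar decompositions by honest unitaries in ${\widetilde B}$. Once that rotation is in hand, letting $\ep\to 0$ and $\mathrm{d}_\tau(a_0)\to 0$ yields $b\in\overline{{\rm{GL}}({\widetilde B})}$, completing the proof.
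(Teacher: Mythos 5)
There is a genuine gap, and it starts at the very first step. Your reduction to positive elements is not only unjustified but vacuous: \emph{every} positive element $b$ of \emph{any} \CA\, $B$ lies in $\overline{{\rm GL}({\widetilde B})}$, since $b+\ep\cdot 1_{\widetilde B}$ is invertible and converges to $b$ as $\ep\to 0$. So if the polar-decomposition reduction were legitimate, the lemma would hold for all \CA s, which is false (it fails for simple \CA s of higher stable rank). The reduction fails because in $x=v|x|$ the phase $v$ lies only in $B^{**}$; to convert invertibles approximating $|x|$ into invertibles approximating $x$ one must replace $v$ by a unitary in ${\widetilde B},$ and that is precisely the content of (almost) stable rank one --- the statement being proved. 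The entire difficulty of the lemma lives in the non-normal elements, which your proposal never addresses. A second, related problem is that the step you yourself flag as the ``main obstacle'' --- building a unitary $u\in{\widetilde B}$ implementing the comparison $e_{00}\lesssim e_{01}$ to rotate the small summand away --- is circular: the tool for doing this (Lemma \ref{Lalmstr1}, or Theorem 5 of \cite{Pedjot87}) has almost stable rank one as a hypothesis.

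The paper's proof avoids unitaries altogether, and the mechanism it uses is the idea missing from your proposal: \emph{nilpotents}. Starting from a general $x$ (cut down into $B_1=\overline{f_\eta(e)Af_\eta(e)}$), stable projectionlessness gives a nonzero orthogonal complement $B_1^{\perp},$ and strict comparison fixes $n$ so that the strictly positive element $d_0$ of $E_2$ ($=D$ in the proof) satisfies $d_0\lesssim b_{0,1}$ for prescribed elements $b_{0,1}, b_{0,2}'$ of $B_1^{\perp}.$ The decomposition $x\approx x_0+x_1\otimes 1_n$ is then rewritten as a product $(z_1+z_3)(z_2+z_4),$ where $z_3=\sum_i w_i^* x_i''$ and $z_4=\sum_i y_i'' w_i$ are shift-type elements built from the matrix-unit structure of $D\otimes 1_n$ (this is the real purpose of the $\otimes 1_n$ amplification), with the last shift $w_n$ landing in $\overline{b_{0,2}'Ab_{0,2}'}\subset B_1^{\perp},$ and $z_1=v^*,\ z_2=v$ factor the small summand with range pushed into $B_1^{\perp}$ via comparison (here one needs only the Cuntz-comparison factorization $x^*x=a,\ xx^*\in\overline{bAb},$ which does not require stable rank one). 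One checks $z_1^2=0,$ $z_3^n=0,$ $z_1z_3=0,$ hence $(z_1+z_3)^{n+1}=0,$ and similarly for $z_2+z_4$; a nilpotent plus a nonzero scalar is invertible in ${\widetilde A},$ so $z_5z_6=(z_1+z_3+\tfrac{\ep}{16(M+1)})(z_2+z_4+\tfrac{\ep}{16(M+1)})$ is invertible and lies within $\ep$ of $x.$ Without this factorization-into-nilpotents device (or some substitute for it), your outline cannot be completed.
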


\begin{proof}
%Note that, by the proof of \ref{Ca=ped}, $A={\rm Ped}(A).$
%It follows that every hereditary \SCA\, of $A$ also has bounded scale.
%In particular, $0\not\in  \overline{T{{(}}A)}^\mathrm{w}.$
%Since every hereditary \SCA\, $B$ of $A$ is in ${\cal D}_{1,0},$
%Since every hereditary \SCA\, $B$ of $A$ has the same said properties,
It is clearly sufficient to 
%it suffices to  
consider the case $B=A.$
%show that $A\subset {\overline{GL({\tilde {{A}}})}}.$
%{\blue{do we need to assume $B$ to be full hereditary, otherwise  $0\not\in \overline{T(B)}^\mathrm{w} $ may not be true}}

Fix an element $x\in A$
%{\widetilde A}$
and $\ep>0.$
%To simplify notation, \wilog, we may assume that $x=1+x'$
Let $e\in A$ with $0\le e\le 1$ be a strictly positive element.
Upon replacing $x$ by
$f_\eta(e)xf_\eta(e)$ for some small $1/8>\eta>0,$  we may assume that
$x\in \overline{f_\eta(e)Af_{\eta}(e)}.$ Put
$B_1=\overline{f_\eta(e)Af_{\eta}(e)}.$

By the assumption, we know that $e$ is not a projection.
{{W}}e obtain a positive element ${{b_0}}\in B_1^{\perp}\setminus \{0\}.$
%{\blue{I think we use $b_0$ not $b_0'$ later}}

Note that
$$
B_{{1}}^{\perp}=\{a\in A: ab=ba=0\rforal b\in B_1\}
$$
is a non-zero  hereditary \SCA\, of $A.$
Since we assume that $A$ is infinite dimensional, $\overline{b_0Ab_0}$
contains  non-zero positive elements $b_{0,1},b_{0,1}', b_{0,2}, {{b_{0,2}'}}\in B_1^{\perp}$
such that
$$
b_{0,1}'\lesssim b_{0,2}'\andeqn b_{0,1}b'_{0,1}=b_{0,1}, b_{0,2}b_{0,2}'=b_{0,2}\andeqn
b_{0,1}'b_{0,2}'=0.
$$

Since $A$ has the strict comparison for positive element{{s}} as in the conclusion of \ref{Comparison},
we can 
choose a large  integer $n\ge 2$ which has the following property:
 if $a_1,a_2,...,a_n\in A_+$ are 
% there are 
 $n$ mutually orthogonal and mutually equivalent positive elements,
%$a_1,a_2,...,a_n\in A_+,$
then
$$
a_1+a_2\lesssim b_{0,1}.
%,\,\,\,i=1,2,...,n.
$$

There is $B_1'\subset B_1$
which has the form
$$
B_1'=B_{1,1}+ D\otimes 1_n,
%\overbrace{D\oplus D\oplus \cdots \oplus D}^n,
$$
where $B_{1,1}$ is a hereditary \SCA\, with a strictly positive element $b_{11}\lesssim b_{0,1}$
and
%$$
%{{x\in_{\ep/16}}} B_{1,d}=\{(b, \overbrace{d,d,...,d}^n): b\in B_{1,1}\andeqn d\in D\},
%$$
%{{That is,
there are $x_0\in B_{1,1}$ and $x_1\in D\setminus \{0\}$ such that
\beq\label{NTstr1-pre-10}
\|x-(x_0+
%{\bar x}_1)
x_1\otimes 1_n)\|<\ep/16.
%\andeqn\\
%{\bar x_1}= \diag(\overbrace{x_1,x_1,...,x_1}^n).
\eneq
%where $x_0\in B_1'$ and $x_1\in D\setminus \{0\}.$

Let $d_0\in D$ be a strictly positive element.  By the choice of
$n,$  $d_0\lesssim b_{0,1}.$

Choose $0<\eta_1<1/4$ such that
\beq\label{NTstr1-pre-11}
\|f_{\eta_1}(d_0)x_1f_{\eta_1}(d_0)-x_1\|<\ep/16.
\eneq
Put $x_1'=f_{\eta_1}(d_0)x_1f_{\eta_1}(d_0).$
Note that
\beq\label{NTstr1-pre-11+}
f_{\eta_1/8}(d_0)\lesssim  b{{'}}_{0,2}.
\eneq
There are $w_i\in A$ such that
\vspace{-0.12in}\beq\label{NTstr1-pre-12}
w_iw_i^*&=&\diag(\overbrace{0,0,...,0}^{i-1}, f_{\eta_1/4}(d_0),0,...,0),\quad i=1,2,...,n,\\
 w_i^*w_i&=&\diag((\overbrace{0,0,...,0}^{i}, f_{\eta_1/4}(d_0),0,...,0),\quad i=1,2,...,n-1,\andeqn\\
 w_n^*w_n&\in& \overline{b{{'}}_{0,2}Ab{{'}}_{0,2}}.\,\,\,
\eneq

There is $v\in A$
such that
\beq\label{NTstr1-pre-13}
v^*v=x_0+\diag(x_1',0,...,0)\andeqn vv^*\in \overline{(b_{0,1}'+b_{0,2}')A(b_{0,1}'+b_{0,2}')}.
\eneq

Put
\vspace{-0.12in}\beq
&&x_i''=\diag(\overbrace{0,0,...,0}^{i-1},x_1',0,...,0),\,\,\,i=1,2,...,n,\\
&&y_i''=\diag(\overbrace{0,0,...,0}^{i-1},f_{\eta_1/4}(d_0),0,...,0),\,\,\,i=1,2,...,n,\hspace{-0.02in}{{\andeqn}}\\
&&z_1=v^*,\,\,\, z_2=v, \,\,\,
z_3=\sum_{i=1}^{n-1}w_i^*x_{i}''\andeqn
%\diag(w_1^*x_1', w_2^*x_1',...,w_{n-1}^*x_1',0)\andeqn\\
z_4=\sum_{i=1}^{n-1}y_i''w_i.
%\diag(f_{\eta_1/4}(d_0))w_1, f_{\eta_1/4}(d_0)w_2,...,f_{\eta_1/4}w_{n-1},0).
\eneq
Note  that
\vspace{-0.12in}\beq
z_3z_2=0, z_1z_4=0.
\eneq
Therefore,
\beq
(z_1+z_3)(z_2+z_4)&=&z_1z_2+z_3z_4\\
&=& v^*v+\diag(0, x_1',x_1',...,x_1')\\
&=&x_0+ \diag(\overbrace{x_1',x_1',...,x_1'}^n){{=x_0+x_1'\otimes 1_n}}.
\eneq
On the other hand,
\vspace{-0.12in}\beq\label{NTstr1-pre-14}
z_1^2=v^*v^*=0,\,\,\, z_1z_3=0.
\eneq
We also compute
that
\vspace{-0.12in}\beq\label{NTstr1-pre-15}
z_3^2=\sum_{i,j}w_i^*x_{i}''w_j^*x_j''=\sum_{i=2}^{n-1}w_i^*x_{i}''w_{i-1}^*x_{i-1}''.
\eneq
Inductively, we compute that
\vspace{-0.12in}\beq\label{NTstr1-pre-16}
z_3^n=0.
\eneq
Thus, by \eqref{NTstr1-pre-14},
% and \eqref{NTstr1-pre-16},
\vspace{-0.05in}\beq\label{NTstr1-pre-17}
(z_1+z_3)^k=\sum_{i=1}^k z_3^iz_1^{k-i}\rforal k.
\eneq
Therefore, by \eqref{NTstr1-pre-14},
and \eqref{NTstr1-pre-16}, for $k=n+1,$
\vspace{-0.02in}\beq\label{NTstr1-pre-18}
(z_1+z_3)^{n+1}=0.
\eneq
We also have that  $z_2z_4=0$ and $z_2^2=0.$
A similar computation shows that $z_4^n=0.$
Therefore, as above, $(z_2+z_4)^{n+1}=0.$
One   has the estimate
%that
$$
\|x-(z_1+z_3)(z_2+z_4)\|<\ep/4.
$$
Suppose that $\|z_i\|\le M$ for $i=1,...,4.$
Consider {{the}} elements of ${\widetilde A}$ 
$$
z_5=z_1+z_3+\ep/16(M+1)\andeqn z_6=z_2+z_4+\ep/16(M+1).
$$
Since $(z_1+z_3)$ and $(z_2+z_4)$ are nilpotent,
both $z_5$ and $z_6$ are invertible in ${\widetilde A}.$
We also estimate
that, by \eqref{NTstr1-pre-10},
$$
\|x-z_5z_6\|<\ep.
$$

\end{proof}

\begin{cor}\label{CD0str1}
Let $A\in {\cal D}$ be a separable \CA.
Then $A$ almost has stable rank one (see \ref{Dalst1}).
\end{cor}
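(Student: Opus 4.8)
The goal is to establish Corollary \ref{CD0str1}: every separable $A\in{\cal D}$ almost has stable rank one. The natural plan is to verify that $A$ satisfies all the hypotheses of Lemma \ref{NTstr1pre}, and then simply quote that lemma, which directly yields $B\subset\overline{{\rm GL}({\widetilde B})}$ for every hereditary \SCA\, $B$ of $A$ (indeed of every matrix amplification, since the definition of almost stable rank one in \ref{Dalst1} involves hereditary \SCA s of ${\rm M}_m(A)$). So the real work is checking the three standing assumptions of \ref{NTstr1pre}: that $A$ is separable, simple and stably projectionless; that every hereditary \SCA\, of $A$ has the strict comparison property formulated in the conclusion of \ref{Comparison}; and that the conclusion of \ref{LappZ} holds (without the requirement that $E_2$ lie in any particular class).

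First I would dispose of the structural properties. Separability is given. Simplicity of $A$ is part of the definition of ${\cal D}$. Stable projectionlessness is exactly Proposition \ref{Pprojless}. For the comparison hypothesis, I would invoke Theorem \ref{Comparison}, which says that for $A\in{\cal D}$ with $A={\rm Ped}(A)$ one has $\mathrm{d}_\tau(a)\le\mathrm{d}_\tau(b)$ for all $\tau\in\overline{{\mathrm{T}(A)}}^{\mathrm w}$ implying $a\lesssim b$. The subtlety is that \ref{NTstr1pre} demands this for \emph{every} hereditary \SCA\, $B$, whereas \ref{Comparison} is stated under $A={\rm Ped}(A)$. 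The bridge is Proposition \ref{Phered}: any hereditary \SCA\, $B$ of $A$ is again in ${\cal D}$. I would then pass to a full hereditary \SCA\, $B_0$ of $B$ (or restrict attention appropriately) so that the Pedersen-ideal hypothesis of \ref{Comparison} is met, using \ref{CherePA=A} and \ref{Tpedersen} to arrange $B_0={\rm Ped}(B_0)$; the comparison statement then transfers from $B_0$ back to $B$ because the relevant trace cones and Cuntz comparisons are compatible under cutting down by a full element. This identification of trace spaces is precisely what Proposition \ref{Subset} provides.

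For the last hypothesis, the conclusion of \ref{LappZ} is established for every separable $A\in{\cal D}$ in Corollary \ref{LappZ} itself — it is stated there without any $A={\rm Ped}(A)$ restriction and with $E_2\in{\cal C}_0'$, which is stronger than the version \ref{NTstr1pre} requires (the latter does not insist $E_2$ belong to a specific class). Since \ref{NTstr1pre} asks for the weaker form, \ref{LappZ} applies verbatim, and by \ref{Phered} it applies to every hereditary \SCA\, of $A$ as well. Thus all three hypotheses of \ref{NTstr1pre} hold for $A$ and for each of its hereditary \SCA s.

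With the hypotheses verified, I would conclude as follows: by \ref{NTstr1pre} applied to $A$ (and to its hereditary \SCA s), $B\subset\overline{{\rm GL}({\widetilde B})}$ for every hereditary \SCA\, $B$ of $A$. To handle the matrix amplifications in Definition \ref{Dalst1}, I would note that ${\rm M}_m(A)\in{\cal D}$ by Proposition \ref{PtadMk}, so the same argument applies to ${\rm M}_m(A)$ and its hereditary \SCA s, giving exactly the condition in \ref{Dalst1}. I expect the main obstacle to be the bookkeeping around the comparison hypothesis: carefully arranging, via \ref{Phered}, \ref{Subset}, \ref{CherePA=A} and \ref{Tpedersen}, that the conclusion of \ref{Comparison} holds for an \emph{arbitrary} hereditary \SCA\, $B$ and not merely for algebraically simple ones, since \ref{Comparison} is only stated under $A={\rm Ped}(A)$. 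Everything else is a direct citation.
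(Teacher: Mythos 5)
Your proposal is correct and is essentially the paper's own argument: the paper proves this corollary by simply citing \ref{PtadMk}, \ref{Phered}, \ref{Pprojless}, \ref{LappZ}, and \ref{NTstr1pre}, which is exactly the combination you assemble. The only difference is that you spell out the hypothesis-checking (in particular the Pedersen-ideal bookkeeping needed to invoke \ref{Comparison} for arbitrary hereditary \SCA s), which the paper leaves implicit.
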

%\begin{df}\label{DTAD}
\begin{proof}
This follows from {{\ref{PtadMk}, \ref{Phered},}} \ref{Pprojless}, \ref{LappZ},  and \ref{NTstr1pre}.
\end{proof}

\begin{cor}\label{CCped}
Let $A\in {\cal D}.$ Suppose that $A$ is separable. Then 
%$A$  is  algebraically simple and 
$A={\rm Ped}(A).$
\end{cor}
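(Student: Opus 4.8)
The goal is to show that a separable \CA\, $A\in {\cal D}$ satisfies ${\rm Ped}(A)=A.$ By Theorem \ref{Tpedersen}, this is equivalent to showing that $A$ is compact; and by Lemma \ref{Lbkqc}, a $\sigma$-unital \CA\, which almost has stable rank one and has the appropriate strict comparison property is automatically compact, provided we also know that $0\notin \overline{{\rm T}(A)}^{\rm w}.$ So the plan is to verify that $A$ satisfies each of the hypotheses of \ref{Lbkqc} and then invoke that lemma (or, equivalently, verify the hypotheses of \ref{Tpedersen} via \ref{Lbkqc}).

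First I would record the three ingredients. By Proposition \ref{PD0qc}, since $A$ is a non-zero separable simple \CA\, in ${\cal D},$ we have ${\rm QT}(A)={\rm T}(A)\neq \O$ and, crucially, $0\notin \overline{{\rm T}(A)}^{\rm w}.$ By Corollary \ref{CD0str1}, $A$ almost has stable rank one. For the comparison hypothesis of \ref{Lbkqc}, I would use Theorem \ref{Comparison}: it gives, for $a,b\in (A\otimes {\cal K})_+,$ that ${\rm d}_\tau(a)\le {\rm d}_\tau(b)$ for all $\tau\in \overline{{\rm T}(A)}^{\rm w}$ implies $a\lesssim b.$ In particular, for $a,b\in {\rm M}_m(A)_+$ with ${\rm d}_\tau(a)<{\rm d}_\tau(b)$ for all $\tau\in \overline{{\rm T}(A)}^{\rm w},$ we certainly get $a\lesssim b,$ which is exactly the comparison property required as a hypothesis of \ref{Lbkqc}. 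One small point to watch is that \ref{Comparison} is stated under the standing assumption $A={\rm Ped}(A)$; I must be careful not to assume the very conclusion I am proving. To avoid circularity I would instead appeal to the portion of \ref{Comparison}'s proof, or to the general comparison statement, that does not presuppose $A={\rm Ped}(A)$ — or simply note that the comparison inequality in \ref{Comparison} for elements of ${\rm M}_m(A)$ only uses the divisibility structure of ${\cal D}$ and strict comparison, neither of which requires algebraic simplicity.

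With these three facts in hand, I would apply Lemma \ref{Lbkqc} directly: its hypotheses are precisely that $A$ is $\sigma$-unital with $0\notin \overline{{\rm T}(A)}^{\rm w},$ that $A$ almost has stable rank one, and that the matrix-level strict comparison property holds. Its conclusion is that $A$ is compact. Then Theorem \ref{Tpedersen} yields ${\rm Ped}(A)=A,$ completing the proof. Since $A$ is separable it is in particular $\sigma$-unital, so that hypothesis is free.

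The main obstacle I anticipate is the circularity issue flagged above: Theorem \ref{Comparison} carries the hypothesis $A={\rm Ped}(A),$ so I cannot invoke it verbatim to prove ${\rm Ped}(A)=A.$ The resolution is to observe that the strict comparison statement needed in \ref{Lbkqc} is only the strict inequality version (${\rm d}_\tau(a)<{\rm d}_\tau(b)\Rightarrow a\lesssim b$) for positive elements of matrix algebras over $A,$ and that the proof of \ref{Comparison} establishes this strict-inequality comparison using only the structure of the class ${\cal D}$ (the tracial decomposition of \ref{DNtr1div}), the fact from \ref{PD0qc} that $0\notin\overline{{\rm T}(A)}^{\rm w},$ and stable projectionlessness from \ref{Pprojless} — none of which needs $A={\rm Ped}(A).$ Indeed, examining the argument, the algebraic-simplicity hypothesis in \ref{Comparison} is used only to guarantee the compactness/normalization of the trace space, which we are supplying here independently through $0\notin\overline{{\rm T}(A)}^{\rm w}$ and the compactness of $\overline{{\rm T}(A)}^{\rm w}$ noted in \ref{DTtilde}. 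I would therefore extract exactly the strict-comparison conclusion for ${\rm M}_m(A)$ from that argument and feed it into \ref{Lbkqc}. Once that is done the remaining steps are immediate citations.
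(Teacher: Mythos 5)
Your proposal is correct and takes essentially the same route as the paper, whose entire proof is the chain \ref{PtadMk}, \ref{PD0qc}, \ref{CD0str1}, \ref{Comparison}, \ref{Lbkqc}, \ref{Tpedersen} that you describe. Your concern about circularity (Theorem \ref{Comparison} is stated under $A={\rm Ped}(A)$) is well placed---the paper's one-line proof glosses over it---and your resolution is the intended reading: in the proof of \ref{Comparison} the hypothesis $A={\rm Ped}(A)$ enters only through \ref{compactrace} to give $0\not\in \overline{{\rm T}(A)}^{\rm w},$ which for a separable simple $A\in {\cal D}$ is supplied directly by \ref{PD0qc}.
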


\begin{proof}
By \ref{PtadMk}, ${\rm M}_n(A)$ is in ${\cal D}.$ 
The corollary then  follows from  the combination of  \ref{PD0qc}, \ref{CD0str1}, \ref{Comparison},
%\ref{PD0qc}, \ref{Comparison}, \ref{CD0str1}, 
 \ref{Lbkqc}, and \ref{Tpedersen}.
\end{proof}

\begin{rem}\label{114}
Note, by \ref{CCped}, the assumption that $A={\rm Ped}(A)$ in 
%\ref{Ltrace}, \ref{Tcomatilde} and 
\ref{TDefsimple} can be removed.
\end{rem}

\begin{thm}\label{TTstr1}
Let $A$ be a separable 
%stably projectionless 
\CA\, {{in ${\cal D}$.}}
 Then $A$ has stable rank one.
\end{thm}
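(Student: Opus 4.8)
The goal is to upgrade the "almost has stable rank one" property, already established for algebras in $\mathcal D$ in Corollary \ref{CD0str1}, to genuine stable rank one. The plan is to invoke the general machinery assembled in the excerpt about when "almost stable rank one" implies "stable rank one." Recall that Definition \ref{Dalst1} records precisely this mechanism: if $A\otimes\mathcal K$ is $\sigma$-unital, almost has stable rank one, and contains a full projection, then $A$ has stable rank one. But algebras in $\mathcal D$ are stably projectionless by Proposition \ref{Pprojless}, so that route is unavailable, and indeed Definition \ref{Dalst1} anticipates exactly this gap. Hence the argument must be genuinely different and must exploit more structure than mere almost-stable-rank-one.

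The approach I would take is to show directly that every $x\in A$ can be approximated in norm by invertible elements of $\widetilde A$, using the tracial approximate divisibility and the refined comparison results. First I would gather the established facts for $A\in\mathcal D$ separable: by Corollary \ref{CCped} we have $A=\mathrm{Ped}(A)$; by Proposition \ref{PD0qc} we have $\mathrm{QT}(A)=\mathrm{T}(A)$ with $0\notin\overline{\mathrm T(A)}^{\mathrm w}$; by Theorem \ref{Comparison} we have the strong comparison $d_\tau(a)\le d_\tau(b)$ for all $\tau$ implies $a\lesssim b$; and by Corollary \ref{CD0str1} the algebra almost has stable rank one. The key is to improve Corollary \ref{CD0str1}: almost stable rank one gives approximation by invertibles in $\widetilde B$ for every hereditary subalgebra $B$, but what we want is approximation by elements of $\mathrm{GL}(\widetilde A)$ for an arbitrary element of $A$, not merely of a positive element's hereditary subalgebra. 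The natural path is to combine the almost-stable-rank-one polar-decomposition technique (Theorem 5 of Pedersen, as used repeatedly, e.g. in the proof of \ref{Lrluniq}) with the fact that $A$ is stably projectionless so that one can always find a small orthogonal positive element to absorb the defect of a partial isometry.

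The concrete steps would be: take $x\in A$ and $\epsilon>0$, write $x=v|x|$ for the polar decomposition in $A^{**}$, and use almost stable rank one to replace $vf_\delta(|x|)$ by $u^*f_\delta(|x|)$ for a unitary $u\in\widetilde A$, thereby reducing to approximating $u^*|x|$, i.e. to approximating $|x|$ (a positive element) by invertibles — the content of Lemma \ref{NTstr1pre} specialized through its hypotheses. Actually the cleanest route is to observe that Lemma \ref{NTstr1pre} already yields $B\subset\overline{\mathrm{GL}(\widetilde B)}$ for every hereditary $B$, given strict comparison and the conclusion of \ref{LappZ}; since $A\in\mathcal D$ satisfies both (comparison by \ref{Comparison}, the \ref{LappZ}-conclusion by \ref{LappZ} itself applied to all hereditary subalgebras via \ref{Phered}), this gives that $A$ and all its hereditary subalgebras lie in the closure of the invertibles of their unitizations. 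What remains is to pass from this "closure of invertibles" statement, which is the density of invertibles in $\widetilde A$, to the statement of stable rank one, namely density of invertibles of $\widetilde{\mathrm M_n(A)}$ for all $n$. Since $\mathrm M_n(A)\in\mathcal D$ by Proposition \ref{PtadMk}, the same argument applies to each $\mathrm M_n(A)$, so stable rank one for $A$ follows by definition.

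The main obstacle I anticipate is the bookkeeping in the passage from $\overline{\mathrm{GL}(\widetilde B)}\supset B$ for all hereditary $B$ to the literal definition of stable rank one. The definition of stable rank one requires that $\mathrm{GL}(\widetilde{\mathrm M_n(A)})$ be dense in $\widetilde{\mathrm M_n(A)}$, and one must be careful that "$B\subset\overline{\mathrm{GL}(\widetilde B)}$ for every hereditary $B$ of every matrix amplification" (which is exactly the definition of \emph{almost} stable rank one in \ref{Dalst1}) is being genuinely strengthened rather than merely restated. The real content is that the element $z_5z_6$ constructed in \ref{NTstr1pre} is a product of two honestly invertible elements of $\widetilde A$, giving approximation of a \emph{general} $x\in A$ (not just positive $x$ or $x$ in a prescribed hereditary subalgebra) by a single invertible; since $\widetilde A$ is unital this density of $\mathrm{GL}(\widetilde A)$ in $\widetilde A$ together with the matrix stability from \ref{PtadMk} is precisely stable rank one. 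I would therefore present the proof as: apply \ref{NTstr1pre} (whose hypotheses hold for $A$ by \ref{Comparison}, \ref{LappZ}, \ref{Phered}, \ref{Pprojless}) to conclude $A\subset\overline{\mathrm{GL}(\widetilde A)}$, hence $\mathrm{GL}(\widetilde A)$ is dense in $\widetilde A$; then invoke \ref{PtadMk} to run the identical argument for each $\mathrm M_n(A)$, yielding stable rank one.
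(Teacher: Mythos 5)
Your argument has a genuine gap at its final step, and it is exactly the gap that the whole theorem exists to close. From Lemma \ref{NTstr1pre} (equivalently, Corollary \ref{CD0str1}) you obtain $A\subset\overline{\mathrm{GL}(\widetilde A)}$: every element \emph{of $A$} is a limit of invertibles of $\widetilde A$. That statement, together with its versions for hereditary subalgebras of matrix algebras, is precisely the \emph{definition} of ``almost has stable rank one'' in \ref{Dalst1} --- it is not stable rank one. Stable rank one requires that every element of the unitization, i.e.\ every $x=\lambda\cdot 1+a$ with $\lambda\in\C$ and $a\in A$, be approximable by invertibles of $\widetilde A$; since $A$ is a closed ideal of codimension one in $\widetilde A$, density of invertibles near points of $A$ says nothing about the elements with $\lambda\neq 0$. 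Your sentence ``hence $\mathrm{GL}(\widetilde A)$ is dense in $\widetilde A$'' is therefore an unproved (and, in this generality, unknown) implication: the paper itself points out in \ref{Dalst1} that the only general mechanism for passing from almost stable rank one to stable rank one uses a full projection, that algebras in ${\cal D}$ are stably projectionless (\ref{Pprojless}), and that no example separating the two notions is known --- meaning the implication cannot simply be invoked. The matrix bookkeeping via \ref{PtadMk}, which you single out as the main residual obstacle, is the trivial part; your resolution of the ``restated versus strengthened'' worry (that \ref{NTstr1pre} handles general $x\in A$ rather than only positive $x$) still leaves you entirely inside the ideal $A$.

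What the paper actually does is an argument specifically about elements with nonzero scalar part, and none of your cited lemmas substitutes for it. Starting from an arbitrary $x\in\widetilde A$, it first uses 3.2 and 3.5 of \cite{Rr11} to reduce to the case $xe_0'=e_0'x=0$ for some nonzero positive $e_0'$, and after scaling writes $x=1+a$ with $a\in A$. The definition of ${\cal D}$ then gives $a\approx x_0+x_1$ with $x_0$ in a ``small'' hereditary subalgebra $B_0$ and $x_1\in D\in{\cal C}_0$, arranged so that $1+x_1$ annihilates $e_{0,1}$. Writing $z=1+x_0+x_1$ and splitting it against the open projection $q_0$ of a subalgebra containing $x_0$, the piece $z_1=(1-q_0)+x_1$ lies in a (nonunital) hereditary subalgebra of $D+\C\cdot(1-q_0)\in{\cal C}$, which has stable rank one by \ref{str=1}, so $z_1$ admits an invertible approximant there; the complementary piece $z_0=(1+x_0)q_0$ is made nilpotent after twisting by a unitary $u$ obtained from comparison ($b_0\lesssim e_{0,1}'$, using \ref{Comparison} and almost stable rank one). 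The two invertible pieces are then glued in $A^{**}$ and shown to land in $\widetilde A$, giving an element of $\mathrm{GL}(\widetilde A)$ within $\ep$ of $xu$. It is this handling of the unit --- absorbing the scalar $1$ into the unitization of a ${\cal C}_0$-algebra plus a nilpotent --- that your proposal is missing.
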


\begin{proof}
Let $x\in {\widetilde A}.$ We must show that $x\in\overline{{\rm{GL}}({\widetilde A})}.$
{{Applying}} 3.2  and 3.5 of \cite{Rr11}, \wilog, we may assume that
there exists a non-zero positive element $e_0'\in {\widetilde A}$  with $\|e_0'\|=1$ such that
$xe_0'=e_0'x=0.$ We may further assume that there exists 
$e_0\in {\widetilde A}_+$  with $\|e_0\|=1$ such that $e_0e_0'=e_0'e_0=e_0.$
Define 
\beq
\sigma=\inf\{\tau(f_{1/4}(e_0)): \tau\in \overline{\mathrm{T}(A)}^{\rm w}\}.
\eneq

Multiplying by a scalar multiple of the identity, \wilog, we may assume that
$x=1+a,$ where $a\in A.$
%\Wlog, we may assume that $\|x\|\le 1.$ Let $1>1/4>\ep>0.$ 

{{Let}} $0<\ep_0<\ep$ {{be given}} and {{set}} $\ep_1=\min\{\ep_0/(\|x\|+1), \sigma\}.$
Since $A\in {\cal D}, $ there exist   a hereditary \SCA\, $B_0\subset A$ 
%in the class $\mathcal D$ 
and a
\SCA\,
$D\subset A$  with $D\in {\cal C}_0$ such that
\beq\label{TTstr1-5}
\|a-(x_0+x_1)\|<\ep_1/64,\,\,\,
\|e_0-(e_{0,0}+e_{0,1})\|<\ep_1/64,\andeqn\\\label{TTstr1-5+}
\|f_{\dt'}(e_0)-(f_{\dt'}(e_{00})+f_{\dt'}(e_{0,1}))\|<\ep_1/64,\,\,\, \dt'\in \{1/2^k: 2\le k\le 6\},
\eneq
where $x_0, e_{0,0}\in B_0$ and $x_1, e_{0,1}\in D,$ $B_0D=DB_0=\{0\},$
\beq\label{TTstr1-6}
d_\tau(b_0)<  \min\{\ep_1/64,  \sigma/64\}\rforal \tau\in  \overline{\mathrm{T}(A)}^{\rm w}.
\eneq
where $b_0$ is a strictly positive element of $B_0.$ 
Let $p_{B_0}$ {{denote}} the open projection associated with $B_0.$ 
Then, for $\dt'\in \{1/2^k: 2\le k\le 6\},$ 
\beq
\hspace{-0.4in}\|(1+x_1)f_{\dt'}(e_{0,1})\|&=&\|(1+x_1)(1-p_{B_0})f_{\dt'}(e_{0,1})\|\\
&=&\|(1+x_1+x_0)(1-p_{B_0})(f_{\dt'}(e_{0,0})+f_{\dt'}(e_{0,1}))\|\\
&< & \|(1+x_1+x_0)(1-p_{B_0})f_{\dt'}(e_0)\|+(\|x\|+\ep_1/64)\ep_1/64\\
&=& \|(1-p_{B_0})(1+x_1+x_0))f_{\dt'}(e_0)\|+\ep_0/64\\
&<& \|(1-p_{B_0})xf_{\dt'}(e_0)\|+\ep_1/64+\ep_0/64\\
&=&\ep_1/64+\ep_0/64<\ep_0/32.
\eneq
Put 
\beq
x_1'=(-2f_{1/64}(e_{0,1})+f_{1/64}(e_{0,1})^2)+(1-f_{1/64}(e_{0,1}))x_1(1-f_{1/64}(e_{0,1})).
\eneq
Then $x_1'\in D.$   By the calculation above,
\beq
&&(1-f_{1/64}(e_{0,1}))(1+x_1)(1-f_{1/64}(e_{0,1}))=1+x_1'\andeqn\\
&&\|(1+x_1')-(1+x_1)\|<3\ep_0/30.
\eneq
Moreover, 
\beq
(1+x_1')f_{1/64}(e_{0,1})=(1-f_{1/64}(e_{0,1}){{)}}(1+x_1)(1-f_{1/64}(e_{0,1}))f_{1/16}(e_{0,1})=0.
\eneq
We also have, by \eqref{TTstr1-5+}, 
\beq
\tau(f_{1/4}(e_{0,1}){{)}}\ge \tau(f_{1/4}(e_0))-\ep_1/64\ge \sigma-\ep_0/64>\sigma/2\rforal \tau\in \overline{\mathrm{T}(A)}^{\rm w}.
\eneq
Therefore, by \eqref{TTstr1-6},
\beq
d_\tau(b_0)<\tau(f_{1/4}(e_{0,1}))\rforal \tau\in \overline{\mathrm{T}(A)}^{\rm w}.
\eneq
By  \eqref{Comparison}, $b_0\lesssim f_{1/8}(e_{0,1}).$  Note 
that $f_{1/16}(e_{0,1})f_{1/8}(e_{0,1})=f_{1/8}(e_{0,1}).$

To simplify notation, choosing sufficiently small $\ep_0$ and changing notations,
we may assume that
\beq\label{TTstr1-5n}
\|a-(x_0+x_1)\|<\ep/16\andeqn
\|e_0-(e_{0,0}+e_{0,1})\|<\ep/16,
%\andeqn\\
%\|f_{\dt'}(e_0)-(f_{\dt'}(e_{00})+f_{\dt'}(e_{0,1}))\|<\ep_0/64,\,\,\, \dt'\in \{1/2^k: 2\le k\le 6\}.
\eneq
where $x_0, e_{0,0}\in B_0$ and $x_1, e_{0,1}\in D,$
{{and also}}
\beq\label{TTstr1-n-1-1}
e_{0,1}(1+x_1)=(1+x_1)e_{0,1}=0
\eneq
 and 
$b_0\lesssim e_{0,1}',$   where $0\le e_{0,1}'
\le 1$ and  $e_{0,1}' f_{\dt}(e_{0,1})=e_{0,1}'$ for some  $0<\dt<1/4.$ 

We may also assume, \wilog,  that there  are $b_{0,1}, b_{0,1}'\in B_0$ 
%, b_{0,1}'' b_{0,1}'''\in B_0$
with $0\le b_{0,1},\, b_{0,1}'\le 1$ 
% b_{0,1}''\le b_{0,1}'''\le 1$
such that
\beq\label{TTstr1-7}
b_{0,1}x_0=x_0b_{0,1}=x_0, \,\,\,f_{1/16}(b_{0,1}')b_{0,1}=b_{0,1}.
%'''b_{0,1}''=b_{0,1}'',\,\,\,b_{0,1}''b_{0,1}'=e_{0,1}'\andeqn b_{0,1}'b_{0,1}=b_{0,1}.
\eneq
  {{Set}} $A_2=\overline{(f_{\dt}(e_{0,1})+b_{0,1}')A(f_{\dt}(e_{0,1})+b_{0,1}')}.$
Note {{that,}}  and by \ref{Phered}, $A_2\in {\cal D}.$ Since $b_{0,1}'\lesssim b_{0}\lesssim e_{0,1}',$  and by \ref{NTstr1pre}, 
$A_2$ almost has stable rank one,
there is  a unitary $u_1'\in {\widetilde A_2}$ (see \ref{LRL}) such that
\beq\label{TTstr1-8}
(u_1')^*b_{0,1}'(u_1')\in \overline{f_{\dt}(e_{0,1})Af_{\dt}(e_{0,1})}.
\eneq

%Let $A_3$ be the hereditary \SCA\, of $A$ generated by $A_0$ and $A_2.$
%Let $q$ be the open projection  in $A^{**}$ corresponding to $A_3,$
%and l
Let $q_0$ {{denote}} the open projection in $A^{**}$ corresponding to $\overline{b_{0,1}'Ab_{0,1}'},$   and
$q$ be the open projection in $A^{**}$  corresponding to the hereditary 
\SCA\, $A_2.$  {{Then}} $q_0\le q.$ 
Note that
\beq
&x_0q_0=x_0q_0=x_0\andeqn\\\label{TTstr1-9}
&q_0u_1'q_0=(u_1')(u_1')^*(q_0u_1')q_0=(u_1')((u_1')^*q_0u_1')q_0=0.
% f_{\dt/2}(e_{0,1})\le e_{0,1}''\le q.
\eneq
Note also that
\beq\label{TTstr1-10}
&&\|x-(1+x_0+x_1)\|=\|a-(x_0+x_1)\|<\ep/16.
%\andeqn\\
%&&x-(p+x_0+x_1)=a-(x_0+x_1)\in B_0\cap A.
\eneq
%In particular, $1+x_0+x_1\in {\widetilde A}.$
Put $z=1+x_0+x_1. $   Then $z\in {\widetilde A}.$ Put
%Then we also view that $z\in {\tilde B}_0.$
\beq\label{TTstr1-11}
&&z_0=zq_0=(1+x_0)q_0=q_0(1+x_0)\andeqn\\
 &&z_1=z(1-q_0)=(1-q_0)z=(1-q_0)+x_1.
\eneq
Keep in mind
that
%\vspace{-0.1in}\beq\label{TTstr1-11+}
$z_0+z_1=z.$
%\eneq

Now write $u_1'=\lambda 1_{{\widetilde A}_2}+y$ for some $y\in A_2$ and for some
scalar $\lambda\in \C$ with $|\lambda|=1.$
{{Set}} $u_1=\lambda q+y.$
Multiplying by ${\bar \lambda}$ and changing notation,
we may assume  that $u_1=q+y.$
Define
$$
u=1+y=u_1+(1-q).
$$
Since $q_0\le q=1_{{\widetilde A}_2},$
we have 
\beq
q_0u=q_0qu=q_0qu_1=q_0qu_1'=q_0u_1'.
\eneq
%\beq\label{TTstr1-12}
%&&\hspace{-0.5in}z_0u=(1+x_0)q_0u_1=(q_0+x_0)b_{0,1}'u_1{{=(q_0+x_0)b_{0,1}'u_1'}}=b_{0,1}'(q_0+x_0)b_{0,1}'u_1'.\\
%&&\hspace{-0.5in}z_0u
%=(1+x_0)q_0u_1=(
%=(q_0+x_0)b_{0,1}'u_1=b_{0,1}'(q_0+x_0)b_{0,1}'u_1.
%\eneq
%Since $b_{0,1}'e_{0,1}'=0,$  by \eqref{TTstr1-8},
Then, by \eqref{TTstr1-9},
\beq\label{TTstr1-13}
(z_0u)(z_0u)=(1+x_0)q_0uq_0(1+x_0)u=(1+x_0)q_0u_1'q_0(1+x_0)=0.
\eneq
In other words, $z_0u$ is a nilpotent in $A^{**}.$

On the other hand,  by \eqref{TTstr1-n-1-1}, 
$$
z_1f_\dt(e_{0,1})=(1-q_0)(1+x_1)f_\dt(e_{0,1})=0.
$$
Therefore 
\beq\label{TTstr-n10}
z_1c=((1-q_0)+x_1)c=cz_1=0\rforal c\in A_2.
\eneq
Thus, as $y\in A_2,$ 
\beq\label{TTstr-n11}
z_1u=z_1(1+y)=z_1+z_1y=z_1.
\eneq

Put $D_1=D+\C\cdot (1-q_0).$ Then $D_1\in {\cal C}.$ 
Let 
\beq	
D_2=\{d'\in D_1: d'f_\dt(e_{0,1})=f_\dt(e_{0,1})d'=0\}.
\eneq
Then, by \eqref{TTstr-n10},  $z_1\in D_2.$
Note that  $D_2$ is a hereditary \SCA\, of $D_1.$  
If $D_2$ is unital, say $e'$ is the unit, then $e'\not=1-q_0.$ 
Then $(1-q_0)-e'$ is also a nonzero projection.
One of them must be in $D.$ 
Since $D$ is stably projectionless, that one has to be zero. Since $(1-q_0)-e'\not=0,$ 
this leads a contradiction. So $D_2$ is not unital.
Since $D_1$ has stable rank one (see, for example, 3.3 of \cite{GLN}), 
so is $D_2.$  
Let $e_{D_2}$ be a strictly positive element of $D_2.$ 
In $A^{**},$ let $p_d=\lim_{n\to\infty} (e_{D_2})^{1/n}$ (converges in $A^{**}$). 
In particular, $p_d\le 1-q_0.$  So $q_0p_d=0.$
Moreover, since $e_{D_2}f_\dt(e_{0,1})=0,$ 
\beq
p_df_\dt(e_{0,1})=0.
\eneq
We also have $p_dB_0=B_0p_d=0.$ 
It follows that 
$
p_dq=0.
$
Therefore, 
\beq
z_0u(1-p_d)=z_0uq(1-p_d)=z_0uq=z_0u.
\eneq
Hence, $z_0u\in (1-p_d)A^{**}(1-p_d).$

%Then $D_1\cong {\tilde D}.$  Since $D$ has stable rank one,
Since $D_2+\C p_d$ has stable rank one, {{there}} is  an invertible element $z_1'\in D_2+\C p_d$
such that
\beq\label{TTstr1-14}
\|z_1-z_1'\|<\ep/16.
\eneq
We may write $z_1'=\lambda_1p_d+y_d,$  where 
$\lambda_1\in \C$ and $y_d\in D_2.$ We may also write $y_d=\lambda_2 (1-q_0)+d_0,$
where $\lambda_2\in \C$ and $d_0\in D.$

{{Set}}  $I=D\cap D_2.$ Then we have  {{the natural short exact sequence}} 
\beq
0\to I \to D_2+\C p_d\to^{\pi} \C\oplus \C\to 0.
\eneq
Then $\|\pi(z_1-z_1')\|<\ep/16.$ 
Thus, $|\lambda_1|<\ep/16$ and 
$|1-\lambda_2|<\ep/16.$ 
Put 
$$
z_1'':=(1/\lambda_2)z_1'=\eta p_d+(1-q_0)+d_0',
$$
where $\eta=\lambda_1/\lambda_2$ and $d_0'=d_0/\lambda_2\in D.$
Then $|\eta|<\ep/8$ and 
\beq\label{TTstr-n12}
\|z_1-z_1''\|<3\ep/16.
\eneq
Moreover, $z_1''$ is invertible in $D_2+\C p_d.$
%We also have 
%Write $z_1'=\lambda_1\cdot (p-q_0)+d$ for some scalar $\lambda_1\in \C$
%with $|\lambda_1|=1$ and $d\in D.$
%By looking the quotient $D_1/D,$ we may also write
%\beq\label{TTst1-15}
%z_1'=(p-q_0)+d+\eta(p-q_0)\andeqn |\eta|<\ep/32.
%\eneq
\Wlog, we may {{require}} that $\eta\not=0$ (since elements near $z_1'$ are invertible).
We may also write 
\beq
z_1''&=&z_1+(z_1''-z_1)=z_1+ (\eta p_d+(1-q_0)+d_0'-(1-q_0+x_1))\\\label{TTst1-15}
&=&
z_1+\eta p_d+d_0'-x_1.
\eneq
Therefore,
\beq\label{TTstr1-15-1}
\eta p_d+d_0'-x_1=z_1''-z_1.
\eneq
%Now view $z_0u\in (1-p_d)A^{**}(1-p_d).$
Since $z_0u$ is a nilpotent, $z_0u+\eta(1-p_d)$ is invertible in $(1-p_d)A^{**}(1-p_d).$
Let $\zeta_1$ {{denote}} the inverse of $z_0u+\eta(1-p_d)$ in $(1-p_d)A^{**}(1-p_d)$ and
$\zeta_2$ the inverse of $z_1''$ in $D_2+\C\cdot p_d.$
Then
\beq\label{TTstr1-15+}
(z_0u+\eta(1-p_d)\oplus z_1'')(\zeta_1\oplus \zeta_2)=(1-p_d)+p_d=1.
\eneq
It follows that
\beq\label{TTstr1-16}
z_2:=z_0u+\eta(1-p_d)+z_1''\in {\rm{GL}}(A^{**}).
\eneq
However, by \eqref{TTst1-15} and \eqref{TTstr-n11},
\beq\label{TTstr1-17}
z_2 &=&z_0u+\eta(1-p_d) +z_1''\\
&=& z_0u+\eta(1-p_d)+z_1+\eta p_d +(d_0'-x_1)\\
&=&z_0u+z_1 +(d_0'-x_1)+\eta\cdot 1\\
&=& (z_0+z_1)u+(d_0'-x_1)+\eta\cdot 1\\\label{TTstr1-nn20}
&=& zu+(d_0'-x_1)+\eta \cdot 1\in {\widetilde A}.
\eneq
It follows that $z_2\in {\rm{GL}}({\widetilde A}).$
We have (by \eqref{TTstr1-nn20}, \eqref{TTstr1-15-1}, 
%\eqref{TTstr-n11} 
and \eqref{TTstr-n12})
\beq\label{TTstr1-18}
\|zu-z_2\| &=&\|(d_0'-x_1)+\eta \cdot 1\|\\
%(z_0u_1+z_1)-z_2\|\\
&\le & \|d_0'-x_1+\eta p_d\|+\eta\|(1-p_d)\|\\
%\|z_1-(\eta(1-(p-q_0))+z_1')\|\\
&=&\|z_1''-z_1\|+\eta<3\ep/16+\ep/8=5\ep/16.
\eneq
Therefore (using also \eqref{TTstr1-10}),
\beq\label{TTstr1-19}
\|xu-z_2\|<\ep,\,\,\, {\rm or}\,\,\,\|x-z_2u^*\|<\ep.
\eneq
Since $z_2$ is invertible so is $z_2u^*.$ However, $u\in {\widetilde A}. $  One concludes that $z_2u^*$ is in ${\rm{GL}}({\widetilde A}).$

\end{proof}

At this point, we would like to introduce the following definition:

\begin{df}\label{DgTR}
Let $A$ be a simple \CA. {{Suppose that $A$ is stably projectionless.}}
We shall say that $A$ 
%is a stably projectionless simple \CA\ with 
{{has}} generalized tracial rank at most one,
and write ${\rm gTR}(A)\le 1,$  if for any $a\in {\rm Ped}(A)_+,$ $\overline{aAa}\in {\cal D}.$
(This extends the definition of generalized tracial rank at most one in the unital case \cite{GLN}.)
%if $A$ is TA${\cal C}_0'.$

\end{df}

\begin{prop}\label{PDgTR}
A separable  stably projectionless  simple \CA\, $A$ has  generalized tracial rank at most one,
i.e., ${\rm gTR}(A)\le 1,$
if, and  only if,  for some $a\in {\mathrm{Ped}(A)}_+\setminus \{0\},$ $\overline{aAa}\in {\cal D}.$
\end{prop}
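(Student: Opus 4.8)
The ``only if'' implication is immediate from Definition \ref{DgTR} (note ${\rm Ped}(A)_+\setminus\{0\}\ne\O$ when $A\ne\{0\}$). For the ``if'' implication, fix $a\in{\rm Ped}(A)_+\setminus\{0\}$ with $0\le a\le1$ and $B_1:=\overline{aAa}\in{\cal D}$, and let $b\in{\rm Ped}(A)_+\setminus\{0\}$ with $0\le b\le1$ be arbitrary; set $B_2:=\overline{bAb}$. Since $A$ is simple, $a$ and $b$ are full, so $B_1,B_2$ are full hereditary \SCA s of $A$ and, by Brown's theorem (\cite{Br1}), $B_1\otimes{\cal K}\cong A\otimes{\cal K}\cong B_2\otimes{\cal K}$. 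By \ref{CD0str1}, $B_1$ almost has stable rank one, and by \ref{PtadMk}, ${\rm M}_n(B_1)\in{\cal D}$ for every $n$. The plan is to realize $B_2$ as a hereditary \SCA\ of some ${\rm M}_n(B_1)$ and then invoke \ref{Phered}.

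The first and main step is to promote almost stable rank one from $B_1$ to $B_1\otimes{\cal K}$, and thence to $A$. I would argue that $B_1\otimes{\cal K}$ almost has stable rank one by a local cutting argument: given a hereditary \SCA\ $E=\overline{e(B_1\otimes{\cal K})e}$ of $B_1\otimes{\cal K}$ and $x\in E$, put $E_\dt:=\overline{f_\dt(e)(B_1\otimes{\cal K})f_\dt(e)}$, so $x$ is approximated by an element of $E_\dt$; by the last statement of \ref{Lalmstr1} (valid since $B_1$ almost has stable rank one) there is a unitary $u\in\widetilde{B_1\otimes{\cal K}}$ with $uf_\dt(e)u^*\in{\rm M}_N(B_1)$, so $uE_\dt u^*$ is a hereditary \SCA\ of ${\rm M}_N(B_1)$ and hence, by Definition \ref{Dalst1} applied to $B_1$, lies in $\overline{{\rm GL}(\widetilde{uE_\dt u^*})}$. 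Conjugating back by $u$ and using that invertibles of $\widetilde{E_\dt}$ remain invertible in $\widetilde E$, one gets $x\in\overline{{\rm GL}(\widetilde E)}$. Transporting this property across the Brown isomorphism shows $A\otimes{\cal K}$ almost has stable rank one; since each ${\rm M}_n(A)$ is a corner, hence a hereditary \SCA, of $A\otimes{\cal K}$, every ${\rm M}_n(A)$ almost has stable rank one.

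Next I would record a comparison $\la b\ra\le n\la a\ra$: writing $b\le\sum_{i=1}^k g_i(y_i)$ with $g_i\in C_0((0,\infty))_+$ of compact support (possible as $b\in{\rm Ped}(A)$) and applying \ref{Lfullep} to the full element $a$ gives $\la g_i(y_i)\ra\le m_i\la a\ra$, whence $b\lesssim\td a:=\diag(\overbrace{a,\dots,a}^{n})$ in ${\rm M}_n(A)$. Since ${\rm M}_n(A)$ almost has stable rank one and $b\otimes e_{11}\lesssim\td a$, the second statement of \ref{Lalmstr1} yields $x\in{\rm M}_n(A)$ with $x^*x=b\otimes e_{11}$ and $xx^*\in\overline{\td a\,{\rm M}_n(A)\,\td a}={\rm M}_n(B_1)$. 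Then
$$
B_2\cong\overline{(b\otimes e_{11}){\rm M}_n(A)(b\otimes e_{11})}=\overline{(x^*x){\rm M}_n(A)(x^*x)}\cong\overline{(xx^*){\rm M}_n(A)(xx^*)},
$$
and the right-hand side is a hereditary \SCA\ of ${\rm M}_n(B_1)$. As ${\rm M}_n(B_1)\in{\cal D}$ is separable and simple, \ref{Phered} gives $B_2\in{\cal D}$. Since $b$ was arbitrary, ${\rm gTR}(A)\le1$.

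The hard part is the first step: transferring ``almost has stable rank one'' through stabilization and the Brown isomorphism. After that, the argument is just the comparison estimate together with \ref{Lalmstr1} and the permanence property \ref{Phered}; the key convenience is that the \emph{exact} matrix subequivalence $b\lesssim\td a$ (not merely $f_\ep(b)\lesssim\td a$) lets me produce $B_2$ itself via a single partial isometry, so that no passage to an inductive limit of hereditary \SCA s is required.
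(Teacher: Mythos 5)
Your proof is correct, but it routes through the middle of the argument differently than the paper does. Both proofs share the same skeleton: compare $b$ with finitely many copies of $a$, realize $\overline{bAb}$ (up to isomorphism) as a hereditary \SCA\, of ${\rm M}_n(\overline{aAa})$, and finish with \ref{PtadMk} and \ref{Phered}. The difference is how the element implementing that realization is produced. The paper combines \ref{Lfullep} with Lemma 2.3(a) of \cite{Rr11} to obtain an \emph{exact} identity $\sum_{i=1}^n x_i^*ax_i=b$; then the one-row matrix $Z=(x_1^*a^{1/2},\dots,x_n^*a^{1/2})$ satisfies $ZZ^*=\diag(b,0,\dots,0)$ and $Z^*Z\in{\rm M}_n(\overline{aAa})$ on the nose, and the polar decomposition of $Z^*$ in ${\rm M}_n(A)^{**}$ gives the desired isomorphism of hereditary subalgebras with no stable-rank input whatsoever. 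You, by contrast, settle for the Cuntz comparison $b\otimes e_{11}\lesssim\td a$ and then invoke the second statement of \ref{Lalmstr1} to convert it into an element $x$ with $x^*x=b\otimes e_{11}$ and $xx^*\in{\rm M}_n(\overline{aAa})$ --- but that lemma requires ${\rm M}_n(A)$ to almost have stable rank one, which is why you need your (correct, but heavier) first step transferring almost stable rank one from $\overline{aAa}$ through $B_1\otimes{\cal K}\cong A\otimes{\cal K}$ to all matrix algebras over $A$. So your ``hard part'' is machinery the paper's proof never needs, while the paper's key device --- the exact fullness identity of R\o rdam --- is absent from yours. What your detour buys is a reusable permanence observation (almost stable rank one passes from a full hereditary \SCA\, to the whole algebra and its matrix amplifications, in the spirit of (1) of \ref{Pgtr1}); what the paper's route buys is brevity and complete independence from stable-rank hypotheses. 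Note finally that your closing isomorphism $\overline{x^*x\,{\rm M}_n(A)\,x^*x}\cong\overline{xx^*\,{\rm M}_n(A)\,xx^*}$ is exactly the polar-decomposition fact the paper exploits, so the two arguments reconverge at the end.
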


\begin{proof}
Let $A$ be a separable stably projectionless  simple \CA. Suppose 
that there is $a\in {\mathrm{Ped}(A)}_+\setminus \{0\},$ $\overline{aAa}\in {\cal D}.$
It suffices to show that, for any $b\in {\mathrm{Ped}(A)}_+\setminus \{0\},$ $\overline{bAb}\in {\cal D}.$

There are $b_1, b_2,...,b_k\in A_+$ and $g_i\in C_0((0, \infty)_+$ such that 
%\beq
$b\le \sum_{i=1}^k g_i(b_i). $
%\eneq
By repeated application of \ref{Lfullep}, and applying 2.3 (a) of \cite{Rr11}, one obtains 
$x_1, x_2,...,x_n\in A$ such that
\beq
\sum_{i=1}^n x_i^*ax_i=b.
\eneq
Let $Z=(x_1^*a^{1/2}, x_2^*a^{1/2},...,x_n^*a^{1/2})$ be {{an}} $n\times n$  matrix in ${\rm M}_n(A)$
with zero {{rows}} except  {{for}} the first {{row}}. 
Then 
\beq
ZZ^*=\diag(b,\overbrace{0,0,...,0}^{n-1})\andeqn  Z^*Z\le {\rm M}_n(\overline{aAa}).
\eneq
Let $Z^*=U(ZZ^*)^{1/2}$ be the polar decomposition of $Z^*$ in ${\rm M}_n(A)^{{**}}.$ 
Then $UZZ^*U^*=ZZ^*\in {\rm M}_n(\overline{aAa}).$ It follows that 
map $x\mapsto UxU^*$ from $\overline{ZZ^*{\rm M}_n(\overline{aAa})ZZ^*}$ to $\overline{bAb}$
is an isomorphism. 
Since $\overline{ZZ^*{\rm M}_n(A)ZZ^*}\cong \overline{bAb},$  
\beq\label{PDgTR-n1}
\overline{bAb}\cong \overline{ZZ^*{\rm M}_n(\overline{aAa})ZZ^*}.
\eneq
It follows from  \ref{PtadMk} that ${\rm M}_n(\overline{aAa})\in {\cal D}.$  Then, by \ref{Phered}, 
$\overline{ZZ^*{\rm M}_n(\overline{aAa})ZZ^*}\in {\cal D}.$  By \eqref{PDgTR-n1}, $\overline{bAb}\in {\cal D},$
as desired.

\end{proof}

\begin{prop}\label{PWAff}
Let $A\in {\cal D}.$ Suppose that $A={\rm Ped}(A)$ (see \ref{CCped}).  Then the map ${\rm Cu}(A)\to {\rm LAff}_{0+}({\overline{\rm{T}(A)}^{\rm w}})$
%$W(A)\to V(A)\sqcup {\rm LAff}_{b,+}(\overline{{\mathrm{T}(A)}}^\mathrm{w})$
is
an 
%order
 isomorphism {{of}} ordered semigroups.
% surjective and injective.
\end{prop}

\begin{proof}
Let us  first show that the map is surjective.
This follows  the same lines of the  proof of 5.3 of \cite{BPT} as shown in 10.5 of \cite{GLN}
using  (4) of \ref{str=1}. (One can also use the proof 6.2.1 of \cite{Rl} by
applying  \ref{LappZ} as (D) in that proof.)

Let us provide the details.
First,  $\overline{{\mathrm{T}(A)}}^\mathrm{w}\subset {\tilde{T}}(A)$ is compact  (see \ref{DTtilde})
and (by \ref{compactrace}), it does not contain zero, and, as $A$ is simple,  for every $a\in A_+\setminus \{0\},$
$\tau(a)>0$ for all $\tau\in \overline{{\mathrm{T}(A)}}^\mathrm{w}.$

The proof follows the same lines as Theorem 5.3 of \cite{BPT}. Note that the injectivity of the map follows from \ref{Comparison}.
 it suffices to show that the map $a\mapsto \mathrm{d}_\tau(a)$ is surjective from $W(A)_+$ onto ${\rm LAff}_{b,0+}({\overline{{\mathrm{T}(A)}}^\mathrm{w}}).$
 %${\rm LAff}_{b,+}|_{\overline{{\mathrm{T}(A)}}^\mathrm{w}}.$
Let $f\in {\rm LAff}_{b,0+}(\overline{{\mathrm{T}(A)}}^\mathrm{w})$  with $f(\tau)>0$ for all  $\tau\in \overline{{\mathrm{T}(A)}}^\mathrm{w}.$
We may assume that $f(\tau)\le 1$ for all $\tau\in {{\overline{{\mathrm{T}(A)}}^\mathrm{w}}}.$
%{\mathrm{T}(A)}.$
 As in the proof of 5.3 of \cite{BPT}, it suffices to find a sequence of $a_i\in {\mathrm{M}}_2(A)_+$ such that
 $a_i\lesssim a_{i+1},$
 $\la a_n\ra\not=\la a_{n+1}\ra$ (in $W(A)$) and
 $$
 \lim_{n\to\infty}\mathrm{d}_{\tau}(a_n)=f(\tau)\rforal \tau\in {{\overline{{\mathrm{T}(A)}}^\mathrm{w}.}}
 % {\mathrm{T}(A)}.}
 $$
Since $f\in {\rm LAff}_{b,0+}({\overline{{\mathrm{T}(A)}}^\mathrm{w}}),$ {{by definition (see \ref{DAq}),}} we can  find 
{{an increasing}} sequence $f_n\in
\Aff_0({\mathrm{T}}_1(A))$
%{\mathrm{T}(A)})_{++}$ 
such that, for all $\tau\in {\overline{{\mathrm{T}(A)}}}^{\mathrm{w}},$ 
\beq\label{Aff=W-L-1}
{{0<}}f_n(\tau)< f_{n+1}(\tau),\,\,\,n=1,2,...{{,\andeqn}}
\lim_{n\to\infty} f_n(\tau)=f(\tau).
%\tforal \tau\in {{\overline{{\mathrm{T}(A)}}^\mathrm{w}.}}
% {\mathrm{T}(A)}.
 \eneq
% Choose a decreasing sequence of positive numbers $(\ep_n)$ such that $\lim_{n\to\infty}\ep_n=0.$
Since $f_{n+1}-f_n$ is continuous and strictly positive on the
compact set $\overline{{\mathrm{T}(A)}}^\mathrm{w},$  there is $\ep_n>0$ such that
$(f_n-f_{n+1})(\tau)>\ep_n$ for all $\tau\in \overline{{\mathrm{T}(A)}}^{\mathrm{w}},$ $n=1,2,....$
We may choose $\ep_n$ so that $\ep_n\searrow 0$ as $n\to\infty.$

Since $A$ is an infinite dimensional simple \CA\, and since $A\in {\cal D}$ (see also \ref{LappZ}),  for each $n,$ there is a \SCA\,
$C_n$ of $A$ with $C_n\in {\cal C}$ and an element $b_n\in (C_n)_+$
such that \beq\label{Aff=W-L-2}
{\rm dim}\pi(C_n)\ge  (16/\ep_n)^2\,\,\,\text{for \,each \,irreducible\,representation} \,\,\pi\,\,\,of \,\,C_n,\\
0<\tau(f_n)-\tau(b_n)<\ep_n/4\rforal \tau\in \overline{{\mathrm{T}(A)}}^\mathrm{w}.
\eneq
Applying
%\ref{Affon2},
\ref{str=1},
one obtains an element $a_n\in {\mathrm{M}}_2(C_n)_+$
such that \beq\label{Aff=W-L-3} 0<t(b_n)-{\rm d}_t(a_n)<\ep_n/4\tforal
t\in {\mathrm{T}}(C_n). \eneq It follows that
\beq\label{Aff=W-L-4}
0<\tau(f_n)-\mathrm{d}_\tau(a_n)<\ep_n/2\tforal \tau\in \overline{{\mathrm{T}(A)}}^\mathrm{w}.
\eneq
One then
checks that $\lim_{n\to\infty}\mathrm{d}_\tau(a_n)=f(\tau)$ for all $\tau\in 
{{\overline{{\mathrm{T}(A)}}^\mathrm{w}.}}$
%{\mathrm{T}(A)}.$ 
Moreover, $\mathrm{d}_\tau(a_n)<\mathrm{d}_\tau(a_{n+1})$ for all $\tau\in
{\mathrm{T}(A)},$ $n=1,2,....$ It follows  from \ref{Comparison} that
$a_n\lesssim a_{n+1},$ $[a_n]\not=[a_{n+1}],$ $n=1,2,....$  This
ends the proof of {{surjectivity.}}

It is clear that the map is order preserving.  By \ref{Comparison}, if $a, b\in (A\otimes {\cal K})_+$ and $d_\tau(a)=d_\tau(b)$
for all $\tau\in \overline{{\mathrm{T}(A)}}^\mathrm{w},$ 
then $a\sim b$ in ${\rm{Cu}}(A).$  So the map is  injective. By \ref{Comparison} again, the inverse map is also order preserving.

%%%%%%%%%%%%%%%%%%%%%%%%%%%%%%%%%
\iffalse
 To complete the proof, let
$a, b\in (A\otimes {\cal K})_+$ with $\|a\|, \|b\|\le 1$  such that $\mathrm{d}_\tau(a)\le \mathrm{d}_\tau(b)$ for all
$\tau\in  \overline{{\mathrm{T}}(A)}^\mathrm{w}.$  Since $A\in {\cal D},$ by \ref{Pprojless}, $A$ is stably projectionless.
Therefore, for any $\ep>0,$
$$
\mathrm{d}_\tau((a-\ep)_+)<{{\tau(f_{\ep/4}(a))}}<\mathrm{d}_\tau(b)\rforal \tau\in \overline{{\mathrm{T}}(A)}^\mathrm{w}.
$$
{{Since $\tau(f_{\ep/4}(a))$ is a continuous on compact set $ \overline{{\mathrm{T}}(A)}^\mathrm{w},$
a standard compact argument shows that, there exists $\dt>0$ 
such that
\beq
\mathrm{d}_\tau((a-\ep)_+)<\tau(f_{\ep/4}(a))<{\mathrm{d}}_\tau(f_{\dt}(b))\le  \mathrm{d}_\tau(b)\rforal \tau\in \overline{{\mathrm{T}}(A)}^\mathrm{w}
\eneq
}}
{{By the last statement of \ref{Lalmstr1}, we may assume that there exists an integer $n\ge 1$ such that 
both $(a-\ep)_+ $ and $f_{\dt}(b)$ are Cuntz equivalent to elements in $M_n(A).$
%%{\blue{Consider an approximate identity $(e_n)_{n\ge 1}$ of $A\otimes {\cal K}$ such that $e_n\in M_n(A),$ $n=1,2....$
%%Then $\lim_{n\to\infty}\|e_nae_n-a\|=0.$ 
%%%%By \ref{Lrorm},    $(a-\ep/4)_+\lesssim e_nae_n$ for some large $n.$
%%By  \ref{Lalmstr1}, we may  may assume, \wilog, $(a-\ep)_+\in M_n(A)$ for some large $n.$ 
%%Similarly, we may also assume that $f_\dt(b)\in M_{n_1}(A)$ for some integer $n_1\ge n.$
Then, by  \ref{Comparison},  $(a-\ep)_+\lesssim f_{\dt}(b).$}}
% in $M_{n_1}(A).$}}
Therefore $(a-\ep)_+\lesssim b.$ This implies that $a\lesssim b.$
It follows that if $\mathrm{d}_\tau(a)=\mathrm{d}_\tau(b)$ for all $\tau\in \overline{{\mathrm{T}}(A)}^\mathrm{w},$ then $a\lesssim b$ and
$b\lesssim a,$ {{i.e.,}} $\la a\ra =\la b\ra.$  {{Thus the map is invertible.}} The fact $\mathrm{d}_\tau(a)\le \mathrm{d}_\tau(b)$ implies that $a\lesssim b$ 
{{just}} proved
also shows that the inverse of the map is also order preserving.
\fi
%%%%%%%%%%%%%%%%%%%%%%%%%
\end{proof}

\begin{cor}\label{Ccontsc}
Let $A\in {\cal D}.$ Then there exists an element $a\in A_+\setminus \{0\}$ such that
$\overline{aAa}$ has continuous scale.
\end{cor}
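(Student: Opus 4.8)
The plan is to produce a single element $e\in A_+\setminus\{0\}$ whose dimension function $\tau\mapsto \mathrm{d}_\tau(e)$ is continuous on $\overline{\mathrm{T}(A)}^{\mathrm w}$, and then to feed $e$ into Proposition \ref{Subcontsc} to conclude that $\overline{eAe}$ has continuous scale. All the structural inputs needed are already available for $A\in{\cal D}$ (which I take to be separable, as in \ref{CCped}): by \ref{CCped} we have $A=\mathrm{Ped}(A)$, so $A$ is algebraically simple and, by \ref{compactrace}, $\overline{\mathrm{T}(A)}^{\mathrm w}$ is compact with $0\notin\overline{\mathrm{T}(A)}^{\mathrm w}$; by \ref{PD0qc} we have $\mathrm{QT}(A)=\mathrm{T}(A)\neq\emptyset$; by \ref{Comparison} the algebra $A$ has strict comparison; by \ref{CD0str1} it almost has stable rank one; by \ref{PWAff} the canonical map $W(A)_+\to \mathrm{LAff}_{b,0+}(\overline{\mathrm{T}(A)}^{\mathrm w})$ is surjective; and by \ref{Pprojless}, $A$ is stably projectionless.

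First I would fix any $a\in A_+$ with $0\le a\le 1$ and $\|a\|=1$. Since $A$ is stably projectionless, $a$ is not Cuntz equivalent to a projection (such a projection would be a nonzero projection in $A\otimes{\cal K}$, contradicting \ref{Pprojless}). This places us exactly in the hypotheses of Lemma \ref{Lbt1ACT}, whose remaining assumptions — strict comparison, almost stable rank one, $\mathrm{QT}(A)=\mathrm{T}(A)$, $A=\mathrm{Ped}(A)$, and surjectivity of $\imath$ — are precisely the items collected above. Applying \ref{Lbt1ACT} with $\ep=1/2$ yields $\dt>0$ and an element $e\in A$ with
\[
0\le f_{1/2}(a)\le e\le f_{\dt}(a)
\]
such that $\tau\mapsto \mathrm{d}_\tau(e)$ is continuous on $\overline{\mathrm{T}(A)}^{\mathrm w}$. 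In particular $e\ge f_{1/2}(a)\neq 0$, so $e\in A_+\setminus\{0\}$.

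It then remains to invoke Proposition \ref{Subcontsc}. Since $A=\mathrm{Ped}(A)$ is algebraically simple and has strict comparison (\ref{Comparison}) with $\mathrm{T}(A)\neq\emptyset$, and $\mathrm{d}_\tau(e)$ is continuous on $\overline{\mathrm{T}(A)}^{\mathrm w}$, the second (algebraically simple) part of \ref{Subcontsc} applies directly and gives that $\overline{eAe}$ has continuous scale; this $e$ is the element asserted to exist. Equivalently, because $A=\mathrm{Ped}(A)$ forces $\widetilde{\mathrm{T}}(A)=\R_+\,\mathrm{T}(A)$, the continuity of $\mathrm{d}_\tau(e)$ on $\overline{\mathrm{T}(A)}^{\mathrm w}$ extends by homogeneity to all of $\widetilde{\mathrm{T}}(A)$, so the first part of \ref{Subcontsc} applies as well.

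The bulk of the argument is thus assembling hypotheses rather than new estimates: the two genuine inputs are the continuity-producing Lemma \ref{Lbt1ACT} and the continuity-to-continuous-scale Proposition \ref{Subcontsc}. I expect the only point requiring care to be the verification that every hypothesis of \ref{Lbt1ACT} holds for $A\in{\cal D}$, which is pieced together from \ref{CCped}, \ref{PD0qc}, \ref{Comparison}, \ref{CD0str1}, \ref{PWAff}, and \ref{Pprojless}; and the mild subtlety of reading the comparison hypothesis of \ref{Subcontsc} in the exact form supplied by \ref{Comparison} (strict comparison with respect to $\overline{\mathrm{T}(A)}^{\mathrm w}$), which is what its proof actually uses. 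I anticipate no substantive obstacle beyond this bookkeeping.
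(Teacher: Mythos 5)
Your proposal is correct and takes essentially the same approach as the paper: the paper likewise reduces to $A=\mathrm{Ped}(A)$, uses the surjectivity from \ref{PWAff} to produce $a\in A_+\setminus\{0\}$ with $\tau\mapsto\mathrm{d}_\tau(a)$ continuous on $\overline{\mathrm{T}(A)}^{\mathrm{w}}$, and then concludes with the hereditary-subalgebra continuous-scale result (it cites the second part of \ref{Pconscale}, which plays exactly the role of the second part of \ref{Subcontsc} that you invoke, and your remark that the exactness hypothesis there can be replaced by \ref{PD0qc} and \ref{Comparison} matches what that proof actually uses). Your only deviation is cosmetic: you extract the element via \ref{Lbt1ACT}, which itself just packages the surjectivity of \ref{PWAff} together with almost stable rank one, whereas the paper cites \ref{PWAff} and stable rank one (\ref{TTstr1}) directly.
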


\begin{proof}
{{Note that, by \ref{TTstr1}, $A$ has stable rank one.}} 
Pick  a nonzero element $c\in {\rm Ped}(A)_+$  and set $B=\overline{cAc}.$ 
Then $B={\rm Ped}(B).$ So, we may assume that $A={\rm Ped}(A).$ 
{{{{Hence}}, combining with}} \ref{PWAff} {{(see \cite{CEI}),}}
% one easily finds 
there exists an element $a\in A_+\setminus \{0\}$ such that
$\mathrm{d}_\tau(a)$ is continuous on $\overline{{\mathrm{T}(A)}}^\mathrm{w}.$
%Note, by \ref{PD0qc}, ${{\rm{QT}}}(A)={\rm{T}}(A).$
%It then follows that
%$\mathrm{d}_\tau$ is continuous on ${\widetilde{\mathrm{T}}}(A).$
{{A}}pplying  the second part of \ref{Pconscale},
%\ref{Pconscale},
one concludes that $\overline{aAa}$ has continuous scale.
\end{proof}

\begin{prop}\label{D=D0K0}
Let $A\in {\cal D}$ be a separable \CA\, with ${\rm K}_0(A)=\{0\}.$  Then $A$ has the properties described in  \ref{TCCdvi} (and \ref{Cuniformful}) but replacing ${{\cal C}_0^{0}}'$ (and ${\cal C}_0^{0}$)  by ${\cal C}_0.$
\end{prop}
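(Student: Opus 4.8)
The plan is to locate the only two places where the proofs of \ref{TCCdvi} and \ref{Cuniformful} used $A\in{\cal D}_0$ rather than merely $A\in{\cal D}$, and to replace each by an argument valid under $A\in{\cal D}$ together with ${\rm K}_0(A)=\{0\}$. These two places are: the appeal to tracial approximate divisibility \ref{TDapprdiv}, and the production of the building block $D$ in ${\cal C}_0^0$ (resp.\ ${{\cal C}_0^0}'$). The second is immediate: for $A\in{\cal D}$ the decomposition supplied by Definition \ref{DNtr1div} (sharpened by \ref{PD0=tad}) only gives $D\in{\cal C}_0'$, but \ref{CherCC} and \ref{Rsemiproj} let one cut $D$ down to a full hereditary \SCA\ in ${\cal C}_0$; this is exactly why the $n=1$ versions of both statements already hold with $D\in{\cal C}_0$ (Remark \ref{Rm17div}). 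Hence everything reduces to proving the conclusion of \ref{TDapprdiv}, tracial approximate divisibility, for our $A$.

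First I would record the regularity of $A$: it is stably projectionless (\ref{Pprojless}), has stable rank one (\ref{TTstr1}), satisfies $A={\rm Ped}(A)$ (\ref{CCped}) and ${\rm QT}(A)={\rm T}(A)$ with $0\notin\overline{{\rm T}(A)}^{\rm w}$ (\ref{PD0qc}), and has ${\rm Cu}(A)\cong{\rm LAff}_{0+}(\overline{{\rm T}(A)}^{\rm w})$ (\ref{PWAff}); moreover ${\rm K}_0=\{0\}$ passes to every full hereditary \SCA\ by Brown's theorem. Since \ref{Dappdiv} concerns finite subsets, I would reduce to continuous scale: by \ref{PWAff} and \ref{Subcontsc} one may choose an approximate unit $(e_k)$ of $A$ with each ${\rm d}_\tau(e_k)$ continuous on $\overline{{\rm T}(A)}^{\rm w}$, so that the corners $B:=\overline{e_kAe_k}$ have continuous scale, lie in ${\cal D}$ (\ref{Phered}) with trivial ${\rm K}_0$, and exhaust $A$. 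Each such $B$ then meets the hypotheses of \ref{CDdiag}. Given $\ep$, ${\cal F}$, $b$, $n$, I would apply \ref{DNtr1div}/\ref{PD0=tad} inside $B$ to get \cpc\ maps $\phi,\psi$ with $\psi\colon B\to D$, $D\in{\cal C}_0'$ a hereditary \SCA\ of $B$, $\phi(B)\perp D$, $\psi(a)$ strictly positive in $D$, $\phi(a)\lesssim b$, and $\|x-(\phi+\psi)(x)\|<\ep$ on ${\cal F}\cup\{a\}$. Then I would apply \ref{CDdiag} to the genuine inclusion $\iota\colon D\hookrightarrow B$ (here $D\in{\cal C}_0'$) with finite set $\psi({\cal F}\cup\{a\})$, getting $\iota_0\colon D\to B_1\subset{\rm M}_n(B_1)\subset B$ with $\iota$ approximated by $\diag(\iota_0,\dots,\iota_0)$ on $\psi({\cal F}\cup\{a\})$. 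Thus for $x\in{\cal F}$
$$
x\approx\phi(x)+\psi(x)\approx\phi(x)+\iota_0(\psi(x))\otimes 1_n\in A_0+B_1\otimes 1_n,
$$
where $A_0=\overline{\phi(a)B\phi(a)}$ has a strictly positive element $\lesssim b$.

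Two points demand care here. Because $\iota_0(\psi(a))\otimes 1_n\approx\psi(a)$, the strictly positive elements of ${\rm M}_n(B_1)$ and of $D$ are close, so \ref{LRL} lets one move ${\rm M}_n(B_1)$ by a unitary into a small enlargement of $\overline{DBD}$, which is orthogonal to $\phi(B)\supset A_0$; after this perturbation $A_0\perp{\rm M}_n(B_1)$, as \ref{Dappdiv} demands. This gives tracial approximate divisibility for each $B$, hence for $A$. With divisibility available I would re-run the proof of \ref{TCCdvi} unchanged: reduce to an algebraically simple corner $A'$, use divisibility to produce $A_1$ with ${\rm M}_n(A_1)\subset A'$, and inside $A_1'=\overline{y_1A'y_1}$ — which lies in ${\cal D}$ (\ref{Phered}) and has ${\rm K}_0=\{0\}$ — invoke the $n=1$ statement (Remark \ref{Rm17div}), which already yields $D\in{\cal C}_0$. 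All trace and fullness estimates are literally those of \ref{TCCdvi}, and the corresponding assertions of \ref{Cuniformful} follow identically, the $T$-fullness coming from \ref{Tqcfull} and \ref{Rqcfull} (valid for every $B\in{\cal C}'$, in particular ${\cal C}_0$) and the comparison $\phi(a)\lesssim\psi(a)$ from (1) of \ref{Pconscale} together with \ref{Comparison}.

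The main obstacle is the divisibility step, i.e.\ actually bringing \ref{CDdiag} to bear: securing the continuous-scale reduction above, confirming that the hypotheses of \ref{CDdiag} are met by $B$ (in particular that its proof, which rests only on \ref{TRobert1} and Robert's realization theorem, is available under the regularity already recorded, so that the exactness hypothesis there is not an extra obstruction for $A\in{\cal D}$), and upgrading the merely approximate matricial picture produced by \ref{CDdiag} to the exact orthogonality $A_0\perp{\rm M}_n(B_1)$ required by \ref{Dappdiv}. Once divisibility is in place, the passage to \ref{TCCdvi} and \ref{Cuniformful} is a routine transcription of the arguments already given.
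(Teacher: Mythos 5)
Your proposal is correct and follows essentially the same route as the paper: the paper's (very terse) proof likewise combines \ref{PWAff} and \ref{TTstr1} with \ref{CDdiag} and Definition \ref{DNtr1div} to obtain tracial approximate divisibility, and then re-runs the proof of \ref{TCCdvi} (and \ref{Cuniformful}) with ${\cal C}_0$ in place of ${{\cal C}_0^{0}}'.$ The details you supply---the reduction to continuous-scale corners, the observation that the exactness hypothesis of \ref{CDdiag} is not actually needed for algebras in ${\cal D},$ and the \ref{LRL}-conjugation into $\overline{DBD}$ to secure the orthogonality $A_0\perp {\rm M}_n(A_1)$ (the same trick used in the proof of \ref{Phered})---are precisely the points the paper leaves implicit.
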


\begin{proof}
%We may assume, \wilog, that $A$ has continuous scale.
It follows from \ref{PWAff} that the map ${\rm Cu}(A)\to {\rm LAff}_{0+}({\overline{{\mathrm{T}(A)}}^{\rm w}})$ is surjective.
Note that $A$ has stable rank one (by \ref{TTstr1}).
Then,  by \ref{CDdiag} and by  Definition \ref{DNtr1div}, $A$ has the tracially approximate divisible property.
The proof of \ref{TCCdvi} applies to $A$ with ${{\cal C}_0^{0}}'$ replaced by ${\cal C}_0.$
One then also obtains the conclusion of \ref{Cuniformful} with ${\cal C}_0^{0}$ replaced by ${\cal C}_0.$
\end{proof}

We would like to summarize some of the facts {{we have established.}} 
%below.
% the following facts.

\begin{prop}\label{Pgtr1}
Let $A$ be a separable simple \CA. {{Suppose that $A$}} is stably projectionless and ${\rm gTR}(A)\le 1.$
Then the following statements hold.

(1) $A$ has stable rank one;

(2) Every quasitrace of $A$ is a trace;

(3)  ${\mathrm{Cu}}(A)={\rm LAff}_+({\widetilde{\rm{T}}}(A));$
%$A$ has strict comparison for positive elements;

(4) If $A={\mathrm{Ped}(A)},$  then $A\in {\cal D};$

(5) If $B\subset A$ is a  hereditary \SCA, then  ${\rm gTR}(B)\le 1;$

(6)  ${\mathrm{M}}_n(A)$ is stably projectionless and ${\rm gTR}({\mathrm{M}}_n(A))\le 1$ for every integer $n\ge 1.$
\end{prop}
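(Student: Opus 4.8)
The plan is to reduce every assertion to the corresponding statement for a full hereditary \SCA\, lying in the class ${\cal D}.$ Fix $a\in {\mathrm{Ped}(A)}_+\setminus\{0\}$ and put $B=\overline{aAa}.$ Since $A$ is simple, $B$ is a full hereditary \SCA, and by Definition \ref{DgTR} (or equivalently by \ref{PDgTR}) we have $B\in {\cal D}.$ By \ref{CCped}, $B={\mathrm{Ped}}(B),$ and by Brown's theorem $A\otimes {\cal K}\cong B\otimes {\cal K},$ which identifies ${\mathrm{Cu}}(A)={\mathrm{Cu}}(B),$ ${\widetilde{\mathrm{T}}}(A)={\widetilde{\mathrm{T}}}(B),$ and ${\mathrm K}_0(A)={\mathrm K}_0(B).$ Statement (4) is then immediate: if $A={\mathrm{Ped}(A)},$ a strictly positive element $a$ of $A$ lies in ${\mathrm{Ped}(A)}_+$ and is full, so $A=\overline{aAa}\in {\cal D}$ by Definition \ref{DgTR}.

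For (1), by \ref{TTstr1} the algebra $B$ has stable rank one; then, exactly as in the discussion in \ref{Dalst1}, Theorem 6.4 of \cite{Rff1} gives that $B\otimes {\cal K}$ has stable rank one, Brown's stable isomorphism theorem transports this to $A\otimes {\cal K},$ and 3.6 of \cite{BP} yields stable rank one for $A.$ For (2), \ref{PD0qc} gives ${\mathrm{QT}}(B)={\mathrm{T}}(B).$ To pass to $A,$ I would take a (bounded, norm-one) quasitrace $\tau$ on $A,$ which is norm continuous by Corollary II.2.5 of \cite{BH}, and verify additivity on $A_+$: given $x,y\in A_+$ and $\ep>0,$ the elements $(x-\ep)_+,(y-\ep)_+$ lie in $\overline{cAc}$ for a suitable $c\in {\mathrm{Ped}(A)}_+,$ which belongs to ${\cal D}$ and hence has only traces as quasitraces, so $\tau$ is additive there, and letting $\ep\to 0$ gives $\tau(x+y)=\tau(x)+\tau(y).$

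For (3), \ref{PWAff} gives that ${\mathrm{Cu}}(B)\to {\rm LAff}_{0+}(\overline{{\mathrm{T}}(B)}^{\mathrm w})$ is an isomorphism of ordered semigroups. Since $B={\mathrm{Ped}}(B),$ one has ${\widetilde{\mathrm{T}}}(B)=\R_+\overline{{\mathrm{T}}(B)}^{\mathrm w},$ so ${\rm LAff}_{0+}(\overline{{\mathrm{T}}(B)}^{\mathrm w})$ is naturally identified with ${\rm LAff}_+({\widetilde{\mathrm{T}}}(B));$ combined with ${\mathrm{Cu}}(A)={\mathrm{Cu}}(B)$ and ${\widetilde{\mathrm{T}}}(A)={\widetilde{\mathrm{T}}}(B)$ this yields ${\mathrm{Cu}}(A)={\rm LAff}_+({\widetilde{\mathrm{T}}}(A)).$ For (5), let $B'\subset A$ be a hereditary \SCA; then $B'$ is separable, simple and stably projectionless. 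Choosing a nonzero $b_0\in B'_+$ and replacing it by $f_\ep(b_0),$ I obtain a nonzero element of ${\mathrm{Ped}(A)}_+\cap {\mathrm{Ped}}(B')_+$ with $\overline{f_\ep(b_0)B'f_\ep(b_0)}=\overline{f_\ep(b_0)Af_\ep(b_0)}\in {\cal D}$ by ${\rm gTR}(A)\le 1;$ then \ref{PDgTR} applied to $B'$ gives ${\rm gTR}(B')\le 1.$ For (6), ${\mathrm{M}}_n(A)$ is simple, separable, and stably projectionless (its stabilization equals $A\otimes {\cal K}$); and ${\mathrm{M}}_n(B)=\overline{(a\otimes 1_n){\mathrm{M}}_n(A)(a\otimes 1_n)}$ lies in ${\cal D}$ by \ref{PtadMk}, with $a\otimes 1_n\in {\mathrm{Ped}}({\mathrm{M}}_n(A))_+\setminus\{0\},$ so \ref{PDgTR} gives ${\rm gTR}({\mathrm{M}}_n(A))\le 1.$

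The heaviest ingredients — stable rank one (\ref{TTstr1}), ${\mathrm{QT}}={\mathrm{T}}$ (\ref{PD0qc}), and the Cuntz-semigroup computation (\ref{PWAff}) — are already available for members of ${\cal D},$ so the remaining work is essentially bookkeeping in the transfer from $B$ to $A.$ I expect the main obstacle to be (2) and (3): for (2) one must ensure the local additivity argument genuinely covers all bounded quasitraces and that the passage from $\overline{cAc}$ back to $A$ loses nothing, and for (3) one must carefully match the three slightly different ${\rm LAff}$ descriptions (on $\overline{{\mathrm{T}}(B)}^{\mathrm w},$ on ${\widetilde{\mathrm{T}}}(B),$ and on ${\widetilde{\mathrm{T}}}(A)$) and check that the stable isomorphism identifies ${\mathrm{Cu}}$ and the trace cones compatibly with the comparison order of \ref{Comparison}.
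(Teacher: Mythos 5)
Your proposal is correct and is essentially the proof the paper intends: Proposition \ref{Pgtr1} is stated there as a summary with no written proof, the intended justification being exactly the assembly of \ref{TTstr1}, \ref{PD0qc}, \ref{PWAff}, \ref{PDgTR}, \ref{PtadMk}, \ref{Phered}, and \ref{CCped} together with Brown's stable isomorphism theorem (and the Rieffel/\cite{BP} chain already spelled out in \ref{Dalst1}) to transfer these properties from a full hereditary \SCA\, $B=\overline{aAa}\in{\cal D}$ to $A$ itself. Your localization argument for (2) and the identification of ${\rm LAff}_{0+}(\overline{{\mathrm{T}}(B)}^{\mathrm w})$ with ${\rm LAff}_+({\widetilde{\mathrm{T}}}(B))$ in (3) are precisely the bookkeeping steps the paper leaves implicit.
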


\section{The \CA s ${\cal W}$ and the class ${\cal D}_{0}$}

%\begin{df}\label{DW}
%Let $W$ be the non-unital simple \CA\, with $K_i(W)=\{0\}$ ($i=0,1$),
%which is an inductive limit of non-commutative $1$-dimensional compleces.
%and which has only one trace which is a state.
%
%Let ${\cal Z}_0$ be the stably projectionless separable simple \CA\,
%with $K_0={\rm ker}\rho_{{\cal Z}_0}=\Z$ which is an inductive limit of non-commutative $1$-dimensional compleces
%which has only one trace which is also a state.
%\end{df}

%\begin{prop}\label{PuniqtracePA}
%Let $A$ be a non-unital simple \CA\, which has one and only one densely defined semi-finite
%trace which is also a state. Then $A={\mathrm{Ped}(A)}.$
%\end{prop}

\begin{df}\label{DWtrace}
%%%%%%%%%%%%%%%%%%%%%%%%%%
\iffalse
Let $A$ be a non-unital separable \CA, {{and let}}
$\tau\in {\mathrm{T}(A)}.$
Let us  say that $\tau$ is a ${\cal C}_0$-trace if there exists a sequence of
\cpc s $\phi_n: A\to D_n\in {\cal C}_0$ such that
\beq\nonumber
&&\lim_{n\to\infty}\|\phi_n(ab)-\phi_n(a)\phi_n(b)\|=0\rforal a,\,b\in A\andeqn\\
&&\tau(a)=\lim_{n\to\infty}t_n(\phi_n(a))\rforal a\in A,
\eneq
where $t_n\in  {\mathrm{T}}(D_n),$ $n=1,2,....$
\fi
%%%%%%%%%%%%%%%%%%%%%%%%%
\vspace{0.1in}
{{Recall (see \ref{DW}) that ${\cal W}$ is a unital separable simple 
\CA\, with ${\rm K}_i({\cal W})=0,$ $i=0,1,$ which is in both ${\cal M}_0$ and ${\cal D}_0.$}}
%% inductive limit of

Let $A$ be a non-unital separable \CA, {{and let}} $\tau\in {\mathrm{T}(A)}.$
Let us say that $\tau$ is a $\cal W$-trace
if there exists a sequence of
\cpc s $\phi_n:A\to {\cal W}$
such that
\beq\nonumber
&&\lim_{n\to\infty}\|\phi_n(ab)-\phi_n(a)\phi_n(b)\|=0\rforal a,\,b\in A\andeqn\\
&&\tau(a)=\lim_{n\to\infty}\tau_{\cal W}(\phi_n(a))\rforal a\in A,
\eneq
where $\tau_{\cal W}$ is the unique tracial state on ${\cal W}.$

\end{df}

\begin{thm}\label{Ttracekerrho}
Let $A$ be a separable simple \CA\, with $A={\mathrm{Ped}(A)}.$
If every tracial state $\tau\in {\mathrm{T}(A)}$ is a ${\cal W}$-trace, then
${\mathrm{K}}_0(A)={\rm ker}\rho_A$ {{(see \ref{Dkerrho} for the definition of $\rho_A$).}}
%(for any choice of $e\in {\mathrm{Ped}(A)}_+\setminus \{0\}$).
\end{thm}

\begin{proof}
%Fix $\tau\in {\mathrm{T}(A)}.$
Suppose that there are two projections $p, q\in {\mathrm{M}}_k({\widetilde A})$ such that
$x=[p]-[q]\in {\mathrm{K}}_0(A)$ and $\tau(p)\not=\tau(q)$ for some $\tau\in  {\mathrm{T}(A)}.$
%Note, here we also assume  
{{Recall that}}  $[p]-[q]\in {\rm{K}}_0(A)$ {{means}} that $\pi(p)$ and $\pi(q)$ have the same rank in ${\mathrm{M}}_k(\C),$
where $\pi: {\mathrm{M}}_k({\widetilde A})\to {\mathrm{M}}_k(\C)$ is the quotient map.

{{Set}} $d=|\tau(p)-\tau(q)|.$
Denote still by $\tau$ the {{canonical}} extension of $\tau$ to ${\widetilde A}$ 
%as well as
and also to
%on 
${\mathrm{M}}_n({\widetilde A}).$
If $\tau$ were a ${\cal W}$-trace, {{then}} there  would be a sequence $(\phi_n)$
of \cpc s from ${\mathrm{M}}_k(A)$ into ${\mathrm{M}}_k({\cal W})$  such that
\beq\nonumber
&&\lim_{n\to\infty}\|\phi_n(a)\phi_n(b)-\phi_n(ab)\|=0\rforal a, b\in {\mathrm{M}}_k(A)\andeqn\\
&&\tau(a)=\lim_{n\to\infty}\tau_{\cal W}\circ\phi_n(a)\rforal a\in {\mathrm{M}}_k(A).
\eneq
%where $t_0$ is the unique tracial state on ${\cal W}.$
{{Denote by}}  ${\tilde \phi}_n: M_k({\widetilde{A}})\to {\mathrm{M}}_k({{\widetilde{{\cal W}}}})$ {{the canonical unital}}
extension of  the \cpc\,  $\phi_n.$
%such that ${\tilde \phi_n}(1)=1_{\tilde {\cal W}}.$
%It follows that
{{then}}
$$
\lim_{n\to\infty}\|{\tilde \phi}_n(a){\tilde \phi}(b)-{\tilde \phi}(ab)\|=0\rforal a, b\in {\mathrm{M}}_k({\widetilde A}).
$$
Let $\tau_{\cal W}$ also denote the {{canonical}} extension of $\tau_{\cal W}$ on ${\mathrm{M}}_k({{{\widetilde {\cal W}}}}).$
Then we also have
$$
\tau(a)=\lim_{n\to\infty} t\circ {\tilde \phi}_n(a)\rforal a\in {\mathrm{M}}_k({\widetilde A}).
$$
%To simplify notation, \wilog, 
{{Passing to a subsequence,}}  we may assume that
\beq\label{Ttkr-1}
|\tau_{\cal W}\circ{\tilde \phi}_n(p)-\tau_{\cal W}\circ {\tilde \phi}_n(q)|\ge d/2\rforal n.
\eneq
%There is an integer $n_0\ge 1$ such that
There are projections $p_n,\, q_n\in {\mathrm{M}}_k({{{\widetilde{\cal W}}}})$ such that
\beq\label{Ttkr-2}
%&&\|p_n-{\tilde \phi}_n(p)\|<1/2\andeqn \|q_n-{\tilde \phi}_n(q)\|<1/2\rforal n\ge n_0,\\
&&\lim_{n\to\infty}\|{\tilde \phi}_n(p)-p_n\|=0\andeqn
%\\\label{Ttkr-3}
%&&
\lim_{n\to\infty}\|{\tilde \phi}_n(q)-q_n\|=0.
\eneq	

Since $\pi(p)$ and $\pi(q)$ have the same rank, there {{exists}} $v\in {\mathrm{M}}_k({\widetilde A})$ such that
$\pi(v^*v)={{\pi(p)}}$ and $\pi(vv^*)={{\pi(q)}}.$
{{Denote by}}  $\pi_w: {\mathrm{M}}_k({\widetilde {\cal W}})\to {\mathrm{M}}_k$ the quotient map.  Then
%for all $n\ge n_0,$
\beq\label{Ttkr-4}
\lim_{n\to\infty}\|\pi_w\circ \phi_n(v^*v)-\pi_w(p_n)\|=0\andeqn
\lim_{n\to\infty}\|\pi_w\circ \phi_n(vv^*)-\pi_w(q_n)\|=0.
\eneq
It follows that $\pi_w(p_n)$ and $\pi_w(q_n)$ are equivalent projections in ${\mathrm{M}}_k$ for all
large $n.$  Since ${\rm K}_0({\cal W})=0,$ it follows
that $[p_n]-[q_n]=0$ in  ${\rm K}_0({\cal W}),$ which means
that $p_n$ and $q_n$ are equivalent in ${\mathrm{M}}_k({\widetilde {\cal W}})$ since ${\widetilde {\cal W}}$ has stable rank one.
In particular,
$$
\tau_{\cal W}(p_n)=\tau_{\cal W}(q_n)
$$
for all sufficiently large $n,$  in contradiction  with  \eqref{Ttkr-1} and \eqref{Ttkr-2}.
% and \eqref{Ttkr-3}
%hold.

\end{proof}

\begin{prop}\label{Pwtracest}
Let $A$ be a separable simple \CA\,  with a ${\cal W}$-trace $\tau\in {\mathrm{T}(A)}.$
Let $0\le a_0\le 1$ be a strictly positive element of $A.$ 
%(and $A\not=0$).
Then there exists a sequence of \cpc s  $\phi_n: A\to {\cal W}$  such that $\phi_n(a_0)$ is a 
strictly positive element,  and 
\beq\nonumber
&&\lim_{n\to\infty}\|\phi_n(a)\phi_n(b)-\phi_n(ab)\|=0\tforal a,\, b\in A\andeqn\\
&&\tau(a)=\lim_{n\to\infty} \tau_{\cal W}\circ \phi_n(a)\tforal a\in A.
\eneq
%where $\tau_{\cal W}$ is the unique tracial state of ${\cal W}.$
\end{prop}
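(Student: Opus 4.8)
The plan is to start from the sequence of approximately multiplicative \cpc s $\psi_n\colon A\to {\cal W}$ supplied by the definition of a ${\cal W}$-trace (Definition \ref{DWtrace}), so that $\tau_{\cal W}\circ\psi_n\to\tau$ pointwise, and to conjugate each $\psi_n$ by a suitable isomorphism so that $a_0$ lands on a strictly positive element. First I would set $x_n=\psi_n(a_0)$ and $B_n=\overline{x_n{\cal W}x_n}$. By Proposition \ref{180915sec2} (applied to the contractive positive map $\psi_n$ and the strictly positive element $a_0$), $B_n=\overline{\psi_n(A){\cal W}\psi_n(A)}$ contains $\psi_n(A)$ and $x_n$ is strictly positive in $B_n$. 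Thus $\psi_n$ may be viewed as a map $A\to B_n$ sending $a_0$ to a strictly positive element.

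The key point is then the claim that $\lambda_n:={\rm d}_{\tau_{\cal W}}(x_n)\to 1$. To see this, fix $m$; since $(\psi_n)$ is asymptotically multiplicative it respects continuous functional calculus in the limit, so $\|\psi_n(a_0^{1/m})-\psi_n(a_0)^{1/m}\|\to 0$ as $n\to\infty$, whence $\tau_{\cal W}(\psi_n(a_0)^{1/m})\to\tau(a_0^{1/m})$. As $\tau_{\cal W}(\psi_n(a_0)^{1/m})\le {\rm d}_{\tau_{\cal W}}(x_n)=\lambda_n$, this gives $\tau(a_0^{1/m})\le\liminf_n\lambda_n$ for every $m$; letting $m\to\infty$ and using that $(a_0^{1/m})$ is an approximate unit of $A$ and $\tau$ is a state, $1=\|\tau\|={\rm d}_\tau(a_0)\le\liminf_n\lambda_n\le 1$. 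This is the crux: it forces the scale of $B_n$ to fill up in the limit, which is exactly what will let a scale-expanding isomorphism preserve the trace asymptotically.

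Next I would identify $B_n$ with ${\cal W}$. Since $x_n\in{\mathrm{Ped}}({\cal W})={\cal W}$ and $B_n$ is a full hereditary \SCA\ of the compact algebra ${\cal W}$, the algebra $B_n$ is algebraically simple, and ${\rm d}_\tau(x_n)$ is continuous (being linear along the single trace ray $\R_+\tau_{\cal W}=\widetilde{\mathrm{T}}({\cal W})$), so by Proposition \ref{Subcontsc} $B_n$ has continuous scale. Together with $K_0(B_n)\cong K_0({\cal W})=0$ and $K_1(B_n)=0$ (Brown's theorem), nuclearity, the UCT, and the fact that $B_n$ has a unique tracial state $t_n/\lambda_n$ (where $t_n:=\tau_{\cal W}|_{B_n}$ has $\|t_n\|=\lambda_n$, as $x_n$ is strictly positive in $B_n$), the uniqueness of ${\cal W}$ (see \ref{DW}, \cite{Raz}, \cite{Jb}) yields an isomorphism $\gamma_n\colon B_n\to{\cal W}$. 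I then set $\phi_n:=\gamma_n\circ\psi_n$. As a composite of an approximately multiplicative \cpc\ with an isomorphism, $\phi_n$ is an approximately multiplicative \cpc, and $\phi_n(a_0)=\gamma_n(x_n)$ is strictly positive in ${\cal W}$ because $\gamma_n$ carries the strictly positive element $x_n$ of $B_n$ to a strictly positive element.

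Finally I would check the trace condition. The functional $\tau_{\cal W}\circ\gamma_n$ is a trace on $B_n$ of norm $\|\tau_{\cal W}\|=1$ (since $\gamma_n$ maps the unit ball of $B_n$ onto that of ${\cal W}$), hence coincides with the unique tracial state $t_n/\lambda_n$ of $B_n$. Therefore $\tau_{\cal W}(\phi_n(a))=\tfrac{1}{\lambda_n}\,\tau_{\cal W}(\psi_n(a))$ for all $a\in A$, and since $\lambda_n\to 1$ and $\tau_{\cal W}(\psi_n(a))\to\tau(a)$, we obtain $\tau_{\cal W}(\phi_n(a))\to\tau(a)$, as required. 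The one step demanding care beyond the cited results is the identification $B_n\cong{\cal W}$, for which I rely on the uniqueness theorem for ${\cal W}$; alternatively one can avoid invoking full classification and build $\gamma_n$ directly from Robert's existence and uniqueness theorem (Theorem 1.0.1 of \cite{Rl}) applied to the scale-rescaling morphism ${\mathrm{Cu}}^{\sim}(B_n)\to{\mathrm{Cu}}^{\sim}({\cal W})$, which exists because both Cuntz semigroups reduce to $\R\sqcup\{+\infty\}$.
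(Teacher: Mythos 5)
Your proposal follows essentially the same route as the paper's proof: both pass to the hereditary subalgebra $B_n=\overline{\psi_n(A){\cal W}\psi_n(A)}$, use Proposition \ref{180915sec2} to see that the image of a strictly positive element of $A$ is strictly positive in $B_n$, map $B_n$ back into ${\cal W}$ via Robert's Theorem 1.0.1 of \cite{Rl} (using ${\rm Cu}^\sim(B_n)={\rm Cu}^\sim({\cal W})=\R\cup\{+\infty\}$), and conclude because the trace-normalizing constant tends to $1$. The differences are technical rather than conceptual: the paper works with $b_n=\psi_n(a_0^{1/n})$ (after arranging $\tau(a_0^{1/n})>1-1/2n$ and passing to a subsequence, so that $\tau_{\cal W}(b_n)\ge 1-1/n$), whereas you prove $\lambda_n={\rm d}_{\tau_{\cal W}}(\psi_n(a_0))\to 1$ by a functional-calculus double limit---both work; and the paper only needs a homomorphism $h_n\colon B_n\to{\cal W}$ carrying $b_n$ to a strictly positive element, pinning down the scaling constant $\alpha_n$ (defined by $\alpha_n\tau_{\cal W}|_{B_n}=\tau_{\cal W}\circ h_n$) through the two inequalities $\alpha_n\ge 1$ and $\alpha_n\le (1-1/n)^{-1}$, rather than requiring an isomorphism.

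One caveat concerning your primary justification of $B_n\cong{\cal W}$: invoking ``uniqueness of ${\cal W}$'' from nuclearity, the UCT, vanishing K-theory, unique trace and continuous scale is not legitimate at this point. The uniqueness theorems of \cite{Raz} and \cite{Jb} classify inductive limits of Razak building blocks, and $B_n$ is not presented as such a limit; the abstract characterization of ${\cal W}$ you are appealing to is the classification of KK-contractible C*-algebras of \cite{egln00}, which postdates and depends on the present paper, so using it here would be circular. Your fallback---Robert's Theorem 1.0.1 applied to the rescaling morphism, available because $B_n$ is stably isomorphic to ${\cal W}$ by Brown's theorem \cite{Br1}---is the correct route and is exactly the paper's; note that it directly produces homomorphisms, so you should either run the two-sided approximate intertwining to upgrade to the isomorphism $\gamma_n$, or dispense with isomorphisms altogether as the paper does and recover the trace normalization from the strict positivity of $h_n(b_n)$.
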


\begin{proof}
We may assume
that
\beq\label{123--1}
\tau(a_0^{1/n})> 1-1/2n,\,\,\, n=1,2,....
\eneq
Since $\tau$ is a ${\cal W}$-trace, there exists a sequence  of \cpc s
$\psi_n: A\to {\cal W}$ such that
\beq\nonumber
&&\lim_{n\to\infty}\|\psi_n(a)\psi_n(b)-\psi_n(ab)\|=0\rforal a,\, b\in A\andeqn\\\label{123-n1}
&&\tau(a)=\lim_{n\to\infty} \tau_{\cal W}\circ \psi_n(a)\rforal a\in A.
\eneq

Put $b_n=\psi_n(a_0^{1/n}).$  
%\Wlog, 
By {{\eqref{123--1} and
\eqref{123-n1},}}  passing to a subsequence of $(\psi_n),$ we may assume
\beq\label{Pwtst-1}
\tau_{\cal W}(b_n)\ge 1-1/n,\,\,\, n=1,2,....
\eneq

Consider {{the}} the hereditary \SCA\, $B_n=\overline{\psi_n(A){\cal W}\psi_n(A)}.$
{{Since $a_0^{1/n}$ is a strictly positive element, by \ref{180915sec2}, $b_n$ is a strictly positive element of $B_n.$}}
%  Choose a strictly positive element
%$0\le e\le 1$ of ${\cal W}$ {\blue{and a strictly positive element $e_n$ of  $B_n.$
%One can choose $e_n$ so that $b_n\le e_n\le 1$.}} 
{{Recall that ${\rm{Cu}}^{\sim}(B_n)={\rm{Cu}}^\sim({\cal W})=\R\cup\{+\infty\}.$
 Let $r_n=\la b_n\ra\in \R_+\subset \R\cup\{+\infty\},$
$n=1,2,....$  Consider the map $r\to r_n \cdot r$ (for $r\in \R$) and $+\infty\to +\infty.$
By Theorem 1.0.1 of}} \cite{Rl} (see also 6.2.4 of \cite{Rl}, Theorem 1.2 of \cite{Jb}, or  Theorem 1.1 of \cite{Raz}, and 
\ref{inductived0}), there is a \hm\, 
%Recall that ${\rm{\bf{Cu}}^\sim
%By \cite{Rl}, there is a \hm\,
$h_n: B_n\to {\cal W}$ such that
% $\la h_n(b_n)\ra =\la e\ra$ in ${\mathrm{Cu}}({\cal W})$ and
$h_n(b_n)$ is strictly positive.  Since $B_n$ has a unique trace,
there is $\af_n>0$ such that
\beq\label{Pwtst-2}
\af_n\tau_{\cal W}(b)=\tau_{\cal W}\circ h_n(b)\rforal b\in B_n,\,\,\, n=1,2,...
\eneq
Since $h_n(b_n)$ is strictly positive,
\beq\label{Pwtst-3}
\lim_{k\to\infty}\tau_{\cal W}\circ h_n(b_n^{1/k})=1.
\eneq
Since $B_n\subset {\cal W},$ and by {{\eqref{Pwtst-2},}} $\af_n\tau_{\cal W}|_{B_n}=\tau_{\cal W}\circ h_n,$ 
% \eqref{Pwtst-2} and 
\eqref{Pwtst-3} {{implies}} that
%\beq\label{Pstst-4}
$\af_n\ge 1.$
%\eneq

On the other hand, by \eqref{Pwtst-1}, together with \eqref{Pwtst-2},  we have that
$$
1-1/n \le \tau_{\cal W}(b_n)= {\tau_{\cal W}\circ h_n(b_n)\over{\af_n}}\le  
%{\tau_{\cal W}\circ h_n(e_n)\over{\af_n}}\le 
1/\af_n,\,\,\, n=1,2,....
$$
Therefore,
$$
{{1\le}} \af_n\le {1\over{1-1/n}},\,\,\, n=1,2,...,
$$
from which  it follows that $\lim_{n\to\infty} \af_n=1.$
%{\blue{By Brown's stable isomorphism theorem \cite{Br1}, one may view ${\cal W}$ as a hereditary \CA\, 
Set $\phi_n=h_n\circ \psi_n.$ 
%One verifies that 
{{Since $\phi_n(a_0^{1/n})=b_n$ is strictly positive (in $B_n$), by \ref{180915sec2} as above, the sequence}} $(\phi_n)$ meets the requirements.
\end{proof}

The following two statements will be  established in \cite{egln00}.

\begin{thm}\label{TMW}
Let $A$ be a separable simple \CA\, with finite nuclear dimension and  with $A={\mathrm{Ped}(A)}$ such that
${\mathrm{T}(A)}\not={\O},$ ${\rm K}_0(A)={\rm ker}\rho_A,$ and
every tracial state is a ${\cal W}$-trace.
Suppose also that  every hereditary \SCA\, of $A$ with continuous scale  is tracially approximately divisible. Then $A\in {\cal D}_0.$
%In particular, $A\otimes U\in {\cal D}_{0}$ for any UHF-algebra $U.$
\end{thm}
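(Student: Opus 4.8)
The plan is to verify the defining properties of the class $\mathcal{D}_0$ (Definition \ref{DNtr1div}) for $A$, using the hypotheses to manufacture, for any finite subset $\mathcal{F}$, any $\varepsilon>0$, and any $b\in A_+\setminus\{0\}$, a splitting $\mathrm{id}_A\approx \phi+\psi$ with $\phi(A)\perp D$, $D\in {\mathcal{C}_0^0}'$, $\phi(a)\lesssim b$, and the tracial lower bound on $\psi$. The natural strategy is to combine the $\mathcal{W}$-trace hypothesis with the tracial approximate divisibility of hereditary subalgebras. First I would fix a strictly positive $a_0\in A$ and reduce to the continuous-scale case: by Corollary \ref{Ccontsc} (applicable once we know $\mathrm{gTR}$-type regularity, but here more directly via the structural assumptions) and Proposition \ref{Subcontsc}, one passes to a hereditary subalgebra $B=\overline{a_0Aa_0}$ with continuous scale, which by hypothesis is tracially approximately divisible; one must check that establishing $B\in\mathcal{D}_0$ for such $B$ suffices to conclude $A\in\mathcal{D}_0$, using that the class $\mathcal{D}_0$ passes to and from full hereditary subalgebras (Proposition \ref{Phered} gives one direction, and an argument via Brown's theorem and \ref{PtadMk} handles the matrix amplification).

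The heart of the construction is to build the map $\psi$ into an algebra $D\in{\mathcal{C}_0^0}'$ out of a $\mathcal{W}$-trace. The key input is Proposition \ref{Pwtracest}: every tracial state $\tau$ admits approximately multiplicative maps $\phi_n:A\to\mathcal{W}$ with $\phi_n(a_0)$ strictly positive and $\tau_{\mathcal{W}}\circ\phi_n\to\tau$. Since $\mathcal{W}=\overline{\bigcup_n C_n}$ with $C_n\in\mathcal{C}_0^0$ (Definition \ref{DW}) and $\lambda_s(C_n)\to 1$, I would compose with the conditional-expectation-type maps onto the $C_n$ to land, up to small error, in an algebra in ${\mathcal{C}_0^0}'$ carrying a trace that is almost normalized on the image of $a_0$. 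Here is where finite nuclear dimension and $\mathcal{Z}$-stability (via \ref{TRozs} and the tracial approximate divisibility hypothesis) must be used to upgrade pointwise-on-a-single-trace information to a splitting that works uniformly and has the orthogonal complementary piece $\phi$ absorbed into an algebra $A_0$ with $\phi(a_0)\lesssim b$. The tracial approximate divisibility of the continuous-scale hereditary subalgebra is exactly what lets one carve out an orthogonal copy $\mathrm{M}_m(A_1)$ whose complementary corner can be made arbitrarily small in the Cuntz sense while the divisible part retains a definite trace mass $\mathfrak{f}_{a_0}$.

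The main obstacle I expect is the passage from a \emph{single} tracial state being a $\mathcal{W}$-trace to a \emph{simultaneous, approximately multiplicative} approximation valid on all of $A$ with the uniform tracial lower bound \eqref{DNtrdiv-4} holding for \emph{all} $t\in\mathrm{T}(D)$, not merely for the one distinguished trace. One cannot simply patch together the maps $\phi_n^\tau$ for different $\tau$; the orthogonality requirement $\phi(A)\perp D$ and the simplicity of the eventual inductive limit force a genuinely tracially-uniform construction. I would address this by invoking finite nuclear dimension to obtain a sequence of order-zero-type decompositions and combining them with the tracial approximate divisibility to spread the $\mathcal{W}$-valued maps across the trace simplex, controlling the defect uniformly via compactness of $\overline{\mathrm{T}(A)}^{\mathrm w}$ (compact by \ref{DTtilde} and \ref{compactrace}). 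The condition $\mathrm{K}_0(A)=\ker\rho_A$ enters precisely to guarantee, through Theorem \ref{Ttracekerrho} and the $\mathrm{K}_0$-triviality of $\mathcal{W}$, that the building blocks $D$ can be taken in ${\mathcal{C}_0^0}'$ (i.e.\ with $\mathrm{K}_0(D)=\{0\}$) rather than merely in $\mathcal{C}_0'$; I would verify that the index-type obstruction to killing $\mathrm{K}_0$ vanishes under this hypothesis. Finally, Remark \ref{Rsemiproj} and Corollary \ref{CherCC} let one replace the full hereditary subalgebras in ${\mathcal{C}_0^0}'$ by genuine members of $\mathcal{C}_0^0$, completing the verification that $A\in\mathcal{D}_0$.
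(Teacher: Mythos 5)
First, a point of comparison: this paper does not prove Theorem \ref{TMW} at all. It is introduced with the sentence ``The following two statements will be established in \cite{egln00},'' so there is no internal proof to measure your proposal against; your sketch has to stand on its own, and as written it does not.

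The genuine gap is the passage from \emph{external} approximation to \emph{internal} decomposition. The hypothesis that every tracial state is a $\mathcal{W}$-trace, even strengthened by Proposition \ref{Pwtracest}, only yields approximately multiplicative completely positive contractive maps $\phi_n: A\to \mathcal{W}$ into the abstract algebra $\mathcal{W}$, recovering traces one at a time. Membership in $\mathcal{D}_0$ (Definition \ref{DNtr1div}) demands much more: a subalgebra $D\subset A$ with $D\in {\mathcal{C}_0^0}'$, maps $\psi: A\to D$ and $\phi: A\to A$ with $\phi(A)\perp D$ \emph{inside} $A$, with $\phi+\psi$ close to $\mathrm{id}_A$ on $\mathcal{F}$, with $\phi(a)\lesssim b$, and with the tracial lower bound holding uniformly over all of $\mathrm{T}(D)$. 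To internalize the $\mathcal{W}$-valued maps you need (i) an existence theorem producing homomorphisms from the model algebras into hereditary subalgebras of $A$ with prescribed Cuntz-semigroup and tracial data (Robert's Theorem 1.0.1 of \cite{Rl}, whose hypotheses on the target, such as almost stable rank one or $\mathcal{Z}$-stability of hereditary subalgebras, must themselves be verified for $A$), and (ii) a uniqueness theorem together with an approximate-intertwining argument showing that the round trip $A\to\mathcal{W}\text{-model}\to A$ can be corrected by unitaries so that, together with an orthogonal small corner, it approximates the identity. Your appeal to ``order-zero decompositions from finite nuclear dimension'' and to ``spreading the maps across the trace simplex by compactness'' does neither of these things: the order-zero maps from a nuclear-dimension decomposition take values in finite-dimensional algebras, not in members of $\mathcal{C}_0^0$ sitting inside $A$; they carry no orthogonality between the remainder and the divisible part; and they give no uniform lower bound over $\mathrm{T}(D)$. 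This existence--uniqueness--intertwining machinery is exactly the content of \cite{egln00}, which is why the authors defer the proof there rather than derive the theorem from the results of the present paper. Two smaller but real defects: Corollary \ref{Ccontsc} presupposes $A\in\mathcal{D}$, so your reduction to the continuous-scale case via that corollary is circular (it should instead go through finite nuclear dimension implying $\mathcal{Z}$-stability, then Remark \ref{Rcontext} and Proposition \ref{Subcontsc}); and Theorem \ref{Ttracekerrho} runs in the wrong direction for your purposes --- it \emph{deduces} ${\rm K}_0(A)={\rm ker}\rho_A$ from the $\mathcal{W}$-trace hypothesis, so it cannot serve as the mechanism by which the ${\rm K}_0$ hypothesis forces the building blocks to lie in ${\mathcal{C}_0^0}'.$
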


\begin{thm}\label{TWtrace}
Let $A$ be a  separable simple  \CA\, with finite nuclear dimension  {{and}} with $A={\mathrm{Ped}(A)}.$
Suppose that ${\mathrm{T}(A)}\not={\O}.$
Then
%$A\otimes Z,
$A\otimes {\cal W}\in {\cal D}_{0}.$
%where $Z$ is as described in \ref{Pwtrace}.
\end{thm}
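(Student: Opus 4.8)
The plan is to deduce the result from Theorem \ref{TMW} by verifying that $B:=A\otimes{\cal W}$ satisfies each of its hypotheses, so that the whole argument reduces to a sequence of checks. First I would dispose of the routine structural points. Since $A$ and ${\cal W}$ are separable, so is $B$; since ${\cal W}$ is nuclear (it is an inductive limit of $1$-dimensional NCCW complexes, see \ref{DW}), the minimal tensor product $B$ is simple because both factors are simple and one is nuclear. Finite nuclear dimension of $B$ follows from that of $A$ together with $\mathrm{dim}_{\mathrm{nuc}}({\cal W})\le 1$ and the standard tensor-product estimate. As ${\cal W}$ has a unique trace $\tau_{\cal W}$, the trace simplex of $B$ is affinely homeomorphic to ${\mathrm{T}(A)}$ via $\tau\mapsto\tau\otimes\tau_{\cal W}$, so ${\mathrm{T}(B)}\not=\O$, and these products are the only (normalized) traces I need to treat below. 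Finally, since $A={\mathrm{Ped}(A)}$ and ${\cal W}$ is compact (it has continuous scale, so ${\cal W}={\mathrm{Ped}({\cal W})}$ by \ref{Tpedersen}), both factors are algebraically simple; that $B={\mathrm{Ped}(B)}$ is then a routine consequence of the behaviour of the Pedersen ideal under minimal tensor products (alternatively, one derives bounded scale for $B$ from bounded scale of the factors using strict comparison in the ${\cal Z}$-stable algebra $B$, see \ref{TRozs}).

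The condition ${\rm K}_0(B)={\rm ker}\,\rho_B$ is easy: ${\cal W}$ satisfies the UCT (its building blocks lie in the bootstrap class), so the K\"unneth formula together with ${\rm K}_*({\cal W})=\{0\}$ yields ${\rm K}_*(B)=\{0\}$, whence ${\rm K}_0(B)=\{0\}={\rm ker}\,\rho_B$ trivially. (Once the trace condition below is established, this also follows from \ref{Ttracekerrho}, since $B={\mathrm{Ped}(B)}$.) For the last hypothesis of \ref{TMW} I would invoke ${\cal Z}$-stability: the Razak--Jacelon algebra ${\cal W}$ is ${\cal Z}$-stable, hence $B\cong B\otimes{\cal Z}$, and every hereditary \SCA\, of $B$ is again ${\cal Z}$-stable. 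A $\sigma$-unital simple ${\cal Z}$-stable \CA\, admits, for each $n$, an embedding of a matrix amplification with arbitrarily small orthogonal complement in the Cuntz sense, so it is tracially approximately divisible in the sense of \ref{Dappdiv}; thus every hereditary \SCA\, of $B$ with continuous scale is tracially approximately divisible.

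The crucial and most delicate hypothesis is that every tracial state of $B$ be a ${\cal W}$-trace. Reducing to the extreme traces $\tau\otimes\tau_{\cal W}$ with $\tau\in{\mathrm{T}(A)}$, the goal is to produce approximately multiplicative \cpc s $\phi_n\colon B\to{\cal W}$ with $\tau_{\cal W}\circ\phi_n\to\tau\otimes\tau_{\cal W}$. Here one uses that $\tau$ is a quasidiagonal trace on $A$ to obtain approximately multiplicative \cpc s $\psi_n\colon A\to{\mathrm{M}}_{k(n)}$ with $\tau=\lim_n\mathrm{tr}_{k(n)}\circ\psi_n$; amplifying gives $\psi_n\otimes\mathrm{id}_{\cal W}\colon B\to{\mathrm{M}}_{k(n)}({\cal W})$, and composing with a \hm\, $h_n\colon{\mathrm{M}}_{k(n)}({\cal W})\to{\cal W}$ supplied by Robert's classification (Theorem 1.0.1 of \cite{Rl}), chosen so that it carries the normalized trace $\mathrm{tr}_{k(n)}\otimes\tau_{\cal W}$ to $\tau_{\cal W}$, one sets $\phi_n=h_n\circ(\psi_n\otimes\mathrm{id}_{\cal W})$ and checks $\tau_{\cal W}\circ\phi_n\to\tau\otimes\tau_{\cal W}$. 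The existence of such $h_n$ rests on ${\mathrm{M}}_{k(n)}({\cal W})$ and ${\cal W}$ having matching invariants (\ref{TRobert1}, \ref{DW}).

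The main obstacle is precisely the quasidiagonality of the traces of $A$: this is the step where the finite nuclear dimension (hence nuclearity) of $A$ must be converted, through the quasidiagonality theorem for traces on separable nuclear simple \CA s, into the concrete matricial approximations $\psi_n$ used above. Everything else is bookkeeping resting on the comparison theory (\ref{Comparison}, \ref{TRozs}), the classification of homomorphisms into ${\cal W}$-type targets (\cite{Rl}), and the structural facts about ${\cal W}$ recorded in \ref{DW}. Once the $\psi_n$ are in hand, all the hypotheses of \ref{TMW} are verified and one concludes $B=A\otimes{\cal W}\in{\cal D}_0$.
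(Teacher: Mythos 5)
First, a point of context: this paper contains no proof of Theorem \ref{TWtrace} for your argument to be compared against. Both \ref{TMW} and \ref{TWtrace} are introduced with the sentence ``The following two statements will be established in \cite{egln00}'', so within this paper the theorem is an announcement, and your proposal must stand on its own (citing \ref{TMW} as a black box is legitimate, since it is stated as a theorem). Judged on those terms, the proposal has one genuine gap, and it sits exactly at the step you yourself call crucial: the verification that every tracial state of $B=A\otimes\mathcal{W}$ is a $\mathcal{W}$-trace. You obtain the matricial approximations $\psi_n\colon A\to\mathrm{M}_{k(n)}$ from ``the quasidiagonality theorem for traces on separable nuclear simple C*-algebras''. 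That theorem is the Tikuisis--White--Winter theorem \cite{TWW}, and it requires the UCT, which is not among the hypotheses of \ref{TWtrace}. Without the UCT, quasidiagonality of traces on separable nuclear C*-algebras is an open problem, so the input to your construction $h_{k(n)}\circ(\psi_n\otimes\mathrm{id}_{\mathcal{W}})$ is simply not available. Everything downstream of that point (the amplification, the trace-preserving homomorphisms $\mathrm{M}_{k(n)}(\mathcal{W})\to\mathcal{W}$ supplied by \cite{Rl}, the reduction to product traces) is correct, so what you have proved is the theorem under the additional hypothesis that $A$ satisfies the UCT, or that all of its traces are quasidiagonal --- not the theorem as stated.

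The gap is substantive rather than cosmetic, because the obvious repair also fails: one is tempted to apply \cite{TWW} to $B=A\otimes\mathcal{W}$ itself, which \emph{does} satisfy the UCT automatically ($\mathcal{W}$ lies in the bootstrap class and has trivial K-theory, hence is KK-contractible, hence so is $B$). But quasidiagonality of the traces of $B$, fed through your tensor trick, produces $\mathcal{W}$-traces on $B\otimes\mathcal{W}$, not on $B$; to transfer them back one would need $\mathcal{W}\otimes\mathcal{W}\cong\mathcal{W}$, which does not follow from anything in this paper and is itself part of what the classification program in \cite{egln00} delivers. This is presumably why the unconditional statement is deferred to the companion paper, where a different argument is given. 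Two smaller steps you should not label routine: (i) tracial approximate divisibility in the sense of \ref{Dappdiv} for hereditary sub-C*-algebras of a non-unital simple $\mathcal{Z}$-stable algebra is plausible but proved nowhere in this paper --- the divisibility results here (\ref{TDapprdiv}, \ref{LappZ}) are established only for algebras already known to lie in ${\cal D}_0$ or ${\cal D}$; and (ii) $\mathrm{Ped}(A\otimes\mathcal{W})=A\otimes\mathcal{W}$ does not follow formally from $A=\mathrm{Ped}(A)$ and $\mathcal{W}=\mathrm{Ped}(\mathcal{W})$, since the Pedersen ideal does not behave functorially under minimal tensor products; it needs an argument via algebraic simplicity or compactness (\ref{Tpedersen}).
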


%\begin{proof}
%It follows \ref{Pwtrace} that every tracial state of $A\otimes {\cal W}$ is a ${\cal W}$-trace.  It follows from \ref{Ttracekerrho}
%that $K_0(A\otimes {\cal W})=\{0\}.$ Moreover, $A\otimes {\cal W}\otimes Q=A\otimes {\cal W}.$

%\rho_{A\otimes {\cal W}}.$
%Then \ref{TMW} applies.
%\end{proof}

\providecommand{\href}[2]{#2}

%&&&&&&
%\section{Regularity of  \CA s in ${\cal D}_{0}$}

%\begin{lem}\label{str1-L1}
%Let $A$ be a unital \CA\, of stable rank one.
%Suppose that $x\in A$ is not invertible. Then, for any $\ep>0,$
%there exists $y\in A,$  a unitary $u\in U(A)$  and a non-zero $a\in A_+$ such that
%$$
%\|x-y\|<\ep\andeqn a(yu)=(yu)a=0.
%$$
%\end{lem}

%&&&&&

\end{document}